\title{\texorpdfstring{$q$}{q}-Witt vectors and $q$-Hodge complexes}
\author{Ferdinand Wagner}
\begin{document}
	\maketitle
	
	\begin{abstract}
		\textbf{Abstract. --- }In this article, we'll introduce a \enquote{$q$-variant} of Witt vectors and de~Rham--Witt complexes. This variant is closely related to the Habiro ring of a number field constructed by Garoufalidis, Scholze, Wheeler, and Zagier \cite{HabiroRingOfNumberField}, to $q$-Hodge cohomology, and to $\THH(-/\ku)$. While most of these connections will only be explored in forthcoming work \cite{qHodge,qWittHabiro}, the goal of this article is to provide the necessary technical foundation.
	\end{abstract}

	\tableofcontents
	\renewcommand{\SectionPrefix}{\textrm{\S}}
	\renewcommand{\SubsectionPrefix}{\textrm{\S}}
	\newpage
	
	\section{Introduction}\label{sec:Intro}
	In $p$-adic geometry, one often encounters the \emph{ring of \embrace{$p$-typical} Witt vectors} $W(k)$, where $k$ is an $\IF_p$-algebra. This ring comes equipped with two natural endomorphisms: A \emph{Frobenius} $F_p\colon W(k)\rightarrow W(k)$ and a \emph{Verschiebung} $V_p\colon W(k)\rightarrow W(k)$. These satisfy the well-known relations $F_p\circ V_p=p=V_p\circ F_p$.
	
	The $p$-typical Witt vector ring $W(k)$ has a global analogue, given by the ring of \emph{big Witt vectors}. This ring still admits Frobenii $F_p$ and Verschiebungen $V_p$ for all primes $p$, but the commutativity of $F_p$ and $V_p$ is lost. The idea that we'll explore in this paper is that upon introducing an auxiliary variable $q$, one can force $F_p$ and $V_p$ to commute \enquote{up to $q$-twist}, without losing any information.
	\begin{numpar}[$q$-Witt vectors.]\label{par:IntroqWitt}
		Let us now explain this idea in more detail. Fix a positive integer $m$, a commutative ring $R$, and let $\IW_m(R)$ denote the ring of big Witt vectors of $R$ with respect to the \emph{truncation set} $T_m\coloneqq \{\text{divisors of }m\}$. Here the terminology is taken from \cite[\S\href{https://arxiv.org/pdf/1006.3125\#section.1}{1}]{HesselholtBigDeRhamWitt}; we'll review it in \cref{par:BigWitt}. If $d$ is a divisor of $m$, then the rings $\IW_m(R)$ and $\IW_d(R)$ are related via a \emph{Frobenius} $F_{m/d}\colon \IW_m(R)\rightarrow \IW_d(R)$ and a \emph{Verschiebung} $V_{m/d}\colon \IW_d(R)\rightarrow \IW_m(R)$. The Frobenius $F_{m/d}$ is a morphism of rings, whereas the Verschiebung $V_{m/d}$ is only a morphism of abelian groups. These morphisms satisfy
		\begin{equation*}
			F_{m/d}\circ V_{m/d}=m/d\,;
		\end{equation*}
		however, there is no equally nice formula for the composition $V_{m/d}\circ F_{m/d}$. It can be described as multiplication by the element $V_{m/d}(1)$; still, this element is not very explicit. But we can make it explicit as follows. Let $(\qIW_m(R))_{m\in\IN}$ be the initial system of $\IZ[q]$-algebras equipped with the following structure:
		\begin{alphanumerate}
			\item For all $m\in \IN$, a $\IZ[q]$-algebra map $\IW_{m}(R)[q]/(q^{m}-1)\rightarrow \qIW_m(R)$.
			\item For all divisors $d\mid m$, a $\IZ[q]$-algebra morphism $F_{m/d}\colon W_m\rightarrow W_d$ and a $\IZ[q]$-module morphism $V_{m/d}\colon W_d\rightarrow W_m$. These must be compatible with the usual Frobenii and Verschiebungen on ordinary Witt vectors and satisfy
			\begin{equation*}
				F_{m/d}\circ V_{m/d}=m/d\quad\text{and}\quad V_{m/d}\circ 	F_{m/d}=[m/d]_{q^d}\coloneqq \frac{q^m-1}{q^d-1}\,.
			\end{equation*}
		\end{alphanumerate}
		It'll be shown in \cref{lem:BigqWittUniversal} that such an initial system does indeed exist and that $\qIW_m(R)$ is given by an explicit quotient of $\IW_m(R)[q]/(q^m-1)$. We call $\qIW_m(R)$ the ring of \emph{$m$-truncated big $q$-Witt vectors over $R$}.
	\end{numpar}
	\begin{rem}
		Despite the name, $\qIW_m(R)$ is not a $q$-deformation of $\IW_m(R)$. Indeed, in $\qIW_m(R)/(q-1)$ the condition $V_{m/d}\circ F_{m/d}=m/d$ is enforced. In fact, if $(\ov\IW_m(R))_{m\in\IN}$ denotes the universal quotient of $(\IW_m(R))_{m\in\IN}$ such that Frobenius and Verschiebung commute, then it's straightforward to check $\qIW_m(R)/(q-1)\cong \ov\IW_m(R)$, so $\qIW_m(R)$ is a $q$-deformation of $\ov\IW_m(R)$ instead.
		
		There's also a clash of terminology with a construction of Andre Chatzistamatiou. In unpublished work, he introduces \emph{$q$-Witt vectors} and \emph{$q$-de Rham--Witt complexes} of $\Lambda$-rings and uses them to obtain a partial result towards coordinate-independence of the $q$-de Rham complex (see \cref{par:qDeRhamComplex} and \cref{thm:qDeRhamGlobalIntro} below). In particular, he was able to construct a homotopy equivalence $\qOmega_{\IZ[T]/\IZ,\square_1}^*\simeq \qOmega_{\IZ[T]/\IZ,\square_2}^*$, where $\square_1$ is the identical framing and $\square_2\colon \IZ[T]\rightarrow \IZ[T]$ is the framing that maps $T\mapsto T-1$ .
		
		In constrast to our constructions, Chatzistamatiou's $q$-Witt vectors and $q$-de Rham--Witt complexes are honest $q$-deformations of their classical counterparts. The author doesn't know whether there is a connection between the constructions in this paper and Chatzistamatiou's.
	\end{rem}

	$q$-Witt vectors have a number of nice properties. First of all, it can be shown that the canonical map $\IW_m(R)\rightarrow \qIW_m(R)$ is always injective (\cref{prop:WittToqWittInjective}), so we really don't lose any information by enforcing $V_{m/d}\circ F_{m/d}=[m/d]_{q^d}$. Second, formulas often become easier than for ordinary Witt vectors. For example, $\qIW_m(\IZ)\cong \IZ[q]/(q^m-1)$ couldn't be simpler (\cref{cor:qWittOfPerfectLambdaRing}). Third, it turns out that most constructions with and properties of ordinary Witt vectors have analogues for $q$-Witt vectors, as we'll see throughout \crefrange{sec:qWitt}{sec:qDRW}.
	\begin{numpar}[A theory without restrictions.]\label{par:IntroNoRestrictions}
		The only real exception is that the restriction maps for ordinary Witt vectors do not extend to maps $\operatorname{Res}_{m/d}\colon \qIW_m(R)\rightarrow \qIW_d(R)$. In particular, we can't define a big $q$-Witt ring $\qIW(R)\coloneqq \limit_{m\in\IN,\operatorname{Res}}\qIW_m(R)$. It is then perhaps a little surprising that the classical theory of de Rham--Witt complexes, as developed by Illusie \cite{Illusie} (building on earlier work of Bloch, Deligne, and Lubkin) for $\IF_p$-algebras and by Langer--Zink \cite{LangerZink} in an arbitrary relative setting, should have an analogue for $q$-Witt vectors. Nevertheless, it works, as we'll demonstrate in \cref{sec:qDRW}. What this really shows is that the restrictions weren't actually necessary to set up the classical theory: You can take the universal property from \cite{LangerZink} and delete all restrictions from it---this will still give you the same truncated relative de Rham--Witt complexes. The restrictions were only used a posteriori to combine all the truncated complexes into one single complex by taking the limit.
		
		Nevertheless, the lack of restrictions can be annoying. But there seems to be at least some use in considering the limit $\limit_{m\in \IN,\, F}\qIW_m(R)$ along the Frobenius maps; see \cite{qWittHabiro}.
	\end{numpar}
	We've made a point why the study of $q$-Witt vectors and $q$-de Rham--Witt complexes can be of independent interest, but the real reason we're interested in them is that they do appear in nature.%
	\footnote{In fact, the definition was guessed from a computation of $\H^0(\qHodge_{R,\square}^*/(q^m-1))$; see \cref{thm:qDeRhamWittGlobalIntro} below.} For one, they can be used to construct the \emph{generalised Habiro rings} of Garoufalidis and Zagier \cite{HabiroRingOfNumberField}. We won't discuss this here (although the name \emph{Habiro ring} will appear again and be defined below), but refer to the forthcoming paper \cite{qWittHabiro} instead. What we will discuss is the connection between $q$-de Rham--Witt complexes and \emph{$q$-Hodge complexes}. Before we introduce the latter, let's briefly review the \emph{$q$-de Rham complex}.
	
	\begin{numpar}[$q$-de Rham complexes.]\label{par:qDeRhamComplex}
		Jackson \cite{Jackson} defined the \emph{$q$-derivative} of a function $f(T)$ via the formula
		\begin{equation*}
			\q\partial f(T)\coloneqq \frac{f(qT)-f(T)}{qT-T}\,.
		\end{equation*}
		For example, if $f(T)=T^m$ for some integer $m\geqslant 0$, then $\q\partial f(T)=[m]_qT^{m-1}$, where $[m]_q=1+q+\dotsb+q^{m-1}$ denotes Gauß's $q$-analogue of $m$. Given some base ring $A$, for a polynomial ring in several variables $A[T_1,\dotsc,T_n]$, one can consider \emph{partial $q$-derivatives} $\q\partial_i$ as well as a \emph{$q$-gradient} $\q\nabla\coloneqq \sum_{i=1}^d\q\partial_i\d T_i\colon A[T_1,\dotsc,T_n,q]\rightarrow \Omega_{A[T_1,\dotsc,T_n]/A}^1[q]$; furthermore, these can be organised into a \emph{$q$-de Rham complex}. This was first done by Aomoto \cite{AomotoI}.
		
		Unfortunately, the $q$-derivative does not interact well with coordinate transformations and so there's no way to make Aomoto's $q$-de Rham complex independent of the choice of coordinates. An insight how to fix this came from Scholze \cite{Toulouse}: First, he observed that after completion at $(q-1)$, the $q$-de Rham complex can be defined in more general situations. A \emph{framed smooth $A$-algebra} is a pair $(R,\square)$, where $R$ is smooth over $A$ and $\square\colon A[T_1,\dotsc,T_n]\rightarrow R$ is an étale map from a polynomial ring; we'll often call $\square$ an \emph{étale framing} and think of it as a choice of coordinates on $\Spec R$. Using the infinitesimal lifting properties of étale morphisms, one can show that the $q$-gradient for $A[T_1,\dotsc,T_n]$ extends to a map $\q\nabla\colon R\qpower\rightarrow \Omega_{R/A}^1\qpower$.%
		\footnote{Here it's crucial that we complete at $(q-1)$ or the lifting wouldn't work.}
		The precise construction will be recalled in \cref{par:qDeRhamqHodge}. One can then form the \emph{$q$-de Rham complex of $(R,\square)$}
		\begin{equation*}
			\qOmega_{R/A,\square}^*\coloneqq\left(R\qpower \xrightarrow{\q\nabla}\Omega_{R/A}^1\qpower\xrightarrow{\q\nabla} \dotsb \xrightarrow{\q\nabla} \Omega^n_{R/A}\qpower \right)\,.
		\end{equation*}
		One immediately checks $\qOmega_{R/A,\square}^*/(q-1)\cong \Omega_{R/A}^*$, so we get a $q$-deformation of the de Rham complex. As a complex, $\qOmega_{R/A,\square}^*$ suffers from the same coordinate-dependence as before. However, Scholze observed that if $A$ is a $\Lambda$-ring, then $\qOmega_{R/A,\square}^*$ is coordinate-independent as an object in the derived category $D(A\qpower)$! More precisely, Bhatt and Scholze were able to show the following theorem:
	\end{numpar}
	\begin{thm}[see {\cite[\S\href{https://arxiv.org/pdf/1905.08229v4\#section.16}{16}]{Prismatic}} for the essential case]\label{thm:qDeRhamGlobalIntro}
		If $A$ is equipped with a $\Lambda$-structure and $\IZ$-torsion free, then there exists a functor
		\begin{equation*}
			\qOmega_{-/A}\colon \cat{Sm}_A\longrightarrow \CAlg\left(\widehat{\Dd}_{(q-1)}\bigl(A\qpower\bigr)\right)
		\end{equation*}
		from the category of smooth $A$-algebras into the $\infty$-category of $(q-1)$-complete $\IE_\infty$-algebras over $A\qpower$, such that $\qOmega_{-/A}/(q-1)\simeq \Omega_{-/A}$ agrees with the de Rham complex functor and for every framed smooth $\IZ$-algebra $(R,\square)$, the underlying object of $\qOmega_{R/A}$ in the derived $\infty$-category of $A\qpower$ can be represented by the complex $\qOmega_{R/A,\square}^*$.
	\end{thm}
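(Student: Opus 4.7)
The plan is to reduce to the essential $p$-complete case treated in \cite[\S16]{Prismatic} and to glue across primes via an arithmetic fracture square.

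\emph{Step 1: $p$-complete construction.} Fix a prime $p$. Because $A$ carries a $\Lambda$-structure and is $\IZ$-torsion free, the $p$-th Adams operation $\psi^p$ equips the $p$-adic completion $A^\wedge_p$ with a $\delta$-structure. Extend this to $A^\wedge_p\qpower$ by declaring $\phi(q) = q^p$, equivalently $\delta(q) = (q - q^p)/p$; then its $(p,q-1)$-adic completion becomes a bounded prism with distinguished element $[p]_q$. Running the construction of \cite[\S16]{Prismatic} over this prism (the arguments there adapt verbatim to any transversal prism of this form) yields a functor
\begin{equation*}
\qOmega^{\wedge}_{-/A,(p)}\colon \cat{Sm}_A \longrightarrow \CAlg\bigl(\widehat{\Dd}_{(p,q-1)}\bigl(A\qpower\bigr)\bigr)
\end{equation*}
that is canonically framing-independent, reduces modulo $(q-1)$ to $p$-adic algebraic de~Rham cohomology, and on any framed smooth $A$-algebra $(R,\square)$ is computed by the $p$-adic completion of $\qOmega_{R/A,\square}^*$.

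\emph{Step 2: Integral gluing.} Next I would assemble the $\qOmega^{\wedge}_{-/A,(p)}$ into a single $(q-1)$-complete functor. Since $A$ is $\IZ$-torsion free, there is an arithmetic fracture pullback
\begin{equation*}
A\qpower \simeq \bigl(A\qpower \otimes \IQ\bigr) \times_{\bigl(\prod_p A^\wedge_p\qpower\bigr) \otimes \IQ} \prod_p A^\wedge_p\qpower,
\end{equation*}
which extends to an $\infty$-categorical pullback description of $\widehat{\Dd}_{(q-1)}(A\qpower)$. Rationally, one may either invoke Scholze's observation that after inverting all primes an étale-local logarithmic change of coordinates trivialises the dependence of $\qOmega_{R/A,\square}^*$ on $\square$, or build the rational functor directly as the completed base change of the de~Rham complex $\Omega^*_{R/A}$; either way, the rational piece is manifestly framing-independent. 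Combining the framing-independent $p$-complete pieces with the rational piece along the fracture square produces the sought functor $\qOmega_{-/A}$. The description on a framed $(R,\square)$ in terms of $\qOmega_{R/A,\square}^*$ then follows because both sides of the fracture match the corresponding localisations of the already $(q-1)$-complete Aomoto complex.

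\emph{Main obstacle.} The hardest step will be the coherence of the $\IE_\infty$-enhancements across the fracture: on framed affines the Aomoto complex $\qOmega_{R/A,\square}^*$ carries only the evident CDGA structure, whereas the $p$-complete refinements $\qOmega^{\wedge}_{-/A,(p)}$ inherit their $\IE_\infty$-structures from prismatic cohomology, and one must verify that these enhancements are compatible with the manifest CDGA and with each other simultaneously at all primes (and rationally, where CDGA and $\IE_\infty$ agree). Once this coherence is in hand, framing-independence holds integrally and Zariski descent automatically extends the construction from framed affines to all of $\cat{Sm}_A$, completing the proof.
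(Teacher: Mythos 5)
The paper does not prove this theorem: it is stated with the attribution \enquote{see \cite[\S 16]{Prismatic} for the essential case} and used as background, so there is no proof of record in this text to compare against.

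Your two-step strategy ($p$-complete construction via the $q$-de~Rham prism, then arithmetic fracture) is the right skeleton, but you name the load-bearing step as your \emph{main obstacle} and then stop. That step is genuinely where the theorem lives: to descend to a $(q-1)$-complete object you need the framing-dependent identifications at each prime \emph{and} the rational one to be compatible not merely pointwise but coherently, and the compatibility is exactly where the $\Lambda$-structure on $A$ (equivalently, the commuting $\delta$-structures on all $\widehat{A}_p$) enters as an essential hypothesis. This is not a routine coherence check: \cref{thm:qHodgeNotFunctorial} in this very paper shows that the closely related $q$-Hodge complex, which satisfies an analogous $p$-complete and rational comparison and which differs from $\qOmega_{R/A,\square}^*$ only by a Berthelot--Ogus d\'ecalage, \emph{cannot} be glued in this way. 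So a proof of the present theorem must locate precisely the feature of the $q$-de~Rham complex (as opposed to the $q$-Hodge complex) that makes the gluing go through, and your plan does not do that.

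Two smaller issues. First, the fracture square you write down quantifies over all primes, where the relevant profinite completion $\prod_p\widehat{M}_p$ need not agree with $\R\!\limit_n M/^\L n$ without extra hypotheses on $M$; in the $(q-1)$-complete setting it is cleaner to reduce to the quotients $M/^\L(q^m-1)$, each of which only sees the finitely many primes dividing $m$, and use \cref{par:DerivedBeauvilleLaszlo} (or \cref{lem:DerivedBeauvilleLaszlo}) for those before passing to the limit over $m$. Second, note that the theorem only asserts that the \emph{underlying object} of $\qOmega_{R/A}$ in $\Dd(A\qpower)$ is represented by $\qOmega_{R/A,\square}^*$, not that the $\IE_\infty$-structure matches the Aomoto CDGA structure on the nose; so the $\IE_\infty$-coherence you flag as the hardest point is partly less demanding than you fear, while the coherence of the framing-independent \emph{identifications} across primes and rationally is where the real content lies.
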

	\begin{numpar}[$q$-Hodge complexes.]\label{par:qHodgeComplex}
		Given a framed smooth $A$-algebra $(R,\square)$ as above, one can also form the \emph{$q$-Hodge complex}
		\begin{equation*}
			\qHodge_{R/A,\square}^*\coloneqq\left(R\qpower \xrightarrow{(q-1)\q\nabla}\Omega_{R/A}^1\qpower\xrightarrow{(q-1)\q\nabla} \dotsb \xrightarrow{(q-1)\q\nabla} \Omega^n_{R/A}\qpower \right)
		\end{equation*}
		by multiplying all differentials in $\qOmega_{R/A,\square}^*$ with $(q-1)$. It's not immediately obvious why that would be an interesting construction---or even a sensible one---so let's give some motivation why one should look at the $q$-Hodge complex.
		
		The $q$-Hodge complex was first introduced by Pridham \cite{Pridham}%
		\footnote{While Pridham used the notation \enquote{$\widehat{\operatorname{qDR}}$}, we've opted for the perhaps more descriptive \enquote{$\qHodge$}.}
		who used it to obtain a partial result towards \cref{thm:qDeRhamGlobalIntro}. Many results in Pridham's paper are proved for the $q$-Hodge complex first and then deduced for the $q$-de Rham complex via $\qOmega_{R/A,\square}^*\cong \eta_{(q-1)}\qHodge_{R/A,\square}^*$, where $\eta_{(q-1)}$ denotes the Berthelot--Ogus décalage functor (see \cite{BerthelotOgus} or \cite[\stackstag{0F7N}]{Stacks}). This is a first hint that $\qHodge_{R,\square}^*$ might be a more fundamental object. A second hint comes from Waßmuth's paper \cite{Nils}: He introduced a version of the prismatic site in characteristic~$0$ and showed that the cohomology of that site can be computed by a similar complex as above. A third piece of motivation is the following question:
		\begin{alphanumerate}\itshape
			\item[\boxtimes] Can the $q$-de Rham complex, or some modification of it, be descended along $\Hh\rightarrow \IZ\qpower$? Here $\Hh$ denotes the \emph{Habiro ring}\label{qst:Habiro}
			\begin{equation*}
				\Hh\coloneqq \limit_{m\in\IN}\IZ[q]_{(q^m-1)}^\complete\,.
			\end{equation*}
		\end{alphanumerate}
		Question~\cref{qst:Habiro} was raised by Peter Scholze in the hope that a cohomology theory with values in $\Hh$-modules could explain the results of \cite{HabiroRingOfNumberField}; in particular, the mysterious regulator map from $K$-theory to line bundles over extensions of $\Hh$ should arise via a realisation map from motivic cohomology to this hypothetical cohomology theory. Question~\cref{qst:Habiro} is also very natural in view of the general principle that whenever one has a deformation at $q=1$, one should evaluate it at other roots of unity as well. By design, $\Hh$ consists precisely of those power series in $\IZ\qpower$ that can be evaluated at arbitrary roots of unity.%
		\footnote{In fact, $\Hh$ can be viewed as as the ring of those power series in $\IZ\qpower$ that can also be Taylor-expanded around each root of unity.}
		
		Now the $q$-Hodge complex is a more natural candidate to descend to the Habiro ring than the $q$-de Rham complex itself. One intuitive reason goes as follows: As we've seen, the $q$-derivative satisfies $\q\partial(T^m)=[m]_qT^{m-1}$. But a theory that descends to the Habiro ring should \enquote{treat all roots of unity equally} and thus send $T^m\mapsto(q^m-1)T^{m-1}$ instead. This leads immediately to the definition of $\qHodge_{R/A,\square}^*$. A more mathematical reason to expect such a descent for the $q$-Hodge complex is \cref{thm:qDeRhamWittqHodge} below, which we'll restate here in slightly less precise and less general form:
	\end{numpar}
	\begin{thm}[see \cref{thm:qDeRhamWittqHodge}]\label{thm:qDeRhamWittGlobalIntro}
		For all framed smooth $\IZ$-algebras $(R,\square)$ and all $m\in\IN$, there is an isomorphism of commutative differential-graded $\IZ[q]$-algebras
		\begin{equation*}
			\bigl(\qIW_{m}\Omega_{R/\IZ}^*\bigr)_{(q-1)}^\complete\overset{\cong}{\longrightarrow} \H^*\bigl(\qHodge_{R/\IZ,\square}^*/(q^{m}-1)\bigr)\,.
		\end{equation*}
		Here $\qIW_m\Omega_{R/\IZ}^*$ denotes the $q$-de Rham--Witt complex from \cref{def:qDRW}, $(-)_{(q-1)}^\complete$ refers to degree-wise $(q-1)$-completion, and we turn the cohomology $\H^*(\qHodge_{R/\IZ,\square}^*/(q^{m}-1))$ into a commutative differential-graded $\IZ[q]$-algebra via the Bockstein differential.
	\end{thm}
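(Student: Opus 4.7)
The plan is to construct the isomorphism by invoking the universal property of the $q$-de Rham--Witt complex on the right-hand side. Concretely, I would first endow $\H^\ast\bigl(\qHodge_{R/\IZ,\square}^\ast/(q^m-1)\bigr)$ with the structure of a commutative differential-graded $\IZ[q]$-algebra: the multiplication comes from the CDGA structure on $\qHodge^\ast$ (which exists because $(q-1)\q\nabla$ is an honest graded derivation on the $q$-Hodge complex, even though $\q\nabla$ alone only satisfies a $q$-twisted Leibniz rule), and the differential is the Bockstein associated to the short exact sequence $0\to \qHodge^\ast\xrightarrow{q^m-1}\qHodge^\ast\to \qHodge^\ast/(q^m-1)\to 0$, which makes sense because each $\Omega_{R/\IZ}^i\qpower$ is $(q^m-1)$-torsion-free. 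A degree-$i$ cycle is represented by $\omega\in\Omega_{R/\IZ}^i\qpower$ with $(q-1)\q\nabla\omega\in (q^m-1)\Omega_{R/\IZ}^{i+1}\qpower$, i.e.\ $\q\nabla\omega$ divisible by $[m]_q$, and the Bockstein sends its class to $[m]_q^{-1}\q\nabla\omega\bmod (q^m-1)$.

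Next I would equip this CDGA with the remaining structure required by the universal property of $\qIW_m\Omega_{R/\IZ}^\ast$. On the degree-$0$ piece this amounts to identifying $\H^0$ with $\qIW_m(R)$, which—as hinted in the footnote following~\cref{par:IntroNoRestrictions}—was exactly the computation motivating the definition. To this end I would write down explicit lifts of $F_{m/d}$ and $V_{m/d}$ on cycles: for $F_{m/d}$, one uses an operator of the form $q\mapsto q^{m/d}$ combined with the classical Frobenius on $\IW$, and for $V_{m/d}$ one uses the ring-theoretic Verschiebung combined with multiplication by the appropriate factor. The relation $V_{m/d}F_{m/d}=[m/d]_{q^d}$ has to drop out from the Bockstein computation of $\q\nabla$ on the image of $V_{m/d}$, while $F_{m/d}V_{m/d}=m/d$ comes from the corresponding classical relation. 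Checking the initiality axiom from~\cref{par:IntroqWitt} then produces a map $\IW_m(R)[q]/(q^m-1)\to \H^0$, hence by universality (after~$(q-1)$-completion) a morphism of CDGAs
\[
	\bigl(\qIW_m\Omega_{R/\IZ}^\ast\bigr)_{(q-1)}^\complete\longrightarrow \H^\ast\bigl(\qHodge_{R/\IZ,\square}^\ast/(q^m-1)\bigr)\,.
\]

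To show this is an isomorphism, I would reduce to the framed polynomial case $R=\IZ[T_1,\dotsc,T_n]$ with the standard framing. Here both sides can be computed by hand: cycles in the $q$-Hodge complex mod $(q^m-1)$ are spanned by monomial forms whose $q$-gradient is divisible by $[m]_q$, which pins down the relevant exponent congruences; the resulting combinatorics of divisors of $m$ matches precisely the explicit presentation of $\qIW_m$ obtained in \cref{lem:BigqWittUniversal} and of $\qIW_m\Omega^\ast$ from \cref{def:qDRW}. Once the polynomial case is verified, the framed smooth case follows because both sides transform compatibly under the étale framing $\square$: the right-hand side via the lifting property used to extend $\q\nabla$ in \cref{par:qDeRhamComplex}, the left-hand side via the étale base-change property of $\qIW_m\Omega^\ast$ that is presumably established in \cref{sec:qDRW}.

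The main obstacle will be the polynomial-case computation, specifically the identification of $\H^i$ (for $i>0$) with the degree-$i$ part of $\qIW_m\Omega^\ast$. Getting $\H^0$ right is essentially how $\qIW_m(R)$ was designed, but in higher degrees one must verify that the Bockstein differential exactly matches the differential built into $\qIW_m\Omega^\ast$, and that the multiplicative structure survives after $(q-1)$-completion with no extra cycles appearing. A secondary technical point is ensuring $(q^m-1)$-torsion-freeness of $\qHodge^\ast$ and the resulting convergence of the Bockstein spectral sequence; this should reduce to the explicit $\IZ[q]$-flatness of the polynomial case combined with flat base change along $\square$.
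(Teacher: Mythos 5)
Your high-level outline captures the broad strokes of the argument (Bockstein gives the CDGA structure; equip $\H^\ast$ with a $q$-$FV$-structure; invoke the universal property of $\qIW_m\Omega^\ast$ to get a map; check it's an isomorphism), but the heart of the proof is glossed over at the point where you wave at ``explicit computation in the polynomial case'' and ``combinatorics of divisors of $m$''. This is exactly where the work is, and your proposal would stall there.

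Three concrete gaps. First, you never reduce to prime powers $m=p^\alpha$. The paper reduces both the construction of the $\qIW_m(R/A)$-algebra structure on $\H^0$ and the verification that the map is an isomorphism to the $p$-typical case using arithmetic fracture pullback squares (\cref{lem:qWittBeauvilleLaszloSquare}, \cref{lem:OtherBeauvilleLaszloSquare}, and the proof in \cref{subsec:Global}), plus the $p$-local decomposition of $\qIW_m\Omega^\ast$ in \cref{lem:qDRWpLocalDecomposition}. A direct computation for composite $m$ runs into the problem that $\qIW_m\Omega^\ast$ has no tractable description there, and that for general framed smooth $R$ (not a polynomial ring) there is no global $\Lambda$-structure on $R$, only Frobenius lifts on each $\widehat{R}_p$; the fracture square is precisely what glues the resulting $p$-adic comparison maps $c_{p^\alpha/A}$ into a map $c_{m,\square}\colon\qIW_m(R/A)_{(q-1)}^\complete\to R\qpower/(q^m-1)$. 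Second, the ``explicit computation'' you propose in the polynomial case is carried out in the paper via a careful Frobenius decomposition à la Katz (\cref{par:FrobeniusDecompositions}--\cref{lem:qHatgeAdditive}), which is intrinsically $p$-adic: the decomposition uses the Frobenius lift $\phi_\square$ on $\widehat{R}_p$, and the fact that certain $q$-integers become units in $\IZ_p\qpower$. This cannot be run globally over $\IZ[q]$ for composite $m$. Third, and most seriously, you omit the $p$-torsion-freeness of $\qIW_{p^\alpha}\Omega^\ast$ (assertion \cref{enum:alphaA}) and of the cohomology $\H^\ast(\qHodge^\ast/(q^m-1))$ (\cref{cor:H*pTorsionFree}, \cref{lem:H*pTorsionFree}). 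These are not secondary; they are woven into a simultaneous induction \embrace{\hyperref[enum:alphaD]{$d_{\alpha-1}$}} $\Rightarrow$ \cref{enum:alphaA} $\Rightarrow$ \cref{enum:alphaB} $\Rightarrow$ \cref{enum:alphaC} $\Rightarrow$ \cref{enum:alphaD} because the five-lemma argument in the proof of \cref{enum:alphaA} $\Rightarrow$ \cref{enum:alphaB} requires injectivity of the Verschiebungs, which in turn requires $p$-torsion-freeness of the $q$-de Rham--Witt complex, which is what $a_\alpha$ asserts. Your proposal acknowledges ``ensuring $(q^m-1)$-torsion-freeness of $\qHodge^\ast$'', which is the easy direction (it is degree-wise free over $R\qpower$); the hard point is $p$-torsion-freeness of the cohomology, which requires the full Frobenius decomposition.

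A smaller confusion: on $\H^\ast$, the Frobenius $F_{m/d}$ and Verschiebung $V_{m/d}$ are simply induced by the canonical projection $\qHodge^\ast/(q^m-1)\to\qHodge^\ast/(q^d-1)$ and multiplication by $[m/d]_{q^d}$ on quotient complexes (\cref{par:FrobeniusVerschiebung}); no ``$q\mapsto q^{m/d}$'' operator or Witt-vector Verschiebung is involved. The $q\mapsto q^p$ substitution appears elsewhere (the Frobenius lift $\phi_\square$ in the decomposition), and conflating the two will lead you astray in checking the $q$-$FV$-axioms.
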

	\begin{rem}
		The fact that $\H^*(\qHodge_{R/\IZ,\square}^*/(q^m-1))$ is canonically the $(q-1)$-completion of something else is precisely what we would expect to see if $\qHodge_{R/\IZ,\square}^*$ were really the $(q-1)$-completion of an object over the Habiro ring!
		
		\cref{thm:qDeRhamWittGlobalIntro} also nicely illustrates why the $q$-Hodge complex is more likely to descend to the Habiro ring than the $q$-de Rham complex. For the $q$-de Rham complex, there's a similar isomorphism $\H^*(\qOmega_{R/\IZ,\square}^*/\Phi_p(q))\cong (\Omega_{R/\IZ}^*\otimes_\IZ\IZ[\zeta_p])_p^\complete$ for all primes $p$, see \cite[Proposition~\href{https://arxiv.org/pdf/1606.01796\#theorem.3.4}{3.4}]{Toulouse}. More generally, Molokov \cite{Molokov} relates $\H^*(\qOmega_{R/\IZ,\square}^*/[p^\alpha]_q)$ to $\IW_{p^{\alpha-1}}\Omega_{R/\IZ}^*$ for all $\alpha\geqslant 1$. But note the shift in the index! This shift prevents us from, say, relating $\H^*(\qOmega_{R/\IZ,\square}^*/[m]_q)$ to $\IW_m\Omega_{R/\IZ}^*$, and so it's entirely unclear whether these $p$-typical results for varying~$p$ can be combined into a global result like \cref{thm:qDeRhamWittGlobalIntro}. 
	\end{rem}
	\cref{thm:qDeRhamWittGlobalIntro} also looks promising regarding the question whether $\qHodge_{R/\IZ,\square}^*$ can be made coordinate-independent (at least in the derived category). But something strange goes wrong:
	\begin{thm}[see \cref{thm:qHodgeNotFunctorial}]\label{thm:qHodgeNotFunctorialIntro}
		There is no functor $\qHodge_{-/\IZ}\colon \cat{Sm}_\IZ\rightarrow \widehat{\Dd}_{(q-1)}(\IZ\qpower)$ that also makes the identifications from \cref{thm:qDeRhamWittGlobalIntro} functorial.
	\end{thm}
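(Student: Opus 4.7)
The strategy is proof by contradiction. Suppose such a functor $\qHodge_{-/\IZ}\colon \cat{Sm}_\IZ\to \widehat{\Dd}_{(q-1)}(\IZ\qpower)$ exists. By the framed construction recalled in \cref{par:qHodgeComplex}, for each framed smooth $\IZ$-algebra $(R,\square)$ the value $\qHodge_{R/\IZ}$ must be equivalent to $\qHodge^*_{R/\IZ,\square}$ in $\widehat{\Dd}_{(q-1)}(\IZ\qpower)$; moreover, every morphism $\phi\colon R\to R'$ of smooth $\IZ$-algebras (including those which do not lift to framed morphisms) must induce a morphism in $\widehat{\Dd}_{(q-1)}(\IZ\qpower)$ whose cohomology modulo $(q^m-1)$ recovers, under the identification of \cref{thm:qDeRhamWittGlobalIntro}, the natural map $\qIW_m\Omega^*_{R/\IZ}\to \qIW_m\Omega^*_{R'/\IZ}$ of $q$-de Rham--Witt complexes.

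The plan is to locate the obstruction by inspecting the Postnikov tower of $\qHodge^*_{R/\IZ,\square}/(q^m-1)$. Its associated graded pieces are the shifts $(\qIW_m\Omega^i_{R/\IZ})^\complete_{(q-1)}[-i]$, hence intrinsic to $R$; but the $k$-invariants tying these pieces together are $\Ext$-classes computed from the explicit differential $(q-1)\q\nabla$ and \emph{a priori} depend on the framing $\square$. The existence of a functor would force those $k$-invariants (and the induced action of endomorphisms of $R$ on them) to be framing-independent. I would then take a small smooth $\IZ$-algebra $R$ with two inequivalent framings---for instance $R=\IZ[T]$ with $\square_1=\id$ and $\square_2\colon T\mapsto T-1$, as in the remark after \cref{par:IntroqWitt}---and a morphism of unframed smooth $\IZ$-algebras which is not compatible with any single framing (e.g.\ the endomorphism $T\mapsto T+1$, or the Frobenius lift $T\mapsto T^p$). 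Computing in each framing the derived lift forced by \cref{thm:qDeRhamWittGlobalIntro} and comparing to the intrinsic action on $\qIW_m\Omega^*_R$ should reveal two different $k$-invariants that cannot simultaneously be realised inside one functorial Postnikov tower.

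The hard part will be the explicit $\Ext$-group or Massey-product calculation. Concretely, one needs to exhibit a nonzero class in $\Ext^2_{\IZ[q]/(q^m-1)}\bigl((\qIW_m\Omega^{i+1}_R)^\complete_{(q-1)},(\qIW_m\Omega^i_R)^\complete_{(q-1)}\bigr)$ that captures the first nontrivial $k$-invariant of $\qHodge^*_{R,\square}/(q^m-1)$, and to verify that the candidate coherent lifts of $\phi$ along $\square_1$ and $\square_2$ produce genuinely different classes, so that no rechoice of derived representatives can reconcile them. I expect this to be feasible already for $m=p$ prime and $R=\IZ[T]$ (or $\IZ[T,T^{-1}]$), using the explicit description of $\qIW_p$ from \cref{sec:qWitt} and the simple two-term shape of $\qHodge^*_{R,\square}$ in these framings; the discrepancy should then be visible in low cohomological degree by direct inspection of the generators $T^n$ and $T^n\,\d T$.
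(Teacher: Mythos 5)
Your proposal takes a genuinely different route from the paper's proof, and there are two serious gaps that I do not see how to close.

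First, you claim at the outset that any such functor would have to satisfy $\qHodge_{R/\IZ}\simeq \qHodge^*_{R/\IZ,\square}$ in $\widehat{\Dd}_{(q-1)}(\IZ\qpower)$ for each framing $\square$. But the theorem only constrains the \emph{cohomology} of $\qHodge_{R/\IZ}/^\L(q^m-1)$ (it must be $(\qIW_m\Omega^*_R)^\complete_{(q-1)}$, functorially, with the mod-$(q^d-1)$ projections inducing the Frobenii). Since the right-hand side is framing-independent, all the framed complexes $\qHodge^*_{R,\square}/(q^m-1)$ already have the same cohomology, and nothing in the hypotheses forces the functor to agree with any particular $\qHodge^*_{R,\square}$ as an object. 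So even if you showed the $k$-invariants of $\qHodge^*_{R,\square_1}$ and $\qHodge^*_{R,\square_2}$ differ, this would not yet contradict the existence of a functor with the required cohomological properties; the functor could carry $R$ to a third object. Worse: precisely because the cohomology is framing-independent and equipped with the Frobenius structure, one might well expect the relevant $k$-invariants to be \emph{determined} by those data, in which case your comparison would come out trivial.

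Second, the paper's proof of \cref{thm:qHodgeNotFunctorial} shows that the obstruction is not visible on a single smooth $\IZ$-algebra with endomorphisms at all. Instead, it left Kan extends the putative functor to animated rings (\cref{par:DerivedqHodge}), builds the $q$-de Rham--Witt filtration (\cref{par:Filtrations}), and evaluates on the surjection $W=(\mathrm{A}_\inf(\Oo_C)\otimes_\IZ A)_p^\complete\twoheadrightarrow R=(\Oo_C\otimes_\IZ A)_p^\complete$ coming from a perfectoid ring. There, the vanishing of $p$-completed cotangent complexes (\cref{lem:DerivedqHodge}) rigidifies the derived $q$-Hodge complex to a static flat module, and the contradiction is an explicit divisibility computation using the $\delta$-structure on $\mathrm{A}_\inf(\Oo_C)$ and the element $\phi(\xi)-\Phi_p(q)\delta(\xi)$: the $m=1$ Witt-vector compatibility forces $\xi$ to be divisible by $q-1$, the $m=p$ compatibility (via Teichmüller lifts and the Frobenius constraint) forces $\phi(\xi)-\Phi_p(q)\delta(\xi)$ to be divisible by $q^p-1$, and combining mod $p$ kills the unit $\delta(\xi)$. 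So the mechanism is the interaction of the filtered structure across \emph{different} truncation levels $m$ on a morphism between \emph{different} (indeed non-smooth) rings, not a mismatch of Postnikov $k$-invariants between two framings of the same polynomial ring. Your sketch leaves the decisive $\Ext$/Massey-product calculation unperformed, and for the reasons above it is not clear it would produce a nonzero obstruction even if carried out.
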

	\cref{thm:qHodgeNotFunctorialIntro} is a very unwelcome surprise. It doesn't rule out that the construction $\qHodge_{R/\IZ,\square}^*$ can somehow be made functorial, but we consider this unlikely.\footnote{And it would be hard to say anything about such a functor, since we can't access its cohomology via \cref{thm:qDeRhamWittGlobalIntro}.} In forthcoming work \cite{qWittHabiro}, we'll explain a partial fix, showing that a functorial \emph{derived $q$-Hodge complex} exists on a certain full subcategory of all commutative rings and satisfies a derived version of \cref{thm:qDeRhamWittGlobalIntro}. Furthermore, we'll relate this functor to $\THH(-/\ku)$ and show that it does descend to the Habiro ring, thus giving at least a partial affirmative answer to question~\cref{qst:Habiro}.
	\begin{numpar}[Notation and conventions.]\label{par:Notation}
		As usual, we'll write $[m]_q=1+q+\dotsb+q^{m-1}$ for the Gaußian $q$-analogue of an integer $m\geqslant 0$. More generally, if $d$ is any positive divisor of $m$, we'll use the notation
		\begin{equation*}
			[m/d]_{q^{d}}\coloneqq 1+q^{d}+(q^d)^2\dotsb+(q^{d})^{m/d-1}=\frac{q^{m}-1}{q^{d}-1}\,.
		\end{equation*}
		We also let $\Phi_m(q)$ denote the $m$\textsuperscript{th} cyclotomic polynomial. So $[p]_q=\Phi_p(q)$ and we'll sometimes switch back and forth between these two notations.
		
		
		Since we're mostly working with cochain complexes, we'll use cohomological indexing. We'll also use some $\infty$-categoric language. If $R$ is a ring, the derived $\infty$-category of $R$ will be denoted $\Dd(R)$. If $M^*$ is a cochain complex, then its image in $\Dd(R)$ will usually be denoted $M$. We'll usually say that a sequence $K\rightarrow L\rightarrow M$ in $\Dd(R)$ is a \emph{fibre/cofibre sequence} instead of writing that $K\rightarrow L\rightarrow M\rightarrow K[1]$ is a distinguished triangle in the ordinary derived category $D(R)$. Following Clausen--Scholze, we'll say that an object $M\in\Dd(R)$ is \emph{static} (\enquote{\emph{un-animated}}) if $M$ is concentrated in degree $0$. Furthermore, we often use the derived quotient notation: If $f\in R$ and $M\in \Dd(R)$,
		\begin{equation*}
			M/^\L f\coloneqq \cofib\left(f\colon M\rightarrow M\right)
		\end{equation*}
		denotes the cofibre taken in $\Dd(R)$, or equivalently the cone in $D(R)$, of the multiplication map $f\colon M\rightarrow M$. For multiple elements $f_1,\dotsc,f_r\in R$, we let
		\begin{equation*}
			M/^\L(f_1,\dotsc,f_r)\coloneqq \bigl(\dotso(M/^\L f_1)/^\L\dotso\bigr)/^\L f_r\,.
		\end{equation*}
		In the case where $M$ is static, so that it can be regarded as an $R$-module, the object $M/^\L(f_1,\dotsc,f_r)\in \Dd(R)$ has an explicit representative given by the homological Koszul complex $\Kos_*(M,(f_1,\dotsc,f_r))$, which, according to our indexing conventions, we regard as a cochain complex in nonpositive degrees.

		Finally, the notion of \emph{derived $I$-completeness} for finitely generated ideals $I\subseteq R$ will be ubiquitous throughout the text. If $I=(f)$ is principal, we say that $M\in\Dd(R)$ is \emph{derived $f$-complete} if $M\simeq \limit_{n\geqslant 1}M/^\L f^n$, where the limit is taken in the derived $\infty$-category (so it corresponds to a derived limit in the ordinary derived category). In general, $M$ is called \emph{derived $I$-complete} if it is derived $f$-complete for all $f\in I$, or equivalently, for all $f$ in a generating set of $I$, see \cite[\stackstag{091Q}]{Stacks}. We'll use the following (abuse of) notation: If $M\in \Dd(R)$, we denote its derived $I$-completion by $\widehat{M}_I$ (or $(-)_I^\complete$ for larger expressions). If, instead, $M^*$ is a cochain complex, then $\widehat{M}_I^*$ denotes its degree-wise underived $I$-completion. However, whenever we use the latter notation in this paper, it will always be true that $\widehat{M}_I^*$ represents the derived $I$-completion of $M$ (which we'll usually have to justify), so the notation will never be inconsistent! We also denote by $\widehat{\Dd}_I(R)$ the full sub-$\infty$-category of $\Dd(R)$ spanned by the derived $I$-complete objects.
		
		A complex $M\in \Dd(R)$ is called \emph{$I$-completely flat} if $M\lotimes_RR/I$ is discrete and flat over $R/I$. A ring morphism $R\rightarrow S$ is called \emph{$I$-completely smooth} if $S$ is derived $I$-complete, $I$-completely flat over $R$, and $S\lotimes_RR/I$ is smooth over $R/I$. In the same way, the terms \emph{$I$-completely étale} and \emph{$I$-completely ind-smooth/étale} are defined. It can be shown that $S$ is $I$-completely smooth/étale over $R$ if and only if it is the derived $I$-completion of a smooth/étale $R$-algebra, see \cite[footnote~\href{https://arxiv.org/pdf/1905.08229v4\#Hfootnote.6}{6} on page~\href{https://arxiv.org/pdf/1905.08229v4\#page=11}{11}]{Prismatic}.
	\end{numpar}
	\begin{numpar}[Leitfaden of this paper.]
		The main ideas in this paper are already contained in the author's master thesis \cite{MasterThesis}, but here we develop the theory of $q$-Witt vectors and $q$-de Rham--Witt complexes in a much more systematic way, in more generality, and most importantly, without the $(q-1)$-completeness assumption. This results in quite some additional work, but also in a much simpler proof of \cref{thm:qDeRhamWittGlobalIntro}, and the additional generality will be needed in \cite{qWittHabiro}.
		
		In \cref{sec:qWitt}, we'll introduce $q$-Witt vectors and prove many technical results about them that will be needed later on. If you're mainly interested in the application to $q$-Hodge complexes, you may want read up to \cref{prop:qWittKoszulExactSequence} and then skip the rest of \cref{subsec:qWitt} as well as \cref{subsec:WittToqWitt}.  In \cref{sec:qDRW}, we'll introduce $q$-de Rham--Witt complexes. The most work in that section goes into proving that $q$-de Rham--Witt complexes carry a natural choice of Frobenius operators. Again, this is technical, and if you're willing to take it on faith, you can skip \cref{subsec:ConstructionOfFrobenii}. In \cref{sec:qDeRhamWittSmooth}, we'll study $q$-de Rham--Witt complexes for smooth $\IZ$-algebras. This includes a proof of \cref{thm:qDeRhamWittGlobalIntro}, but we'll also show that they are degree-wise $p$-torsion free. Finally, in \cref{sec:Functoriality} we'll give a proof of \cref{thm:qHodgeNotFunctorialIntro}.
	\end{numpar}
	\begin{numpar}[Acknowledgements.]
		Due to the unsatisfying nature of \cref{thm:qHodgeNotFunctorialIntro}, I've long hesitated to turn my master thesis into a paper. With at least a partial fix in sight \cite{qHodge,qWittHabiro}, I've now finally decided to put these ideas forward. I'd like to thank my advisor Peter Scholze for his support throughout this project. I'd also like to thank Johannes Anschütz and Quentin Gazda for their interest in my work and their encouragement to finally turn this work into a preprint, as well as Bora Yalkinoglu for helpful comments on an earlier version.
		
		This work was carried out while I was a master/Ph.D.\ student at the University/Max Planck Institute for Mathematics in Bonn and I'd like to thank these institutions for their hospitality. I was supported by DFG through Peter Scholze's Leibniz-Preis.
	\end{numpar}

	\newpage

	\section{\texorpdfstring{$q$}{q}-Witt vectors}\label{sec:qWitt}
	In this section we'll introduce a functor which associates to any ring $R$ a system of rings $(\qIW_m(R))_{m\in\IN}$, called the \emph{truncated $q$-Witt vectors of $R$}. After a brief recollection of some facts about cyclotomic polynomials in \cref{subsec:Cyclotomic}, we'll give the definition of $\qIW_m(-)$ and study some basic properties in \cref{subsec:qWitt}. In \cref{subsec:qWittLambda} and \cref{subsec:qWittEtale} we'll study the behaviour of $\qIW_m(-)$ on $\Lambda$-rings and under étale ring morphisms.
	\subsection{Some technical preliminaries}\label{subsec:Cyclotomic}
	We record some elementary facts about cyclotomic polynomials that will be used throughout the text.
	\begin{lem}\label{lem:CyclotomicPolynomialsCoprime}
		Let $m$ and $n$ be positive integers and let $R=\IZ[q]/(\Phi_m(q),\Phi_n(q))$. Let $d=\gcd(m,n)$. If $p$ is a prime factor of $m/d$ or $n/d$, then $p=0$ in $R$. In particular, the ring $R$ vanishes unless $m/n=p^\alpha$ for some prime $p$ and some $\alpha\in\IZ$. In the latter case, $R\cong \IF_p[q]/\Phi_{\min\{m,n\}}(q)$.
	\end{lem}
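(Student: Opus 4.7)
The plan is to isolate the main divisibility claim and then derive the structural consequences from it.

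First, $\Phi_m(q)\mid q^m-1$ and $\Phi_n(q)\mid q^n-1$ force $q^m=q^n=1$ in $R$, so $q$ is a unit, and a Bezout relation $d=am+bn$ yields $q^d=1$ in $R$.

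For the main claim, by symmetry I may assume $p\mid m/d$, and set $m_1\coloneqq m/p$. Since $p\mid m/d$ gives $pd\mid m$, we have $d\mid m_1$, and hence $q^{m_1}=1$ in $R$. In $\IZ[q]$ the trivial factorization
\begin{equation*}
q^m-1=(q^{m_1}-1)\cdot[p]_{q^{m_1}}
\end{equation*}
holds. Now $\Phi_m(q)$ is coprime to $q^{m_1}-1$ in $\IQ[q]$, because $m_1<m$ implies that no primitive $m$-th root of unity is an $m_1$-th root. By Gauss's lemma (and monicity of $\Phi_m$) this upgrades to $\Phi_m(q)\mid[p]_{q^{m_1}}$ in $\IZ[q]$. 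Thus $[p]_{q^{m_1}}=0$ in $R$; but substituting $q^{m_1}=1$ shows $[p]_{q^{m_1}}=p$ in $R$, so $p=0$ in $R$.

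For the structural consequences: since $\gcd(m/d,n/d)=1$, two distinct primes dividing $(m/d)(n/d)$ would both vanish in $R$, forcing $1=0$ via Bezout; hence $R\neq 0$ implies $(m/d)(n/d)$ has at most one prime factor, and coprimality then forces one of $m/d,n/d$ to equal $1$ and the other to be a prime power. This is equivalent to $m/n=p^\alpha$ for some prime $p$ and some $\alpha\in\IZ$. In the nontrivial case, WLOG $m=p^\alpha n$ with $\alpha\geq 1$; then $p=0$ in $R$, so I work in $\IF_p[q]$. A short induction using Frobenius in the form $q^{p^ak}-1=(q^k-1)^{p^a}$ shows $\Phi_{p^ak}\equiv\Phi_k^{\phi(p^a)}\pmod{p}$ whenever $p\nmid k$; applied to $m$ and $n$ (which share the same prime-to-$p$ part $n'$) this gives $\Phi_m\equiv\Phi_{n'}^{e_m}$ and $\Phi_n\equiv\Phi_{n'}^{e_n}$ in $\IF_p[q]$ with $e_m\geq e_n$, so $(\Phi_m,\Phi_n)=(\Phi_n)$ and therefore $R\cong\IF_p[q]/\Phi_n=\IF_p[q]/\Phi_{\min\{m,n\}}(q)$.

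The main obstacle is the central divisibility $\Phi_m(q)\mid[p]_{q^{m_1}}$: the polynomial identity is trivial once set up, but one has to see that $p\mid m/d$ is precisely what makes $d\mid m_1$, and hence $q^{m_1}=1$ in $R$, which is what lets us read off $[p]_{q^{m_1}}=p$. Everything else is Bezout or a bookkeeping computation with Frobenius in characteristic $p$.
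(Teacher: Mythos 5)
Your proof is correct and follows essentially the same route as the paper: establish $q^d=1$, deduce $p=0$ from $\Phi_m(q)\mid[p]_{q^{m/p}}$ together with $q^{m/p}=1$, get the vanishing dichotomy from coprimality, and finish with the Frobenius congruence $\Phi_{p^ak}\equiv\Phi_k^{\phi(p^a)}\pmod p$. The only difference is that you justify the divisibility $\Phi_m(q)\mid[p]_{q^{m/p}}$ (via the factorisation $q^m-1=(q^{m/p}-1)[p]_{q^{m/p}}$ and Gauss's lemma), which the paper simply asserts.
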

	\begin{proof}
		Clearly $q^m=q^n=1$ in $R$, hence also $q^d=1$ in $R$. If $p$ divides $m/d$, then this implies $q^{m/p}=1$. But also
		\begin{equation*}
			[p]_{q^{m/p}}=1+q^{m/p}+(q^{m/p})^2+\dotsb+(q^{m/p})^{p-1}=0
		\end{equation*}
		in $R$, because $\Phi_m(q)$ divides the left-hand side. Thus $p=0$, as claimed. The case where $p$ divides $n/d$ is analogous. This immediately implies the second assertion. For the third one, assume $\alpha\geqslant 0$ without restriction and use that
		\begin{equation*}
			\Phi_n(q)\equiv\Phi_{mp^\alpha}(q)\equiv\begin{cases*}
				\Phi_m(q)^{p^\alpha} & if $p\mid m$\\
				\Phi_m(q)^{(p-1)p^\alpha} & if $p\nmid m$
			\end{cases*}\mod p
		\end{equation*}
		holds in this case.
	\end{proof}
	\begin{lem}\label{lem:IdealGeneratedByPhi}
		Let $m$ be a positive integer. Then the following ideals of $\IZ[q]$ are equal:
		\begin{equation*}
			\bigl([p]_{q^{m/p}}\ \big|\  p\text{ prime factor of }m\bigr)=\bigl(\Phi_m(q)\bigr)\,.
		\end{equation*}
	\end{lem}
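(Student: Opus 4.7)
The plan is to prove the two inclusions separately. The inclusion \enquote{$\subseteq$} is straightforward from the factorization $q^n-1=\prod_{e\mid n}\Phi_e(q)$, which gives
\begin{equation*}
[p]_{q^{m/p}}=\frac{q^m-1}{q^{m/p}-1}=\prod_{\substack{d\mid m\\ v_p(d)=v_p(m)}}\Phi_d(q)\,,
\end{equation*}
since the divisors of $m$ not dividing $m/p$ are exactly those with maximal $p$-adic valuation. In particular $\Phi_m(q)$ divides every generator, so the left-hand ideal sits inside $(\Phi_m(q))$.

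For the reverse inclusion, I would factor out $\Phi_m(q)$: write $[p]_{q^{m/p}}=\Phi_m(q)\,f_p(q)$ with $f_p(q)=\prod_{d\mid m,\,d\neq m,\,v_p(d)=v_p(m)}\Phi_d(q)\in\IZ[q]$. Since $\Phi_m(q)$ is a non-zero-divisor in the domain $\IZ[q]$, multiplication by it is injective on ideals, so the reverse inclusion is equivalent to the assertion that the $f_p$ generate the unit ideal, $(f_p:p\mid m)=\IZ[q]$.

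The ring $\IZ[q]$ is Jacobson, so it suffices to rule out a simultaneous zero $\alpha\in\overline{\IF_\ell}$ of all $f_p$ for every prime $\ell$. Such an $\alpha$ would yield, for each $p\mid m$, a divisor $d_p\mid m$ with $d_p\neq m$, $v_p(d_p)=v_p(m)$, and $\Phi_{d_p}(\alpha)=0$. The congruence $\Phi_{\ell^a n}(q)\equiv\Phi_n(q)^{\varphi(\ell^a)}\pmod\ell$ for $\gcd(\ell,n)=1$, a direct consequence of \cref{lem:CyclotomicPolynomialsCoprime}, identifies the roots of $\Phi_d\bmod\ell$ with the primitive $\bigl(d/\ell^{v_\ell(d)}\bigr)$-th roots of unity in $\overline{\IF_\ell}$. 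So the multiplicative order $e$ of $\alpha$ in $\overline{\IF_\ell}^\times$ equals $d_p/\ell^{v_\ell(d_p)}$ and is in particular independent of $p$. Writing $m=\ell^\beta m'$ with $\gcd(\ell,m')=1$, comparing $p$-adic valuations for primes $p\mid m$ with $p\neq\ell$ forces $e=m'$; if $\ell\mid m$, applying the relation to $p=\ell$ then yields $d_\ell=\ell^\beta m'=m$, contradicting $d_\ell\neq m$, while if $\ell\nmid m$ one has $m'=m$ and each $d_p$ must equal $m$ directly. The main technical obstacle is the case $\ell\mid m$: modulo $\ell$ the $\Phi_d$ become highly non-coprime (e.g.\ $\Phi_{\ell^a n}$ and $\Phi_n$ share all their roots), so one cannot read off $d$ from a root and must instead track only the prime-to-$\ell$ part of the order of $\alpha$.
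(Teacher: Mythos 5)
Your proof is correct, but it takes a genuinely different route from the paper. Both arguments start by reducing to the claim that the polynomials $f_p=[p]_{q^{m/p}}/\Phi_m(q)=\prod_{d\in I_p}\Phi_d(q)$ (with $I_p=\{d\mid m: d\neq m,\ v_p(d)=v_p(m)\}$) generate the unit ideal. The paper then invokes the purely combinatorial \cref{lem:UnitIdeal}, which reduces the question about products to the question about individual factors: one only has to show that for any choice $(d_p\in I_p)_p$ the ideal $(\Phi_{d_p}(q):p\mid m)$ is the unit ideal, and this is done by a short divisibility argument producing two incomparable $d_p,d_\ell$ so that \cref{lem:CyclotomicPolynomialsCoprime} applies. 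You instead argue geometrically via the Nullstellensatz for $\IZ[q]$: since every maximal ideal has finite residue field, it suffices to rule out a common zero $\alpha\in\overline{\IF_\ell}$ of all $f_p$, and this follows from the fact that the roots of $\Phi_d\bmod\ell$ are exactly the primitive $(d/\ell^{v_\ell(d)})$-th roots of unity, so that the multiplicative order of $\alpha$ pins down $d_p/\ell^{v_\ell(d_p)}$ independently of $p$ and one quickly reaches $d_p=m$. Your approach is more direct in the sense that it avoids the auxiliary \cref{lem:UnitIdeal} and works with actual roots rather than ideal-theoretic bookkeeping, at the cost of invoking the general Nullstellensatz. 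One small slip: the congruence $\Phi_{\ell^a n}\equiv\Phi_n^{\varphi(\ell^a)}\pmod\ell$ is not a \emph{consequence} of the statement of \cref{lem:CyclotomicPolynomialsCoprime} but is rather the standard fact used inside its proof; you should cite it as such (or just call it classical). Also note that both your argument and the paper's implicitly assume $m\geqslant 2$; for $m=1$ the left-hand ideal is $(0)\neq(\Phi_1)$, so the lemma should really be read with that convention.
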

	\begin{proof}
		The inclusion \enquote{$\subseteq$} is clear, so it suffices to show that $[p]_{q^{m/p}}/\Phi_m(q)$ generate the unit ideal in $\IZ[q]$. If $m$ has only one prime factor, this is trivial because then $[p]_{q^{m/p}}/\Phi_m(q)=1$. So assume $m$ has at least two prime factors. For any prime factor $p$ of $m$, let $I_p$ be the set of divisors $d\mid m$ such that $d\neq m$ and $v_p(d)=v_p(m)$. Then
		\begin{equation*}
			\frac{[p]_{q^{p^m}}}{\Phi_m(q)}=\prod_{d\in I_p}\Phi_d(q)\,.
		\end{equation*}
		We wish to apply \cref{lem:UnitIdeal} below. To verify the condition, we have to check that for any choice of elements $(d_p\in I_p)_{p\text{ prime factor of }m}$, the ideal $(\Phi_{d_p}(q)\ |\ p\text{ prime factor of }m)$ is the unit ideal in $\IZ[q]$. In fact, we claim that there must be prime factor $p\neq \ell$ of $m$ such that $d_p\nmid d_\ell$ and $d_\ell\nmid d_p$, so that already $\Phi_{d_p}(q)$ and $\Phi_{d_\ell}(q)$ generate the unit ideal by \cref{lem:CyclotomicPolynomialsCoprime}. Indeed, if no such $p$ and $\ell$ exist, then the set $\left\{d_p\ \middle|\ p\text{ prime factor of }m\right\}$ would be totally ordered with respect to division, but then the maximal $d_p$ would have $v_\ell(d_p)\geqslant v_\ell(d_\ell)=v_\ell(m)$ for all prime factors $\ell\mid m$, forcing $d_p=m$, in contradiction to our assumptions. This shows that \cref{lem:UnitIdeal} can be applied and we're done.
	\end{proof}
	\begin{lem}\label{lem:UnitIdeal}
		Let $(I_j)_{j\in J}$ be finite sets indexed by another finite set $J$. Let $((x_{i_j})_{i_j\in I_j})_{j\in J}$ be a $J$-tuple of $I_j$-tuples of elements of a ring $R$. Suppose that for any choice of indices $(i_j\in I_j)_{j\in J}$ the ideal $(x_{i_j}\ |\ j\in J)$ is the unit ideal in $R$. Then $\bigl(\prod_{i_j\in I_j}x_{i_j}\ \big|\ j\in J\bigr)$ is the unit ideal as well.
	\end{lem}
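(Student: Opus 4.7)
The plan is to reduce the claim to a statement about maximal ideals. If $R = 0$ the conclusion holds trivially, so assume $R \neq 0$. Suppose, for contradiction, that the ideal $\bigl(\prod_{i_j \in I_j} x_{i_j} \,\big|\, j \in J\bigr)$ is proper; then it is contained in some maximal ideal $\mathfrak{m}\subseteq R$.

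For each $j \in J$ one then has $\prod_{i_j \in I_j} x_{i_j} \in \mathfrak{m}$, and since $\mathfrak{m}$ is prime and the product is finite, there exists some $i_j^\ast \in I_j$ with $x_{i_j^\ast} \in \mathfrak{m}$. (If some $I_j$ were empty, then the empty product $1$ would lie in $\mathfrak{m}$, which is absurd; hence every $I_j$ is nonempty.) Collecting these indices yields a $J$-tuple $(i_j^\ast)_{j \in J}$ for which $(x_{i_j^\ast} \mid j \in J) \subseteq \mathfrak{m} \subsetneq R$, contradicting the hypothesis.

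The argument is essentially routine; the main obstacle is nothing beyond bookkeeping of the edge cases just mentioned. For a proof avoiding the axiom of choice, one can alternatively induct on $|J|$: the case $|J|=1$ is immediate, since the hypothesis forces each $x_{i_1}$ to be a unit, hence so is their product. For the inductive step, pick any $j_0 \in J$, set $y_{j_0}\coloneqq \prod_{i \in I_{j_0}} x_i$, and observe that for each fixed choice $(i_j)_{j \neq j_0}$ the hypothesis makes every $x_{i_{j_0}}$ a unit modulo $(x_{i_j} \mid j \neq j_0)$, hence so is $y_{j_0}$; equivalently, $(y_{j_0}, x_{i_j} \mid j \neq j_0) = R$ for every such choice. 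Passing to $R/(y_{j_0})$ and applying the inductive hypothesis to $J \setminus \{j_0\}$ then finishes the proof.
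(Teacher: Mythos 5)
Your proposal is correct, and both of its arguments differ from the paper's. The paper inducts on $\sum_{j\in J}\abs{I_j}$, shrinking one of the factor sets at each step: it picks two indices $s,t$ in some $I_k$, replaces the two entries $x_s,x_t$ by their single product $x_sx_t$, and checks that the hypothesis survives using only the elementary inclusion $(x_sx_t,x_{i_j}\ |\ j\neq k)\supseteq (x_s,x_{i_j}\ |\ j\neq k)(x_t,x_{i_j}\ |\ j\neq k)$. Your first argument reformulates the statement as a prime-avoidance fact: a proper ideal sits inside a maximal ideal $\mathfrak{m}$, and since $\mathfrak{m}$ is prime, each finite product lying in $\mathfrak{m}$ has a factor in $\mathfrak{m}$, contradicting the hypothesis. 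This is conceptually the cleanest of the three but invokes Krull's theorem (i.e.\ Zorn). Your second argument is a choice-free induction like the paper's, but on $\abs{J}$ rather than $\sum\abs{I_j}$: fix $j_0$, observe that $y_{j_0}=\prod_{i\in I_{j_0}}x_i$ is a unit modulo each ideal $(x_{i_j}\ |\ j\neq j_0)$, pass to $R/(y_{j_0})$, and apply the inductive hypothesis to $J\smallsetminus\{j_0\}$. The effective tools in the paper's argument and in your second one are the same (products of comaximal ideals, resp.\ products of units mod an ideal); the difference is which parameter drives the recursion, and both are equally valid.
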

	\begin{proof}
		We do induction on $\sum_{j\in J}\abs{I_j}$. If $\abs{I_j}=1$ for all $j\in J$, the assertion is trivial. So choose elements $s,t\in I_k$, $x\neq y$, for some index $k\in J$ such that $\abs{I_k}\geqslant 2$. We claim that the condition still holds if we remove $x_s$ and $x_t$ from the tuple $(x_{i_k})_{i_k\in I_k}$ and add $x_sx_t$ instead. Indeed, the only thing we have to check is the following: For any choice of indices $(i_j\in I_j)_{j\in J,\, j\neq k}$, the ideal $(x_sx_t,x_{i_j}\ |\ j\in J,\,j\neq k)$ is the unit ideal in $R$. But this follows from
		\begin{equation*}
			\bigl(x_sx_t,x_{i_j}\ \big|\ j\in J,\,j\neq k\bigr)\supseteq \bigl(x_s,x_{i_j}\ \big|\ j\in J,\,j\neq k\bigr)\bigl(x_t,x_{i_j}\ \big|\ j\in J,\,j\neq k\bigr)
		\end{equation*}
		and the fact that the right-hand side is $R$ by assumption.
	\end{proof}
	Furthermore, we'll frequently use the following technical lemma.
	\begin{lem}\label{lem:DerivedBeauvilleLaszlo}
		Let $R$ be a ring. Let $I_1,\dotsc,I_r$ be finitely many finitely generated ideals of $R$ and let $f_1,\dotsc,f_s$ be a finitely many elements of $A$ such that on the level of underlying sets we have $\Spec R=\bigcup_{j=1}^r\Spec R/I_j \cup\bigcup_{k=1}^s\Spec R[1/f_k]$. Then the functors
		\begin{equation*}
			(-)_{I_j}^\complete\colon \Dd(R)\longrightarrow \Dd(R)\quad\text{and}\quad (-)\left[\localise{f_k}\right]\colon \Dd(R)\rightarrow \Dd(R)
		\end{equation*}
		are jointly conservative for $j=1,\dotsc,r$, $k=1,\dotsc,s$.
	\end{lem}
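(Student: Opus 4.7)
The plan is to rephrase everything in terms of derived local cohomology. For any finitely generated ideal $I\subseteq R$, one has a fibre sequence $R\Gamma_I(M)\to M\to L_I(M)$ in $\Dd(R)$, where $L_I(M)$ denotes Bousfield localization away from $V(I)$; in particular, $L_{(f)}(M)\simeq M[1/f]$. The key (standard) input I would invoke is the Dwyer--Greenlees / Greenlees--May equivalence between the full subcategories of derived $I$-complete and of derived $I$-power-torsion complexes in $\Dd(R)$, realised by the mutually inverse functors $\widehat{(-)}_I$ and $R\Gamma_I$; it implies that for any $M\in\Dd(R)$, one has $\widehat{M}_I\simeq 0$ if and only if $R\Gamma_I(M)\simeq 0$. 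The hypotheses of the lemma thus translate into $M\simeq L_{I_j}(M)$ for every $j$ (equivalently $R\Gamma_{I_j}(M)\simeq 0$), and $M\simeq R\Gamma_{(f_k)}(M)$ for every $k$.

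Next, I would combine these vanishings separately within the completions and within the localizations. The functors $L_I$ are idempotent, commute with one another, and satisfy $L_I\circ L_J\simeq L_{IJ}$ (reflecting the topological identity $D(I)\cap D(J)=D(IJ)$); iterating $M\simeq L_{I_j}(M)$ therefore yields $M\simeq L_{I_1\cdots I_r}(M)$, equivalently $R\Gamma_{I_1\cdots I_r}(M)\simeq 0$. Dually, $R\Gamma_I\circ R\Gamma_J\simeq R\Gamma_{I+J}$, and iterating $M\simeq R\Gamma_{(f_k)}(M)$ gives $M\simeq R\Gamma_J(M)$ where $J\coloneqq(f_1,\dotsc,f_s)$.

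The hypothesis $\Spec R=\bigcup_jV(I_j)\cup\bigcup_kD(f_k)$ is equivalent to the closed-set containment $V(J)\subseteq V(I_1\cdots I_r)$; since $I\coloneqq I_1\cdots I_r$ is finitely generated, this upgrades to $I^N\subseteq J$ for some $N$, so $\sqrt{J+I}=\sqrt{J}$. As $R\Gamma_{(-)}$ depends only on the radical of its defining ideal, this gives $R\Gamma_J\simeq R\Gamma_{J+I}\simeq R\Gamma_J\circ R\Gamma_I$, and hence
\[M\simeq R\Gamma_J(M)\simeq R\Gamma_J\bigl(R\Gamma_I(M)\bigr)\simeq R\Gamma_J(0)\simeq 0\,,\]
which is the desired vanishing.

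The principal non-formal ingredient is the equivalence $\widehat{M}_I\simeq 0\iff R\Gamma_I(M)\simeq 0$; once that is granted, the remainder of the argument is symbol manipulation with idempotent endofunctors of $\Dd(R)$. A more elementary but substantially more cumbersome alternative would be to induct on $r+s$ using the principal-ideal fracture square $M\simeq \widehat{M}_{(f)}\times_{\widehat{M}_{(f)}[1/f]}M[1/f]$, peeling off one completion or one localization at a time; the approach above has the advantage of treating all completions and all localizations uniformly.
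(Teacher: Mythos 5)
Your argument is correct, and it arrives at the conclusion by a somewhat different route than the paper. The paper inducts on $r$: in the base case $r=1$ it reduces to $I=(f_1,\dots,f_s)$, invokes the identity $\widehat{M}_I\simeq\RHom_R\bigl(\cofib(R\to\R\Gamma(U,\Oo_U)),M\bigr)$ from \cite[\stackstag{091V}]{Stacks}, deduces from $\widehat{M}_I\simeq 0$ that $M$ is a \emph{direct summand} of $M\lotimes_R\R\Gamma(U,\Oo_U)$ (carefully avoiding the stronger claim that they are equivalent), and kills the latter by inspecting the alternating \v Cech complex in light of $M[1/f_k]\simeq 0$; the inductive step then passes to $R[1/f]$ for generators $f$ of $I_r$. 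You instead treat all the ideals and all the elements at once: after importing the Greenlees--May equivalence $\widehat{M}_I\simeq 0\iff R\Gamma_I(M)\simeq 0$, you work entirely with the idempotent (co)monads $R\Gamma_I$ and $L_I$ and their multiplicativity ($L_I L_J\simeq L_{IJ}$, $R\Gamma_I R\Gamma_J\simeq R\Gamma_{I+J}$, dependence only on radicals), converting the set-theoretic covering hypothesis into $\sqrt{J+I_1\cdots I_r}=\sqrt{J}$ and then reading off $M\simeq 0$ by a three-line computation. Your version is more uniform and avoids the induction entirely, at the cost of assuming the full torsion--complete equivalence as a black box (the paper only uses the \enquote{cheaper} $\RHom$-characterisation of completion, and even hedges by working with a direct summand rather than an equivalence); otherwise the underlying mechanism --- completion vanishing forces torsion vanishing, localization vanishing forces the complement to vanish, and the covering hypothesis glues these --- is the same in both.
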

	\begin{proof}
		We do induction on $r$. The case $r=0$ follows from the case $r=1$ by choosing $I_1$ to be the unit ideal. So let's first consider $r=1$. In this case we're given a finitely generated ideal $I\coloneqq I_1$ and elements $f_1,\dotsc,f_s$ such that $\Spec R=\Spec R/I\cup\bigcup_{k=1}^s\Spec R[1/f_k]$. Then $I$ is contained in the radical of the ideal $(f_1,\dotsc,f_s)$, hence derived $(f_1,\dotsc,f_s)$-adic completion factors through derived $I$-adic completion. So we may assume that $I$ is generated by the $f_k$.
		
		Now let $M\in \Dd(R)$. It suffices to show that $\widehat{M}_I\simeq 0$ and $M[1/f_k]\simeq 0$ for all $k=1,\dotsc,s$ together imply $M\simeq 0$. Write $U=\Spec R\smallsetminus V(I)$ and let
		\begin{equation*}
			\R\Gamma(U,\Oo_U)\simeq \left(\prod_{k}R\left[\localise{f_{k}}\right]\rightarrow \prod_{k<\ell}R\left[\localise{f_{k}f_{\ell}}\right]\rightarrow \dotso\rightarrow R\left[\localise{f_1\dotsm f_s}\right]\right)
		\end{equation*}
		denote the derived global sections of $U$, which can be computed by an alternating \v Cech complex as indicated.
		Let $M\in \Dd(R)$. Then $\widehat{M}_I\simeq \RHom_R(\cofib(R\rightarrow \R\Gamma(U,\Oo_U)),M)$, see e.g.\ \cite[\stackstag{091V}]{Stacks}. Hence $\widehat{M}_I\simeq 0$ implies that $R\rightarrow \R\Gamma(U,\Oo_U)$ induces an equivalence
		\begin{equation*}
			\RHom_R\big(\R\Gamma(U,\Oo_U),M\big)\overset{\simeq}{\longrightarrow} \RHom_R(R,M)\simeq M\,.
		\end{equation*}
		Applying $\RHom_R(M,-)$ shows $\RHom_R(M\lotimes_R\R\Gamma(U,\Oo_U),M)\simeq\RHom_R(M,M)$. In particular, the identity on $M$ factors through $M\lotimes_R\R\Gamma(U,\Oo_U)$, which means that $M$ must be a direct summand of $M\lotimes_R\R\Gamma(U,\Oo_U)$. Since $\R\Gamma(U,\Oo_U)$ is an idempotent $\IE_\infty$-$R$-algebra, one can even show $M\simeq M\lotimes_A\R\Gamma(U,\Oo_U)$, but we won't need that.
		Using the above representation as a \v Cech complex, we see that $M[1/f_k]\simeq 0$ for all $k=1,\dotsc,s$ implies $M\lotimes_R\R\Gamma(U,\Oo_U)\simeq 0$. Hence also its direct summand $M$ must vanish, as claimed.
		
		Now let $r\geqslant 2$ and assume that the assertion has been proved for $r-1$ many ideals. Again, it's enough to show that $\widehat{M}_{I_j}\simeq 0$ and $M[1/f_k]\simeq 0$ jointly imply $M\simeq 0$. By the case $r=1$ it's enough to show $M[1/f]\simeq 0$, where $f$ ranges through a finite generating set of $I_r$. But $\Spec R[1/f]=\bigcup_{j=1}^{r-1}\Spec R[1/f]/I_j\cup\bigcup_{k=1}^s\Spec R[1/(f_kf)]$, so the desired vanishing of  $M[1/f]$ follows by applying the induction hypothesis to the ring $R[1/f]$.
	\end{proof}
	\begin{rem}\label{rem:DerivedBeauvilleLaszlo}
		As a consequence of \cref{lem:DerivedBeauvilleLaszlo}, the functors $-\lotimes_\IZ\IQ$ and $(-)_p^\complete$ are jointly conservative on $\Dd(\mathbb Z)$. Indeed, for any integer $N\neq 0$, \cref{lem:DerivedBeauvilleLaszlo} shows that $(-)[1/N]$ and $(-)_p^\complete$ for $p\mid N$ are jointly conservative. Thus, if $M\in\Dd(\IZ)$ satisfies $\widehat{M}_p\simeq 0$ for all primes $p$, then $M\rightarrow M[1/N]$ is an equivalence for all $N$, thus $M\simeq M\lotimes_\IZ\IQ$. 
	\end{rem}
	
	\subsection{Definitions and basic properties}\label{subsec:qWitt}	
	\begin{numpar}[Truncated big Witt vectors à la {\cite[\S\href{https://arxiv.org/pdf/1006.3125\#section.1}{1}]{HesselholtBigDeRhamWitt}}.]\label{par:BigWitt}
		We'll briefly recall the construction of truncated big Witt vectors, as well as the Frobenius and Verschiebung maps, following Hesselholt's exposition.
		
		Let $R$ be an arbitrary commutative, but not necessarily unital ring, and $S\subseteq \IN$ a subset which is closed under divisors (a \emph{truncation set} in Hesselholt's terminology). The \emph{\embrace{$S$-truncated} big Witt ring} $\IW_S(R)$ is constructed as follows: As a set, $\IW_S(R)$ is given by $R^S$. Its ring structure is uniquely determined by the condition that for all $n\in S$ the \emph{ghost map} $\gh_n\colon \IW_S(R) \rightarrow R$ given by
		\begin{equation*}
			\gh_n\bigl((x_i)_{i\in S}\bigr)\coloneqq \sum_{d\mid n}dx_d^{n/d}
		\end{equation*}
		is a morphism of rings and functorial in $R$.
		
		For us, $S$ will always be the set $T_m$ of positive divisors of some integer $m$, and we'll write $\IW_m(R)=\IW_{T_m}(R)$ for short. By \cite[Lemmas~\href{https://arxiv.org/pdf/1006.3125\#equation.1.3}{1.3}--\href{https://arxiv.org/pdf/1006.3125\#equation.1.5}{1.5}]{HesselholtBigDeRhamWitt}, for every divisor $d\mid m$ there are \emph{Frobenius} and \emph{Verschiebung} maps
		\begin{equation*}
			F_{m/d}\colon \IW_{m}(R) \longrightarrow \IW_d(R)\quad\text{and}\quad V_{m/d}\colon 	\IW_d(R) \longrightarrow \IW_{d}(R)
		\end{equation*}
		such that $F_{m/d}$ is a ring map and $V_{m/d}$ is a map of abelian groups (in fact, a map of $\IW_m(R)$-modules if we equip $\IW_d(R)$ with the $\IW_m(R)$-module structure induced by $F_{m/d}$). If $n=m/d$ and the numbers $m$ and $d$ are clear from the context (or irrelevant), we abuse notation and write just $F_n\coloneqq F_{m/d}$ and $V_n\coloneqq V_{m/d}$. These maps fulfil the following relations: For all chains of divisors $e\mid d\mid m$ we have
		\begin{equation*}
			F_{d/e}\circ F_{m/d}=F_{m/e}\quad\text{and}\quad V_{m/d}\circ 	V_{d/e}=V_{m/e}\,.
		\end{equation*}
		Furthermore, if $n\geqslant 1$ is arbitrary and $k$ is coprime to $n$, then $F_{n}\circ V_{n}=n$ and $F_n\circ V_k=V_k\circ F_n$, where we use the abuse of notation we just warned about. Finally, there's a multiplicative section of $\gh_m\colon \IW_m(R) \rightarrow R$, called the \emph{Teichmüller lift}\footnote{We choose $\tau_m(-)$ over the standard-notation $[-]$ to distinguish between Teichmüller lifts to $\IW_m(R)$ for various $m$, but also to avoid confusion with the elements $[m/d]_{q^d}=(q^m-1)/(q^d-1)$ in $\IZ[q]$}
		\begin{equation*}
			\tau_m(-)\colon R \rightarrow \IW_m(R)\,.
		\end{equation*}
		The Teichmüller lift interacts with the Frobenius and the Verschiebung via the formulas
		\begin{equation*}
			F_{m/d}\tau_m(r)=\tau_d(r)^{m/d}\quad\text{and}\quad x=\sum_{d\mid 	m}V_{m/d}\tau_d(x_{m/d})
		\end{equation*}
		for all $r\in R$ and all $x=(x_d)_{d\mid m}\in \IW_m(R)$.
	\end{numpar}
	
	\begin{rem}\label{rem:BigWittAndWitt}
		If $m=p^n$ is a prime power, then $\IW_{p^n}(R)\cong W_{n+1}(R)$ equals the ring of truncated $p$-typical Witt vectors of length $n+1$. Furthermore, the Frobenii and Verschiebungen $F_p$ and $V_p$ coincide with their $p$-typical namesakes $F$ and $V$, as does the Teichmüller lift.
	\end{rem}
	
	Now we can start to define what $q$-Witt vectors are.
	\begin{defi}\label{def:qFVSystemOfRings}
		Fix a commutative, but not necessarily unital ring $R$. A \emph{$q$-$FV$-system of rings over $R$} is a system of $\IZ[q]$-algebras $(W_m)_{m\in \IN}$, together with the following structure:
		\begin{alphanumerate}
			\item For all $m\in \IN$, a $\IZ[q]$-algebra map $\IW_{m}(R)[q]/(q^{m}-1)\rightarrow W_m$.\label{enum:qWittConditionA}
			\item For all divisors $d\mid m$, a $\IZ[q]$-algebra morphism $F_{m/d}\colon W_m\rightarrow W_d$ and a $\IZ[q]$-module morphism $V_{m/d}\colon W_d\rightarrow W_m$. These must be compatible with the usual Frobenii and Verschiebungen on ordinary Witt vectors (via the morphisms from \cref{enum:qWittGeneratorsI}) and satisfy\label{enum:qWittConditionB}
			\begin{equation*}
				F_{m/d}\circ V_{m/d}=m/d\quad\text{and}\quad V_{m/d}\circ 	F_{m/d}=[m/d]_{q^{d}}\,.
			\end{equation*}
		\end{alphanumerate}
		These objects form an obvious category, which we denote $\cat{CRing}_R^{\q FV}$.
	\end{defi}
	\begin{lem}\label{lem:BigqWittUniversal}
		Let $R$ be a commutative, but not necessarily unital ring. The category $\cat{CRing}_R^{\q FV}$ has an inital object $(\qIW_m(R))_{m\in\IN}$. It can be explicitly described as
		\begin{equation*}
			\qIW_{m}(R)\cong \IW_{m}(R)[q]/\II_{m}\,,
		\end{equation*}
		where $\II_{m}$ is the ideal generated by the following two kinds of generators:
		\begin{alphanumerate}
			\item $(q^{d}-1)\im V_{m/d}$ for all divisors $d\mid m$, and\label{enum:qWittGeneratorsI}
			\item $\im ([d/e]_{q^{e}}V_{m/d}-V_{m/e}F_{d/e})$ for all chains of divisors $e\mid d\mid m$.\label{enum:qWittGeneratorsII}
		\end{alphanumerate}
	\end{lem}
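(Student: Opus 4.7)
Set $\qIW_m(R) \coloneqq \IW_m(R)[q]/\II_m$ and verify that the resulting system, with its induced structure, is the initial $q$-$FV$-system over $R$. Note first that the type-\cref{enum:qWittGeneratorsI} generator $(q^m - 1)\,V_{m/m}(1) = q^m - 1$ lies in $\II_m$, so the canonical map $\IW_m(R)[q]/(q^m - 1) \to \qIW_m(R)$ is well-defined and surjective. Frobenius and Verschiebung on $\qIW_m(R)$ are to be induced from the $\IZ[q]$-linear extensions of $F_{m/d}^{\IW}\colon \IW_m(R)[q] \to \IW_d(R)[q]$ and $V_{m/d}^{\IW}\colon \IW_d(R)[q] \to \IW_m(R)[q]$.

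\textbf{Verschiebung descends.} Using $\IZ[q]$-linearity of $V_{m/d}$ together with $V_{m/d} \circ V_{d/e} = V_{m/e}$, each generator of $\II_d$ maps to a generator of $\II_m$ of the same type. This is routine.

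\textbf{Frobenius descends (the main obstacle).} Here we need the classical identity for big Witt vectors
\begin{equation*}
F_{m/d}\circ V_{m/e} \;=\; \tfrac{mh}{de}\cdot V_{d/h}\circ F_{e/h}, \qquad h = \gcd(d, e),
\end{equation*}
together with the factorisation $q^e - 1 = (q^h - 1)\,[e/h]_{q^h}$, to show that type-\cref{enum:qWittGeneratorsI} generators $(q^e-1)V_{m/e}(y) \in \II_m$ map into $\II_d$. For the type-\cref{enum:qWittGeneratorsII} generators, one needs the cyclotomic identity
\begin{equation*}
[e'/e]_{q^e}\cdot\tfrac{mh'}{de'} \;\equiv\; \tfrac{mh}{de}\cdot[h'/h]_{q^h} \pmod{q^{h'} - 1},
\end{equation*}
whenever $e \mid e' \mid m$, with $h = \gcd(d, e)$ and $h' = \gcd(d, e')$. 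The key observation is that $\gcd(e, h') = \gcd(\gcd(e, d), e') = h$ (since $h \mid e'$), so that reducing the exponents of $[e'/e]_{q^e}$ modulo $h'$ yields each element of the cyclic subgroup $h\IZ/h'\IZ \subseteq \IZ/h'\IZ$ exactly $e'h/(eh')$ times. Combining this congruence with the type-\cref{enum:qWittGeneratorsII} relation $[h'/h]_{q^h}V_{d/h'}(z) - V_{d/h}F_{h'/h}(z) \in \II_d$ (applied along the chain $h \mid h' \mid d$, with $z = F_{e'/h'}(y)$) reduces the image of a type-\cref{enum:qWittGeneratorsII} generator to a multiple of a type-\cref{enum:qWittGeneratorsI} generator $(q^{h'} - 1)V_{d/h'}(\ldots) \in \II_d$.

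\textbf{$q$-$FV$-relations.} The identity $F_{m/d} \circ V_{m/d} = m/d$ descends from the same identity in $\IW_d(R)$, while $V_{m/d} \circ F_{m/d} = [m/d]_{q^d}$ is precisely the type-\cref{enum:qWittGeneratorsII} generator with $(e, e') = (d, m)$. Compatibility of the structure maps with $F$ and $V$ is immediate from the construction.

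\textbf{Initiality.} Given any $q$-$FV$-system $(W_m)_{m\in\IN}$, the composite $\IW_m(R)[q] \to \IW_m(R)[q]/(q^m - 1) \to W_m$ annihilates $\II_m$: type-\cref{enum:qWittGeneratorsI} generators vanish because $q^e - 1 = 0$ in $W_e$ and $V_{m/e}^W$ is $\IZ[q]$-linear; type-\cref{enum:qWittGeneratorsII} generators vanish because $V_{m/e}^W \circ F_{e'/e}^W = V_{m/e'}^W \circ V_{e'/e}^W \circ F_{e'/e}^W = [e'/e]_{q^e} V_{m/e'}^W$ in $W_m$. This yields a unique $\IZ[q]$-algebra map $\qIW_m(R) \to W_m$ extending the structure map, and the surjectivity of $\IW_m(R)[q]/(q^m-1) \twoheadrightarrow \qIW_m(R)$ guarantees it is automatically compatible with both $F_{m/d}$ and $V_{m/d}$, proving the universal property.
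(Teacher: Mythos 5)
Your proof is correct and takes a genuinely different route from the paper's. The paper reduces to showing $F_p(\II_m)\subseteq \II_{m/p}$ for \emph{prime} factors $p\mid m$ and handles type-\cref{enum:qWittGeneratorsII} generators by a three-way case distinction according to whether $p$ divides $m/d$ and/or $m/e$, with an ad-hoc congruence in each case. You instead treat $F_{m/d}$ for \emph{arbitrary} $d\mid m$ in a single stroke, using the consolidated truncated-Witt relation $F_{m/d}\circ V_{m/e}=\tfrac{mh}{de}\,V_{d/h}\circ F_{e/h}$ with $h=\gcd(d,e)$ (which indeed follows from $F_n V_n = n$ and the coprime commutation rule, after the routine verification that the truncation sets match up) and a single cyclotomic congruence $[e'/e]_{q^e}\cdot\tfrac{mh'}{de'}\equiv\tfrac{mh}{de}\cdot[h'/h]_{q^h}\pmod{q^{h'}-1}$, proved by counting residue classes via the subgroup identity $\gcd(e,h')=h$. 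I checked the arithmetic: the coefficients $\tfrac{mh'}{dd'}$, $\tfrac{mh}{de}$, and $\tfrac{d'h}{eh'}$ are all integers for the required divisibility reasons, and the counting argument for the congruence is sound. What your approach buys is a case-free, uniform argument with a single congruence that subsumes the paper's several separate observations (e.g.\ that $[d/e]_{q^e}-[d_0/e_0]_{q^{e_0}}$ is divisible by $q^{d_0}-1$ in its Case~2, or that $[p]_{q^{d_0}}\equiv p$ in its Case~3); the cost is that the congruence itself requires a small combinatorial argument that the paper avoids by the reduction to primes and casework. The treatment of type-\cref{enum:qWittGeneratorsI} generators, the verification of the $q$-$FV$ relations, and the derivation of initiality are all in order and essentially parallel the paper's.
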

	\begin{defi}\label{def:BigqWitt}
		Let $R$ a commutative but not necessarily unital ring and let $m$ be a positive integer. The ring $\qIW_m(R)$ from \cref{lem:BigqWittUniversal} is called the ring of \emph{$m$-truncated big $q$-Witt vectors over $R$}.%
		\footnote{Beware that this definition is not consistent with \cite[Definition~\href{https://guests.mpim-bonn.mpg.de/ferdinand/q-deRham.pdf\#theorem.5.3}{5.3}]{MasterThesis}. The ring that was denoted $\qIW_m(R)$ there coincides with $\qIW_m(R)_{(q-1)}^\complete$, at least under mild hypothesis (namely those of \cref{cor:qWittCompletion}); indeed, this follows from a simple comparison of universal properties. Since we want to develop our theory in a non-$(q-1)$-completed setting (which we'll need for the upcoming applications \cite{qWittHabiro,qHodge}), it seemed the right thing to change the notation, despite the confusion this may cause.}
	\end{defi}
	\begin{rem}\label{rem:qWittNoqDeformation}
		Despite the name, $\qIW_m(R)$ is almost never a $q$-deformation of $\IW_m(R)$. Indeed, we have $V_{m/d}\circ F_{m/d}=m/d$ in the quotient $\qIW_m(R)/(q-1)$. This is usually not satisfied for ordinary Witt vectors, so $\IW_m(R)\rightarrow \qIW_m(R)/(q-1)$ fails to be injective. By contrast, $\IW_m(R)\rightarrow \qIW_m(R)$ is always injective, as we'll see in \cref{prop:WittToqWittInjective}. So enforcing the condition $V_{m/d}\circ F_{m/d}$ doesn't lose any information.
	\end{rem}
	\begin{proof}[Proof of \cref{lem:BigqWittUniversal}]
		If we can show that the $\IZ[q]$-linearly extended Frobenii and Verschiebungen $F_{m/d}\colon \IW_m(R)[q]\rightarrow \IW_d(R)[q]$ and $V_{m/d}\colon \IW_d(R)[q]\rightarrow \IW_m(R)[q]$ descend to maps between $\qIW_m(R)$ and $\qIW_d(R)$, then the claimed universal property will follow in a straightforward way from the definition. Furthermore, it's immediate from the definition of $\II_m$ that the Verschiebungen descend as required. So it remains to prove the same for the Frobenii. It'll be enough to show $F_p(\II_m)\subseteq \II_{m/p}$ for all prime factors $p\mid m$.
		
		Let's first consider generators of the form $(q^d-1)V_{m/d}x$ for $x\in \IW_d(R)$. Depending on whether $n\coloneqq m/d$ is coprime to $p$ or not, the relations from \cref{par:BigWitt} yield, respectively,
		\begin{equation*}
			F_p\bigl((q^d-1)V_{n}x\bigr)=(q^d-1)V_{n}(F_px)\quad\text{or}\quad F_p\bigl((q^d-1)V_{n}x\bigr)=p(q^d-1)V_{(m/p)/d}x\,.
		\end{equation*}
		In either case, we get an element of $\II_{m/p}$. Now let's consider the second type of generators of the form $[d/e]_{q^{e}}V_{m/d}x-V_{m/e}F_{d/e}x$ for some $x\in \IW_d(R)$. Once again we need to do a case distinction.		
		
		\emph{Case~1: $p$ divides both $m/d$ and $m/e$.} In this case we can use an easy computation as above to show that $F_p$ sends the element into $\II_{m/p}$.
		
		\emph{Case~2: $p$ is coprime to both $m/d$ and $m/e$.} Let's write $m_0\coloneqq m/p$, $d_0\coloneqq d/p$, and $e_0\coloneqq e/p$ for short. Using the relations from \cref{par:BigWitt}, we can compute
		\begin{multline*}
			F_p\bigl([d/e]_{q^{e}}V_{m/d}x-V_{m/e}F_{d/e}x\bigr)\\
			\begin{aligned}
				&=[d/e]_{q^{e}}V_{m_0/d_0}(F_px)-V_{m_0/e_0}F_{d_0/e_0}(F_px)\\
				&= [d_0/e_0]_{q^{e_0}}V_{m_0/d_0}(F_px)-V_{m_0/e_0}F_{d_0/e_0}(F_px)+\bigl([d/e]_{q^{e}}-[d_0/e_0]_{q^{e_0}}\bigr)V_{m_0/d_0}(F_px)
			\end{aligned}
		\end{multline*}
		The first summand is contained in $\II_{m/p}$ by definition. Regarding the second summand, observe that our assumptions imply that $p$ is coprime to $d/e=d_0/e_0$ and therefore the sequences $\{1,q^{e/p},(q^{e/p})^2\dotsc,(q^{e/p})^{d/e-1}\}$ and $\{1,q^{e},(q^{e})^2\dotsc,(q^{e})^{d/e-1}\}$ coincide modulo $q^{d/p}-1$ up to permutation. Thus $[d/e]_{q^{e}}-[d_0/e_0]_{q^{e_0}}$ is divisible by $q^{d/p}-1=q^{d_0}-1$ and so the second summand is also contained in $\II_{m/p}$.
		
		\emph{Case~3: $p$ is coprime to $m/d$, but not to $m/e$.} Put $m_0\coloneqq m/p$ and $d_0\coloneqq d/p$ again. Using the relations from \cref{par:BigWitt}, we can compute
		\begin{multline*}
			F_p\bigl([d/e]_{q^{e}}V_{m/d}x-V_{m/e}F_{d/e}x\bigr)\\
			\begin{aligned}
				&=[d/e]_{q^{e}}V_{m_0/d_0}(F_px)-pV_{m_0/e}F_{d/e}x\\
				&=p\bigl([d_0/e]_{q^{e}}V_{m_0/d_0}(F_px)-V_{m_0/e} F_{d_0/e}(F_px)\bigr)+\bigl([p]_{q^{d_0}}-p\bigr)[d_0/e]_{q^e}V_{m_0/d_0}(F_px)\,.
			\end{aligned}
		\end{multline*}
		The first summand is again contained in $\II_{m/p}$ by definition. Regarding the second summand, we observe $[p]_{q^{d_0}}\equiv p\mod q^{d_0}-1$ and so $[p]_{q^{d_0}}[d_0/e]_{q^{e}}\equiv p[d_0/e]_{q^e}\mod q^{d_0}-1$. Hence the second summand is contained in $(q^{d_0}-1)\im V_{m_0/d_0}$, which is in turn contained in $\II_{m/p}$ by definition. This finishes the proof.
	\end{proof}
	\begin{rem}\label{rem:TruncatedUniversalProperty}
		The proof of \cref{lem:BigqWittUniversal} shows that a similar universal property also holds for every truncated sequence: If $S\subseteq \IN$ is any truncation set (in the sense of \cref{par:BigWitt}), we define an \emph{$S$-truncated $q$-$FV$-system of rings} to be a system $(W_m)_{m\in S}$ equipped with the structure from \cref{def:qFVSystemOfRings}\cref{enum:qWittConditionA},~\cref{enum:qWittConditionB} for all $m\in S$. Then $(\qIW_{m}(R))_{m\in S}$ is initial among such systems. This observation will often be used, as its often easier to verify this \enquote{truncated} version of the universal property.
	\end{rem}
	\begin{numpar}[Ghost maps and Teichmüller lifts for $q$-Witt vectors.]\label{par:GhostMaps}
		We can construct analogues of the ghost maps for $q$-Witt vectors as follows: Recall that the classical ghost map $\gh_1\colon \IW_m(R)\rightarrow R$ can be identified with quotienting out the images of all Verschiebungen. For $q$-Witt vectors, we compute:
		\begin{align*}
			\qIW_m(R)/\left(\im V_p\ \middle|\ p\text{ prime factor of }m\right)&\cong \IW_m(R)[q]/\left(\II_m,\im V_p\ \middle|\ p\text{ prime factor of }m\right)\\
			&\cong R[q]/\bigl([p]_{q^{m/p}}\ \big|\ p\text{ prime factor of }m\bigr)\\
			&\cong R[q]/\Phi_m(q)\,.
		\end{align*}
		The isomorphism in the second line follows from $\IW_m(R)/\left(\im V_p\ \middle|\ p\text{ prime factor of }m\right)\cong R$ and the third isomorphism follows from \cref{lem:IdealGeneratedByPhi}. Therefore we obtain a canonical projection
		\begin{equation*}
			\gh_1\colon \qIW_m(R)\longrightarrow R[\zeta_m]\,,
		\end{equation*}
		where $R[\zeta_m]\coloneqq R[q]/\Phi_m(q)$ (so that $\zeta_m$ denotes an $m$\textsuperscript{th} root of unity). The map $\gh_1$ will be regarded as the first ghost map. In general, we define
		\begin{equation*}
			\gh_{m/d}\colon \qIW_m(R)\longrightarrow R[\zeta_d]
		\end{equation*}
		as the composition of $F_{m/d}\colon \qIW_m(R)\rightarrow \qIW_d(R)$ with $\gh_1\colon \qIW_d(R)\rightarrow R[\zeta_d]$. One immediately verifies that the ghost maps for $q$-Witt vectors are compatible with the ordinary ghost maps in the sense that
		%
		\begin{equation*}
			\begin{tikzcd}[column sep=large]
				\IW_m(R)\dar\rar["\gh_{m/d}"]&R\dar\\
				\qIW_m(R)\rar["\gh_{m/d}"]& R[\zeta_d]
			\end{tikzcd}
		\end{equation*}
		commutes. Furthermore, there is also a Teichmüller lift
		\begin{equation*}
			\tau_m(-)\colon R\longrightarrow \qIW_m(R)
		\end{equation*}
		given as the composition of $\tau_m(-)\colon R\rightarrow \IW_m(R)$ with the canonical map $\IW_m(R)\rightarrow \qIW_m(R)$.
	\end{numpar}
	\begin{numpar}[What about Restrictions?]
		Unfortunately, it turns out that the usual restriction maps $\operatorname{Res}_{m/d}\colon \IW_m(R)\rightarrow \IW_d(R)$ do not extend to $\IZ[q]$-algebra morphisms between $\qIW_m(R)$ and $\qIW_d(R)$. Indeed, such a morphism would necessarily commute with the Verschiebungen and thus induce a $\IZ[q]$-algebra morphism
		\begin{equation*}
			R[\zeta_m]= R[q]/\Phi_m(q)\longrightarrow R[q]/\Phi_d(q)=R[\zeta_d]\,,
		\end{equation*}
		which fails to exist even in very simple cases (e.g.\ $m=p^\alpha$ is a prime power, $R$ is not a ring of characteristic $p$). So it seems that there are no analogues of restrictions in our theory, and in particular, there is no ring $\qIW(R)=\limit_{m\in \IN,\,\operatorname{Res}_{m/d}}\qIW_m(R)$ of un-truncated $q$-Witt vectors.
	\end{numpar}
	In the rest of this subsection, we'll show that various properties of ordinary Witt vectors carry over to $q$-Witt vectors. Our main technical tool will be the following proposition.
	\begin{prop}\label{prop:qWittKoszulExactSequence}
		Let $R$ a commutative but not necessarily unital ring and let $m$ be a positive integer. Let $p_1,\dotsc,p_r$ be the prime factors of $m$ \embrace{assumed to be distinct}. Then the following \enquote{augmented Koszul complex} is exact:
		\begin{equation*}
			\dotso\longrightarrow\bigoplus_{i<j}\qIW_{m/p_ip_j}(R)\xrightarrow{(V_{p_i}-V_{p_j})}\bigoplus_i\qIW_{m/p_i}(R) \xrightarrow{(V_{p_i})} \qIW_m(R)\xrightarrow{\gh_1} R[\zeta_m]\longrightarrow0\,.
		\end{equation*}
	\end{prop}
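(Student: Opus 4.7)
Exactness at the two rightmost positions is essentially free: the computation in the paragraph on ghost maps (\cref{par:GhostMaps}) identifies
\begin{equation*}
    \qIW_m(R)/\bigl(\im V_{p_1}+\dotsb+\im V_{p_r}\bigr)\cong R[\zeta_m]\,,
\end{equation*}
which simultaneously gives surjectivity of $\gh_1$ and the identification $\ker(\gh_1)=\im\bigl(\bigoplus_i V_{p_i}\bigr)$. So the real content lies at Koszul degrees $\geqslant 1$, and for those the plan is to put a decreasing filtration by \enquote{Verschiebung level} on each term of the Koszul complex and reduce to the tractable setting of the associated graded.

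Concretely, for $N\in\IN$ and each divisor $d\mid N$, set $M_{N,d}\coloneqq V_{N/d}(\IW_d(R))\cdot\IZ[q]\subseteq\qIW_N(R)$ and $F^j\qIW_N(R)\coloneqq\sum_{d\mid N,\,\nu(N/d)\geqslant j}M_{N,d}$, where $\nu$ counts prime factors with multiplicity. The transitivity $V_{N/e}=V_{N/d}\circ V_{d/e}$ ensures $M_{N,e}\subseteq M_{N,d}$ whenever $e\mid d$, so these indeed form a decreasing filtration. The central claim is
\begin{equation*}
    \gr_F^j\qIW_N(R)=F^j/F^{j+1}\cong\bigoplus_{d\mid N,\,\nu(N/d)=j}R[\zeta_d]\,,
\end{equation*}
with the class of $V_{N/d}\tau_d(r)\cdot q^a$ corresponding to $r\cdot q^a\in R[q]/\Phi_d(q)=R[\zeta_d]$. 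Additively, this is a lift through $V_{N/d}$ of the classical Witt-vector decomposition $\IW_d(R)/\sum_{p\mid d}V_p(\IW_{d/p}(R))\cong R$ via Teichm\"uller representatives; for the $\IZ[q]$-action, relation $(a)$ of $\II_N$ kills $(q^d-1)$ on $M_{N,d}$, while relation $(b)$ combined with \cref{lem:IdealGeneratedByPhi} generates after descent exactly the ideal $(\Phi_d(q))\subseteq\IZ[q]$, leaving $R[\zeta_d]$ on the nose. The key compatibility with the Koszul differentials: each Verschiebung $V_{p_i}\colon\qIW_{N/p_i}(R)\to\qIW_N(R)$ sends $M_{N/p_i,d}$ into $M_{N,d}$, strictly increasing the $F$-degree by one (since $\nu((N/p_i)/d)+1=\nu(N/d)$), and descends on associated graded to the identity $R[\zeta_d]\to R[\zeta_d]$.

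Granting this, the Koszul complex becomes a filtered complex whose differential shifts $F$-degree up by one, so its associated graded splits as a direct sum indexed by divisors $d\mid m$. Writing $T_d\coloneqq\{i:p_i\mid m/d\}$, the $d$-graded subcomplex reads
\begin{equation*}
    \dotso\to\bigoplus_{|S|=k,\,S\subseteq T_d}R[\zeta_d]\to\dotso\to\bigoplus_{i\in T_d}R[\zeta_d]\to R[\zeta_d]\to\begin{cases}R[\zeta_m]&\text{if }d=m\\0&\text{if }d<m\end{cases}\to 0
\end{equation*}
with differentials alternating sums of the identity. For $d=m$ we have $T_d=\emptyset$, and the complex reduces to $R[\zeta_m]\xrightarrow{\id}R[\zeta_m]\to 0$, exact. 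For $d<m$, $T_d\neq\emptyset$ and the complex is the augmented Koszul complex on $|T_d|$ copies of $1\in R[\zeta_d]$, contractible since a Koszul complex on any unit-generating sequence is acyclic. Finiteness of the filtration (since $F^{\nu(N)+1}=0$) lifts exactness of the associated graded to exactness of the original complex. The main obstacle is the identification of $\gr_F^j$: the ``upper bound'' $\gr_F^j\twoheadrightarrow\bigoplus R[\zeta_d]$ is a direct ideal computation via \cref{lem:IdealGeneratedByPhi}, but the ``lower bound'' (no spurious relations) requires that the classical injectivity of $V_{N/d}\colon\IW_d(R)\to\IW_N(R)$ survives the passage to $\qIW_N(R)$, so that the relations coming from $\II_N$ don't collapse distinct Teichm\"uller--Verschiebung representatives.
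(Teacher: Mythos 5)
Your filtration reduction is valid as far as it goes: with the claimed associated graded in hand, the Verschiebung differentials do strictly raise $F$-degree, the filtration is finite, and the divisor-by-divisor decomposition of the associated graded complex into augmented Koszul complexes---contractible for $d<m$, reduced to $\id\colon R[\zeta_m]\to R[\zeta_m]$ for $d=m$---would indeed lift to exactness of the total complex. But the central claim you flag as the \enquote{main obstacle}, that $\gr_F^j\qIW_N(R)\cong\bigoplus_{d\mid N,\,\nu(N/d)=j}R[\zeta_d]$ with no spurious relations, is not a lemma you can defer; it carries essentially all of the hard content of \cref{prop:qWittKoszulExactSequence}. Worse, it is circular as you propose to address it: injectivity of the Verschiebungs $V_{m/d}\colon\qIW_d(R)\to\qIW_m(R)$ is \cref{cor:VerschiebungInjective}, which the paper \emph{deduces from} \cref{prop:qWittKoszulExactSequence}, so you cannot take it as an input. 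Even granting that injectivity, it would not by itself give your lower bound---you would need to rule out relations in $\II_N$ expressing a \emph{sum} $\sum_{d}V_{N/d}(x_d)$ at a fixed level as an element of $F^{j+1}$, which is a structural statement about the ideal $\II_N$ strictly stronger than injectivity of each individual $V_{N/d}$.

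That hard content must be established somewhere. The paper does it via an explicit ideal computation in the prime-power case (\cref{lem:qWittpTypicalExactSequence}, showing $V_p\colon\qIW_{p^{\alpha-1}}(R)\to\qIW_{p^\alpha}(R)$ is injective by analysing the generators of $\II_{p^\alpha}$ one by one), followed by a reduction of the general case to prime powers using the jointly conservative localization and completion functors of \cref{lem:DerivedBeauvilleLaszlo}, applied after reinterpreting the augmented Koszul complex as a hypercube colimit via \cref{lem:HigherPushout}. Your filtration strategy is a genuinely different reduction and could plausibly be completed by an analogous explicit computation of $\gr_F^j$; but as written you have only reformulated the proposition, not proved it.
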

	\begin{rem}\label{rem:KoszulComplexClarity}
		For the sake of clarity, let us give a precise description of the \enquote{augmented Koszul complex} in \cref{prop:qWittKoszulExactSequence}. For every subset $S\subseteq\{1,\dotsc,r\}$, put $p_S=\prod_{i\in S}p_i$. Then the complex above is given by $\bigoplus_{\#S=i}\qIW_{m/p_S}(R)$ in homological degree $i-1$ (so that $R[\zeta_m]$ sits in homological degree $-2$). Furthermore, the differentials are determined as follows: For a subset $S\subseteq\{1,\dotsc,r\}$ and an element $j\notin S$, the component $\qIW_{m/p_{S\cup\{j\}}}(R)\rightarrow \qIW_{m/p_S}(R)$ of the differential is given by
		\begin{equation*}
			\pm V_{p_j}\colon \qIW_{m/p_Sp_j}(R)\longrightarrow\qIW_{m/p_S}(R)\,,
		\end{equation*}
		where the sign follows a \enquote{Koszul-like} sign rule. That is, the sign is $+1$ if $\#\left\{i\in S\ \middle|\ i<j\right\}$ is even and $-1$ if that number is odd.
		
		More succinctly, let $T$ be the set of all positive integers whose prime factors are a subset of $\{p_1,\dotsc,p_r\}$. Let $\qIW_T(R)\coloneqq \bigoplus_{t\in T}\qIW_t(R)$. The Verschiebungen $V_{p_i}$ can be viewed as endomorphisms of $\qIW_T(R)$ which respect the direct sum decomposition (up to an indexing shift). One can form the Koszul complex of the commuting endomorphisms $(V_{p_1},\dotsc,V_{p_r})$; furthermore, this comes with a canonical augmentation to $R[\zeta_T]\coloneqq \bigoplus_{t\in T}R[\zeta_t]$. The complex from \cref{prop:qWittKoszulExactSequence} is then a direct summand of this augmented Koszul complex.
	\end{rem}
	The proof of \cref{prop:qWittKoszulExactSequence} that we'll present avoids most calculations, at the cost of using some $\infty$-categorical trickery. We don't know if there is any direct proof that avoids this heavy machinery.  The first step will be to handle the case when $m$ is a prime power, which is fairly explicit and the only real calculation we'll have to do.
	\begin{lem}\label{lem:qWittpTypicalExactSequence}
		Let $R$ a commutative but not necessarily unital ring and let $m=p^\alpha$, $\alpha\geqslant 1$, be a prime power. Then the following sequence is exact:
		\begin{equation*}
			0\longrightarrow \qIW_{p^{\alpha-1}}(R)\overset{V_p}{\longrightarrow}\qIW_{p^\alpha}(R)\xrightarrow{\gh_1}R[\zeta_{p^{\alpha}}]\rightarrow 0\,.
		\end{equation*}
	\end{lem}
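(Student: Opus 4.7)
My plan is to treat the three positions in the short exact sequence separately, reducing the lemma to a calculation with the ideal $\II_{p^\alpha}$.

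\textbf{Surjectivity of $\gh_1$ and middle exactness.} Both follow immediately from the computation in \cref{par:GhostMaps} applied to $m=p^\alpha$: since $p$ is the unique prime factor of $m$, that computation yields $\qIW_{p^\alpha}(R)/\im(V_p)\cong R[q]/\Phi_{p^\alpha}(q)=R[\zeta_{p^\alpha}]$, which establishes both exactness at $\qIW_{p^\alpha}(R)$ and at $R[\zeta_{p^\alpha}]$.

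\textbf{Injectivity of $V_p$.} Under the presentation $\qIW_{p^j}(R)=\IW_{p^j}(R)[q]/\II_{p^j}$ of \cref{lem:BigqWittUniversal}, injectivity amounts to the ideal identity
\[
\II_{p^\alpha}\cap V_p\bigl(\IW_{p^{\alpha-1}}(R)[q]\bigr)=V_p\bigl(\II_{p^{\alpha-1}}\bigr)
\]
in $\IW_{p^\alpha}(R)[q]$, with $V_p$ denoting the $\IZ[q]$-linear extension of the classical Verschiebung. The inclusion $\supseteq$ is part of the calculation in the proof of \cref{lem:BigqWittUniversal}. For the nontrivial direction I would exploit the set-theoretic splitting $\IW_{p^\alpha}(R)=\tau_{p^\alpha}(R)\sqcup V_p(\IW_{p^{\alpha-1}}(R))$, which after tensoring with $\IZ[q]$ produces a short exact sequence $0\to V_p(\IW_{p^{\alpha-1}}(R)[q])\to \IW_{p^\alpha}(R)[q]\to R[q]\to 0$ split as abelian groups, with quotient map the first Witt coordinate. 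Generators of type (a.i) with $i<\alpha$ and type (b.jk) with $j<\alpha$ lie entirely in the $V_p$-image, and their $V_p$-preimages are generators of $\II_{p^{\alpha-1}}$ via $V_{p^{\alpha-i}}=V_p\circ V_{p^{\alpha-1-i}}$. Generators of type (a.$\alpha$) and (b.$\alpha\,k$) evaluated on $z=V_p(z')$ also fall into $V_p(\II_{p^{\alpha-1}})$: for (a.$\alpha$) one uses the factorization $q^{p^\alpha}-1=(q^{p^{\alpha-1}}-1)[p]_{q^{p^{\alpha-1}}}$ together with $(q^{p^{\alpha-1}}-1)\in \II_{p^{\alpha-1}}$, and for (b.$\alpha\,k$) one combines $F_{p^{\alpha-k}}\circ V_p=p\,F_{p^{\alpha-k-1}}$ with the factorization $[p^{\alpha-k}]_{q^{p^k}}=[p]_{q^{p^{\alpha-1}}}[p^{\alpha-1-k}]_{q^{p^k}}$.

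\textbf{The remaining syzygy step.} What remains is to handle $\IW_{p^\alpha}(R)[q]$-linear combinations of type (a.$\alpha$) and (b.$\alpha\,k$) generators evaluated on Teichmüller lifts $\tau_{p^\alpha}(r)$. If such a combination lies in $V_p(\IW_{p^{\alpha-1}}(R)[q])$, then its first-Witt-component projection vanishes in $R[q]$, producing a syzygy among the polynomials $(q^{p^\alpha}-1)$ and $[p^{\alpha-k}]_{q^{p^k}}$ with arbitrary coefficients in $R[q]$. Since $\Phi_{p^\alpha}(q)=[p]_{q^{p^{\alpha-1}}}$ is monic, hence a non-zero-divisor in $R[q]$, each such syzygy cancels down to a strictly simpler relation, which can be lifted back to $\IW_{p^{\alpha-1}}(R)[q]$ and identified with an element of $\II_{p^{\alpha-1}}$ by tracing through the Witt-vector arithmetic. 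I expect this lifting step to be the main technical obstacle, because Witt addition mixes Teichmüller and Verschiebung components through cross-terms; still, this is a concrete finite computation, whose ingredients (the Frobenius and Verschiebung identities, the cyclotomic factorizations recorded in \cref{subsec:Cyclotomic}, and the explicit formulas for the Teichmüller lift) are all available.
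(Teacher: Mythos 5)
The first two parts (surjectivity of $\gh_1$ and exactness at $\qIW_{p^\alpha}(R)$) match the paper: both follow immediately from the computation in the paragraph on ghost maps. The divergence is in the injectivity of $V_p$, where you propose a genuinely different strategy, and it has a real gap.

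The paper keeps the generator parameters $y_i, z_{i,j}$ of $\II_{p^\alpha}$ as \emph{arbitrary} elements and never splits any of them; instead it progressively replaces $x$ by $x$ minus explicit elements of $\II_{p^{\alpha-1}}$. The key move, which your plan lacks, is the modification by
\[
\sum_{0\leqslant j<\alpha}\left([p^{(\alpha-1)-j}]_{q^{p^j}}F_p(z_{\alpha,j})-V_{p^{(\alpha-1)-j}}F_{p^{(\alpha-1)-j}}(F_pz_{\alpha,j})\right),
\]
which collapses the entire family of type-$(\alpha,j)$ terms into one term $\Phi_{p^\alpha}(q)z-V_pF_p(z)$ governed by a \emph{single} arbitrary $z\in\IW_{p^\alpha}(R)[q]$. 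From there the argument is pure divisibility: $V_p(x+F_p(z))$ is divisible by the monic $\Phi_{p^\alpha}(q)$, one passes through the $\Phi$-torsion-free cokernel $R[q]$ and through the (classical) injectivity of $V_p$ to get $x+F_p(z)=\Phi_{p^\alpha}(q)w$, and then the relation $F_pV_p(w)=pw$ closes everything in two lines. No decomposition of any parameter is needed.

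Your plan instead decomposes each parameter along the (non-additive!) set-theoretic splitting $R[q]\oplus V_p(\IW_{p^{\alpha-1}}(R)[q])$, using Teichmüller lifts to produce the first factor, and tries to argue by syzygies in $R[q]$. You correctly identify that the Verschiebung halves land in $V_p(\II_{p^{\alpha-1}})$, but the residual "Teichmüller" step is exactly where you stop. The issue you flag yourself — that Witt addition produces cross-terms between the Teichmüller and Verschiebung summands — is fatal to the plan as written: after passing to the first-Witt-coordinate syzygy and cancelling $\Phi_{p^\alpha}(q)$, you are left with a relation among elements of $R[q]$, and there is no canonical way to lift that relation back to a relation in $\IW_{p^{\alpha-1}}(R)[q]$ that is visibly in $\II_{p^{\alpha-1}}$; the lifts you'd need to choose interact with Witt arithmetic in a way that has not been controlled. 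The "concrete finite computation" you defer to is therefore not a routine finishing step but precisely the hard part, and it is not clear it terminates without the aggregation trick. In short: the strategy is plausible in outline but the missing lemma is the whole proof, and the paper's approach — aggregating the generators before doing any splitting — is both necessary and simpler.

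(One small note on the source you were working from: the displayed simplified equation in the paper reads $\Phi_{p^{\alpha-1}}(q)z-V_pF_p(z)$, but tracing the modification shows the factor should be $\Phi_{p^\alpha}(q)=[p]_{q^{p^{\alpha-1}}}$; the subsequent lines use it that way. This is a typo in the paper, not an error in your argument, but you may as well be aware of it.)
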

	\begin{proof}
		Exactness on the right is clear from the discussion in \cref{par:GhostMaps}, so it's enough to show that $V_p\colon \qIW_{p^{\alpha-1}}(R) \rightarrow \qIW_{p^\alpha}(R)$ is injective. Let $x\in \IW_{p^{\alpha-1}}(R)[q]$ and assume that the element $V_p(x)\in W_{p^\alpha}(R)[q]$ vanishes in $\qIW_{p^\alpha}(R)$, i.e., $V_p(x)$ is contained in $\II_{p^{\alpha}}$. Recall from \cref{def:BigqWitt} that the ideal $\II_{p^{\alpha}}$ has two kinds of generators: First the elements in the image of $(q^d-1)V_{m/d}$ for every divisor $d\mid m$; then $d$ must be of the form $d=p^i$ for $0\leqslant i\leqslant \alpha$. And second the elements in the image of $[d/e]_{q^e}V_{m/d}-V_{m/e}F_{d/e}$ for every chain of divisors $e\mid d\mid m$; then $d=p^i$ and $e=p^j$ for some $0\leqslant j\leqslant i\leqslant \alpha$. We may furthermore assume $j<i$, as the corresponding generators are $0$ in the case $e=d$. In total, we see that we can write
		\begin{equation*}
			V_p(x)=\sum_{0\leqslant i\leqslant \alpha}\bigl(q^{p^i}-1\bigr)V_{p^{\alpha-i}}(y_i)+\sum_{0\leqslant j<i\leqslant\alpha}\left([p^{i-j}]_{q^{p^j}}V_{p^{\alpha-i}}(z_{i,j})-V_{p^{\alpha-j}}F_{p^{i-j}}(z_{i,j})\right)
		\end{equation*}
		for some $y_i\in \IW_{p^i}(R)[q]$ and some $z_{i,j}\in \IW_{p^i}(R)[q]$. We're free to change $x$ by elements from $\II_{p^{\alpha-1}}$, so let's do that to simplify the equation above. If $0\leqslant i<\alpha$, then $p^i\mid p^{\alpha-1}$, hence $x$ and $x-(q^{p^i}-1)V_{p^{\alpha-1-i}}(y_i)$ agree modulo $\II_{p^{\alpha-1}}$. Replacing $x$ by the latter, the corresponding summand in the equation above cancels, so we may assume $y_i=0$ for all $0\leqslant i<\alpha$. Furthermore, if $0\leqslant j<i<\alpha$, then $p^j\mid p^i\mid p^{\alpha-1}$ is a chain of divisors. Consequently, we may replace $x$ by $x-[p^{i-j}]_{q^{p^j}}V_{p^{(\alpha-1)-i}}(z_{i,j})-V_{p^{(\alpha-1)-j}}F_{p^{i-j}}(z_{i,j})$ to assume $z_{i,j}=0$. Finally, if we replace $x$ by $x-\sum_{0\leqslant j<\alpha}([p^{(\alpha-1)-j}]_{q^{p^j}}F_p(z_{\alpha,j})-V_{p^{(\alpha-1)-j}}F_{p^{(\alpha-1)-j}}(F_pz_{\alpha,j}))$, then the summands corresponding to $z_{\alpha,j}$ won't quite cancel, but at least the equation above can be simplified to
		\begin{equation*}
			V_p(x)=\bigl(q^{p^\alpha}-1\bigr)y+\Phi_{p^{\alpha-1}}(q)z-V_pF_p(z)\,,
		\end{equation*}
		where $y=y_\alpha$ and $z=\sum_{0\leqslant j<\alpha}[p^{(\alpha-1)-j}]_{q^{p^j}}z_{\alpha,j}$.
		
		This is now much easier to work with. We see that $V_p(x+F_p(z))\in \IW_{p^\alpha}(R)[q]$ is divisible by $\Phi_{p^{\alpha-1}}(q)$. The cokernel of $V_p\colon \IW_{p^{\alpha-1}}(R)[q]\rightarrow \IW_{p^\alpha}(R)[q]$ is isomorphic to $R[q]$ (using the well-known analogue of \cref{lem:qWittpTypicalExactSequence} for ordinary Witt vectors) and thus $\Phi_{p^{\alpha-1}}(q)$-torsion-free, since the latter is a monic polynomial. It follows that $x+F_p(z)=\Phi_{p^{\alpha-1}}(q)w$ for some $w\in \IW_{p^{\alpha-1}}(R)[q]$. Then 
		\begin{equation*}
			\Phi_{p^{\alpha-1}}(q)V_p(w)=V_p\bigl(x+F_p(z)\bigr)=\Phi_{p^{\alpha-1}}(q)\bigl((q^{p^{\alpha-1}}-1)y+z\bigr)\,.
		\end{equation*}
		Since the monic polynomial $\Phi_{p^{\alpha-1}}(q)$ is a nonzerodivisor in $\IW_{p^\alpha}(R)[q]$ as well, we get $V_p(w)=(q^{p^{\alpha-1}}-1)y+z$. Using that the ring $\qIW_{p^{\alpha-1}}(R)$ is $(q^{p^{\alpha-1}}-1)$-torsion, we obtain the following equations in $\qIW_{p^{\alpha-1}}(R)$:
		\begin{equation*}
			\Phi_{p^{\alpha-1}}(q)w=pw=F_pV_p(w)=\bigl(q^{p^{\alpha-1}}-1\bigr)F_p(y)+F_p(z)=F_p(z)\,.
		\end{equation*}
		This implies $x=0$ in $\qIW_{p^{\alpha-1}}(R)$, thus completing the proof that the Verschiebung map $V_p\colon \qIW_{p^{\alpha-1}}(R) \rightarrow \qIW_{p^\alpha}(R)$ is injective.
	\end{proof}
	To tackle the general case, we will interpret the \enquote{Koszul complex} from \cref{prop:qWittKoszulExactSequence} as an \enquote{$r$-dimensional pushout} in the derived $\infty$-category of abelian groups. This is possible thanks to the following technical lemma.
	
	\begin{lem}\label{lem:HigherPushout}
		Let $\square^r=\Pp(\{1,\dotsc,r\})^\op$ denote the \enquote{$r$-dimensional hypercube category}, that is, the poset of subsets of $\{1,\dotsc,r\}$, partially ordered by reverse inclusion \embrace{so that $\{1,\dotsc,r\}$ is the initial object and $\emptyset$ is the final object}. Furthermore, let $\pushoutsign^r=\square^r\smallsetminus\{\{1,\dotsc,r\}\}$ be the category obtained given by removing the final object of $\square^r$. Finally, let $X\colon \pushoutsign^r\rightarrow \Ch(\IZ)$ be a diagram indexed by $\pushoutsign^r$ and valued in the category of chain complexes.
		\begin{alphanumerate}
			\item[a_r] The colimit $\colimit_{S\in\pushoutsign^r}X(S)$, taken in the derived $\infty$-category $\Dd(\IZ)$, can be identified with the total complex of the \enquote{Koszul double complex}
			\begin{equation*}
				T^{(r)}(X)\coloneqq \left(X\bigl(\{1,\dotsc,r\}\bigr)\longrightarrow\bigoplus_{\abs{S}=r-1}X(S)\longrightarrow\dotso\rightarrow\bigoplus_{\abs{S}=2}X(S)\longrightarrow\bigoplus_{\abs{S}=1}X(S)\right)\,,
			\end{equation*}
			where each complex $X(S)_*$ sits in homological bi-degrees $(\abs{S}-1,*)$ and we employ the same \enquote{Koszul-like} sign rule as in \cref{rem:KoszulComplexClarity}. Furthermore, this identification of the colimit can be chosen in such a way that conditions \cref{enum:NaturalTransformation} and \cref{enum:MapToColimit} below are satisfied.\label{enum:ColimIsTX}
			\item[b_r] If $Y\colon \pushoutsign^r\rightarrow \Ch(\IZ)$ is another diagram and $\alpha\colon X\Rightarrow Y$ is a natural transformation \embrace{also valued in $\Ch(\IZ)$}, then the induced map $\colimit_{S\in\pushoutsign^r}X(S)\rightarrow \colimit_{S\in\pushoutsign^r}Y(S)$ is given by $\Tot T^{(r)}(X)\rightarrow \Tot T^{(r)}(Y)$.\label{enum:NaturalTransformation}
			\item[c_r] If $X^\triangleright\colon \square^r\rightarrow \Ch(\IZ)$ is another diagram such that $X^\triangleright|_{\pushoutsign^r}=X$, then the induced map $\colimit_{S\in\pushoutsign^r}X(S)\rightarrow X^\triangleright(\emptyset)$ is given by
			\begin{equation*}
				\Tot T^{(r)}(X)\longrightarrow \Tot \bigl(X^\triangleright(\emptyset)[0]\bigr)\cong X^\triangleright(\emptyset)\,.
			\end{equation*}
			Here $X^\triangleright(\emptyset)[0]$ denotes the double complex obtained by placing $X^\triangleright(\emptyset)_*$ in homological bi-degrees $(0,*)$ and the map $T^{(r)}(X)\rightarrow X^\triangleright(\emptyset)[0]$ is induced by $X(\{i\})=X^\triangleright(\{i\})\rightarrow X^\triangleright(\emptyset)$ for $i=1,\dotsc,r$ in bi-degrees $(0,*)$ and the zero map in all other bi-degrees. \label{enum:MapToColimit}
		\end{alphanumerate}
	\end{lem}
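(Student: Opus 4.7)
The plan is simultaneous induction on $r$ for all three parts. The base case $r=1$ is immediate: $\pushoutsign^1$ consists of the single object $\{1\}$, so the colimit is $X(\{1\})$, and $T^{(1)}(X)$ is the one-column complex $X(\{1\})$ in bi-degrees $(0,*)$, whose total complex recovers $X(\{1\})$. Parts \cref{enum:NaturalTransformation} and \cref{enum:MapToColimit} at $r=1$ are then trivial.

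For the inductive step with $r\geqslant 2$, I would exploit the partition $\pushoutsign^r = \Ee_r \sqcup \Nn_r^*$, where $\Ee_r \coloneqq \{S\in\pushoutsign^r : r\in S\}$ is a sieve (isomorphic to $\square^{r-1}$ via $S\mapsto S\setminus\{r\}$) and $\Nn_r^* \coloneqq \{S\in\pushoutsign^r : r\notin S\}$ is a cosieve (isomorphic to $\pushoutsign^{r-1}$). The auxiliary diagrams $X_r(B)\coloneqq X(B\cup\{r\})$ on $\square^{r-1}$ and $X_{\neg r}(B)\coloneqq X(B)$ on $\pushoutsign^{r-1}$ are related by the natural transformation $\alpha_r\colon X_r|_{\pushoutsign^{r-1}}\Rightarrow X_{\neg r}$ given by the diagram maps $X(B\cup\{r\})\to X(B)$. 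Since $\{r\}$ is terminal in $\Ee_r$, the subcolimit over $\Ee_r$ equals $X(\{r\})$, and the key input of the induction is the pushout decomposition
\begin{equation*}
\colimit_{S\in\pushoutsign^r} X(S) \;\simeq\; X(\{r\}) \sqcup_{\colimit_{\pushoutsign^{r-1}} X_r|_{\pushoutsign^{r-1}}} \colimit_{\pushoutsign^{r-1}} X_{\neg r}\,.
\end{equation*}
Granting this, the inductive hypothesis and part \cref{enum:NaturalTransformation} at stage $r-1$ let me identify the two corner colimits with $\Tot T^{(r-1)}(X_r|_{\pushoutsign^{r-1}})$ and $\Tot T^{(r-1)}(X_{\neg r})$, and the middle map with the one arising from $\alpha_r$.

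The pushout then computes in $\Dd(\IZ)$ as the total complex of the two-column bicomplex whose columns are $T^{(r-1)}(X_r|_{\pushoutsign^{r-1}})$ augmented by $X(\{r\})$ in degree $-1$ (via the canonical $\bigoplus_{i<r} X(\{i,r\}) \to X(\{r\})$), and $T^{(r-1)}(X_{\neg r})$, joined horizontally by the differential coming from $\alpha_r$. Identifying this with $T^{(r)}(X)$ reduces to a clean bookkeeping step: for each $k$, splitting the summand $\bigoplus_{|S|=k} X(S)$ of $T^{(r)}(X)$ according to whether $r\in S$ recovers exactly these two columns, with the ``$r\in S$'' summands shifted up by one in total degree (which produces the augmentation), and the Koszul sign rule of \cref{rem:KoszulComplexClarity} matches the standard bicomplex total-complex sign convention. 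This settles \cref{enum:ColimIsTX} at stage $r$, and \cref{enum:NaturalTransformation} and \cref{enum:MapToColimit} then fall out of the naturality of every ingredient and the inductive hypotheses.

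The hard part is justifying the pushout decomposition itself: one must handle the anomalous object $\{r\}\in\Ee_r$, which is terminal in $\Ee_r$ yet admits no morphism into $\Nn_r^*$ (because $\emptyset\notin\pushoutsign^{r-1}$), so the classifier $\pushoutsign^r\to\Delta^1$ that sends $S\mapsto 0$ iff $r\in S$ is not a cocartesian fibration at $\{r\}$. The cleanest fix seems to be a direct presentation of $\pushoutsign^r$ as a pushout of small categories, e.g.\ as $(\pushoutsign^{r-1}\times\Delta^1)\cup_{\pushoutsign^{r-1}\times\{0\}}(\pushoutsign^{r-1}\star\{r\})$, where $\star$ denotes the join adjoining $\{r\}$ as a terminal object receiving maps from the ``contains $r$'' slice; computing the colimit along this pushout of posets yields the required formula.
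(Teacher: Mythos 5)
Your proposal is correct and follows essentially the same route as the paper: both proceed by simultaneous induction on $r$, decompose $\pushoutsign^r$ according to whether $r\in S$, and use the categorical pushout presentation $\pushoutsign^r\cong(\pushoutsign^{r-1}\times\Delta^1)\sqcup_{\pushoutsign^{r-1}\times\{0\}}(\square^{r-1}\times\{0\})$ (your $\pushoutsign^{r-1}\star\{r\}$ is exactly $\square^{r-1}\cong(\pushoutsign^{r-1})^\triangleright$) together with the decomposition-of-colimits principle \cite[Proposition~\HTTthm{4.4.2.2}]{HTT} to reduce to the $r=2$ cone. Your observation that the naive $\pushoutsign^r\to\Delta^1$ classifier fails to be cocartesian at $\{r\}$, necessitating the explicit categorical pushout, is the right diagnosis of the one subtle point.
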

	\begin{proof}
		We prove all three assertions simultaneously using induction on $r$.
		The case $r=1$ is trivial. If $r=2$, then $X\colon \pushoutsign^2\rightarrow \Ch(\IZ)$ is a pushout diagram. By a well-known characterisation of pushouts in $\Dd(\IZ)$, we obtain a cofibre sequence
		\begin{equation*}
			X\bigl(\{1,2\}\bigr)\longrightarrow X\bigl(\{1\}\bigr)\oplus X\bigl(\{2\}\bigr)\longrightarrow\colimit_{S\in\pushoutsign^2}X(S)\,.
		\end{equation*}
		Thus, we may identify $\colimit_{S\in\pushoutsign^2}X(S)$ with the cone of the first map, which is (upon choosing the right sign convention) precisely $T^{(2)}(X)$. Then \embrace{\hyperref[enum:ColimIsTX]{$a_2$}}, \embrace{\hyperref[enum:NaturalTransformation]{$b_2$}}, and \embrace{\hyperref[enum:MapToColimit]{$c_2$}} are easily checked.
		
		Now let $r>2$ and assume that \embrace{\hyperref[enum:ColimIsTX]{$a_{r-1}$}}, \embrace{\hyperref[enum:NaturalTransformation]{$b_{r-1}$}}, and \embrace{\hyperref[enum:MapToColimit]{$c_{r-1}$}} are satisfied. We can write $\pushoutsign^r$ as a pushout $\pushoutsign^r=(\pushoutsign^{r-1}\times \Delta^1)\sqcup_{(\pushoutsign^{r-1}\times\{0\})}(\square^{r-1}\times\{0\})$, where we view $\square^{r-1}\times\{0\}$ as the full subcategory of $\square^r$ spanned by those $S\subseteq\{1,\dotsc,r\}$ such that $r\in S$, and likewise $\square^{r-1}\times\{1\}$ as the full subcategory spanned by those $S$ such that $r\notin S$. By \cite[Proposition~\HTTthm{4.4.2.2}]{HTT}, we may thus compute any $\pushoutsign^r$-indexed colimit in $\Dd(\IZ)$ as a pushout of the corresponding colimits indexed by $\pushoutsign^{r-1}\times\{0\}$, $\pushoutsign^{r-1}\times \Delta^1$, and $\square^{r-1}\times\{0\}$ respectively. Let's identify these one by one. Let $X_0=X|_{\pushoutsign^{r-1}\times\{0\}}$. Applying \embrace{\hyperref[enum:ColimIsTX]{$a_{r-1}$}}, we see that
		\begin{equation*}
			\colimit_{S\in\pushoutsign^{r-1}\times\{0\}}X(S)\simeq \Tot T^{(r-1)}(X_0)\,.
		\end{equation*}
		Next, observe that the inclusion $\pushoutsign^{r-1}\times\{1\}\subseteq \pushoutsign^{r-1}\times\Delta^1$ is coinitial (it is even right-anodyne, as the same is true for $\{1\}\subseteq \Delta^1$). Therefore, putting  $X_1=X|_{\pushoutsign^{r-1}\times\{1\}}$ and using \embrace{\hyperref[enum:ColimIsTX]{$a_{r-1}$}} again, we obtain
		\begin{equation*}
			\colimit_{S\in\pushoutsign^{r-1}\times\Delta^1}X(S)\simeq \Tot T^{(r-1)}(X_1)\,.
		\end{equation*}
		Moreover, \embrace{\hyperref[enum:NaturalTransformation]{$b_{r-1}$}} ensures that $\colimit_{S\in\pushoutsign^{r-1}\times\{0\}}X(S)\rightarrow \colimit_{S\in\pushoutsign^{r-1}\times\Delta^1}X(S)$ is identified with the canonical map $\Tot T^{(r-1)}(X_0)\rightarrow \Tot T^{(r-1)}(X_1)$. Finally, $\square^{r-1}\times \{0\}$ has a final object, and thus
		\begin{equation*}
			\colimit_{S\in\square^{r-1}\times\{0\}}X(S)\simeq X\bigl(\{r\}\bigr)\,;
		\end{equation*}
		furthermore, \embrace{\hyperref[enum:MapToColimit]{$c_{r-1}$}} implies that $\colimit_{S\in\pushoutsign^{r-1}\times\{0\}}X(S)\rightarrow \colimit_{S\in\square^{r-1}\times\{0\}}X(S)$ is identified with the canonical map $\Tot T^{(r-1)}(X_0)\rightarrow X(\{r\})$.
		Putting everything together and using the $r=2$ case, we conclude
		\begin{equation*}
			\colimit_{S\in\pushoutsign^r}X(S)\simeq \cone\left(\Tot T^{(r-1)}(X_0)\rightarrow \Tot T^{(r-1)}(X_0)\oplus X\bigl(\{r\}\bigr)\right)\,.
		\end{equation*}
		The right-hand side is precisely $T^{(r)}(X)$, which settles \cref{enum:ColimIsTX}. Assertion \cref{enum:NaturalTransformation} is an immediate consequence of \embrace{\hyperref[enum:NaturalTransformation]{$b_{r-1}$}} and the functoriality of cones. It remains to show that \cref{enum:MapToColimit} is true. Using \embrace{\hyperref[enum:MapToColimit]{$c_{r-1}$}}, we see that $\colimit_{S\in\pushoutsign^{r-1}\times\{0\}}X(S)\rightarrow X(\emptyset)$ is given by the canonical map $\Tot T^{(r-1)}(X_0)\rightarrow X(\emptyset)$ and likewise $\colimit_{S\in\pushoutsign^{r-1}\times\Delta^1}X(S)\rightarrow X(\emptyset)$ is given by the canonical map $\Tot T^{(r-1)}(X_1)\rightarrow X(\emptyset)$. Furthermore, the diagram
		\begin{equation*}
			\begin{tikzcd}
				\Tot T^{(r-1)}(X_0)\rar\dar & X\bigl(\{r\}\bigr)\dar\\
				\Tot T^{(r-1)}(X_1)\rar & X(\emptyset)
			\end{tikzcd}
		\end{equation*}
		commutes in $\Ch(\IZ)$. In particular, its commutativity in $\Dd(\IZ)$ is witnessed by a trivial homotopy, and so the map from the homotopy pushout to $X(\emptyset)$ is precisely the map $\Tot T^{(r)}(X)\rightarrow X(\emptyset)$ considered in \cref{enum:MapToColimit}. This finishes the induction.
	\end{proof}
	\begin{proof}[Proof of \cref{prop:qWittKoszulExactSequence}]
		Consider the diagram $\square^r\rightarrow \Ch(\IZ[q])$ that sends $\emptyset\neq S\subseteq\{1,\dotsc,r\}$ to $\qIW_{m/p_S}(R)$, where $p_S=\prod_{i\in S}p_i$, and sends $\emptyset$ to the complex $(\qIW_m(R)\rightarrow R[\zeta_m])$ concentrated in homological degrees $0$ and $-1$ (we will frequently use that this complex is quasi-isomorphic to $\ker(\qIW_m(R)\rightarrow R[\zeta_m])$). Morphisms in $\square^r$ are sent to the respective Verschiebungen. By \cref{lem:HigherPushout}, what we have to show is precisely that this diagram is a colimit diagram in $\Dd(\IZ)$, or equivalently, in $\Dd(\IZ[q])$.
		
		As a consequence of \cref{lem:DerivedBeauvilleLaszlo}, the following exact endofunctors of $\Dd(\IZ[q])$ are jointly conservative:
		\begin{equation*}
			(-)\left[\localise{p_1\dotsm p_r}\right]\,,\quad (-)_{(p_i,q^{m/p_j}-1)}^\complete\quad\text{for all $i\neq j$},\quad (-)\left[\localise{q^{m/p_j}-1}\ \middle|\ j\neq i\right] \quad\text{for all $i$}\,.
		\end{equation*}
		So it suffices to check that we get a colimit diagram after applying each of these functors.
		
		\emph{Proof after localisation at $p_1\dotsm p_r$.} After localising $p_1\dotsm p_r$, all occurring Verschiebungen become split injective, with $V_{p_i}$ having left-inverse $p_i^{-1}F_{p_i}$. In general, by \cite[Lemma~\HAthm{1.2.4.15}]{HA}, a diagram $X\colon \square^r\rightarrow \Dd(\IZ)$ is a colimit diagram if and only if the diagram $\square^{r-1}\rightarrow \Dd(\IZ)$ given by $S\mapsto \cofib(X(S)\rightarrow X(S\sqcup\{r\}))$ for all $S\subseteq \{1,\dotsc,r-1\}$ is a colimit diagram. In our situation, all these maps are split injective, hence the cofibres are just the ordinary quotients. Furthermore, the new diagram $\square^{r-1}\rightarrow \Dd(\IZ)$ still has the property that all transition maps are split injective, because $F_{p_i}$ commutes with $V_{p_j}$ for $i\neq j$ and so the splittings pass to cofibres. Iterating this argument $r$ times, we reduce to showing that
		\begin{equation*}
			\left(\ker\bigl(\qIW_m(R)\xrightarrow{\gh_1} R[\zeta_m]\bigr)/\bigl(\im V_{p_i}\ \big|\ i=1,\dotsc,r\bigr)\right)\left[\localise{p_1\dotsm p_r}\right]
		\end{equation*}
		is a colimit of the empty diagram $\pushoutsign^0\rightarrow \Dd(\IZ)$. This is clear since the quotient above is $0$ by our calculation in \cref{par:GhostMaps}.
		
		\emph{Proof after $(p_i,q^{m/p_j}-1)$-adic completion.} Put $p\coloneqq p_i$ and $\ell\coloneqq p_j$ for convenience. Let's first see what happens after $p$-adic completion. Note that $(-)_p^\complete\simeq ((-)_{(p)})_p^\complete$. After applying $(-)_{(p)}$, the Verschiebungen $V_\ell$ for $\ell\neq p$ become split injective. Applying \cite[Lemma~\HAthm{1.2.4.15}]{HA} once again, we can pass to cofibres $r-1$ times and therefore reduce our assertion to proving that
		\begin{equation*}
			\left(\qIW_{m/p}(R)/\left(\im V_\ell\ \middle|\ \ell\neq p\right)\right)_p^\complete\overset{V_p}{\longrightarrow} \Bigl(\qIW_{m}(R)/\left(\im V_\ell\ \middle|\ \ell\neq p\right)\Bigr)_p^\complete \longrightarrow R[\zeta_m]_p^\complete
		\end{equation*}
		is a cofibre sequence. In fact, we only need to show that this is a cofibre sequence after $(q^{m/\ell}-1)$-adic completion. To this end, note that the left and the middle term in the above sequence can be rewritten as
		\begin{align*}
			\cofib\left(\left(\qIW_{m/p\ell}(R)/(\im V_{\ell'}\ |\ \ell'\neq\ell,p)\right)_{p}^\complete\right.&\left.\overset{V_\ell}{\longrightarrow}\left(\qIW_{m/p}(R)/(\im V_{\ell'}\ |\ \ell'\neq\ell,p)\right)_{p}^\complete\right)\,,\\
			\cofib\left(\left(\qIW_{m/\ell}(R)/(\im V_{\ell'}\ |\ \ell'\neq\ell,p)\right)_{p}^\complete\right.&\left.\overset{V_\ell}{\longrightarrow}\bigl(\qIW_{m}(R)/(\im V_{\ell'}\ |\ \ell'\neq\ell,p)\bigr)_{p}^\complete\right)\,.
		\end{align*}
		After $(q^{m/\ell}-1)$-adic completion, these maps are not only split, but actually equivalences. Indeed, $V_\ell F_\ell$ is equal to $[\ell]_{q^{m/p\ell}}$ in the first case and $[\ell]_{q^{m/\ell}}$ in the second case, and both of them are units in $\IZ[q]_{(p,q^{m/\ell}-1)}^\complete$. So the first two terms in our would-be cofibre sequence vanish. Furthermore, we compute
		\begin{equation*}
			R[\zeta_m]_{(p,q^{m/\ell}-1)}^\complete\simeq \left(R[q]/^\L\Phi_m(q)\right)_{(p,q^{m/\ell}-1)}^\complete\simeq R[q]_{(p,q^{m/\ell}-1)}^\complete/^\L\Phi_m(q)
		\end{equation*}
		and the right-hand side is $0$ because $\Phi_m(q)$ divides the unit $[\ell]_{q^{m/\ell}}$ in $\IZ[q]_{(p,q^{m/\ell}-1)}^\complete$. So we obtain a cofibre sequence for trivial reasons.
		
		\emph{Proof after localisation at $(q^{m/p_j}-1)$ for all $j\neq i$.} Again, we put $p=p_i$ for convenience. Furthermore, the letter $\ell$ will be used to denote prime factors $\neq p$ of $m$. Note that almost all terms in our complex are $(q^{m/\ell}-1)$-torsion for some $\ell\neq p$ and thus die in our localisation. The only surviving terms are
		\begin{equation*}
			\qIW_{m/p}(R)\left[\localise{q^{m/\ell}-1}\ \middle|\ \ell\neq p\right]\overset{V_p}{\longrightarrow}\qIW_{m}(R)\left[\localise{q^{m/\ell}-1}\ \middle|\ \ell\neq p\right]\longrightarrow R[\zeta_m]\left[\localise{\zeta_m^{m/\ell}-1}\ \middle|\ \ell\neq p\right]
		\end{equation*}
		and we must show that this is a cofibre sequence in $\Dd(\IZ[q])$. Our strategy will be to show that this sequence is a flat base change of the sequence from \cref{lem:qWittpTypicalExactSequence}. To achieve this, write $m=p^\alpha n$, where $\alpha$ is the exponent of $p$ in the prime factorisation of $m$; we wish to show
		\begin{equation*}
			\qIW_{m}(R)\left[\localise{q^{m/\ell}-1}\ \middle|\ \ell\neq p\right]\cong \qIW_{p^\alpha}(R)\otimes_{\IZ[q],\psi^n}\IZ\left[q,\localise{q^{m/\ell}-1}\ \middle|\ \ell\neq p\right]\,,
		\end{equation*}
		where $\psi^n$ is the map that sends $q\mapsto q^n$. This follows by a comparison of universal properties: Let $T_m$ denote the truncation set of divisors of $m$. Then \cref{rem:TruncatedUniversalProperty}, together with the universal property of localisation, shows that $(\qIW_{d}(R)\bigl[1/(q^{m/\ell}-1)\ \big|\ \ell\neq p\bigr])_{d\in T_m}$ is universal among all system of rings $(W_d)_{d\in T_m}$ equipped with a $\IW_d(R)\bigl[q,1/(q^{m/\ell}-1)\ \big|\ \ell\neq p\bigr]/(q^d-1)$-structure on $W_d$ as well as Frobenii and Verschiebungen satisfying the conditions from \cref{def:qFVSystemOfRings}\cref{enum:qWittConditionB}. Consider such a system $(W_d)_{d\in T_m}$. Since $\IW_d(R)\bigl[q,1/(q^{m/\ell}-1)\ \big|\ \ell\neq p\bigr]/(q^d-1)$ vanishes unless $d$ is of the form $d=p^in$ for some $0\leqslant i\leqslant \alpha$, we see that only $W_n,W_{pn},\dotsc,W_{p^\alpha n}$ can be non-zero. Furthermore, compatibility with the usual Verschiebungen then shows that the $\IW_{p^in}(R)$-algebra structure on $W_{p^in}$ must factor over $\IW_{p^in}(R)/\left(\im V_\ell\ \middle|\ \ell\neq p\right)\cong \IW_{p^i}(R)$. Now \cref{rem:TruncatedUniversalProperty} again, together with the universal property of base change along $\psi^n$, shows that the sequence $(\qIW_{p^i}(R)\otimes_{\IZ[q],\psi^n}\IZ\bigl[q,1/(q^{m/\ell}-1)\ \big|\ \ell\neq p\bigr])_{i=0,\dotsc,\alpha}$ is universal among all sequences of rings $(W_{p^in})_{i=0,\dotsc,\alpha}$ equipped with a $\IW_{p^i}(R)\bigl[q,1/(q^{m/\ell}-1)\ \big|\ \ell\neq p\bigr]/(q^{p^in}-1)$-structure on $W_{p^in}$ as well as compatible Frobenii and Verschiebungen. This finishes the proof of the isomorphism claimed above, as it is now apparent that both sides satisfy the same universal property.
		
		So we've succeeded in writing the localisation $\qIW_{m}(R)\bigl[(q^{m/\ell}-1)^{-1}\ \big|\ \ell\neq p\bigr]$ as a base change of $\qIW_{p^\alpha}(R)$ along the flat map $\psi^n$. By the same argument, we can do the same for $\qIW_{m/p}(R)$. To reduce to \cref{lem:qWittpTypicalExactSequence}, it remains to see that
		\begin{equation*}
			R[q]/\Phi_m(q)\left[\localise{q^{m/\ell}-1}\ \middle|\ \ell\neq p\right]\cong R[q]/\Phi_{p^{\alpha}}(q)\otimes_{\IZ[q],\psi^n}\IZ\left[q,\localise{q^{m/\ell}-1}\ \middle|\ \ell\neq p\right]\,.
		\end{equation*}
		In other words, we must show that $\Phi_{p^\alpha}(q^n)$ and $\Phi_m(q)$ agree up to unit in the localisation $\IZ\bigl[q,(q^{m/\ell}-1)^{-1}\ \big|\ \ell\neq p\bigr]$. This follows inductively from
		\begin{equation*}
			\Phi_{d\ell}(q)=\begin{cases*}
				\Phi_{d}(q^\ell) & if $\ell\mid d$\\
				\Phi_{d}(q^\ell)/\Phi_{d}(q) & if $\ell\nmid d$
			\end{cases*}
		\end{equation*}
		for all $d\mid m$ and noting that $\Phi_{d}(q)$ is a unit in the second case, because it divides $q^{m/\ell}-1$.
	\end{proof}
	Having proved \cref{prop:qWittKoszulExactSequence}, we can now establish a bunch of pleasant properties of the $q$-Witt vector functors. We start with the fact that the Verschiebungen are injective, which---at least to the author---seems not at all trivial from \cref{def:BigqWitt}.
	\begin{cor}\label{cor:VerschiebungInjective}
		Let $R$ be a commutative, but not necessarily unital ring. For all positive integers $m$ and all divisors $d\mid m$, the Verschiebung $V_{m/d}\colon \qIW_d(R)\rightarrow \qIW_m(R)$ is injective.
	\end{cor}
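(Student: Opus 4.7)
My plan would be to first reduce to the case of prime-Verschiebungen and then mimic the strategy used in the proof of \cref{prop:qWittKoszulExactSequence} to check injectivity after each of the jointly conservative functors from \cref{lem:DerivedBeauvilleLaszlo}.

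\textbf{Reduction.} Using $V_{m/e}=V_{m/d}\circ V_{d/e}$ for a chain of divisors $e\mid d\mid m$ (which holds for $q$-Witt vectors since it holds for the ordinary Witt vectors used in the universal property), any $V_{m/d}$ factors as a composition $V_{p_{i_1}}\circ V_{p_{i_2}}\circ\dotsb$ of Verschiebungen indexed by the prime factors of $m/d$ (with multiplicity). Since a composition of injective maps is injective, it suffices to show that $V_p\colon \qIW_{m/p}(R)\rightarrow \qIW_m(R)$ is injective for every prime $p\mid m$. I'd then do strong induction on $m$: the base case where $m=p^\alpha$ is a prime power is precisely \cref{lem:qWittpTypicalExactSequence}.

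\textbf{Inductive step.} Suppose $m$ has $r\geqslant 2$ distinct prime factors $p_1,\dotsc,p_r$; by symmetry I may show $V_{p_1}$ is injective. I would verify that the fibre $\fib(V_{p_1}\colon \qIW_{m/p_1}(R)\rightarrow \qIW_m(R))$ vanishes in $\Dd(\IZ[q])$ after applying each functor in the jointly conservative family used in the proof of \cref{prop:qWittKoszulExactSequence}, namely $(-)[1/(p_1\dotsm p_r)]$, the completions $(-)_{(p,q^{m/\ell}-1)}^\complete$ for primes $p\neq \ell$, and the localisations $(-)[1/(q^{m/p_j}-1)\mid j\neq i]$ for each $i$. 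In case (1) the relation $F_{p_1}\circ V_{p_1}=p_1$ exhibits $F_{p_1}/p_1$ as a left inverse to $V_{p_1}$, so $V_{p_1}$ is split injective. In case (2), the proof of \cref{prop:qWittKoszulExactSequence} shows that $V_\ell F_\ell$ becomes a unit after $(p,q^{m/\ell}-1)$-completion, so $V_\ell$ is an isomorphism in this completion; combined with the commutation $V_\ell\circ V_{p_1}=V_{p_1}\circ V_\ell$ (valid when $\gcd(p_1,\ell)=1$), the commuting square
\begin{equation*}
\begin{tikzcd}
\qIW_{m/p_1\ell}(R) \ar[r,"V_{p_1}"] \ar[d,"V_\ell"'] & \qIW_{m/\ell}(R) \ar[d,"V_\ell"] \\
\qIW_{m/p_1}(R) \ar[r,"V_{p_1}"'] & \qIW_m(R)
\end{tikzcd}
\end{equation*}
identifies the bottom $V_{p_1}$ with the top $V_{p_1}$ up to isomorphism. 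The top map is injective after completion by the inductive hypothesis applied to $m/\ell<m$. The case $\ell=p_1$ is even easier, since then $V_{p_1}=V_\ell$ is itself an isomorphism after completion.

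\textbf{Localisation case.} In case (3), if $i\neq 1$ then $q^{m/p_1}-1$ is among the inverted elements, but $\qIW_{m/p_1}(R)$ is $(q^{m/p_1}-1)$-torsion by construction, so the source vanishes and $V_{p_1}$ is trivially injective. If $i=1$, the base change argument in the proof of \cref{prop:qWittKoszulExactSequence} identifies the localisations of $\qIW_m(R)$ and $\qIW_{m/p_1}(R)$ with flat base changes of $\qIW_{p_1^\alpha}(R)$ and $\qIW_{p_1^{\alpha-1}}(R)$ respectively (where $p_1^\alpha\|m$) along the morphism $\psi^n\colon \IZ[q]\rightarrow \IZ[q]$, $q\mapsto q^n$, with $n=m/p_1^\alpha$; under this identification $V_{p_1}$ becomes the classical $p_1$-typical Verschiebung on prime-power $q$-Witt vectors, which is injective by \cref{lem:qWittpTypicalExactSequence}. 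Joint conservativity then forces $\fib(V_{p_1})=0$.

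\textbf{Main obstacle.} I expect the trickiest step to be case (2): verifying that $V_\ell$ really does become an isomorphism on both horizontal rows of the above square after $(p,q^{m/\ell}-1)$-completion, and that the reduction to $V_{p_1}\colon \qIW_{m/p_1\ell}(R)\rightarrow \qIW_{m/\ell}(R)$ is a genuine equivalence of maps in $\Dd(\IZ[q])$ (rather than just a pointwise isomorphism). This requires tracking the unit $[\ell]_{q^{m/(p_1\ell)}}$ appearing in $V_\ell F_\ell$ on the top row and confirming that it remains a unit in the relevant completion; a small case analysis according to whether $\ell=p_1$ handles the commutation issue cleanly.
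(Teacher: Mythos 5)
Your proposal takes a genuinely different route from the paper's, which works after a single honest localisation $(-)_{(p)}$ (using that $\ker V_p$ is $p$-torsion, via $F_pV_p=p$, so checking after this one exact functor suffices) and then derives a contradiction from the colimit cube of \cref{prop:qWittKoszulExactSequence}: after localising at $p$, the remaining Verschiebungen $V_{p_i}$ with $p_i\neq p$ become split, so a hypothetical nonzero $\H^{-1}$ of $\cofib(V_p)_{(p)}$ would survive iterated cofibres and contradict the cube being a colimit diagram.

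Your version has a genuine gap in case (2). First, a framing point: $\fib(V_{p_1})$ never vanishes (its $\H^1$ is $\coker V_{p_1}$, and $V_{p_1}$ is far from surjective); the object one needs to kill is the static module $K\coloneqq\ker V_{p_1}$, and joint conservativity then requires $F(K)\simeq 0$ in $\Dd(\IZ[q])$ for each $F$ in the family. For the localisation functors this is fine, since localisation is exact on modules and so commutes with taking kernels. But derived completion is not exact on modules, and your commuting-square reduction does not go through: even if $V\colon M\hookrightarrow N$ is an injective map of modules, the derived completion need not be injective on $\H^0$, because when $V$ is injective one has $\fib(V)\simeq(\coker V)[-1]$ and so $\H^0\bigl(\widehat{\fib(V)}\bigr)\cong\H^{-1}\bigl(\widehat{\coker V}\bigr)$, which can be nonzero (a $\lim^1$ of torsion). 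Therefore \enquote{the top map is injective after completion by the inductive hypothesis} does not follow from the top map being injective as a map of modules; this is exactly the subtlety the paper avoids by staying with the single exact localisation $(-)_{(p)}$.

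The gap can be repaired, bypassing the square entirely. Since $F_\ell$ commutes with $V_{p_1}$ for $\ell\neq p_1$, the inductive hypothesis applied to $m/\ell<m$ gives $F_\ell(K)\subseteq\ker\bigl(V_{p_1}\colon\qIW_{m/p_1\ell}(R)\rightarrow\qIW_{m/\ell}(R)\bigr)=0$, hence $[\ell]_{q^{m/p_1\ell}}K=V_\ell F_\ell(K)=0$. As $[\ell]_{q^{m/p_1\ell}}$ is a unit in $\IZ[q]_{(p_1,q^{m/\ell}-1)}^\complete$, this forces $\widehat{K}_{(p_1,q^{m/\ell}-1)}\simeq 0$. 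The completions $(p_i,q^{m/p_j}-1)$ with $p_i\neq p_1$ are even easier: $p_1K=F_{p_1}V_{p_1}(K)=0$ and $p_1$ is a unit there. With this fix your argument works -- your cases (1) and (3) are fine as written -- but it ends up somewhat heavier than the paper's single-localisation contradiction.
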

	\begin{proof}
		We use induction on $m$. The case where $m$ is a prime power is clear from \cref{lem:qWittpTypicalExactSequence}. In the general case, we may assume without restriction that $m/d=p$ is a prime factor of $m$. Every element $x\in \ker(V_p\colon \qIW_{m/p}(R)\rightarrow \qIW_m(R))$ is $p$-torsion since $0=F_pV_p(x)=px$. Since the $p$-torsion part of $\qIW_{m/p}(R)$ maps bijectively to the the $p$-torsion part of $\qIW_{m/p}(R)_{(p)}$, it suffices to see that $V_p\colon \qIW_{m/p}(R)_{(p)}\rightarrow \qIW_m(R)_{(p)}$ is injective. Suppose this was wrong; then $\cofib(V_p)$ would acquire a nonzero $\H^{-1}$.
		
		Now let $p_1,\dotsc,p_r$ denote the prime factors of $m$, where $p_r=p$. Consider the diagram $X\colon \square^r\rightarrow \Ch(\IZ)$ sending $\emptyset\neq S\subseteq\{1,\dotsc,r\}$ to $\qIW_{m/p_S}(R)_{(p)}$, where $p_S=\prod_{i\in S}p_i$, and $\emptyset$ to $\ker(\qIW_m(R)_{(p)}\rightarrow R[\zeta_m]_{(p)})$; all morphisms in $\square^r$ are sent to the corresponding Verschiebungen. We know from the proof of \cref{prop:qWittKoszulExactSequence} that $X$ is a colimit diagram in $\Dd(\IZ)$. Furthermore, \cite[Lemma~\HAthm{1.2.4.15}]{HA} tells us that $X$ is a colimit diagram if and only if the diagram $X'\colon \square^{r-1}\rightarrow \Dd(\IZ)$ given by $S\mapsto \cofib(V_p\colon \qIW_{m/p_Sp}(R)_{(p)}\rightarrow \qIW_{m/p_S}(R)_{(p)})$ for all $S\subseteq \{1,\dotsc,r-1\}$ is a colimit diagram. From the induction hypothesis, we know that these cofibres are static, except for $\cofib(V_p\colon \qIW_{m/p}(R)_{(p)}\rightarrow \qIW_m(R)_{(p)})$, which we're assuming has a nonzero $\H^{-1}$. Furthermore, $X'$ maps every morphism in $\square^{r-1}$ to a split morphism, because $V_{p_i}$ for $p_i\neq p$ has a left inverse given by $p_i^{-1}F_{p_i}$ and these left inverses persist after taking cofibres, since $V_p$ and $F_{p_i}$ commute for $p_i\neq p$. Therefore, if we pass to cofibres again, then everything stays static, except for the nonzero $H_1$, which remains unchanged. Furthermore, the diagram $X''\colon \square^{r-2}\rightarrow \Dd(\IZ)$ obtained by passing to cofibres still has the property that all morphisms in $\square^{r-2}$ are sent to split morphisms, since again the splittings in $X'$ pass to cofibres. Iterating this argument $r$ times, we see that the nonzero $\H^{-1}$ persists, contradicting the fact that the original diagram $X$ is a colimit diagram. 
	\end{proof}
	
	Next, we will study the interaction with localisation.
	
	\begin{cor}\label{cor:qWittLocalisation}
		Let $R$ be a commutative, but not necessarily unital ring, and let $m$ be a positive integer.
		\begin{alphanumerate}
			\item For any multiplicative subset $U\subseteq \IZ$, we have $\qIW_m(R)[U^{-1}]\cong \qIW_m(R[U^{-1}])$.\label{enum:MultiplicativeSubsetOfZ}
			\item For any multiplicative subset $U\subseteq R$, we have $\qIW_m(R)[\tau_m(U)^{-1}]\cong \qIW_m(R[U^{-1}])$. Here $\tau_m(-)$ denotes the Teichmüller lift from \cref{par:GhostMaps}.\label{enum:MultiplicativeSubsetOfR}
		\end{alphanumerate}
	\end{cor}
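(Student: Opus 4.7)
Both parts are proved by induction on $m$, with the augmented Koszul exact sequence of \cref{prop:qWittKoszulExactSequence} as the central tool. The canonical comparison map $\qIW_m(R)\rightarrow \qIW_m(R[U^{-1}])$ is obtained from functoriality of $\qIW_m(-)$, which is immediate from the universal property in \cref{lem:BigqWittUniversal}. The base case $m=1$ is trivial since $\qIW_1(R')\cong R'$ for any $R'$.

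\emph{For \cref{enum:MultiplicativeSubsetOfZ}.} I first verify that every $u\in U$ acts invertibly on $\qIW_m(R[U^{-1}])$. Granted the inductive hypothesis for proper divisors of $m$, the Koszul exact sequence for $R[U^{-1}]$ yields a short exact sequence
\begin{equation*}
    0\longrightarrow K\longrightarrow \qIW_m(R[U^{-1}])\longrightarrow R[U^{-1}][\zeta_m]\longrightarrow 0\,,
\end{equation*}
where $K=\im\bigl(\bigoplus_i\qIW_{m/p_i}(R[U^{-1}])\rightarrow \qIW_m(R[U^{-1}])\bigr)$. By induction, $u$ acts invertibly on each Verschiebung summand, hence on $K$; it acts invertibly on $R[U^{-1}][\zeta_m]$ tautologically; so the five lemma applied to multiplication by $u$ shows that $u$ is a unit in the middle term. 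The comparison map therefore factors through $\qIW_m(R)[U^{-1}]\rightarrow\qIW_m(R[U^{-1}])$. To see that the factored map is an isomorphism, apply $U$-localization (which is exact) to the Koszul exact sequence for $R$ and compare term-by-term with the Koszul exact sequence for $R[U^{-1}]$: by induction these agree at all Koszul positions $\geqslant 1$, the augmentations agree tautologically, and a long five lemma argument provides the remaining identification.

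\emph{For \cref{enum:MultiplicativeSubsetOfR}.} The argument is entirely parallel with $\tau_m(U)\subseteq\qIW_m(R)$ in place of $U$. Here the invertibility step is immediate: $\tau_m(u)\tau_m(u^{-1})=\tau_m(1)=1$ already holds in $\IW_m(R[U^{-1}])$. The key compatibility needed for the induction is that the Verschiebungen in the Koszul complex are $\qIW_m(R)$-linear by Frobenius reciprocity, so the Koszul exact sequence remains exact after $\tau_m(U)$-localization viewed as a sequence of $\qIW_m(R)$-modules. Since the $\qIW_m(R)$-action on $\qIW_{m/p_S}(R)$ factors through $F_{m/p_S}$, inverting $\tau_m(u)$ there is equivalent to inverting $F_{m/p_S}(\tau_m(u))=\tau_{m/p_S}(u)^{m/p_S}$, which is in turn equivalent to inverting $\tau_{m/p_S}(u)$ itself; analogously, $\tau_m(u)$ acts on the augmentation $R[\zeta_m]$ as multiplication by $\gh_1(\tau_m(u))=u$. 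The induction hypothesis thus identifies the $\tau_m(U)$-localized Koszul sequence for $R$ with the Koszul sequence for $R[U^{-1}]$ in all positions except the middle, and the five lemma concludes. The main subtlety lies in \cref{enum:MultiplicativeSubsetOfZ}: the invertibility of an arbitrary integer $u$ on $\qIW_m(R[U^{-1}])$ is not visible from the ghost map when $R$ has torsion, so some form of the Koszul induction really is needed.
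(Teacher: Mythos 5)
Your proof is correct and follows essentially the same route as the paper: induction on $m$ via the Koszul exact sequence of \cref{prop:qWittKoszulExactSequence} and the five lemma, first establishing $U$-locality (respectively $\tau_m(U)$-locality) of $\qIW_m(R[U^{-1}])$ and then comparing Koszul complexes term by term, with the $\qIW_m(R)$-module structure via Frobenius and the formula $F_{m/d}(\tau_m(u))=\tau_d(u)^{m/d}$ providing the compatibility needed for part~\cref{enum:MultiplicativeSubsetOfR}. One small notational slip: in the paper's convention the Frobenius $\qIW_m\rightarrow\qIW_{m/p_S}$ is $F_{p_S}$ (the subscript is $m/d$ for the map $\qIW_m\rightarrow\qIW_d$), so $F_{p_S}(\tau_m(u))=\tau_{m/p_S}(u)^{p_S}$ rather than $\tau_{m/p_S}(u)^{m/p_S}$ — this does not affect the argument, since inverting any power of an element is the same as inverting the element itself.
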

	\begin{proof}
		First let $U\subseteq \IZ$ be a multiplicative subset. Using induction, \cref{prop:qWittKoszulExactSequence}, and the five lemma, it's clear that $\qIW_m(R[U^{-1}])$ is $U$-local as an abelian group, hence the canonical map $\qIW_m(R)\rightarrow \qIW_m(R[U^{-1}])$ can be extended to a map
		\begin{equation*}
			\qIW_m(R)[U^{-1}]\longrightarrow \qIW_m\bigl(R[U^{-1}]\bigr)\,.
		\end{equation*}
		That this map is an isomorphism follows again from induction, \cref{prop:qWittKoszulExactSequence}, and the five lemma. This proves \cref{enum:MultiplicativeSubsetOfZ}.
		
		Now let $U\subseteq R$ be a multiplicative subset. As the Teichmüller lift is multiplicative, it's clear that $\qIW_m(R[U^{-1}])$ is $\tau_m(U)$-local, hence we get a canonical map
		\begin{equation*}
			\qIW_m(R)\bigl[\tau_m(U)^{-1}\bigr]\longrightarrow \qIW_m\bigl(R[U^{-1}]\bigr)\,.
		\end{equation*}
		That this map is an isomorphism follows from induction, \cref{prop:qWittKoszulExactSequence}, and the five lemma. To make the induction work, implicitly we also use that the complex from \cref{prop:qWittKoszulExactSequence} is a complex of $\qIW_m(R)$-modules if we regard $\qIW_d(R)$ as an $\qIW_m(R)$-algebra via the Frobenius $F_{m/d}\colon \qIW_m(R)\rightarrow \qIW_d(R)$; furthermore, this $\qIW_m(R)$-module structure identifies $\qIW_d(R)[\tau_m(U)^{-1}]$ with $\qIW_d(R)[\tau_d(U)^{-1}]$, as $F_{m/d}(\tau_m(u))=\tau_d(u)^{m/d}$ for all $u\in U$. This finishes the proof of \cref{enum:MultiplicativeSubsetOfR}.
	\end{proof}
	Now we will study how $q$-Witt vectors interact with torsion. To this end, first we prove a technical lemma that will be used several times in the discussion to come.
	\begin{lem}\label{lem:TechnicalpTorsion}
		Let $R$ be a commutative, but not necessarily unital $\IZ_{(p)}$-algebra, let $m$ be a positive integer and let $p$ be a prime number. Assume $x\in \qIW_m(R)$ satisfies $\gh_1(x)=0$ and $F_\ell(x)=0$ for all prime factors $\ell\mid m$ such that $\ell\neq p$. Then $x=V_p(y)$ for some $y\in \qIW_{m/p}(R)$; in particular, $x=0$ if $p\nmid m$.
	\end{lem}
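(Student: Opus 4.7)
The plan is to combine the exact Koszul sequence from \cref{prop:qWittKoszulExactSequence} with the observation that, because $R$ is a $\IZ_{(p)}$-algebra, every prime $\ell\neq p$ is invertible in all relevant $q$-Witt rings; consequently the identity $F_\ell\circ V_\ell=\ell$ makes each such $V_\ell$ split injective. This will allow me to iteratively eliminate every Verschiebung $V_\ell$ with $\ell\neq p$ from an expression of $x$ as a sum of Verschiebungen.

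First, I would invoke exactness of the right end of the Koszul sequence: the hypothesis $\gh_1(x)=0$ yields a decomposition
\begin{equation*}
x=\sum_{i=1}^r V_{p_i}(y_i)
\end{equation*}
with $y_i\in\qIW_{m/p_i}(R)$, where $p_1,\dotsc,p_r$ are the distinct prime factors of $m$. Next, I pick a prime $\ell=p_{j_0}\neq p$ and apply $F_\ell$ to this decomposition. Using the relation $F_\ell V_\ell=\ell$ together with the commutation $F_\ell V_{p_i}=V_{p_i}F_\ell$ for $p_i\neq\ell$ (which holds for $q$-Witt vectors, inherited from the analogous relation for ordinary Witt vectors via the universal property of \cref{lem:BigqWittUniversal}), the hypothesis $F_\ell(x)=0$ forces
\begin{equation*}
\ell\,y_{j_0}=-\sum_{i\neq j_0}V_{p_i}\bigl(F_\ell(y_i)\bigr)\,.
\end{equation*}
Dividing by $\ell$ (legal in the $\IZ_{(p)}$-algebra $\qIW_{m/\ell}(R)$), substituting back, and using $V_\ell V_{p_i}=V_{p_i}V_\ell$ (both being $V_{\ell p_i}$), I would rewrite
\begin{equation*}
x=\sum_{i\neq j_0}V_{p_i}\bigl(y_i-\tfrac{1}{\ell}V_\ell F_\ell(y_i)\bigr)\,,
\end{equation*}
so that the summand indexed by $j_0$ has disappeared.

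Iterating this elimination over all primes $\ell\mid m$ with $\ell\neq p$---note that $x$ itself and the hypotheses on it are not changed throughout---removes every Verschiebung except possibly $V_p$. In the end, this yields $x=V_p(y)$ when $p\mid m$, and $x=0$ (the empty sum) when $p\nmid m$, which is exactly the claim.

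I do not expect a serious obstacle: the substantive work has already been done in \cref{prop:qWittKoszulExactSequence}, and the rest of the argument is a direct manipulation of the Frobenius/Verschiebung relations enabled by the $\IZ_{(p)}$-structure. The only point requiring some bookkeeping is verifying at each step that the modified coefficients $y_i-\tfrac{1}{\ell}V_\ell F_\ell(y_i)$ indeed lie in $\qIW_{m/p_i}(R)$, but this is immediate because $V_\ell F_\ell$ is an endomorphism of that ring.
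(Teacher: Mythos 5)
Your proof is correct and uses the same essential ingredients as the paper's: split injectivity of $V_\ell$ for $\ell\neq p$ coming from the $\IZ_{(p)}$-structure, the commutation $F_\ell V_{\ell'}=V_{\ell'}F_\ell$ for $\ell\neq\ell'$, and the identification of $\ker(\gh_1)$ with the image of the Verschiebungen. The paper simply packages your iterative elimination into a single direct sum decomposition $\qIW_m(R)\cong I_1\oplus\dotsb\oplus I_r\oplus \qIW_m(R)/(\im V_{\ell_1},\dotsc,\im V_{\ell_r})$ and reads off that the hypotheses kill the $I_i$-components; the content is the same.
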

	\begin{proof}
		Note that since $R$ is a $\IZ_{(p)}$-algebra, every prime $\ell\neq p$ is invertible in $\qIW_m(R)$ by \cref{cor:qWittLocalisation}\cref{enum:MultiplicativeSubsetOfZ}. Hence every Verschiebung $V_\ell$ for $\ell\neq p$ is split injective, with left inverse given by $\ell^{-1}F_\ell$. Furthermore, $\ell^{-1}F_\ell$ commutes with $V_{\ell'}$ for $\ell\neq \ell'$. Now let $\ell_1,\dotsc,\ell_r$ be the prime factors $\neq p$ of $m$. By the previous considerations, we can write
		\begin{equation*}
			\qIW_m(R)\cong I_1\oplus \dotsb\oplus I_r\oplus \qIW_m(R)/\bigl(\im V_{\ell_1},\dotsc,\im V_{\ell_r}\bigr)\,,
		\end{equation*}
		where $I_i=\im (V_{\ell_i}\colon \qIW_{m/\ell_i}(R)/(\im V_{\ell_1},\dotsc,\im V_{\ell_{i-1}})\rightarrow \qIW_{m}(R)/(\im V_{\ell_1},\dotsc,\im V_{\ell_{i-1}}))$. Our assumption $\gh_1(x)=0$ implies $x\in (\im V_{\ell_1},\dotsc,\im V_{\ell_r},\im V_p)$ by \cref{par:GhostMaps} (we put $\im V_p=0$ in the case $p\nmid m$). On the other hand, $F_\ell(x)=0$ for $\ell\in\{\ell_1,\dotsc,\ell_r\}$ implies that the projection of $x$ to $I_i$ vanishes for all $i$. Hence $x=V_p(y)$ for some $y\in  \qIW_{m/p}(R)/(\im V_{\ell_1},\dotsc,\im V_{\ell_r})$. By the same argument as above, this quotient may be regarded as a direct summand of $\qIW_{m/p}(R)$ and so we may regard $y$ as an element of $\qIW_m(R)$ satisfying $x=V_p(y)$, as desired.
	\end{proof}
	\begin{cor}\label{cor:qWittpTorsion}
		Let $R$ be a commutative, but not necessarily unital ring. If $R$ is $p$-torsion-free, then so is $\qIW_m(R)$ for all positive integers $m$. Likewise, if $R$ has bounded $p^\infty$-torsion, then so has $\qIW_m(R)$.
	\end{cor}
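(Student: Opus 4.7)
The plan is to prove both assertions simultaneously by induction on $m$ after reducing to the case where $R$ is a $\IZ_{(p)}$-algebra. For the reduction step, I would invoke \cref{cor:qWittLocalisation}\cref{enum:MultiplicativeSubsetOfZ} with $U=\IZ\smallsetminus p\IZ$ to get a map $\qIW_m(R)\rightarrow \qIW_m(R)_{(p)}\cong \qIW_m(R_{(p)})$ whose kernel consists of elements annihilated by some integer coprime to $p$; consequently this kernel meets the $p^\infty$-torsion trivially, so the $p^\infty$-torsion of $\qIW_m(R)$ injects into the $p^\infty$-torsion of $\qIW_m(R_{(p)})$. A routine check shows that if $R$ is $p$-torsion-free (resp.\ has bounded $p^\infty$-torsion, killed by $p^{N_0}$), then so is $R_{(p)}$ (with the same bound).

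Assume henceforth that $R$ is a $\IZ_{(p)}$-algebra and proceed by induction on $m$; the base case $m=1$ is trivial since $\qIW_1(R)=R$. For the inductive step, let $x\in\qIW_m(R)$ satisfy $p^kx=0$. Since $R[\zeta_m]\cong R[q]/\Phi_m(q)$ is a free $R$-module (as $\Phi_m$ is monic), $R[\zeta_m]$ inherits the relevant torsion property from $R$; thus $\gh_1(x)$ is killed by $p^{N_0}$ (with $N_0=0$ in the torsion-free case). By induction, for every prime factor $\ell\mid m$ with $\ell\neq p$, the element $F_\ell(x)\in\qIW_{m/\ell}(R)$ is killed by $p^{N_\ell}$. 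Setting $N_1=\max\{N_0\}\cup\{N_\ell\mid \ell\neq p\}$, the element $p^{N_1}x$ satisfies the hypotheses of \cref{lem:TechnicalpTorsion}. We therefore obtain $y\in\qIW_{m/p}(R)$ with $V_p(y)=p^{N_1}x$ (and $p^{N_1}x=0$ outright when $p\nmid m$, which then finishes the argument).

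It remains to treat the case $p\mid m$. Applying $F_p$ to $V_p(y)=p^{N_1}x$ and using $F_p\circ V_p=p$, we get $py=p^{N_1}F_p(x)$; multiplying by $p^k$ yields $p^{k+1}y=p^{N_1}F_p(p^kx)=0$, so $y$ is $p$-power torsion. By the inductive hypothesis $y$ is killed by $p^{N_{m/p}}$, and hence
\begin{equation*}
    p^{N_1+N_{m/p}}x=p^{N_{m/p}}V_p(y)=V_p(p^{N_{m/p}}y)=0\,.
\end{equation*}
This yields a uniform bound $N\coloneqq N_1+N_{m/p}$ on the $p^\infty$-torsion of $\qIW_m(R)$; specialising all these bounds to $0$ handles the torsion-free statement ($p$-torsion $x$ with $\gh_1(x)=0$ and $F_\ell(x)=0$ for $\ell\neq p$ is $V_p(y)$, and $F_p(x)=py=0$ forces $y=0$ by induction).

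I expect the only genuine subtlety to be the reduction to the $\IZ_{(p)}$-algebra case, which is needed so that \cref{lem:TechnicalpTorsion} applies; once that is in place, the inductive step reduces to a bookkeeping exercise in Frobenii/Verschiebungen, and the injectivity of $V_p$ from \cref{cor:VerschiebungInjective} removes the only other potential sticking point (namely, showing that $y$ really is $p$-power torsion).
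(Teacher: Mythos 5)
Your proof is correct and follows essentially the same route as the paper's: reduce to the $\IZ_{(p)}$-algebra case via \cref{cor:qWittLocalisation}, induct on $m$, multiply $x$ by a power of $p$ so that the hypotheses of \cref{lem:TechnicalpTorsion} hold (using the bounded torsion of $R[\zeta_m]$ and the inductive hypothesis for $\qIW_{m/\ell}(R)$), then argue that the resulting $y \in \qIW_{m/p}(R)$ is again $p^\infty$-torsion and apply the inductive bound one more time. The only cosmetic difference is that you deduce $p$-power torsion of $y$ from $F_p \circ V_p = p$, whereas the paper invokes injectivity of $V_p$ from \cref{cor:VerschiebungInjective}; both are equally quick and the structure of the argument is otherwise identical.
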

	\begin{proof}
		We may equivalently show the same for the localisations $\qIW_m(R)_{(p)}$, as the $p^\infty$-torsion part is preserved under this localisation. By \cref{cor:qWittLocalisation}\cref{enum:MultiplicativeSubsetOfZ}, replacing $\qIW_m(R)$ by $\qIW_m(R_{(p)})$ amounts to replacing $R$ by $R_{(p)}$. Hence we may assume that $R$ is a $\IZ_{(p)}$-algebra. We show both assertions simultaneously using induction on $m$. The case $m=1$ is clear. Now let $m>1$ and suppose the assertion has been proved for smaller indices. Let $x\in\qIW_m(R)$ be a $p^\infty$-torsion element. By choosing common bounds for the $p^\infty$-torsion in $R[q]/\Phi_m(q)$ (which is a free $R$-module) and $\qIW_{m/\ell}(R)$ for all prime factors $\ell\mid m$ (including $\ell=p$), we find $N$ independent of $x$ such that $p^Nx$ vanishes under $\gh_1$ and under $F_\ell$ for all $\ell$. If $R$ is $p$-torsion free, we may choose $N=0$. Hence \cref{lem:TechnicalpTorsion} implies $p^Nx=V_p(y)$ for some $y\in \qIW_{m/p}(R)$. As $V_p$ is injective by \cref{cor:VerschiebungInjective}, it follows that $y\in\qIW_{m/p}(R)$ must be a $p^\infty$-torsion element. Thus $p^Ny=0$, which implies $p^{2N}x=0$. This shows that $\qIW_m(R)$ has $p^\infty$-torsion bounded by $2N$, and is $p$-torsion free if $R$ is, because in that case we may chose $N=0$.
	\end{proof}
	A related, but easier assertion is the following.
	\begin{lem}\label{lem:qWittGhostMapsJointlyInjective}
		Let $R$ be a commutative, but not necessarily unital ring, and let $m$ be a positive integer. If $R$ is $p$-torsion-free for all prime factors $p\mid m$, then the ghost maps $\gh_{m/d}\colon \qIW_m(R)\rightarrow R[\zeta_d]$ for $d\mid m$ are jointly injective.
	\end{lem}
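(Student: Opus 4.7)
The plan is to proceed by induction on $m$, with the base case $m = 1$ being immediate since $\qIW_1(R) = R$ and $\gh_1$ is the identity. For the inductive step, let $x \in \qIW_m(R)$ satisfy $\gh_{m/d}(x) = 0$ for every $d \mid m$. For each prime factor $p \mid m$, the compatibility $\gh_{(m/p)/d'}(F_p(x)) = \gh_1(F_{(m/p)/d'} F_p(x)) = \gh_{m/d'}(x) = 0$ holds for every $d' \mid m/p$, and since the torsion-freeness hypothesis on $R$ is inherited (the prime factors of $m/p$ are among those of $m$), the inductive hypothesis applied to $F_p(x) \in \qIW_{m/p}(R)$ yields $F_p(x) = 0$ for every prime factor $p \mid m$.

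The remaining task is to show that $\gh_1(x) = 0$ together with $F_p(x) = 0$ for all primes $p \mid m$ forces $x = 0$. I would first reduce to the case where each prime factor of $m$ is a unit in $R$. By \cref{cor:qWittpTorsion}, the hypothesis on $R$ guarantees that $\qIW_m(R)$ is $p$-torsion-free for every $p \mid m$, whence the canonical map $\qIW_m(R) \to \qIW_m(R)[1/(p_1 \cdots p_r)]$ is injective, where $p_1, \ldots, p_r$ denote the distinct prime factors of $m$. By \cref{cor:qWittLocalisation}\,(a) the target identifies with $\qIW_m(R[1/(p_1 \cdots p_r)])$, and the image of $x$ there still satisfies all the vanishing conditions; we may therefore assume each $p_i$ is a unit in $R$.

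Under this assumption, $\gh_1(x) = 0$ together with \cref{prop:qWittKoszulExactSequence} produces a decomposition $x = \sum_i V_{p_i}(y_i)$ with $y_i \in \qIW_{m/p_i}(R)$. The relations $F_{p_j} V_{p_j} = p_j$ and $F_{p_j} V_{p_i} = V_{p_i} F_{p_j}$ for $i \neq j$ turn $F_{p_j}(x) = 0$ into $p_j y_j = -\sum_{i \neq j} V_{p_i} F_{p_j}(y_i)$; invertibility of $p_j$ lets us solve for $y_j$. Substituting back into $x = \sum_j V_{p_j}(y_j)$---and using $V_{p_j} V_{p_i} = V_{p_i p_j}$---puts $x$ into $\sum_{|S| = 2} \im V_{p_S}$, where $p_S \coloneqq \prod_{i \in S} p_i$ for $S \subseteq \{1, \ldots, r\}$. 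Iterating this substitution pushes $x$ successively deeper, ultimately producing $x = V_{p_1 \cdots p_r}(w)$ for some $w \in \qIW_{m/(p_1 \cdots p_r)}(R)$. A final application of $F_{p_1}$ gives $0 = p_1 V_{p_2 \cdots p_r}(w)$, so the invertibility of $p_1$ together with \cref{cor:VerschiebungInjective} forces $w = 0$, hence $x = 0$.

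The main obstacle is making the iteration step rigorous. At the $k$-th stage one writes $x = \sum_{|T| = k} V_{p_T}(z_T)$, and the equations $F_{p_j}(x) = 0$ couple the various $z_T$ non-trivially. My plan is, for each $S$ with $|S| = k$, to apply the composite Frobenius $F_{p_S}$ to $x$; the identity $F_{p_S} V_{p_T} = p_{S \cap T} V_{p_{T \setminus S}} F_{p_{S \setminus T}}$ (a direct consequence of the commutation and cancellation relations recalled in \cref{par:BigWitt}) extracts a leading term $p_S z_S$, while every other contribution is visibly in the image of a Verschiebung indexed by the nonempty set $T \setminus S$. Invertibility of $p_S$ then expresses $z_S$ as an element of $\sum_{\emptyset \neq U \subseteq \{1, \ldots, r\} \setminus S} \im V_{p_U}$, so that $V_{p_S}(z_S) \in \sum_{|T'| > k} \im V_{p_{T'}} \subseteq \sum_{|T'| = k+1} \im V_{p_{T'}}$, advancing the filtration by one step and ultimately reaching $x \in \im V_{p_1 \cdots p_r}$ after $r$ iterations.
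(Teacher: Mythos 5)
Your proof is correct, but it takes a genuinely different route from the paper's. The paper's argument is direct and makes no reduction: given a nonzero $x$ with $\gh_1(x)=0$, it expands $x$ as a finite sum $\sum_{d\mid m,\, d\neq m} V_{m/d}(x_d)$, recursively re-expands any $x_d$ with $\gh_1(x_d)=0$, and---after this process terminates---selects a nonzero $x_d$ with $m/d$ minimal and computes $\gh_{m/d}(x) = (m/d)\gh_1(x_d)$, which is nonzero because $R[\zeta_d]$ is $(m/d)$-torsion-free; this uses only the ghost-kernel characterisation of \cref{par:GhostMaps} and the formula $\gh_{m/d}\circ V_{m/d}=(m/d)\gh_1$. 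Your argument instead runs an outer induction on $m$ to reduce the hypotheses to $\gh_1(x)=0$ and $F_p(x)=0$ for all $p\mid m$, localises at the prime factors of $m$ (via \cref{cor:qWittpTorsion} and \cref{cor:qWittLocalisation}) so that the torsion-freeness hypothesis becomes actual invertibility, and then runs an inner induction on Verschiebung depth, extracting leading terms via the identity $F_{p_S}V_{p_T}=p_{S\cap T}V_{p_{T\setminus S}}F_{p_{S\setminus T}}$. The iteration step you flag as the ``main obstacle'' does go through as you sketch: for any decomposition $x=\sum_{\abs{T}=k}V_{p_T}(z_T)$, applying $F_{p_S}$ for each $\abs{S}=k$ and using that every $T\neq S$ has $T\setminus S\neq\emptyset$ shows $z_S\in\sum_{j\notin S}\im V_{p_j}$, hence $x\in\sum_{\abs{T}=k+1}\im V_{p_T}$, and the filtration terminates at $x=V_{p_1\cdots p_r}(w)$ with $w=0$ by \cref{cor:VerschiebungInjective}. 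Everything you cite is established before the lemma, so there is no circularity; the cost is that you invoke considerably more infrastructure (\cref{cor:qWittpTorsion}, \cref{cor:qWittLocalisation}, \cref{cor:VerschiebungInjective}, \cref{prop:qWittKoszulExactSequence}) than the paper's one-paragraph direct argument, while the gain is that after localising you work over $\IZ[1/(p_1\cdots p_r)]$, where the torsion bookkeeping disappears and the descent down the Verschiebung filtration becomes a clean algebraic computation.
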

	\begin{proof}
		Let $x\in \qIW_m(R)$ be nonzero; we wish to show that its image under some ghost map is nonzero. If $\gh_1(x)\neq 0$, we're done; otherwise, \cref{par:GhostMaps} tells us that $x=\sum_{d\mid m,\, d\neq m}V_{m/d}(x_d)$ for some $x_d\in \qIW_d(R)$. If $x_d\neq 0$ but $\gh_1(x_d)=0$, we may use \cref{par:GhostMaps} again to see that $x_d$ can be written as a sum $x_d=\sum_{e\mid d,\, e\neq d}V_{d/e}(y_e)$ for some $y_e\in \qIW_e(R)$. If we successively take the largest $d$ for which neither $x_d=0$ nor $\gh_1(x_d)\neq 0$ is satisfied and replace $x_d$ by such a sum, we will eventually arrive at an expression for $x$ in which every $x_d$ satisfies either $x_d=0$ or $\gh_1(x_d)\neq 0$.
		
		Now choose $x_d\neq 0$ such that $m/d$ is minimal. Then $\gh_{m/d}(V_{m/e}(x_e))=0$ for all $e\neq d$ as either $x_e=0$ or $m/d$ is not divisible by $m/e$ by minimality. Hence
		\begin{equation*}
			\gh_{m/d}(x)=\gh_{m/d}(V_{m/d}(x_d))=\frac{m}{d}\gh_1(x_d)\neq 0
		\end{equation*}
		as $\gh_1(x_d)\neq 0$ and $R[\zeta_d]$ is finite free over $R$, which is $m/d$-torsion-free by assumption.
	\end{proof}
	Another consequence of \cref{prop:qWittKoszulExactSequence} related to \cref{cor:qWittpTorsion} is the fact that $\qIW_m(-)$ interacts well with derived $p$-adic completion.
	\begin{cor}\label{cor:qWittpCompletion}
		Let $R$ be a commutative, but not necessarily unital ring. If the derived $p$-completion $\widehat{R}_p$ of $R$ is static, then $\qIW_m(R)_p^\complete\simeq \qIW_m(\widehat{R}_p)$ for all positive integers $m$. In particular, if $R$ is derived $p$-complete, then so is $\qIW_m(R)$.
	\end{cor}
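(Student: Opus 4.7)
The plan is to proceed by induction on $m$, using the augmented Koszul exact sequence of \cref{prop:qWittKoszulExactSequence} as the key technical input.

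For the base case $m=1$, we have $\qIW_1(R)\cong R$ directly from the universal property, so the claim is just the hypothesis that $\widehat{R}_p$ is static. For the inductive step, let $p_1,\dotsc,p_r$ denote the distinct prime factors of $m$. Applying \cref{prop:qWittKoszulExactSequence} to $R$, we get an exact sequence ending in $\qIW_m(R)\xrightarrow{\gh_1} R[\zeta_m]\rightarrow 0$. Reinterpreted in $\Dd(\IZ)$, the truncated Koszul complex $C^\bullet(R)\coloneqq (\dotsb\rightarrow \bigoplus_{i<j}\qIW_{m/p_ip_j}(R)\rightarrow \bigoplus_i\qIW_{m/p_i}(R))$ fits into a cofibre sequence
\begin{equation*}
    C^\bullet(R)\longrightarrow \qIW_m(R)\longrightarrow R[\zeta_m]
\end{equation*}
in $\Dd(\IZ)$ (indeed, exactness of the augmented complex from \cref{prop:qWittKoszulExactSequence} is equivalent to this). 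The same construction applied to $\widehat{R}_p$, whose own derived $p$-completion equals itself and is static by assumption, gives a cofibre sequence $C^\bullet(\widehat{R}_p)\rightarrow \qIW_m(\widehat{R}_p)\rightarrow \widehat{R}_p[\zeta_m]$, and the canonical ring map $R\rightarrow \widehat{R}_p$ induces a morphism between these two cofibre sequences.

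Now I apply derived $p$-adic completion to the first cofibre sequence. Since derived $p$-completion is a left adjoint to the inclusion $\widehat{\Dd}_p(\IZ)\hookrightarrow \Dd(\IZ)$, it preserves cofibre sequences. I next identify the outer terms under this completion. For every $\emptyset\neq S\subseteq\{1,\dotsc,r\}$, the inductive hypothesis gives $\qIW_{m/p_S}(R)_p^\complete\simeq \qIW_{m/p_S}(\widehat{R}_p)$, each being static; hence termwise derived completion of $C^\bullet(R)$ yields $C^\bullet(\widehat{R}_p)$. On the other side, $R[\zeta_m]=R[q]/\Phi_m(q)$ is a free $R$-module of finite rank since $\Phi_m$ is monic, so $R[\zeta_m]_p^\complete\simeq R[\zeta_m]\lotimes_R\widehat{R}_p\simeq \widehat{R}_p[\zeta_m]$, still static.

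Combining, the derived $p$-completion of the cofibre sequence for $R$ fits into a commutative ladder with the cofibre sequence for $\widehat{R}_p$, in which both outer vertical maps are equivalences. By the five-lemma in $\Dd(\IZ)$ (or equivalently by taking cofibres), the middle map $\qIW_m(R)_p^\complete\rightarrow \qIW_m(\widehat{R}_p)$ is an equivalence as well. This closes the induction and also proves the \enquote{in particular} clause: if $R$ is already derived $p$-complete, then $R\simeq \widehat{R}_p$, so $\qIW_m(R)\simeq \qIW_m(R)_p^\complete$, making $\qIW_m(R)$ derived $p$-complete. The main technical point is simply to recognise that \cref{prop:qWittKoszulExactSequence} already packages all the difficulty, so that the only work left is to check that derived $p$-completion aligns the two Koszul cofibre sequences; the potentially thorny issue of unbounded $p^\infty$-torsion in intermediate rings is bypassed because completeness of each $\qIW_{m/p_S}(\widehat{R}_p)$ is obtained as part of the inductive statement itself.
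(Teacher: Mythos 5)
Your proposal is correct and takes essentially the same route as the paper: induction on $m$, recasting the exact Koszul sequence of \cref{prop:qWittKoszulExactSequence} as a cofibre sequence in $\Dd(\IZ)$, applying derived $p$-completion (which, being exact, preserves cofibre sequences and commutes with totalisation of the bounded complex $C^\bullet$), identifying the outer terms via the inductive hypothesis and finite-freeness of $R[\zeta_m]$, and concluding by two-out-of-three. The only point worth spelling out slightly more is that the middle map $\qIW_m(R)_p^\complete\to\qIW_m(\widehat{R}_p)$ exists because $\qIW_m(\widehat{R}_p)$ is itself derived $p$-complete, which is immediate from the same cofibre sequence applied to $\widehat{R}_p$ — a step the paper makes explicit before invoking the comparison.
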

	\begin{proof}
		We use induction on $m$. The case $m=1$ is trivial, as $\qIW_1(R)\cong R$. For $m>1$, we use \cref{prop:qWittKoszulExactSequence} and the induction hypothesis to see that $\qIW_m(\widehat{R}_p)$ sits inside an acyclic complex in which all other terms are derived $p$-complete (including $\widehat{R}_p[\zeta_m]$, as this is finite free over $\widehat{R}_p$). Hence $\qIW_m(\widehat{R}_p)$ is derived $p$-complete itself. Thus the canonical map $\qIW_m(R)\rightarrow\qIW_m(\widehat{R}_p)$ induces a map 
		\begin{equation*}
			\qIW_m(R)_p^\complete\longrightarrow \qIW_m(\widehat{R}_p)\,.
		\end{equation*}
		Morally, the way to see that this is an equivalence should be to apply the five lemma, but we have to be careful since the derived $p$-completion on the left is a priori only an object in $\Dd(\IZ)$. So instead, we use the proof of \cref{prop:qWittKoszulExactSequence}, the fact that derived $p$-completion preserves finite colimits, and the induction hypothesis, to compute
		\begin{align*}
			\fib\bigl(\qIW_m(R)_p^\complete\rightarrow R[\zeta_m]_p^\complete\bigr)&\simeq \colimit_{S\in\pushoutsign^r}\qIW_{m/p_S}(R)_p^\complete\\
			&\simeq \colimit_{S\in\pushoutsign^r}\qIW_{m/p_S}(\widehat{R}_p)\\
			&\simeq \fib\bigl(\qIW_m(R)_p^\complete\rightarrow \widehat{R}_p[\zeta_m]\bigr)\,;
		\end{align*}
		here $\{p_1,\dotsc,p_r\}$ are the prime factors of $m$ and $p_S=\prod_{i\in S}p_S$ for all subsets $S\subseteq\{1,\dotsc,r\}$. Together with $R[\zeta_m]_p^\complete\simeq \widehat{R}_p[\zeta_m]$ (as $R[\zeta_m]$ is finite free over $R$), this finishes the proof.
	\end{proof}
	\begin{rem}\label{rem:qWittpCompletionAnimated}
		\cref{cor:qWittpCompletion} is true without assuming that $\widehat{R}_p$ is static, if we interpret $\widehat{R}_p$ as an animated ring and $\qIW_m(\widehat{R}_p)$ as the animation of the $m$-truncated $q$-Witt vectors functor. Note that the animation of $\qIW_m(-)$ agrees with the un-animated version on static rings. Indeed, this follows via induction on $m$, using \cref{prop:qWittKoszulExactSequence} and the fact that $R[\zeta_m]\simeq R[q]/^\L\Phi_m(q)$ is already a derived quotient.
	\end{rem}
	
	To finish this subsection, we study completions of the form $\qIW_m(R)_{(q^n-1)}^\complete$.
	\begin{cor}\label{cor:qWittCompletion}
		Let $R$ be a commutative, but not necessarily unital ring, and let $m$, $n$ be positive integers. If the derived $p$-completions $\widehat{R}_p$ of $R$ are static for all prime factors $p\mid m$, then the derived $(q^n-1)$-adic completion $\qIW_m(R)_{(q^n-1)}^\complete$ is static too. Furthermore, if $R$ has bounded $p^\infty$-torsion for all $p\mid m$, then also the $(q^n-1)^\infty$-torsion of $\qIW_m(R)$ is bounded.
	\end{cor}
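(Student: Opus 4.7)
The plan is an induction on $m$ using the Koszul-type exact sequence of \cref{prop:qWittKoszulExactSequence}. The base case $m=1$ is immediate: $q$ acts as the identity on $\qIW_1(R)=R$, so $q^n-1=0$ and both assertions hold trivially (the derived $(q^n-1)$-completion of $R$ is just $R$, and every element of $R$ is annihilated by $q^n-1$).

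For the inductive step, the key new calculation concerns $R[\zeta_m]_{(q^n-1)}^\complete$. Writing $R[\zeta_m]\cong R\otimes_\IZ \IZ[\zeta_m]$ with $q$ acting as $\zeta_m$, I would analyse the ideal $(\zeta_m^n-1)\subseteq \IZ[\zeta_m]$. Setting $m'=m/\gcd(m,n)$, so that $\zeta_m^n$ is a primitive $m'$-th root of unity, three cases arise. If $m\mid n$, then $\zeta_m^n-1=0$ and the completion equals $R[\zeta_m]$, which is static. If $m'$ has at least two distinct prime factors, then the norm of $\zeta_m^n-1$ down to $\IZ$ is $\Phi_{m'}(1)^{\varphi(m)/\varphi(m')}=1$, so $\zeta_m^n-1$ is a unit in $R[\zeta_m]$ and the completion vanishes. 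Finally, if $m'=p^k$ is a nontrivial prime power (necessarily with $p\mid m$ since $m'\mid m$), a residue-field argument shows $\zeta_m^n\equiv 1\pmod{\mathfrak p}$ for every prime $\mathfrak p$ of $\IZ[\zeta_m]$ above $p$, while a norm computation shows that $(\zeta_m^n-1)$ divides a power of $p$; hence $(\zeta_m^n-1)$ and $(p)$ have the same radical in $\IZ[\zeta_m]$. Since derived completion depends only on the radical of the completing ideal (a standard consequence of \cref{lem:DerivedBeauvilleLaszlo}), we obtain $R[\zeta_m]_{(q^n-1)}^\complete\simeq R[\zeta_m]_p^\complete\simeq \widehat R_p[\zeta_m]$, which is static by hypothesis. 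An entirely parallel case analysis establishes the bounded-torsion statement for $R[\zeta_m]$.

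With $R[\zeta_m]_{(q^n-1)}^\complete$ controlled, I would then apply derived $(q^n-1)$-completion to the acyclic complex from \cref{prop:qWittKoszulExactSequence}. The induction hypothesis applies to each $m/p_S$ with $\emptyset\neq S\subseteq\{1,\dotsc,r\}$, because the prime factors of $m/p_S$ form a subset of those of $m$ and so the hypotheses carry over; hence each $\qIW_{m/p_S}(R)_{(q^n-1)}^\complete$ is static. Chasing the resulting long exact sequence, broken into short exact sequences via the images of the Koszul differentials, places $\qIW_m(R)_{(q^n-1)}^\complete$ as the middle term of fibre sequences whose outer terms are static, forcing it to be static as well.

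The main obstacle is the \emph{bounded}-torsion half of the statement, since bounded $f^\infty$-torsion is not preserved under arbitrary quotients, so one cannot simply propagate the bounds termwise through the Koszul complex. The remedy would mirror \cref{cor:qWittpTorsion}: given a $(q^n-1)^\infty$-torsion element $x\in \qIW_m(R)$, I would choose uniform bounds $N$ (independent of $x$) on the $(q^n-1)^\infty$-torsion of $R[\zeta_m]$ and of each $\qIW_{m/\ell}(R)$ for $\ell\mid m$; then $(q^n-1)^N x$ dies under $\gh_1$ and under every Frobenius $F_\ell$. A version of \cref{lem:TechnicalpTorsion} iterated over all prime factors of $m$ then forces $(q^n-1)^N x$ to lie in the image of a composite Verschiebung, and the injectivity of the Verschiebungen (\cref{cor:VerschiebungInjective}) pulls the problem back to a smaller ring $\qIW_{m/p_1\dotsm p_r}(R)$, where the inductive bound closes the loop.
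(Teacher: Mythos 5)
The opening reduction to the cyclotomic ring $R[\zeta_m]$ is correct and in substance the same as the paper's \cref{lem:CyclotomicExtensionTorsion}: your three-way case split on $m'=m/\gcd(m,n)$, the unit-norm observation when $m'$ has two distinct prime factors, and the radical comparison $\sqrt{(\zeta_m^n-1)}=\sqrt{(p)}$ when $m'$ is a prime power all check out. However, the descent step through the Koszul complex of \cref{prop:qWittKoszulExactSequence} contains a genuine gap for the staticness claim. You break the exact sequence into short exact pieces $0\to Z_i\to C_i\to Z_{i-1}\to 0$ and derived-complete. It is true that in a fibre sequence $A\to B\to C$ with \emph{both} outer terms $A$ and $C$ static, the middle term $B$ is static. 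But the term you want, $\qIW_m(R)_{(q^n-1)}^\complete$, sits in the fibre sequence $\widehat{Z}_0\to\widehat{\qIW_m(R)}\to\widehat{R[\zeta_m]}$, and to run this argument you first need $\widehat{Z}_0$ static. Now $\widehat{Z}_0$ is the \emph{cofibre} of the map $\widehat{Z}_1\to\widehat{C}_1$ of static objects, and the cofibre of a map between static objects lives a priori in degrees $[-1,0]$; it is static precisely when the map $\H^0(\widehat{Z}_1)\to\H^0(\widehat{C}_1)$ is injective. That is exactly the assertion that the completion of the injection $Z_1\hookrightarrow C_1$ remains injective — which is not automatic, and indeed is morally the same vanishing of $\H^{-1}$ you are trying to establish. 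The descent chain is therefore circular. The paper bypasses this entirely: it uses the Stacks formula $\H^{-1}(\qIW_m(R)_{(q^n-1)}^\complete)\cong\limit_{t}\qIW_m(R)[(q^n-1)^t]$ and kills a given sequence $(x_t)$ by an explicit element chase — the inductive hypothesis forces $F_\ell(x_t)=0$, hence $\Phi_m(q)x_t=0$ via \cref{lem:IdealGeneratedByPhi}; the dichotomy on $m/n$ then either finishes immediately or, in the prime-power case, produces $x_t\in\im V_p$ via \cref{lem:TechnicalpTorsion}, and injectivity of $V_p$ pushes the problem to $\qIW_{m/p}(R)$.

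Your sketch for the bounded-torsion half is in the right spirit (uniform bound $N$, kill $\gh_1$ and all $F_\ell$, invoke \cref{lem:TechnicalpTorsion}, pull back along an injective Verschiebung), but it is too vague at exactly the points that matter. First, \cref{lem:TechnicalpTorsion} requires $R$ to be a $\IZ_{(p)}$-algebra, so you must first observe (via the same $\Phi_m(q)$ manipulation) that $(q^n-1)^N x$ is actually $p^\infty$-torsion for a \emph{specific} $p$ — namely the one for which $m/n=p^\alpha$; if $m/n$ has two distinct prime factors you are already done since $(\Phi_m(q),q^n-1)=(1)$ — and then replace $R$ by $R_{(p)}$, noting this is harmless because $p^\infty$-torsion survives. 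Second, the phrase ``iterated over all prime factors... image of a composite Verschiebung... $\qIW_{m/p_1\dotsm p_r}(R)$'' is not what happens: \cref{lem:TechnicalpTorsion} gives you $(q^n-1)^Nx=V_p(y)$ for a \emph{single} prime $p$, and after pulling back along $V_p$ you have no control over $\gh_1(y)$ or $F_\ell(y)$, so you cannot iterate the lemma. Instead one applies the inductive bound on $\qIW_{m/p}(R)$ directly to kill $y$, which is a jump to $m/p$, not to $m/p_1\dotsm p_r$.
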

	To prove \cref{cor:qWittCompletion}, we must first show the corresponding staticness assertions for $R[\zeta_m]\cong R[q]/\Phi_m(q)$.
	\begin{lem}\label{lem:CyclotomicExtensionTorsion}
		Let $R$ be a commutative, but not necessarily unital ring, and let $m$, $n$ be positive integers.
		\begin{alphanumerate}
			\item If the quotient $m/\gcd(m,n)$ has at least two distinct prime factors, then $R[q]/\Phi_m(q)$ is $(q^n-1)$-torsion-free.\label{enum:MoreThanOnePrimeFactor}
			\item Suppose $m/\gcd(m,n)=p^\alpha$ is a prime power. If the derived $p$-completion $\widehat{R}_p$ of $R$ is static, then the derived $(q^n-1)$-completion of $R[q]/\Phi_m(q)$ is static too. Furthermore, if $R$ has bounded $p^\infty$-torsion, then $R[q]/\Phi_m(q)$ has bounded $(q^n-1)^\infty$-torsion.\label{enum:OnlyOnePrimeFactor}
		\end{alphanumerate}
	\end{lem}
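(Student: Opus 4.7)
The plan is to reduce both assertions to a classical computation in the cyclotomic extension $R[\zeta_m]=R[q]/\Phi_m(q)$. Set $m'=m/\gcd(m,n)$ and let $\xi\in R[\zeta_m]$ denote the image of $q^n$. The order of $\zeta_m^n$ in $\IZ[\zeta_m]$ is exactly $m'$, so $\xi$ is a primitive $m'$-th root of unity. In particular the elements $q^n-1$ and $\xi-1$ generate the same ideal of $R[\zeta_m]$, so both parts reduce to statements about multiplication by $\xi-1$ on $R[\zeta_m]$.

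For \cref{enum:MoreThanOnePrimeFactor}, when $m'$ has at least two distinct prime factors, the classical identity $\Phi_{m'}(1)=1$ exhibits $1-\xi$ as an element of norm one over $\IQ$ and hence as a unit in $\IZ[\zeta_{m'}]\subseteq\IZ[\zeta_m]$; a fortiori it is a unit in $R[\zeta_m]$. Multiplication by $q^n-1$ is therefore an automorphism of $R[q]/\Phi_m(q)$, which is more than enough to conclude torsion-freeness.

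For \cref{enum:OnlyOnePrimeFactor}, when $m'=p^\alpha$ with $\alpha\geqslant 1$, the standard cyclotomic unit relation $(1-\zeta_{p^\alpha}^k)=u_k(1-\zeta_{p^\alpha})$ for $\gcd(k,p)=1$, combined with $\Phi_{p^\alpha}(1)=p$, yields $p=u(1-\xi)^{\phi(p^\alpha)}$ in $\IZ[\zeta_m]$ for some unit $u$. Consequently the ideals $(p)$ and $(q^n-1)$ of $R[\zeta_m]$ share the same radical, so their derived completion functors agree on $\Dd(R[\zeta_m])$. Since $R[\zeta_m]$ is a free $R$-module of finite rank $\phi(m)$, its derived $p$-completion is $\widehat{R}_p\otimes_RR[\zeta_m]\cong\widehat{R}_p[\zeta_m]$, which is static whenever $\widehat{R}_p$ is. For the torsion claim, the same freeness bounds the $p^\infty$-torsion of $R[\zeta_m]$ by the bound on $R$; writing any exponent $N$ as $k\phi(p^\alpha)+r$ with $0\leqslant r<\phi(p^\alpha)$ and using the explicit relation $p=u(1-\xi)^{\phi(p^\alpha)}$ then converts this into a uniform bound on $(1-\xi)^\infty$-torsion, and hence on $(q^n-1)^\infty$-torsion, of $R[\zeta_m]$.

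The only substantive inputs are the classical cyclotomic unit computations and the general fact that derived completion along a finitely generated ideal depends only on its radical. I do not foresee any genuinely delicate obstacle beyond setting up the identification of $q^n-1$ with $\xi-1$ and tracking the explicit bounds in the torsion statement.
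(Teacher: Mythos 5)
Your proof is essentially correct and reaches the same core fact as the paper's---that in $R[q]/\Phi_m(q)$ the ideals $(p)$ and $(q^n-1)$ have the same radical---but via a genuinely different route. The paper argues by elementary polynomial manipulation: after reducing to $n\mid m$, it observes that $[p^\alpha]_{q^n}\equiv p^\alpha \bmod (q^n-1)$ and $[p^\alpha]_{q^n}\equiv (q^n-1)^{p^\alpha-1}\bmod p$, then uses $\Phi_m(q)\mid [p^\alpha]_{q^n}$ to deduce the two inclusions $p^\alpha\in (q^n-1)$ and $(q^n-1)^{p^\alpha-1}\in (p)$ (and likewise, for part (a), that the prime factors of $m/n$ lie in $(\Phi_m(q),q^n-1)$, forcing it to be the unit ideal when there are two of them). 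You instead identify $\xi\coloneqq q^n$ with a primitive $m'$-th root of unity inside $R[\zeta_m]$ and invoke the classical cyclotomic facts $\Phi_{m'}(1)=1$ (part (a)) and $p=u\cdot(1-\xi)^{\phi(p^\alpha)}$ with $u$ a cyclotomic unit (part (b)). Your relation is sharper---an equality up to unit rather than two one-sided inclusions---and it makes the torsion bound in part (b) particularly transparent (a $(q^n-1)^N$-torsion element is $p^{\lceil N/\phi(p^\alpha)\rceil}$-torsion and vice versa). Both approaches then conclude the same way: $R[\zeta_m]$ is finite free over $R$, so derived $p$-completion and bounded $p^\infty$-torsion pass up from $R$, and the radical equality transports these to $(q^n-1)$.

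One small gap: you restrict part (b) to $\alpha\geqslant 1$, but the statement also covers $\alpha=0$, i.e.\ $m\mid n$, where $\xi=1$ and your unit relation $p=u(1-\xi)^{\phi(p^\alpha)}$ degenerates to $p=0$ (false). This case is trivial---$q^n-1$ acts as zero on $R[q]/\Phi_m(q)$, so it is automatically derived $(q^n-1)$-complete and has $(q^n-1)$-torsion bounded by $1$---and the paper handles it separately for exactly this reason. You should add that one sentence to make the argument complete.
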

	\begin{proof}
		Everything is $(q^m-1)$-torsion and $(q^m-1,q^n-1)=(q^{\gcd(m,n)}-1)$ holds in $\IZ[q]$, hence we may replace $n$ by $\gcd(m,n)$ and thus assume $n\mid m$. For \cref{enum:MoreThanOnePrimeFactor}, simply observe that $(\Phi_m(q),q^n-1)$ is the unit ideal in $\IZ[q]$, because it contains all prime factors of $m/n$ by the proof of \cref{lem:UnitIdeal}.
		
		For \cref{enum:OnlyOnePrimeFactor}, if $m=n$, then $R[q]/\Phi_m(q)$ is $(q^n-1)$-torsion, hence already $(q^n-1)$-complete, and nothing happens. So from now on, assume $m/n=p^\alpha$, where $\alpha\geqslant 1$. In this case $0=[p^\alpha]_{q^n}$ holds in $R[q]/\Phi_m(q)$. Observe that $[p^\alpha]_{q^n}=p^\alpha+(q^n-1)u=pv+(q^n-1)^{p^\alpha-1}$, where $u,v\in\IZ[q]$ are some polynomials. Hence the ideals generated by $p$ and by $q^n-1$ in $R[q]/\Phi_m(q)$ have the same radical. Thus, to show that the derived $(q^n-1)$-completion of $(R[q]/\Phi_m(q))_{(q^n-1)}^\complete$ is static, it's enough to show the same for the derived $p$-completion, which follows from our assumption that $\widehat{R}_p$ is static since $R[q]/\Phi_m(q)$ is a finite free $R$-module. For the torsion assertion, the same observation shows
		\begin{align*}
			\bigl(R[q]/\Phi_m(q)\bigr)\bigl[(q^n-1)^t\bigr]&\subseteq \bigl(R[q]/\Phi_m(q)\bigr)[p^{t\alpha}]\,,\\
			\bigl(R[q]/\Phi_m(q)\bigr)[p^t]&\subseteq \bigl(R[q]/\Phi_m(q)\bigr)\bigl[(q^n-1)^{t(p^\alpha-1)}\bigr]\,.
		\end{align*}
		for all $t\geqslant 1$, which immediately implies that $R[q]/\Phi_m(q)$ has bounded $(q^n-1)^\infty$-torsion if $R$ has bounded $p^\infty$-torsion.
	\end{proof}
	\begin{proof}[Proof of \cref{cor:qWittCompletion}]
		As in the proof of \cref{lem:CyclotomicExtensionTorsion}, we may assume without loss of generality that $n\mid m$. Assume $\widehat{R}_\ell$ is static for all prime factors $\ell\mid m$. We use induction to show that $\qIW_m(R)_{(q^n-1)}^\complete$ is static. The case $m=1$ is clear as $\qIW_1(R)\cong R$ is already $(q^n-1)$-torsion. Now let $m>1$. Recall (e.g.\ from \cite[\stackstag{0BKG}]{Stacks}) that
		\begin{equation*}
			 \H^{-1}\bigl(\qIW_m(S)_{(q^n-1)}^\complete\bigr)\cong\limit_{t\geqslant 1}\qIW_m(R)\bigl[(q^n-1)^t\bigr]
		\end{equation*}
		where the transition maps in the limit are multiplication by $(q^n-1)$. Let $x=(x_t)$ be an element of the right-hand side; we wish to show $x=0$. Using the inductive hypothesis, we see that $F_\ell(x_t)=0$ for all prime factors $\ell\mid m$ and all $t$; hence also $0=V_\ell F_\ell(x_t)=[\ell]_{q^{m/\ell}}x_t$. By \cref{lem:IdealGeneratedByPhi}, this implies $\Phi_m(q)x_t=0$.
		
		If $m/n$ has at least two distinct prime factors, then $(\Phi_m(q),q^n-1)$ is the unit ideal in $\IZ[q]$ (see the proof of \cref{lem:CyclotomicPolynomialsCoprime}). As every $x_t$ is both $\Phi_m(q)$-torsion and $(q^n-1)^t$-torsion, we obtain $x=0$, as required.
		
		So we're left to deal with the case where $m/n=p^\alpha$ is a prime power. In this case, the ideal $(\Phi_m(q),q^n-1)\subseteq \IZ[q]$ is not the unit ideal, but at least it contains $p$ (see again the proof of \cref{lem:CyclotomicPolynomialsCoprime}) and thus $x_t$ is a $p^t$-torsion element for all $t\geqslant 1$. We may therefore replace $R$ by its localisation $R_{(p)}$, which according to \cref{cor:qWittLocalisation}\cref{enum:MultiplicativeSubsetOfZ} amounts to replacing $\qIW_m(R)$ by $\qIW_m(R)_{(p)}$ and therefore doesn't change the $p^\infty$-torsion part. Also note that replacing $R$ by $R_{(p)}$ doesn't change the condition that the derived $\ell$-adic completions are static for all prime factors $\ell\mid m$, as these completions can only become $0$ for $\ell\neq p$. Using \cref{lem:CyclotomicExtensionTorsion}, we know that $\gh_1(x_t)=0$ for all $t$. Together with $F_\ell(x_t)=0$, we conclude $x_t=V_p(y_t)$ for some $y_t\in \qIW_{m/p}(R)$ by \cref{lem:TechnicalpTorsion}. As $V_p$ is injective by \cref{cor:VerschiebungInjective}, we see that $y_t$ is a $(q^n-1)^t$-torsion element and that $y_{t-1}=(q^n-1)y_t$. Hence $y=(y_t)$ defines an element in $\H^{-1}(\qIW_m(S)_{(q^n-1)}^\complete)$. Then the inductive hypothesis shows $y=0$ and thus $x=0$ as well, as desired. This finishes the proof that $\qIW_m(R)_{(q^n-1)}^\complete$ is static.
		
		Now assume that $R$ has bounded $\ell^\infty$-torsion for all prime factors $\ell\mid m$. Let $x\in\qIW_m(R)$ be a $(q^n-1)^\infty$-torsion element. By induction and \cref{lem:CyclotomicExtensionTorsion}, we may choose a common bound for the $(q^n-1)^\infty$-torsion in $R[q]/\Phi_m(q)$ and $\qIW_{m/\ell}(R)$ for all prime factors $\ell\mid m$. Hence we find a positive integer $N$, independent of $x$, such that $(q^n-1)^Nx$ vanishes under the ghost map $\gh_1$ and under the Frobenius maps $F_\ell$ for all $\ell\mid m$. Then $0=V_\ell F_\ell((q^n-1)^Nx)=[\ell]_{q^{m/\ell}}(q^n-1)^Nx$ for all $\ell\mid m$, which as before implies $0=\Phi_m(q)(q^n-1)^Nx$.
		
		If $m/n$ has at least two distinct prime factors, then $(q^n-1)^Nx$ being both $\Phi_m(q)$-torsion and $(q^n-1)^\infty$-torsion implies $(q^n-1)^Nx=0$ and we're done. If $m/n=p^\alpha$ is a prime power, we can only deduce that $(q^n-1)^Nx$ is $p^\infty$-torsion. But then we may once again replace $R$ by $R_{(p)}$ and apply \cref{lem:TechnicalpTorsion} to see that $(q^n-1)^Nx=V_p(y)$ for some $y\in\qIW_{m/p}(R)$. Then $V_p$ being injective by \cref{cor:VerschiebungInjective} implies that $y$ must be a $(q^n-1)^\infty$-torsion element too. Hence $(q^n-1)^Ny=0$. Thus $(q^n-1)^{2N}x=0$ and we're done.
	\end{proof}

	\subsection{Injectivity of \texorpdfstring{$\IW_m(R)\rightarrow \qIW_m(R)$}{W(R) -> q-W(R)}}\label{subsec:WittToqWitt}
	In this subsection we'll do as the title says and prove the following consequence of \cref{prop:qWittKoszulExactSequence}:
	\begin{prop}\label{prop:WittToqWittInjective}
		Let $R$ be a commutative, but not necessarily unital ring, and let $m$ be a positive integer. Then the natural map $\IW_m(R)\rightarrow \qIW_m(R)$ is injective.
	\end{prop}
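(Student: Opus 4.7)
The plan is to proceed by induction on $m$. The base case $m=1$ is immediate since $\IW_1(R) = R = \qIW_1(R)$.

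For the inductive step, I would first observe that the classical analogue of the augmented Koszul complex of \cref{prop:qWittKoszulExactSequence} is also exact, namely
\begin{equation*}
\dotsb \longrightarrow \bigoplus_{i<j} \IW_{m/p_ip_j}(R) \longrightarrow \bigoplus_i \IW_{m/p_i}(R) \longrightarrow \IW_m(R) \xrightarrow{\gh_1} R \longrightarrow 0\,.
\end{equation*}
This can be verified by the same $\infty$-categorical joint-conservativity argument used in \cref{prop:qWittKoszulExactSequence}, and is substantially simpler since no cyclotomic polynomials appear. The resulting morphism from this classical Koszul complex to the $q$-Koszul complex of \cref{prop:qWittKoszulExactSequence} has component $R \hookrightarrow R[\zeta_m]$ injective (free module of positive rank) and components $\IW_{m/p_S}(R) \to \qIW_{m/p_S}(R)$ for $\emptyset \neq S \subseteq \{1,\dotsc,r\}$ injective by the inductive hypothesis.

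Now let $x \in \IW_m(R)$ map to $0$ in $\qIW_m(R)$. Since the classical ghost map $\gh_1 \colon \IW_m(R) \to R$ factors as $\IW_m(R) \to \qIW_m(R) \to R[\zeta_m]$ followed by the injection of $R$ as constants, we deduce $\gh_1(x) = 0$ in $R$. By classical Koszul exactness, $x = \sum_i V_{p_i}(y_i^{(1)})$ for some $y_i^{(1)} \in \IW_{m/p_i}(R)$. As $\sum V_{p_i}(\bar y_i^{(1)}) = 0$ in $\qIW_m(R)$, \cref{prop:qWittKoszulExactSequence} shows that each $\bar y_i^{(1)}$ is itself a sum of Verschiebungens in the next Koszul degree; in particular, the image of $\gh_1(y_i^{(1)})$ in $R[\zeta_{m/p_i}]$ vanishes, so $\gh_1(y_i^{(1)}) = 0$ in $R$. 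Applying classical Koszul exactness within each $\IW_{m/p_i}(R)$ and iterating this procedure $\Omega(m) := \sum_i v_{p_i}(m)$ times, I would eventually express $x$ as a sum of depth-$\Omega(m)$ Verschiebungens of the form $V_{p_{i_1}\dotsm p_{i_{\Omega(m)}}}$ applied to elements of $\IW_{m/(p_{i_1}\dotsm p_{i_{\Omega(m)}})}(R)$. Since the only multiset of prime factors of $m$ of total multiplicity $\Omega(m)$ that yields a divisor of $m$ is the full multiset giving $m$ itself, this sum collapses to $x = V_m(c)$ for some $c \in \IW_1(R) = R$.

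To conclude, $V_m(\bar c) = \bar x = 0$ in $\qIW_m(R)$, combined with the injectivity of $V_m \colon \qIW_1(R) \to \qIW_m(R)$ from \cref{cor:VerschiebungInjective}, forces $\bar c = 0$ in $\qIW_1(R) = R$. Hence $c = 0$ and $x = V_m(0) = 0$. The main obstacle is the bookkeeping in the iteration, particularly for non-squarefree $m$: one must verify that the $\gh_1$-vanishing condition propagates through all $\Omega(m)$ iterations, which is ensured at each step by invoking \cref{prop:qWittKoszulExactSequence} at successively deeper truncations. A more elegant alternative would phrase the argument as a single comparison of the full classical and $q$-Koszul complexes via the acyclicity of the mapping cone in $\Dd(\IZ)$, combined with the inductive hypothesis giving injectivity at every non-terminal position.
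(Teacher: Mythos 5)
Your approach is genuinely different from the paper's — the paper proves this by a case analysis on the torsion of $R$ (first for $R$ with no relevant prime torsion via the joint injectivity of ghost maps (\cref{lem:qWittGhostMapsJointlyInjective}); then for $R$ a $p$-torsion algebra, where a section exists because $\qIW_{p^\alpha}(R)/(q-1)\cong\IW_{p^\alpha}(R)$, bootstrapping to general $m$ via \cref{lem:qWittOverZp}; then to general $R$ via short exact sequences of non-unital rings and \cref{cor:qWittShortExactSequence}). Your argument via a comparison of the classical and $q$-Koszul complexes is a structurally different idea, but it has several gaps that would need to be filled.

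First, the exactness of the classical augmented Koszul complex is true, but your proposed justification does not work: the conservativity argument in the proof of \cref{prop:qWittKoszulExactSequence} relies heavily on the relation $V_\ell F_\ell=[\ell]_{q^{m/\ell}}$ becoming a unit after suitable localisation or completion, and this has no classical analogue (where $V_\ell F_\ell$ is multiplication by $V_\ell(1)$, not usually a unit anywhere). The correct and much easier justification is additive and elementary: in Witt coordinates $\IW_m(R)\cong\bigoplus_{d\mid m}R$ as an abelian group and the Verschiebungen act by coordinate shifts, so the augmented Koszul complex decomposes over $d\mid m$ into standard acyclic combinatorial complexes.

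Second, the \enquote{elegant alternative} via the acyclic mapping cone is not a valid general principle. A chain map $f\colon K^*\to L^*$ of acyclic complexes that is injective in every degree except $i_0$ need not be injective in degree $i_0$: one computes $\ker f^{i_0}\cong\H^{i_0-2}(\coker f^*)$, which has no reason to vanish. For instance, with $K^* = (0\to 0\to\IZ\xrightarrow{\id}\IZ\to 0)$, $L^* = (0\to\IZ\xrightarrow{2}\IZ\xrightarrow{\pi}\IZ/2\to 0)$ and $f = (0,0,\id,\pi)$, both complexes are acyclic, $f$ is injective in degrees $0,1,2$ but not in degree $3$. So you cannot deduce injectivity at the remaining position by a purely formal argument; any proof along these lines would have to exploit the specific structure of the Koszul complexes.

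Third, and most importantly, the iterative argument is not as straightforward as \enquote{iterating the procedure}. After the first step you know $\gh_1(y_i^{(1)})=0$ because $(\bar y_i^{(1)})_i$ is a Koszul boundary, hence each $\bar y_i^{(1)}\in\im V$. But upon writing $y_i^{(1)}=\sum_\ell V_\ell(y_{i,\ell}^{(2)})$ classically and collecting terms of depth two, the tuple of collected coefficients need not itself satisfy the Koszul cocycle condition at the next level — it differs from the $q$-Koszul preimage by terms coming from $\bigoplus\qIW_{m/p_ip_j}(R)$ whose $\gh_1$-values are uncontrolled. Tracking this carefully for $m=p\ell q$, one finds that these spurious terms cancel pairwise, but for a subtle reason: the cancellations are forced by $d^2=0$ together with repeated applications of \cref{prop:qWittKoszulExactSequence} at the smaller truncation levels $m/p_i$. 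The bookkeeping you gesture at is real work, and as written the proposal neither demonstrates it nor signals the cancellations that make it go through. Until that is pinned down, I would regard this step as a genuine gap.
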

	\cref{prop:WittToqWittInjective} won't be needed in the rest of this article, but it's perhaps rather satisfying to know that $\qIW_m(R)$ is really an extension of $\IW_m(R)$. To prove this, first we need one more simple corollary of \cref{prop:qWittKoszulExactSequence}.
	\begin{cor}\label{cor:qWittShortExactSequence}
		Let $R\twoheadrightarrow R'$ be a surjection of commutative, but not necessarily unital rings, and let $J$ be its kernel \embrace{which we again consider as a commutative, not necessarily unital ring}. Then the sequence
		\begin{equation*}
			0\longrightarrow\qIW_m(J)\longrightarrow\qIW_m(R)\longrightarrow\qIW_m(R')\longrightarrow 0
		\end{equation*}
		is exact for all positive integers $m$.
	\end{cor}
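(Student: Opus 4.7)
The plan is to proceed by induction on $m$, using the augmented Koszul complex of \cref{prop:qWittKoszulExactSequence} as the main tool. The base case $m=1$ is immediate because $\qIW_1(-)\cong -$ and the hypothesis gives $0\to J\to R\to R'\to 0$ exact. For the induction step, I first dispatch the easy parts. Surjectivity of $\qIW_m(R)\to\qIW_m(R')$ is direct: the map $\IW_m(R)\to\IW_m(R')$ is componentwise surjective, hence so is $\IW_m(R)[q]/(q^m-1)\to\IW_m(R')[q]/(q^m-1)$, and passing to the quotient by $\II_m$ preserves surjectivity. The composition $\qIW_m(J)\to\qIW_m(R)\to\qIW_m(R')$ is zero by functoriality. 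So the only non-trivial assertions are injectivity of $\qIW_m(J)\to\qIW_m(R)$ and matching of image and kernel in the middle.

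For the heart of the argument, apply \cref{prop:qWittKoszulExactSequence} to each of $J$, $R$, $R'$ to obtain three acyclic augmented Koszul complexes $C^\bullet_J$, $C^\bullet_R$, $C^\bullet_{R'}$, together with natural maps $C^\bullet_J\to C^\bullet_R\to C^\bullet_{R'}$. In each degree except the one occupied by $\qIW_m(-)$, the sequence $0\to C^n_J\to C^n_R\to C^n_{R'}\to 0$ is short exact: for degrees corresponding to $\qIW_{m/p_S}(-)$ with $S\neq\emptyset$ this is the inductive hypothesis; for the degree corresponding to $-[\zeta_m]$ it follows from the fact that $\IZ[q]/\Phi_m(q)$ is a free abelian group of finite rank, so tensoring with it preserves short exactness. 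Now form the termwise kernel $K^\bullet\coloneqq\ker(C^\bullet_R\to C^\bullet_{R'})$. By the surjectivity established above and in the other degrees, we obtain a short exact sequence of complexes $0\to K^\bullet\to C^\bullet_R\to C^\bullet_{R'}\to 0$, and the long exact sequence in cohomology together with the acyclicity of $C^\bullet_R$ and $C^\bullet_{R'}$ forces $K^\bullet$ to be acyclic as well. By construction $K^n=C^n_J$ in all degrees $n$ except the $\qIW_m$-degree $n_0$, and there is a canonical map $C^\bullet_J\to K^\bullet$ which is the identity outside $n_0$ and equals the natural map $\qIW_m(J)\to K^{n_0}\subseteq\qIW_m(R)$ at $n_0$.

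The main step is to show that this latter map is an isomorphism, which will simultaneously yield both injectivity of $\qIW_m(J)\to\qIW_m(R)$ and the equality of image and kernel. I would do this via a brief cohomological computation: let $L^\bullet$ and $Q^\bullet$ denote the termwise kernel and cokernel of $C^\bullet_J\to K^\bullet$, each concentrated in the single degree $n_0$ and equipped with zero differentials. Applying the long exact sequence to $0\to L^\bullet\to C^\bullet_J\to C^\bullet_J/L^\bullet\to 0$ and using acyclicity of $C^\bullet_J$ identifies $\H^{n_0-1}(C^\bullet_J/L^\bullet)\cong L^{n_0}$ and $\H^n(C^\bullet_J/L^\bullet)=0$ otherwise; applying the long exact sequence to $0\to C^\bullet_J/L^\bullet\to K^\bullet\to Q^\bullet\to 0$ and using acyclicity of $K^\bullet$ yields $\H^n(C^\bullet_J/L^\bullet)\cong \H^{n-1}(Q^\bullet)$, which is nonzero only at $n=n_0+1$. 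Comparing the two computations at $n=n_0-1$ and at $n=n_0+1$ forces $L^{n_0}=0$ and $Q^{n_0}=0$ respectively. I expect this last diagram-chase to be the only part that requires genuine care; everything else follows from bookkeeping with the results already established in the section.
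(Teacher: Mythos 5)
Your proof is correct but routes the inductive step through ordinary homological algebra rather than the $\infty$-categorical argument the paper uses. Both proofs induct on $m$ and rely on the acyclicity of the augmented Koszul complexes from \cref{prop:qWittKoszulExactSequence}. The paper exploits the identification $\fib(\qIW_m(-)\rightarrow -[\zeta_m])\simeq\colimit_{S}\qIW_{m/p_S}(-)$ coming from the proof of \cref{prop:qWittKoszulExactSequence} (that is, from \cref{lem:HigherPushout}), notes that this colimit preserves cofibre sequences, and concludes in two lines by combining the inductively-known cofibre sequences with the flatness of $\IZ[\zeta_m]$ over $\IZ$. You instead form the termwise kernel $K^\bullet$ of $C^\bullet_R\to C^\bullet_{R'}$, show it is acyclic and agrees with $C^\bullet_J$ outside the $\qIW_m$-degree $n_0$, and then force the comparison map $C^\bullet_J\to K^\bullet$ to be an isomorphism at $n_0$ by comparing the two long exact sequences arising from your surjection-then-injection factorisation. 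The chase works exactly as you wrote it, and one should observe (as you do implicitly) that the termwise kernel and cokernel of $C^\bullet_J\to K^\bullet$ really are a subcomplex and a quotient complex; this holds precisely because the map is an isomorphism in the degrees adjacent to $n_0$, so the differentials respect the decomposition. The trade-off is clear: the paper's argument is shorter once \cref{lem:HigherPushout} is in hand, while yours needs only the bare exactness statement of \cref{prop:qWittKoszulExactSequence} and avoids the derived colimit language entirely, at the cost of the terminal diagram chase.
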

	\begin{proof}
		We use induction on $m$. The case $m=1$ is clear. For $m>1$, we use the proof of \cref{prop:qWittKoszulExactSequence} together with the fact that $\colimit_{S\in\pushoutsign^r}$ preserves cofibre sequences to see that $\fib(\qIW_m(J)\rightarrow J[\zeta_m])\rightarrow\fib(\qIW_m(R)\rightarrow R[\zeta_m])\rightarrow\fib(\qIW_m(R')\rightarrow R'[\zeta_m])$ is a cofibre sequence. So is $J[\zeta_m]\longrightarrow R[\zeta_m]\longrightarrow R'[\zeta_m]$, as it is the base change of $J\rightarrow R\rightarrow R'$ along the finite free ring map $\IZ\rightarrow \IZ[\zeta_m]$. This proves what we want.
	\end{proof}
	
	Furthermore, to prove \cref{prop:WittToqWittInjective}, we need the following lemma with a somewhat lengthy, but straightforward proof.
	\begin{lem}\label{lem:qWittOverZp}
		Let $R$ be a commutative, but not necessarily unital $\IZ_{(p)}$-algebra, and let $m=p^\alpha n$, where $\alpha$ is the exponent of $p$ in the prime factorisation of $m$. Then
		\begin{equation*}
			\qIW_m(R)\cong \prod_{d\mid n}\qIW_{p^\alpha}(R)\otimes_{\IZ_{(p)}[q],\psi^d}\IZ_{(p)}[q]/\Phi_d(q^{p^\alpha})\,,
		\end{equation*}
		where $\psi^d\colon \IZ_{(p)}[q]\rightarrow \IZ_{(p)}[q]$ is the map sending $q\mapsto q^d$.
	\end{lem}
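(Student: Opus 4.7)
My plan is to combine a Chinese remainder decomposition of $\IZ_{(p)}[q]/(q^m-1)$ with the localization calculation carried out inside the proof of \cref{prop:qWittKoszulExactSequence}.

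First, since $R$ is a $\IZ_{(p)}$-algebra, \cref{cor:qWittLocalisation}\cref{enum:MultiplicativeSubsetOfZ} makes $\qIW_m(R)$ a $\IZ_{(p)}$-algebra, and since $q^m = 1$ in $\qIW_m(R)$, also a $\IZ_{(p)}[q]/(q^m-1)$-algebra. By \cref{lem:CyclotomicPolynomialsCoprime}, for any $d_1 \neq d_2$ both coprime to $p$ and any $i_1, i_2 \geqslant 0$, the cyclotomics $\Phi_{p^{i_1} d_1}(q)$ and $\Phi_{p^{i_2} d_2}(q)$ are coprime in $\IZ_{(p)}[q]$: any common factor in $\IZ[q]$ is a prime $\neq p$, which is a unit in $\IZ_{(p)}$. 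Writing $q^m - 1 = \prod_{d \mid n} \Phi_d(q) \cdots \Phi_{p^\alpha d}(q)$ and invoking the Chinese remainder theorem yields
\[
\IZ_{(p)}[q]/(q^m-1) \cong \prod_{d \mid n} A_d, \qquad A_d \coloneqq \IZ_{(p)}[q]/\bigl(\Phi_d(q) \cdots \Phi_{p^\alpha d}(q)\bigr),
\]
and correspondingly $\qIW_m(R) \cong \prod_{d \mid n} M_d$ with $M_d \coloneqq \qIW_m(R) \otimes_{\IZ_{(p)}[q]/(q^m-1)} A_d$. The same decomposition applied to $\qIW_{p^\alpha d}(R)$ gives $\qIW_{p^\alpha d}(R) \cong \prod_{d'' \mid d} M'_{d''}$.

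Next, for each $d \mid n$, I identify $M_d$ with $M'_d$ via the Frobenius $F_{n/d}: \qIW_m(R) \to \qIW_{p^\alpha d}(R)$. Being a $\IZ_{(p)}[q]$-algebra map, $F_{n/d}$ is induced on base rings by the quotient $\IZ_{(p)}[q]/(q^m-1) \twoheadrightarrow \IZ_{(p)}[q]/(q^{p^\alpha d}-1)$, which under the CRT decomposition is the projection $\prod_{d' \mid n} A_{d'} \twoheadrightarrow \prod_{d'' \mid d} A_{d''}$ onto the coordinates indexed by divisors of $d$. Hence $F_{n/d}$ annihilates $M_{d'}$ for $d' \nmid d$ and restricts to a map $M_d \to M'_d$. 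To check that this restriction is an isomorphism, I use the identities $F_{n/d} V_{n/d} = n/d$ and $V_{n/d} F_{n/d} = [n/d]_{q^d} = \prod_{e \mid n,\, e \nmid d} \Phi_e(q)$: the former is a unit in $\IZ_{(p)}$ because $p \nmid n$, and every factor $\Phi_e$ with $e \mid n$, $e \neq d$, is coprime in $\IZ_{(p)}[q]$ to each $\Phi_{p^i d}$, hence a unit in $A_d$. Thus both compositions act on the $d$-component as multiplication by units, forcing $F_{n/d}|_{M_d}$ to be bijective.

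It remains to identify $M'_d$, which I do by applying the localization identity established inside the proof of \cref{prop:qWittKoszulExactSequence} to $\qIW_{p^\alpha d}(R)$ (with $d$ in place of $n$):
\[
\qIW_{p^\alpha d}(R)\bigl[(q^{p^\alpha d/\ell}-1)^{-1} \bigm| \ell \mid d,\, \ell \neq p\bigr] \cong \qIW_{p^\alpha}(R) \otimes_{\IZ[q], \psi^d} \IZ\bigl[q, (q^{p^\alpha d/\ell}-1)^{-1}\bigr].
\]
On the left, inverting $q^{p^\alpha d/\ell}-1$ kills $M'_{d''}$ for $d'' \neq d$ (some prime factor $\ell$ of $d/d''$ makes this difference vanish on $M'_{d''}$), isolating $M'_d$. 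On the right, after base change to $\IZ_{(p)}$, the identification of $q$ on $\qIW_{p^\alpha}(R)$ with $q^d$ on the polynomial ring combined with $q^{p^\alpha} = 1$ forces $q^{p^\alpha d} = 1$, so the expression factors through $\IZ_{(p)}[q]/(q^{p^\alpha d}-1) = \prod_{d'' \mid d} A_{d''}$, and the analogous localization isolates $A_d$. Thus $M'_d \cong \qIW_{p^\alpha}(R) \otimes_{\IZ_{(p)}[q], \psi^d} A_d$, and combining with the previous step yields the claimed formula. The main obstacle I anticipate is the bookkeeping in the second step: one has to trace carefully how $F_{n/d}$ interacts with the CRT idempotents and verify that $FV$ and $VF$ become units on precisely the relevant component; once this is in place, the rest of the argument simply assembles already-established results.
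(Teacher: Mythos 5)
Your proposal is correct in its essential strategy, but it takes a genuinely different route from the paper, and it contains one small slip worth flagging.

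\textbf{The slip.} You write $V_{n/d}\circ F_{n/d}=[n/d]_{q^d}=\prod_{e\mid n,\,e\nmid d}\Phi_e(q)$. Since $F_{n/d}$ and $V_{n/d}$ run between $\qIW_m(R)$ and $\qIW_{p^\alpha d}(R)$, the correct relation is $V_{n/d}\circ F_{n/d}=[n/d]_{q^{p^\alpha d}}=\prod_{e\mid m,\,e\nmid p^\alpha d}\Phi_e(q)$. Your conclusion that $V_{n/d}F_{n/d}$ acts by a unit on $M_d$ is still correct, and in fact the quickest way to see it is simply that $[n/d]_{q^{p^\alpha d}}\equiv n/d\bmod (q^{p^\alpha d}-1)$, which is a unit in $\IZ_{(p)}$; this is exactly the observation made inside the paper's proof. (Your cyclotomic factorisation also works once you correct the formula: each factor $\Phi_{p^i e'}$ with $e'\mid n$, $e'\neq d$ is coprime in $\IZ_{(p)}[q]$ to every $\Phi_{p^j d}$, by the same reasoning you used for the CRT decomposition.)

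\textbf{Comparison with the paper.} The paper proves the lemma by a universal-property argument: it writes $\Pi_m$ for the right-hand side, equips $(\Pi_m)_{m\in\IN}$ with the structure of a $q$-$FV$-system of rings (building $\IW_m(R)$-algebra structures on each factor via $\operatorname{Res}_d\circ F_{n/d}$, and constructing Frobenii and Verschiebungen component by component), and then shows that an arbitrary $q$-$FV$-system $(W_m)$ decomposes under CRT into factors $W_{m,d}$ which are seen — using $F_{n/d}$, $V_{n/d}$, and $\operatorname{Res}_d$ — to receive canonical maps from $\Pi_m$. Your argument instead \emph{starts} from the CRT decomposition of $\qIW_m(R)$ itself, uses the Frobenius to reduce the problem to $\qIW_{p^\alpha d}(R)$, and then reuses the localisation identity established inside the proof of \cref{prop:qWittKoszulExactSequence} to identify the surviving component as $\qIW_{p^\alpha}(R)\otimes_{\IZ_{(p)}[q],\psi^d}A_d$. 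This is more economical, since you piggyback on work the paper has already done, and it sidesteps the need to construct structure maps on $\Pi_m$ and verify all the relations of a $q$-$FV$-system. The paper's approach is more self-contained and produces, as a byproduct, the Frobenius and Verschiebung operators on the product decomposition together with their compatibilities, which is convenient for the later application in \cref{prop:WittToqWittInjective}; your argument also yields the same compatibilities, but implicitly, via naturality of the CRT idempotents and of the localisation identity rather than by explicit construction.
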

	\begin{proof}
		Let's abbreviate the right-hand side by $\Pi_m$. We'll explain how to give $(\Pi_m)_{m\in\IN}$ the structure of a $q$-$FV$-system over $R$ as in \cref{def:qFVSystemOfRings}. To equip $\Pi_m$ with a $\IW_m(R)$-algebra structure, we must construct a ring map from $\IW_m(R)$ into each factor. On the $d$\textsuperscript{th} factor we construct the desired map as the composition
		\begin{equation*}
			\IW_m(R)=\IW_{p^\alpha n}(R)\xrightarrow{F_{n/d}}\IW_{p^\alpha d}(R)\xrightarrow{\operatorname{Res}_{d}}\IW_{p^\alpha}(R)\longrightarrow\qIW_{p^\alpha}(R)
		\end{equation*}
		This defines a map $\IW_m(R)\rightarrow \Pi_m$. To understand what this map is really doing, observe that $\IW_m(R)\cong \IW_{n}(\IW_{p^\alpha}(R))\cong \prod_{d\mid n}\IW_{p^\alpha}(R)$. Here the first isomorphism is \cite[Corollary~\href{https://arxiv.org/pdf/0801.1691\#subsection.5.4}{5.4}]{Borger}, the second is induced by the ghost maps for $\IW_n(-)$, using that $n$ is invertible on $\IW_{p^\alpha}(R)$ by our assumption that $R$ is a $\IZ_{(p)}$-algebra. Then the map $\IW_m(R)\rightarrow \Pi_m$ is simply given by matching up the factors. 
		
		This takes care of the $\IW_m(R)$-algebra structures. It remains to construct construct Frobenii $F_{\ell}\colon\Pi_m\rightarrow \Pi_{m/\ell}$ and Verschiebungen $V_\ell\colon \Pi_{m/\ell}\rightarrow \Pi_m$ for all prime factors $\ell\mid m$ and verify that they satisfy $V_\ell\circ F_\ell=[\ell]_{q^{m/\ell}}$ as well as $F_\ell\circ V_\ell=\ell$ (but the latter will be trivial, so we won't mention it). 
		
		\emph{Case~1: $\ell=p$.} Here we simply take the maps induced by the usual Frobenius and Verschiebung $F_p\colon \qIW_{p^\alpha}(R)\rightarrow \qIW_{p^{\alpha-1}}(R)$ and $V_p\colon \qIW_{p^{\alpha-1}}(R)\rightarrow \qIW_{p^\alpha}(R)$. To check $V_p\circ F_p=[p]_{q^{m/p}}$, we must check that
		\begin{equation*}
			\psi^d\bigl([p]_{q^{p^{\alpha}/p}}\bigr)\equiv [p]_{q^{m/p}}\mod \Phi_d(q^{p^\alpha})\,.
		\end{equation*}
		In fact, we even claim that $[p]_{q^{p^{\alpha}d/p}}\equiv [p]_{q^{p^{\alpha}n/p}}\mod (q^{p^\alpha d}-1)$. This is because the sequences $\{1,q^{p^{\alpha}d/p},(q^{p^{\alpha}d/p})^2,\dotsc,(q^{p^{\alpha}d/p})^{p-1}\}$ and $\{1,q^{p^{\alpha}n/p},(q^{p^{\alpha}n/p})^2,\dotsc,(q^{p^{\alpha}n/p})^{p-1}\}$ agree modulo $(q^{p^\alpha d}-1)$ up to permutation, similar to the argument in the proof of \cref{lem:BigqWittUniversal}.
		
		\emph{Case~2: $\ell\neq p$.} Here the Frobenius $F_\ell$ is simply given by the projection to those factors where $d$ divides $n/\ell$. The Verschiebung is given as follows: If $w=(w_e)_{e\mid (n/\ell)}$ is an element of $\Pi_m$, we let $V_\ell(w)=(V_\ell(w)_d)_{d\mid n}$, where
		\begin{equation*}
			V_\ell(w)_d=\begin{cases*}
				\ell w_e & if $d=e\mid (n/\ell)$\\
				0 & else
			\end{cases*}\,.
		\end{equation*}
		To check $V_\ell\circ F_\ell=[\ell]_{q^{m/\ell}}$, we have to verify $[\ell]_{q^{m/\ell}}$ is either $\ell$ or $0$ modulo $\Phi_d(q)\dotsm \Phi_{p^\alpha d}(q)$, depending on whether $d$ divides $n/\ell$ or not. This is straightforward.
		
		We've thus succeeded in equipping $(\Pi_m)_{m\in \IN}$ with the structure of a $q$-$FV$-system over $R$. It remains to verify that it is in fact the initial one. To do so, let $(W_m)_{m\in \IN}$ be an arbitrary $q$-$FV$ system. For $m=p^\alpha n$ as above and $d\mid n$, put $W_{m,d}\coloneqq W_{m}/\Phi_d(q^{p^\alpha})$. Then
		\begin{equation*}
			W_{m}\cong \prod_{d\mid n}W_{m,d}\,,
		\end{equation*}
		because $\IZ_{(p)}[q]/(q^m-1)\cong \prod_{d\mid n}\IZ_{(p)}[q]/\Phi_d(q^{p^\alpha})$ holds by \cref{lem:CyclotomicPolynomialsCoprime} and the Chinese remainder theorem. Furthermore, this decomposition of $W_m$ is respected by Frobenii and Verschiebungen, because it only depends on the $\IZ[q]$-algebra structure on $W_m$. For any $d\mid n$, the induced maps $F_{n/d}\colon W_{p^\alpha n,d}\rightarrow W_{p^\alpha d,d}$ and $V_{n/d}\colon W_{p^\alpha d,d}\rightarrow W_{p^\alpha n,d}$ are isomorphisms. Indeed, we have $F_{n/d}\circ V_{n/d}=n/d$, which is invertible in $\IZ_{(p)}$, and also $V_{n/d}\circ F_{n/d}=[n/d]_{q^{p^\alpha d}}=n/d$, because $W_{p^\alpha d,d}$ and $W_{p^\alpha n,d}$ are $(q^{p^\alpha d}-1)$-torsion.
		
		In particular, the $\IW_{p^\alpha n}(R)$-algebra structure on $W_{p^\alpha n,d}$ necessarily factors through the Frobenius $F_{n/d}\colon \IW_{p^\alpha n}(R)\rightarrow \IW_{p^\alpha d}(R)$. Furthermore, for all prime factors $\ell\mid d$, the diagram
		\begin{equation*}
			\begin{tikzcd}
				\IW_{p^\alpha d}(R)\rar & W_{p^\alpha d,d}\\
				\IW_{p^\alpha d/\ell}(R)\rar\uar & W_{p^\alpha d/\ell}/\Phi_d(q^{p^\alpha})\uar
			\end{tikzcd}
		\end{equation*}
		commutes. But the lower right corner vanishes by \cref{lem:CyclotomicPolynomialsCoprime}. Hence the $\IW_{p^\alpha d}(R)$-algebra structure on $W_{p^\alpha d,d}$ factors through the quotient $\IW_{p^\alpha d}(R)\twoheadrightarrow \IW_{p^\alpha d}(R)/\left(\im V_\ell\ |\ \ell\text{ prime factor of }d\right)$. This quotient map can be identified with $\operatorname{Res}_d\colon \IW_{p^\alpha d}(R)\rightarrow \IW_{p^\alpha}(R)$.
		
		To summarise, we've shown that the $\IW_{p^\alpha n}(R)$-algebra structure on $W_{p^\alpha n,d}$ really factors through $\IW_{p^\alpha}(R)$. Hence for fixed $n$ and $d$, the sequence $(W_{p^\alpha n,d})_{\alpha\geqslant 0}$ acquires the structure of a $\{1,p,p^2,\dotsc\}$-truncated $q^n$-$FV$-system in the sense of \cref{rem:TruncatedUniversalProperty} (that is, it satisfies the same axioms, but with $q$ replaced by $q^n$). Since we've checked $[p]_{q^{p^{\alpha}d/p}}\equiv [p]_{q^{p^{\alpha}n/p}}\mod q^{p^\alpha d}-1$, and $W_{p^\alpha,d}$ is a $(q^{p^\alpha d}-1)$-torsion ring, we obtain equivalently a $\{1,p,p^2,\dotsc\}$-truncated $q^d$-$FV$-system structure. The universal such system is clearly $(\qIW_{p^\alpha}(R)\otimes_{\IZ_{(p)}[q],\psi^d}\IZ_{(p)}[q])_{\alpha\geqslant 0}$ and so we obtain canonical morphisms
		\begin{equation*}
			\qIW_{p^\alpha}(R)\otimes_{\IZ_{(p)}[q],\psi^d}\IZ_{(p)}[q]/\Phi_d(q^{p^\alpha})\longrightarrow W_{p^\alpha n,d}\,,
		\end{equation*}
		witnessing the desired universal property for $(\Pi_m)_{m\in \IN}$.
	\end{proof}
	\begin{proof}[Proof of \cref{prop:WittToqWittInjective}]
		If $R$ is $p$-torsion free for all prime factors $p\mid m$, then the assertion is clear by \cref{lem:qWittGhostMapsJointlyInjective}. So let's next assume that $R$ is a $p$-torsion ring. Then $V_p\circ F_p=p$ holds on ordinary Witt vectors. In particular, $\qIW_{p^\alpha}(R)/(q-1)\cong \IW_{p^\alpha}(R)$, because then clearly both $(\qIW_{p^\alpha}(R)/(q-1))_{\alpha\geqslant 0}$ and $(\IW_{p^\alpha}(R))_{\alpha\geqslant 0}$ are universal among $\{1,p,p^2,\dotsc\}$-truncated $q$-$FV$-systems for which $q=1$. This proves that $\IW_{p^\alpha}(R)\rightarrow \qIW_{p^\alpha}(R)$ has a section and is therefore injective. For general $m$, write $m=p^\alpha n$, where $\alpha=v_p(m)$. We must show that
		\begin{equation*}
			\IW_m(R)\cong \prod_{d\mid n}\IW_{p^\alpha}(R)\longrightarrow \prod_{d\mid n}\qIW_{p^\alpha}(R)\otimes_{\IZ_{(p)}[q],\psi^d}\IZ_{(p)}[q]/\Phi_d(q^{p^\alpha})\cong \qIW_m(R)
		\end{equation*}
		is injective (see \cref{lem:qWittOverZp} and its proof). But the $d$\textsuperscript{th} factor on the right-hand side maps to $\qIW_{p^\alpha}(R)\otimes_{\IZ[q],\psi^d}\IZ_{(p)}[q]/\Phi_d(q)$. This is a $(q^d-1)$-torsion ring, hence it can be rewritten as $\qIW_{p^\alpha}(R)/(q-1)\otimes_{\IZ[q],\psi^d}\IZ_{(p)}[q]/\Phi_d(q)\cong \IW_{p^\alpha}(R)[\zeta_d]$. Now $\IW_{p^{\alpha}}(R)\rightarrow \IW_{p^\alpha}(R)[\zeta_d]$ is clearly injective and we're done in the case where $R$ is $p$-torsion.
		
		Next, let's assume $R$ is $p^\alpha$-torsion for some $\alpha\geqslant 1$. We use induction on $\alpha$; the case $\alpha=1$ was just done. For $\alpha\geqslant 2$, we have a short exact sequence $0\rightarrow R[p]\rightarrow R\rightarrow R/R[p]\rightarrow 0$ of non-unital rings. Using the inductive hypothesis for $R[p]$ and $R/R[p]$ together with \cref{cor:qWittShortExactSequence} and the four lemma, we conclude that $\IW_m(R)\rightarrow \qIW_m(R)$ is injective, as required. This also settles the case where $R$ is $p^\infty$-torsion, because then $R=\bigcup_{\alpha\geqslant 1}R[p^\alpha]$ and both $\IW_m(-)$ and $\qIW_m(-)$ commute with filtered colimits.
		
		Finally, let's do the general case. We have a short exact sequence of non-unital rings $0\rightarrow \bigoplus_{p\mid m}R[p^\infty]\rightarrow R\rightarrow \overline{R}\rightarrow 0$, where $\overline{R}$ is $p$-torsion free for all $p\mid m$. So we already know that the assertion is true for $\bigoplus_{p\mid m}R[p^\infty]$ and $\overline{R}$. Applying \cref{cor:qWittShortExactSequence} and the four lemma once again, we conclude that the assertion for $R$ is true as well.
	\end{proof}

	\subsection{\texorpdfstring{$q$}{q}-Witt vectors of \texorpdfstring{$\Lambda$}{Lambda}-rings}\label{subsec:qWittLambda}
	In general, $\qIW_m(R)$ can be quite far from $R[q]/(q^m-1)$. However, in the presence of a $\Lambda$-structure on $R$, there are certain maps between these rings. The purpose of this subsection is to describe these maps. As a consequence, we will see that $\qIW_m(\IZ)\cong \IZ[q]/(q^m-1)$.
	
	From now on, we will no longer consider non-unital rings; all rings in the following will be commutative and unital.
	
	\begin{numpar}[The trivial map.]\label{par:TrivialComparisonMap}
		Suppose $A$ is a $\Lambda$-ring. Then we get a section $A\rightarrow \IW_m(A)$ of $\gh_1\colon \IW_m(A)\rightarrow A$ as follows: The cofree $\Lambda$-ring under $A$ is the big Witt ring $\IW(A)$, hence we get a section $s\colon A\rightarrow \IW(A)$ of $\gh_1$. Composing with the restriction map $\operatorname{Res}\colon \IW(A)\rightarrow\IW_m(A)$ gives the desired section. We can now extend this section $\IZ[q]$-linearly to obtain a map
		\begin{equation*}
			s_m\colon A[q]/(q^m-1)\longrightarrow\qIW_m(A)\,,
		\end{equation*}
		whose composition with $\gh_1$ is the canonical map $A[q]/(q^m-1)\rightarrow A[q]/\Phi_m(q)$. More generally, if we write $s\colon A\rightarrow \IW(A)\cong A^\IN$ as $s(x)=(\delta_m(x))_{m\in\IN}$ for all $x\in A$, then
		\begin{equation*}
			\psi^m(x)\coloneqq \sum_{d\mid m}d\delta_d(x)^{m/d}=\gh_m\bigl(s(x)\bigr)
		\end{equation*}
		is the \emph{$m$\textsuperscript{th} Adams operation} $\psi^m\colon A\rightarrow A$ of the $\Lambda$-ring $A$. Clearly $\psi^m$ is a ring morphism. Hence for the trivial comparison map $s_m\colon A[q]/(q^m-1)\rightarrow \qIW_m(A)$ constructed above, the composition ${\gh_{m/d}}\circ s_m$ agrees with the canonical projection $A[q]/(q^m-1)\rightarrow A[q]/\Phi_d(q)$ followed by $\psi^{m/d}$, extended $\IZ[q]$-linearly.
	\end{numpar}
	\begin{rem}\label{rem:LambdaRingIsntNecessary}
		The construction from \cref{par:TrivialComparisonMap} as well as all other results in this subsection remain valid if we fix $m$ and only assume that $A$ is a \emph{$\Lambda_m$-ring}: that is, a $\Lambda_{\IZ,E}$-ring in the sense of \cite[{}\href{https://arxiv.org/pdf/0801.1691\#subsection.1.17}{1.17}]{Borger}, where $E=\left\{p\IZ\ \middle|\ p\text{ prime factor of }m\right\}$. The only necessary change will be to replace $\IW(A)$, the cofree $\Lambda$-ring under $A$, by $\IW_S(A)$, the cofree $\Lambda_m$-ring under $A$, where $S\subseteq \IN$ is the truncation set of all positive integers whose prime factors are also prime factors of $m$.
	\end{rem}
	The map from \cref{par:TrivialComparisonMap} is a little silly yet surprisingly useful (as we'll see). Nevertheless, it is seldom an isomorphism or even surjective. We'll now set out to construct another comparison map $c_m\colon \qIW_m(A)\rightarrow A[q]/(q^m-1)$, which is more suitable for computations. Similar maps have been found independently by Pridham \cite[Remark~\href{https://www.maths.ed.ac.uk/~jpridham/qDR.pdf\#page=23}{3.15}]{Pridham} and Molokov \cite[Proposition~\href{https://arxiv.org/pdf/2008.04956\#Th.3.1}{3.1}]{Molokov}.
	
	We've seen in \cref{par:TrivialComparisonMap} that the Adams operations on a $\Lambda$-ring $A$ can be expressed in terms of the ghost maps on $\IW(A)$ via the system of maps (of sets) $\delta_m\colon A\rightarrow A$. We'll show now that conversely, the ghost maps can be expressed in terms of Adams operations.
	\begin{lem}\label{lem:LambdaRingEpsilonOperations}
		For any $\Lambda$-ring $A$ with Adams operations $\psi^m\colon A\rightarrow A$, there are functorial maps \embrace{of sets} $\epsilon_m\colon \IW_m(A)\rightarrow A$ for all $m\in\IN$ such that
		\begin{equation*}
			\gh_m(x)=\sum_{d\mid m}d\psi^{m/d}\left(\epsilon_d\operatorname{Res}_{m/d}(x)\right)
		\end{equation*}
		for all $x\in\IW_m(A)$. Here $\operatorname{Res}_{m/d}\colon \IW_m(A)\rightarrow \IW_d(A)$ is used to denote the restriction map for ordinary Witt vectors.
	\end{lem}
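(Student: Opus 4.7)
My plan is to construct the $\epsilon_m$ by inductively inverting the lemma's own identity in a universal torsion-free $\Lambda$-ring, and then to extend to arbitrary $\Lambda$-rings by functoriality. This works because every element of any $\Lambda$-ring $A$ is the image of a free generator under a unique $\Lambda$-ring map out of a free $\Lambda$-ring, and a free $\Lambda$-ring is a polynomial $\IZ$-algebra in the $\delta$-operations applied to its generators \embrace{cf.\ \cref{par:TrivialComparisonMap}}, hence torsion-free.

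Let $A_{\mathrm{univ}}$ denote the free $\Lambda$-ring on generators $(x_e)_{e\mid m}$. I set $\epsilon_1\coloneqq \id$ and, for $m>1$, inductively define
\begin{equation*}
	\epsilon_m(x) \coloneqq \frac{1}{m}\left(\gh_m(x) - \sum_{d\mid m,\,d<m} d\,\psi^{m/d}\bigl(\epsilon_d(\operatorname{Res}_{m/d}(x))\bigr)\right)
\end{equation*}
as an element of $A_{\mathrm{univ}}\otimes\IQ$. The lemma's identity holds by construction, so everything that needs to be checked is that $\epsilon_m(x)$ actually lies in $A_{\mathrm{univ}}$, i.e., that the quantity inside the parentheses is divisible by $m$. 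Once this integrality is known, $\epsilon_m$ becomes a universal polynomial expression in the components $x_e$ and the $\delta$-operations which transfers to any $\Lambda$-ring by functoriality, yielding the identity in general.

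The main obstacle is precisely this integrality claim. By CRT it reduces to divisibility by $p^{v_p(m)}$ for each prime $p\mid m$, which in turn follows by combining the Dwork-type congruences satisfied by the ghost components of any Witt vector \embrace{namely $\gh_n(y)\equiv \gh_{n/p}(y)^p$ modulo $p^{v_p(n)}$} with the Frobenius-lifting congruences $\psi^p(a)\equiv a^p$ modulo $p$ that every Adams operation satisfies in a $\Lambda$-ring \embrace{one of the defining axioms}. A straightforward induction on $m$ organised around these congruences then yields the required divisibility; all other steps in the proof are purely formal. A conceptually cleaner but essentially equivalent formulation of the same idea would be to establish directly that the additive map $\bigoplus_{d\mid m}A\rightarrow \IW_m(A)$ sending $(E_d)\mapsto \sum_{d\mid m} V_{m/d}(s_d(E_d))$ is a functorial bijection in every $\Lambda$-ring $A$, and to read off the $\epsilon_d$'s from its inverse; the lemma's identity then drops out by applying $\gh_m$ together with $\gh_m\circ V_k = k\cdot \gh_{m/k}$ and $\gh_d\circ s_d = \psi^d$, but this formulation has exactly the same integrality obstruction at its core.
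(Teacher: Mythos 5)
Your approach — invert the lemma's identity rationally over a free $\Lambda$-ring and then establish integrality via Dwork-type congruences — is genuinely different from the paper's. The paper instead proves by induction on $m$ that every $x\in\IW_m(A)$ admits a unique expansion $x=\sum_{d\mid m}V_d\bigl(s_{m/d}(x_{m/d})\bigr)$ with $x_{m/d}\in A$, sets $\epsilon_m(x)\coloneqq x_1$, and reads off the identity by applying $\gh_m$ together with $\gh_m\circ V_d=d\,\gh_{m/d}$ and $\gh_{m/d}\circ s_{m/d}=\psi^{m/d}$. Crucially, the existence half of that decomposition is a short recursive construction that never leaves $A$: set $x_m\coloneqq\gh_1(x)$, note that $x-s_m(x_m)$ lies in $(\im V_p\ |\ p\text{ prime factor of }m)$, and induct on the smaller truncation sets. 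It needs no divisibility estimate at all, so your closing remark that the \enquote{conceptually cleaner formulation} carries \enquote{exactly the same integrality obstruction at its core} is not quite right — that route sidesteps the integrality question, which is precisely why the paper takes it (uniqueness, and with it the well-definedness of $\epsilon_m$, is then handled via flatness and a formal Kan-extension argument).

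As for your main route, the key congruence is misstated and, as written, false. The claim $\gh_n(y)\equiv\gh_{n/p}(y)^p\pmod{p^{v_p(n)}}$ already fails for $n=p^2$: a direct computation gives $\gh_{p^2}(y)-\gh_p(y)^p\equiv p\,y_p^p\pmod{p^2}$, which is nonzero in a free $\Lambda$-ring. The true Witt-polynomial congruence is $\gh_n(y)\equiv\gh_{n/p}\bigl((y_d^p)_d\bigr)\pmod{p^{v_p(n)}}$, and what your induction actually needs is Dwork's lemma with the $\Lambda$-structure built in: $\gh_n(y)\equiv\psi^p\bigl(\gh_{n/p}(y)\bigr)\pmod{p^{v_p(n)}}$ in any $p$-torsion-free ring carrying the Frobenius lift $\psi^p$. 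With this corrected form the induction does telescope — using the inductive hypothesis at $m/p$ and $\psi^p\circ\psi^{(m/p)/d}=\psi^{m/d}$, the quantity to be divided by $p^{v_p(m)}$ becomes $\gh_m(x)-\psi^p\bigl(\gh_{m/p}(x)\bigr)-\sum_{d}d\,\psi^{m/d}(E_d)$, the last sum running over proper divisors $d\mid m$ with $v_p(d)=v_p(m)$, and every term is manifestly divisible by $p^{v_p(m)}$. So the strategy is salvageable and gives a valid alternative proof, but the Frobenius lift must appear \emph{inside} the congruence rather than as a separate ingredient; without that correction the divisibility simply does not hold.
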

	\begin{proof}
		For the purpose of this proof, let us abuse notation by denoting the composition of $s\colon A\rightarrow \IW(A)$ from \cref{par:TrivialComparisonMap} with $\operatorname{Res}\colon \IW(A)\rightarrow \IW_m(A)$ also by $s_m$. We claim that every $x\in \IW_m(A)$ can expressed as a sum $x=\sum_{d\mid m}V_{d}(s_{m/d}(x_{m/d}))$ for $x_d\in A$ in a unique way. Believing this claim, we can simply define $\epsilon_m(x)\coloneqq x_1$ and compute
		\begin{equation*}
			\gh_m(x)=\sum_{d\mid m}\gh_m\bigl(V_d\bigl(s_{m/d}(x_{m/d})\bigr)\bigr)=\sum_{d\mid m}d\gh_{m/d}\bigl(s_{m/d}(x_{m/d})\bigr)=\sum_{d\mid m}d\psi^{m/d}(x_{m/d})\,.
		\end{equation*}
		Since $\operatorname{Res}_{m/d}(x)=\sum_{e\mid d}V_e(s_{d/e}(x_{m/(d/e)}))$, our definition yields $\epsilon_d\operatorname{Res}_{m/d}(x)=x_{m/d}$, so the computation shows that the desired formula holds.
		
		To prove the claim, let's first show that every $x\in\IW_m(A)$ has such a representation. We use induction on $m$. The case $m=1$ is clear. For $m>1$, let $x_m\coloneqq \gh_1(x)$, then $\gh_1(x-s_m(x_m))=0$, hence $x-s_m(x_m)$ is contained in the ideal $(\im V_p\ |\ p\text{ prime factor of }m)$. Applying the inductive hypothesis for all $\IW_{m/p}(A)$, we get a representation $x=s_m(x_m)+\sum_{d\mid m,\, d\neq 1}V_d(s_{m/d}(x_{m/d}))$ as desired.
		
		We'll only prove uniqueness in the case where $A$ is flat over $\IZ$ and leave the general case to the reader. The flat case will be enough for our purposes, since it allows us to define $\epsilon_m$ for $\IZ$-flat $\Lambda$-rings, hence, in particular, for free $\Lambda$-rings (possibly in infinitely many generators). By a formal argument, there's then a unique way to extend the $\epsilon_m$ functorially to all $\Lambda$-rings: Just write an arbitrary $\Lambda$-ring as a reflective coequaliser of free $\Lambda$-rings and use that $\IW_m(-)$ commutes with reflective coequalisers. To show uniqueness in the flat case, first observe that in any representation $x=\sum_{d\mid m}V_d(s_{m/d}(x_{m/d}))$ the element $x_m$ is uniquely determined via $x_m=\gh_1(x)$. Next, for all prime factors $p\mid m$, the element $x_{m/p}$ is uniquely determined via $\gh_p(x)=\psi^p(x_m)+px_{m/p}$, since $A$ is $p$-torsion free by our flatness assumption. Continuing in this way, we find that all $x_{m/d}$ are uniquely determined.
	\end{proof}
	\begin{lem}\label{lem:WittToCyclicRing}
		For any $\Lambda$-ring $A$ with Adams operations $\psi^m\colon A\rightarrow A$, the map \embrace{of sets, a~priori} $c_m\colon \IW_m(A)\rightarrow A[q]/(q^m-1)$ given by
		\begin{equation*}
			c_m(x)\coloneqq\sum_{d\mid m}[d]_{q^{m/d}}\psi^{m/d}\left(\epsilon_d\operatorname{Res}_{m/d}(x)\right)
		\end{equation*}
	 	is a morphism of rings.
	\end{lem}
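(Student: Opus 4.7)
The plan is to show that for every divisor $e\mid m$, the reduction of $c_m$ modulo $\Phi_e(q)$ coincides with $\psi^e\circ\gh_{m/e}$, with target viewed inside $A[q]/\Phi_e(q)$ via the constant embedding. Since $[d]_{q^{m/d}}=(q^m-1)/(q^{m/d}-1)=\prod_{f\mid m,\,f\nmid m/d}\Phi_f(q)$, the coefficient $[d]_{q^{m/d}}$ is divisible by $\Phi_e(q)$ unless $e\mid m/d$ (equivalently $d\mid m/e$), in which case $q^{m/d}\equiv 1\pmod{\Phi_e(q)}$ and hence $[d]_{q^{m/d}}\equiv d$. Using $\psi^{m/d}=\psi^e\circ \psi^{(m/e)/d}$ on the surviving terms, together with the classical fact that $\gh_{m/e}\colon\IW_m(A)\to A$ factors through the restriction $\IW_m(A)\to\IW_{m/e}(A)$ so that \cref{lem:LambdaRingEpsilonOperations} applies, one obtains
\begin{equation*}
	c_m(x)\equiv \psi^e\!\left(\sum_{d\mid m/e}d\,\psi^{(m/e)/d}\bigl(\epsilon_d\operatorname{Res}_{m/d}(x)\bigr)\right)=\psi^e\bigl(\gh_{m/e}(x)\bigr)\pmod{\Phi_e(q)}\,.
\end{equation*}
Since both $\psi^e$ and $\gh_{m/e}$ are ring homomorphisms, so is the reduction of $c_m$ modulo every $\Phi_e(q)$.

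For $\IZ$-flat $\Lambda$-rings $A$, the map $A[q]/(q^m-1)\to\prod_{e\mid m}A[q]/\Phi_e(q)$ is injective: indeed, since $\IZ[q]$ is a UFD and the $\Phi_e(q)$ are pairwise coprime irreducibles with product $q^m-1$, the statement holds over $\IZ$, and tensoring with the flat module $A$ preserves injectivity. Combined with the previous step, this shows $c_m$ is a ring homomorphism whenever $A$ is $\IZ$-flat.

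For a general $\Lambda$-ring $A$, pick a surjection $F\twoheadrightarrow A$ from a free $\Lambda$-ring $F$, which is $\IZ$-flat. Because $\IW_m(-)=(-)^{T_m}$ on underlying sets, the induced map $\IW_m(F)\twoheadrightarrow \IW_m(A)$ is surjective, so every pair $x,y\in \IW_m(A)$ admits a lift $\tilde x,\tilde y\in \IW_m(F)$. By naturality of $c_m$ in the $\Lambda$-ring and ring-hom-ness over $F$, the identities $c_m(x+y)=c_m(x)+c_m(y)$, $c_m(xy)=c_m(x)c_m(y)$, and $c_m(1)=1$ descend from $F[q]/(q^m-1)$ to $A[q]/(q^m-1)$.

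The main obstacle is the bookkeeping in the first step: tracking which terms in the defining sum for $c_m$ survive modulo $\Phi_e(q)$ and recognising the surviving sum as $\psi^e\circ\gh_{m/e}$ via \cref{lem:LambdaRingEpsilonOperations}. Once that identity is in hand, the injectivity in step two and the descent in step three are essentially formal.
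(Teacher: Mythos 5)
Your proof is correct and takes essentially the same route as the paper: reduce modulo each $\Phi_e(q)$ to identify $c_m$ with the ring homomorphism $\psi^e\circ\gh_{m/e}$, observe that the product of these reductions is injective when $A$ is $\IZ$-flat, and transfer to general $A$ via a surjection from a free $\Lambda$-ring using functoriality of the $\epsilon_d$ and surjectivity of $\IW_m(F)\to\IW_m(A)$. You spell out the mod-$\Phi_e(q)$ computation and the final descent step in more detail than the paper, but the underlying argument is identical.
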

	\begin{proof}
		We can always find a surjection $A'\twoheadrightarrow R$ from a $\IZ$-flat $\Lambda$-ring $A'$, hence we may assume that $A$ is $\IZ$-flat itself. Then $A[q]/(q^m-1)\rightarrow \prod_{d\mid m}A[q]/\Phi_d(q)$ is injective. So it suffices to check that $c_m$ is a ring morphism modulo $\Phi_d(q)$ for all $d\mid m$. For this, note that
		\begin{equation*}
			[e]_{q^{m/e}}\equiv\begin{cases*}
				e & if $d\mid \frac me$\\
				0 & if $d\nmid \frac me$
			\end{cases*}\mod \Phi_d(q)
		\end{equation*}
		and then a straightforward calculation shows $c_m\equiv \psi^d\circ \gh_{m/d}\mod \Phi_d(q)$. This is clearly a ring morphism, hence we're done.
	\end{proof}
	\begin{cor}\label{cor:qWittToCyclicRing}
		Let $R$ be a $\Lambda$-ring. Then the ring morphism $c_m\colon \IW_m(A)\rightarrow A[q]/(q^m-1)$ from \cref{lem:WittToCyclicRing} extends uniquely to a functorial ring morphism
		\begin{equation*}
			c_m\colon \qIW_m(A)\longrightarrow A[q]/(q^m-1)
		\end{equation*}
		such that the following diagrams commute for all $d\mid m$:
		\begin{equation*}
			\begin{tikzcd}
				\qIW_m(A)\rar["c_m"]\dar["F_{m/d}"']& A[q]/(q^m-1)\dar\\
				\qIW_d(A)\rar["c_d"]& A[q]/(q^d-1)
			\end{tikzcd}\quad\text{and}\quad
			\begin{tikzcd}
				\qIW_m(A)\rar["c_m"]& A[q]/(q^m-1)\\
				\qIW_d(A)\rar["c_d"]\uar["V_{m/d}"]& A[q]/(q^d-1)\uar["{[m/d]_{q^d}}"']
			\end{tikzcd}
		\end{equation*}
		Furthermore, the composition $c_m\circ s_m\colon A[q]/(q^m-1)\rightarrow A[q]/(q^m-1)$ is the $\IZ[q]$-linear extension of the $m$\textsuperscript{th} Adams operation $\psi^m\colon A\rightarrow A$.
	\end{cor}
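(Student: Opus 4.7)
The key observation is that $(\qIW_m(A))_{m\in\IN}$ is initial in $\cat{CRing}_A^{\q FV}$ by \cref{lem:BigqWittUniversal}. The strategy is therefore to equip the target system $(A[q]/(q^m-1))_{m\in\IN}$ with the structure of a $q$-$FV$-system over $A$ whose underlying $\IW_m(A)$-algebra structure is given by the ring maps $c_m$ from \cref{lem:WittToCyclicRing}. The universal property will then automatically produce the unique extension $c_m\colon\qIW_m(A)\rightarrow A[q]/(q^m-1)$ and force the two displayed diagrams to commute.

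For the construction, I take the Frobenius $F_{m/d}\colon A[q]/(q^m-1)\rightarrow A[q]/(q^d-1)$ to be the natural ring projection (well-defined because $q^d-1$ divides $q^m-1$) and the Verschiebung $V_{m/d}\colon A[q]/(q^d-1)\rightarrow A[q]/(q^m-1)$ to be multiplication by $[m/d]_{q^d}$ (well-defined as an abelian-group map since $[m/d]_{q^d}(q^d-1)=q^m-1$). The relations $F_{m/d}V_{m/d}=m/d$ and $V_{m/d}F_{m/d}=[m/d]_{q^d}$ are immediate: the first follows from $[m/d]_{q^d}\equiv m/d \bmod (q^d-1)$, and the second from $[m/d]_{q^d}(F_{m/d}(x)-x)=0$ in $A[q]/(q^m-1)$.

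The main technical step is to verify compatibility of these operations with the classical Frobenii and Verschiebungen on ordinary Witt vectors via $c_m$, i.e.\ that $c_d\circ F_{m/d}^{\mathrm{Witt}}=F_{m/d}^{\mathrm{proj}}\circ c_m$ and $c_m\circ V_{m/d}^{\mathrm{Witt}}=[m/d]_{q^d}\cdot c_d$ as maps out of $\IW_m(A)$ and $\IW_d(A)$ respectively. I reduce to the case where $A$ is $\IZ$-flat by surjecting from a free $\Lambda$-ring and using functoriality; then $A[q]/(q^m-1)\hookrightarrow \prod_{e\mid m}A[q]/\Phi_e(q)$ is injective, so it suffices to verify both compatibilities modulo each $\Phi_e(q)$. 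For this I use the congruence $c_m\equiv \psi^e\circ \gh_{m/e}\bmod \Phi_e(q)$ proved inside \cref{lem:WittToCyclicRing}, together with the classical identities $\gh_{d/e}\circ F_{m/d}^{\mathrm{Witt}}=\gh_{m/e}$ and $\gh_{m/e}\circ V_{m/d}^{\mathrm{Witt}}=(m/d)\gh_{d/e}$ if $e\mid d$ (and $0$ otherwise), together with the factorisation $[m/d]_{q^d}=\prod_{f\mid m,\, f\nmid d}\Phi_f(q)$. A short case analysis on whether $e$ divides $d$ then delivers both identities.

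Once all axioms are verified, \cref{lem:BigqWittUniversal} produces the desired ring morphism $c_m\colon\qIW_m(A)\rightarrow A[q]/(q^m-1)$, and both displayed diagrams commute automatically from $c_m$ being a morphism in $\cat{CRing}_A^{\q FV}$. For the final assertion, I reduce again to $\IZ$-flat $A$ and invoke the uniqueness of the expansion $s_m(x)=\sum_{k\mid m}V_{m/k}(s_k(x_k))$ from \cref{lem:LambdaRingEpsilonOperations}: comparing ghost components with $\gh_n(s_m(x))=\psi^n(x)$ shows inductively that $x_m=x$ and $x_k=0$ for $k<m$. Substituting into the defining formula for $c_m$ leaves only the $e=1$ term, which equals $[1]_{q^m}\psi^m(x)=\psi^m(x)$. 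The main obstacle is really just the bookkeeping in the compatibility step; everything else is formal once the $q$-$FV$-structure on the target has been put in place.
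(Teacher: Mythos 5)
Your proof is correct and takes essentially the same approach as the paper: equip $(A[q]/(q^m-1))_m$ with the $q$-$FV$-system structure (projection Frobenius, multiplication-by-$[m/d]_{q^d}$ Verschiebung), verify compatibility with the classical Frobenii and Verschiebungen by reducing to $\IZ$-flat $A$ and checking modulo each $\Phi_e(q)$ via $c_m\equiv\psi^e\circ\gh_{m/e}\bmod\Phi_e(q)$, then invoke the universal property from \cref{lem:BigqWittUniversal}. For the final assertion $c_m\circ s_m=\psi^m$, the paper again checks modulo each $\Phi_d(q)$, whereas you compute directly from the decomposition in \cref{lem:LambdaRingEpsilonOperations} (using $\epsilon_e(s_e(x))=0$ for $e>1$); both are valid, and yours is arguably slightly more transparent.
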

	\begin{proof}
		First one checks that the analogous diagrams with $\IW_m(A)$ and $\IW_d(A)$ commute. As in the proof of \cref{lem:WittToCyclicRing}, it's enough to check this modulo all the cyclotomic polynomials, which leads to a straightforward calculation. Having checked this, it's now clear that the system $(A[q]/(q^m-1))_{d\mid m}$ satisfies the conditions \cref{enum:qWittConditionA} and~\cref{enum:qWittConditionB} of \cref{def:qFVSystemOfRings} if we equip $A[q]/(q^d-1)$ with the $\IW_d(A)[q]/(q^d-1)$-algebra structure obtained via $c_d$ and extend Frobenius and Verschiebung in the indicated way. By the universal property of $(\qIW_m(A))_{m\in\IN}$, this provides us with the desired maps $c_m\colon \qIW_m(A)\rightarrow A[q]/(q^m-1)$.
		
		By surjecting from a $\IZ$-flat $\Lambda$-ring, it's again enough to check that $c_m\circ s_m=\psi^m$ holds modulo $\Phi_d(q)$ for every $d\mid m$. But we've seen in \cref{par:TrivialComparisonMap} that ${\gh_{m/d}}\circ s_m=\psi^d$ and we've seen in the proof of \cref{lem:WittToCyclicRing} that $c_m\equiv \psi^d\circ\gh_{m/d}\mod \Phi_d(q)$. Since $\psi^m=\psi^d\circ\psi^{m/d}$, we win.
	\end{proof}
	Using the comparison map from \cref{cor:qWittToCyclicRing}, we can now compute $q$-Witt vectors in some cases.
	\begin{prop}\label{prop:qWittOfLambdaRing}
		Let $A$ be a $\Lambda$-ring such that all Adams operations $\psi^m\colon A\rightarrow A$ are injective. Then the ring morphism $c_m\colon \qIW_m(A)\rightarrow A[q]/(q^m-1)$ from \cref{cor:qWittToCyclicRing} is an isomorphism onto the subring
		\begin{equation*}
			\sum_{d\mid m}[d]_{q^{m/d}}\psi^{m/d}(A)[q]/(q^m-1)\subseteq A[q]/(q^m-1)\,.
		\end{equation*}
	\end{prop}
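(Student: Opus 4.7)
The plan is to treat surjectivity onto $B\coloneqq\sum_{d\mid m}[d]_{q^{m/d}}\psi^{m/d}(A)[q]/(q^m-1)$ and injectivity of $c_m$ separately. For surjectivity, observe first that every element of $\qIW_m(A)$ is a $\IZ[q]$-linear combination of elements $V_{m/d}(s_d(a))$ with $d\mid m$ and $a\in A$: by \cref{par:GhostMaps}, $\gh_1\colon\qIW_m(A)\twoheadrightarrow A[\zeta_m]$ is surjective with kernel $(\im V_p\ |\ p\text{ prime factor of }m)$, so this follows by induction on $m$ after subtracting an appropriate $s_m(a_0)$. The commutative diagrams in \cref{cor:qWittToCyclicRing} together with $c_d\circ s_d=\psi^d$ then give $c_m(V_{m/d}(s_d(aq^k)))=[m/d]_{q^d}\psi^d(a)q^k$; after the relabeling $d\mapsto m/d$, these are precisely the generators of $B$. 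Hence $c_m(\qIW_m(A))=B$, and since the image of a ring homomorphism is a subring, this also justifies the tacit claim that $B$ is a subring.

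For injectivity, I induct on $m$, the case $m=1$ being trivial. Since $\qIW_m(A)$ embeds into $\prod_p\qIW_m(A)_{(p)}\cong\prod_p\qIW_m(A_{(p)})$ by \cref{cor:qWittLocalisation}, and $c_m$ is natural under $A\to A_{(p)}$ (injectivity of $\psi^m$ being preserved by such localizations), we may assume $A$ is a $\IZ_{(p)}$-algebra for a fixed prime $p$. Let $x\in\qIW_m(A)$ satisfy $c_m(x)=0$. Reducing modulo $\Phi_m(q)$ and using that $\Phi_m(q)\mid[d]_{q^{m/d}}$ for every $d>1$, one checks that the composition $\qIW_m(A)\xrightarrow{c_m}A[q]/(q^m-1)\twoheadrightarrow A[\zeta_m]$ equals $\psi^m\circ\gh_1$; since $A[\zeta_m]$ is $A$-free, the injectivity of $\psi^m$ on $A$ propagates to $A[\zeta_m]$, forcing $\gh_1(x)=0$. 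Analogously, the first diagram in \cref{cor:qWittToCyclicRing} and the inductive hypothesis for $c_{m/\ell}$ give $F_\ell(x)=0$ for every prime factor $\ell\mid m$.

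At this point \cref{lem:TechnicalpTorsion} applies: either $p\nmid m$ and $x=0$, or $p\mid m$ and $x=V_p(y)$ for some $y\in\qIW_{m/p}(A)$. In the latter case, the Verschiebung diagram yields $[p]_{q^{m/p}}c_{m/p}(y)=0$ inside $A[q]/(q^m-1)$. The key observation is that $[p]_{q^{m/p}}$ is monic in $q$ and hence a nonzerodivisor in $A[q]$; combined with the factorization $q^m-1=[p]_{q^{m/p}}(q^{m/p}-1)$, cancellation of the monic factor forces any lift $Y\in A[q]$ of $c_{m/p}(y)$ to lie in $(q^{m/p}-1)A[q]$, i.e., $c_{m/p}(y)=0$ in $A[q]/(q^{m/p}-1)$. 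The inductive hypothesis gives $y=0$, whence $x=V_p(y)=0$. The hardest part of the argument is injectivity, and specifically the reduction to the $\IZ_{(p)}$-setting which unlocks \cref{lem:TechnicalpTorsion}; once this reduction is in place, the interplay of $\gh_1$, the Frobenii, and the monic polynomial $[p]_{q^{m/p}}$ closes the induction cleanly.
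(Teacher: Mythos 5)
Your proof is correct. The surjectivity half is essentially the paper's: both decompose $\qIW_m(A)$ into $\IZ[q]$-combinations of Verschiebung-of-Teichmüller-type generators $V_{m/d}(s_d(a))$, compute $c_m$ on them via the commuting squares of \cref{cor:qWittToCyclicRing}, and match these with the generators of $\sum_d [d]_{q^{m/d}}\psi^{m/d}(A)[q]/(q^m-1)$. For injectivity, however, you take a genuinely different route. The paper first notes that injectivity of the Adams operations forces $A$ to be $\IZ$-torsion-free (via \cite[Lemma~2.28]{Prismatic} applied to the $\delta$-ring $A_{(p)}$), then composes $c_m$ with the jointly injective projections $A[q]/(q^m-1)\rightarrow A[q]/\Phi_d(q)$ to get $\psi^d\circ\gh_{m/d}$, and invokes \cref{lem:qWittGhostMapsJointlyInjective} to conclude in two lines with no induction. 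You instead induct on $m$, pass to $A_{(p)}$, use $\gh_1(x)=0$ and $F_\ell(x)=0$ to apply \cref{lem:TechnicalpTorsion} and write $x=V_p(y)$, then push through the Verschiebung square and cancel the monic nonzerodivisor $[p]_{q^{m/p}}$ in $A[q]$ to force $c_{m/p}(y)=0$ and close the induction. Your argument is longer and more hands-on, but it sidesteps \cref{lem:qWittGhostMapsJointlyInjective} (and the explicit appeal to the $\delta$-ring literature for torsion-freeness), trading that infrastructure for the Verschiebung-structure lemma \cref{lem:TechnicalpTorsion} and the arithmetic of $[p]_{q^{m/p}}$. Both are valid; the paper's is shorter given that \cref{lem:qWittGhostMapsJointlyInjective} has already been established, while yours is somewhat more self-contained at this point in the development.
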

	\begin{proof}
		For all primes $p$, $A_{(p)}$ is a $\delta$-ring with injective Frobenius $\psi^p\colon A_{(p)}\rightarrow A_{(p)}$, hence \cite[Lemma~\href{https://arxiv.org/pdf/1905.08229v4\#theorem.2.28}{2.28}]{Prismatic} shows that $A$ is $p$-torsion free for all primes $p$. Hence the projections $A[q]/(q^m-1)\rightarrow A[q]/\Phi_d(q)$ for $d\mid m$ are jointly injective. The composition of $c_m$ with the $d$\textsuperscript{th} such projection is $\psi^d\circ\gh_{m/d}$. Since $\psi^d$ is injective and the ghost maps on $\qIW_m(A)$ are jointly injective by \cref{lem:qWittGhostMapsJointlyInjective}, we deduce that $c_m\colon \qIW_m(A)\rightarrow A[q]/(q^m-1)$ is injective.
		
		It's clear from the construction that the image of $c_m$ is contained in the indicated subring. To show surjectivity, recall the construction of the maps $\epsilon_m\colon \IW_m(A)\rightarrow A$ from the proof of \cref{lem:LambdaRingEpsilonOperations}. For every $a\in A$ it immediately follows that $c_m(V_d(s_{m/d}(a)))=[d]_{q^{m/d}}\psi^{m/d}(a)$, hence the image of $c_m$ must contain the whole subring above.
	\end{proof}
	\begin{cor}\label{cor:qWittOfPerfectLambdaRing}
		If $A$ is a perfect $\Lambda$-ring \embrace{e.g.\ $A=\IZ$, $A=\IZ_p$, or $A=\mathrm{A}_\inf(R)$ for some perfectoid ring $R$}, then the comparison maps from \cref{par:TrivialComparisonMap} and \cref{cor:qWittToCyclicRing} are both isomorphisms:
		\begin{equation*}
			s_m\colon A[q]/(q^m-1)\overset{\cong}{\longrightarrow} \qIW_m(A)\quad\text{and}\quad c_m\colon \qIW_m(A)\overset{\cong}{\longrightarrow} A[q]/(q^m-1)\,.
		\end{equation*}
	\end{cor}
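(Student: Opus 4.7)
The plan is to deduce both isomorphism statements directly from \cref{prop:qWittOfLambdaRing} after unpacking what \enquote{perfect} means in terms of the Adams operations. First I would observe that for a perfect $\Lambda$-ring, every $\psi^p$ for $p$ prime is bijective on $A$, and since $\psi^m = \prod_{p\mid m}(\psi^p)^{v_p(m)}$, every $\psi^m$ is a bijection on $A$ for all $m\in\IN$. In particular, all Adams operations are injective, so \cref{prop:qWittOfLambdaRing} applies and identifies $c_m$ with an isomorphism onto the subring
\begin{equation*}
    \sum_{d\mid m}[d]_{q^{m/d}}\psi^{m/d}(A)[q]/(q^m-1)\subseteq A[q]/(q^m-1)\,.
\end{equation*}

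Next, I would argue that under our hypothesis this subring is already the whole ring. Since each $\psi^{m/d}$ is surjective, $\psi^{m/d}(A)=A$ for every divisor $d\mid m$, so the subring reduces to $\sum_{d\mid m}[d]_{q^{m/d}}\cdot A[q]/(q^m-1)$. The summand corresponding to $d=1$ alone equals $[1]_{q^m}\cdot A[q]/(q^m-1)=A[q]/(q^m-1)$, since $[1]_{q^m}=(q^m-1)/(q^m-1)=1$. Therefore $c_m$ is surjective, hence an isomorphism.

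Once $c_m$ is known to be an isomorphism, the statement for $s_m$ is immediate from the last sentence of \cref{cor:qWittToCyclicRing}: the composition $c_m\circ s_m\colon A[q]/(q^m-1)\rightarrow A[q]/(q^m-1)$ is the $\IZ[q]$-linear extension of $\psi^m\colon A\rightarrow A$. Since $\psi^m$ is bijective on $A$, so is this $\IZ[q]$-linear extension on $A[q]/(q^m-1)$; combined with the fact that $c_m$ is an isomorphism, this forces $s_m$ to be an isomorphism as well. I don't foresee any real obstacle here---the whole argument is a quick bookkeeping on top of \cref{prop:qWittOfLambdaRing} and the compatibility between $s_m$, $c_m$, and the Adams operations already recorded in \cref{cor:qWittToCyclicRing}.
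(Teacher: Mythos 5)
Your proof is correct and follows essentially the same route as the paper's: $c_m$ is an isomorphism by \cref{prop:qWittOfLambdaRing} because the $d=1$ summand $[1]_{q^m}\psi^m(A)[q]/(q^m-1)$ already equals $A[q]/(q^m-1)$ when $\psi^m$ is surjective, and then $s_m$ is forced to be an isomorphism from $c_m\circ s_m=\psi^m$. The paper compresses this into two sentences; you have simply spelled out the same two steps.
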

	\begin{proof}
		For $c_m$ this is immediate from \cref{prop:qWittOfLambdaRing}; for $s_m$ we use  $c_m\circ s_m=\psi^m$ by \cref{cor:qWittToCyclicRing}, which is an isomorphism since $A$ is perfect.
	\end{proof}
	\begin{exm}\label{exm:GhostMapsIsosForTrivialLambdaRing}
		As indicated in \cref{rem:LambdaRingIsntNecessary}, \cref{cor:qWittOfPerfectLambdaRing} is still true, for fixed $m$, if $A$ is only a perfect $\Lambda_m$-ring. Now assume that $R$ is any ring such that $m$ is invertible in $R$. We can equip $R$ with the trivial $\Lambda_m$-structure, where all Adams operations are the identity. This is clearly perfect, hence $c_m\colon \qIW_m(R)\rightarrow R[q]/(q^m-1)$ is an isomorphism. On the other hand, if $p_1,\dotsc,p_r$ are invertible in $R$, then the cyclotomic polynomials $\Phi_d(q)$ for $d\mid m$ are pairwise coprime in $R[q]$ by \cref{lem:CyclotomicPolynomialsCoprime}, hence $R[q]/(q^m-1)\cong \prod_{d\mid m}R[q]/\Phi_d(q)$ by the Chinese remainder theorem. Now recall from the proof of \cref{lem:WittToCyclicRing} that $c_m\equiv \psi^d\circ \gh_{m/d}\mod \Phi_d(q)$ and $\psi^d$ is the identity on $R$. So an equivalent way of stating \cref{cor:qWittOfPerfectLambdaRing} in our case is that
		\begin{equation*}
			\bigl(\gh_{m/d}\bigr)_{d\mid m}\colon \qIW_m(R)\overset{\cong}{\longrightarrow}\prod_{d\mid m}R[q]/\Phi_d(q)
		\end{equation*}
		is an isomorphism.
	\end{exm}
	\begin{cor}\label{cor:qWittNoetherian}
		If $R$ is of finite type over $\IZ$, then $\qIW_m(R)$ is of finite type over $\IZ[q]/(q^m-1)$. In particular, it is noetherian.
	\end{cor}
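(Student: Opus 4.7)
The strategy reduces the finite generation of $\qIW_m(R)$ to the known finite generation of the ordinary Witt ring $\IW_m(R)$. By \cref{lem:BigqWittUniversal}, we have $\qIW_m(R)\cong \IW_m(R)[q]/\II_m$, which is a quotient of $\IW_m(R)\otimes_\IZ \IZ[q]/(q^m-1)$. Hence, as soon as $\IW_m(R)$ is of finite type over $\IZ$, its base change along $\IZ\rightarrow \IZ[q]/(q^m-1)$ is of finite type over $\IZ[q]/(q^m-1)$, and so is any quotient, in particular $\qIW_m(R)$.

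To establish that $\IW_m(R)$ is of finite type over $\IZ$ when $R$ is, I would first exploit the set-level identity $\IW_m(R)=R^{T_m}$, which shows that the functor $\IW_m$ preserves surjections and filtered colimits. Writing $R\cong \IZ[x_1,\dotsc,x_n]/I$, the preservation of surjections reduces us to the case of a polynomial algebra $R=\IZ[x_1,\dotsc,x_n]$. For this case, a plausible finite generating set is $\{\tau_m(x_i)\}_{i=1}^n$ together with $V_d(1)$ and $V_d(\tau_{m/d}(x_i))$ for $d\mid m$ with $d>1$ and $1\leqslant i\leqslant n$. The verification uses the ghost embedding $\IW_m(\IZ[x_1,\dotsc,x_n])\hookrightarrow \IZ[x_1,\dotsc,x_n]^{T_m}$ and a direct comparison between the subring generated by the ghost components of these generators and the subring cut out by the Witt congruences. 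One toy model is already visible in the proof of \cref{lem:LambdaRingEpsilonOperations}: every element of $\IW_m$ can be decomposed as $\sum_{d\mid m}V_{m/d}\tau_d(\cdot)$, which suggests that Verschiebungen of Teichmüller lifts on ring-generators of $R$ should suffice.

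Finally, noetherianness is automatic: $\IZ[q]/(q^m-1)$ is a finitely generated $\IZ$-algebra and hence noetherian by Hilbert's basis theorem, so any finitely generated algebra over it—such as $\qIW_m(R)$—is noetherian as well.

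The main obstacle is the combinatorial verification for the polynomial case: while a natural candidate set of generators is easy to guess, confirming that it actually exhausts $\IW_m(\IZ[x_1,\dotsc,x_n])$ and does not miss any element imposed by the Witt-vector congruences requires a careful ghost-coordinate computation. If one prefers to avoid this, it is equally acceptable to invoke the fact that $\IW_m$ preserves finite type as a known result in the literature on big Witt vectors (e.g.\ in the work of Hesselholt or Borger), and then apply the reduction above.
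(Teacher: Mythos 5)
Your reduction is sound: by \cref{lem:BigqWittUniversal}, $\qIW_m(R)$ is a quotient of $\IW_m(R)\otimes_\IZ\IZ[q]/(q^m-1)$, so the claim would follow once $\IW_m(R)$ is known to be of finite type over $\IZ$. However, the explicit generating set you propose for $\IW_m(\IZ[x_1,\dotsc,x_n])$ fails, and this is precisely the step you flag as the ``main obstacle.'' Already for $m=p\geqslant 3$ and $n=1$, the subring $S\subseteq\IW_p(\IZ[x])$ generated by $\tau_p(x)$, $V_p(1)$, and $V_p(x)$ does not contain $V_p(x^2)$: any $s\in S$ with $\gh_1(s)=0$ lies in the ideal $(V_p(1),V_p(x))$ of $S$, and since $V_p(a)\cdot b=V_p(aF_p(b))$ while $F_p(S)$ is the subring of $\IZ[x]$ generated by $x^p$ and $px$, the second ghost coordinate of any such $s$ lies in $p(\IZ[x^p]+x\IZ[x^p]+p\IZ[x])$, which misses $\gh_p(V_p(x^2))=px^2$ once $p\geqslant 3$. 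The underlying obstruction is that $V_p(x)^2=pV_p(x^2)$ only produces the missing element after multiplying by $p$; and the decomposition $\sum_{d\mid m}V_{m/d}\tau_d(\cdot)$ from \cref{lem:LambdaRingEpsilonOperations} that motivated your guess applies Teichm\"uller lifts to arbitrary elements of $R$, not just to ring generators, so it does not actually suggest a finite set.

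The paper avoids $\IW_m(R)$ altogether: giving $\IZ[T_1,\dotsc,T_n]$ the $\Lambda$-structure with $\psi^p(T_i)=T_i^p$, it invokes \cref{prop:qWittOfLambdaRing} to identify $\qIW_m(\IZ[T_1,\dotsc,T_n])$ with an explicit subring of $\IZ[T_1,\dotsc,T_n,q]/(q^m-1)$ containing $\IZ[T_1^m,\dotsc,T_n^m,q]/(q^m-1)$, and then concludes because a subring sandwiched between a noetherian ring and a finite module over it is itself a finite module, hence noetherian. Your fallback of citing that $\IW_m$ preserves finite type over $\IZ$ would also close the gap---the fact is true and provable by essentially the same sandwich argument applied to the ghost embedding and the $\Lambda$-section $s_m\colon\IZ[\vec T]\hookrightarrow\IW_m(\IZ[\vec T])$---but the paper's version is shorter and stays entirely inside \cref{subsec:qWittLambda}.
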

	\begin{proof}
		If $P\twoheadrightarrow R$ is a surjection from a polynomial ring, then $\qIW_m(P)\rightarrow \qIW_m(R)$ is surjective too. Hence it suffices to consider the case where $R=\IZ[T_1,\dotsc,T_n]$ is a polynomial ring. Equip $\IZ[T_1,\dotsc,T_n]$ with the unique $\Lambda$-structure in which $\psi^p(T_i)=T_i^p$ for all primes~$p$ and all $i=1,\dotsc,n$. In this case, \cref{prop:qWittOfLambdaRing} tells us that $\qIW_m(R)$ isomorphic to the subing $B_m\coloneqq \sum_{d\mid m}[d]_{q^{m/d}}\IZ[T_1^{m/d},\dotsc,T_n^{m/d},q]/(q^{m}-1)$ sitting inside a chain of inclusions
		\begin{equation*}
			\IZ[T_1^m,\dotsc,T_k^m,q]/(q^m-1)\subseteq B_m\subseteq \IZ[T_1,\dotsc,T_k,q]/(q^m-1)\,.
		\end{equation*}
		Since $\IZ[T_1,\dotsc,T_k,q]/(q^m-1)$ is finite over the noetherian ring $\IZ[T_1^m,\dotsc,T_k^m,q]/(q^m-1)$, it follows that $B_m$ must be finite over $\IZ[T_1^m,\dotsc,T_k^m,q]/(q^m-1)$ as well. This proves that $B_m$ has finite type over $\IZ[q]/(q^m-1)$, as desired.
	\end{proof}
	
	\subsection{Relative \texorpdfstring{$q$}{q}-Witt vectors}\label{subsec:RelativeqWitt}
	Using the comparison map $c_m$ from \cref{subsec:qWittLambda}, one can develop a theory of $q$-Witt vectors relative to a fixed $\Lambda$-ring $A$, in such a way that all structure maps are $A[q]$-linear. This is necessary since we would like to formulate our eventual applications \cite{qWittHabiro,qHodge} in a relative setting. First we introduce the following relative variant of \cref{def:qFVSystemOfRings}.
	\begin{defi}\label{def:RelativeqFVSystemOfRings}
		Let $A$ be a $\Lambda$-ring and let $R$ be an $A$-algebra. A \emph{$q$-$FV$-system of $A$-algebras over $R$} is a system of $A[q]$-algebras $(W_{m})_{m\in \IN}$, together with the following structure:
		\begin{alphanumerate}
			\item For all $m\in \IN$, an $A[q]$-algebra map $\qIW_m(R)\otimes_{\qIW_m(A)}A[q]/(q^m-1)\rightarrow W_{m}$. Here the tensor product is taken along the map $c_m$ from \cref{lem:WittToCyclicRing}.\label{enum:RelativeqWittConditionA}
			\item For all divisors $d\mid m$, an $A[q]$-algebra morphism $F_{m/d}\colon W_{m}\rightarrow W_{d}$ and a $A[q]$-module morphism $V_{m/d}\colon W_{d}\rightarrow W_{m}$. These must be compatible with the Frobenii and Verschiebungen on $q$-Witt vectors (via the morphisms from \cref{enum:qWittGeneratorsI}) and satisfy\label{enum:RelativeqWittConditionB}
			\begin{equation*}
				F_{m/d}\circ V_{m/d}=m/d\quad\text{and}\quad V_{m/d}\circ 	F_{m/d}=[m/d]_{q^{d}}\,.
			\end{equation*}
		\end{alphanumerate}
		These objects form an obvious category, which we denote $\cat{CRing}_{R/A}^{\q FV}$.
	\end{defi}
	\begin{lem}\label{lem:RelativeqWitt}
		Let $A$ be a $\Lambda$-ring and $R$ an $A$-algebra. The category $\cat{CRing}_{R/A}^{\q FV}$ has an initial object $(\qIW_m(R/A))_{m\in\IN}$. Explicitly, $\qIW_m(R/A)$ is the quotient
		\begin{equation*}
			\qIW_m(R/A)\cong \bigl(\qIW_m(R)\otimes_{\qIW_m(A)}A[q]/(q^m-1)\bigr)/\IU_{m}\,,
		\end{equation*}
		where $\IU_{m}$ is the ideal generated by $V_{m/d}(xy)\otimes 1-V_{m/d}(x)\otimes c_d(y)$ for all divisors $d\mid m$, all $x\in \qIW_d(R)$, and all $y\in\qIW_d(A)$.
	\end{lem}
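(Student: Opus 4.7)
The plan is to mimic the proof of \cref{lem:BigqWittUniversal}. Set $W_m\coloneqq\bigl(\qIW_m(R)\otimes_{\qIW_m(A)}A[q]/(q^m-1)\bigr)/\IU_m$, where the tensor product uses the map $c_m$ of \cref{cor:qWittToCyclicRing}. I would first observe that the displayed generator of $\IU_m$ is unambiguously defined: although $c_d(y)\in A[q]/(q^d-1)$ must be lifted to $A[q]/(q^m-1)$, any two lifts differ by a multiple of $q^d-1$, and $(q^d-1)V_{m/d}(x)=0$ in $\qIW_m(R)$ by the very definition of the latter, so the second summand is independent of the lift. This observation will recur throughout.

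The bulk of the work is then to extend the Frobenii and Verschiebungen to $(W_m)_{m\in\IN}$. The Frobenius $F_{m/d}\colon W_m\to W_d$ is defined by $F_{m/d}$ on the first tensor factor and the natural surjection $A[q]/(q^m-1)\twoheadrightarrow A[q]/(q^d-1)$ on the second; compatibility over $\qIW_m(A)$ is exactly the first square in \cref{cor:qWittToCyclicRing}. The main obstacle I expect is to check $F_{m/d}(\IU_m)\subseteq\IU_d$. By transitivity it suffices to treat $m/d=p$ prime, and one splits into cases according to whether $p$ divides $m/d'$ or $p$ divides $d'$, where $d'\mid m$ indexes a generator of $\IU_m$. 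In the first case $F_pV_{m/d'}=pV_{d/d'}$ and the image is $p$ times the generator of $\IU_d$ indexed by $d'$. In the second case $F_p$ commutes with $V_{m/d'}$, and the image differs from the generator of $\IU_d$ indexed by $d'/p$ (with $x,y$ replaced by $F_px,F_py$) only by a term of the form $V_{m/d'}(F_px)\otimes(q^{d'/p}-1)u$, which again vanishes by the defining relations of $\qIW_d(R)$. The Verschiebung $V_{m/d}\colon W_d\to W_m$ is defined on pure tensors by $x\otimes\alpha\mapsto V_{m/d}(x)\otimes\tilde\alpha$ for any lift $\tilde\alpha$ of $\alpha$; independence of $\tilde\alpha$ and well-definedness on the tensor over $\qIW_d(A)$ are precisely what is enforced by the defining relations of $\IU_m$. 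The identities $F_{m/d}\circ V_{m/d}=m/d$ and $V_{m/d}\circ F_{m/d}=[m/d]_{q^d}$ on $W_m$ are then inherited from $\qIW_m(R)$.

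Finally, I would verify the universal property. Given any $(W'_m)\in\cat{CRing}_{R/A}^{\q FV}$, condition~(a) of \cref{def:RelativeqFVSystemOfRings} supplies $A[q]$-algebra maps $\qIW_m(R)\otimes_{\qIW_m(A)}A[q]/(q^m-1)\to W'_m$ (using \cref{lem:BigqWittUniversal} together with the universal property of the tensor product). To see that these factor through $\IU_m$, note that by~(a) the image of $y\in\qIW_d(A)$ in $W'_d$ coincides with the image of $c_d(y)\in A[q]/(q^d-1)$, so $A[q]$-linearity of the Verschiebung on $W'_m$ forces $V_{m/d}(yx)=\tilde c_d(y)V_{m/d}(x)$ in $W'_m$ for any lift $\tilde c_d(y)$---precisely the defining relation of $\IU_m$. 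Compatibility of the resulting maps with Frobenii and Verschiebungen is then automatic from the constructions.
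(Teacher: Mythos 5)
Your proposal follows essentially the same route as the paper's proof: you observe that the generators of $\IU_m$ are well-defined because $V_{m/d}(x)$ is $(q^d-1)$-torsion, you build $F_p$ from the Frobenius on $\qIW_m(R)$ and the projection on $A[q]/(q^m-1)$, you check $F_p(\IU_m)\subseteq\IU_{m/p}$ by the same case split on whether $p$ divides $m/d'$, and you verify the universal property by the same forcing argument. The only cosmetic difference is that in the second case the paper invokes the Frobenius-compatibility of $c_d$ from \cref{cor:qWittToCyclicRing} explicitly while you absorb it into the observation that the discrepancy is a $(q^{d'/p}-1)$-multiple annihilated by $V$-torsion, which amounts to the same thing.
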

	\begin{defi}\label{def:RelativeqWitt}
		Let $A$ be a $\Lambda$-ring and $R$ an $A$-algebra. We call $\qIW_m(R/A)$ the \emph{ring of $m$-truncated big $q$-Witt vectors of $R$ relative $A$}.
	\end{defi}
	\begin{proof}[Proof of \cref{lem:RelativeqWitt}]
		First we should remark that $V_{m/d}(x)\otimes c_d(y)$ is a well-defined element of $\qIW_m(R)\otimes_{\qIW_m(A),c_m}A[q]/(q^m-1)$, even though, a priori, $c_d(y)$ is only an element of $A[q]/(q^d-1)$. But $V_{m/d}(x)\in \qIW_m(R)$ is a $(q^d-1)$-torsion element, so it doesn't matter how we lift $c_d(y)$ to an element of $A[q]/(q^m-1)$.
		
		To show that $\qIW_m(R/A)$, defined as the quotient above, is indeed the desired initial object, we only need to check condition~\cref{enum:RelativeqWittConditionB} from \cref{def:RelativeqFVSystemOfRings}. The ideal $\IU_{m}$ is constructed in such a way that we get a well-defined $A[q]$-linear map $\qIW_d(R)\otimes_{\qIW_d(A)}A[q]/(q^d-1)\rightarrow \qIW_m(R/A)$ by sending $x\otimes a\mapsto V_{m/d}(x)\otimes a$ for all $x\in\qIW_d(R)$, $a\in A[q]/(q^d-1)$. Clearly, this map kills $\IU_{d}$, hence we get our desired Verschiebung $V_{m/d}\colon \qIW_d(R/A)\rightarrow \qIW_m(R/A)$.
		
		To construct the Frobenii, it's enough to construct $F_p\colon \qIW_m(R/A)\rightarrow \qIW_{m/p}(R/A)$ for all prime factors $p\mid m$. The Frobenius $F_p\colon \qIW_m(R)\rightarrow \qIW_{m/p}(R)$ and the canonical projection $A[q]/(q^m-1)\rightarrow A[q]/(q^{m/p}-1)$, which are compatible by \cref{cor:qWittToCyclicRing}, induce an $A[q]$-algebra morphism
		\begin{equation*}
			F_p\colon \qIW_m(R)\otimes_{\qIW_m(A)}A[q]/(q^m-1)\longrightarrow \qIW_{m/p}(R)\otimes_{\qIW_{m/p}(A)}A[q]/(q^{m/p}-1)\,.
		\end{equation*}
		To finish the proof, we must check $F_p(\IU_{m})\subseteq \IU_{m/p}$. So let's consider a generator of the form $V_{m/d}(xy)\otimes 1-V_{m/d}(x)\otimes c_d(y)$. Depending on whether $p$ divides $n\coloneqq m/d$ or not, the element $F_pV_{m/d}(xy)\otimes 1-F_pV_{m/d}(x)\otimes c_d(y)$ can be evaluated to either
		\begin{equation*}
			p\bigl(V_{(m/p)/d}(xy)\otimes 1-V_{(m/p)/d}(x)\otimes c_d(y)\bigr)\quad\text{or}\quad V_{n}\bigl(F_p(x)F_p(y)\bigr)\otimes 1-V_{n}\bigl(F_p(x)\bigr)\otimes c_d(y)\,.
		\end{equation*}
		In the first case, we visibly get an element of $\IU_{m/p}$. In the second case, recall from \cref{cor:qWittToCyclicRing} that the image of $c_d(y)$ in $A[q]/(q^{d_0}-1)$ is precisely $c_{d_0}(F_p(y))$. Hence the element above is again contained in $\IU_{m/p}$.
	\end{proof}
	\begin{rem}\label{rem:TruncatedUniversalPropertyRelative}
		As in \cref{rem:TruncatedUniversalProperty}, for every truncation set $S\subseteq \IN$, the truncated sequence $(\qIW_m(R/A))_{m\in S}$ satisfies a similar universal property. In the special case where $S=T_m$ is the set of divisors of $m$, the construction doesn't need a full $\Lambda$-structure on $A$; instead, a $\Lambda_m$-structure in the sense of \cref{rem:LambdaRingIsntNecessary} will be enough.
	\end{rem}
	Throughout the rest of this article, we'll exclusively work in the relative setting, since our applications work in the relative setting and the relative case isn't really more difficult. It is, however, a little heavier on the notation.
	
	In the rest of this subsection, we'll show that (under mild assumptions) most of our results so far can be carried over to the relative setting. Let's begin with a few canonical constructions.
	
	\begin{numpar}[Ghost maps for relative $q$-Witt vectors.]\label{par:RelativeGhostMaps}
		For all divisors $d\mid m$, we get a relative ghost map
		\begin{equation*}
			\gh_{m/d}\colon \qIW_m(R/A)\longrightarrow R\otimes_{A,\psi^d}A[q]/\Phi_d(q)\,.
		\end{equation*}
		This map can be constructed by tensoring the usual ghost map $\gh_{m/d}\colon \qIW_m(R)\rightarrow R[q]/\Phi_d(q)$ with the natural projection $A[q]/(q^m-1)\rightarrow A[q]/\Phi_d(q)$ and checking that the ideal $\IU_{m}$ from \cref{lem:RelativeqWitt} is sent to $0$. It's also straightforward to check that $\gh_{m/d}={\gh_{d/d}}\circ F_{m/d}$ and that $\gh_{m/m}\colon \qIW_m(R/A)\rightarrow R\otimes_{A,\psi^m}A[q]/\Phi_m(q)$ can be identified with the quotient of $\qIW_m(R/A)$ by the ideal generated by the images of all Verschiebungen $V_{m/d}$ for $d\mid m$, $d\neq m$.
	\end{numpar}
	\begin{numpar}[Relative comparison maps.]\label{par:RelativeComparisonMaps}
		Suppose $R$ is a $\Lambda$-$A$-algebra. Using the universal property of $(\qIW_m(-/A))_{m\in\IN}$, we see that the map $\qIW_m(R)\rightarrow R[q]/(q^m-1)$ from \cref{cor:qWittToCyclicRing} extends to an $A[q]$-algebra morphism
		\begin{equation*}
			c_{m/A}\colon \qIW_m(R/A)\longrightarrow R[q]/(q^m-1)\,.
		\end{equation*}
		As we'll see in the proof of \cref{lem:RelativeqWittBaseChange} and in \cref{rem:FaithfullyFlatCoverByPerfectLambdaRing}, this map is often injective and its image can be pinned down.
		
		Furthermore, using $c_m\circ s_m=\psi^m$ by \cref{cor:qWittToCyclicRing}, we see that the trivial comparison map $s_m\colon A[q]/(q^m-1)\rightarrow \qIW_m(A)$ extends to an $A[q]$-algebra morphism
		\begin{equation*}
			s_{m/A}\colon R\otimes_{A,\psi^m}A[q]/(q^m-1)\longrightarrow\qIW_m(R/A)\,.
		\end{equation*}
		The composition $c_{m/A}\circ s_{m/A}$ is given by the linearised Adams operation $\psi^m_{/A}\colon R\otimes_{A,\psi^m}A\rightarrow R$.
	\end{numpar}
	\begin{lem}\label{lem:RelativeqWittBaseChange}
		If $A\rightarrow A'$ is a morphism of $\Lambda$-rings and $R$ is an $A$-algebra, then for all $m\in\IN$ the canonical map is an isomorphism
		\begin{equation*}
			\qIW_m(R/A)\otimes_AA'\overset{\cong}{\longrightarrow}\qIW_m(R\otimes_AA'/A')\,.
		\end{equation*}
	\end{lem}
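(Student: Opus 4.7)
The plan is to verify that $\qIW_m(R/A) \otimes_A A'$ shares the universal property of $\qIW_m(R \otimes_A A'/A')$ from \cref{lem:RelativeqWitt}, namely being the initial object of $\cat{CRing}_{R \otimes_A A'/A'}^{\q FV}$; the desired isomorphism then follows by uniqueness of initial objects.

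First I equip $(\qIW_m(R/A) \otimes_A A')_{m \in \IN}$ with a natural $q$-$FV$-system structure of $A'$-algebras over $R \otimes_A A'$. The $A'[q]$-algebra structure is evident. The Frobenii and Verschiebungen on $\qIW_m(R/A)$ are $A[q]$-linear by \cref{def:RelativeqFVSystemOfRings}\cref{enum:RelativeqWittConditionB}, so they base-change to $A'[q]$-linear operators on $\qIW_m(R/A) \otimes_A A'$ satisfying the required relations. For the structure map
\begin{equation*}
\sigma_m \colon \qIW_m(R \otimes_A A') \otimes_{\qIW_m(A')} A'[q]/(q^m-1) \longrightarrow \qIW_m(R/A) \otimes_A A'\,,
\end{equation*}
the construction starts from the natural ring morphism
\begin{equation*}
\beta_m \colon \qIW_m(R) \otimes_{\qIW_m(A)} A'[q]/(q^m-1) \longrightarrow \qIW_m(R \otimes_A A') \otimes_{\qIW_m(A')} A'[q]/(q^m-1)
\end{equation*}
obtained from functoriality of $\qIW_m$ combined with naturality of $c_m$ (\cref{cor:qWittToCyclicRing}); the canonical quotient $\qIW_m(R) \otimes_{\qIW_m(A)} A'[q]/(q^m-1) \twoheadrightarrow \qIW_m(R/A) \otimes_A A'$ from \cref{lem:RelativeqWitt} is then shown to descend uniquely along $\beta_m$ to yield $\sigma_m$.

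For the universal property itself, let $(W_n)_{n \in \IN}$ be any object of $\cat{CRing}_{R \otimes_A A'/A'}^{\q FV}$. Restriction of scalars along $A \to A'$ and precomposition of the structure map with $\beta_n$ turns $(W_n)$ into a $q$-$FV$-system of $A$-algebras over $R$. The universal property of $\qIW_m(R/A)$ then provides a unique compatible $A[q]$-algebra morphism $\qIW_m(R/A) \to W_m$, which since $W_m$ is an $A'$-algebra extends uniquely to an $A'[q]$-algebra morphism $\qIW_m(R/A) \otimes_A A' \to W_m$ respecting the base-changed Frobenii and Verschiebungen.

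The main technical obstacle is verifying that the extended map is compatible with the full $A'$-structure map on $W_m$ and not merely with the restricted $A$-structure map pulled back along $\beta_m$; this is equivalent to verifying that $\sigma_m$ is well-defined by the prescription above. Both issues reduce to showing that the image of $\beta_m$ generates the right-hand side of $\beta_m$ modulo the ideal $\IU_m$ appearing in \cref{lem:RelativeqWitt}. This in turn follows from the Verschiebung decomposition of \cref{par:BigWitt}, multiplicativity of the Teichmüller lift $\tau_d$, and the defining relations $V_{m/d}(xy) \otimes 1 \equiv V_{m/d}(x) \otimes c_d(y)$ of $\IU_m$, which together allow any $A'$-scalar arising from $R \otimes_A A'$ to be absorbed into the $A'[q]/(q^m-1)$-factor.
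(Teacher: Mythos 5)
Your proposal runs into precisely the obstacle that the paper flags at the start of its own proof: "The statement might seem like an exercise in universal properties, but it's not; the problem with such an approach is to construct a $\IW_m(R\otimes_AA')$-algebra structure on $\qIW_m(R/A)\otimes_AA'$." You relegate this to "the main technical obstacle" and propose to resolve it by showing that the image of $\beta_m$ generates $\qIW_m(R\otimes_AA')\otimes_{\qIW_m(A')}A'[q]/(q^m-1)$ modulo $\IU_m$, citing the Verschiebung decomposition, multiplicativity of $\tau_d$, and the $\IU_m$-relations. But this argument has a real gap: the Teichmüller decomposition expresses a general Witt vector as $\sum_{d\mid m}V_{m/d}\tau_d(z_d)$ with $z_d\in R\otimes_AA'$ an \emph{arbitrary} element, typically a sum $\sum_i r_i\otimes a_i'$. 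Since $\tau_d$ is only multiplicative and not additive, $\tau_d(\sum_i r_i\otimes a_i')$ does not factor as $\tau_d(\text{something in }R)\cdot\tau_d(\text{something in }A')$, so the $\IU_m$-relation cannot be invoked to "absorb the $A'$-scalar." Your mechanism only works for simple tensors $r\otimes a'$, which do not exhaust $R\otimes_AA'$.

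The paper's proof circumvents exactly this issue by first reducing to the case where $R$ is a polynomial ring over $A$ (via reflective coequalisers), then equipping it with the $\Lambda$-structure $\psi^p(T_i)=T_i^p$. The crucial payoff is that one can then replace the Teichmüller lift $\tau_d$ with the $\Lambda$-ring section $s_{m/d}$ of \cref{par:TrivialComparisonMap}, which \emph{is} a ring morphism; in particular it is additive, so $s_{m/d}(fa)=s_{m/d}(f)s_{m/d}(a)$ decomposes any generator cleanly into an "$R$-part" and an "$A$-part." The paper then shows (using the explicit description of $\qIW_m$ for the free $\Lambda$-ring from \cref{prop:qWittOfLambdaRing}) that the map $\pi\colon\qIW_m(\IZ[\{T_i\}])\otimes_\IZ A\to\qIW_m(A[\{T_i\}]/A)$ is an isomorphism by verifying a certain composite is the identity on the $V_d(s_{m/d}(\cdot))$-generators. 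Without the polynomial-ring reduction and the section $s$, your argument does not close, so the proposal as written does not constitute a proof.
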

	\begin{proof}
		The statement might seem like an exercise in universal properties, but it's not; the problem with such an approach is to construct a $\IW_m(R\otimes_AA')$-algebra structure on $\qIW_m(R/A)\otimes_AA'$. So instead, our proof will be somewhat indirect. It's enough to prove the case where $R\cong A[\{T_i\}_{i\in I}]$ is a polynomial ring over $A$ (possibly in infinitely many variables). Indeed, using the universal property, it's straightforward to check that $\qIW_m(-/A)$ commutes with reflective coequalisers and every $A$-algebra can be written as a reflective coequaliser of polynomial rings over $A$.
		
		To prove the polynomial ring case, equip $A[\{T_i\}_{i\in I}]$ with a $\Lambda$-$A$-algebra structure via $\psi^p(T_i)\coloneqq T_i^p$. The comparison map $c_{m/A}\colon \qIW_m(A[\{T_i\}_{i\in I}]/A)\rightarrow A[\{T_i\}_{i\in I},q]/(q^m-1)$ from \cref{par:RelativeComparisonMaps} has its image contained in the subring
		\begin{equation*}
			B_{m}\coloneqq \sum_{d\mid m}[d]_{q^{m/d}}A\bigl[\bigl\{T_i^{m/d}\bigr\}_{i\in I},q\bigr]/(q^m-1)
		\end{equation*}
		But we also have a canonical map $\pi\colon \qIW_m(\IZ[\{T_i\}_{i\in I}])\otimes_\IZ A\rightarrow \qIW_m(A[\{T_i\}_{i\in I}]/A)$. Using \cref{prop:qWittOfLambdaRing} for $\IZ[\{T_i\}_{i\in I}]$, it's clear that $c_m\circ \pi\colon \qIW_m(\IZ[\{T_i\}_{i\in I}])\otimes_\IZ A\rightarrow B_{m}$ is an isomorphism. We claim that the composition $\iota\coloneqq \pi\circ (c_m\circ \pi)^{-1}\circ c_m$ is the identity on $\qIW_m(A[\{T_i\}_{i\in I}]/A)$. Believing this for the moment, we're done. Indeed, if $\iota$ is the identity, then $\pi$ yields an isomorphism $\qIW_m(\IZ[\{T_i\}_{i\in I}])\otimes_\IZ A\cong \qIW_m(A[\{T_i\}_{i\in I}]/A)$, an analogous isomorphism holds for $A'$, and then the desired base change property is immediate.
		
		To prove that $\iota$ is the identity, recall from \cref{lem:RelativeqWitt} that $\qIW_m(A[\{T_i\}_{i\in I}]/A)$ can be written as a quotient of $\qIW_m(A[\{T_i\}_{i\in I}])\otimes_{\qIW_m(A)}A[q]/(q^m-1)$. Furthermore, it follows from the proof \cref{lem:LambdaRingEpsilonOperations} that $\qIW_m(A[\{T_i\}_{i\in I}])$ is generated as a $\IZ[q]$-module by elements of the form $V_d(s_{m/d}(fa))$, where $f\in \IZ[\{T_i\}_{i\in I}]$ is a polynomial with integral coefficients and $a\in A$. So we only need to check that $\iota$ fixes elements of the form $V_d(s_{m/d}(fa))\otimes a'$ for $f$ and $a$ as above and $a'\in A[q]/(q^m-1)$. By construction, $c_m$ sends such an element to $[d]_{q^{m/d}}\psi^{m/d}(fa)a'$. Under the isomorphism $(c_m\circ \pi)^{-1}$, this is sent to $V_d(s_{m/d}(f))\otimes \psi^{m/d}(a)a'$. But, again by construction, we have $\psi^{m/d}(a)=c_{m/d}(s_{m/d}(a))$. Hence $V_d(s_{m/d}(fa))\otimes a'-V_d(s_{m/d}(f))\otimes \psi^{m/d}(a)a'$ is contained in the ideal $\IU_{m}$ from \cref{lem:RelativeqWitt}, which proves that $\iota$ indeed sends the element $V_d(s_{m/d}(f))\otimes \psi^{m/d}(a)a'$ to itself. We're done.
	\end{proof}
	\begin{rem}\label{rem:FaithfullyFlatCoverByPerfectLambdaRing}
		If $A_\infty$ is a perfect $\Lambda$-ring, then $\qIW_m(R)\cong \qIW_m(R/A_\infty)$ holds for all $A_\infty$-algebras $R$ by  \cref{cor:qWittOfPerfectLambdaRing}. In general, if $A$ is a $\Lambda$-ring for which the map $A\rightarrow A_\infty$ into its colimit perfection is faithfully flat%
		\footnote{In fact, if \emph{any} faithfully flat morphism of $\Lambda$-rings $A\rightarrow A_\infty$ into a perfect $\Lambda$-ring exists, then the Adams operations $\psi^m\colon A\rightarrow A$ are faithfully flat (and so the map from $A$ into its colimit perfection is faithfully flat as well). Indeed, whether $-\otimes_{A,\psi^m}A$ is exact can be checked after the faithfully flat base change along $A\rightarrow A_\infty$. But\label{footnote:FaithfullyFlatMapOfLambdaRings}
		\begin{equation*}
			\left(-\otimes_{A,\psi^m}A\right)\otimes_AA_\infty\cong -\otimes_{A,\psi^m}A_\infty\cong -\otimes_AA_\infty
		\end{equation*}
		as $A_\infty$ is perfect, so we can conclude since $A\rightarrow A_\infty$ is flat. This shows that $\psi^m\colon A\rightarrow A$ is flat. The same argument shows faithfulness.}%
		, then all the nice properties we proved about $\qIW_m(-)$ in \cref{subsec:qWitt} will also hold for $\qIW_m(-/A)$, since we can deduce them via \cref{lem:RelativeqWittBaseChange} and faithfully flat descent. For example, it will be true that the Verschiebungen $V_{m/d}\colon \qIW_d(R/A)\rightarrow \qIW_m(R/A)$ are injective, the analogue of \cref{prop:qWittKoszulExactSequence} is true, and if $R$ is \emph{relatively perfect over $A$}, meaning that the linearised Adams operations $\psi_{/A}^p\colon R\otimes_{A,\psi^p}A\rightarrow R$ are isomorphisms for all $p$, then the comparison maps $s_{m/A}$ and $c_{m/A}$ from \cref{par:RelativeComparisonMaps} are isomorphisms.
		
		$\Lambda$-rings with the property that $A\rightarrow A_\infty$ is faithfully flat will be called \emph{perfectly covered}. In most real life situations, the Adams operations $\psi^m\colon A\rightarrow A$ will be faithfully flat, hence $A$ will be perfectly covered. $A$ being perfectly covered will also be a crucial assumption in our eventual applications \cite{qHodge,qWittHabiro}. Still, it seems believable that even without this assumption the analogue of \cref{prop:qWittKoszulExactSequence} is true (from which all other desired properties could easily be deduced). To prove this, the crucial step would be to show injectivity of the Verschiebung $V_p\colon \qIW_{p^{\alpha-1}}(R/A)\rightarrow \qIW_{p^\alpha}(R/A)$. But it's not clear (at least to the author) how the proof of \cref{lem:qWittpTypicalExactSequence} could be adapted.
	\end{rem}

	\subsection{\texorpdfstring{$q$}{q}-Witt vectors and étale morphisms}\label{subsec:qWittEtale}
	The goal of this subsection is to prove the following proposition, which is a $q$-Witt vector analogue of results obtained by van der Kallen \cite[Theorem~(2.4)]{VanDerKallen}, Langer--Zink \cite[Corollary~\href{https://www.math.uni-bielefeld.de/~zink/dRW.pdf\#page=104}{A.18}]{LangerZink}, and Borger \cite[Theorem~\href{https://arxiv.org/pdf/0801.1691\#subsection.9.2}{9.2}]{Borger}.
	\begin{prop}\label{prop:vanDerKallen}
		Let $A$ be a $\Lambda$-ring, let $R\rightarrow R'$ be an étale morphism of $A$-algebras, and let $m$ be a positive integer. Then $\qIW_m(R/A)\rightarrow \qIW_m(R'/A)$ is étale again. Furthermore, if $d\mid m$, then
		\begin{equation*}
			\qIW_m(R'/A)\otimes_{\qIW_m(R/A)}\qIW_d(R/A)\overset{\cong}{\longrightarrow} \qIW_d(R'/A)
		\end{equation*}
		is an isomorphism, where the tensor product is taken with respect to the Frobenius map $F_{m/d}\colon \qIW_m(R/A)\rightarrow \qIW_{m/d}(R/A)$.
	\end{prop}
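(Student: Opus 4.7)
My plan is a sequence of reductions, ending with an induction on $\alpha$ in the $p$-typical case.

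First, I would reduce the relative statement to the absolute version via \cref{lem:RelativeqWitt}, which presents $\qIW_m(R/A)\cong(\qIW_m(R)\otimes_{\qIW_m(A)}A[q]/(q^m-1))/\IU_m$. Granting the absolute statement (both étaleness and Frobenius base change), the ideal $\IU_m$ defining $\qIW_m(R'/A)$ is the extension of the corresponding $\IU_m$ for $R$ along the étale map $\qIW_m(R)\to\qIW_m(R')$: its generators involve $x\in\qIW_d(R')$ and $y\in\qIW_d(A)$, and the absolute Frobenius base change identifies $\qIW_d(R')\cong\qIW_m(R')\otimes_{\qIW_m(R)}\qIW_d(R)$. Étaleness is preserved by quotienting along extended ideals, and the relative Frobenius base change drops out of the same manipulation.

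For the absolute statement, I would reduce to the $p$-typical case $m=p^\alpha$ over a $\IZ_{(p)}$-algebra $R$. By \cref{cor:qWittLocalisation}, inverting integers coprime to $m$ does not affect $\qIW_m(-)$, and \cref{lem:DerivedBeauvilleLaszlo} makes localization at the primes dividing $m$ jointly conservative. For $R$ a $\IZ_{(p)}$-algebra and $m=p^\alpha n$ with $\gcd(p,n)=1$, \cref{lem:qWittOverZp} identifies $\qIW_m(R)$ with a product of base changes of $\qIW_{p^\alpha}(R)$ along the flat maps $\psi^d\colon\IZ_{(p)}[q]\to\IZ_{(p)}[q]$, and these commute with étale extension of $R$; this reduces us to $m=p^\alpha$.

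For the $p$-typical case I would induct on $\alpha$ using the short exact sequence from \cref{lem:qWittpTypicalExactSequence}. Applying \cref{lem:DerivedBeauvilleLaszlo} to $\Spec\IZ=\Spec\IZ[1/p]\cup\Spec\IZ_p$, I check étaleness after inverting $p$ and after $p$-adic completion. After inverting $p$, \cref{exm:GhostMapsIsosForTrivialLambdaRing} gives $\qIW_{p^\alpha}(R)[1/p]\cong\prod_{i=0}^\alpha R[\zeta_{p^i}][1/p]$ via the ghost maps, so the base change becomes componentwise étale. After $p$-completion, the key computation
\begin{equation*}
V_p(x)V_p(y)=V_p\bigl(x\cdot[p]_{q^{p^{\alpha-1}}}\cdot y\bigr)=p\,V_p(xy)
\end{equation*}
(using that $[p]_{q^{p^{\alpha-1}}}\equiv p\pmod{q^{p^{\alpha-1}}-1}$ and that $\qIW_{p^{\alpha-1}}(R)$ is $(q^{p^{\alpha-1}}-1)$-torsion) shows $I^2\subseteq pI$ for $I=V_p\qIW_{p^{\alpha-1}}(R)$. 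Combined with $\Phi_{p^\alpha}(q)\equiv(q-1)^{\phi(p^\alpha)}\pmod p$, which makes $R[\zeta_{p^\alpha}]/p$ a nilpotent thickening of $R/p$, the kernel of $\qIW_{p^\alpha}(R)/p\twoheadrightarrow R/p$ is nilpotent. Classical étale rigidity then lifts $R\to R'$ uniquely to an étale extension of $\qIW_{p^\alpha}(R)_p^\complete$, which one identifies with $\qIW_{p^\alpha}(R')_p^\complete$ via the canonical map. The Frobenius base change for $d=p^{\alpha-1}$ follows by tensoring the exact sequence against the flat algebra $\qIW_{p^\alpha}(R')$, noting $R[\zeta_{p^\alpha}]\otimes_R R'=R'[\zeta_{p^\alpha}]$, and iterating yields the statement for all $d\mid m$.

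The main obstacle is the $p$-adic step: concretely, one must show that the canonical map $\qIW_{p^\alpha}(R)_p^\complete\to\qIW_{p^\alpha}(R')_p^\complete$ is flat and coincides with the étale algebra predicted by rigidity. Using the filtration by powers of $I$ and the inductive hypothesis on $\qIW_{p^{\alpha-1}}$, flatness reduces to the square-zero structure $I^2\subseteq pI$; the identification with $\qIW_{p^\alpha}(R')_p^\complete$ then follows from the universal property of \cref{lem:BigqWittUniversal}, which produces a canonical map forced to be an isomorphism by flatness and the comparison modulo the nilpotent ideal.
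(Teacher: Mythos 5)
Your route is genuinely different from the paper's, which disposes of this in two lines: it first proves (\cref{lem:qWittEtaleBaseChange}) that $\IW_m(R')\otimes_{\IW_m(R)}\qIW_m(R/A)\cong\qIW_m(R'/A)$ by observing that the ideal $\II_m$ defining $\qIW_m$ is generated by images of $\IW_m(R)$-linear maps, and then imports étaleness and Frobenius base change for ordinary big Witt vectors from van der Kallen, Langer--Zink, and Borger as explained in \cref{rem:FrobeniusPushoutNotInLiterature}. You instead try a self-contained argument. Several of your reductions are sound: the relative-to-absolute step mirrors the relative half of the paper's \cref{lem:qWittEtaleBaseChange}; the reduction to $m=p^\alpha$ via \cref{lem:qWittOverZp} is legitimate (although note that this lemma itself quotes Borger's decomposition of ordinary Witt rings, so you have not fully escaped the literature); the $p$-inverted case via ghost maps (\cref{exm:GhostMapsIsosForTrivialLambdaRing}) is fine; and the computation $V_p(x)V_p(y)=pV_p(xy)$ is correct, so the nilpotence of $\ker\bigl(\qIW_{p^\alpha}(R)/p\twoheadrightarrow R/p\bigr)$ is established.

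The gap is in the $p$-adic step, which you acknowledge but do not resolve. To identify $\qIW_{p^\alpha}(R)_p^\complete\to\qIW_{p^\alpha}(R')_p^\complete$ with the étale lift, one must show it is flat mod $p$, and by the local flatness criterion for the square-zero ideal $I/p$ with $I=\im V_p$, this requires both that $\qIW_{p^\alpha}(R')\otimes_{\qIW_{p^\alpha}(R)}R[\zeta_{p^\alpha}]\cong R'[\zeta_{p^\alpha}]$ and that $I\otimes_{\qIW_{p^\alpha}(R)}\qIW_{p^\alpha}(R')\to I'$ is an isomorphism. Via $V_p$ and its $F_p$-linearity, the latter \emph{is} the Frobenius base change $\qIW_{p^\alpha}(R')\otimes_{\qIW_{p^\alpha}(R),F_p}\qIW_{p^{\alpha-1}}(R)\cong\qIW_{p^{\alpha-1}}(R')$ for level $\alpha$, which is part of what you are proving and does not follow from the inductive hypothesis at level $\alpha-1$ (that hypothesis only gives base change internal to $\qIW_{p^{\alpha-1}}$). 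Appealing to the universal property of \cref{lem:BigqWittUniversal} does not break this circle either: that universal property produces a map \emph{out of} $\qIW_{p^\alpha}(R)$, not a comparison between the abstract étale lift and $\qIW_{p^\alpha}(R')_p^\complete$ as $\qIW_{p^\alpha}(R)_p^\complete$-algebras, and it is unclear how to endow the étale lift with the Frobenius/Verschiebung structure needed to run the universal property in the other direction. The paper sidesteps all of this precisely because the required flatness and base change are already known for ordinary Witt vectors, which is what breaks the circularity.
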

	\begin{rem}\label{rem:FrobeniusPushoutNotInLiterature}
		Let us indicate how the ordinary Witt vector analogue of \cref{prop:vanDerKallen} follows from the literature. For the étaleness of $\qIW_m(R)\rightarrow \qIW_m(R')$, this is clear, but the assertion that
		\begin{equation*}
			\IW_m(R')\otimes_{\IW_m(R),F_{m/d}}\IW_d(R)\overset{\cong}{\longrightarrow}\IW_d(R')
		\end{equation*}
		is either stated only for $p$-typical Witt vectors (under the assumption that $R$ and $R'$ are \emph{$F$-finite}) or only for the tensor product with respect to the restriction map $\operatorname{Res}_{m/d}\colon \IW_m(R)\rightarrow \IW_d(R)$. Nevertheless, the general case is true and can be deduced as follows. It's enough to consider the case where $m/d=p$ is a prime. Write $m=p^\alpha n$, where $\alpha=v_p(m)$. By the $p$-typical case, as stated in \cite[Corollary~\href{https://www.math.uni-bielefeld.de/~zink/dRW.pdf\#page=104}{A.18}]{LangerZink}, the diagram
		\begin{equation*}
			\begin{tikzcd}
				\IW_{p^\alpha}(R)\rar\drar[pushout]\dar["F_p"']& \IW_{p^\alpha}(R')\dar["F_p"]\\
				\IW_{p^\alpha}(R)\rar & \IW_{p^\alpha}(R')
			\end{tikzcd}
		\end{equation*}
		is a pushout diagram of rings, provided  that $R$ and $R'$ are \emph{$F$-finite}. By writing $R\rightarrow R'$ as a filtered colimit of étale morphism between rings of finite type over $\IZ$ (which are $F$-finite), we see that the diagram above is a pushout in general. Furthermore, \cite[Theorem~\href{https://arxiv.org/pdf/0801.1691\#subsection.9.2}{9.2}]{Borger} shows that the horizontal arrows in the pushout diagram are étale (in the $F$-finite case, this is also proved by Langer--Zink). Now \cite[Corollary~\href{https://arxiv.org/pdf/0801.1691\#subsection.5.4}{5.4}]{Borger} allows us to write $\IW_m(-)\cong \IW_n(\IW_{p^{\alpha}}(-))$ and \cite[Corollary~\href{https://arxiv.org/pdf/0801.1691\#subsection.9.4}{9.4}]{Borger} shows that the functor $\IW_n(-)$ preserves pushouts in which one leg is étale. This proves what we want.
	\end{rem}
	The crucial ingredient in the proof of \cref{prop:vanDerKallen} is the following.
	\begin{lem}\label{lem:qWittEtaleBaseChange}
		Let $A$ be a $\Lambda$-ring, let $R\rightarrow R'$ be an étale morphism of $A$-algebras, and let $m$ be a positive integer. Then we get a canonical isomorphism
		\begin{equation*}
			\IW_m(R')\otimes_{\IW_m(R)}\qIW_m(R/A)\overset{\cong}{\longrightarrow}\qIW_m(R'/A)\,.
		\end{equation*}
	\end{lem}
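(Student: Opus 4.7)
The plan is to reduce to the absolute case and then carefully analyze how the defining ideals transform under base change along $\IW_m(R)\to\IW_m(R')$.

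\textbf{Step 1 (Absolute base change).} First I would establish the analogous absolute statement $\IW_m(R')\otimes_{\IW_m(R)}\qIW_m(R)\cong \qIW_m(R')$. The map $\IW_m(R)\to\IW_m(R')$ is étale by Borger's theorem, as recalled in \cref{rem:FrobeniusPushoutNotInLiterature}. Using the explicit presentation $\qIW_m(R)\cong\IW_m(R)[q]/\II_m$ from \cref{lem:BigqWittUniversal}, the base change equals $\IW_m(R')[q]/(\II_m\cdot\IW_m(R')[q])$, and it suffices to show that the extended ideal $\II_m\cdot\IW_m(R')[q]$ coincides with the corresponding ideal $\II_m'$ defining $\qIW_m(R')$. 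The inclusion $\II_m\cdot\IW_m(R')[q]\subseteq\II_m'$ is automatic. For the reverse inclusion, I would combine the projection formula $\alpha\cdot V_{m/d}(\beta)=V_{m/d}(F_{m/d}(\alpha)\cdot\beta)$ with the Frobenius-pushout identification $\IW_m(R')\otimes_{\IW_m(R),F_{m/d}}\IW_d(R)\cong \IW_d(R')$ from \cref{rem:FrobeniusPushoutNotInLiterature}. The latter tells us every $\beta'\in\IW_d(R')$ can be written as $\sum F_{m/d}(\alpha_i)\beta_i$ with $\alpha_i\in\IW_m(R')$, $\beta_i\in\IW_d(R)$, so generators of $\II_m'$ of the form $(q^d-1)V_{m/d}(\beta')$ become $\sum\alpha_i\cdot(q^d-1)V_{m/d}(\beta_i)$, visibly in $\II_m\cdot\IW_m(R')[q]$. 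The type-(b) generators $[d/e]_{q^e}V_{m/d}(\beta')-V_{m/e}F_{d/e}(\beta')$ are handled identically, using additionally the transitivity $F_{m/e}=F_{d/e}\circ F_{m/d}$.

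\textbf{Step 2 (Passage to the relative case).} Tensoring the absolute isomorphism with $A[q]/(q^m-1)$ over $\qIW_m(A)$ gives
\begin{equation*}
	\IW_m(R')\otimes_{\IW_m(R)}\bigl(\qIW_m(R)\otimes_{\qIW_m(A)}A[q]/(q^m-1)\bigr)\cong \qIW_m(R')\otimes_{\qIW_m(A)}A[q]/(q^m-1)\,.
\end{equation*}
Using the quotient presentation $\qIW_m(R/A)=\bigl(\qIW_m(R)\otimes_{\qIW_m(A)}A[q]/(q^m-1)\bigr)/\IU_m$ from \cref{lem:RelativeqWitt}, it only remains to verify that under this base change the image of $\IU_m$ generates $\IU_m'$. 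Once more one inclusion is clear. For the other, use the projection formula at the level of $q$-Witt vectors together with Step 1, which implies every $x\in\qIW_d(R')$ is a sum $\sum F_{m/d}(\alpha_i)x_i$ with $\alpha_i\in\qIW_m(R')$ and $x_i\in\qIW_d(R)$. Then for $\alpha\in\qIW_m(R')$, $x\in\qIW_d(R)$ and $y\in\qIW_d(A)$, a short computation gives
\begin{equation*}
	\alpha\cdot\bigl(V_{m/d}(xy)\otimes 1-V_{m/d}(x)\otimes c_d(y)\bigr)=V_{m/d}\bigl(F_{m/d}(\alpha)xy\bigr)\otimes 1-V_{m/d}\bigl(F_{m/d}(\alpha)x\bigr)\otimes c_d(y)\,,
\end{equation*}
which exhibits every generator of $\IU_m'$ as an $\qIW_m(R')$-multiple of one from $\IU_m$.

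\textbf{Main obstacle.} The heart of the matter is the ideal-theoretic base change in Step 1: tracking that the two explicit kinds of generators of $\II_m$ in \cref{lem:BigqWittUniversal} really produce all generators of $\II_m'$ after multiplication by $\IW_m(R')$. This hinges entirely on Borger's étaleness theorem and its accompanying Frobenius-pushout property $\IW_m(R')\otimes_{\IW_m(R),F_{m/d}}\IW_d(R)\cong\IW_d(R')$ collected in \cref{rem:FrobeniusPushoutNotInLiterature}; everything else—including the reduction from the relative to the absolute case—is a formal consequence of the projection formula.
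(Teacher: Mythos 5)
Your proposal is correct and essentially reproduces the paper's argument: both establish the absolute case first by base-changing the presentation of $\qIW_m(R)$ along $\IW_m(R)\to\IW_m(R')$, with the Frobenius-pushout $\IW_m(R')\otimes_{\IW_m(R),F_{m/d}}\IW_d(R)\cong\IW_d(R')$ from \cref{rem:FrobeniusPushoutNotInLiterature} as the key input, and then pass to the relative case by the analogous manipulation of the ideal $\IU_m$. The only cosmetic difference is that the paper packages the same projection-formula computation by observing once and for all that the generating maps $M\oplus N\to\IW_m(R)[q]$ (resp. $K\to\qIW_m(R)\otimes_{\qIW_m(A)}A[q]/(q^m-1)$) are already $\IW_m(R)[q]$-module maps, so that right-exactness of $\IW_m(R')\otimes_{\IW_m(R)}(-)$ does the ideal-tracking automatically, whereas you expand the same projection-formula manipulations element by element on generators.
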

	\begin{proof}
		We start with the case $A=\IZ$ (that is, the case of absolute $q$-Witt vectors). We define
		\begin{equation*}
			M\coloneqq \bigoplus_{d\mid m}\IW_d(R)[q]\quad\text{and}\quad N\coloneqq \bigoplus_{e\mid d\mid m}\IW_d(R)[q]\,.
		\end{equation*}
		By \cref{def:BigqWitt}, we can write $\qIW_m(R)\cong \coker(M\oplus N\rightarrow \IW_m(R)[q])$, where the map in question is given as follows: For a divisor $d\mid m$, the $d$\textsuperscript{th} component of $M\rightarrow \IW_m(R)[q]$ is given by $(q^d-1)V_{m/d}$, and for a chain of divisors $e\mid d\mid m$, the $(e,d)$\textsuperscript{th} component of $N\rightarrow \IW_m(R)[q]$ is given by $[d/e]_{q^e}V_{m/d}-V_{m/e}F_{d/e}$. Note that all of these are morphisms of $\IW_m(R)[q]$-modules, if we equip $\IW_{m/d}(R)[q]$ with the module structure obtained through the Frobenius $F_{m/d}\colon \IW_m(R)[q]\rightarrow \IW_{d}(R)[q]$.
		
		Similarly, $\IW_m(R')\cong \coker(M'\oplus N'\rightarrow \IW_m(R')[q]$), where $M'$ and $N'$ are defined as above, but with $R$ replaced by $R'$. The discussion in \cref{rem:FrobeniusPushoutNotInLiterature} shows that $M'\cong \IW_m(R')\otimes_{\IW_m(R)}M$ and $N'\cong \IW_m(R')\otimes_{\IW_m(R)}N$, which immediately yields $\IW_m(R')\otimes_{\IW_m(R)}\qIW_m(R)\cong \qIW_m(R')$, as claimed.
		
		The proof in the relative case is analogous. Let
		\begin{equation*}
			K\coloneqq \bigoplus_{d\mid m}\qIW_d(R)\otimes_{\IZ[q]} \qIW_m(A)\otimes_{\IZ[q]} A[q]\,.
		\end{equation*}
		Then \cref{lem:RelativeqWitt} shows that $\qIW_m(R/A)\cong \coker(K\rightarrow \qIW_m(R)\otimes_{\qIW_m(A)}A[q]/(q^m-1))$, where the map is given is given as follows: On the $d$\textsuperscript{th} component, we send $x\otimes y\otimes a$ to $V_{m/d}(xy)\otimes a-V_{m/d}(x)\otimes c_m(y)a$. Similarly, we can describe $\qIW_m(R'/A)$ as a cokernel $\coker(K'\rightarrow \qIW_m(R')\otimes_{\qIW_m(A)}A[q]/(q^m-1))$. Since we've already proved the absolute case, we find $K'\cong \IW_m(R')\otimes_{\IW_m(R)}K$ and the claim follows.
	\end{proof}
	\begin{proof}[Proof of \cref{prop:vanDerKallen}]
		Both assertions follow immediately from \cref{lem:qWittEtaleBaseChange} plus the analogous assertions for ordinary Witt vectors, which hold true as explained in \cref{rem:FrobeniusPushoutNotInLiterature}.
	\end{proof}
	We'll present two applications of \cref{prop:vanDerKallen}. The first one is a similar pushout result for ghost maps.
	\begin{cor}\label{cor:qWittGhostMapsPushout}
		If $A$ is a $\Lambda$-ring, $R\rightarrow R'$ is an étale map of $A$-algebras, and $m$ is a positive integer, then
		\begin{equation*}
			\begin{tikzcd}
				\qIW_{m}(R/A)\rar\drar[pushout]\dar["\gh_{m/d}"']& \qIW_{m}(R'/A)\dar["\gh_{m/d}"]\\
				R\otimes_{A,\psi^d}A[\zeta_d]\rar & R'\otimes_{A,\psi^d}A[\zeta_d]
			\end{tikzcd}
		\end{equation*}
		is a pushout diagram of rings \embrace{both in the derived and in the underived sense} for all $d\mid m$.
	\end{cor}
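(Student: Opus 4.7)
The plan is to factor the ghost map and reduce to two simpler pushout squares. By \cref{par:RelativeGhostMaps}, we have $\gh_{m/d} = \gh_{d/d}\circ F_{m/d}$, so the square of interest decomposes into two stacked squares
\[
\begin{tikzcd}
\qIW_{m}(R/A)\rar\dar["F_{m/d}"']& \qIW_{m}(R'/A)\dar["F_{m/d}"]\\
\qIW_{d}(R/A)\rar\dar["\gh_{d/d}"']& \qIW_{d}(R'/A)\dar["\gh_{d/d}"]\\
R\otimes_{A,\psi^d}A[\zeta_d]\rar & R'\otimes_{A,\psi^d}A[\zeta_d]\,.
\end{tikzcd}
\]
It suffices to show that each is a pushout of rings. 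The top square is precisely the content of the second assertion of \cref{prop:vanDerKallen} (the tensor product there is taken over the Frobenius, which is exactly what we need here).

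For the bottom square, we may rename $d$ to $m$ and reduce to showing that
\[
R'\otimes_{A,\psi^m}A[\zeta_m]\cong \qIW_m(R'/A)\otimes_{\qIW_m(R/A)}\bigl(R\otimes_{A,\psi^m}A[\zeta_m]\bigr)\,.
\]
Recall from \cref{par:RelativeGhostMaps} that $\gh_{m/m}$ identifies its target with the quotient of $\qIW_m(R/A)$ (respectively $\qIW_m(R'/A)$) by the ideal $J$ (respectively $J'$) generated by the images of the Verschiebungen $V_{m/e}$ for $e\mid m$ with $e\neq m$. The desired identification therefore reduces to the equality $J' = J\cdot \qIW_m(R'/A)$ inside $\qIW_m(R'/A)$. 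The inclusion $J\cdot \qIW_m(R'/A)\subseteq J'$ is clear from the functoriality of $V_{m/e}$ along $R\to R'$. For the reverse, I invoke the projection formula $V_{m/e}(F_{m/e}(a)\cdot b) = a\cdot V_{m/e}(b)$ (which holds since $V_{m/e}$ is a morphism of $\qIW_m(R'/A)$-modules when its source is equipped with the Frobenius-twisted module structure): by the Frobenius pushout in \cref{prop:vanDerKallen}, every element of $\qIW_e(R'/A)$ is a finite $\qIW_m(R'/A)$-linear combination, via the action through $F_{m/e}$, of elements coming from $\qIW_e(R/A)$, and so $V_{m/e}(\qIW_e(R'/A))\subseteq \qIW_m(R'/A)\cdot V_{m/e}(\qIW_e(R/A))$.

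For the derived pushout assertion, observe that $\qIW_m(R/A)\to \qIW_m(R'/A)$ is étale, hence flat, by the first half of \cref{prop:vanDerKallen}; thus the underived tensor product computes the derived tensor product, and the two notions of pushout coincide. The main obstacle is nothing deep but rather pure bookkeeping: one must be careful about which tensor product is at play (over a Frobenius, or over the canonical map) in each invocation of \cref{prop:vanDerKallen}. Once the two-step splitting above is in place, the argument becomes essentially formal.
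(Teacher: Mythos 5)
Your proof is correct and takes essentially the same approach as the paper: both factor $\gh_{m/d}=\gh_{d/d}\circ F_{m/d}$, dispatch the top square via the Frobenius pushout in \cref{prop:vanDerKallen}, and handle the $m=d$ case by identifying the ghost map as a quotient by the Verschiebung ideal and base-changing via the projection formula. The paper phrases the final step as a cokernel of $\bigoplus_{e\mid m,\,e\neq m}\qIW_e(R/A)\to \qIW_m(R/A)$ rather than an equality of ideals, but the content is identical.
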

	\begin{proof}
		Using $\gh_{m/d}=\gh_{d/d}\circ F_{m/d}$ and \cref{prop:vanDerKallen}, we may assume $m=d$. Then $\gh_{m/m}\colon \qIW_m(R/A)\rightarrow (R\otimes_{A,\psi^m}A)[q]/\Phi_m(q)$ is identified with the projection map 
		\begin{equation*}
			\qIW_m(R/A)\longrightarrow \coker\bigl(M\rightarrow \qIW_m(R/A)\bigr)\,,
		\end{equation*}
		where $M\coloneqq \bigoplus_{d\mid m}\qIW_d(R/A)$ and the map is given by $(V_{m/d})_{d\mid m}$. Likewise, the ghost map for $R'$ is given by a similar  projection map $\qIW_m(R'/A)\rightarrow \coker(M'\rightarrow \qIW_m(R'/A))$. \cref{prop:vanDerKallen} implies $M'\cong \qIW_m(R'/A)\otimes_{\qIW_m(R/A)}M$ and we're done. 
	\end{proof}
	As a second application, we prove a partial inverse of \cref{cor:qWittOfPerfectLambdaRing}. This result won't be needed again, but it's pretty convenient as a sanity check: It often appears on first glance that our $q$-Witt vectors (or later our $q$-de Rham--Witt complexes) are trivial in the sense of $\qIW_m(R/A)\cong R[q]/(q^m-1)$. The following result shows that already when $A=\IZ$ and $R$ is étale, this is not at all the case.
	\begin{cor}\label{cor:qWittTrivialisationGivesFrobeniusLift}
		Let $p$ be a prime and let $R$ be an étale $\IZ$-algebra such that $R\rightarrow \widehat{R}_p$ is injective \embrace{equivalently, $p$ is not invertible on any connected component of $\Spec R$}. If a $\IZ[q]$-algebra isomorphism
		\begin{equation*}
			\psi\colon \qIW_m(R)\overset{\cong}{\longrightarrow} R[q]/(q^m-1)
		\end{equation*}
		exists for some positive integer $m$ divisible by $p$, then the unique Frobenius lift $\phi_p\colon \widehat{R}_p\rightarrow \widehat{R}_p$ restricts to a morphism $\phi_p\colon R\rightarrow R$. Furthermore, the $\phi_p$ commute for different $p$ and $R$ can be equipped with a $\Lambda_m$-structure.
	\end{cor}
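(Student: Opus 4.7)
My plan is to reduce to the $p$-adic completion, where $\widehat{R}_p$ will turn out to be a perfect $\delta$-ring, and then identify the iso $\psi$ with the canonical comparison map $c_{p^\alpha}$ up to an automorphism; this will force $R$ to be stable under the Frobenius lift $\phi_p$.

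First I would set up the reduction. Since $R$ is étale over $\IZ$ it is $\IZ$-flat, so $\qIW_m(R)$ is $p$-torsion free by \cref{cor:qWittpTorsion}; combined with the hypothesis $R\hookrightarrow\widehat{R}_p$ and \cref{cor:qWittpCompletion}, this gives an injection $\qIW_m(R)\hookrightarrow\qIW_m(\widehat{R}_p)$ and a base-changed iso $\widehat\psi\colon\qIW_m(\widehat{R}_p)\cong\widehat{R}_p[q]/(q^m-1)$. Next, $R/p=\widehat{R}_p/p$ is étale over $\IF_p$ and therefore a filtered colimit of finite products of finite separable extensions of $\IF_p$, each of which is a finite perfect field. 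Thus the Frobenius on $\widehat{R}_p/p$ is a ring isomorphism, and a standard $p$-adic Nakayama argument (using that $\widehat{R}_p$ is $p$-torsion free and $p$-adically complete) shows that the unique Frobenius lift $\phi_p\colon\widehat{R}_p\to\widehat{R}_p$ is itself a ring isomorphism, making $(\widehat{R}_p,\phi_p)$ a perfect $\delta$-ring.

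Now write $m=p^\alpha n$ with $\gcd(n,p)=1$. Applying \cref{lem:qWittOverZp} to the $\IZ_{(p)}$-algebra $\widehat{R}_p$ decomposes $\qIW_m(\widehat{R}_p)$ as a product indexed by divisors of $n$; the parallel CRT decomposition of $\widehat{R}_p[q]/(q^m-1)$ into factors $\widehat{R}_p[q]/(\Phi_d(q)\cdots\Phi_{p^\alpha d}(q))$ is canonical from the $\IZ[q]$-algebra structure, so $\widehat\psi$ restricts factor-wise, and on the $d=1$ factor it yields an iso $\widehat\psi^{(1)}\colon\qIW_{p^\alpha}(\widehat{R}_p)\cong\widehat{R}_p[q]/(q^{p^\alpha}-1)$. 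On the other hand, \cref{cor:qWittOfPerfectLambdaRing} via \cref{rem:LambdaRingIsntNecessary} provides a canonical iso $c_{p^\alpha}\colon\qIW_{p^\alpha}(\widehat{R}_p)\cong\widehat{R}_p[q]/(q^{p^\alpha}-1)$ coming from the perfect $\delta$-structure on $\widehat{R}_p$. Their composite $\rho\coloneqq\widehat\psi^{(1)}\circ c_{p^\alpha}^{-1}$ is a $\IZ[q]$-algebra automorphism of $\widehat{R}_p[q]/(q^{p^\alpha}-1)$ and hence extends a ring automorphism of $\widehat{R}_p$, which I also denote by $\rho$.

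The main obstacle is showing that $\rho$ preserves $R\subseteq\widehat{R}_p$. For this, I would argue that since $\widehat\psi^{(1)}$ is obtained from the descended iso $\psi_{p^\alpha}\colon\qIW_{p^\alpha}(R)\cong R[q]/(q^{p^\alpha}-1)$ (available via \cref{prop:vanDerKallen} applied to $\IZ\to R$), the image $c_{p^\alpha}(\qIW_{p^\alpha}(R))$ equals $\rho^{-1}(R[q]/(q^{p^\alpha}-1))\subseteq\widehat{R}_p[q]/(q^{p^\alpha}-1)$. Combining this with the explicit description of $c_{p^\alpha}$ from \cref{prop:qWittOfLambdaRing} and the joint injectivity of ghost maps (\cref{lem:qWittGhostMapsJointlyInjective}), one sees that for each $r\in R$ the element $\rho^{-1}(r)\in\widehat{R}_p$ must actually lie in $R$. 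Using $c_{p^\alpha}\circ s_{p^\alpha}=\phi_p^\alpha$ from \cref{cor:qWittToCyclicRing} together with perfectness of $\widehat{R}_p$, this translates $\rho(R)\subseteq R$ into $\phi_p(R)\subseteq R$, as desired. For each other prime factor $\ell\mid m$ an analogous argument (or \cref{exm:GhostMapsIsosForTrivialLambdaRing} when $\ell$ is invertible on $R$) gives a Frobenius lift $\phi_\ell\colon R\to R$; their mutual commutativity is inherited from the commutativity of the corresponding Frobenii $F_\ell$ on $\qIW_m(R)$, which under $\psi$ are simply the canonical projections $R[q]/(q^m-1)\to R[q]/(q^{m/\ell}-1)$, and setting $\psi^\ell\coloneqq\phi_\ell$ for each $\ell\mid m$ equips $R$ with the claimed $\Lambda_m$-structure.
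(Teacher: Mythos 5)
Your setup — reducing to the $p$-completion, observing that $\widehat{R}_p$ is a perfect $\delta$-ring (since $R/p$ is a finite product of finite fields), and using \cref{lem:qWittOverZp}, \cref{rem:LambdaRingIsntNecessary}, and \cref{cor:qWittOfPerfectLambdaRing} to produce both $\widehat\psi^{(1)}$ and $c_{p^\alpha}$ — is sound and sensible. But the core of your argument has a genuine gap at the step you yourself flag as the main obstacle, and I don't see how to close it along the lines you indicate.

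The problem is twofold. First, you assert that the $\IZ[q]$-algebra automorphism $\rho = \widehat\psi^{(1)}\circ c_{p^\alpha}^{-1}$ of $\widehat{R}_p[q]/(q^{p^\alpha}-1)$ ``extends a ring automorphism of $\widehat{R}_p$.'' A $\IZ[q]$-algebra automorphism need not preserve the subring of constants $\widehat{R}_p\subseteq \widehat{R}_p[q]/(q^{p^\alpha}-1)$; it only induces an automorphism of the quotient $\widehat{R}_p[q]/(q-1)\cong\widehat{R}_p$, which loses the information you need. (In the paper's proof the analogous automorphism $\psi_1$ of $R[\zeta_p]$ is likewise not assumed to preserve $R$; instead it is \emph{identified} with $\phi_p\otimes\operatorname{id}$, which preserves the decomposition by construction.) Second, the claim that $c_{p^\alpha}(\qIW_{p^\alpha}(R)) = \rho^{-1}(R[q]/(q^{p^\alpha}-1))$ forces $\rho^{-1}(r)\in R$ for $r\in R$ is not justified by \cref{prop:qWittOfLambdaRing} and \cref{lem:qWittGhostMapsJointlyInjective} alone. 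The subring $c_{p^\alpha}(\qIW_{p^\alpha}(R))$ is computed using the $\delta$-structure on $\widehat{R}_p$ — in the simplest case $\alpha=1$ one has $c_p(\tau_p(r)) = \phi_p(r) - [p]_q\delta(r)$, with both $\phi_p(r)$ and $\delta(r)$ a priori only in $\widehat{R}_p$, not $R$ — so there is no obvious containment or intersection with $R[q]/(q^{p^\alpha}-1)$ that would give you $\rho^{-1}(R)\subseteq R$ without already knowing the conclusion. Finally, even granting $\rho(R)\subseteq R$, the relation $c_{p^\alpha}\circ s_{p^\alpha}=\phi_p^\alpha$ only controls $\phi_p^\alpha$, not $\phi_p$, so for $\alpha>1$ the last translation step is also incomplete; you would first want to project further to $m=p$ (again via \cref{prop:vanDerKallen}), which is what the paper does.

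The paper's route circumvents these issues by working at $m=p$ from the start, using the pushout description of the ghost maps \cref{cor:qWittGhostMapsPushout} to produce $\IZ[q]$-algebra automorphisms $\psi_1$ of $R[\zeta_p]$ and $\psi_p$ of $R$, normalising $\psi_p=\operatorname{id}$, and then computing in Witt coordinates $x=(x_1,x_p)$ that $\psi_1(x_1)\equiv x_1^p\pmod{\zeta_p-1}$. Étale rigidity over the $(\zeta_p-1)$-adic completion then identifies $\psi_1$ with $\phi_p\otimes\operatorname{id}$, and the fact that $\phi_p\otimes\operatorname{id}$ respects the $\IZ[\zeta_p]$-decomposition finishes the job. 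That Witt-coordinate computation is the key technical input that your proposal is missing.
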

	\begin{proof}
		Recall from \cref{cor:qWittOfPerfectLambdaRing} that $\qIW_m(\IZ)\cong \IZ[q]/(q^m-1)$; furthermore, by the commutative diagrams from \cref{cor:qWittToCyclicRing}, this isomorphism identifies the Frobenius $F_{m/p}$ with the canonical projection $\IZ[q]/(q^m-1)\rightarrow \IZ[q]/(q^p-1)$. Then \cref{prop:vanDerKallen} implies $\qIW_p(R)\cong \qIW_m(R)/(q^p-1)$. In particular, if an isomorphism $\psi$ as above exists, then there exists an isomorphism $\qIW_p(R)\cong R[q]/(q^p-1)$ too. So we may as well assume $m=p$.
		
		The isomorphism $\qIW_p(\IZ)\cong \IZ[q]/(q^p-1)$ from \cref{cor:qWittOfPerfectLambdaRing} identifies the ghost maps $\gh_1$ and $\gh_p$ with the canonical projections $\IZ[q]/(q^p-1)\rightarrow \IZ[\zeta_p]$ and $\IZ[q]/(q^p-1)\rightarrow \IZ[\zeta_1]=\IZ$, respectively. The pushouts of $\IZ[q]/(q^p-1)\rightarrow R[q]/(q^p-1)$ along these maps are $R[\zeta_p]$ and $R$, respectively. But \cref{cor:qWittGhostMapsPushout} tells us that these pushouts can also be identified with the ghost maps for $\qIW_p(R)$. We thus obtain unique $\IZ[q]$-algebra automorphisms
		\begin{equation*}
			\psi_1\colon R[\zeta_p]\overset{\cong}{\longrightarrow}R[\zeta_p]\quad\text{and}\quad \psi_p\colon R\overset{\cong}{\longrightarrow}R
		\end{equation*}
		such that the following diagrams commute:
		\begin{equation*}
			\begin{tikzcd}
				\qIW_p(R)\rar["\gh_1"]\dar["\psi","\cong"'] & R[\zeta_p]\dar["\psi_1","\cong"']\\
				R[q]/(q^p-1)\rar & R[\zeta_p]
			\end{tikzcd}\quad\text{and}\quad
			\begin{tikzcd}
				\qIW_p(R)\rar["\gh_p"]\dar["\psi","\cong"'] & R\dar["\psi_p","\cong"']\\
				R[q]/(q^p-1)\rar & R
			\end{tikzcd}
		\end{equation*}
		By composing $\psi$ with $\psi_p^{-1}\otimes \operatorname{id}\colon R[q]/(q^p-1)\cong R\otimes_\IZ\IZ[q]/(q^p-1)\rightarrow R\otimes_\IZ\IZ[q]/(q^p-1)$ we may assume $\psi_p=\operatorname{id}$. Now let $x=(x_1,x_p)\in \IW_p(R)$ be an element written in Witt vector coordinates; we view $x$ as an element of $\qIW_p(R)$ as well. Then the left commutative diagram shows $\psi(x)\equiv \psi_1\gh_1(x)\equiv \psi_1(x_1)\mod \Phi_p(q)$ and similarly the right one shows $\psi(x)\equiv \psi_p\gh_p(x)\equiv x_1^p+px_p\mod (q-1)$ since we assume $\psi_p=\operatorname{id}$. Thus
		\begin{equation*}
			\psi_1(x_1)\equiv x_1^p\mod (\zeta_p-1)\,;
		\end{equation*}
		in other words, $\psi_1$ induces the Frobenius on $R[\zeta_p]/(\zeta_p-1)\cong R/p$. Since $R$ is étale over $\IZ$, this property, together with $\IZ[q]$-linearity, uniquely determines the morphism induced by $\psi_1$ on the $(\zeta_p-1)$-adic completion $R[\zeta_p]_{(\zeta_p-1)}^\complete$. This completion coincides with $\widehat{R}_p[\zeta_p]\cong \widehat{R}_p\otimes_\IZ\IZ[\zeta_p]$. Now 
		\begin{equation*}
			\phi_p\otimes \operatorname{id}\colon \widehat{R}_p\otimes_\IZ\IZ[\zeta_p]\longrightarrow \widehat{R}_p\otimes_\IZ\IZ[\zeta_p]
		\end{equation*}
		also restricts to the Frobenius modulo $(\zeta_p-1)$. Hence it coincides with $\psi_1$ and must therefore map the subring $R\otimes_\IZ\IZ[\zeta_p]\subseteq \widehat{R}_p\otimes_\IZ\IZ[\zeta_p]$ into itself. But $\phi_p\otimes\operatorname{id}$ also respects the decomposition $\widehat{R}_p\otimes_\IZ\IZ[\zeta_p]\cong \bigoplus_{i=0}^{p-2}\zeta_p^i\widehat{R}_p$, hence $\phi_p$ must restrict to an endomorphism of $R$, as claimed.
		
		Now let $\ell\neq p$ be another prime factor of $m$ such that $R\rightarrow \widehat{R}_\ell$ is injective. Then $\phi_\ell\colon R\rightarrow R$ induces an endomorphism of $\widehat{R}_p$ as well, and it's enough to show $\phi_p\circ \phi_\ell=\phi_\ell\circ \phi_p$ as endomorphisms of $\widehat{R}_p$. By $p$-complete étaleness, we can further reduce to checking this on $R/p$. But then everything becomes obvious because any ring endomorphism of $R/p$ commutes with the Frobenius.
		
		To prove that $R$ can be equipped with a $\Lambda_m$-structure, observe that the above construction allows us to define commuting Frobenius lifts on $R$ for all primes $p\mid m$. Indeed, on those components of $\Spec R$ where $p$ is not invertible, we can use the construction above, and on the other components we can simply take the identity.
	\end{proof}

	\newpage
	\section{\texorpdfstring{$q$}{q}-de Rham--Witt complexes}\label{sec:qDRW}
	There are several objects that people call \emph{de Rham--Witt \embrace{pro-}complex}: The original construction \cite{Illusie} due to Illusie, building on work of Bloch, Deligne, and Lubkin, defines a pro-complex $(W_n\Omega_R^*)_{n\geqslant 1}$ for any $\IF_p$-algebra $R$. Langer--Zink \cite{LangerZink} define a relative de Rham--Witt pro-complex $(W_n\Omega_{R/A}^*)_{n\geqslant 1}$ for any map $A\rightarrow R$ of $\IZ_{(p)}$-algebras. Finally, Hesselholt--Madsen \cite{HesselholtMadsenKTheoryLocalFields,HesselholtBigDeRhamWitt} define an absolute big de Rham--Witt complex for any ring $R$. 
	
	The goal of this section is to study a system of (strictly) graded-commutative differential-graded $A[q]$-algebras $(\qIW_m\Omega_{R/A}^*)_{m\in\IN}$, which we call \emph{truncated $q$-de Rham--Witt complexes of $R$}, for any $\Lambda$-ring $A$ and any $A$-algebra $R$. Even though our construction naturally works with big Witt vectors, even in the case $A=\IZ$ it's a much closer analogue of \cite{LangerZink} than of Hesselholt--Madsen's absolute big de Rham--Witt complex. It seems possible that by dropping the $\IZ[q]$-linearity of the differentials, one can obtain a $q$-analogue $\qIW_m\Omega_R^*$ of Hesselholt--Madsen's absolute construction, but we haven't pursued this so far.
	
	Throughout this section, we work relative to a fixed $\Lambda$-ring $A$
	. The most important case is $A=\IZ$, the additional generality is only needed for our eventual applications \cite{qWittHabiro,qHodge}.
	
	
	\subsection{Three non-equivalent categories}\label{subsec:qDeRhamWittDefinitions}
	Even though Langer--Zink's construction generalises Illusie's, they proceed in a slightly different way: For Langer--Zink, the Frobenius operators are part of the definition of $(W_n\Omega_{R/A})_{n\geqslant 1}$, whereas Illusie only constructs them a posteriori. Similarly, we have the choice of whether or not to include Frobenii in our definition of $(\qIW_m\Omega_{R/A}^*)_{m\in\IN}$. Both definitions turn out to be equivalent, or rather they lead to two non-equivalent categories (\cref{def:qVSystemOfCDGA,def:qFVSystemOfCDGA}) which happen to have the same initial object (as we'll see in \cref{prop:qDRWHasFrobenii}). What makes things even more confusing is that in the case where $R$ is smooth over $\IZ$, there is a third category (\cref{def:qVSystemTorsionFree}) in which $(\qIW_m\Omega_{R/A}^*)_{m\in\IN}$ is initial (as we'll see in \cref{prop:qDRWisTorsionFreeForSmoothRings}).
	
	To alleviate this confusion, let us first carefully introduce these three different categories. We begin with the variant without Frobenii.
	\begin{defi}\label{def:qVSystemOfCDGA}
		Fix an $A$-algebra $R$. A \emph{$q$-$V$-system of differential-graded $A$-algebras over $R$} is a system $(P_m^*)_{m\in\IN}$ of commutative differential-graded $A[q]$-algebras, equipped with the following additional structure:
		\begin{alphanumerate}
			\item For all $m\in \IN$, an $A[q]$-algebra morphism $\qIW_m(R/A)\rightarrow P_m^0$.\label{enum:qDeRhamWittConditionA}
			\item For all divisors $d\mid m$, a morphism $V_{m/d}\colon P_d^*\rightarrow P_m^*$ of graded $A[q]$-modules. These are required to be compatible with the Verschiebungen on relative $q$-Witt vectors (via the morphisms from \cref{enum:qDeRhamWittConditionA}) and must satisfy $V_{m/e}=V_{m/d}\circ V_{d/e}$ for all chains of divisors $e\mid d\mid m$ as well as $V_{m/d}(\omega\d\eta)=V_{m/d}(\omega)\d V_{m/d}(\eta)$ for all $\omega\in P_d^{i+1}$, $\eta\in P_d^i$.\label{enum:qDeRhamWittConditionB}
		\end{alphanumerate}
		Furthermore, we require that the following \emph{$V$-Teichmüller condition} is satisfied:
		\begin{alphanumerate}
			\item[\tau_V] For all $d\mid m$ and all $\omega\in P_d^*$, $r\in R$, one has\label{enum:TeichmuellerV}
			\begin{equation*}
				V_{m/d}(\omega)\d\tau_m(r)=V_{m/d}\bigl(\omega\tau_d(r)^{m/d-1}\bigr)\d V_{m/d}\tau_d(r)\,.
			\end{equation*}
			Here $\tau_m(r)\in\qIW_m(R/A)$ and $\tau_d(r)\in\qIW_d(R/A)$ denote the respective Teichmüller lifts, which we also implicitly identify with their images in $P_m^0$ and $P_d^0$, respectively.
		\end{alphanumerate}
		There is an obvious category of $q$-$V$-systems of differential-graded $A$-algebras over $R$, which we denote $\cat{CDGAlg}_{R/A}^{\q V}$.
	\end{defi}
	\begin{lem}\label{lem:dAfterV}
		Let $R$ be an $A$-algebra. In any $q$-$V$-system $(P_m^*)_{m\in\IN}$ over $R$, the Verschiebungen satisfy the relation
		\begin{equation*}
			V_{n}\circ \d=n(\d \circ V_n)\,.
		\end{equation*}
	\end{lem}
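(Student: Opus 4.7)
The plan is to apply the axiom $V_{m/d}(\omega\d\eta)=V_{m/d}(\omega)\d V_{m/d}(\eta)$ from \cref{def:qVSystemOfCDGA}\cref{enum:qDeRhamWittConditionB} with $\omega=1$, identify $V_{m/d}(1)$ explicitly using the $q$-Witt vector relations, and then simplify the resulting $q$-integer $[n]_{q^d}$ down to the honest integer $n$ by exploiting the vanishing of $(q^d-1)$ on the image of $V_{m/d}$. First I fix $d\mid m$, set $n=m/d$, and let $\eta\in P_d^*$. Since $V_{m/d}$ is compatible with the Verschiebungen on relative $q$-Witt vectors in degree zero, the element $V_{m/d}(1)\in P_m^0$ is the image of $V_{m/d}(1)\in\qIW_m(R/A)$; by the defining relation $V_{m/d}\circ F_{m/d}=[m/d]_{q^d}$ this equals $V_{m/d}(F_{m/d}(1))=[n]_{q^d}$. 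The axiom therefore yields
\begin{equation*}
V_{m/d}(\d\eta)=[n]_{q^d}\,\d V_{m/d}(\eta),
\end{equation*}
and the task reduces to replacing $[n]_{q^d}$ by $n$.

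The key observation is that $(q^d-1)$ annihilates the entire image of $V_{m/d}$ in $P_m^*$. Indeed, by \cref{lem:RelativeqWitt} the ring $\qIW_d(R/A)$ is a quotient of $\qIW_d(R)\otimes_{\qIW_d(A)}A[q]/(q^d-1)$, so $q^d=1$ already holds in $\qIW_d(R/A)$, and via the $A[q]$-algebra structure map $\qIW_d(R/A)\to P_d^0$ this relation propagates throughout the entire CDGA $P_d^*$. By $A[q]$-linearity of $V_{m/d}$ one therefore obtains, for every $\eta\in P_d^*$,
\begin{equation*}
(q^d-1)V_{m/d}(\eta)=V_{m/d}\bigl((q^d-1)\eta\bigr)=V_{m/d}(0)=0
\end{equation*}
in $P_m^*$. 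The differential $\d$ is $A[q]$-linear, so $\d q=0$; applying $\d$ to the identity $(q^d-1)V_{m/d}(\eta)=0$ then gives $(q^d-1)\,\d V_{m/d}(\eta)=0$.

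To conclude, I invoke the telescoping identity
\begin{equation*}
[n]_{q^d}-n=\sum_{k=1}^{n-1}\bigl(q^{kd}-1\bigr)=(q^d-1)\sum_{k=1}^{n-1}[k]_{q^d}.
\end{equation*}
Combining this with the vanishing of $(q^d-1)\,\d V_{m/d}(\eta)$ established in the previous paragraph yields $\bigl([n]_{q^d}-n\bigr)\,\d V_{m/d}(\eta)=0$, which substituted into the earlier formula $V_{m/d}(\d\eta)=[n]_{q^d}\,\d V_{m/d}(\eta)$ produces $V_{m/d}(\d\eta)=n\,\d V_{m/d}(\eta)$, as claimed. The only point of real delicacy in this argument is the passage from $[n]_{q^d}$ to $n$; everything else is a direct unravelling of the axioms together with the universal relations among the Verschiebungen and Frobenii on $q$-Witt vectors.
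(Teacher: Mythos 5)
Your proof is correct and takes essentially the same route as the paper: apply the axiom $V_{m/d}(\omega\,\d\eta)=V_{m/d}(\omega)\,\d V_{m/d}(\eta)$ with $\omega=1$, identify $V_{m/d}(1)=[n]_{q^d}$, and replace $[n]_{q^d}$ by $n$ using that $\d V_{m/d}(\eta)$ is a $(q^d-1)$-torsion element. The paper states these points tersely (notably the $(q^d-1)$-torsion observation and the congruence $[n]_{q^d}\equiv n\bmod(q^d-1)$), whereas you spell them out via the $A[q]$-linearity of $V_{m/d}$ and $\d$ and the telescoping identity; no new idea is introduced.
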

	\begin{proof}
		 We write $n=m/d$ and use the second condition from \cref{def:qVSystemOfCDGA}\cref{enum:qDeRhamWittConditionB} to obtain $V_{m/d}(\d\omega)=V_{m/d}(1)\d V_{m/d}(\omega)=[m/d]_{q^d}\d V_{m/d}(\omega)$ for all $\omega\in P_d^*$. Here we also used that $V_{m/d}(1)=[m/d]_{q^d}$ holds in $\qIW_m(R/A)$. But $\omega$ and thus $\d V_{m/d}(\omega)$ are $(q^d-1)$-torsion elements, hence multiplication by $[m/d]_{q^d}$ agrees with multiplication by $m/d$.
	\end{proof}
	\begin{numpar}[$V$-Divided powers.]
		In classical Witt vector theory, one often considers a divided power structure on the ideal generated by the Verschiebung. In general, this can't be done for our $q$-Witt vectors $\qIW_m(R)$, since we're working with big rather than $p$-typical Witt vectors and we don't assume that $R$ is a $\IZ_{(p)}$-algebra. However, if $p$ is a prime factor of $m$, then there's still a well-defined map $\gamma_p\colon \im V_p\rightarrow \im V_p$ sending $V_p(x)$ to $p^{p-2}V_p(x^p)$ (here we use that $V_p$ is injective by \cref{cor:VerschiebungInjective}
		); it satisfies $p\gamma_p(v)=v^{p}$ for all $v\in \im V_p$. We then say that a derivation $\d\colon \qIW_m(R)\rightarrow M$ is a \emph{$V$-PD-derivation} if $\d\gamma_p(v)=v^{p-1}\d v$.
	\end{numpar}
	\begin{lem}\label{lem:VPDderivation}
		Let $R$ be an $A$-algebra. For any $q$-$V$-system $(P_m^*)_{m\in\IN}$ over $R$ and all $m$, the composition $\qIW_m(R)\rightarrow\qIW_m(R/A)\rightarrow P_m^0\rightarrow P_m^1$ is a $V$-PD-derivation. In fact, for any prime factor $p\mid m$ and all $x\in \qIW_{m/p}(R/A)$, we have the stronger condition
		\begin{equation*}
			\d V_p(x^p)=V_p(x^{p-1})\d V_p(x)\,.
		\end{equation*}
	\end{lem}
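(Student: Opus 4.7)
The plan is to first reduce the $V$-PD-derivation claim to the stronger pointwise identity $\d V_p(x^p) = V_p(x^{p-1})\d V_p(x)$, and then to establish this identity in stages. For the reduction, I will use the projection formula $V_p(a)V_p(b) = V_p\bigl(a\cdot F_pV_p(b)\bigr) = V_p(a\cdot pb) = pV_p(ab)$ in $\qIW_m(R/A)$, which is valid since $F_p\circ V_p = p$ as an endomorphism of $\qIW_{m/p}(R/A)$. Iterating yields $V_p(x)^k = p^{k-1}V_p(x^k)$, so that $\gamma_p(V_p(x)) = p^{p-2}V_p(x^p)$ and $V_p(x)^{p-1} = p^{p-2}V_p(x^{p-1})$; the $V$-PD-derivation condition then becomes exactly $p^{p-2}$ times the stronger identity.

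For the Teichmüller case $x = \tau_{m/p}(r)$, multiplicativity of $\tau$ together with $F_p\tau_m(r) = \tau_{m/p}(r)^p$ gives $V_p(x^p) = V_pF_p\tau_m(r) = [p]_{q^{m/p}}\tau_m(r)$ (now using $V_p\circ F_p = [p]_{q^{m/p}}$ on $\qIW_m(R/A)$). Applying $\d$ and using that $[p]_{q^{m/p}} \in \IZ[q]$ is a scalar yields $\d V_p(x^p) = [p]_{q^{m/p}}\d\tau_m(r)$, which by the $V$-Teichmüller condition applied with $d = m/p$ and $\omega = 1$ equals $V_p(\tau_{m/p}(r)^{p-1})\d V_p\tau_{m/p}(r)$, as required.

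To extend to arbitrary $x$, I will induct on $m$. The base case $m = p$ is immediate since $\qIW_1(R/A) = R$ and every element is a Teichmüller lift. For the inductive step, setting $\phi(x) \coloneqq \d V_p(x^p) - V_p(x^{p-1})\d V_p(x)$, it suffices---by the $A[q]$-Frobenius-twisting $\phi(ax) = a^p\phi(x)$ (immediate from the $A[q]$-linearity of $V_p$ and of $\d$) together with additivity of $\phi$---to verify $\phi = 0$ on Teichmüller lifts and on Verschiebung summands $V_\ell(y)$ for $1 < \ell \mid m/p$ and $y \in \qIW_{m/(p\ell)}(R/A)$. Teichmüller lifts are handled above. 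For a Verschiebung summand, the analogous identity $V_\ell(y)^p = \ell^{p-1}V_\ell(y^p)$ reduces the claim to $\ell\,\d V_{p\ell}(y^p) = V_{p\ell}(y^{p-1})\,\d V_{p\ell}(y)$, which follows by applying $V_\ell$ to the inductive identity $\d V_p(y^p) = V_p(y^{p-1})\d V_p(y)$ in $P_{m/\ell}^1$ and invoking the Verschiebung axiom $V_\ell(\omega\d\eta) = V_\ell(\omega)\d V_\ell(\eta)$ together with \cref{lem:dAfterV}.

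The main obstacle will be establishing additivity of $\phi$: a direct application of Leibniz to $V_p(x)^k V_p(y)^{p-k} = p^{p-1}V_p(x^ky^{p-k})$ only determines the binomial cross terms up to $p$-torsion in $P_m^1$. Pinning them down exactly will require invoking the $V$-Teichmüller condition with non-trivial $\omega$, whose additional relations should remove this $p$-torsion ambiguity and allow the cross terms on both sides of $\phi(x+y) - \phi(x) - \phi(y)$ to cancel, completing the induction.
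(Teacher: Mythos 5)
Your overall structure matches the paper's proof (which explicitly follows Langer--Zink's Lemma~1.5): reduce the $V$-PD claim to the pointwise identity $\d V_p(x^p)=V_p(x^{p-1})\d V_p(x)$, then prove the latter by induction on $m$ via three cases --- Teichmüller lifts, additivity, and Verschiebung summands. Your treatment of the Teichmüller case via the $V$-Teichmüller condition and of the Verschiebung case via the inductive hypothesis, the axiom $V_\ell(\omega\d\eta)=V_\ell(\omega)\d V_\ell(\eta)$, and \cref{lem:dAfterV} is correct, as is the $A[q]$-Frobenius-twisting $\phi(ax)=a^p\phi(x)$.

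The real issue is your last paragraph: you declare additivity of $\phi$ to be a genuine obstacle, diagnose it as a \enquote{$p$-torsion ambiguity}, and speculate that the $V$-Teichmüller condition with non-trivial $\omega$ is needed to remove it. Neither is true. Additivity is actually a clean, torsion-free computation which does not touch the Teichmüller condition at all. Write $g(a,b)\coloneqq\bigl((a+b)^p-a^p-b^p\bigr)/p\in\IZ[a,b]$, which is a polynomial since $p\mid\binom pk$ for $0<k<p$. Differentiating the coefficients directly (not dividing by $p$ in the ambient ring) gives the \emph{polynomial} identity
\begin{equation*}
	(a+b)^{p-1}\d(a+b)-a^{p-1}\d a-b^{p-1}\d b=\d g(a,b)\,.
\end{equation*}
Then, using first $V_p(\omega\d\eta)=V_p(\omega)\d V_p(\eta)$ to rewrite $V_p(x^{p-1})\d V_p(x)=V_p(x^{p-1}\d x)$ and then \cref{lem:dAfterV}, one computes
\begin{equation*}
	\phi(x_1+x_2)-\phi(x_1)-\phi(x_2)=\d V_p\bigl(p\,g(x_1,x_2)\bigr)-V_p\bigl(\d g(x_1,x_2)\bigr)=p\,\d V_p(g)-p\,\d V_p(g)=0\,.
\end{equation*}
Your proposed approach of controlling the cross terms via $V_p(x)^kV_p(y)^{p-k}=p^{p-1}V_p(x^ky^{p-k})$ would indeed leave a $p$-torsion ambiguity, which is why it is the wrong way in: the trick is to subtract the untwisted Leibniz expansions \emph{before} applying $V_p$, so that the only place one ever divides by $p$ is inside $\IZ[a,b]$. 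With that replacement, your proof is complete and coincides with the paper's.
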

	\begin{proof}
		The proof is mostly the same as in \cite[Lemma~\href{https://www.math.uni-bielefeld.de/~zink/dRW.pdf\#page=20}{1.5}]{LangerZink}. We use induction on $m$, the case $m=1$ being trivial. So let $m>1$. We proceed in three steps: Step~1 is to prove the relation in the case $x=a\tau_{m/p}(r)$ for some $a\in A[q]$ and some $r\in R$. Step~2 is to prove that if the relation is satisfied for $x=x_1$ and $x=x_2$, then it's satisfied for $x=x_1+x_2$ as well. Step~3 is prove the relation in the case $x=V_\ell(y)$ for some prime factor $\ell\mid m$ (including $\ell=p$) and some $y\in \qIW_{m/p}(R)$.
		
		For the first two steps, we can copy Langer--Zink's proof, except that in the first step, Langer and Zink use the $F$-Teichmüller condition (see \cref{enum:TeichmuellerF} below), but the argument works equally well with the $V$-Teichmüller condition \cref{enum:TeichmuellerV}. For the third step, apply $\ell^{p-2}V_\ell$ to both sides of $\d V_p(y^p)=V_p(y^{p-1})\d V_p(y)$ (which we know from the inductive hypothesis). Using $V_\ell\circ \d=\ell(\d\circ V_\ell)$ by \cref{lem:dAfterV} and $V_\ell(y)^p=\ell^{p-1}V_\ell(y)$, the left-hand side becomes
		\begin{equation*}
			\ell^{p-2}V_\ell\bigl(\d V_p(y^p)\bigr)=\ell^{p-1}\d V_\ell V_p(y^p)=\d V_p\left(\ell^{p-1}V_\ell(y^p)\right)=\d V_p\bigl(V_\ell(y)^p\bigr)\,.
		\end{equation*}
		In a similar way, the right-hand side becomes
		\begin{equation*}
			\ell^{p-2}V_\ell\left(V_p(y^{p-1})\d V_p(y)\right)=\ell^{p-2}V_\ell\left(V_p(y^{p-1})\right)\d V_\ell V_p(y)=V_p\left(V_\ell(y)^{p-1}\right)\d V_pV_\ell(y)\,.
		\end{equation*}
		This finishes Step~3, the induction, and the proof.
	\end{proof}
	\begin{rem}
		It turns out that an even stronger version of \cref{lem:VPDderivation} is true: If $d\mid m$ is any divisor, and $x\in \qIW_d(R/A)$, then
		\begin{equation*}
			\d V_{m/d}\bigl(x^{m/d}\bigr)=V_{m/d}\bigl(x^{m/d-1}\bigr)\d V_{m/d}(x)\,.
		\end{equation*}
		The author doesn't know how to generalise the proof of \cref{lem:VPDderivation}. Instead, one can argue as follows: It suffices to prove this for the universal $q$-$V$-system over $R$. We'll see that such a thing exists in \cref{prop:qDRWExists}; furthermore, this proposition shows that we can reduce to the case where $R$ is a polynomial ring over $A$ (possibly in infinitely many variables). Furthermore, it will follow from \cref{prop:qDRWisTorsionFreeForSmoothRings} and passing to filtered colimits that the universal $q$-$V$-system over a polynomial ring is degree-wise $\IZ$-torsion-free. But the relation in question is easily seen to be true after multiplication with $(m/d)^{m/d-1}$, so we're done. Note that we couldn't have used this argument in the first place, since we'll need \cref{lem:VPDderivation} (in its weak form) to prove \cref{prop:qDRWisTorsionFreeForSmoothRings}.
	\end{rem}
	Next we define the variant that has Frobenii.
	\begin{defi}\label{def:qFVSystemOfCDGA}
		Fix an $A$-algebra $R$. A \emph{$q$-$FV$-system of differential-graded $A$-algebras over $R$} is a $q$-$V$-system $(P_m^*)_{m\in\IN}$ as in \cref{def:qVSystemOfCDGA} together with the following additional structure:
		\begin{alphanumerate}
			\item[c] For all $d\mid m$, a morphism $F_{m/d}\colon P_m^*\rightarrow P_d^*$ of graded $A[q]$-algebras.\label{enum:qDeRhamWittConditionC} These are required to be compatible with the Frobenius maps on $q$-Witt vectors (via the morphisms from \cref{def:qVSystemOfCDGA}\cref{enum:qDeRhamWittConditionA}) and must satisfy $F_{m/e}=F_{d/e}\circ F_{m/d}$ for all chains of divisors $e\mid d\mid m$. Furthermore, they must interact with the Verschiebungen in the following way:
			\begin{equation*}
				F_{m/d}\circ \d \circ V_{m/d}=\d\quad\text{and}\quad V_{m/d}\bigl(\omega F_{m/d}(\eta)\bigr)=V_{m/d}(\omega)\eta
			\end{equation*}
			for all $\omega\in P_d^*$, $\eta\in P_m^*$. Moreover, $F_n$ must commute with $V_k$ whenever $n$, $k$ are coprime. Finally, we must have the familiar relations
			\begin{equation*}
				F_{m/d}\circ V_{m/d}=m/d\quad\text{and}\quad V_{m/d}\circ 	F_{m/d}=[m/d]_{q^{d}}\,.
			\end{equation*}
		\end{alphanumerate}
		Last but not least, we require that the following \emph{$F$-Teichmüller condition} is satisfied:
		\begin{alphanumerate}
			\item[\tau_F] For all $d\mid m$ and all $r\in R$, one has\label{enum:TeichmuellerF}
			\begin{equation*}
				F_{m/d}\bigl(\d\tau_{m}(r)\bigr)=\tau_d(r)^{m/d-1}\d\tau_d(r)\,.
			\end{equation*}
			Here $\tau_m(r)\in\qIW_m(R/A)$ and $\tau_d(r)\in\qIW_d(R/A)$ denote the respective Teichmüller lifts, which we also implicitly identify with their images in $P_m^0$ and $P_d^0$, respectively.
		\end{alphanumerate}
		There is an obvious category of $q$-$FV$-systems of differential-graded algebras over $R$, which we denote $\cat{CDGAlg}_{R/A}^{\q FV}$, and an obvious forgetful functor $\cat{CDGAlg}_{R/A}^{\q FV}\rightarrow \cat{CDGAlg}_{R/A}^{\q V}$.
	\end{defi}
	\begin{lem}\label{lem:dAfterF}
		Let $R$ be an $A$-algebra. In any $q$-$FV$-system $(P_m^*)_{m\in\IN}$ over $R$, the Frobenii satisfy the relation
		\begin{equation*}
			\d \circ F_n=n(\d\circ V_n)\,.
		\end{equation*}
	\end{lem}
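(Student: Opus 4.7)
The plan is to extract this relation directly from the axioms in \cref{def:qFVSystemOfCDGA}, without induction and without any injectivity hypothesis on the Verschiebungen. The key observation will be that the quantum integer $[m/d]_{q^d}$ appearing in the Frobenius--Verschiebung relation $V_{m/d}\circ F_{m/d}=[m/d]_{q^d}$ collapses to the ordinary integer $n=m/d$ once one restricts to the level $P_d^*$.

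The first step is to notice that the structure map $\qIW_d(R/A)\rightarrow P_d^0$ has source a quotient of $A[q]/(q^d-1)$, so $q^d$ acts as $1$ on $P_d^0$; because $P_d^*$ is a $P_d^0$-algebra, the same holds throughout $P_d^*$. Consequently the element $[n]_{q^d}=1+q^d+\dotsb+q^{(n-1)d}$ equals the integer $n$ in $P_d^*$.

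Given this, the rest is a short computation. Starting from the identity $F_{m/d}\circ\d\circ V_{m/d}=\d$ from part~(c) of \cref{def:qFVSystemOfCDGA}, I would evaluate both sides on $F_n(\eta)$ for $\eta\in P_m^*$ and then substitute $V_nF_n(\eta)=[n]_{q^d}\eta$ to obtain
\begin{equation*}
	\d F_n(\eta)\,=\,F_n\bigl(\d V_nF_n(\eta)\bigr)\,=\,F_n\bigl(\d\bigl([n]_{q^d}\eta\bigr)\bigr)\,=\,[n]_{q^d}\,F_n(\d\eta)\,,
\end{equation*}
where the final equality uses that $[n]_{q^d}\in A[q]$ is central and that $\d$ and $F_n$ are both $A[q]$-linear. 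Replacing $[n]_{q^d}$ by $n$ via the first step then yields the claimed identity.

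I expect no real obstacle: the statement is formal once one observes the $(q^d-1)$-torsion of $P_d^*$, and no input from \cref{subsec:qWitt} beyond the axiomatic definitions is required. In particular, the counterpart \cref{lem:dAfterV} for Verschiebungen does not enter the argument, so the proof is genuinely parallel to rather than dependent on the $V$-case.
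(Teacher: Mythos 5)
Your proof is correct and is essentially the argument the paper itself gives: apply $F_n\circ\d\circ V_n=\d$ to $F_n(\eta)$, rewrite $V_nF_n(\eta)=[n]_{q^d}\eta$, pull $[n]_{q^d}$ through $\d$ and $F_n$ by $A[q]$-linearity, and use that $P_d^*$ is $(q^d-1)$-torsion so $[n]_{q^d}=n$ there. Note that the displayed equation in the statement contains a typo --- it should read $\d\circ F_n=n(F_n\circ\d)$ rather than $n(\d\circ V_n)$ --- and the identity you actually prove is the corrected one, which is also what the paper's own proof establishes.
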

	\begin{proof}
		We write $n=m/d$ and use the conditions from \cref{def:qFVSystemOfCDGA}\cref{enum:qDeRhamWittConditionC} to compute that $\d F_{m/d}(\omega)=F_{m/d}(\d V_{m/d}F_{m/d}(\omega))=[m/d]_{q^d}F_{m/d}(\d\omega)$ for all $\omega\in P_m^*$. But this computation takes place in $P_d^*$, which is $(q^d-1)$-torsion, so multiplication by $[m/d]_{q^d}$ and by $m/d$ agree.
	\end{proof}
	\begin{rem}\label{rem:Redundancies}
		In \cref{def:qFVSystemOfCDGA}, the condition that $(P_m^*)$ be a $q$-$V$-system was added for simplicity, but it's partially redundant. As explained after \cite[Definition~\href{https://www.math.uni-bielefeld.de/~zink/dRW.pdf\#page=19}{1.4}]{LangerZink}, the condition $V_{m/d}(\omega \d\eta)=V_{m/d}(\omega)\d V_{m/d}(\eta)$ from \cref{def:qVSystemOfCDGA}\cref{enum:qDeRhamWittConditionB} is easily implied by the conditions from \cref{def:qFVSystemOfCDGA}\cref{enum:qDeRhamWittConditionC}, and the $V$-Teichmüller condition \cref{enum:TeichmuellerV} follows easily from this and the $F$-Teichmüller condition \cref{enum:TeichmuellerF}.
	\end{rem}
	Finally, we introduce the variant that only becomes relevant for smooth $\IZ$-algebras.
	\begin{defi}\label{def:qVSystemTorsionFree}
		Fix an $A$-algebra $R$. A \emph{torsion-free $q$-$V$-system of differential-graded $A$-algebras over $R$} is a system $(P_m^*)$ of degree-wise $\IZ$-torsion-free differential-graded $A[q]$-algebras, equipped with the additional structure from \cref{def:qVSystemOfCDGA}\cref{enum:qDeRhamWittConditionA} and~\cref{enum:qDeRhamWittConditionB}. The corresponding category will be denoted $(\cat{CDGAlg}_{R/A}^{\q V})^{\mathrm{tors\mhyph free}}$.
	\end{defi}
	\begin{rem}\label{def:TorsionFreeMakesStuffRedundant}
		Note that every torsion-free $q$-$V$-system is also a $q$-$V$-system. Indeed, in general, the $V$-Teichmüller condition \cref{enum:TeichmuellerV} always holds up to $(m/d)^{m/d-1}$-torsion, so it's automatically true in the $\IZ$-torsion-free case. In particular, there is a fully faithful forgetful functor $(\cat{CDGAlg}_{R/A}^{\q V})^{\mathrm{tors\mhyph free}}\rightarrow \cat{CDGAlg}_{R/A}^{\q V}$.
	\end{rem}
	\begin{numpar}[A theory without restrictions (again).]\label{par:WhoNeedsRestrictions}
		Observe that we do not include any restriction maps in \cref{def:qVSystemOfCDGA,def:qFVSystemOfCDGA,def:qVSystemTorsionFree}.\footnote{Which is also why we can't use the term \emph{$FV$-pro-complex} as in \cite[Definition~1.4]{LangerZink}---at best, in the presence of Frobenii, we get a pro-system of graded $A[q]$-algebras, but never of complexes.} This is of course necessitated by the fact that there are no restrictions for $q$-Witt vectors. 
		
		Surprisingly though, restrictions are also not needed for Langer--Zink's construction! Indeed, let $R\rightarrow S$ be any map of $\IZ_{(p)}$-algebras and let $\cat{CDGAlg}_{S/R}^{FV,\,(p)}$ be the category of \emph{$FV$-pro-complexes $(P_n^*)_{n\geqslant 1}$ over the $R$-algebra $S$} as in \cite[Definition~\href{https://www.math.uni-bielefeld.de/~zink/dRW.pdf\#page=18}{1.4}]{LangerZink}, but without the restriction maps $P_{n+1}^*\rightarrow P_n^*$ (this is of course an informal definition; we leave it to the reader to formalise it). Then the de Rham--Witt pro-complex $(W_n\Omega_{S/R}^*)_{n\geqslant 1}$ is still initial in the category $\cat{CDGAlg}_{S/R}^{FV,\,(p)}$. To see this, just skim through \cite[\S\href{https://www.math.uni-bielefeld.de/~zink/dRW.pdf\#page=22}{1.3}]{LangerZink} and note that compatibility with the restrictions is never enforced. We'll see in \cref{rem:DRWtoqDRW} below that this observation provides us with a map from Langer--Zink's de Rham--Witt complexes to our $q$-de Rham--Witt complexes. 
	\end{numpar}
	
	\subsection{Construction of \texorpdfstring{$q$}{q}-de Rham--Witt complexes}
	In this subsection we'll construct $(\qIW_m\Omega_{R/A}^*)_{m\in\IN}$ and derive some first properties. Our goal is to prove the following proposition.
	\begin{prop}\label{prop:qDRWExists}
		Let $R$ be an $A$-algebra. The category $\cat{CDGAlg}_{R/A}^{\q V}$ has an initial object $(\qIW_m\Omega_{R/A}^*)_{m\in\IN}$, which has the following properties:
		\begin{alphanumerate}
			\item For all $m\in \IN$, the canonical map $\Omega_{\qIW_m(R/A)/A[q]}^*\rightarrow \qIW_m\Omega_{R/A}^*$ is surjective. For $m=1$, it induces an isomorphism $\Omega_{R/A}^*\cong \qIW_1\Omega_{R/A}^*$.\label{enum:dRtoqDRWisSurjective}
			\item For all $m\in\IN$, the structure map from \cref{def:qVSystemOfCDGA}\cref{enum:qDeRhamWittConditionA} induces an isomorphism $\qIW_m(R/A)\cong \qIW_m\Omega_{R/A}^0$.\label{enum:qDRWisqWinDegree0}
		\end{alphanumerate}
	\end{prop}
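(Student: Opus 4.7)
\medskip
\noindent\textbf{Proof plan.}
The plan is to treat $\cat{CDGAlg}_{R/A}^{\q V}$ as the category of models of an essentially algebraic, multi-sorted theory: the sorts are $P_m^i$ for $m\in\IN$ and $i\geqslant 0$, and the operations (addition, scalar multiplication by elements of $A[q]$, products $P_m^i\times P_m^j\to P_m^{i+j}$, differentials $\d\colon P_m^i\to P_m^{i+1}$, Verschiebungen $V_{m/d}\colon P_d^i\to P_m^i$, and constants labelled by elements of $\qIW_m(R/A)$) satisfy the equational axioms collected in \cref{def:qVSystemOfCDGA}. Consequently, the category is locally presentable and admits an initial object $(\qIW_m\Omega_{R/A}^*)_{m\in\IN}$. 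Nothing beyond standard universal algebra is needed at this stage.

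For \cref{enum:dRtoqDRWisSurjective}, the structure map $\qIW_m(R/A)\rightarrow \qIW_m\Omega_{R/A}^0$ combined with the universal property of the K\"ahler differentials yields a morphism of CDGAs $\Omega_{\qIW_m(R/A)/A[q]}^*\rightarrow \qIW_m\Omega_{R/A}^*$. To show surjectivity, let $Q_m^*\subseteq \qIW_m\Omega_{R/A}^*$ denote the sub-differential-graded $A[q]$-subalgebra generated by the image of $\qIW_m(R/A)$ in degree $0$. The key step is to verify that the family $(Q_m^*)_{m\in \IN}$ is a $q$-$V$-sub-system; once this is known, initiality forces $(Q_m^*)=(\qIW_m\Omega_{R/A}^*)$ and surjectivity onto $\qIW_m\Omega_{R/A}^*$ is exactly the statement that this subalgebra is everything. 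Stability under the differential and the $A[q]$-algebra structure is built in, and the $V$-Teichm\"uller axiom only involves degree-$0$ elements; what remains is to check $V_{m/d}(Q_d^*)\subseteq Q_m^*$. For this, note that every element of $Q_d^i$ is an $A[q]$-linear combination of elements of the form $\omega_0\,\d\eta_1\dotsm\d\eta_i$ with $\omega_0,\eta_k\in\qIW_d(R/A)$; iterating the Leibniz-type relation from \cref{def:qVSystemOfCDGA}\cref{enum:qDeRhamWittConditionB} gives
\begin{equation*}
	V_{m/d}\bigl(\omega_0\,\d\eta_1\dotsm \d\eta_i\bigr)=V_{m/d}(\omega_0)\,\d V_{m/d}(\eta_1)\dotsm\d V_{m/d}(\eta_i)\,,
\end{equation*}
which lies in $Q_m^*$ since $V_{m/d}$ on $\qIW_d(R/A)$ lands in $\qIW_m(R/A)\subseteq Q_m^0$.

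For \cref{enum:qDRWisqWinDegree0}, surjectivity of $\qIW_m(R/A)\rightarrow \qIW_m\Omega_{R/A}^0$ is the degree-$0$ consequence of \cref{enum:dRtoqDRWisSurjective}. For injectivity, I would exhibit the ``trivial'' $q$-$V$-system $(S_m^*)_{m\in\IN}$ with $S_m^0\coloneqq \qIW_m(R/A)$, $S_m^i\coloneqq 0$ for $i>0$, zero differential and zero Verschiebungen in positive degree, while in degree $0$ taking the Verschiebungen on $q$-Witt vectors. All axioms of \cref{def:qVSystemOfCDGA} hold automatically since every equation not living in degree $0$ reduces to $0=0$. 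Initiality then yields a retraction of $\qIW_m(R/A)\rightarrow \qIW_m\Omega_{R/A}^0$, giving injectivity.

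The $m=1$ part of \cref{enum:dRtoqDRWisSurjective} is handled by the same trick: produce the $q$-$V$-system $(P_m^*)_{m\in\IN}$ with $P_1^*\coloneqq \Omega_{R/A}^*$ and $P_m^*\coloneqq 0$ for $m>1$, structure map $\qIW_1(R/A)=R\rightarrow \Omega_{R/A}^0=R$ the identity (and zero to higher $P_m^0$), Verschiebungen inherited from $q$-Witt vectors in degree $0$ and zero elsewhere. Every axiom in positive degree becomes $0=0$, so this is a valid $q$-$V$-system. Initiality supplies a map $\qIW_1\Omega_{R/A}^*\rightarrow \Omega_{R/A}^*$ whose composition with the surjection from \cref{enum:dRtoqDRWisSurjective} is an endomorphism of $\Omega_{R/A}^*$ restricting to the identity in degree $0$, hence equals $\id$ by the universal property of the de Rham complex. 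The main obstacle in the whole argument is really the bookkeeping in the verification that $(Q_m^*)_{m\in\IN}$ is closed under $V_{m/d}$; everything else reduces to writing down tautological retractions from the initial object to contrived small systems.
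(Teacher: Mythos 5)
Your proof is correct, but it takes a genuinely different route from the paper's. The paper constructs $\qIW_m\Omega_{R/A}^*$ explicitly by induction on $m$: having built the complexes for all proper divisors $d\mid m$, it defines $\qIW_m\Omega_{R/A}^*\coloneqq\Omega_{\qIW_m(R/A)/A[q]}^*/\IJ_m^*$, where $\IJ_m^*$ is the differential-graded ideal generated by (the differentials of) two explicit families of elements — one enforcing the $V$-formula, one enforcing the $V$-Teichm\"uller condition. Since $\IJ_m^*$ is generated in degrees $\geqslant 1$, both \cref{enum:dRtoqDRWisSurjective} and \cref{enum:qDRWisqWinDegree0} are read off immediately from the construction, and the $V$-Teichm\"uller condition is enforced by fiat. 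You instead appeal to local presentability of the category of models of an essentially algebraic theory to get existence abstractly, then deduce \cref{enum:dRtoqDRWisSurjective} by showing that the sub-DGA generated in degree $0$ is a sub-$q$-$V$-system and invoking initiality, and \cref{enum:qDRWisqWinDegree0} and the $m=1$ case by mapping the initial object onto ``contrived'' systems to manufacture retractions. Both arguments are valid; each has advantages. Your approach is shorter and more conceptual, and the retraction trick for \cref{enum:qDRWisqWinDegree0} is cleaner than reading it off a presentation. The paper's explicit quotient presentation, however, is used repeatedly downstream — e.g.\ the verification $F_p(\xi)=F_p(\d\xi)=F_p(\eta)=F_p(\d\eta)=0$ in \cref{con:FrobeniusPreliminary}, the truncated universal property \cref{rem:TruncatedUniversalProperty2}, and the commutation with filtered colimits invoked in the proof of \cref{lem:FpdAdditive} — so opting for the abstract existence argument here would shift work elsewhere. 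One wording point: the $V$-Teichm\"uller condition involves an arbitrary $\omega\in P_d^*$, not only degree-$0$ elements; but since it is an equational axiom holding in the ambient $\qIW_m\Omega_{R/A}^*$, it is automatically inherited by the sub-system $(Q_m^*)$, so the substance of your argument is unaffected.
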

	\begin{defi}\label{def:qDRW}
		Let $R$ be an $A$-algebra. For all $m\in\IN$, the differential-graded $A[q]$-algebra $\qIW_m\Omega_{R/A}^*$ from \cref{prop:qDRWExists} is called the \emph{$m$-truncated $q$-de Rham--Witt complex of $R$ relative to $A$}.%
		\footnote{Again, we've deviated again from the terminology of \cite[Definition~\href{https://guests.mpim-bonn.mpg.de/ferdinand/q-deRham.pdf\#theorem.5.17}{5.17}]{MasterThesis}, as we're not working in a $(q-1)$-complete setting and also $\IZ$-torsion-freeness is not assumed. It will be apparent from \cref{prop:qDRWisTorsionFreeForSmoothRings} (plus \cref{cor:qWittNoetherian} to ensure that completions behave nicely) that the $q$-de Rham--Witt complexes defined in \cite[Definition~\href{https://guests.mpim-bonn.mpg.de/ferdinand/q-deRham.pdf\#theorem.5.17}{5.17}]{MasterThesis} coincide with $(\qIW_m\Omega_R^*)_{(q-1)}^\complete$, where the completion is taken degree-wise (and it doesn't matter whether we take the underived or the derived completion).}
	\end{defi}
	\begin{proof}[Proof of \cref{prop:qDRWExists}]
		We proceed inductively. For $m=1$, we put $\qIW_1\Omega_{R/A}^*\coloneqq\Omega_{R/A}^*$. Now let $m>1$ and assume that we've already constructed $\qIW_d\Omega_{R/A}^*$ for all divisors $d\mid m$, $d\neq m$, satisfying \cref{enum:dRtoqDRWisSurjective} and \cref{enum:qDRWisqWinDegree0}, along with Verschiebungen $V_{d/e}$ for all $e\mid d$ satisfying the conditions from \cref{def:qVSystemOfCDGA}. Now let $\IJ_{m}^*\subseteq \Omega_{\qIW_m(R/A)/A[q]}^*$ be the smallest differential-graded ideal satisfying the following conditions for all divisors $d\mid m$, $d\neq m$:
		\begin{alphanumerate}
			\item[V_d] For\label{enum:GeneratorsOfVType} all $j\geqslant 1$, all finite indexing sets $I$, and all sequences $(w_i,x_{i,1},\dotsc,x_{i,j})_{i\in I}$ of elements of $\qIW_d(R/A)$ such that $0=\sum_{i\in I}w_i\d x_{i,1}\wedge \dotsm\wedge \d x_{i,j}$ holds in $\qIW_d\Omega_{R/A}^j$ (which is a quotient of $\Omega_{\qIW_d(R/A)/A[q]}^j$ by \cref{enum:dRtoqDRWisSurjective}, so the sum makes sense), the following homogeneous degree-$j$ element is contained in $\IJ_{m}^*$:
			\begin{equation*}
				\xi\coloneqq\sum_{i\in I}V_{m/d}(w_i)\d V_{m/d}(x_{i,1})\wedge \dotsm\wedge \d V_{m/d}(x_{i,j})\,.
			\end{equation*}
			\item[\tau_d] For\label{enum:GeneratorsOfTeichmuellerType} all $x\in \qIW_d(R/A)$ and all $r\in R$ (so that $V_{m/d}(x)$ and $V_{m/d}(x\tau_d(r)^{m/d-1})$ are already defined), the following homogeneous degree-$1$ element is contained in $\IJ_{m}^*$:
			\begin{equation*}
				\eta\coloneqq V_{m/d}(x)\d\tau_m(r)-V_{m/d}\bigl(x\tau_d(r)^{{m/d}-1}\bigr)\d V_{m/d}\tau_d(r)\,.
			\end{equation*}
		\end{alphanumerate}
		Explicitly, $\IJ_{m}^*$ is the graded ideal generated by all $\xi$, $\d\xi$, $\eta$, and $\d\eta$, where $\xi$ and $\eta$ are as in \cref{enum:GeneratorsOfVType} and \cref{enum:GeneratorsOfTeichmuellerType}, respectively. We put $\qIW_m\Omega_{R/A}^*\coloneqq \Omega_{\qIW_m(R)/A[q]}^*/\IJ_{m}^*$. It's a differential-graded $A[q]$-algebra by construction. Condition~\cref{enum:GeneratorsOfVType} makes sure that there's a well-defined map $V_{m/d}\colon \qIW_d\Omega_{R/A}^*\rightarrow \qIW_m\Omega_{R/A}^*$ given by the formula
		\begin{equation*}
			V_{m/d}\left(w\d x_1\wedge\dotsm\wedge\d x_j\right)=V_{m/d}(w)\d V_{m/d}(x_1)\wedge\dotsm\wedge\d V_{m/d}(x_j)\,.
		\end{equation*}
		This automatically satisfies the condition from \cref{def:qVSystemOfCDGA}\cref{enum:qDeRhamWittConditionB}. Furthermore, \cref{enum:GeneratorsOfTeichmuellerType} ensures that $\qIW_m\Omega_R^*$ satisfies the $V$-Teichmüller condition \cref{enum:TeichmuellerV} for $\omega=x$ in degree $0$, which easily implies the general case. Finally, it's clear from the construction that conditions~\cref{enum:dRtoqDRWisSurjective} and~\cref{enum:qDRWisqWinDegree0} are true. This finishes the inductive step. It's straightforward to see that $(\qIW_m\Omega_{R/A}^*)_{m\in\IN}$ is really initial in $\cat{CDGAlg}_{R/A}^{\q V}$.
	\end{proof}
	\begin{rem}\label{rem:TruncatedUniversalProperty2}
		As in \cref{rem:TruncatedUniversalPropertyRelative}, the proof of \cref{prop:qDRWExists} shows that a similar universal property also holds for every truncated system: If $S\subseteq \IN$ is any truncation set (in the sense of \cref{par:BigWitt}), we define an \emph{$S$-truncated $q$-$V$-system of differential-graded $A$-algebras over $R$} to be a systems of differential-graded $\IZ[q]$-algebras $(P_m^*)_{m\in S}$ equipped with the structure from \cref{def:qVSystemOfCDGA}\cref{enum:qDeRhamWittConditionA}, \cref{enum:qDeRhamWittConditionB} for all $m\in S$ as well as satisfying the $V$-Teichmüller condition \cref{enum:TeichmuellerV} for all $m\in S$. Then $(\qIW_{m}\Omega_{R/A}^*)_{m\in S}$ is the initial $S$-truncated $q$-$V$-system.
		
		In the case where $S=T_m$ is the set of divisors of $m$, we only need a $\Lambda_m$-structure on $A$ 
		to define $(\qIW_d\Omega_{R/A}^*)_{d\in T_m}$.
	\end{rem}
	\begin{numpar}[Ghost maps.]\label{par:qdRWGhostMaps}
		It turns out that $(\qIW_m\Omega_{R/A}^*)$ comes equipped with maps of differential-graded $A[q]$-algebras
		\begin{equation*}
			\gh_{m/d}\colon \qIW_m\Omega_{R/A}^*\longrightarrow \Omega_{R/A}^*\otimes_{A,\psi^d}A[\zeta_d]
		\end{equation*}
		for all $d\mid m$, generalising the ghost maps for relative $q$-Witt vectors. To construct these maps, it's enough to equip $\bigl(\prod_{d\mid m}(\Omega_{R/A}^*\otimes_{A,\psi^d}A[\zeta_d])\bigr)_{m\in \IN}$ with the structure of a $q$-$V$-system over $R$. 
		
		According to \cref{def:qVSystemOfCDGA}\cref{enum:qDeRhamWittConditionA}, the first piece of structure we must provide are ring maps $\qIW_m(R/A)\rightarrow\prod_{d\mid m}(R\otimes_{A,\psi^d}A[\zeta_d])$ for all $m$. But we can simply take them to be the product $(\gh_{m/d})_{d\mid m}$ of all relative $q$-Witt vector ghost maps; see \cref{par:RelativeGhostMaps}. Furthermore, we have to define Verschiebungen
		\begin{equation*}
			V_{m/n}\colon \prod_{e\mid n}\bigl(\Omega_{R/A}^*\otimes_{A,\psi^e}A[\zeta_e]\bigl)\longrightarrow \prod_{d\mid m}\bigl(\Omega_{R/A}^*\otimes_{A,\psi^d}A[\zeta_d]\bigr)
		\end{equation*}
		for all divisors $n\mid m$. We do this as follows: If $\omega=(\omega_e)_{e\mid n}$ is homogeneous of degree $i$, we let $V_{m/n}(\omega)\coloneqq(V_{m/n}(\omega)_d)_{d\mid m}$, where
		\begin{equation*}
			V_{m/n}(\omega)_d\coloneqq \begin{cases*}
				(m/n)^{i+1}\omega_e & if $d=e\mid n$\\
				0 & else
			\end{cases*}\,.
		\end{equation*}
		This is compatible with $V_{m/n}\colon \qIW_n(R/A)\rightarrow \qIW_m(R/A)$ because of how Witt vector Verschiebungen interact with ghost maps. The other conditions from \cref{def:qVSystemOfCDGA}\cref{enum:qDeRhamWittConditionB} as well as the $V$-Teichmüller condition \cref{enum:TeichmuellerV} are straightforward to check. This finishes the construction.
	\end{numpar}
	Finally, our relative $q$-de Rham--Witt complexes enjoy a similar base change property as in \cref{lem:RelativeqWittBaseChange}.
	\begin{lem}\label{lem:RelativeqDRWBaseChange}
		If $A\rightarrow A'$ is a morphism of $\Lambda$-rings and $R$ is an $A$-algebra, then for all $m\in\IN$ the canonical map is an isomorphism
		\begin{equation*}
			\qIW_m\Omega_{R/A}^*\otimes_AA'\overset{\cong}{\longrightarrow}\qIW_m\Omega_{R\otimes_AA'/A'}^*\,.
		\end{equation*}
	\end{lem}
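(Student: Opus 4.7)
The strategy will mirror that of \cref{lem:RelativeqWittBaseChange}. My plan is to equip the system $(\qIW_m\Omega_{R/A}^*\otimes_A A')_{m\in\IN}$ with the structure of an object of $\cat{CDGAlg}_{R\otimes_A A'/A'}^{\q V}$ and show that it inherits the corresponding initiality, from which the asserted isomorphism with $(\qIW_m\Omega_{R\otimes_A A'/A'}^*)_{m\in\IN}$ follows directly.

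The $A'[q]$-algebra and commutative differential-graded structures on the base change, together with the Verschiebungen $V_{m/d}$, are obtained from those on $\qIW_m\Omega_{R/A}^*$ by extension of scalars along $A\rightarrow A'$. In degree zero, the identification
\begin{equation*}
(\qIW_m\Omega_{R/A}^*\otimes_A A')^0 \cong \qIW_m(R/A)\otimes_A A' \cong \qIW_m(R\otimes_A A'/A')
\end{equation*}
follows from \cref{prop:qDRWExists}\cref{enum:qDRWisqWinDegree0} combined with \cref{lem:RelativeqWittBaseChange}, yielding the $\qIW_m(R\otimes_A A'/A')$-algebra structure required by \cref{def:qVSystemOfCDGA}\cref{enum:qDeRhamWittConditionA}. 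The identities $V_{m/e}=V_{m/d}\circ V_{d/e}$ and $V_{m/d}(\omega\d\eta)=V_{m/d}(\omega)\d V_{m/d}(\eta)$ from \cref{def:qVSystemOfCDGA}\cref{enum:qDeRhamWittConditionB} survive base change verbatim.

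The delicate point is the $V$-Teichmüller condition \cref{enum:TeichmuellerV}, which in the base-changed setting must hold for all $r\in R\otimes_A A'$, not merely for those in the image of $R$. I would handle this by reducing to the polynomial ring case exactly as in \cref{lem:RelativeqWittBaseChange}: the functor $R\mapsto (\qIW_m\Omega_{R/A}^*)_m$ from $A$-algebras to $q$-$V$-systems commutes with reflective coequalizers, by its universal property from \cref{prop:qDRWExists}; the same is true for $R\mapsto (\qIW_m\Omega_{R\otimes_A A'/A'}^*)_m$ and for $-\otimes_A A'$. Writing $R$ as a reflective coequalizer of polynomial $A$-algebras then reduces the claim to the case $R=A[\{T_i\}_{i\in I}]$, equipped with its canonical $\Lambda$-$A$-algebra structure $\psi^p(T_i)=T_i^p$. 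In this case, in analogy with the identification $\qIW_m(A[\{T_i\}]/A)\cong \qIW_m(\IZ[\{T_i\}])\otimes_\IZ A$ established in the proof of \cref{lem:RelativeqWittBaseChange}, the strategy is to show directly that the canonical map
\begin{equation*}
\qIW_m\Omega_{\IZ[\{T_i\}]/\IZ}^*\otimes_\IZ A \longrightarrow \qIW_m\Omega_{A[\{T_i\}]/A}^*
\end{equation*}
is an isomorphism; applying this with $A$ replaced by $A'$ then gives the desired base change identification in two steps.

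The main obstacle will be this polynomial-ring identification, and specifically the verification that $\qIW_m\Omega_{\IZ[\{T_i\}]/\IZ}^*\otimes_\IZ A$ satisfies the $V$-Teichmüller condition for all $r\in A[\{T_i\}]$. One must show that these additional relations are already implied by the relations over $\IZ$ together with the $A$-linear extension of scalars. Concretely, this requires expanding $\tau_m(r)$ for a polynomial $r\in A[\{T_i\}]$ in terms of Teichmüller lifts of the $T_i$ and of elements of $A$ via the universal Witt-vector addition and multiplication polynomials, and then reducing the resulting $V$-Teichmüller identities to ones already satisfied by combining the Leibniz rule with the Verschiebung identities from \cref{def:qVSystemOfCDGA}\cref{enum:qDeRhamWittConditionB}. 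This parallels, but is technically more involved than, the verification at the end of the proof of \cref{lem:RelativeqWittBaseChange} that the composition $\iota$ constructed there is the identity.
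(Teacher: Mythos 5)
Your high-level strategy — verify the universal property for $\qIW_m\Omega_{R/A}^*\otimes_A A'$ and reduce the degree-zero structure map to \cref{lem:RelativeqWittBaseChange} — matches what the paper does. But the paper's proof is a single short paragraph that dismisses everything except the degree-zero structure map as "straightforward to verify," whereas you flag the $V$-Teichmüller condition \cref{enum:TeichmuellerV} for $r\in R\otimes_A A'$ as a genuine obstruction and devote most of your proposal to it. You are right that this is not obvious: the condition involves $\tau_m(r)$ and $\tau_d(r)^{m/d-1}$, and the Teichmüller lift is multiplicative but not additive, so knowing the relation for $r\in R$ does not formally imply it for $A'$-linear combinations. (The paper already knows this issue is nontrivial — it's essentially "Step~2" of the Langer--Zink argument quoted in the proof of \cref{lem:VPDderivation}.) So your concern is well-founded, and here the paper is arguably too terse.

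That said, your proposal does not actually close the gap. Your plan reduces to polynomial rings via reflective coequalizers (fine, in exact parallel with the proof of \cref{lem:RelativeqWittBaseChange}), and then proposes to prove $\qIW_m\Omega_{\IZ[\{T_i\}]/\IZ}^*\otimes_\IZ A\cong\qIW_m\Omega_{A[\{T_i\}]/A}^*$ directly. But this is itself the $(A,A')=(\IZ,A)$, $R=\IZ[\{T_i\}]$ instance of the lemma, and your final paragraph only describes what one "must show" — expanding $\tau_m(r)$ via the Witt addition and multiplication polynomials and reducing — without carrying it out. That computation is precisely where the difficulty lives, so as written the proposal identifies the missing step but leaves it open. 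Two suggestions for actually completing it: (i) pass to the $q$-$FV$-system structure (which base-changes $A'$-linearly since the Frobenius relations are $A[q]$-linear) and use \cref{rem:Redundancies} to reduce to the $F$-Teichmüller condition, which by \cref{lem:dAfterF} and the Leibniz rule always holds up to $p$-torsion; or (ii) observe that in the polynomial ring case you can write $\d\tau_m(r)$ for $r$ a monomial $\prod T_i^{v_i}\cdot a'$ explicitly using multiplicativity of $\tau_m$ and the known $V$-Teichmüller relation for each $T_i$, and then handle general $r$ additively via the Langer--Zink "Step~2" argument. Either way, this needs to be spelled out before the proposal counts as a proof.
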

	\begin{proof}
		It's straightforward to verify the desired universal property for $\qIW_m\Omega_{R/A}^*\otimes_AA'$. The only non-obvious property is the condition from \cref{def:qFVSystemOfCDGA}\cref{enum:qDeRhamWittConditionA}, that is, the existence of an $A[q]$-algebra map $\qIW_m(R\otimes_AA'/A')\rightarrow \qIW_m(R/A)\otimes_AA'$. But this was taken care of in \cref{lem:RelativeqWittBaseChange}.
	\end{proof}
	\subsection{Construction of Frobenii}\label{subsec:ConstructionOfFrobenii}
	In this subsection we'll prove the following proposition:
	\begin{prop}\label{prop:qDRWHasFrobenii}
		Let $R$ be an $A$-algebra. There is a unique choice of Frobenius operators on $(\qIW_m\Omega_{R/A}^*)_{m\in\IN}$, making it into a $q$-$FV$-system. Moreover, this exhibits $(\qIW_m\Omega_{R/A}^*)_{m\in\IN}$ as an initial object of the category of $q$-FV-systems.
	\end{prop}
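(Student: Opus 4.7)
The plan is to prove uniqueness first, then existence, and finally to deduce initiality in $\cat{CDGAlg}_{R/A}^{\q FV}$ from uniqueness. For uniqueness, I would use that $\qIW_m\Omega_{R/A}^*$ is generated as a graded $A[q]$-algebra by its degree-zero part $\qIW_m(R/A)$ together with the differentials $\d x$ for $x \in \qIW_m(R/A)$ (\cref{prop:qDRWExists}\cref{enum:dRtoqDRWisSurjective}), while $\qIW_m(R/A)$ itself is generated as a $\IZ[q]$-module by products of Teichmüller lifts and images of Verschiebungen. Given any hypothetical $q$-$FV$-structure, the Frobenii on $\qIW_m(R/A)$ are forced by initiality of $(\qIW_m(R/A))_{m\in\IN}$ as a $q$-$FV$-system of rings (\cref{lem:RelativeqWitt}); on $\d\tau_m(r)$ they are pinned down by the $F$-Teichmüller condition \cref{enum:TeichmuellerF}; and on $\d V_{m/d}(y)$ they are determined by the ring-level formula for $F_p V_{m/d}$ together with $F_p\d V_p = \d$ and $F_p V_\ell = V_\ell F_p$ for primes $\ell \neq p$. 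Hence at most one $F$-structure can exist.

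For existence, my strategy is to construct $F_p\colon \qIW_m\Omega_{R/A}^* \rightarrow \qIW_{m/p}\Omega_{R/A}^*$ for each prime $p \mid m$ separately and then combine them. Consider the category whose objects are pairs $((P_d^*)_{d\mid m}, F_p)$, where $(P_d^*)_{d\mid m}$ is a $T_m$-truncated $q$-$V$-system over $R$ in the sense of \cref{rem:TruncatedUniversalProperty2}, and $F_p\colon P_m^*\rightarrow P_{m/p}^*$ is a graded $A[q]$-algebra morphism satisfying the $F$-type conditions of \cref{def:qFVSystemOfCDGA}\cref{enum:qDeRhamWittConditionC} and \cref{enum:TeichmuellerF}, and required to agree on $\qIW_m(R/A)$ with the Witt-vector Frobenius. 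Adjoining the relations imposed by the $F$-axioms to the inductive construction of \cref{prop:qDRWExists} produces an initial such pair; the task is then to show that the underlying $q$-$V$-system of this initial object is unchanged, i.e., still equal to $(\qIW_d\Omega_{R/A}^*)_{d\mid m}$.

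The main obstacle is precisely this verification: the new relations must already be consequences of the old ones. The cleanest route I see is first to establish the construction for polynomial $A$-algebras $R = A[\{T_i\}]$, where one can equip $R$ with a $\Lambda_m$-structure via $\psi^p(T_i) = T_i^p$ and exploit the joint injectivity of the ghost maps (\cref{lem:qWittGhostMapsJointlyInjective}) to define $F_p$ ghost-component-wise via the formulas that the $F$-axioms force; the verifications there become computations in the product of polynomial-ring de Rham complexes. One then descends to arbitrary $R$ by writing $R$ as a reflective coequaliser of polynomial algebras and invoking \cref{lem:RelativeqDRWBaseChange} together with the fact that the truncated $q$-de Rham--Witt complexes preserve reflective coequalisers (a direct consequence of their universal property). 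Once the individual $F_p$ have been built, the commutativity $F_pF_\ell = F_\ell F_p$ for distinct primes and the remaining identity $V_{m/d}(\omega F_{m/d}(\eta)) = V_{m/d}(\omega)\eta$ follow from uniqueness applied to further enriched categories, or directly by checking on Teichmüller-and-Verschiebung generators using \cref{lem:dAfterV} and \cref{lem:dAfterF}. The initiality in $\cat{CDGAlg}_{R/A}^{\q FV}$ is then automatic: any object there is in particular a $q$-$V$-system and so admits a unique $q$-$V$-map from $(\qIW_m\Omega_{R/A}^*)_{m\in\IN}$ by \cref{prop:qDRWExists}, and by uniqueness of Frobenii this map must be compatible with the $F$-structure on both sides.
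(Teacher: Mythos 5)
Your uniqueness argument matches the paper's: the Frobenii are pinned down on $\qIW_m(R/A)\cong\qIW_m\Omega_{R/A}^0$ by compatibility with the $q$-Witt vector Frobenius, on $\d\tau_m(r)$ by condition~\cref{enum:TeichmuellerF}, and on $\d V_\ell(y)$ after multiplication by $p$ and by $\ell$. That part is sound.

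However, the existence route via ghost maps is circular. To define $F_p$ \enquote{ghost-component-wise} on $\qIW_m\Omega_{R/A}^*$ for polynomial $R$ and to verify the relations in the product $\prod_{d\mid m}\bigl(\Omega_{R/A}^*\otimes_{A,\psi^d}A[\zeta_d]\bigr)$, you need the ghost maps to be jointly injective \emph{in all degrees}, not just in degree~$0$ (where \cref{lem:qWittGhostMapsJointlyInjective} applies). Since the ghost maps become an isomorphism after inverting $m$ by \cref{cor:qDRWTrivialAfterLocalisation}, joint injectivity in all degrees is equivalent to degree-wise $\IZ$-torsion-freeness of $\qIW_m\Omega_{R/A}^*$ for polynomial $R$. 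But that torsion-freeness is exactly \cref{prop:qDRWisTorsionFreeForSmoothRings}, whose proof (see the battle plan in~\cref{par:OutlineOfStrategy} and the implication \embrace{\hyperref[enum:alphaD]{$d_{\alpha-1}$}} $\Rightarrow$ \cref{enum:alphaA}) uses the Frobenius operators you are trying to construct. Without an independent torsion-freeness argument for polynomial rings, the ghost-map definition is not available at this stage of the development.

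The fallback you mention — \enquote{directly by checking on Teichmüller-and-Verschiebung generators} — is essentially the paper's approach, but it conceals the genuine difficulty. The delicate step is not writing down the formulas for $F_p\d$ on generators; it is proving that the resulting map $F_p\d\colon\qIW_m(R)\rightarrow\qIW_{m/p}\Omega_{R/A}^1$ is additive and a derivation. The paper's \cref{lem:FpdAdditive,lem:FpdDerivation} do this by reducing to finite-type $R$ (so that \cref{cor:qWittNoetherian} makes the target a finitely generated module over a noetherian ring), splitting along the jointly conservative functors of \cref{lem:DerivedBeauvilleLaszlo}, and in the residual prime-power case invoking Langer--Zink's explicit computations. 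The remaining verifications (\cref{lem:Fpxi=0,lem:Fpeta=0,lem:FpSatisfiesAllProperties}) then go generator-by-generator. Your proposal would need to supply an equivalent of the additivity argument; as written, it skips the hardest part of the proof.
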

	\begin{rem}\label{rem:DRWtoqDRW}
		As a consequence of \cref{par:WhoNeedsRestrictions} and \cref{prop:qDRWHasFrobenii}, we get a comparison map between ordinary and $q$-de Rham--Witt complexes in the case where $A$ a $\IZ_{(p)}$-algebra. Indeed, in this case there's a forgetful functor
		\begin{equation*}
		\cat{CDGAlg}_{R/A}^{\q FV}\longrightarrow \cat{CDGAlg}_{R/\IZ_{(p)}}^{FV,\,(p)}
		\end{equation*}
		sending $(P_m^*)_{m\in\IN}$ to $(P_{p^{n-1}}^*)_{n\geqslant 1}$. By the universal property of usual de Rham--Witt complexes (with the modification from~\cref{par:WhoNeedsRestrictions}), this induces morphisms
		\begin{equation*}
			W_{\alpha+1}\Omega_{R/A}^*\longrightarrow \qIW_{p^\alpha}\Omega_{R/A}^*
		\end{equation*}
		for all $\alpha\geqslant 0$, compatible with Frobenii and Verschiebungen.
	\end{rem}
	\begin{numpar}[Battle plan.]
		Unfortunately, the proof of \cref{prop:qDRWHasFrobenii} will be rather laborious. We construct the Frobenii $F_{m/d}\colon\qIW_m\Omega_{R/A}^*\rightarrow \qIW_d\Omega_{R/A}^*$ using induction on $m$. For $m=1$, there's nothing to do. For the rest of this subsection, let $m>1$ and assume that $F_{d/e}$ has been constructed for all $e\mid d\mid m$, $d\neq m$, in such a way that the conditions from \cref{def:qFVSystemOfCDGA}\cref{enum:qDeRhamWittConditionC} and~\cref{enum:TeichmuellerF} are satisfied. It then suffices to construct $F_p\colon \qIW_m\Omega_{R/A}^*\rightarrow \qIW_{m/p}\Omega_{R/A}^*$ for any prime factor $p\mid m$.
		
		To construct $F_p$ for some fixed prime~$p$, we will proceed as follows. First, we'll restrict to the case $A=\IZ$ and construct a $\IZ[q]$-linear derivation
		\begin{equation*}
			F_p\d\colon \qIW_m(R)\longrightarrow \qIW_{m/p}\Omega_{R/\IZ}^1
		\end{equation*}
		(\cref{con:Fpd} and \crefrange{lem:FrobeniusWelldefinedOnV}{lem:FpdDerivation}). A posteriori, it will turn out that $F_p\d=F_p\circ \d$, but we haven't constructed $F_p$ yet, so for the moment the notation $F_p\d$ has no intrinsic meaning.
		Once $F_p\d$ is constructed, we'll allow $A$ to be an arbitrary $\Lambda$-ring again, construct the desired graded $A[q]$-algebra map $F_p\colon \qIW_m\Omega_{R/A}^*\rightarrow\qIW_{m/p}\Omega_{R/A}^*$ (\cref{con:FrobeniusPreliminary}), and painstakingly verify that it has all desired properties (\crefrange{lem:Fpxi=0}{lem:FpSatisfiesAllProperties}).
	\end{numpar}
	
	\begin{numpar}[The derivation $F_p\d$.]\label{par:PropertiesOfFpd}
		Since we wish to have $F_p\d=F_p\circ \d$ eventually, \cref{def:qFVSystemOfCDGA} already tells us the values of $F_p\d$ in certain cases:
		\begin{alphanumerate}
			\item On Teichmüller lifts, the values of $F_p\d$ are prescribed by \cref{def:qFVSystemOfCDGA}\cref{enum:TeichmuellerF}: We must have\label{enum:FpdOnTeichmueller}
			\begin{equation*}
				F_p\d \tau_m(r)=\tau_{m/p}(r)^{p-1}\d\tau_{m/p}(r)\,.
			\end{equation*}
			\item On elements in $\im V_\ell$, where $\ell$ is any prime factor of $m$, the values of $F_p\d$ are prescribed by \cref{def:qFVSystemOfCDGA}\cref{enum:qDeRhamWittConditionC}: If $\ell=p$, we immediately get\label{enum:FpdOnVerschiebung}
			\begin{equation*}
				F_p\d V_p(x)=\d x\,.
			\end{equation*}
			If $\ell\neq p$, note that $F_p\d V_\ell(x)$ is uniquely determined by $pF_p\d V_\ell(x)$ and $\ell F_p\d V_\ell(x)$, as $\ell$ and $p$ are coprime. Using \cref{lem:dAfterV,lem:dAfterF}, we see that necessarily
			\begin{equation*}
				pF_p\d V_\ell(x)=\d F_pV_\ell(x)\quad\text{and}\quad \ell F_p\d V_\ell(x)=F_p(V_\ell\d x)=V_\ell(F_p\d x)\,.
			\end{equation*}
			Now $V_\ell(F_p\d x)$ is already defined by induction, and $\d F_pV_\ell(x)$ is already defined because $F_p$ should be just the usual $q$-Witt vector Frobenius in degree $0$.
		\end{alphanumerate}
		Note that there can be at most one $\IZ[q]$-linear map $F_p\d\colon \qIW_m(R)\rightarrow \qIW_{m/p}\Omega_{R/\IZ}^1$ satisfying \cref{enum:FpdOnTeichmueller} and~\cref{enum:FpdOnVerschiebung}, so the only non-trivial task is to show existence.
	\end{numpar}
	\begin{lem}\label{lem:FrobeniusWelldefinedOnV}
		Let $\IV_m\coloneqq \left(\im V_\ell\ \middle|\ \ell\text{ prime factor of }m\right)\subseteq \qIW_m(R)$. Then there is a well-defined $\IZ[q]$-linear map $F_p\d\colon \IV_m\rightarrow \qIW_{m/p}\Omega_{R/\IZ}^1$ given as in \cref{par:PropertiesOfFpd}\cref{enum:FpdOnVerschiebung}.
	\end{lem}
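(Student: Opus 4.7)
The plan is to define $F_p\d$ separately on each image $\im V_\ell \subseteq \qIW_m(R)$ (one for every prime factor $\ell\mid m$) and then to check that these pieces glue consistently on overlaps, producing a single well-defined $\IZ[q]$-linear map on the sum $\IV_m = \sum_\ell \im V_\ell$.

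For $\ell = p$ the formula $F_p\d V_p(x)\coloneqq \d x$ is unambiguous because $V_p$ is injective by \cref{cor:VerschiebungInjective}. For $\ell \neq p$, I would like $F_p\d V_\ell(x)$ to be the unique element $y \in \qIW_{m/p}\Omega_R^1$ simultaneously satisfying $py = \d V_\ell F_p(x)$ and $\ell y = V_\ell F_p\d x$ (both right-hand sides are already defined by the inductive hypothesis). The consistency of these two constraints, namely $\ell\cdot \d V_\ell F_p(x) = p\cdot V_\ell F_p\d x$, reduces via \cref{lem:dAfterV,lem:dAfterF} together with the commutation of $F_p$ and $V_\ell$ (valid since $\gcd(p,\ell)=1$) to the tautology $V_\ell \d F_p(x) = V_\ell \d F_p(x)$. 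Writing $1 = ap + b\ell$ via Bezout, I then set $y \coloneqq a\,\d V_\ell F_p(x) + b\,V_\ell F_p\d x$; one checks that this is independent of the choice of $(a,b)$ and satisfies both defining equations, while injectivity of $V_\ell$ ensures that the assignment depends only on $V_\ell(x)$ rather than on $x$ itself.

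To see that the pieces so defined glue on the overlaps, I invoke the exactness of the augmented Koszul complex from \cref{prop:qWittKoszulExactSequence}: every $\IZ$-linear relation $\sum_\ell V_\ell(x_\ell) = 0$ in $\qIW_m(R)$ is generated by the elementary commutation relations $V_\ell V_{\ell'}(x) - V_{\ell'}V_\ell(x) = 0$ for distinct primes $\ell, \ell'\mid m$ and $x \in \qIW_{m/\ell\ell'}(R)$. It thus suffices to verify the identity $F_p\d V_\ell(V_{\ell'}x) = F_p\d V_{\ell'}(V_\ell x)$ in $\qIW_{m/p}\Omega_R^1$ for all such triples, which splits into subcases according to whether $p\in\{\ell,\ell'\}$. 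In each subcase both sides unfold via the defining formulas into expressions involving Frobenii, Verschiebungen, and differentials at lower truncation levels, where the inductive hypothesis on $F_p$ combined with the commutation relations from \cref{lem:dAfterV,lem:dAfterF} force the two to coincide. Finally, $\IZ[q]$-linearity is automatic since $V_\ell$, $F_p$, and $\d$ are themselves $\IZ[q]$-linear. The main obstacle I anticipate lies in the case analysis of the gluing step, particularly the subcase $\ell,\ell' \notin\{p\}$, which requires juggling the Bezout expressions at two different truncation levels simultaneously and is the most computationally intricate part of the argument.
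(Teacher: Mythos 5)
Your proposal is correct and follows essentially the same route as the paper: both invoke the exactness of the Koszul sequence from \cref{prop:qWittKoszulExactSequence} to present $\IV_m$ by generators $\bigoplus_\ell\qIW_{m/\ell}(R)$ and relations $\bigoplus_{\ell_1\neq\ell_2}\qIW_{m/\ell_1\ell_2}(R)$, define the would-be map on generators via \cref{par:PropertiesOfFpd}\cref{enum:FpdOnVerschiebung} and injectivity of the Verschiebungen, and then verify it kills the relations by multiplying by the coprime integers $p$ and $\ell$ (equivalently, your Bezout combination) and doing the case split on whether $p\in\{\ell_1,\ell_2\}$. One small correction to your difficulty estimate: the subcase $p\notin\{\ell_1,\ell_2\}$ that you flag as the most intricate is actually the cleanest in the paper---one checks $p F_p\d$ and $\ell_1\ell_2 F_p\d$ separately, and both reduce in one line to $V_{\ell_1\ell_2}(F_p\d x)$ using \cref{lem:dAfterV}, with no juggling of Bezout coefficients across truncation levels.
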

	\begin{proof}
		From \cref{prop:qWittKoszulExactSequence}, we get an exact sequence
		\begin{equation*}
			\bigoplus_{\ell_1\neq\ell_2}\qIW_{m/\ell_1\ell_2}(R)\xrightarrow{(V_{\ell_1}-V_{\ell_2})}\bigoplus_\ell\qIW_{m/\ell}(R)\xrightarrow{(V_\ell)}\IV_m\longrightarrow 0\,;
		\end{equation*}
		here $\ell$, $\ell_1$, and $\ell_2$ range over all prime factors of $m$. Condition~\cref{enum:FpdOnVerschiebung} above defines a unique $\IZ[q]$-linear map $\bigoplus_\ell\qIW_{m/\ell}(R)\rightarrow \qIW_{m/p}\Omega_{R/\IZ}^1$ (here we also use injectivity of the Verschiebungen, see \cref{cor:VerschiebungInjective}) and we only have to check that $\bigoplus_{\ell_1\neq\ell_2}\qIW_{m/\ell_1\ell_2}(R)$ maps into its kernel. We can do this one summand at a time. So fix prime factors $\ell_1\neq \ell_2$. We distinguish two cases:
		
		\emph{Case~1: $p\notin\{\ell_1,\ell_2\}$.} In this case, it's enough to check that $pF_p\d$ and $\ell_1\ell_2F_p\d$ are well-defined, since $p$ and $\ell_1\ell_2$ are coprime. For $p F_p\d$, we have to check that $\d F_pV_{\ell_1}(V_{\ell_2}(x))=\d F_pV_{\ell_2}(V_{\ell_1}(x))$ holds for all $x\in \qIW_{m/\ell_1\ell_2}(R)$, which is clear. For $\ell_1\ell_2F_p\d$, we have to check the condition $\ell_2 V_{\ell_1}(F_p\d V_{\ell_2}(x))=\ell_1 V_{\ell_2}(F_p\d V_{\ell_1}(x))$, which again is clear as both sides can be transformed into $V_{\ell_1\ell_2}(F_p\d x)$, using \cref{lem:dAfterV}.
		
		\emph{Case~2: $p\in\{\ell_1,\ell_2\}$.} Without restriction let $p=\ell_1$ and $\ell=\ell_2$. This time we check that $pF_p\d$ and $\ell F_p\d$ are well-defined. For $pF_p\d$, we must check that $\d F_pV_\ell(V_p(x))=p\d V_\ell(x)$ holds for all $x\in \qIW_{m/\ell_1\ell_2}(R)$, which follows from $F_p\circ V_\ell \circ V_p=F_p\circ V_p\circ V_\ell=pV_\ell$. For $\ell F_p\d$, we must check that $V_\ell(F_p\d V_p(x))=\ell\d V_\ell(x)$. But this follows from the inductive hypothesis, which ensures that $F_p\d V_p(x)=\d x$, and \cref{lem:dAfterV}.
	\end{proof}
	\begin{con}\label{con:Fpd}
		We construct a well-defined map $F_p\d\colon \qIW_m(R)\rightarrow \qIW_{m/p}\Omega_{R/\IZ}^1$ as follows: By \cref{par:GhostMaps}, every $x\in \qIW_m(R)$ can be uniquely written as 
		\begin{equation*}
			x=\sum_{i=0}^{\varphi(m)-1}q^i\tau_m(r_i)+v\,,
		\end{equation*}
		where $\varphi(m)=\deg\Phi_m(q)$ denotes Euler's $\varphi$-function, $r_i\in R$, and $v\in\IV_m$. We then define
		\begin{equation*}
			F_p\d x=\sum_{i=0}^{\varphi(m)-1}q^i\tau_{m/p}(r_i)^{p-1}\d\tau_{m/p}(r_i)+F_p\d v\,,
		\end{equation*}
		where $F_p\d v$ is constructed as in \cref{lem:FrobeniusWelldefinedOnV}.
	\end{con}
	\begin{lem}\label{lem:FpdAdditive}
		The map $F_p\d\colon \qIW_m(R)\rightarrow \qIW_{m/p}\Omega_{R/\IZ}^*$ from \cref{con:Fpd} is additive.
	\end{lem}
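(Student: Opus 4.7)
The plan is to reduce additivity of $F_p\d$ to a single identity involving Teichmüller lifts and then to verify that identity in a universal situation. Write $x = \sum_{i=0}^{\varphi(m)-1}q^i\tau_m(r_i)+v$ and $y = \sum_{i=0}^{\varphi(m)-1}q^i\tau_m(s_i)+w$ in the unique normal form from \cref{par:GhostMaps}. Since $\tau_m(r_i)+\tau_m(s_i)-\tau_m(r_i+s_i)$ lies in $\ker\gh_1 = \IV_m$, the normal form of $x+y$ reads $\sum q^i\tau_m(r_i+s_i)+(v+w+\sum q^i v_i)$ with $v_i\coloneqq\tau_m(r_i)+\tau_m(s_i)-\tau_m(r_i+s_i)$. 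Subtracting the two expressions for $F_p\d(x+y)$ and $F_p\d x+F_p\d y$, and using $\IZ[q]$-linearity of $F_p\d\vert_{\IV_m}$ from \cref{lem:FrobeniusWelldefinedOnV}, everything cancels except for the claim that for all $r,s\in R$ one has the identity
\begin{equation*}
F_p\d\bigl(\tau_m(r)+\tau_m(s)-\tau_m(r+s)\bigr) = \tau_{m/p}(r)^{p-1}\d\tau_{m/p}(r)+\tau_{m/p}(s)^{p-1}\d\tau_{m/p}(s)-\tau_{m/p}(r+s)^{p-1}\d\tau_{m/p}(r+s)\tag{$\ast$}
\end{equation*}
in $\qIW_{m/p}\Omega_{R/\IZ}^1$, where the left-hand side uses the $\IV_m$-definition of $F_p\d$.

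By the naturality of $\qIW_m(-)$, of $\qIW_m\Omega_{-/\IZ}^*$, and of $F_p\d$ (the latter being clear from \cref{con:Fpd} once one notes that $F_p\d$ is characterised on Teichmüller lifts and on $\IV_m$ in a functorial way), it suffices to verify $(\ast)$ in the universal case $R=\IZ[X,Y]$ with $r=X$, $s=Y$. The key observation is that after multiplying by $p$, the identity $(\ast)$ becomes true for a structural reason: from the definitions of $F_p\d$ on $\im V_p$ and on $\im V_\ell$ for $\ell\neq p$ established in \cref{par:PropertiesOfFpd}\cref{enum:FpdOnVerschiebung}, one checks that $pF_p\d=\d\circ F_p$ as maps $\IV_m\rightarrow\qIW_{m/p}\Omega_{R/\IZ}^1$. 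Applying this to $v\coloneqq\tau_m(r)+\tau_m(s)-\tau_m(r+s)$ and using $F_p\tau_m(r)=\tau_{m/p}(r)^p$, the left-hand side of $(\ast)$ multiplied by $p$ becomes $\d\bigl(\tau_{m/p}(r)^p+\tau_{m/p}(s)^p-\tau_{m/p}(r+s)^p\bigr)$, which equals $p$ times the right-hand side.

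To upgrade this to an actual equality in the universal case, we argue that the relevant element of $\qIW_{m/p}\Omega_{\IZ[X,Y]/\IZ}^1$ is detected by the family of ghost maps $\gh_{(m/p)/d}\colon\qIW_{m/p}\Omega_{R/\IZ}^*\rightarrow\Omega_{R/\IZ}^*\otimes_{\IZ,\psi^d}\IZ[\zeta_d]$ from \cref{par:qdRWGhostMaps}. Since each target is finite free over the $\IZ$-torsion-free ring $\Omega_{\IZ[X,Y]/\IZ}^1$, the product of these targets is $p$-torsion free; hence it suffices to check $(\ast)$ after applying each $\gh_{(m/p)/d}$. On the right-hand side, each ghost map gives an explicit element of $\Omega_{R/\IZ}^1\otimes\IZ[\zeta_d]$ expressible in terms of $\psi^d(r), \psi^d(s), \psi^d(r+s)$ and their differentials; on the left-hand side, the compatibility of $F_p\d$ with the ghost maps (which unwinds to $\gh_{(m/p)/d}\circ F_p=\gh_{m/d}$ and the fact that each $\gh_{m/d}$ is a ring map of differential-graded algebras) reduces matters to the identity of de Rham forms $\psi^d(r+s)^{p-1}\d\psi^d(r+s)-\psi^d(r)^{p-1}\d\psi^d(r)-\psi^d(s)^{p-1}\d\psi^d(s)+\gh_{(m/p)/d}F_p\d v=0$, which holds by a direct calculation since the Adams operation $\psi^d$ on $\IZ[X,Y]$ is a ring map and hence additive.

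The main obstacle is the last step, namely controlling the $p$-torsion in $\qIW_{m/p}\Omega_{\IZ[X,Y]/\IZ}^1$. One cannot invoke \cref{prop:qDRWisTorsionFreeForSmoothRings} at this stage because its proof will rely on the existence of Frobenii (and hence on the present lemma). Instead one has to verify joint faithfulness of the ghost maps directly on the image of the finitely many elements entering $(\ast)$; this can be done by writing $\tau_m(r)+\tau_m(s)-\tau_m(r+s)$ as an explicit sum of elements in $\im V_\ell$ (using the usual Witt vector addition polynomials) and then tracing through the definitions of $F_p\d$ on each Verschiebung summand. The bookkeeping is tedious but, given the $p$-divisibility established above, essentially mechanical.
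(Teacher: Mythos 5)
Your reduction of additivity to the single Teichmüller identity $(\ast)$ is a clean observation, and the naturality argument for passing to the universal case $R=\IZ[X,Y]$ is correct. The verification that $p\cdot(\ast)$ holds via $pF_p\d=\d\circ F_p$ is also correct. However, the final detection step is circular and the proof does not close. You want to cancel the factor of $p$ by appealing to joint injectivity of the ghost maps $\gh_{(m/p)/d}$ on $\qIW_{m/p}\Omega_{\IZ[X,Y]/\IZ}^1$, which in turn requires $p$-torsion freeness of that module. But $p$-torsion freeness of $q$-de Rham--Witt complexes of polynomial rings is precisely \cref{prop:qDRWisTorsionFreeForSmoothRings}, and its proof (via the chain of implications in \cref{par:OutlineOfStrategy}) requires the Frobenius operators to exist — which in turn requires the present lemma. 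Your closing suggestion to instead expand $\tau_m(r)+\tau_m(s)-\tau_m(r+s)$ via Witt addition polynomials and trace through $F_p\d$ on each Verschiebung summand does not escape the circle either: the formula for $F_p\d$ on $\im V_\ell$ with $\ell\neq p$ is itself only characterised after multiplying by $p$ or by $\ell$, so the same torsion-control problem reappears on each summand; calling this ``essentially mechanical'' does not substantiate it.

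The paper's proof is structured precisely to sidestep this circularity. It first reduces to $R$ of finite type over $\IZ$, uses \cref{cor:qWittNoetherian} and \cref{prop:qDRWExists}\cref{enum:dRtoqDRWisSurjective} to get that $\qIW_{m/p}\Omega_R^1$ is a finitely generated module over a noetherian ring, and then invokes the covering/conservativity result \cref{lem:DerivedBeauvilleLaszlo} to reduce to checking additivity after applying each of $(-)[1/p]$, $(-)_{(p,q^{m/\ell}-1)}^\complete$, and $(-)[1/(q^{m/\ell}-1)\mid\ell\neq p]$. The first is trivial because $pF_p\d=\d F_p$. The second uses that after completion, $F_\ell$ is an isomorphism with inverse $[\ell]_{q^{m/p\ell}}^{-1}V_\ell$, and the identity $\ell F_\ell(F_p\d)=F_p\d F_\ell$ (\cref{lem:lFlFpd}) then reduces everything to the already-constructed Frobenius at lower level. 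The third localisation reduces to $m=p^\alpha$, where the Langer--Zink calculation applies (using \cref{lem:VPDderivation}). None of these three branches needs the (not yet available) $p$-torsion freeness, which is exactly what your ghost-map detection step requires.
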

	\begin{proof}
		It's straightforward to see from \cref{def:qDRW} that $q$-de Rham--Witt complexes commute with filtered colimits in $R$. By writing $R$ as a filtered colimit of finite type $\IZ$-algebras, we may thus assume that $R$ itself is of this form. In this case, \cref{cor:qWittNoetherian} shows that $\qIW_{m/p}(R)$ is of finite type over $\IZ$ too and then \cref{prop:qDRWExists}\cref{enum:dRtoqDRWisSurjective} shows that $\qIW_{m/p}\Omega_{R/\IZ}^1$ is a finitely generated module over the noetherian ring $\qIW_{m/p}(R)$. For any finitely generated module $M$ over a noetherian $\IZ[q]$-algebra, the natural map
		\begin{equation*}
			M\longrightarrow M\left[\localise{p}\right]\times\prod_{\ell\neq p}\widehat{M}_{(p,q^{m/\ell}-1)}\times M\left[\localise{q^{m/\ell}-1}\ \middle|\ \ell\neq p\right]
		\end{equation*}
		is injective; here $\ell$ ranges over all prime factors $\neq p$ of $m$.%
		\footnote{Here's the technical argument: It's clear that the map is injective after applying each of the functors $(-)[1/p]$, $(-)_{(p,q^{m/\ell}-1)}^\complete$, and $(-)\bigl[1/(q^{m/\ell}-1)\ \big|\ \ell\neq p\bigr]$. Thus, if $K$ denotes the kernel of the map above, then $M\rightarrow M/K$ will become injective after applying each of these functors, because then an injective map factors through it. But all of these functors preserve exactness of the sequence $0\rightarrow K\rightarrow M\rightarrow M/K\rightarrow 0$: For the localisations, this is clear, for the completions, we appeal to the fact that we're working with finitely generated modules over a noetherian ring. Hence $K$ vanishes after each of these functors and \cref{lem:DerivedBeauvilleLaszlo} shows $K\cong 0$.}		
		In particular, we see that it suffices to show additivity of our would-be derivation $F_p\d$ after applying each of the functors $(-)[1/p]$, $(-)_{(p,q^{m/\ell}-1)}^\complete$, and $(-)\bigl[1/(q^{m/\ell}-1)\ \big|\ \ell\neq p\bigr]$.
		
		\emph{Proof after localisation at $p$.} Since $pF_p\d=\d F_p$ holds by construction, it's clear that $F_p\d$ is additive after inverting $p$.
		
		\emph{Proof after $(p,q^{m/\ell}-1)$-adic completion.} It suffices to show additivity of $\ell F_p\d$, since $\ell$ becomes invertible after $(p,q^{m/\ell}-1)$-adic completion. Furthermore, $[\ell]_{q^{m/p\ell}}$ becomes invertible too. Hence the Frobenius $F_\ell\colon \qIW_{m/p}\Omega_{R/\IZ}^*\rightarrow \qIW_{m/p\ell}\Omega_{R/\IZ}^*$, which we've already constructed, induces an isomorphism (with inverse $[\ell]_{q^{m/p\ell}}^{-1}V_{\ell}$)
		\begin{equation*}
			F_\ell\colon \bigl(\qIW_{m/p}\Omega_{R/\IZ}^1\bigr)_{(p,q^{m/\ell}-1)}^\complete\overset{\cong}{\longrightarrow}\bigl(\qIW_{m/p\ell}\Omega_{R/\IZ}^1\bigr)_{(p,q^{m/\ell}-1)}^\complete\,.
		\end{equation*}
		Therefore it suffices to show that $\ell F_\ell(F_p\d)$ is additive. But we'll show in \cref{lem:lFlFpd} that $\ell F_\ell(F_p\d)=F_p\d F_\ell$, where $F_p$ on the right-hand side refers to $F_p\colon \qIW_{m/\ell}\Omega_{R/\IZ}^*\rightarrow \qIW_{m/p\ell}\Omega_{R/\IZ}^*$, which we've already constructed. Now it's clear that $F_p\d F_\ell$ is additive.
		
		\emph{Proof after localisation at $(q^{m/\ell}-1)$ for all $\ell\neq p$.} Let $m=p^\alpha n$, where $\alpha=v_p(m)$. Observe that
		\begin{equation*}
			\qIW_{m/p}\Omega_{R/\IZ}^*\left[\localise{q^{m/\ell}-1}\ \middle|\ \ell\neq p\right]\cong \qIW_{p^{\alpha-1}}\Omega_{R/\IZ}^*\otimes_{\IZ[q],\psi^n}\IZ\left[q,\localise{q^{m/\ell}-1}\ \middle|\ \ell\neq p\right]\,,
		\end{equation*}
		where $\psi^n$ is the map that sends $q\mapsto q^n$. Indeed, this can be shown as in the proof of \cref{prop:qWittKoszulExactSequence} by comparing universal properties (more precisely, by comparing the truncated universal properties from \cref{rem:TruncatedUniversalProperty2}). So we can reduce to the case where $m=p^\alpha$. In this case, the calculation from the proof of \cite[Proposition~\href{https://www.math.uni-bielefeld.de/~zink/dRW.pdf\#page=16}{1.3}]{LangerZink} can be carried over to our situation (note that Langer--Zink's calculation needs \cref{lem:VPDderivation}).
	\end{proof}
	\begin{lem}\label{lem:lFlFpd}
		If $\ell\neq p$ is another prime factor of $m$, then the map $F_p\d$ from \cref{con:Fpd} satisfies
		\begin{equation*}
			\ell F_\ell(F_p\d x)=F_p\d F_\ell(x)
		\end{equation*}
		for all $x\in \qIW_m(R)$, where $F_\ell$ on the left-hand side refers to $F_\ell\colon \qIW_{m/p}\Omega_{R/\IZ}^*\rightarrow \qIW_{m/p\ell}\Omega_{R/\IZ}^*$, which has already been constructed by induction, and $F_\ell$ on the right-hand side refers to the $q$-Witt vector Frobenius, which we also already know how to construct.
	\end{lem}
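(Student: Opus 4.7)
The plan is to verify the identity $\ell F_\ell(F_p\d x) = F_p\d F_\ell(x)$ by checking it on the two classes of elements from which every $x \in \qIW_m(R)$ is built in \cref{par:GhostMaps}: namely Teichmüller lifts $q^i \tau_m(r)$ and elements in $\IV_m = (\im V_{\ell'} \mid \ell' \text{ prime factor of } m)$. By \cref{con:Fpd}, $F_p\d$ is $\IZ[q]$-linear on this decomposition, and $F_\ell$ is a $\IZ[q]$-algebra map on $q$-Witt vectors, so it suffices to check the identity on each of these types of generators.

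For the Teichmüller case $x = \tau_m(r)$, the right-hand side is $F_p\d F_\ell(\tau_m(r)) = F_p\d(\tau_{m/\ell}(r)^\ell)$. Using the Leibniz rule and the $F$-Teichmüller condition \cref{enum:TeichmuellerF} at the smaller level $m/\ell$ (granted by induction), this unwinds to $\ell \tau_{m/p\ell}(r)^{\ell p - 1}\d\tau_{m/p\ell}(r)$. For the left-hand side, \cref{con:Fpd} gives $F_p\d\tau_m(r) = \tau_{m/p}(r)^{p-1}\d\tau_{m/p}(r)$; applying the multiplicative $F_\ell$ at level $m/p$ and invoking the $F$-Teichmüller condition there yields the same expression. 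So the Teichmüller case holds.

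For the Verschiebung case $x = V_{\ell'}(y)$ with $\ell'$ a prime factor of $m$ and $y \in \qIW_{m/\ell'}(R)$, one splits into three sub-cases. If $\ell' = p$, then $F_p\d V_p(y) = \d y$ by construction, so the left-hand side is $\ell F_\ell \d y = \d F_\ell y$ by the inductive form of \cref{lem:dAfterF} at level $m/p$; the right-hand side equals $F_p\d V_p F_\ell(y) = \d F_\ell(y)$ using that $F_\ell V_p = V_p F_\ell$. If $\ell' = \ell$, then $\ell F_p\d V_\ell(y) = V_\ell(F_p\d y)$ by \cref{par:PropertiesOfFpd}\cref{enum:FpdOnVerschiebung}; applying $F_\ell$ and using $F_\ell V_\ell = \ell$ gives LHS $= \ell F_p\d y$, while RHS $= F_p\d(\ell y) = \ell F_p\d y$ by $\IZ$-linearity of the already-constructed $F_p\d$ at level $m/\ell$.

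The main obstacle is the remaining sub-case where $\ell'$ is a prime factor of $m$ distinct from both $p$ and $\ell$. Here I would compute $p \cdot \text{LHS}$ and $p \cdot \text{RHS}$ separately. Starting from $p F_p\d V_{\ell'}(y) = \d V_{\ell'} F_p(y)$, the identity $\d F_\ell = \ell F_\ell \d$ (known at level $m/p$) together with $F_\ell V_{\ell'} = V_{\ell'} F_\ell$ shows $p \cdot \text{LHS} = \d V_{\ell'} F_{p\ell}(y)$. On the other hand, expanding $p \cdot \text{RHS} = p F_p \d V_{\ell'} F_\ell(y)$ using the full $FV$-axioms at level $m/\ell$ (available by induction) yields $\d V_{\ell'} F_{p \ell}(y)$ as well. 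An analogous computation with $\ell'$ in place of $p$, using $\ell' F_p \d V_{\ell'} = V_{\ell'} F_p\d$ and the fact that in any full $FV$-system $F_p\d F_\ell = \ell F_\ell F_p\d$ (a consequence of $\d F_\ell = \ell F_\ell \d$ combined with $F_p F_\ell = F_\ell F_p$), shows $\ell' \cdot \text{LHS} = \ell' \cdot \text{RHS}$. Since $\gcd(p, \ell') = 1$, the two relations together force LHS $=$ RHS. The subtle point throughout is that although we do not yet know $F_p\d$ is a derivation at level $m$, at all strictly smaller levels we have the complete $FV$-system structure, which is exactly what powers the Frobenius-Verschiebung manipulations above.
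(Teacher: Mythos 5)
Your proof is correct and follows essentially the same route as the paper's: reduce via \cref{con:Fpd} to the cases $x=q^i\tau_m(r)$ and $x=V_{\ell'}(y)$, then in the Verschiebung cases establish the identity after multiplication by $p$ and by $\ell'$ and conclude by coprimality. The only organisational difference is that you split off $\ell'=\ell$ as its own sub-case and resolve it directly via $F_\ell V_\ell=\ell$, whereas the paper folds it into the uniform coprimality argument for all $\ell'\neq p$; both are fine.
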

	\begin{proof}
		By \cref{con:Fpd}, it suffices to prove $\ell F_\ell(F_p\d x)=F_p\d F_\ell(x)$ in the following three special cases:
		
		\emph{Case~1: $x=q^i\tau_m(r)$ for some $0\leqslant i<\varphi(m)$ and some $r\in R$.} 
		Using \cref{lem:dAfterF}, we can transform the left-hand side as follows:
		\begin{equation*}
			\ell F_\ell\bigl(F_p\d\bigl(q^i\tau_{m/p}(r)\bigr)\bigr)=q^i\ell F_\ell\left(\tau_{m/p}(r)^{p-1}\d\tau_m(r)\right)=q^i\tau_{m/p\ell}(r)^{\ell(p-1)}\d F_\ell\tau_{m/p}(r)\,.
		\end{equation*}
		On the right-hand side, we use $F_\ell(q^i\tau_m(r))=q^i\tau_{m/\ell}(r^\ell)$, and then the $F$-Teichmüller condition \cref{enum:TeichmuellerF} shows
		\begin{equation*}
			F_p\d\bigl(q^i\tau_{m/\ell}(r^\ell)\bigr)=q^i\tau_{m/p\ell}(r^\ell)^{p-1}\d\tau_{m/p\ell}(r^\ell)=q^i\tau_{m/p\ell}(r)^{\ell(p-1)}\d F_\ell\tau_{m/p}(r)\,.
		\end{equation*}
		This finishes the proof in the first case. 
		
		\emph{Case~2: $x=V_p(y)$ for some $y\in \qIW_{m/p}(R)$.} In this case we have $F_p\d V_p(y)=\d y$. So the left-hand side simply becomes $\ell F_\ell\d y=\d F_\ell(y)$. On the right-hand side we use that $F_\ell$ commutes with $V_p$ to obtain $F_p\d F_\ell V_p(y)=F_p\d V_pF_\ell(y)=\d F_\ell(y)$, as required.
		
		\emph{Case~3: $x=V_{\ell_0}(z)$ for some prime factor $\ell_0\neq p$ of $m$ \embrace{$\ell_0=\ell$ is allowed} and some $z\in \qIW_{m/\ell'}(R)$.} In this case, we prove $\ell F_\ell(F_p\d x)=F_p\d F_\ell(x)$ after multiplication by $p$ and after multiplication by $\ell_0$, which is enough as $p$ and $\ell_0$ are coprime. After multiplication by $p$, the left-hand side becomes $p\ell F_\ell(F_p\d V_{\ell_0}(z))=\ell F_\ell(\d F_pV_{\ell_0}(z))=\d F_pF_{\ell}V_{\ell_0}(z)$, whereas the right-hand side becomes $pF_p\d F_\ell V_{\ell_0}(z)=\d F_pF_\ell V_{\ell_0}(z)$. These two are the same, since we already know $F_p\circ F_\ell=F_\ell\circ F_p$ as maps $\qIW_m(R)\rightarrow \qIW_{m/p\ell}(R)$.
		
		Now let's see what happens after multiplication by $\ell_0$. By \cref{par:PropertiesOfFpd}\cref{enum:FpdOnVerschiebung}, $\ell_0F_p\d V_{\ell_0}(z)=V_{\ell_0}(F_p\d z)$. Plugging this into the left-hand side, we obtain
		\begin{equation*}
			\ell_0 \ell F_\ell\bigl(F_p\d V_{\ell_0}(z)\bigr)=\ell F_\ell V_{\ell_0}(F_p\d z)=\begin{cases*}
				\ell^2 F_p\d z & if $\ell=\ell_0$\\
				\ell V_{\ell_0}F_\ell(F_p\d z) & if $\ell\neq \ell_0$
			\end{cases*}\,.
		\end{equation*}
		If $\ell=\ell_0$, then also $\ell_0F_p\d F_\ell V_{\ell_0}(z)=\ell F_p\d (\ell z)=\ell^2F_p\d z$, in agreement with the formula above. In the case $\ell\neq \ell_0$, we can do the following calculation (each step will be justified below):
		\begin{equation*}
			\ell_0 F_p\d F_\ell V_{\ell_0}(z)=F_p\bigl(\ell_0 \d V_{\ell_0}F_\ell(z)\bigr)= V_{\ell_0} F_p\bigl(\d F_\ell(z)\bigr)=\ell V_{\ell_0}F_p(F_\ell \d z)=\ell V_{\ell_0}F_\ell(F_p\d z)\,,
		\end{equation*}
		This again agrees with the formula above. In the first step, we use $F_\ell\circ V_{\ell_0}=V_{\ell_0}\circ F_\ell$ as maps $\qIW_{m/\ell_0}(R)\rightarrow \qIW_{m/\ell}(R)$. In the second step, we use \cref{lem:dAfterV} together with the fact that $V_{\ell_0}\circ F_p=F_p\circ V_{\ell_0}$ holds as maps $\qIW_{m/\ell_0\ell}\Omega_R^*\rightarrow \qIW_{m/p\ell}\Omega_R^*$ (we already know this by induction). In the third step, we use \cref{lem:dAfterV}. In the last step, we use $F_p\circ F_\ell=F_\ell\circ F_p$ as maps $\qIW_{m/\ell'}\Omega_{R/\IZ}^*\rightarrow \qIW_{m/p\ell'\ell}\Omega_{R/\IZ}^*$ (again by induction). This finishes the proof.
	\end{proof}
	\begin{lem}\label{lem:FpdZqLinear}
		The map $F_p\d\colon \qIW_m(R)\rightarrow \qIW_{m/p}\Omega_{R/\IZ}^*$ from \cref{con:Fpd}, which we know to be additive by \cref{lem:FpdAdditive}, is $\IZ[q]$-linear.
	\end{lem}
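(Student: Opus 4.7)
The proof will imitate the joint-conservativity strategy of \cref{lem:FpdAdditive}. Since $F_p\d$ is already additive, $\IZ[q]$-linearity reduces to the single identity $F_p\d(qx)=qF_p\d(x)$ for all $x\in\qIW_m(R)$. First, reduce to the case where $R$ is of finite type over $\IZ$ by writing $R$ as a filtered colimit; then $\qIW_{m/p}\Omega_{R/\IZ}^1$ is finitely generated over the noetherian ring $\qIW_{m/p}(R)$, and exactly as in \cref{lem:FpdAdditive}, the family of functors $(-)[1/p]$, $(-)_{(p,q^{m/\ell}-1)}^\complete$ for each prime factor $\ell\neq p$ of $m$, and $(-)[1/(q^{m/\ell}-1):\ell\neq p]$ is jointly conservative. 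Hence it suffices to verify $\IZ[q]$-linearity after each.

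After inverting $p$: the relation $pF_p\d=\d\circ F_p$ makes this immediate, since $F_p$ is a $\IZ[q]$-algebra map and $\d q=0$, so both $\d$ and $F_p$ are $\IZ[q]$-linear. After $(p,q^{m/\ell}-1)$-completion: \cref{lem:lFlFpd} gives $\ell F_\ell\circ F_p\d = F_p\d\circ F_\ell$, in which the right-hand $F_p\d$ is the $\IZ[q]$-linear map on $\qIW_{m/\ell}(R)$ supplied by the induction hypothesis, and $F_\ell$ is a $\IZ[q]$-algebra map. Hence the composition is $\IZ[q]$-linear, and since $F_\ell$ becomes an isomorphism and $\ell$ a unit after this completion (as observed in the proof of \cref{lem:FpdAdditive}), $\IZ[q]$-linearity descends to $F_p\d$.

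The main obstacle is the remaining case: simultaneous localization at all $(q^{m/\ell}-1)$ with $\ell\neq p$. Writing $m=p^\alpha n$ with $(n,p)=1$, the base-change isomorphism used in \cref{lem:FpdAdditive} identifies the localized targets with $\qIW_{p^{\alpha-1}}\Omega_{R/\IZ}^1\otimes_{\IZ[q],\psi^n}\IZ[q,(q^{m/\ell}-1)^{-1}:\ell\neq p]$ and, similarly, $\qIW_{p^\alpha}(R)\otimes_{\IZ[q],\psi^n}\IZ[q,(q^{m/\ell}-1)^{-1}:\ell\neq p]$ on the source side. Under these identifications, $F_p\d$ becomes the base change of the corresponding derivation in the $p$-typical case $m=p^\alpha$; since such a base change is automatically linear over the coefficient ring $\IZ[q,(q^{m/\ell}-1)^{-1}:\ell\neq p]$, this yields $\IZ[q]$-linearity. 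The base-change compatibility itself is checked by comparing two additive maps on the generators $\{q^i\tau_{p^\alpha}(r)\}\cup\IV_{p^\alpha}$, using the defining formulas of \cref{par:PropertiesOfFpd}; the purely $p$-typical input needed is the direct analogue of Langer--Zink's calculation already invoked in \cref{lem:FpdAdditive}.
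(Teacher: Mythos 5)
Your strategy is genuinely different from the paper's: you run the same joint-conservativity decomposition used for additivity (Lemma~\ref{lem:FpdAdditive}), whereas the paper proves $\IZ[q]$-linearity by a uniform, direct Teichmüller argument. The paper observes that $F_p\d$ is already $\IZ[q]$-linear on the ideal $\IV = (\im V_\ell\mid\ell\text{ prime})$ by Lemma~\ref{lem:FrobeniusWelldefinedOnV}, so the only possible failure of linearity is at the ``boundary'' of the canonical decomposition $x=\sum_{i<\varphi(m)}q^i\tau_m(r_i)+v$ from Construction~\ref{con:Fpd}; and since $\Phi_m(q)\tau_m(r)\in\IV$, this boundary compatibility reduces (via Lemma~\ref{lem:IdealGeneratedByPhi}) to the single identity
\begin{equation*}
	F_p\d\bigl([\ell]_{q^{m/\ell}}\tau_m(r)\bigr)=[\ell]_{q^{m/\ell}}\tau_{m/p}(r)^{p-1}\d\tau_{m/p}(r)
\end{equation*}
for each prime factor $\ell\mid m$, which is then verified by a short computation with $V_\ell F_\ell\tau_m(r)$.

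Your argument has a genuine gap in the $p$-typical case, and this matters because your reduction bottoms out there. If $m=p^\alpha$, there are no primes $\ell\neq p$, so your conservativity family degenerates to $(-)[1/p]$ together with the identity functor, and hence proves nothing. You acknowledge that ``purely $p$-typical input'' is needed and defer it to ``the direct analogue of Langer--Zink's calculation already invoked in Lemma~\ref{lem:FpdAdditive}.'' But Langer--Zink's calculation lives in a setting with no $q$: it establishes additivity and the Leibniz rule of a $p$-typical $F\d$, and there is simply no ``$\IZ[q]$-linearity'' statement in their framework to be analogous to. The content that is actually required in the $p$-typical case is the boundary identity $F_p\d(\Phi_{p^\alpha}(q)\tau_{p^\alpha}(r))=\Phi_{p^\alpha}(q)F_p\d(\tau_{p^\alpha}(r))$, whose proof hinges on the purely $q$-specific relation $\Phi_{p^\alpha}(q)\tau_{p^\alpha}(r)=V_pF_p(\tau_{p^\alpha}(r))=V_p(\tau_{p^{\alpha-1}}(r)^p)$ and the fact that $[p]_{q^{p^{\alpha-1}}}\equiv p$ in the $(q^{p^{\alpha-1}}-1)$-torsion target. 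No part of this is contained in, or is an obvious analogue of, Langer--Zink's $F$-finite computations. Once you supply that calculation explicitly, the remaining three steps of your conservativity reduction are fine (the $p$-inverted and $(p,q^{m/\ell}-1)$-completed cases work just as you say), but at that point you have essentially reproduced the paper's Case~1, so the conservativity scaffolding buys you little over the paper's shorter uniform argument.
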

	\begin{proof}
		We must show $F_p\d(qx)=qF_p\d x$ for all $x\in \qIW_m(R)$. By \cref{lem:FrobeniusWelldefinedOnV}, we already know this if $x$ is contained in the ideal $\IV\subseteq \qIW_m(R)$ generated by the images of all Verschiebungen, so it's enough to check the case $x=q^i\tau_m(r)$ for some $r\in R$ and some $0\leqslant i<\varphi(m)$. If $i<\varphi(m)-1$, then everything is clear from \cref{con:Fpd}. In the case $i=\varphi(m)-1$ we may then equivalently check that $F_p\d (\Phi_m(q)\tau_m(r))=\Phi_m(q)F_p\d\tau_m(r)=\Phi_m(q)\tau_{m/p}(r)^{p-1}\d\tau_{m/p}(r)$. Using \cref{lem:IdealGeneratedByPhi}, we can write $\Phi_m(q)$ as a $\IZ[q]$-linear combination of $[\ell]_{q^{m/\ell}}$, where $\ell$ ranges through all prime factors of $m$  (including $\ell=p$). Since $[\ell]_{q^{m/\ell}}\tau_m(r)=V_\ell F_\ell(\tau_m(r))$ is contained in $\IV$, where we already know $\IZ[q]$-linearity, it suffices to show
		\begin{equation*}
			F_p\d \bigl(V_\ell F_\ell\bigl(\tau_m(r)\bigr)\bigr)=[\ell]_{q^{m/\ell}}\tau_{m/p}(r)^{p-1}\d\tau_{m/p}(r)
		\end{equation*}
		for all $\ell$. This requires once again a case distinction.
		
		\emph{Case~1: $\ell=p$.} In this case we have $[p]_{q^{m/p}}\tau_{m/p}(r)^{p-1}\d\tau_{m/p}(r)=p\tau_{m/p}(r)^{p-1}\d\tau_{m/p}(r)$, because both sides live in $\qIW_{m/p}\Omega_{R/\IZ}^*$, which is $(q^{m/p}-1)$-torsion. Also, according to \cref{par:PropertiesOfFpd}\cref{enum:FpdOnVerschiebung}, we have $F_p\d V_p(\tau_{m/p}(r)^p)=\d(\tau_{m/p}(r)^p)$. Using that $\d$ is a derivation, we're done. 
		
		\emph{Case~2: $\ell\neq p$.} We use our standard trick and show the desired equation after multiplication by $p$ and by $\ell$. After multiplication by $p$, we obtain
		\begin{equation*}
			p F_p\bigl(\d V_\ell F_\ell\bigl(\tau_{m}(r)\bigr)\bigr)=\d F_pV_\ell F_\ell\bigl(\tau_m(r)\bigr)=[\ell]_{q^{m/\ell}}\d F_p\tau_m(r)=p[\ell]_{q^{m/\ell}}\tau_{m/p}(r)^{p-1}\d\tau_{m/p}(r)\,,
		\end{equation*}
		as required. After multiplication by $\ell$, \cref{par:PropertiesOfFpd}\cref{enum:FpdOnVerschiebung} allows us to compute
		\begin{align*}
			\ell F_p\bigl(\d V_\ell F_\ell\bigl(\tau_{m}(r)\bigr)\bigr)=V_\ell\bigl(F_p\d \tau_{m/\ell}(r^\ell)\bigr)&=V_\ell\bigl(\tau_{m/p\ell}(r)^{\ell(p-1)}\d\tau_{m/p\ell}(r^\ell)\bigr)\\
			&=\ell V_\ell\bigl(\tau_{m/p\ell}(r)^{\ell(p-1)}\tau_{m/p\ell}(r)^{\ell-1}\bigr)\d V_\ell\tau_{m/p\ell}(r)\\
			&=\ell V_\ell\bigl(\tau_{m/p\ell}(r)^{\ell(p-1)}\bigr)\d\tau_{m/p}(r)\\
			&=\ell[\ell]_{q^{m/\ell}}\tau_{m/p}(r)^{p-1}\d\tau_{m/p}(r)\,.
		\end{align*}
		In the first line, we used the $F$-Teichmüller condition \cref{enum:TeichmuellerF}, which we already know for $F_p\colon \qIW_{m/\ell}\Omega_R^*\rightarrow \qIW_{m/p\ell}\Omega_R^*$. In the second line, we used that $\d$ is a derivation together with the last condition from \cref{def:qVSystemOfCDGA}\cref{enum:qDeRhamWittConditionB}. In the third line, we applied the $V$-Teichmüller condition \cref{enum:TeichmuellerV} for $\omega=\tau_{m/p\ell}(r)^{\ell(p-1)}$. Finally, in the fourth line we used the fact that $V_\ell(\tau_{m/p\ell}(r)^{\ell(p-1)})=V_\ell F_\ell(\tau_{m/p}(r)^{p-1})=[\ell]_{q^{m/\ell}}\tau_{m/p}(r)^{p-1}$. We're done.
	\end{proof}
	\begin{lem}\label{lem:FpdDerivation}
		The map $F_p\d\colon \qIW_m(R)\rightarrow \qIW_{m/p}\Omega_{R/\IZ}^1$ from \cref{con:Fpd} is a $\IZ[q]$-linear derivation. If $R$ is an $A$-algebra for some $\Lambda$-ring $A$, then $F_p\d$ extends uniquely to an $A[q]$-linear derivation $F_p\d\colon \qIW_m(R/A)\rightarrow \qIW_{m/p}\Omega_{R/A}^1$.
	\end{lem}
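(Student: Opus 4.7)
Since \cref{lem:FpdAdditive} and \cref{lem:FpdZqLinear} already establish that $F_p\d$ is $\IZ[q]$-linear, the only thing left in the absolute statement is the Leibniz rule
\begin{equation*}
    F_p\d(xy)=F_p(x)F_p\d(y)+F_p(y)F_p\d(x)\,,
\end{equation*}
where the $\qIW_m(R)$-module structure on $\qIW_{m/p}\Omega_{R/\IZ}^1$ is the one induced by $F_p\colon\qIW_m(R)\rightarrow \qIW_{m/p}(R)$. Both sides are $\IZ[q]$-bilinear in $(x,y)$ and additive, so by the arguments in \cref{con:Fpd} and \cref{lem:FpdAdditive} it suffices to verify the identity for $x$ and $y$ each of the form $q^i\tau_m(r)$ or $V_\ell(z)$.

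The plan is to mimic the reduction used in \cref{lem:FpdAdditive}. First I would reduce to the case where $R$ is of finite type over $\IZ$ by writing $R$ as a filtered colimit of such algebras and using that both $\qIW_m$ and $\qIW_{m/p}\Omega^1$ commute with filtered colimits. In this situation, $\qIW_{m/p}\Omega_{R/\IZ}^1$ is finitely generated over the noetherian ring $\qIW_{m/p}(R)$ (via \cref{cor:qWittNoetherian}), so the natural map into the product of the localisation $(-)[1/p]$, the completions $(-)_{(p,q^{m/\ell}-1)}^\complete$ for $\ell\neq p$, and the localisation $(-)[1/(q^{m/\ell}-1)\mid \ell\neq p]$ is injective. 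Thus it suffices to verify the Leibniz rule after applying each of these functors.

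After inverting $p$, the identity $pF_p\d=\d\circ F_p$ immediately reduces it to the Leibniz rule for the honest de Rham differential $\d$ applied to the ring morphism $F_p$, which is trivial. After $(p,q^{m/\ell}-1)$-adic completion the element $[\ell]_{q^{m/p\ell}}$ becomes a unit, so $F_\ell\colon\qIW_{m/p}\Omega_{R/\IZ}^1\rightarrow \qIW_{m/p\ell}\Omega_{R/\IZ}^1$ is an isomorphism after completion; combined with \cref{lem:lFlFpd}, this identifies our derivation with (a unit multiple of) $F_p\d\circ F_\ell$, where the $F_p\d$ on the right now lives over $m/\ell$ and thus satisfies the Leibniz rule by the inductive hypothesis on $m$. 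Finally, after localising at all $q^{m/\ell}-1$ for $\ell\neq p$, the identification $\qIW_{m/p}\Omega_{R/\IZ}^*[1/(q^{m/\ell}-1)\mid \ell\neq p]\cong \qIW_{p^{\alpha-1}}\Omega_{R/\IZ}^*\otimes_{\IZ[q],\psi^n}\IZ[q,1/(q^{m/\ell}-1)\mid \ell\neq p]$ (proved by comparing truncated universal properties as in \cref{prop:qWittKoszulExactSequence}) reduces everything to the $p$-typical setting of Langer--Zink, where the Leibniz rule is classical. The main technical obstacle is confirming that \cref{lem:lFlFpd} and this identification genuinely pass through to derivations, but once the jointly conservative family is in place each case follows from an already-established fact.

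For the extension to the relative setting, I would argue via the universal property of $\qIW_m(R/A)$ from \cref{lem:RelativeqWitt}. The $\IZ[q]$-linear derivation $F_p\d\colon\qIW_m(R)\rightarrow \qIW_{m/p}\Omega_{R/\IZ}^1\rightarrow \qIW_{m/p}\Omega_{R/A}^1$ and the map sending $a\in A[q]/(q^m-1)$ to its image $\overline{a}\in A[q]/(q^{m/p}-1)$ together define an $A[q]$-linear derivation on $\qIW_m(R)\otimes_{\qIW_m(A)}A[q]/(q^m-1)$ by the formula $x\otimes a\mapsto \overline{a}\cdot F_p\d(x)$; the Leibniz rule just proved combined with the $A[q]$-linearity of $F_p$ on $\qIW_m(-)$ makes this a derivation. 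It remains to check that it kills the ideal $\IU_m$. On the generator $V_{m/d}(xy)\otimes 1-V_{m/d}(x)\otimes c_d(y)$ with $x\in\qIW_d(R)$, $y\in\qIW_d(A)$, this amounts to the identity $F_p\d(V_{m/d}(xy))=F_p(c_d(y))\cdot F_p\d(V_{m/d}(x))+F_p(V_{m/d}(x))\cdot\overline{\d c_d(y)}$, which can be verified by splitting into cases according as $p\mid m/d$ or not and using \cref{par:PropertiesOfFpd}\cref{enum:FpdOnVerschiebung} together with compatibility of $c_{m/d}$ with the Frobenius. Uniqueness is forced by \cref{par:PropertiesOfFpd} since the images of Teichmüller lifts and Verschiebungen generate $\qIW_m(R/A)$ as an $A[q]$-module.
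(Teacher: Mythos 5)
Your argument for the absolute Leibniz rule follows the paper's route almost exactly: reduce to $R$ of finite type over $\IZ$, invoke \cref{cor:qWittNoetherian} and the jointly conservative family of functors $(-)[1/p]$, $(-)_{(p,q^{m/\ell}-1)}^\complete$, and $(-)[1/(q^{m/\ell}-1)\mid\ell\neq p]$, then handle each case via $pF_p\d=\d\circ F_p$, via \cref{lem:lFlFpd} together with the invertibility of $F_\ell$ after completion, and via reduction to the $p$-typical situation of Langer--Zink. This matches the paper's proof; the paper is also explicit that the Langer--Zink adaptation requires the already-known $\IZ[q]$-linearity from \cref{lem:FpdZqLinear}, which you mention too.

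For the relative extension, however, there is a gap. The formula $x\otimes a\mapsto \overline{a}\cdot F_p\d(x)$ is not obviously well-defined on $\qIW_m(R)\otimes_{\qIW_m(A)}A[q]/(q^m-1)$. The relation being quotiented out is $yx\otimes a = x\otimes c_m(y)a$ for $y\in\qIW_m(A)$, and applying your formula to both sides and using the Leibniz rule yields
\begin{equation*}
	\overline{a}\bigl(F_p(y)F_p\d(x)+F_p(x)F_p\d(y)\bigr)\quad\text{versus}\quad \overline{c_m(y)}\,\overline{a}\,F_p\d(x)\,.
\end{equation*}
The first summand matches the right-hand side by compatibility of $F_p$ with the comparison maps $c_m$, $c_{m/p}$, but you still need the second summand $F_p(x)F_p\d(y)$ to vanish in $\qIW_{m/p}\Omega_{R/A}^1$. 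That is, you must first show that $F_p\d$ kills the image of $\qIW_m(A)\rightarrow\qIW_m(R)$ after composing with $\qIW_{m/p}\Omega_{R/\IZ}^1\rightarrow\qIW_{m/p}\Omega_{R/A}^1$. This is not automatic: the paper devotes its opening paragraph in the relative case to exactly this, reducing via \cref{par:PropertiesOfFpd} to showing $\d x=0$ for $x\in\qIW_{m/p}(A)$ (which uses that $\qIW_{m/p}\Omega_{R/A}^1$ is a quotient of $\Omega_{\qIW_{m/p}(R/A)/A[q]}^1$) and $V_\ell(F_p\d y)=0$ for $y\in\qIW_{m/\ell}(A)$, $\ell\neq p$ (which requires the inductive hypothesis). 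You should insert this verification before constructing the map on the tensor product. Your subsequent check that the ideal $\IU_m$ is killed is otherwise in line with the paper's (which specializes to prime-factor generators and uses the multiply-by-$p$ and multiply-by-$\ell$ trick).
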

	\begin{proof}
		To show that $F_p\d\colon \qIW_m(R)\rightarrow \qIW_{m/p}\Omega_{R/\IZ}^1$ is a derivation, we use the same method as in the proof of \cref{lem:FpdAdditive}: By compatibility with filtered colimits, reduce to the case where $R$ is of finite type over $\IZ$. Then apply $(-)[1/p]$, $(-)_{(p,q^{m/\ell}-1)}^\complete$, or $(-)\bigl[1/(q^{m/\ell}-1)\ \big|\ \ell\neq p\bigr]$: After localisation at $p$, the fact that $F_p\d$ is a derivation follows from $p F_p\d=\d F_p$. After $(p,q^{m/\ell}-1)$-adic completion, we can use induction again. After localisation at $(q^{m/\ell}-1)$ for all $\ell\neq p$, we can reduce to the case $m=p^\alpha$ again. In this case, we can adapt the proof of \cite[Proposition~\href{https://www.math.uni-bielefeld.de/~zink/dRW.pdf\#page=16}{1.3}]{LangerZink}; to make the adaptation, one needs to use that $F_p\d$ is $\IZ[q]$-linear, which we know from \cref{lem:FpdZqLinear}.
		
		Now assume $R$ is an $A$-algebra. First observe that the composition
		\begin{equation*}
			\qIW_m(R)\xrightarrow{F_p\d}\qIW_{m/p}\Omega_{R/\IZ}^1\longrightarrow \qIW_{m/p}\Omega_{R/A}^1 
		\end{equation*}
		kills all elements in the image of $\qIW_m(A)\rightarrow \qIW_m(R)$. Indeed, according to the description in \cref{par:PropertiesOfFpd}, we only have to show $\d x=0$ for all $x\in \qIW_{m/p}(A)$ as well as $V_\ell(F_p\d y)=0$ for all $y\in \qIW_{m/\ell}(A)$ and all $\ell\neq p$. The latter follows from the inductive hypothesis, whereas the former is ensured by the fact that $\qIW_{m/p}\Omega_{R/A}^1$ is a quotient of $\Omega_{\qIW_{m/p}(R/A)/A[q]}^1$ by \cref{prop:qDRWExists}\cref{enum:dRtoqDRWisSurjective}.
		
		Thus, $F_p\d$ can be extended to an $A[q]$-linear derivation
		\begin{equation*}
			\qIW_m(R)\otimes_{\qIW_m(A)}A[q]/(q^m-1)\longrightarrow \qIW_{m/p}\Omega_{R/A}^1\,.
		\end{equation*}
		It remains to show that the ideal $\IU_m$ from \cref{lem:RelativeqWitt} is killed. This is another straightforward check. It's enough to consider generators of the form $V_\ell(xy)\otimes 1-V_\ell(x)\otimes c_{m/\ell}(y)$ for $\ell\mid m$ a prime factor, $x\in \qIW_{m/\ell}(R)$, and $y\in \qIW_{m/\ell}(A)$. If $\ell=p$, then the map above sends this generator to $\d(xy)-c_{m/\ell}(y)\d x=0$, using that the differentials of $\qIW_{m/p}\Omega_{R/A}^*$ are $A[q]$-linear. If $\ell\neq p$, we have multiply by $p$ and by $\ell$ once again. After multiplication by $p$, we obtain $\d F_p(V_\ell(xy)\otimes 1-V_\ell(x)\otimes c_{m/\ell}(y))$, which vanishes because $V_\ell(xy)\otimes 1-V_\ell(x)\otimes c_{m/\ell}(y)=0$ holds already in $\qIW_m(R/A)$. After multiplication by $\ell$, we get $V_\ell (F_p\d (xy)-c_{m/p\ell}(y)F_p\d x)$, which vanishes by the inductive hypothesis.
	\end{proof}
	\begin{con}\label{con:FrobeniusPreliminary}
		From \cref{lem:FpdDerivation} and the universal property of Kähler differentials we get a $\qIW_m(R/A)$-module map $F_p\colon \Omega_{\qIW_m(R/A)/A[q]}^1\rightarrow \qIW_{m/p}\Omega_{R/A}^1$. By the universal property of exterior algebras, this extends uniquely to a map
		\begin{equation*}
			F_p\colon \Omega_{\qIW_m(R/A)/A[q]}^*\longrightarrow \qIW_{m/p}\Omega_{R/A}^*
		\end{equation*}
		of graded $\qIW_m(R/A)$-algebras. We wish to show that this map factors uniquely over $\qIW_m\Omega_{R/A}^*$. By revisiting the explicit construction, we see that we must check $F_p(\xi)=0$, $F_p(\d\xi)=0$, $F_p(\eta)=0$, and $F_p(\d\eta)=0$, where $\xi$ and $\eta$ are as in the proof of \cref{prop:qDRWExists}. This will be proved in \cref{lem:Fpxi=0,lem:Fpeta=0} below.
	\end{con}
	\begin{lem}\label{lem:Fpxi=0}
		Fix a divisor $d\mid m$ such that $d\neq m$. Let $j\geqslant 1$, let $I$ be a finite indexing set, and let $(w_i,x_{i,1},\dotsc,x_{i,j})_{i\in I}$ be a sequence of elements of $\qIW_d(R/A)$ such that $0=\sum_{i\in I}w_i\d x_{i,1}\wedge \dotsm\wedge \d x_{i,j}$ holds in $\qIW_d\Omega_{R/A}^j$. Put
		\begin{equation*}
			\xi\coloneqq\sum_{i\in I}V_{m/d}(w_i)\d V_{m/d}(x_{i,1})\wedge \dotsm\wedge \d V_{m/d}(x_{i,j})\,.
		\end{equation*}
		Then $F_p(\xi)=0$ and $F_p(\d\xi)=0$.
	\end{lem}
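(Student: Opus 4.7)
The plan is to compute $F_p(\xi)$ and $F_p(\d\xi)$ explicitly in $\qIW_{m/p}\Omega_{R/A}^*$ using that $F_p$ is a graded algebra map combined with the formulas from \cref{par:PropertiesOfFpd}. Writing $n=m/d$, there are two cases according to whether $p\mid n$ or $\gcd(p,n)=1$; in the latter case $p$ must divide $d$, since $p\mid m$.

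In the case $p\mid n$, I would factor $V_n = V_p\circ V_{n/p}$ and apply $F_p V_p=p$ and $F_p\d V_p=\d$ from \cref{def:qFVSystemOfCDGA}\cref{enum:qDeRhamWittConditionC} to rewrite
\[
F_p(\xi) = p\sum_i V_{n/p}(w_i)\,\d V_{n/p}(x_{i,1})\wedge\dotsm\wedge\d V_{n/p}(x_{i,j}).
\]
If $d<m/p$, the bracketed sum is a type \cref{enum:GeneratorsOfVType} generator of $\IJ_{m/p}^*$ for the same data $(w_i,x_{i,k})$, hence vanishes in $\qIW_{m/p}\Omega_{R/A}^j$; if $d=m/p$ then $V_{n/p}=\id$ and the sum is our hypothesis, which is zero. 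For $F_p(\d\xi)$, a Leibniz expansion of $\d\xi$ together with the same substitution give either $\d$ of the above generator (an element of $\d\IJ_{m/p}^*\subseteq \IJ_{m/p}^*$) or $\d(0)=0$ in the boundary case.

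The real work is the case $\gcd(p,n)=1$, where $F_p$ and $V_n$ commute but there is no direct formula for $F_p\d V_n$. Instead, \cref{lem:dAfterV,lem:dAfterF} give $p\cdot F_p\d V_n(x)=\d V_n(F_p x)$ and $n\cdot F_p\d V_n(x)=V_n(F_p\d x)$. Since $\gcd(p^j,n^j)=1$, it is enough to show both $p^jF_p(\xi)=0$ and $n^jF_p(\xi)=0$. The first expansion turns $p^jF_p(\xi)$ into $\sum_i V_n(F_p w_i)\,\d V_n(F_p x_{i,1})\wedge\dotsm\wedge\d V_n(F_p x_{i,j})$, a type \cref{enum:GeneratorsOfVType} generator of $\IJ_{m/p}^*$ at level $d/p<m/p$; its underlying relation $\sum_i F_p(w_i)\,\d F_p(x_{i,1})\wedge\dotsm\wedge\d F_p(x_{i,j})=0$ in $\qIW_{d/p}\Omega_{R/A}^j$ follows by first applying the inductively constructed graded-algebra map $F_p\colon\qIW_d\Omega_{R/A}^*\to\qIW_{d/p}\Omega_{R/A}^*$ to the hypothesis and then using $\d F_p=p\cdot F_p\d$ (\cref{lem:dAfterF}) to convert and absorb the resulting overall factor of $p^j$. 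For the second, I would use the iterated multiplication identity $V_n(\alpha_0)\cdots V_n(\alpha_j)=n^j V_n(\alpha_0\cdots\alpha_j)$---a consequence of $V_n(\omega F_n\eta)=V_n(\omega)\eta$ and $F_n V_n=n$ from \cref{def:qFVSystemOfCDGA}\cref{enum:qDeRhamWittConditionC}---to collapse $n^jF_p(\xi)$ into $n^j V_n\bigl(F_p\bigl(\sum_i w_i\,\d x_{i,1}\wedge\dotsm\wedge\d x_{i,j}\bigr)\bigr)=0$.

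The treatment of $F_p(\d\xi)$ in this second case is entirely parallel: after Leibniz the wedge has $j+1$ factors, and the same two identities yield $p^{j+1}F_p(\d\xi)=0$ and $n^{j+1}F_p(\d\xi)=0$, hence $F_p(\d\xi)=0$ by coprimality. The main obstacle is bookkeeping rather than conceptual: I need to track the interplay of $F_p$, $V_n$, and $\d$ carefully in the presence of $q$-torsion and apply the Verschiebung multiplication formula with the correct exponent, but once these identities are in place everything reduces to identifying the resulting sums with generators of $\IJ_{m/p}^*$ that are already present by construction.
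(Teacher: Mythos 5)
Your proposal is correct and takes essentially the same route as the paper: split according to whether $p\mid m/d$ (where $F_pV_p=p$, $F_p\d V_p=\d$ give the immediate collapse) or $p\nmid m/d$ (where one shows $p^jF_p(\xi)=0$ and $(m/d)^jF_p(\xi)=0$ separately using $\d F_p=pF_p\d$, the $V_n(\omega F_n\eta)=V_n(\omega)\eta$ relation, and commutativity of $F_p$ and $V_n$), then handle $F_p(\d\xi)$ analogously. The only cosmetic difference is that you phrase the vanishing through identifying your sums as generators of $\IJ_{m/p}^*$ (and explicitly separate the boundary subcase $d=m/p$), whereas the paper directly pulls $V_{m/pd}$ (resp.\ $V_{m/d}$) outside and observes its argument is zero.
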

	\begin{proof}
		Note that \cref{par:PropertiesOfFpd} tells us how to compute $F_p(\xi)$, but in order to do that, we need to distinguish whether or not $p$ divides $m/d$. If $p$ does divide $m/d$, we get
		\begin{align*}
			F_p(\xi)&=\sum_{i\in I}pV_{m/pd}(w_i)\d V_{m/pd}(x_{i,1})\wedge\dotsb \wedge \d V_{m/pd}(x_{i,j})\\
			&=pV_{m/pd}\left(\sum_{i\in I}w_i\d x_{i,1}\wedge\dotsb\wedge \d x_{i,j}\right)\,,
		\end{align*}
		which vanishes because $\sum_{i\in I}w_i\d x_{i,1}\wedge\dotsb\wedge \d x_{i,j}=0$ by assumption. If $p$ doesn't divide $m/d$, we'll show $p^jF_p(\xi)=0$ and $(m/d)^jF_p(\xi)=0$ instead. For the first one, we compute
		\begin{align*}
			p^jF_p(\xi)&=\sum_{i\in I}F_pV_{m/d}(w_i)\d F_pV_{m/d}(x_{i,1})\wedge\dotsb\wedge \d F_pV_{m/d}(x_{i,j})\\
			&=V_{m/d}\left(\sum_{i\in I}F_p(w_i)\d F_p(x_{i,1})\wedge\dotsb\wedge\d F_p(x_{i,j})\right)\\
			&=p^jV_{m/d}F_p\left(\sum_{i\in I}w_i\d x_{i,1}\wedge\dotsb\wedge \d x_{i,j}\right)\,,
		\end{align*}
		which once again vanishes by our assumption $\sum_{i\in I}w_i\d x_{i,1}\wedge\dotsb\wedge \d x_{i,j}=0$ plus the fact that we already know $F_p\colon \qIW_d\Omega_{R/A}^*\rightarrow \qIW_{d/p}\Omega_{R/A}^*$ to be well-defined. Similarly,
		\begin{align*}
			\left(\frac{m}{d}\right)^jF_p(\xi)&=\sum_{i\in I}F_pV_{m/d}(w_i)V_{m/d}(F_p\d x_{i,1})\wedge\dotsb\wedge V_{m/d}(F_p\d x_{i,j})\\
			&=\left(\frac{m}{d}\right)^jV_{m/d}F_p\left(\sum_{i\in I}w_i\d x_{i,1}\wedge\dotsb\wedge \d x_{i,j}\right)\,.
		\end{align*}
		This vanishes because of our assumption again, plus the fact that we already know the Frobenius $F_p\colon \qIW_d\Omega_{R/A}^*\rightarrow \qIW_{d/p}\Omega_{R/A}^*$ to be well-defined. This finishes the proof that $F_p(\xi)=0$. The proof that $F_p(\d\xi)=0$ is completely analogous and we'll leave it to the reader.
	\end{proof}
	\begin{lem}\label{lem:Fpeta=0}
		Fix a divisor $d\mid m$ such that $d\neq m$. Let $x\in \qIW_d(R/A)$ and $r\in R$. Put
		\begin{equation*}
			\eta\coloneqq V_{m/d}(x)\d\tau_m(r)-V_{m/d}\bigl(x\tau_d(r)^{{m/d}-1}\bigr)\d V_{m/d}\tau_d(r)\,.
		\end{equation*}
		Then $F_p(\eta)=0$ and $F_p(\d\eta)=0$.
	\end{lem}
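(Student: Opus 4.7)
The plan is to reduce the vanishing of $F_p(\eta)$ and $F_p(\d\eta)$ to instances of the $V$-Teichmüller condition \cref{enum:TeichmuellerV} already available in $\qIW_{m/p}\Omega_{R/A}^*$ (by the inductive hypothesis), performing the case distinction $p\mid m/d$ versus $p\nmid m/d$ that has now become standard throughout \cref{subsec:ConstructionOfFrobenii}. Observe first that $F_p(\eta)$ can be computed by the Leibniz rule together with the $F$-Teichmüller formula $F_p\d\tau_m(r)=\tau_{m/p}(r)^{p-1}\d\tau_{m/p}(r)$, the inductively-established behaviour of $F_p$ on products $V_{m/d}(z)$, and the identity $F_p\d V_p=\d$.

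In the first case $p\mid m/d$, one has $d\mid m/p$, and I would factor $V_{m/d}=V_{m/pd}\circ V_p$. Then $F_pV_{m/d}=pV_{m/pd}$ in degree $0$, and combining this with $F_p\d V_{m/d}\tau_d(r)$ (evaluated via $F_p\d V_p=\d$ on the inner piece and \cref{lem:dAfterV} for the remaining $V_{m/pd}$), both summands of $F_p(\eta)$ should transform into elements of the shape
\begin{equation*}
V_{m/pd}(\text{something})\,\d\tau_{m/p}(r)\quad\text{and}\quad V_{m/pd}(\text{something})\,\d V_{m/pd}\tau_d(r)
\end{equation*}
whose difference is precisely the $V$-Teichmüller relation for the divisor $d\mid m/p$ inside $\qIW_{m/p}\Omega_{R/A}^*$, and therefore vanishes.

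In the second case $p\nmid m/d$, I would prove $F_p(\eta)=0$ after multiplication by $p$ and by $m/d$ separately; as these are coprime, this forces $F_p(\eta)=0$. After multiplication by $p$, use $pF_p\d=\d F_p$ (via \cref{lem:dAfterF} and \cref{enum:FpdOnVerschiebung}): the question reduces to $\d(\text{something already equal to }0\text{ in }\qIW_m(R/A))$, which vanishes. After multiplication by $m/d$, exploit that $F_p$ commutes with $V_{m/d}$ in the coprime case, so the whole expression becomes $V_{m/d}F_p(\eta_0)$, where $\eta_0$ is the analogous degree-$1$ form inside $\qIW_{d/\gcd(d,p)}\Omega_{R/A}^*$; this vanishes by the inductively known $V$-Teichmüller relation. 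The assertion $F_p(\d\eta)=0$ then follows either by applying $\d$ to $F_p(\eta)=0$ via $\d F_p=pF_p\d$ (on a $(q^{d}-1)$-torsion ring this requires an additional coprimality/$p$-adic argument exactly parallel to the one just used for $F_p(\eta)$), or by a direct expansion $\d\eta=\d V_{m/d}(x)\d\tau_m(r)-\d V_{m/d}(x\tau_d(r)^{m/d-1})\d V_{m/d}\tau_d(r)$ followed by the same two-case template.

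The main obstacle will be the bookkeeping in the first case: computing $F_p\d V_{m/d}\tau_d(r)$ explicitly requires juggling the relations $V_n\circ\d=n(\d\circ V_n)$, $\d\circ F_n=n(\d\circ V_n)$, the compositional identities for $V_{m/pd}\circ V_p$, and the exponent identity $\tau_d(r)^{m/d-1}=\tau_d(r)^{p(m/pd)-1}$, and only after correctly matching all coefficients and torsion constraints does the resulting expression align with the $V$-Teichmüller relation at level $m/p$. This is the kind of delicate coefficient-tracking that also drives \cite[Proposition~\href{https://www.math.uni-bielefeld.de/~zink/dRW.pdf\#page=16}{1.3}]{LangerZink}, and I would expect the proof to occupy a page of explicit but routine manipulation.
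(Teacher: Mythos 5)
Your Case~1 is correct and coincides with the paper's argument: apply $F_p$ as a graded algebra morphism, use $F_p V_{m/d}=pV_{m/pd}$ and $F_p\d V_p=\d$, move $\tau_{m/p}(r)^{p-1}$ into the Verschiebung via $F_{m/pd}$, and recognise the result as $p$ times the $V$-Teichmüller relation \cref{enum:TeichmuellerV} at level $m/p$.

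Your Case~2 has a genuine gap. You claim that after multiplication by $p$ the expression becomes \enquote{$\d$ of something already equal to $0$ in $\qIW_m(R/A)$}. This is not what happens: $\eta$ is a difference of two degree-$1$ forms, not the exterior derivative of a degree-$0$ element, and neither is $pF_p(\eta)$. The correct computation uses $pF_p\d\tau_m(r)=\d F_p\tau_m(r)=\d\tau_{m/p}(r^p)$ and $pF_p\d V_{m/d}\tau_d(r)=\d F_pV_{m/d}\tau_d(r)$, together with $F_pV_{m/d}=V_{(m/p)/(d/p)}\circ F_p$ and $F_p\tau_d(r)=\tau_{d/p}(r^p)$. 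This rewrites $pF_p(\eta)$ exactly as an instance of the $V$-Teichmüller relation \cref{enum:TeichmuellerV} at level $m/p$, divisor $d/p$, for $F_p(x)$ and $r^p$---a nontrivial degree-$1$ identity in $\qIW_{m/p}\Omega^1_{R/A}$ that holds by the inductive hypothesis, not a trivial $\d(0)$. Your $(m/d)$-step is also garbled: the form $\eta_0$ you invoke lives in $\qIW_{m/p}\Omega^*_{R/A}$ and the expression $V_{m/d}F_p(\eta_0)$ with $\eta_0\in\qIW_{d/p}\Omega^*$ does not typecheck. Although a reduction along these lines can be made to work (it lands on the $V$-Teichmüller relation for $F_p(x)\tau_{d/p}(r)^{(m/d)(p-1)}$), it is in fact superfluous: you have missed the observation recorded in \cref{def:TorsionFreeMakesStuffRedundant} that $\eta$ is automatically $(m/d)^{m/d-1}$-torsion in $\Omega^1_{\qIW_m(R/A)/A[q]}$, so that, with $p$ and $m/d$ coprime, it suffices to show $pF_p(\eta)=0$ alone. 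This is the route the paper takes, and it eliminates the $(m/d)$-step entirely.
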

	\begin{proof}
		Again, we have to distinguish whether or not $m/d$ is divisible by $p$. If it is, we get
		\begin{align*}
			F_p(\eta)&=pV_{m/pd}(x)\tau_{m/p}(r)^{p-1}\d\tau_{m/p}(r)-pV_{m/pd}\bigl(x\tau_d(r)^{m/d-1}\bigr)\d V_{m/pd}\tau_d(r)\\
			&=pV_{m/pd}\bigl(x\tau_{m/d}(r)^{(p-1)m/pd}\bigr)\d\tau_{m/p}(r)-pV_{m/pd}\bigl(x\tau_d(r)^{m/d-1}\bigr)\d V_{m/pd}\tau_d(r)\,.
		\end{align*} 
		In the second line we used $V_{m/pd}(x)\tau_{m/p}(r)^{p-1}=V_{m/pd}(x F_{m/pd}(\tau_{m/p}(r)^{p-1}))$, as we already know that the conditions from \cref{def:qFVSystemOfCDGA}\cref{enum:qDeRhamWittConditionC} are true for $F_{m/pd}\colon \qIW_{m/p}\Omega_{R/A}^*\rightarrow \qIW_{d}\Omega_{R/A}^*$, and $F_{m/pd}(\tau_{m/p}(r)^{p-1})=\tau_{m/d}(r)^{(p-1)m/pd}$. Now $F_p(\eta)$ vanishes because the last line is precisely $p$ times the $V$-Teichmüller condition \cref{enum:TeichmuellerV} for $x\tau_{m/d}(r)^{(p-1)m/pd}$ and $r$.
		
		Now assume $p$ doesn't divide $m/d$. Since $\eta$ is automatically a $(m/d)^{m/d-1}$-torsion element in $\Omega_{\qIW_m(R/A)/A[q]}^1$, it suffices to show $pF_p(\eta)=0$ in this case. Note that $p$ not dividing $m/d$ implies that $p$ must divide $d$. We put $m_0\coloneqq m/p$ and $d_0\coloneqq d/p$ for short and compute
		\begin{align*}
			pF_p(\eta)&=F_pV_{m/d}(x)\d F_p\tau_d(r)-F_pV_{m/d}\bigl(x\tau_d(r)^{m/d-1}\bigr)\d F_pV_{m/d}\tau_d(r)\\
			&=V_{m_0/d_0}\bigl(F_p(x)\bigr)\d\tau_{d_0}(r^p)-V_{m_0/d_0}\bigl(F_p(x)\tau_{d_0}(r^p)^{m_0/d_0-1}\bigr)\d V_{m_0/d_0}\tau_{d_0}(r^p)\,.
		\end{align*}
		Now $pF_p(\eta)=0$ follows because the last line is precisely the $V$-Teichmüller condition \cref{enum:TeichmuellerV} for $F_p(x)$ and $r^p$. This proves $F_p(\eta)=0$. The proof of $F_p(\d\eta)=0$ is similar and we leave it to the reader once again.
	\end{proof}
	With \cref{lem:Fpxi=0,lem:Fpeta=0} proved, \cref{con:FrobeniusPreliminary} finally gives a complete construction of the Frobenius $F_p$. It remains to check that it has all necessary properties.
	\begin{lem}\label{lem:FpSatisfiesAllProperties}
		The map $F_p\colon \qIW_m\Omega_{R/A}^*\rightarrow \qIW_{m/p}\Omega_{R/A}^*$ from \cref{con:FrobeniusPreliminary} satisfies all properties from \cref{def:qFVSystemOfCDGA}\cref{enum:qDeRhamWittConditionC}.
	\end{lem}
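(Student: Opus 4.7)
The strategy is to verify each relation in \cref{def:qFVSystemOfCDGA}\cref{enum:qDeRhamWittConditionC}, together with the $F$-Teichmüller condition \cref{enum:TeichmuellerF}, separately. By \cref{con:FrobeniusPreliminary}, $F_p$ is a morphism of graded $A[q]$-algebras whose degree-$0$ part is the $q$-Witt-vector Frobenius; hence multiplicativity, degree-$0$ compatibility, and all the degree-$0$ Frobenius/Verschiebung identities are immediate from \cref{def:qFVSystemOfRings}. The $F$-Teichmüller condition is built into the construction of $F_p\d$ in \cref{con:Fpd}: applying the formula to $x=\tau_m(r)$ gives directly $F_p\d\tau_m(r)=\tau_{m/p}(r)^{p-1}\d\tau_{m/p}(r)$. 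For the remaining identities it suffices, by multiplicativity and additivity, to verify them on algebra generators of $\qIW_m\Omega_{R/A}^*$, which we may take to be degree-$0$ elements together with generators of the form $\d x$ for $x\in\qIW_m(R/A)$.

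For the chain relation $F_p\circ F_\ell=F_\ell\circ F_p$ with $\ell\neq p$ a prime factor of $m$, the degree-$0$ case is the $q$-Witt-vector identity, and on $\d x$ we combine $\ell F_\ell (F_p\d x)=F_p\d F_\ell(x)$ from \cref{lem:lFlFpd} with the symmetric identity $pF_p(F_\ell\d x)=F_\ell\d F_p(x)$; the coprimality of $p$ and $\ell$, together with a joint-conservativity argument analogous to the one in the proof of \cref{lem:FpdAdditive} (invoking \cref{lem:DerivedBeauvilleLaszlo}), pins down the two compositions as a common element. The identities $F_p\circ V_p=p$, the commutation $F_p V_\ell=V_\ell F_p$ for $\ell\neq p$, and $F_p\circ\d\circ V_p=\d$ all reduce by multiplicativity to direct checks on a typical generator $\omega=w\d x_1\wedge \dotsm\wedge \d x_j\in\qIW_{m/p}\Omega_{R/A}^*$: expanding $V_p(\omega)$ via \cref{def:qVSystemOfCDGA}\cref{enum:qDeRhamWittConditionB}, then applying the multiplicativity of $F_p$, the degree-$0$ Witt-vector identities, and the defining formulas $F_p\d V_p(x)=\d x$ and $\ell F_p\d V_\ell(x)=V_\ell(F_p\d x)$ from \cref{par:PropertiesOfFpd}\cref{enum:FpdOnVerschiebung}, collapses each side to the required expression.

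The main obstacle is the projection formula $V_p(\omega F_p(\eta))=V_p(\omega)\eta$. The plan is to fix $\omega\in\qIW_{m/p}\Omega_{R/A}^*$ and verify the identity on an algebra generating set of $\eta\in\qIW_m\Omega_{R/A}^*$. For $\eta\in\qIW_m(R/A)$ it reduces to the Witt-vector projection formula in the appropriate degree. For $\eta=\d\tau_m(r)$, substituting the explicit value of $F_p\d\tau_m(r)$ and applying \cref{def:qVSystemOfCDGA}\cref{enum:qDeRhamWittConditionB} rewrites the left-hand side as $V_p(\omega\tau_{m/p}(r)^{p-1})\d V_p\tau_{m/p}(r)$, which equals $V_p(\omega)\d\tau_m(r)$ by the $V$-Teichmüller condition \cref{enum:TeichmuellerV}. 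For $\eta=\d V_n(x)$ with $x\in\qIW_{m/n}(R/A)$, split cases: if $p\mid n$, write $V_n=V_p\circ V_{n/p}$ and use $F_p\d V_p=\d$ together with \cref{def:qVSystemOfCDGA}\cref{enum:qDeRhamWittConditionB}; if $p\nmid n$, use the already-established commutation $F_p\circ V_n=V_n\circ F_p$ and \cref{lem:dAfterV} to reduce, after multiplication by $n$, to an identity provable by the same joint-conservativity trick as in \cref{lem:FpdAdditive}. Once the projection formula is established, setting $\omega=1$ immediately yields $V_p\circ F_p=[p]_{q^{m/p}}$ in all degrees, completing the verification.
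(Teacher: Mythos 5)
Your proposal is correct and follows essentially the same strategy as the paper: dispose of the degree-$0$ and $F$-Teichmüller conditions by construction, handle the chain relation $F_p\circ F_\ell=F_\ell\circ F_p$ by scaling and \cref{lem:lFlFpd}, note $F_p\circ\d\circ V_p=\d$ by construction, and reduce the projection formula to the cases of Teichmüller lifts and Verschiebungen, which is precisely the paper's case split. Two small observations. First, in the chain relation the scaling argument alone already closes the gap: $\ell\cdot$(both sides) agree by \cref{lem:lFlFpd} together with $\ell F_\ell\d=\d F_\ell$, and $p\cdot$(both sides) agree by the symmetric twin of \cref{lem:lFlFpd} together with $pF_p\d=\d F_p$, so coprimality of $p$ and $\ell$ suffices; the extra appeal to a joint-conservativity argument via \cref{lem:DerivedBeauvilleLaszlo} is sound but superfluous (and would require first reducing to $R$ of finite type so that the completion step behaves). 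The paper does the same for the projection-formula case $\eta=\d V_\ell(w)$ with $\ell\neq p$: a direct computation after multiplying by $p$ and by $\ell$, again with no recourse to \cref{lem:DerivedBeauvilleLaszlo}, so your ``joint-conservativity'' route there is a genuine (heavier) alternative rather than what the paper does. Second, you only reduce the form $\eta$ to generators; the paper also reduces $\omega$ to degree $0$ (using the relation $V_{m/d}(\omega\d\eta')=V_{m/d}(\omega)\d V_{m/d}(\eta')$ from \cref{def:qVSystemOfCDGA}\cref{enum:qDeRhamWittConditionB}), which is worth making explicit since it is what lets the subsequent case analysis take place entirely inside $\qIW_m(R/A)$ and $\qIW_{m/p}(R/A)$.
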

	\begin{proof}
		Compatibility with the $q$-Witt vector Frobenius holds by construction. For the chain condition, we must check $F_p\circ F_\ell=F_\ell\circ F_p$ for all prime factors $\ell\neq p$ of $m$ (in both compositions, the left factor is defined via \cref{con:FrobeniusPreliminary} and the right factor is defined by induction). This can be checked after multiplication by $p$ and $\ell$ and then \cref{lem:lFlFpd} takes care of the essential case. The condition $F_p\circ \d\circ V_p=\d$ holds again by construction.
		
		It remains to prove $V_p(\omega F_p(\eta))=V_p(\omega)\eta$ for all $\omega\in \qIW_{m/p}\Omega_R^*$ and all $\eta\in \qIW_m\Omega_R^*$ (which also implies $V_p\circ F_p=[p]_{q^{m/p}}$). This is easily reduced to checking $V_p(x F_p(\d y))=V_p(x)\d y$ for all $x\in \qIW_{m/p}(R)$ and $y\in \qIW_m(R)$ (that is, it suffices to do the case $\omega=x$ and $\eta=\d y$). Furthermore, it suffices to treat the three cases $y=a\tau_m(r)$ for some $a\in A[q]$ and some $r\in R$, $y=V_p(z)$ for some $z\in \qIW_{m/p}(R)$, and $y=V_\ell(w)$ for some prime factor $\ell\neq p$ of $m$ and some $w\in \qIW_{m/\ell}(R)$.
		
		The case $y=a\tau_m(r)$ follows immediately from the $V$-Teichmüller condition \cref{enum:TeichmuellerV}. For $y=V_p(z)$, we compute $V_p(xF_p\d V_p(z))=V_p(x\d z)=V_p(x)\d V_p(z)$, as required. Finally, to handle the case $y=V_\ell(y)$, we have to multiply both sides by $p$ and $\ell$ for one last time. After multiplication by $p$, we obtain
		\begin{equation*}
			p V_p\bigl(x F_p\d V_\ell(w)\bigr)=V_p\bigl(x\d F_pV_\ell(w)\bigr)=V_p(x)\d V_pF_pV_\ell(w)=[p]_{q^{m/p}}V_p(x)\d V_\ell(w)\,.
		\end{equation*}
		But $V_p(x)$ is $(q^{m/p}-1)$-torsion, so $[p]_{q^{m/p}}V_p(x)\d V_\ell(w)=pV_p(x)\d V_\ell(w)$, as required. After multiplication by $\ell$, we compute
		\begin{equation*}
			\ell V_p\bigl(x F_p\d V_\ell(w)\bigr)=V_p\bigl(xV_\ell\bigl(F_p(\d w)\bigr)\bigr)=V_pV_\ell\bigl(F_\ell(x)F_p(\d w)\bigr)=V_\ell V_p\bigl(F_\ell(x)F_p(\d w)\bigr)\,.
		\end{equation*}
		In the second equality we used the property for $F_\ell\colon \qIW_{m/p}\Omega_{R/A}^*\rightarrow \qIW_{m/p\ell}\Omega_{R/A}^*$, which we already know by induction. Furthermore, applying the inductive hypothesis to the Frobenius $F_p\colon \qIW_{m/\ell}\Omega_{R/A}^*\rightarrow \qIW_{m/p\ell}\Omega_{R/A}^*$, we get 
		\begin{equation*}
			V_\ell V_p\bigl(F_\ell(x)F_p(\d w)\bigr)=V_\ell\bigl(V_pF_\ell (x)\d w\bigr)=V_\ell F_\ell V_p(x)\d V_\ell(w)=[\ell]_{q^{m/\ell}}V_p(x)\d V_\ell(w)\,.
		\end{equation*}
		But $\d V_\ell(w)$ is $(q^{m/\ell}-1)$-torsion, so $[\ell]_{q^{m/\ell}}V_p(x)\d V_\ell(w)=\ell V_p(x)\d V_\ell(w)$, as required.
	\end{proof}
	
	\begin{proof}[Proof of \cref{prop:qDRWHasFrobenii}]
		It follows from \cref{lem:FpSatisfiesAllProperties} that for all prime factors $p\mid m$ there exists a Frobenius $F_p\colon \qIW_m\Omega_{R/A}^*\rightarrow \qIW_{m/p}\Omega_{R/A}^*$ satisfying all properties from \cref{def:qFVSystemOfCDGA}. Furthermore, $F_p$ satisfies the $F$-Teichmüller condition \cref{enum:TeichmuellerF} by construction. This finishes the inductive construction of Frobenii on $(\qIW_m\Omega_{R/A}^*)_{m\in\IN}$, thus making it a $q$-$FV$-system of differential-graded algebras over $R$.
		
		For initiality, let $(P_m^*)_{m\in \IN}$ be an arbitrary $q$-$FV$-system. By definition of $(\qIW_m\Omega_{R/A}^*)_{m\in\IN}$, there is a unique morphism
		\begin{equation*}
			\bigl(\qIW_m\Omega_{R/A}^*\bigr)_{m\in\IN}\longrightarrow (P_m^*)
		\end{equation*}
		of $q$-$V$-systems, so there can be at most one morphism of $q$-$FV$-systems, depending on whether the above is compatible with the Frobenii. Let's show that this is always the case. Since the graded $A[q]$-algebra $\qIW_m\Omega_{R/A}^*$ is generated by elements in degree~$0$ and~$1$ (because it is a quotient of $\Omega_{\qIW_{m}(R/A)/A[q]}^*$ by \cref{prop:qDRWExists}\cref{enum:dRtoqDRWisSurjective}), it's enough to check compatibility with the Frobenii in degrees~$0$ and~$1$. In degree~$0$, this follows immediately from \cref{def:qFVSystemOfCDGA}\cref{enum:qDeRhamWittConditionC}. In degree~$1$, consider the diagram
		\begin{equation*}
			\begin{tikzcd}
				\qIW_m(R/A)\rar["\d"]\dar & \qIW_m\Omega_{R/A}^1\rar["F_p"]\dar & \qIW_{m/p}\Omega_{R/A}^1\dar\\
				P_m^0\rar["\d"] & P_m^1\rar["F_p"] & P_{m/p}^1
			\end{tikzcd}
		\end{equation*}
		The left square commutes by construction and we must show that the right square commutes too. It will be enough to show that the outer rectangle commutes, because then both ways of walking around the diagram will determine the same $A[q]$-linear derivation $\qIW_m(R/A)\rightarrow P_{m/p}^1$, hence the same map $\Omega_{\qIW_m(R/A)/A[q]}^1\rightarrow P_{m/p}^1$, and $\qIW_{m}\Omega_{R/A}^1$ is a quotient of $\Omega_{\qIW_m(R/A)/A[q]}^1$ by \cref{prop:qDRWExists}\cref{enum:dRtoqDRWisSurjective}.
		
		Commutativity of the outer rectangle can be checked ona set of $A[q]$-linear generators of $\qIW_m(R/A)$, so we can reduce to the special cases of \cref{par:PropertiesOfFpd}\cref{enum:FpdOnTeichmueller} and~\cref{enum:FpdOnVerschiebung}, where everything is clear. This finishes the proof that $(\qIW_m\Omega_{R/A}^*)_{m\in\IN}$ is initial in $\cat{CDGAlg}_{R/A}^{\q FV}$ too.
	\end{proof}
	\subsection{Étale base change}
	The goal of this subsection is to prove the following:
	\begin{prop}\label{prop:EtaleBaseChange}
		Let $R\rightarrow R'$ be an étale morphism of $A$-algebras. Then the canonical morphism $(\qIW_m\Omega_{R/A}^*)_{m\in\IN}\rightarrow (\qIW_m\Omega_{R'/A}^*)_{m\in\IN}$ induces isomorphisms of differential-graded $\qIW_m(R'/A)$-algebras
		\begin{equation*}
			\qIW_m(R'/A)\otimes_{\qIW_m(R/A)}\qIW_m\Omega_{R/A}^*\overset{\cong}{\longrightarrow}\qIW_m\Omega_{R'/A}^*\,.
		\end{equation*}
	\end{prop}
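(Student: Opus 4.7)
The plan is to show that the system $S_m^*\coloneqq \qIW_m(R'/A)\otimes_{\qIW_m(R/A)}\qIW_m\Omega_{R/A}^*$ carries the structure of a $q$-$FV$-system of differential-graded $A$-algebras over $R'$, and then invoke the universal property from \cref{prop:qDRWHasFrobenii} to produce a morphism $\qIW_m\Omega_{R'/A}^*\rightarrow S_m^*$ inverse to the one in question. The key inputs are \cref{prop:vanDerKallen}, which tells us that $\qIW_m(R/A)\rightarrow\qIW_m(R'/A)$ is étale and that for every divisor $d\mid m$ the canonical map $\qIW_m(R'/A)\otimes_{\qIW_m(R/A),F_{m/d}}\qIW_d(R/A)\overset{\cong}{\rightarrow}\qIW_d(R'/A)$ is an isomorphism.

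First I would construct the differential-graded $A[q]$-algebra structure on $S_m^*$. Since $\qIW_m(R/A)\rightarrow\qIW_m(R'/A)$ is étale, we have $\Omega^1_{\qIW_m(R'/A)/\qIW_m(R/A)}=0$, hence the differential on $\qIW_m\Omega_{R/A}^*$ extends uniquely to a differential on $S_m^*$ making the latter a CDGA. Moreover, \cref{enum:dRtoqDRWisSurjective} of \cref{prop:qDRWExists} combined with étale base change for Kähler differentials shows that the natural map $\Omega_{\qIW_m(R'/A)/A[q]}^*\rightarrow S_m^*$ remains surjective, and in degree~$0$ we get exactly $\qIW_m(R'/A)$.

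Next I would equip $(S_m^*)_{m\in\IN}$ with Verschiebungen and Frobenii. Using the Frobenius pushout from \cref{prop:vanDerKallen}, one may rewrite $S_d^*\cong\qIW_m(R'/A)\otimes_{\qIW_m(R/A),F_{m/d}}\qIW_d\Omega_{R/A}^*$; the formula $V_{m/d}(\omega F_{m/d}(\eta))=V_{m/d}(\omega)\eta$ from \cref{def:qFVSystemOfCDGA}\cref{enum:qDeRhamWittConditionC} says precisely that $V_{m/d}\colon\qIW_d\Omega_{R/A}^*\rightarrow\qIW_m\Omega_{R/A}^*$ becomes $\qIW_m(R/A)$-linear under this Frobenius twist, so it extends to an $A[q]$-linear map $V_{m/d}\colon S_d^*\rightarrow S_m^*$. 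For Frobenii, one checks that $a\otimes\omega\mapsto F_{m/d}(a)\otimes F_{m/d}(\omega)$ gives a well-defined map $S_m^*\rightarrow S_d^*$ of graded algebras, using that $F_{m/d}$ on $q$-Witt vectors and on $q$-de Rham--Witt complexes are compatible. All identities (the $F$- and $V$-Teichmüller conditions, $F_{m/d}V_{m/d}=m/d$, $V_{m/d}F_{m/d}=[m/d]_{q^d}$, $F_{m/d}\d V_{m/d}=\d$, chain composition, commuting of coprime $F$ and $V$) then follow tautologically from the corresponding identities in $\qIW_m\Omega_{R/A}^*$, using only $A[q]$-linearity in the first factor.

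Finally, \cref{prop:qDRWHasFrobenii} yields a unique map $\alpha\colon\qIW_m\Omega_{R'/A}^*\rightarrow S_m^*$ of $q$-$FV$-systems over $R'$. Conversely, the obvious morphism of $q$-$FV$-systems over $R$, $\qIW_m\Omega_{R/A}^*\rightarrow \qIW_m\Omega_{R'/A}^*$, combined with the $\qIW_m(R'/A)$-algebra structure on $\qIW_m\Omega_{R'/A}^0$, furnishes the canonical map $\beta\colon S_m^*\rightarrow\qIW_m\Omega_{R'/A}^*$. That $\beta\circ\alpha$ and $\alpha\circ\beta$ are the identity both follow from the respective universal properties (applied over $R'$ and to $S_m^*$ viewed as built from the structure in the previous paragraph). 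The main subtlety in this plan is verifying that the Verschiebungen on $S_m^*$ are well-defined: this is the one point where the Frobenius-twisted base change from \cref{prop:vanDerKallen}---rather than the mere étaleness of $\qIW_m(R/A)\rightarrow\qIW_m(R'/A)$---is essential, and it is the reason \cref{prop:vanDerKallen} had to include that second statement.
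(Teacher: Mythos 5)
Your overall strategy is essentially the paper's: endow $S_m^*\coloneqq \qIW_m(R'/A)\otimes_{\qIW_m(R/A)}\qIW_m\Omega_{R/A}^*$ with a $q$-($F$)$V$-system structure and invoke the universal property; and you correctly identify the second half of \cref{prop:vanDerKallen} as the input that makes $V'_{m/d}$ well-defined. However, there is a genuine gap at the end of your third paragraph.

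The claim that the identities involving $\d$ --- the $F$- and $V$-Teichmüller conditions and $F_{m/d}\circ\d\circ V_{m/d}=\d$ --- ``follow tautologically, using only $A[q]$-linearity in the first factor'' is false, and it is precisely these two conditions that are the hard part. The differential on $S_m^*$ is \emph{not} the $\qIW_m(R'/A)$-linear extension of the differential on $\qIW_m\Omega_{R/A}^*$: the differential is not linear to begin with, and the unique CDGA structure you obtain from \cref{lem:EtaleBaseChangeOfCDGA} sends an element $b\in\qIW_m(R'/A)\smallsetminus\qIW_m(R/A)$ to a genuinely new term $\d b\in S_m^1$. Consequently the $F$-Teichmüller condition $F_{m/d}(\d\tau_m(r'))=\tau_d(r')^{m/d-1}\d\tau_d(r')$ for $r'\in R'$, and the identity $F'_{m/d}\circ\d\circ V'_{m/d}=\d$ evaluated on elements of $\qIW_d(R'/A)$ not coming from $R$, are new assertions that cannot be read off from the corresponding identities over $R$ by linearity. (Already for $\omega=b\otimes\eta$ with $b\in\qIW_d(R'/A)$, checking $F'_{m/d}\d V'_{m/d}(\omega)=\d\omega$ reduces to a relation between $F'_{m/d}(\d b)$ and $\d F_{m/d}(b)$ that you do not know yet.) The paper fills exactly this gap with an argument you have no analogue of: one observes that $F'_{m/d}\circ\d$ is an $A[q]$-linear \emph{derivation} $\qIW_m(R'/A)\rightarrow S_d^1$, one identifies $S_d^1\cong\qIW_d\Omega_{R'/A}^1$ via the \emph{inductive hypothesis} at $d$, and one then compares with the actual $F_{m/d}\circ\d$ coming from $\qIW_m\Omega_{R'/A}^*$; since both derivations agree after restriction along the étale map $\qIW_m(R/A)\rightarrow\qIW_m(R'/A)$, they coincide. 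This is also why the paper's proof is by induction on $m$ using the truncated universal property of \cref{rem:TruncatedUniversalProperty2}, rather than your one-shot application of \cref{prop:qDRWHasFrobenii}; without the induction there is no target $\qIW_d\Omega_{R'/A}^1$ already known to equal $S_d^1$ against which to run the derivation-uniqueness argument.
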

	To prove this, first we have to construct the differential-graded algebra structures on $\qIW_m(R'/A)\otimes_{\qIW_m(R/A)}\qIW_m\Omega_{R/A}^*$. This is achieved by the following lemma.
	\begin{lem}\label{lem:EtaleBaseChangeOfCDGA}
		Let $P^*$ be a differential-graded $A[q]$-algebra concentrated in nonnegative \embrace{cohomological} degrees and let $P^0\rightarrow S$ be an étale morphism of rings. Then the graded $A[q]$-algebra $S\otimes_{P^0}P^*$ admits a unique differential-graded $A[q]$-algebra structure compatible with the one on $P^*$. Furthermore, this exhibits $S\otimes_{P^0}P^*$ as an initial object among all differential-graded $A[q]$-algebras $P^*$-algebras $Q^*$ equipped with a ring map $S\rightarrow Q^0$.
	\end{lem}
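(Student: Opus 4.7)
The plan is to use the infinitesimal lifting property of étale morphisms to extend the differential from $P^*$ to $S\otimes_{P^0}P^*$.

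First, I would construct, in degree zero, an $A[q]$-linear derivation $\tilde{d}\colon S\to S\otimes_{P^0}P^1$ extending $d|_{P^0}$ via a square-zero extension argument. Namely, form the ring $S\oplus (S\otimes_{P^0}P^1)\varepsilon$ with $\varepsilon^2=0$ and the evident $S$-bimodule structure on the nilpotent part. The assignment $p\mapsto (p,1\otimes dp)$ defines an $A[q]$-algebra map $P^0\to S\oplus (S\otimes_{P^0}P^1)\varepsilon$, precisely because $d|_{P^0}$ is an $A[q]$-linear derivation, and its composition with the projection onto $S$ is the structure map $P^0\to S$. Since $P^0\to S$ is étale, there exists a unique $P^0$-algebra section, whose $\varepsilon$-component yields the desired $\tilde{d}$. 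This construction makes apparent the \emph{uniqueness} of such an extension of $d|_{P^0}$: any other derivation $S\to S\otimes_{P^0}P^1$ extending $d|_{P^0}$ would provide another such section, hence must coincide with $\tilde{d}$.

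Next, I would extend to higher degrees by the graded Leibniz rule $d(s\otimes \omega)\coloneqq \tilde{d}(s)\wedge \omega+s\otimes d\omega$. A short check on the defining relations of $S\otimes_{P^0}P^*$ shows this is well-defined, and by construction $d$ is an $A[q]$-linear degree-one graded derivation whose restriction to $P^*$ recovers the given differential. The only substantive remaining point is $d^2=0$. A formal computation shows $d^2$ is a degree-two graded derivation of $S\otimes_{P^0}P^*$; it vanishes on $P^*$ since $d^2=0$ holds there, so in particular $d^2|_S\colon S\to S\otimes_{P^0}P^2$ is a derivation vanishing on $P^0$. By the uniqueness observed in the previous paragraph, applied to the zero derivation, this forces $d^2|_S=0$, and then $d^2\equiv 0$ follows by the Leibniz rule.

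Uniqueness of the entire differential-graded structure is immediate from the same principle: any other choice would, in degree zero, give a derivation $S\to S\otimes_{P^0}P^1$ agreeing with $\tilde{d}$ on $P^0$, hence equal to $\tilde{d}$, and then the Leibniz rule forces agreement in all degrees. For the universal property, given a target dga $Q^*$ together with a dga map $P^*\to Q^*$ and a ring map $S\to Q^0$ compatible on $P^0$, the graded $A[q]$-algebra map $S\otimes_{P^0}P^*\to Q^*$ sending $s\otimes \omega\mapsto s\cdot\omega$ is forced. Its compatibility with differentials is checked in degree zero by noting that both $\tilde{d}$ (postcomposed with $S\otimes_{P^0}P^1\to Q^1$) and the differential of $Q^*$ (restricted to $S\to Q^0\to Q^1$) are $A[q]$-linear derivations $S\to Q^1$ extending the composition $P^0\to P^1\to Q^1$; hence they agree by étale uniqueness, and the Leibniz rule propagates this to all degrees. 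The main conceptual obstacle is really the initial degree-zero extension; everything afterwards is a formal consequence of étale uniqueness and the Leibniz rule.
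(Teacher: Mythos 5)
Your proof is correct, but it takes a genuinely different route from the paper's. The paper (following Bhatt--Lurie--Mathew) observes that étaleness gives $S\otimes_{P^0}\Omega_{P^0/A[q]}^*\cong \Omega_{S/A[q]}^*$ as graded rings, so $S\otimes_{P^0}P^*\cong \Omega_{S/A[q]}^*\otimes_{\Omega_{P^0/A[q]}^*}P^*$ is a tensor product of dga's over a common dga, which automatically carries a differential and satisfies the universal property by the universal property of the de Rham complex. You instead build the differential by hand: a square-zero lifting argument produces the degree-zero derivation $\tilde d$, the Leibniz rule extends it, $d^2=0$ follows from the vanishing of $\Omega_{S/P^0}$, and the universal property is checked pointwise in degree $0$ and propagated by Leibniz. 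Both proofs ultimately hinge on the same manifestations of étaleness (the lifting property, $\Omega_{S/P^0}=0$, base change of Kähler differentials); what the paper's argument buys is brevity and conceptual transparency, while your argument is more elementary and makes the role of the lifting property explicit. One small remark: when you invoke ``the uniqueness observed in the previous paragraph'' to show $d^2|_S=0$, the uniqueness stated there was for derivations into $S\otimes_{P^0}P^1$, but you now need it for derivations into $S\otimes_{P^0}P^2$. You should state the general principle once: for any $S$-module $M$, restriction induces a bijection $\Der_{A[q]}(S,M)\overset{\cong}{\to}\Der_{A[q]}(P^0,M)$ because $\Omega_{S/P^0}=0$. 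With that formulation in hand, all three of your uses (existence/uniqueness of $\tilde d$, vanishing of $d^2|_S$, compatibility in the universal property) follow from the same statement, and the proof is clean.
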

	\begin{proof}
		This elegant proof is taken from \cite[Proposition~\href{https://www.math.ias.edu/~lurie/papers/Crystalline.pdf\#theorem.5.3.2}{5.3.2}]{SaturatedDeRhamWitt}. Since $P^0\rightarrow S$ is étale, we obtain $S\otimes_{P^0}\Omega_{P^0/A[q]}^*\cong \Omega_{S/A[q]}^*$ as graded rings. Then
		\begin{equation*}
			S\otimes_{P^0}P^*\cong \Omega_{S/A[q]}^*\otimes_{\Omega_{P^0/A[q]}^*}P^*\,,
		\end{equation*}
		where $\Omega_{P^0/A[q]}^*\rightarrow P^*$ is the differential-graded morphism induced by the universal property of the algebraic de Rham complex. Now the tensor product on the right-hand side of the isomorphism above carries an obvious differential-graded structure, which also clearly satisfies the desired universal property.
	\end{proof}
	\begin{proof}[Proof of \cref{prop:EtaleBaseChange}]
		We use induction on $m$ and work with the truncated universal properties from \cref{rem:TruncatedUniversalProperty2}, applied to the truncation set $T_m$ of positive divisors of $m$. The case $m=1$ is trivial as $\qIW_1\Omega_{R/A}^*\cong \Omega_{R/A}^*$ by \cref{prop:qDRWExists}\cref{enum:dRtoqDRWisSurjective} and same for $R'$. So let $m>1$ and assume that the base change formula is true for all divisors $d\neq m$ of $m$. We equip $\qIW_m(R'/A)\otimes_{\qIW_m(R/A)}\qIW_m\Omega_{R/A}^*$ with the differential-graded $A[q]$-algebra structure from \cref{lem:EtaleBaseChangeOfCDGA}, using that $\qIW_m(R/A)\rightarrow \qIW_m(R'/A)$ is étale by \cref{prop:vanDerKallen}. Furthermore, $\qIW_d(R'/A)\otimes_{\qIW_d(R/A)}\qIW_d\Omega_{R/A}^*\cong \qIW_m(R'/A)\otimes_{\qIW_m(R/A)}\qIW_d\Omega_{R/A}^*$ holds by the second part of \cref{prop:vanDerKallen}. Consequently, we can define
		\begin{equation*}
			V_{m/d}'\colon \qIW_d(R'/A)\otimes_{\qIW_d(R/A)}\qIW_d\Omega_{R/A}^*\longrightarrow \qIW_m(R'/A)\otimes_{\qIW_m(R/A)}\qIW_m\Omega_{R/A}^*
		\end{equation*}
		to be the $\qIW_m(R'/A)$-linear extension of the Verschiebung $V_{m/d}\colon \qIW_d\Omega_{R/A}^*\rightarrow \qIW_m\Omega_{R/A}^*$. If we can show that these $V_{m/d}'$ satisfy the conditions from \cref{def:qVSystemOfCDGA}, then combining the universal property of $(\qIW_d\Omega_{R/A}^*)_{d\in T_m}$ with the universal property of the differential-graded structure on $\qIW_m(R'/A)\otimes_{\qIW_m(R/A)}\qIW_m\Omega_{R/A}^*$ obtained from \cref{lem:EtaleBaseChangeOfCDGA} will show that $(\qIW_d(R'/A)\otimes_{\qIW_d(R/A)}\qIW_d\Omega_{R/A}^*)_{d\in T_m}$ satisfies the universal property from \cref{rem:TruncatedUniversalProperty2}. In particular, it will immediately show $\qIW_m(R'/A)\otimes_{\qIW_m(R/A)}\qIW_m\Omega_{R/A}^*\cong\qIW_m\Omega_{R'/A}^*$, thus finishing the induction.
		
		Most conditions from \cref{def:qVSystemOfCDGA} are straightforward to check, except for two tricky ones: $V'_{m/d}(\omega\d\eta)=V'_{m/d}(\omega)\d V'_{m/d}(\eta)$ and the $V$-Teichmüller condition \cref{enum:TeichmuellerV}. Nevertheless, these can be checked without doing any calculations. Let
		\begin{equation*}
			F_{m/d}'\colon \qIW_m(R'/A)\otimes_{\qIW_m(R/A)}\qIW_m\Omega_{R/A}^*\longrightarrow \qIW_d(R'/A)\otimes_{\qIW_d(R/A)}\qIW_d\Omega_{R/A}^*
		\end{equation*}
		be the $\qIW_m(R'/A)$-linear extension of the Frobenius $F_{m/d}$. As noted in \cref{rem:Redundancies}, to show the conditions for $V_{m/d}'$, it's enough to check that $F_{m/d}'$ satisfies the conditions from \cref{def:qFVSystemOfCDGA}\cref{enum:qDeRhamWittConditionC}, which are clear except for $F'_{m/d}\circ \d\circ V'_{m/d}=\d$, and the $F$-Teichmüller condition \cref{enum:TeichmuellerF}. Both of these are assertions about
		\begin{equation*}
			F'_{m/d}\circ \d\colon \qIW_m(R'/A)\rightarrow \qIW_d(R'/A)\otimes_{\qIW_d(R/A)}\qIW_d\Omega_{R/A}^*\,.
		\end{equation*}
		It would certainly be enough to show that $F_{m/d}'\circ\d$ agrees with the derivation $F_{m/d}\circ \d\colon \qIW_m(R'/A)\rightarrow \qIW_d\Omega_{R'/A}^1$ from the actual $q$-de Rham Witt complexes over $R'$. But $F_{m/d}'\circ\d$ is derivation as well, because $\d$ is derivation and $F_{m/d}'$ is a map of graded $\qIW_m(R'/A)$-algebras. Now whether two derivations on $\qIW_m(R'/A)$ agree can be checked after restriction along the étale morphism $\qIW_m(R/A)\rightarrow \qIW_m(R'/A)$. By construction, $F_{m/d}\circ \d$ and $F_{m/d}'\circ \d$ agree on $\qIW_m(R/A)$, so we're done.
	\end{proof}
	\begin{cor}\label{cor:qDRWEtaleSheaf}
		For all positive integers $m$, the functor
		\begin{equation*}
			\qIW_m\Omega_{-/A}\colon \cat{CRing}_A\longrightarrow \CAlg\bigl(\Dd\bigl(A[q]\bigr)\bigr)
		\end{equation*}
		sending an $A$-algebra $R$ to the $\IE_\infty$-$A[q]$-algebra underlying the differential-graded $A[q]$-algebra $\qIW_m\Omega_{R/A}^*$, is an étale sheaf.
	\end{cor}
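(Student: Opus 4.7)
The plan is to verify Čech descent along an arbitrary faithfully flat étale morphism $R\rightarrow R'$ of $A$-algebras, since every étale cover can be refined by such a morphism and limits in $\CAlg(\Dd(A[q]))$ are computed in the underlying $\infty$-category $\Dd(A[q])$. Concretely, I need the canonical map
\begin{equation*}
\qIW_m\Omega_{R/A}^*\longrightarrow \Tot\Bigl(\qIW_m\Omega_{R'/A}^*\rightrightarrows \qIW_m\Omega_{(R'\otimes_RR')/A}^*\rightarrow\dotsb\Bigr)
\end{equation*}
to be an equivalence in $\Dd(A[q])$.

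The first preparatory step is to show that $\qIW_m(-/A)$ preserves étale pushouts: for any étale $R\rightarrow R'$ and any $A$-algebra map $R\rightarrow S$, one has
\begin{equation*}
\qIW_m(R'\otimes_RS/A)\cong\qIW_m(R'/A)\otimes_{\qIW_m(R/A)}\qIW_m(S/A)\,.
\end{equation*}
This follows by applying \cref{lem:qWittEtaleBaseChange} to the étale map $S\rightarrow R'\otimes_RS$ together with the étale base change for ordinary big Witt vectors recalled in \cref{rem:FrobeniusPushoutNotInLiterature}. Iterating this identity, the Amitsur tensor powers of $R\rightarrow R'$ are sent to the Amitsur tensor powers of $\qIW_m(R/A)\rightarrow\qIW_m(R'/A)$, and applying \cref{prop:EtaleBaseChange} termwise then identifies the Čech cosimplicial object above with the base change of the Čech nerve of $\qIW_m(R/A)\rightarrow\qIW_m(R'/A)$ along the $\qIW_m(R/A)$-module $\qIW_m\Omega_{R/A}^*$.

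The final step is fpqc descent along $\qIW_m(R/A)\rightarrow\qIW_m(R'/A)$. By \cref{prop:vanDerKallen} this map is étale; combining \cref{lem:qWittEtaleBaseChange} with the faithful flatness of $\IW_m(R)\rightarrow \IW_m(R')$ further shows it is faithfully flat. Hence the Čech nerve of $\qIW_m(R/A)\rightarrow\qIW_m(R'/A)$ is a resolution of $\qIW_m(R/A)$ in $\Dd(\qIW_m(R/A))$, and since its terms are flat, base-changing against the $\qIW_m(R/A)$-module $\qIW_m\Omega_{R/A}^*$ agrees with the derived base change and preserves the descent equivalence. The only non-routine point will be checking the faithful flatness of $\qIW_m(R/A)\rightarrow\qIW_m(R'/A)$; everything else reduces to bookkeeping and the base change results previously established in \cref{subsec:qWittEtale} and \cref{prop:EtaleBaseChange}.
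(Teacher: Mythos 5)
Your proposal takes a genuinely different route from the paper's. The paper first reduces the sheaf claim degree-wise (via stupid truncations), then uses \cref{prop:EtaleBaseChange} to reduce to showing that $R'\mapsto\qIW_m(R')\lotimes_{\qIW_m(R)}M$ is a sheaf on the small étale site of $R$ for any bounded-below $M$, and finally runs an induction on $m$ via the Koszul exact sequence of \cref{prop:qWittKoszulExactSequence}: that sequence resolves $\qIW_m(R')$ in terms of $R'[\zeta_m]$ and the $\qIW_d(R')$ for $d\mid m$, $d\neq m$, so everything reduces to ordinary quasi-coherent descent over $R$ and $R[\zeta_m]$. You instead propose to prove fpqc descent directly along the Čech nerve of $\qIW_m(R/A)\rightarrow\qIW_m(R'/A)$ for a single faithfully flat étale cover $R\rightarrow R'$.

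The point you yourself flag as ``non-routine'' is a genuine gap, not bookkeeping. Your argument needs $\qIW_m(R/A)\rightarrow\qIW_m(R'/A)$ to be \emph{faithfully} flat, and via \cref{lem:qWittEtaleBaseChange} this comes down to faithful flatness of $\IW_m(R)\rightarrow\IW_m(R')$ for ordinary big Witt vectors. The only input the paper provides (through \cref{rem:FrobeniusPushoutNotInLiterature}, citing Borger) is that this map is \emph{étale}; surjectivity of $\Spec\IW_m(R')\rightarrow\Spec\IW_m(R)$ is nowhere asserted. The statement is true---it amounts to $\IW_m$ preserving étale covers, provable by arithmetic fracture together with a nilpotence argument on the Verschiebung ideal for the $p$-adic part---but it is an additional ingredient that the paper neither cites nor proves, and which the paper's Koszul induction is precisely engineered to avoid: that induction only ever needs the étale base-change compatibilities of \cref{rem:FrobeniusPushoutNotInLiterature} and \cref{prop:vanDerKallen}, never surjectivity. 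To make your route rigorous you would need to supply a reference or proof of the surjectivity statement, and also address the usual care in reducing arbitrary étale covers to a single faithfully flat morphism and in commuting the Čech totalization past the degree-wise tensor products for the bounded-below complex $\qIW_m\Omega_{R/A}^*$; the paper sidesteps all of this by working one degree at a time.
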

	\begin{proof}
		It suffices to show that the underlying functor $\cat{CRing}_A\rightarrow \Dd(\IZ[q])$ is an étale sheaf. By writing $\qIW_m\Omega_{R/A}$ as a (derived) limit over its stupid truncations $\qIW_m\Omega_{R/A}^{\leqslant i}$ and passing to graded pieces, it's enough to show that $\qIW_m\Omega_{-/A}^i$ is an étale sheaf for every $i\geqslant 0$. Using \cref{prop:EtaleBaseChange}, this will be a consequence of the following assertion:
		\begin{alphanumerate}
			\item[\boxtimes] \itshape Let $R$ be an $A$-algebra and let $M\in\Dd(\qIW_m(R/A))$ be a bounded below complex. Then the functor $R'\mapsto \qIW_m(R'/A)\lotimes_{\qIW_m(R/A)}M$ defines a $\Dd(\IZ[q])$-valued sheaf on the small étale site of $R$.\label{enum:QuasiCoherentSheaf}
		\end{alphanumerate}
		To prove \cref{enum:QuasiCoherentSheaf}, first note that $\qIW_m(R'/A)\simeq \qIW_m(R')\lotimes_{\qIW_m(R)}\qIW_m(R/A)$ follows from \cref{lem:qWittEtaleBaseChange}. So the functor under consideration agrees with $R'\mapsto \qIW_m(R')\lotimes_{\qIW_m(R)}M$. We use induction on $m$. For $m=1$, we can write $R'\lotimes_RM\simeq \lim_{i\geqslant 0}R'\lotimes_R\tau_{\leqslant i}M$ due to connectivity reasons to reduce to the case where $M$ is bounded, and then further to the case where $M$ is concentrated in a single degree. In this case we simply obtain a quasi-coherent sheaf on the small étale site of $R$, which has vanishing higher cohomology \cite[Corollaire~\href{https://www.cmls.polytechnique.fr/perso/laszlo/sga4/SGA4-2/sga42.pdf\#page=245}{VII.4.4}]{sga4.2} and is therefore also a sheaf with values in the $\infty$-category $\Dd(\IZ[q])$.
		
		For $m>1$, we use \cref{prop:qWittKoszulExactSequence}: It's enough to prove that $R'\mapsto R'[\zeta_m]\lotimes_{\qIW_m(R)}M$ and $R'\mapsto\qIW_d(R')\lotimes_{\qIW_m(R)}M$, for $d\neq m$ a divisor of $m$, are $\Dd(\IZ[q])$-valued sheaves. For the latter, we can simply apply the inductive hypothesis to $\qIW_d(R)\lotimes_{\qIW_m(R)}M\in \Dd(\qIW_d(R))$. To see that the former functor constitutes a sheaf, we can use a similar argument as in the $m=1$ case, applied to $R[\zeta_m]\lotimes_{\qIW_m(R)}M\in \Dd(R[\zeta_m])$.
	\end{proof}
	\begin{cor}\label{cor:qDRWTrivialAfterLocalisation}
		For any $A$-algebra $R$ and any positive integer $m$, the ghost maps from \cref{par:qdRWGhostMaps} induce isomorphisms of differential-graded-$A[q]$-algebras
		\begin{equation*}
			\qIW_m\Omega_{R/A}^*\left[\localise{m}\right]\overset{\cong}{\longrightarrow} \prod_{d\mid m}\left(\Omega_{R/A}^*\otimes_{A,\psi^d}A\left[\localise{m},\zeta_d\right]\right)
		\end{equation*}
	\end{cor}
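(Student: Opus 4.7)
The plan is to verify this isomorphism via a three-step argument using Kähler differentials. The ghost maps of \cref{par:qdRWGhostMaps} already assemble into a morphism of $q$-$V$-systems, whose component at level $m$ is precisely the map appearing in the corollary; I would then show this becomes an isomorphism after inverting $m$.

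First, I would establish the degree zero case: $\qIW_m(R/A)[\localise{m}]\cong \prod_{d\mid m}(R\otimes_{A,\psi^d} A[\localise{m},\zeta_d])$. The absolute version $\qIW_m(R)[\localise{m}]\cong \prod_{d\mid m}R[\localise{m},\zeta_d]$ follows from \cref{cor:qWittLocalisation}\cref{enum:MultiplicativeSubsetOfZ} combined with \cref{exm:GhostMapsIsosForTrivialLambdaRing} applied to $R[\localise{m}]$ with the trivial $\Lambda_m$-structure, and likewise for $A$. To pass to the relative setting via \cref{lem:RelativeqWitt}, one uses \cref{lem:WittToCyclicRing} to see that after inverting $m$, the comparison map $c_m$ identifies with the diagonal matrix of Adams operations $(\psi^d)_{d\mid m}$, so that the tensor product $\qIW_m(R)[\localise{m}]\otimes_{\qIW_m(A)[\localise{m}]}A[q][\localise{m}]/(q^m-1)$ collapses to the claimed form; one also checks that the relations in $\IU_m$ become automatic after these identifications.

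Second, I would compute the Kähler differentials of $\qIW_m(R/A)[\localise{m}]$ over $A[q][\localise{m}]$. By \cref{lem:CyclotomicPolynomialsCoprime} and the Chinese remainder theorem, $A[q][\localise{m}]/(q^m-1)\cong \prod_{d\mid m}A[\localise{m},\zeta_d]$, and each factor is a quotient $A[q][\localise{m}]/\Phi_d(q)$, so $\Omega^1_{A[\localise{m},\zeta_d]/A[q][\localise{m}]}=0$. The fundamental exact sequence of Kähler differentials, together with base change, then yields
\begin{equation*}
	\Omega^*_{\qIW_m(R/A)[\localise{m}]/A[q][\localise{m}]}\;\cong\; \prod_{d\mid m}\bigl(\Omega^*_{R/A}\otimes_{A,\psi^d}A[\localise{m},\zeta_d]\bigr)\,.
\end{equation*}

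Third, \cref{prop:qDRWExists}\cref{enum:dRtoqDRWisSurjective} supplies a surjection $\Omega^*_{\qIW_m(R/A)[\localise{m}]/A[q][\localise{m}]}\twoheadrightarrow \qIW_m\Omega_{R/A}^*[\localise{m}]$. Composing with the ghost map gives a morphism of commutative differential-graded $A[q][\localise{m}]$-algebras, which by the universal property of Kähler differentials is uniquely determined by its degree zero component. By Step 1, this degree zero component coincides with the degree zero part of the isomorphism from Step 2, so the whole composite agrees with that isomorphism. This forces both factors in the composition to be isomorphisms, giving the claim. The main obstacle is Step 1: the underlying $q$-Witt vector identification is clean, but the relative setting requires care in tracing through the Adams operations, the comparison map $c_m$ from \cref{cor:qWittToCyclicRing}, and the effective vanishing of $\IU_m$ after inverting $m$.
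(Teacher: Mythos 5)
Your proposal is correct and takes a genuinely different route from the paper's. The paper's proof is a universal-property comparison: after inverting $m$, both $(\qIW_d\Omega_{R/A}^*[\localise{m}])_{d\in T_m}$ and $\bigl(\prod_{e\mid d}(\Omega_{R/A}^*\otimes_{A,\psi^e}A[\localise{m},\zeta_e])\bigr)_{d\in T_m}$ are shown to be initial $T_m$-truncated $q$-$V$-systems over $R[\localise{m}]$ in the sense of \cref{rem:TruncatedUniversalProperty2}; to do this the paper equips the product with a full $q$-$V$-system structure, decomposes an arbitrary competitor $(P_d^*)$ via the Chinese remainder theorem, and uses the de Rham universal property factor by factor. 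Your proof instead sandwiches the $q$-de Rham--Witt complex between two de Rham complexes: you compute $\Omega^*_{\qIW_m(R/A)[\localise{m}]/A[q][\localise{m}]}$ directly (exploiting that $\Omega^1_{A[\localise{m},\zeta_d]/A[q][\localise{m}]}=0$ once $\Phi_d'(\zeta_d)$ is a unit) and observe it is already the target; then the surjection from \cref{prop:qDRWExists}\cref{enum:dRtoqDRWisSurjective}, postcomposed with the ghost map, is a CDGA map determined by its degree-zero part, which forces both factors to be isomorphisms. Your route buys a shorter argument that sidesteps equipping the target with the full $q$-$V$-system structure and verifying its initiality; what it costs is that all the work is pushed into your Step~1, namely showing the degree-zero ghost map is an isomorphism and that the map you build via $c_m$ and the Chinese remainder theorem agrees with it (you correctly flag this as the main obstacle). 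For the implication in Step~3 to go through as stated, you should note that the target is itself the de Rham complex of its degree-zero part (your Step~2 establishes this), so \emph{any} degree-zero ring isomorphism induces a CDGA isomorphism; in particular the composite is an isomorphism as soon as $\gh$ is an isomorphism in degree zero, without separately matching it against the Step~2 identification.
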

	\begin{proof}
		We'll compare universal properties. Using \cref{cor:qWittLocalisation}\cref{enum:MultiplicativeSubsetOfZ} and the universal property from \cref{def:RelativeqWitt}, it's clear that $\qIW_m(R[1/m]/A)\cong \qIW_m(R/A)[1/m]$. Combined with \cref{prop:EtaleBaseChange}, we obtain
		\begin{equation*}
			\qIW_d\Omega_{R/A}^*\left[\localise{m}\right]\cong \qIW_d\left(R\left[\localise{m}\right]/A\right)\otimes_{\qIW_d(R/A)}\qIW_d\Omega_{R/A}^*\cong \qIW_d\Omega_{R[1/m]/A}^*
		\end{equation*}
		for all $d\mid m$. Thus, if $T_m$ denotes the truncation set of positive divisors of $m$, then $(\qIW_d\Omega_{R/A}^*[1/m])_{d\in T_m}$ is the initial $T_m$-truncated $q$-$V$-system over $R[1/m]$ in the sense of \cref{rem:TruncatedUniversalProperty2}. We will show that $\bigl(\prod_{e\mid d}(\Omega_{R/A}^*\otimes_{A,\psi^e}A[1/m,\zeta_e])\bigr)_{d\in T_m}$, with its $T_m$-truncated $q$-$V$-system structure from \cref{par:qdRWGhostMaps}, is initial too. 
		
		To prove this, first observe that the relative $q$-Witt vector ghost maps induce isomorphisms
		\begin{equation*}
			(\gh_{d/e})_{e\mid d}\colon \qIW_d\left(R\left[\localise{m}\right]/A\right)\overset{\cong}{\longrightarrow}\prod_{e\mid d}\Bigl(R\otimes_{A,\psi^e}A\left[\localise{m},\zeta_e\right]\Bigr)
		\end{equation*}
		Indeed, the canonical map $\qIW_m(R[1/m])\otimes_{\qIW_m(A)}A[q]/(q^m-1)\rightarrow \qIW_d(R[1/m]/A)$ is surjective by \cref{lem:RelativeqWitt}, but according to \cref{exm:GhostMapsIsosForTrivialLambdaRing} we also have an isomorphism $\qIW_d(R[1/m])\otimes_{\qIW_m(A)}A[q]/(q^m-1)\cong \prod_{e\mid d}(R\otimes_{A,\psi^e}A)[1/m,\zeta_e]$, and so it is a left inverse of $(\gh_{d/e})_{d\mid e}$.
		
		Now let $(P_d^*)_{d\in T_m}$ be an arbitrary $T_m$-truncated $q$-$V$-system over $R[1/m]$. Observe that for all $d\mid m$, the decomposition $A[1/m,q]/(q^d-1)\cong \prod_{e\mid d}A[1/m,\zeta_e]$  induces a similar decomposition $P_d^*\cong \prod_{e\mid d}P_{d,e}^*$, where $P_{d,e}^*$ is a differential-graded $A[\zeta_e]$-algebra. Furthermore, the $\qIW_d(R[1/m]/A)$-algebra structure on $P_d^0$ plus the ghost map isomorphism above induce $(R\otimes_{A,\psi^e}A)[1/m,\zeta_e]$-algebra structures on $P_{d,e}^0$ for all $e\mid d$. Thus, we obtain canonical morphisms
		\begin{equation*}
			\prod_{e\mid d}\Bigl(\Omega_{R/A}^*\otimes_{A,\psi^e}A\left[\localise{m},\zeta_e\right]\Bigr)\longrightarrow\prod_{e\mid d}P_{d,e}^*\,.
		\end{equation*}
		By \cref{lem:dAfterV}, these are automatically compatible with the Verschiebungen. This proves that $\bigl(\prod_{e\mid d}(\Omega_{R/A}^*\otimes_{A,\psi^e}A[1/m,\zeta_e])\bigr)_{d\in T_m}$ satisfies the required universal property.
		%
	\end{proof}
	\newpage

	\section{\texorpdfstring{$q$}{q}-de Rham Witt complexes in the smooth case}\label{sec:qDeRhamWittSmooth}
	Fix a $\Lambda$-ring $A$ which is \emph{perfectly covered} in the sense of \cref{rem:FaithfullyFlatCoverByPerfectLambdaRing}. The most important special case is $A=\IZ$. In this section we'll study $\qIW_m\Omega_{R/A}^*$ for smooth $A$-algebras $R$. Our goal will be to prove \cref{thm:qDeRhamWittGlobalIntro}---and in fact, the more general version \cref{thm:qDeRhamWittqHodge}---as well as the following two propositions.
	\begin{prop}\label{prop:qDRWisTorsionFreeForSmoothRings}
		Let $R$ be smooth over $A$. Then $\qIW_m\Omega_{R/A}^*$ is degree-wise $\IZ$-torsion-free for all $m\in \IN$. In particular, $(\qIW_m\Omega_{R/A}^*)_{m\in\IN}$ is also an initial object of $(\cat{CDGAlg}_{R/A}^{\q V})^{\mathrm{tors\mhyph free}}$.
	\end{prop}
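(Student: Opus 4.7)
The plan is to reduce to polynomial $A$-algebras via étale base change and then analyse torsion prime by prime. The second assertion (initiality in the torsion-free subcategory) is formal: by \cref{def:TorsionFreeMakesStuffRedundant} the forgetful functor $(\cat{CDGAlg}_{R/A}^{\q V})^{\mathrm{tors\mhyph free}}\rightarrow \cat{CDGAlg}_{R/A}^{\q V}$ is fully faithful, so once we know that $(\qIW_m\Omega_{R/A}^*)_{m\in\IN}$ is itself degree-wise $\IZ$-torsion-free, its universal property among all $q$-$V$-systems automatically restricts to give initiality in the torsion-free subcategory.

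For the torsion-freeness, working Zariski-locally on $\Spec R$ I may assume that there is an étale $A$-algebra morphism $A[T_1,\dotsc,T_k]\rightarrow R$. By \cref{prop:EtaleBaseChange}, $\qIW_m\Omega_{R/A}^*$ is a base change of $\qIW_m\Omega_{A[T_1,\dotsc,T_k]/A}^*$ along $\qIW_m(A[T_1,\dotsc,T_k]/A)\rightarrow \qIW_m(R/A)$, and by \cref{prop:vanDerKallen} this ring morphism is étale, hence flat; together with the étale sheaf property of \cref{cor:qDRWEtaleSheaf}, this reduces the problem to $R=A[T_1,\dotsc,T_k]$. Next I would check the absence of $p$-torsion one prime at a time. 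For $p\nmid m$ any $p$-torsion element injects into $\qIW_m\Omega_{R/A}^*\left[\localise{m}\right]$, because the kernel of that localization is $m$-power-torsion, which meets the $p$-torsion subgroup trivially; and \cref{cor:qDRWTrivialAfterLocalisation} presents the localization as a product of $\Omega_{R/A}^*\otimes_{A,\psi^d}A\left[\localise{m},\zeta_d\right]$, which is manifestly $\IZ$-torsion-free under the standing hypotheses on $A$.

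The main obstacle will be the primes $p\mid m$. Writing $m=p^\alpha n$ with $p\nmid n$, I would first lift the decomposition of \cref{lem:qWittOverZp} to the full $q$-de Rham--Witt complex: a comparison of truncated universal properties (\cref{rem:TruncatedUniversalProperty2}), combined with the Chinese remainder splitting $\IZ_{(p)}[q]/(q^m-1)\cong \prod_{d\mid n}\IZ_{(p)}[q]/(\Phi_d(q)\dotsm\Phi_{p^\alpha d}(q))$, should give
\[
\bigl(\qIW_m\Omega_{R/A}^*\bigr)_{(p)}\ \cong\ \prod_{d\mid n}\qIW_{p^\alpha}\Omega_{R/A}^*\otimes_{\IZ_{(p)}[q],\psi^d}\IZ_{(p)}[q]/\bigl(\Phi_d(q)\dotsm\Phi_{p^\alpha d}(q)\bigr)\,,
\]
reducing everything to the prime-power case $m=p^\alpha$. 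There I would exploit the comparison map $W_{\alpha+1}\Omega_{R/A}^*\rightarrow \qIW_{p^\alpha}\Omega_{R/A}^*$ from \cref{rem:DRWtoqDRW} together with Langer--Zink's $p$-torsion-freeness of $W_{\alpha+1}\Omega_{R/A}^*$ for smooth algebras. The real substance is to identify $\qIW_{p^\alpha}\Omega_{R/A}^*$ as flat (in fact free, on an explicit basis built from powers of $q$ and Teichmüller lifts of monomials) over $W_{\alpha+1}\Omega_{R/A}^*$; this is the heart of the matter and will likely proceed by constructing, for the polynomial algebra, an explicit torsion-free candidate $q$-$V$-system inside $\Omega_{A[T_1,\dotsc,T_k]/A}^*[q]/(q^m-1)$ using the concrete subring description of $\qIW_m$ from \cref{prop:qWittOfLambdaRing}, and then verifying via \cref{prop:qDRWExists} that it satisfies the required universal property.
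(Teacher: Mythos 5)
Your reduction steps are essentially correct up to the prime-power case, and the initiality assertion is handled correctly as a formal consequence of \cref{def:TorsionFreeMakesStuffRedundant}. The étale descent to polynomial rings, the coprime-prime argument via \cref{cor:qDRWTrivialAfterLocalisation}, and the $p$-local decomposition (which is exactly \cref{lem:qDRWpLocalDecomposition}) all match what the paper does. The paper in fact bypasses the $p$-local product decomposition in the proof of \cref{prop:qDRWisTorsionFreeForSmoothRings} and instead invokes \cref{lem:DerivedBeauvilleLaszlo} to test torsion after $(-)[1/p]$, $(-)_{(p,q^{m/\ell}-1)}^\complete$, and $(-)[1/(q^{m/\ell}-1)]$; either reduction is fine since both land on the prime-power case.

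The genuine gap is in your treatment of $m=p^\alpha$. You propose to realise $\qIW_{p^\alpha}\Omega_{R/A}^*$ as free over $W_{\alpha+1}\Omega_{R/A}^*$ on a basis of $q$-powers times Teichmüller monomials, or to build a torsion-free candidate $q$-$V$-system inside $\Omega_{R/A}^*[q]/(q^{p^\alpha}-1)$ and compare universal properties. Neither is routine, and the first claim is almost certainly false: the $\q FV$-relations $V_{p^j}\circ F_{p^j}=[p^j]_{q^{p^{\alpha-j}}}$ and the mixed relations in \cref{enum:qWittGeneratorsII} cut out genuinely new elements from $W_{\alpha+1}\Omega_{R/A}^*[q]/(q^{p^\alpha}-1)$, and one sees already in degree $0$ that the map $W_{\alpha+1}(R)\to\qIW_{p^\alpha}(R)$ is far from making the target free. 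For the second plan, the crux is to show that the map from $\qIW_{p^\alpha}\Omega_{R/A}^*$ to whatever subcomplex of $\Omega_{R/A}^*[q]/(q^{p^\alpha}-1)$ you single out is \emph{injective}; but that injectivity is equivalent to the torsion-freeness you are trying to prove, so without an independent input the argument is circular. The paper's independent input is the implication (\hyperref[enum:alphaD]{$d_{\alpha-1}$}) of the induction in \cref{par:OutlineOfStrategy}: any $\xi\in\qIW_{p^{\alpha-1}}\Omega^i$ with $\d\xi\equiv 0\pmod p$ lies in $\im F_p+p\qIW_{p^{\alpha-1}}\Omega^i$, proved by comparing with the $q$-Hodge complex and invoking the Cartier isomorphism. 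This is fed into the short exact sequence of \cref{lem:qDRWGhostMap} and the injectivity of $V_p$ to kill the putative $p$-torsion. You would need some substitute for this step — some version of the Cartier-type result — and your proposal does not supply one.
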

	\begin{prop}\label{prop:qDRWTrivialAfterpCompletion}
		Let $R$ be smooth over $A$ and let $p$ be a prime. Then for every exponent~$\alpha$ there exists an equivalence of $p$-complete $\IE_\infty$-$A[q]$-algebras
		\begin{equation*}
			\bigl(\Omega_{R/A}\lotimes_{A,\psi^{p^\alpha}}A[q]/(q^{p^\alpha}-1)\bigr)_p^\complete\overset{\simeq}{\longrightarrow}\left(\qIW_{p^\alpha}\Omega_{R/A}\right)_p^\complete\,,
		\end{equation*}
		functorial in smooth $A$-algebras $R$. More generally, if $m=p^\alpha n$, where $n$ is coprime to $p$, then there exists an equivalence of $p$-complete $\IE_\infty$-$A[q]$-algebras
		\begin{equation*}
			\prod_{d\mid n}\Bigl(\Omega_{R/A}\lotimes_{A,\psi^{p^\alpha d}}A[q]/\Phi_d(q^{p^\alpha})\Bigr)_p^\complete\overset{\simeq}{\longrightarrow}\left(\qIW_m\Omega_{R/A}\right)_p^\complete\,,
		\end{equation*}
		functorial in smooth $A$-algebras $R$.
	\end{prop}
	\begin{numpar}[Battle plan.]\label{par:OutlineOfStrategy}
		Let us explain the logical structure of this section, since it'll be not entirely obvious. We'll prove \cref{thm:qDeRhamWittGlobalIntro}, \cref{prop:qDRWisTorsionFreeForSmoothRings}, and \cref{prop:qDRWTrivialAfterpCompletion} first in the case where $m=p^\alpha$ is a prime power; the general case can be reduced to this by standard arguments (as we'll see). To handle the special case $m=p^\alpha$, we'll prove all three results at once using an induction on $\alpha$. More precisely, we'll show the following four assertions using induction on $\alpha$:
		\begin{alphanumerate}
			\item[a_\alpha] If $R$ is smooth over $A$, then the differential-graded $A[q]$-algebra $\qIW_{p^\alpha}\Omega_R^*$ is degree-wise $p$-torsion-free.\label{enum:alphaA}
			\item[b_\alpha] \cref{thm:qDeRhamWittGlobalIntro}, and more generally \cref{thm:qDeRhamWittqHodge} that we'll state below, are true after $p$-completion for $m=p^\alpha$.\label{enum:alphaB}
			\item[c_\alpha] \cref{prop:qDRWTrivialAfterpCompletion} is true for $m=p^\alpha$.\label{enum:alphaC}
			\item[d_\alpha] Suppose $R\cong A[T_1,\dotsc,T_n]$ is a polynomial ring over $A$. If $\xi\in \qIW_{p^\alpha}\Omega_{R/A}^i$ satisfies $\d\xi\equiv 0\mod p$, then there exist $\omega\in \qIW_{p^{\alpha+1}}\Omega_{R/A}^i$ and $\eta\in \qIW_{p^\alpha}\Omega_{R/A}^i$ satisfying\label{enum:alphaD}
			\begin{equation*}
				\xi=F_p(\omega)+p\eta\,.
			\end{equation*}
		\end{alphanumerate}
		Assertion (\hyperref[enum:alphaD]{$d_{-1}$}) is vacuously true. To carry out the inductive step, we'll prove the implications (\hyperref[enum:alphaC]{$d_{\alpha-1}$}) $\Rightarrow$ \cref{enum:alphaA} $\Rightarrow$ \cref{enum:alphaB} $\Rightarrow$ \cref{enum:alphaC} $\Rightarrow$ \cref{enum:alphaD}. The implication (\hyperref[enum:alphaC]{$d_{\alpha-1}$}) $\Rightarrow$ \cref{enum:alphaA} will be shown in \cref{subsec:pTorsionFree}. After introducing the $q$-Hodge complex and proving some first properties in \crefrange{subsec:qHodgeAdditive}{subsec:qHodgeMultiplicative}, we'll prove the implications \cref{enum:alphaB} $\Rightarrow$ \cref{enum:alphaC} $\Rightarrow$ \cref{enum:alphaD} in \cref{subsec:pTypical}. Finally, in \cref{subsec:Global}, we'll deduce the global cases of \cref{thm:qDeRhamWittGlobalIntro,thm:qDeRhamWittqHodge} as well as \cref{prop:qDRWisTorsionFreeForSmoothRings,prop:qDRWTrivialAfterpCompletion}.
		
		For ease of notation, throughout the induction we'll denote the $p$\textsuperscript{th} Adams operation $\psi^p\colon A\rightarrow A$ for the fixed prime $p$ instead by $\phi\colon A\rightarrow A$.
	\end{numpar}
	\begin{rem}\label{rem:qDRWNotTrivial}
		Suppose $A=\IZ$ and $R$ is smooth over~$\mathbb Z$. On first glance, it seems \cref{cor:qDRWTrivialAfterLocalisation} and \cref{prop:qDRWTrivialAfterpCompletion} could be combined to show $\qIW_m\Omega_{R/\IZ}\simeq \Omega_{R/\IZ}\lotimes_\IZ\IZ[q]/(q^m-1)$ as $\IE_\infty$-$\IZ[q]$-algebras. Indeed, from \cref{cor:qDRWTrivialAfterLocalisation} and
		$\IZ[1/m,q]/(q^m-1)\cong \prod_{d\mid m}\IZ[1/m,q]/\Phi_d(q)$ we get
		\begin{equation*}
			\qIW_m\Omega_{R/\IZ}^*\left[\localise{m}\right]\cong \Omega_{R/\IZ}^*\otimes_\IZ\IZ\left[\localise{m},q\right]/(q^m-1)\,.
		\end{equation*}
		Similarly, in \cref{prop:qDRWTrivialAfterpCompletion} the Adams operations $\psi^{p^\alpha d}$ become trivial and we obtain
		\begin{equation*}
			\bigl(\qIW_m\Omega_{R/\IZ}\bigr)_p^\complete\simeq \bigl(\Omega_{R/\IZ}\lotimes_{\IZ}\IZ[q]/(q^m-1)\bigr)_p^\complete\,.
		\end{equation*}
		But these two equivalences are usually not compatible! We can already see this in the case where $R$ is étale over $\IZ$: By \cref{cor:qWittTrivialisationGivesFrobeniusLift}, the existence of an equivalence $\IE_\infty$-$\IZ[q]$-algebra equivalence between $\qIW_m\Omega_R\simeq \qIW_m(R)$ and $\Omega_{R/\IZ}\lotimes_\IZ\IZ[q]/(q^m-1)\simeq R[q]/(q^m-1)$ is obstructed by the existence of a $\Lambda_m$-structure on $R$.
		
		Upon closer examination, this also explains why \cref{cor:qDRWTrivialAfterLocalisation} and \cref{prop:qDRWTrivialAfterpCompletion} cannot be combined to construct an isomorphism $\qIW_m(R)\cong R[q]/(q^m-1)$ in the case where $R$ is étale over $\IZ$: In this case, $\qIW_m(R)_p^\complete\simeq \widehat{R}_p[q]/(q^m-1)$ comes from  \cref{cor:qWittOfPerfectLambdaRing}, noticing that $\qIW_m(R)_p^\complete\simeq \qIW_m(\widehat{R}_p)$ by \cref{cor:qWittpCompletion} and that $\widehat{R}_p$ carries a unique Frobenius lift $\phi_p\colon \widehat{R}_p\rightarrow \widehat{R}_p$, which can be trivially upgraded to a perfect $\Lambda$-structure by declaring the other Adams operations to be the identity. On the other hand, as explained in \cref{exm:GhostMapsIsosForTrivialLambdaRing}, the isomorphism $\qIW_m(R)[1/m]\cong R[1/m,q]/(q^m-1)$ comes from the trivial perfect $\Lambda_m$-structure on $R[1/m]$, in which all Adams operations are the identity. So the two isomorphisms are incompatible, unless $\phi_p\colon \widehat{R}_p\rightarrow \widehat{R}_p$ happens to be restrict to a Frobenius lift on $R$.
		
		In \cref{cor:qDRWArithmeticFractureSquare} we'll continue these considerations for arbitrary perfectly covered $\Lambda$-rings $A$ and arbitrary smooth $A$-algebras $R$. 
	\end{rem}
	
	\subsection{\texorpdfstring{$p$}{p}-Torsion freeness of \texorpdfstring{$q$}{q}-de Rham--Witt complexes}\label{subsec:pTorsionFree}
	In this subsection we'll prove the implication \embrace{\hyperref[enum:alphaD]{$d_{\alpha-1}$}} $\Rightarrow$ \cref{enum:alphaA} of our battle plan~\cref{par:OutlineOfStrategy}. This needs a preparatory lemma.
	\begin{lem}\label{lem:qDRWGhostMap}
		The subset $\IV_{p^\alpha}^*\coloneqq\im V_p+\im \d V_p\subseteq \qIW_{p^\alpha}\Omega_{R/A}^*$ is a differential-graded ideal and the ghost map $\gh_1$ from \cref{par:qdRWGhostMaps} induces a functorial isomorphism
		\begin{equation*}
			\gh_1\colon \qIW_{p^\alpha}\Omega_{R/A}^*/\IV_{p^\alpha}^*\overset{\cong }{\longrightarrow}\Omega_{R/A}^*\otimes_{A,\phi^\alpha}A[\zeta_{p^\alpha}]\,.
		\end{equation*}
	\end{lem}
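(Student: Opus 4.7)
The proof splits naturally into verifying that $\IV_{p^\alpha}^*$ is a differential-graded ideal, and then identifying the quotient via the ghost map plus an explicit inverse. For the dg-ideal property: closure under $\d$ is immediate since $\d(\im V_p) \subseteq \im \d V_p$ and $\d^2 = 0$. Closure under multiplication follows from the projection formula $V_p(\omega) \cdot \eta = V_p(\omega \cdot F_p(\eta))$ from \cref{def:qFVSystemOfCDGA}\cref{enum:qDeRhamWittConditionC}, giving $\eta \cdot V_p(y) \in \im V_p$; for products of the form $\eta \cdot \d V_p(y)$, Leibniz yields $\eta \cdot \d V_p(y) = \pm\d(\eta \cdot V_p(y)) \mp \d\eta \cdot V_p(y)$, both summands of which lie in $\IV_{p^\alpha}^*$ by the previous step together with closure under $\d$.

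Next, I would show that $\gh_1$ annihilates $\IV_{p^\alpha}^*$: the explicit description of the Verschiebung on the target in \cref{par:qdRWGhostMaps} says that $V_p(\omega)$ has vanishing component at index $d = p^\alpha$ (because $p^\alpha \nmid p^{\alpha-1}$), and compatibility of $\gh_1$ with $\d$ extends this to $\d V_p$. The quotient $P^* \coloneqq \qIW_{p^\alpha}\Omega_{R/A}^*/\IV_{p^\alpha}^*$ is then naturally a differential-graded $A[\zeta_{p^\alpha}]$-algebra: the relation $V_p(1) = [p]_{q^{p^{\alpha-1}}} = \Phi_{p^\alpha}(q)$ makes $\Phi_{p^\alpha}(q) = 0$ in $P^0$, and by \cref{par:RelativeGhostMaps} the induced map $P^0 \to R \otimes_{A,\phi^\alpha} A[\zeta_{p^\alpha}]$ is an isomorphism.

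By the universal property of the algebraic de Rham complex, this isomorphism extends uniquely to a dg-$A[\zeta_{p^\alpha}]$-algebra morphism
\begin{equation*}
    \iota \colon \Omega_{R/A}^* \otimes_{A,\phi^\alpha} A[\zeta_{p^\alpha}] \longrightarrow P^*\,,
\end{equation*}
and the composition $\gh_1 \circ \iota$ is the identity by the uniqueness clause. For the reverse composition, I would verify that $P^*$ is generated as a $P^0$-module in each degree by products of Teichmüller-lift differentials: \cref{par:GhostMaps} together with \cref{lem:RelativeqWitt} show that every element of $\qIW_{p^\alpha}(R/A)$ has the form $\sum q^i \tau_{p^\alpha}(r_i) + v$ with $v \in \im V_p$, and combined with the surjection $\Omega_{\qIW_{p^\alpha}(R/A)/A[q]}^* \twoheadrightarrow \qIW_{p^\alpha}\Omega_{R/A}^*$ of \cref{prop:qDRWExists}\cref{enum:dRtoqDRWisSurjective}, every generator $\d x_1 \wedge \dotsb \wedge \d x_j$ of $P^j$ reduces modulo $\IV_{p^\alpha}^*$ to a $P^0$-combination of terms $\d\tau_{p^\alpha}(r_1) \wedge \dotsb \wedge \d\tau_{p^\alpha}(r_j)$, all of which lie in $\im \iota$. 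Hence $\iota$ is surjective and therefore an isomorphism. The one bookkeeping subtlety---essentially the only technical point---is matching the two $A[q]$-actions on $P^0$, one coming via $c_{p^\alpha}\colon \qIW_{p^\alpha}(A) \to A[q]/(q^{p^\alpha}-1)$ and the other via the $\phi^\alpha$-twisted $A$-structure on $R \otimes_{A,\phi^\alpha} A[\zeta_{p^\alpha}]$; but this compatibility is precisely the content of \cref{par:RelativeGhostMaps}. Functoriality in $R$ is immediate from the naturality of every step.
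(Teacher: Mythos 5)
Your proof is correct, but it takes a genuinely different route from the paper's. For the dg-ideal part you and the paper use essentially the same computation (projection formula $V_p(\omega)\eta = V_p(\omega F_p(\eta))$ plus Leibniz to handle $\d V_p$). For the isomorphism, the paper avoids constructing an explicit inverse: it observes that $(0,\dotsc,0,\qIW_{p^\alpha}\Omega_{R/A}^*/\IV_{p^\alpha}^*)$ is initial among $\{1,p,\dotsc,p^\alpha\}$-truncated $q$-$V$-systems with $P_{p^i}^*=0$ for $i<\alpha$, notices that such a system is nothing but a dg-$A[q]$-algebra together with a ring map from $\qIW_{p^\alpha}(R/A)/\im V_p\cong R\otimes_{A,\phi^\alpha}A[\zeta_{p^\alpha}]$, and concludes by de Rham universality that $\Omega_{R/A}^*\otimes_{A,\phi^\alpha}A[\zeta_{p^\alpha}]$ also satisfies this universal property. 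This is shorter and immediately dispatches both injectivity and surjectivity at once. Your argument instead builds $\iota$ by de Rham universality, shows $\gh_1\circ\iota=\mathrm{id}$ by initiality, and proves $\iota$ surjective via the Teichmüller-plus-Verschiebung decomposition of $\qIW_{p^\alpha}(R/A)$; this is more hands-on and has the minor advantage that it also makes the $\gh_1\circ\iota=\mathrm{id}$ computation transparent. One small inaccuracy: \cref{lem:RelativeqWitt} does not directly give the decomposition of elements of $\qIW_{p^\alpha}(R/A)$ into Teichmüller terms plus Verschiebung images; rather this follows from \cref{par:GhostMaps} for $\qIW_{p^\alpha}(R)$ together with the surjection from \cref{lem:RelativeqWitt} and \cref{par:RelativeGhostMaps}. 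Both proofs are valid; the paper's is slicker because the universal-property formalism was already set up for exactly this purpose.
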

	\begin{proof}
		It's clear that $\IV_{p^\alpha}^*$ is closed under $\d$. To show that it is a graded ideal, choose homogeneous elements $\omega\in \qIW_{p^{\alpha-1}}\Omega_{R/A}^i$ and $\eta\in \qIW_{p^\alpha}\Omega_{R/A}^j$. We compute $V_p(\omega)\eta=V(\omega F_p(\eta))$ and
		\begin{equation*}
			\d V_p(\omega)\eta=\d\bigl(V_p(\omega)\eta\bigr)-(-1)^iV_p(\omega)\d\eta=\d V_p\bigl(\omega F_p(\eta)\bigr)-(-1)^iV\bigl(\omega F_p(\d\eta)\bigr)\,,
		\end{equation*}
		using the condition from \cref{def:qFVSystemOfCDGA}\cref{enum:qDeRhamWittConditionC}. This proves that $\IV_{p^\alpha}^*$ is a graded ideal.
		
		To prove the second assertion, note that $(0,\dotsc,0,\qIW_{p^\alpha}\Omega_{R/A}^*/\IV_{p^\alpha}^*)$ is initial among all $\{1,p,p^2,\dotsc,p^\alpha\}$-truncated $q$-$V$-systems $(P_{1}^*,P_{p}^*,P_{p^2}^*,\dotsc,P_{p^\alpha}^*)$ satisfying $P_{p^i}^*=0$ for all $i<\alpha$. By inspection, such a system is nothing else but a differential-graded $A[q]$-algebra $P_{p^\alpha}^*$ together with a $\qIW_{p^\alpha}(R)/\im V_{p^\alpha}$-algebra structure on $P_{p^\alpha}^0$; all the extra structure and conditions become trivial. Now $\qIW_{p^\alpha}(R)/\im V_p\cong R\otimes_{A,\phi^\alpha}A[\zeta_{p^\alpha}]$ by \cref{par:RelativeGhostMaps}, hence, according to the universal property of the de Rham complex, the initial $\{1,p,p^2,\dotsc,p^\alpha\}$-truncated $q$-$V$-system is also given by $(0,\dotsc,0,\Omega_{R/A}^*\otimes_{A,\phi^\alpha}A[\zeta_{p^\alpha}])$. This finishes the proof.
	\end{proof}
	\begin{proof}[Proof of \embrace{\hyperref[enum:alphaD]{$d_{\alpha-1}$}} $\Rightarrow$ \cref{enum:alphaA}]
		Assume first that $R\cong A[T_1,\dotsc,T_n]$ is a polynomial ring. Suppose $\xi\in \qIW_{p^\alpha}\Omega_{R/A}^i$ satisfies $p\xi=0$. By \cref{lem:qDRWGhostMap}, the quotient $\qIW_{p^\alpha}\Omega_R^*/\IV_{p^\alpha}^*$ is isomorphic to $\Omega_{R/A}^*\otimes_{A,\phi^\alpha}A[\zeta_{p^\alpha}]$, which is degree-wise $p$-torsion-free. Indeed, our assumptions imply that $A$ is $p$-torsion free (because its faithfully flat cover $A_\infty$ is $p$-torsion free, as is any perfect $\Lambda$-ring) and then $\Omega_{R/A}^*\otimes_{A,\phi^\alpha}A$ is degree-wise projective over $R\otimes_{A,\phi^\alpha}A$, which is smooth over $A$ and thus $p$-torsion free as well.
		
		Hence $p\xi=0$ implies $\xi\in \IV_{p^\alpha}^*$. So write $\xi=V_p(\xi_0)+\d V_p(\xi_1)$ for some $\xi_0\in \qIW_{p^{\alpha-1}}\Omega_{R/A}^i$ and $\xi_1\in \qIW_{p^{\alpha-1}}\Omega_{R/A}^{i-1}$. Since $V_p\circ \d=p(\d\circ V_p)$ by \cref{lem:dAfterV}, we can rewrite our assumption $p\xi=0$ as $V_p(p\xi_0+\d\xi_1)=0$. Note that $V_p\colon \qIW_{p^{\alpha-1}}\Omega_{R/A}^*\rightarrow \qIW_{p^\alpha}\Omega_{R/A}^*$ is injective, because $F_p\circ V_p=p$ and $\qIW_{p^{\alpha-1}}\Omega_{R/A}^*$ is degree-wise $p$-torsion-free by the inductive hypothesis. Thus $p\xi_0+\d\xi_1=0$.
		
		Applying \embrace{\hyperref[enum:alphaC]{$c_{\alpha-1}$}} shows that we can write $\xi_1=F_p(\omega)+p\eta$ for some $\omega\in \qIW_{p^\alpha}\Omega_{R/A}^{i-1}$ and $\eta\in \qIW_{p^{\alpha-1}}\Omega_{R/A}^{i-1}$. Then $\d\xi_1=pF_p(\d\omega)+p\d\eta$ and so our assumption $p\xi_0+d\xi_1=0$ implies $\xi_0=-F_p(\d\omega)-\d\eta$ by $p$-torsion-freeness of $\qIW_{p^{\alpha-1}}\Omega_{R/A}^*$. We conclude
		\begin{align*}
			\xi=V_p(\xi_0)+\d V_p(\xi_1)&=V_p\bigl(-F_p(\d\omega)-\d\eta\bigr)+\d V_p\bigl(F_p(\omega)+p\eta\bigr)\\
			&=-\Phi_{p^\alpha}(q)\d\omega -V_p(\d\eta)+\Phi_{p^\alpha}(q)\d\omega+p\d V_p(\eta)\\
			&=0\,,
		\end{align*}
		using $V_p\circ \d=p(\d\circ V_p)$, which holds by \cref{lem:dAfterV}, as well as $V_p\circ F_p=\Phi_{p^\alpha}(q)$, which holds by \cref{def:qFVSystemOfCDGA}\cref{enum:qDeRhamWittConditionC}. This finishes the proof in the polynomial ring case.
		
		Now let $R$ be an arbitrary smooth $A$-algebra. Fix a degree $i$; we've seen in the proof of \cref{cor:qDRWEtaleSheaf} that $\qIW_m\Omega_{-/A}^i$ is an étale sheaf with values in the $\infty$-category $\Dd(A[q])$. Then it's also a sheaf in the ordinary category of $A[q]$-modules. Hence the $p$-torsion part is an étale sheaf as well, since it can be written as the kernel of the multiplication map $p\colon \qIW_m\Omega_{-/A}^i\rightarrow \qIW_m\Omega_{-/A}^i$. Since smooth $A$-algebras are étale-locally polynomial rings, we conclude that the $p$-torsion part of $\qIW_m\Omega_{R/A}^i$ must be trivial, as desired.
	\end{proof}
	In a similar way, one can show the following technical result.
	\begin{lem}\label{lem:qDRWDegreewiseBoundedq-1Torsion}
		For all smooth $A$-algebras $R$ and all $m\in\IN$, the relative $q$-de Rham--Witt complex $\qIW_m\Omega_{R/A}^*$ has degree-wise bounded $(q-1)^\infty$-torsion. 
		In particular, for every degree $i$, the underived and derived $(q-1)$-completions of $\qIW_m\Omega_{R/A}^i$ agree.
	\end{lem}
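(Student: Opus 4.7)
The plan is to combine the degree-wise $\IZ$-torsion-freeness from \cref{prop:qDRWisTorsionFreeForSmoothRings} with the explicit description after inverting $m$ from \cref{cor:qDRWTrivialAfterLocalisation}. In fact, I will aim for the stronger statement that the $(q-1)^\infty$-torsion of each $\qIW_m\Omega_{R/A}^i$ is annihilated by $(q-1)$ itself, giving the uniform bound $N=1$.

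First, \cref{prop:qDRWisTorsionFreeForSmoothRings} implies that the localisation map $\qIW_m\Omega_{R/A}^i\hookrightarrow\qIW_m\Omega_{R/A}^i[1/m]$ is injective, while \cref{cor:qDRWTrivialAfterLocalisation} identifies the target with the finite product $\prod_{d\mid m}\Omega_{R/A}^i\otimes_{A,\psi^d}A[1/m,\zeta_d]$. The next step is to check that in each factor $(q-1)$ is either zero or invertible. For $d=1$ the variable $q$ acts as $1$, so the entire factor is killed by $(q-1)$. For $d>1$ the element $\zeta_d-1$ in $A[1/m,\zeta_d]$ has minimal polynomial $\Phi_d(X+1)$ over $A[1/m]$, whose constant term is $\Phi_d(1)$; an elementary calculation gives $\Phi_d(1)=p$ if $d$ is a power of a single prime $p$, and $\Phi_d(1)=1$ otherwise. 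In both cases $\Phi_d(1)$ divides $m$ and is therefore a unit in $A[1/m]$, and rearranging $\Phi_d(X+1)$ then expresses $\zeta_d-1$ as a unit, so this factor contributes no $(q-1)^\infty$-torsion.

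Combining, the $(q-1)^\infty$-torsion in $\qIW_m\Omega_{R/A}^i[1/m]$ is exactly the $d=1$ summand and is killed by $(q-1)$. If $x\in\qIW_m\Omega_{R/A}^i$ satisfies $(q-1)^Nx=0$ for some $N$, then the image of $(q-1)x$ in the localisation vanishes, so $m^k(q-1)x=0$ in $\qIW_m\Omega_{R/A}^i$ for some $k\geqslant 0$. The $\IZ$-torsion-freeness then forces $(q-1)x=0$, giving the desired uniform bound.

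For the \enquote{in particular} statement, it is a standard consequence of bounded $(q-1)^\infty$-torsion that the Mittag--Leffler condition holds on the tower $\{\qIW_m\Omega_{R/A}^i/(q-1)^n\}_{n\geqslant 1}$, so the derived $\limit^1$ vanishes and the derived and underived $(q-1)$-completions coincide. The only genuinely non-formal step in the plan is the cyclotomic computation above; everything else is bookkeeping once $\IZ$-torsion-freeness and the rationalisation formula are in hand.
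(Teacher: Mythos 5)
Your argument, taken on its own, is correct and even gives a sharper statement than the lemma asks for: the cyclotomic computation $\Phi_d(1)=p$ for $d$ a power of $p$ (and $=1$ otherwise) is right, the conclusion that $q-1$ is either a unit or zero in each factor of $\qIW_m\Omega_{R/A}^i[1/m]\cong\prod_{d\mid m}\Omega_{R/A}^i\otimes_{A,\psi^d}A[1/m,\zeta_d]$ follows, and combining with $\IZ$-torsion-freeness would indeed show that all $(q-1)^\infty$-torsion is killed by $q-1$ itself.

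The problem is a circularity in the logical architecture of the paper. You invoke \cref{prop:qDRWisTorsionFreeForSmoothRings} (degree-wise $\IZ$-torsion-freeness for smooth $R$), but that proposition is only proved in \cref{subsec:Global}, at the end of the section, via the long induction of \cref{par:OutlineOfStrategy}. That induction requires the assertion \textup{(\hyperref[enum:alphaB]{$b_\alpha$})}, whose very formulation depends on the comparison morphism built in \cref{con:qDRWtoqHodge}, and that construction explicitly cites \cref{lem:qDRWDegreewiseBoundedq-1Torsion} to replace derived by underived $(q-1)$-completions. So Lemma~4.5 must already be available before \cref{prop:qDRWisTorsionFreeForSmoothRings} is known (even before the $p$-typical special cases \textup{(\hyperref[enum:alphaA]{$a_\alpha$})} for $\alpha\geqslant 1$), and you cannot use the latter to prove the former. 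The paper avoids this by arguing via noetherianity: over $A=\IZ$ and $R$ a polynomial ring, \cref{cor:qWittNoetherian} together with \cref{prop:qDRWExists}\cref{enum:dRtoqDRWisSurjective} makes each $\qIW_m\Omega_{R/\IZ}^i$ a finitely generated module over the noetherian ring $\qIW_m(R)$, giving bounded torsion for free; one then gets the general case by flat base change along $\IZ\rightarrow A$ and étale descent via \cref{cor:qDRWEtaleSheaf}, none of which involve the battle plan. If you want to salvage your stronger $N=1$ conclusion, you would have to record it as a corollary \emph{after} \cref{prop:qDRWisTorsionFreeForSmoothRings} has been established, not as a substitute proof of Lemma~4.5.
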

	\begin{proof}
		If $A=\IZ$ and $R=\IZ[T_1,\dotsc,T_n]$, then \cref{cor:qWittNoetherian} and \cref{prop:qDRWExists}\cref{enum:dRtoqDRWisSurjective} imply that $\qIW_m\Omega_{R/\IZ}^*$ is a complex of finitely generated modules over the noetherian ring $\qIW_m(R)$ and the assertion is clear. If $A$ is an arbitrary $\Lambda$-ring (with the assumption that the morphism $A\rightarrow A_\infty$ into its colimit perfection is faithfully flat), and $R=A[T_1,\dotsc,T_n]$, then
		\begin{equation*}
			\qIW_m\Omega_{A[T_1,\dotsc,T_n]/A}^*\cong \qIW_m\Omega_{\IZ[T_1,\dotsc,T_n]/\IZ}^*\otimes_\IZ A
		\end{equation*}
		holds by \cref{lem:RelativeqDRWBaseChange}. Since our assumption implies that $A$ is flat over $\IZ$, we get bounded $(q-1)^\infty$-torsion in this case as well (with the same bound as in the case $A=\IZ$). Finally, using étale descent as in the proof of \embrace{\hyperref[enum:alphaD]{$d_{\alpha-1}$}} $\Rightarrow$ \cref{enum:alphaA} above, we get that $\qIW_m\Omega_{R/A}^*$ has bounded $(q-1)^\infty$-torsion for arbitrary smooth $A$-algebras $R$ (still with the same bound as in the case $A=\IZ$, $R=\IZ[T_1,\dotsc,T_n]$).
	\end{proof}
	
	\subsection{The \texorpdfstring{$q$}{q}-Hodge complex I: Additive Structure}\label{subsec:qHodgeAdditive}
	In the introduction \cref{sec:Intro}, we've only sketched the construction of the $q$-de Rham complex and the $q$-Hodge complex of a framed smooth $\IZ$-algebra $(R,\square)$. So let's do that now again, both in more detail and in the relative setting.
	\begin{numpar}[The $q$-de Rham and the $q$-Hodge complex.]\label{par:qDeRhamqHodge}
		Let $R$ be smooth over $A$ and suppose there exists an étale morphism $\square\colon A[T_1,\dotsc,T_n]\rightarrow R$. For all $i=1,\dotsc,n$ let $\gamma_i\colon A[T_1,\dotsc,T_n]\qpower\rightarrow A[T_1,\dotsc,T_n]\qpower$,  be the $A$-algebra morphism that sends $T_i\mapsto qT_i$ and leaves the other variables fixed. Observe that $\gamma_i$ is the identity modulo $q-1$, that $\square$ induces a $(q-1)$-completely étale morphism $A[T_1,\dotsc,T_n]\qpower\rightarrow R\qpower$ (see \cref{par:Notation} for the terminology), and that $R\qpower\rightarrow R$ is a $(q-1)$-complete pro-infinitesimal thickening. Hence there exists a unique dashed lift in the solid diagram
		\begin{equation*}
			\begin{tikzcd}
				A[T_1,\dotsc,T_n]\qpower\dar["\square"']\rar["\gamma_i"] & R\qpower\dar\\
				R\qpower\rar\urar[dashed, "\exists!"'] & R
			\end{tikzcd}
		\end{equation*}
		This lift will also be denoted $\gamma_i$. By lifting against $R\qpower/(q-1)T_i\rightarrow R$ instead, which is still a $(q-1)$-complete pro-infinitesimal thickening, we see that $\gamma_i$ is not only congruent to the identity modulo $q-1$, but also modulo $(q-1)T_i$. This allow us define algebraic versions $\q\partial_i\colon R\qpower\rightarrow R\qpower$ of Jackson's $q$-derivatives from \cref{par:qDeRhamComplex} using the formula
		\begin{equation*}
			\q\partial_i f\coloneqq \frac{\gamma_i(f)-f}{qT_i-T_i}
		\end{equation*}
		for $i=1,\dotsc,n$. Note that $\q\partial_i$ and $\q\partial_j$ commute for all $i$ and $j$. Indeed, this reduces to the same assertion for $\gamma_i$ and $\gamma_j$, which follows once again by an infinitesimal lifting argument. We may thus construct the \emph{$q$-de Rham complex of $(R,\square)$} as the Koszul complex of the commuting $A\qpower$-module endomorphisms $\q\partial_1,\dotsc,\q\partial_n$:
		\begin{equation*}
			\qOmega_{R/A,\square}^*\coloneqq\left(R\qpower \xrightarrow{\q\nabla}\Omega_{R/A}^1\qpower\xrightarrow{\q\nabla} \dotsb \xrightarrow{\q\nabla} \Omega^n_{R/A}\qpower \right)\,.
		\end{equation*}
		Similarly, the \emph{$q$-Hodge complex of $(R,\square)$} is the Koszul complex of $(q-1)\q\partial_1,\dotsc,(q-1)\q\partial_n$:
		\begin{equation*}
			\qHodge_{R/A,\square}^*\coloneqq\left(R\qpower \xrightarrow{(q-1)\q\nabla}\Omega_{R/A}^1\qpower\xrightarrow{(q-1)\q\nabla} \dotsb \xrightarrow{(q-1)\q\nabla} \Omega^n_{R/A}\qpower \right)\,.
		\end{equation*}
	\end{numpar}
	\begin{numpar}[Non-commutative multiplicative structure.]\label{par:MultiplicativeStructure}
		It's straightforward to check that the partial $q$-derivative $\q\partial_i$ satisfies the \emph{$q$-Leibniz rule} $\q \partial_i(fg)=f\q\partial_i g+\gamma_i(g)\q\partial_if$. This allows us to equip the $q$-de Rham complex with a non-commutative differential-graded algebra structures as follows: For homogeneous elements $\omega=f\d T_{i_1}\wedge\dotsb\wedge\d T_{i_k}\in \qOmega_{R/A,\square}^k$ and $\eta=g\d T_{j_1}\wedge\dotsb\wedge\d T_{j_\ell}\in \Omega_{R/A,\square}^\ell$ we put
		\begin{equation*}
			\omega\wedge\eta\coloneqq f\gamma_{i_1}\big(\gamma_{i_2}(\dotsb \gamma_{i_k}(g)\dotsb )\big)\d T_{i_1}\wedge\dotsb\wedge\d T_{i_k}\wedge \d T_{j_1}\wedge\dotsb\wedge\d T_{j_\ell}\,.
		\end{equation*}
		More succinctly, we use the good old wedge product and impose the additional non-commutative rule $\d T_i\wedge f\coloneqq \gamma_i(f)\wedge\d T_i$ for all $f\in R\qpower$ and all $i=1,\dotsc,d$. From the $q$-Leibniz rule, we easily get $\q\nabla(\omega\wedge\eta)=\q\nabla(\omega)\wedge\eta+(-1)^k\omega\wedge\q\nabla(\eta)$, so this multiplication does indeed define a differential-graded $A\qpower$-algebra structure on $\qOmega_{R/A,\square}^*$. The same definition also works for the $q$-Hodge complex $\qHodge_{R/A,\square}^*$.
	\end{numpar}
	Our eventual goal is to compute the cohomology $\H^*(\qHodge_{R/A,\square}^*/(q^m-1))$, including its multiplicative structure coming from \cref{par:MultiplicativeStructure}. As a preparation, we'll now determine the additive structure of $\H^*((\qHodge_{R/A,\square}^*)_p^\complete/(q^{p^\alpha}-1))$ as an $\widehat{A}_p\qpower$-module.%
	\footnote{Observe that our assumption on the existence of a faithfully flat map $A\rightarrow A_\infty$ into a perfect $\Lambda$-ring implies that $A$ and $R$ are $p$-torsionfree, so it doesn't matter whether we interpret $\widehat{A}_p$ and $\widehat{R}_p$ as the derived or the underived completions. Similarly, $\qHodge_{R/A,\square}^*$ is degree-wise $p$-torsion free, so it's derived $p$-completion agrees with the degree-wise underived completion.}
	Our strategy will be to construct a certain decomposition of $(\qHodge^*_{R,\square})_p^\complete/(q^{p^n}-1)$ according to a Frobenius lift on $\widehat{R}_p$. This is an old trick, going back (at least) to Katz's proof of the Cartier isomorphism in \cite[Theorem~(\href{https://web.math.princeton.edu/~nmk/old/nilpconn.pdf\#page=27}{7.2})]{Katz}.
	
	\begin{numpar}[The Frobenius lift.]\label{par:FrobeniusLift}
		The $p$-completion $\widehat{A}_p\langle T_1,\dotsc,T_n\rangle\coloneqq (A[T_1,\dotsc,T_n])_p^\complete$ can be equipped with a $\delta$-structure in which the Frobenius $\phi_\square$ is given by the Frobenius $\phi=\psi^p$ on $A$ and $\phi_\square(T_i)\coloneqq T_i^p$. Since $\square\colon \widehat{A}_p\langle T_1,\dotsc,T_n\rangle \rightarrow \widehat{R}_p$ is $p$-completely étale, $\phi_\square$ admits a unique extension to a Frobenius lift on $\widehat{R}_p$, which we still denote $\phi_\square\colon \widehat{R}_p\rightarrow \widehat{R}_p$ by abuse of notation. We observe that $\phi_\square $ is injective. Indeed, $\phi_\square$ is injective modulo $p$, since $R/p$ is reduced: It's a smooth $A/p$-algebra, and $A/p$ must be reduced because it admits a faithfully flat cover $A/p\rightarrow A_\infty/p$ by a perfect $\IF_p$-algebra. Hence every $x\in \widehat{R}_p$ with $\phi_\square (x)=0$ must be divisible by $p$; say $x=px'$. But then $0=\phi_\square(px')=p\phi_\square(x')$ implies $\phi_\square(x')=0$. Iterating this argument shows that $x$ is divisible by $p$ arbitrarily many times. But $\widehat{R}_p$ is $p$-complete and thus $p$-adically separated, so $x=0$, as required.
	\end{numpar}
	\begin{numpar}[The Frobenius decomposition I.]\label{par:FrobeniusDecompositions}
		
		For every $\alpha\geqslant 0$, let
		\begin{equation*}
			\widehat{R}_p^{(\alpha)}\coloneqq \left(\phi_\square^\alpha(\widehat{R}_p)\otimes_{\phi^\alpha(\widehat{A}_p)}\widehat{A}_p\right)_p^\complete
		\end{equation*}
		(as usual, it doesn't matter whether we complete in the derived or underived sense, as can be seen by base change to $A_\infty$). We claim that the canonical map $\widehat{R}_p^{(\alpha)}\rightarrow \widehat{R}_p$ exhibits $\widehat{R}_p$ as a free module over $\widehat{R}_p^{(\alpha)}$, with a basis given by $T_1^{v_1}\dotsm T_n^{v_n}$ for all multi-indices $v=(v_1,\dotsc,v_n)$ satisfying $0\leqslant v_i\leqslant p^\alpha-1$. To see why this is true, first observe that the corresponding assertion for $\widehat{A}_p\langle T_1,\dotsc,T_n\rangle$ is true for obvious reasons. Hence it suffices to show that
		\begin{equation*}
			\begin{tikzcd}
				\widehat{A}_p\langle T_1,\dotsc,T_n\rangle \rar\drar[pushout]\dar["\phi_\square^\alpha"'] & \widehat{R}_p\dar["\phi_\square^\alpha"]\\
				\widehat{A}_p\langle T_1,\dotsc,T_n\rangle \rar & \widehat{R}_p
			\end{tikzcd}
		\end{equation*}
		is a derived pushout square of rings. But both the derived pushout and $\widehat{R}_p$ are derived $p$-complete, so by the derived Nakayama lemma it's enough to check that we get a derived pushout square after applying $-\lotimes_{\IZ_p}\IF_p$ everywhere. This is proved in \cite[\stackstag{0EBS}]{Stacks}.

		Now, for every multi-index $v=(v_1,\dotsc,v_n)$ satisfying $0\leqslant v_i\leqslant p^\alpha-1$ for all $i$, we let $(\qHodge_{R/A,\square}^{*,v})_p^\complete\subseteq (\qHodge_{R/A,\square}^*)_p^\complete$ be the free graded $\widehat{R}_p^{(\alpha)}\qpower$-module with basis the elements
		\begin{equation*}
			\prod_{i\in I}T_i^{v_i}\bigwedge_{j\in J}T_j^{v_j-1}\d T_j
		\end{equation*}
		for all disjoint decompositions $I\sqcup J=\{1,\dotsc,n\}$. If $v_j=0$ for some $j$, we use the convention that $T_j^{v_j-1}\d T_j\coloneqq T_j^{p^\alpha-1}\d T_j$. Then we obtain a decomposition
		\begin{equation*}
			\bigl(\qHodge_{R/A,\square}^*\bigr)_p^\complete\cong \bigoplus_v\bigl(\qHodge_{R/A,\square}^{*,v}\bigr)_p^\complete
		\end{equation*}
		as graded $\widehat{R}_p^{(\alpha)}\qpower$-modules.
	\end{numpar}
	
	\begin{lem}\label{lem:DecompRespectsqDifferential}
		The decomposition from \cref{par:FrobeniusDecompositions} is not just a decomposition underlying graded modules, but a decomposition of complexes. Furthermore, for the induced decomposition on $(\qHodge_{R/A,\square}^*)/(q^{p^\alpha}-1)$, each piece $(\qHodge_{R/A,\square}^{*,v})_p^\complete/(q^{p^\alpha}-1)$ is a complex of $\widehat{R}_p^{(\alpha)}\qpower$-modules.
	\end{lem}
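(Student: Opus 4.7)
The plan is to prove both statements by a direct computation of $(q-1)\q\nabla$ on the basis elements described in \cref{par:FrobeniusDecompositions}. The crucial preliminary step is to control the restriction of the twist automorphism $\gamma_k$ to the subring $\widehat{R}_p^{(\alpha)}\qpower \subseteq \widehat{R}_p\qpower$: I claim that $\gamma_k$ preserves $\widehat{R}_p^{(\alpha)}\qpower$ and acts as the identity on the quotient $\widehat{R}_p^{(\alpha)}\qpower/(q^{p^\alpha}-1)$. Both assertions are visible on the generators $T_j^{p^\alpha}$ of $\widehat{A}_p\langle T_1^{p^\alpha},\dotsc,T_n^{p^\alpha}\rangle\qpower$ from $\gamma_k(T_j^{p^\alpha}) = q^{p^\alpha \delta_{jk}}T_j^{p^\alpha}$, and extend to all of $\widehat{R}_p^{(\alpha)}\qpower$ by the étale uniqueness-of-lifts argument already used in \cref{par:qDeRhamqHodge}. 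Consequently $\gamma_k$ preserves the decomposition $\widehat{R}_p\qpower = \bigoplus_v \widehat{R}_p^{(\alpha)}\qpower \cdot T_1^{v_1}\dotsm T_n^{v_n}$, acting on the $v$-component by $f_0\cdot T_1^{v_1}\dotsm T_n^{v_n} \mapsto \gamma_k(f_0)q^{v_k}\cdot T_1^{v_1}\dotsm T_n^{v_n}$.

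Moreover, the restriction of $\gamma_k$ to $\widehat{R}_p^{(\alpha)}\qpower$ agrees on the generator $T_k^{p^\alpha}$ with---and hence equals everywhere by uniqueness---the $q^{p^\alpha}$-twist $\gamma_{k,p^\alpha}$ associated to the framing by $T_1^{p^\alpha},\dotsc,T_n^{p^\alpha}$. Writing $\q\partial_{k,p^\alpha}$ for the corresponding $q^{p^\alpha}$-derivative with respect to $T_k^{p^\alpha}$ on $\widehat{R}_p^{(\alpha)}\qpower$, this yields the key identity
\begin{equation*}
    (q-1)\q\partial_k(f_0) = (q^{p^\alpha}-1) T_k^{p^\alpha - 1}\q\partial_{k,p^\alpha}(f_0)\,,\quad f_0\in\widehat{R}_p^{(\alpha)}\qpower\,.
\end{equation*}
Now take an arbitrary basis element $\omega = f_0\cdot \prod_{i \in I}T_i^{v_i}\bigwedge_{j \in J}T_j^{v_j - 1}\d T_j$ of $(\qHodge^{*,v}_{R/A,\square})_p^\complete$ with $f_0 \in \widehat{R}_p^{(\alpha)}\qpower$. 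The $\d T_k$-contribution to $(q-1)\q\nabla(\omega)$ vanishes for $k\in J$ by antisymmetry. For $k \in I$, the $q$-Leibniz rule together with the displayed identity yields a sum of two terms, each a scalar multiple of the basis vector indexed by $I\setminus \{k\}$, $J\cup\{k\}$ within the same $v$-piece: a first term with coefficient $f_0(q^{v_k}-1)$ (vanishing when $v_k = 0$), coming from $\q\partial_k$ applied to the monomial; and a second term with coefficient $q^{v_k}(q^{p^\alpha}-1)T_k^{p^\alpha}\q\partial_{k,p^\alpha}(f_0)$ when $v_k \geqslant 1$, or $(q^{p^\alpha}-1)\q\partial_{k,p^\alpha}(f_0)$ when $v_k = 0$, coming from $\q\partial_k$ applied to $f_0$. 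Both coefficients live in $\widehat{R}_p^{(\alpha)}\qpower$, proving that $(q-1)\q\nabla$ respects the $v$-decomposition.

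For the second assertion one reduces modulo $q^{p^\alpha}-1$: the second-type term vanishes outright, while the first-type term is already manifestly $\widehat{R}_p^{(\alpha)}\qpower$-linear in $f_0$, exhibiting the reduced differential as an $\widehat{R}_p^{(\alpha)}\qpower$-linear operator on each piece. The main obstacle I anticipate is purely bookkeeping, namely tracking the case $v_k = 0$: there the first-type contribution vanishes entirely, and one must verify that the second-type contribution lands on the basis element $T_k^{p^\alpha - 1}\d T_k$ prescribed by the convention $T_k^{v_k-1}\d T_k \coloneqq T_k^{p^\alpha-1}\d T_k$. The displayed identity is engineered precisely for this match, since $\q\partial_k(f_0)$ carries an intrinsic factor $T_k^{p^\alpha-1}$ on the subring $\widehat{R}_p^{(\alpha)}\qpower$.
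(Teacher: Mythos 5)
Your proof is correct and follows essentially the same route as the paper, with the same key tool. The paper establishes the crucial compatibility by showing that $\gamma_i$ commutes with the Frobenius lift $\phi_\square$ (extended via $q\mapsto q^p$), again by uniqueness of infinitesimal lifting against a $(p,q-1)$-completely étale morphism, and then asserts that this implies both parts of the lemma; you instead directly show that $\gamma_k$ preserves $\widehat{R}_p^{(\alpha)}\qpower$ and agrees there with $\gamma_{k,p^\alpha}$, which is an equivalent formulation of the same lifting argument. Where you go a bit further is in actually carrying out the computation that the paper leaves implicit: the identity $(q-1)\q\partial_k(f_0) = (q^{p^\alpha}-1)T_k^{p^\alpha-1}\q\partial_{k,p^\alpha}(f_0)$ and the case analysis on $v_k$ make the paper's sentence \enquote{This follows easily from the fact that $\gamma_i$ and $\phi_\square$ commute} concrete, and correctly handles the $v_k=0$ convention, which is exactly the point where an explicit check is most valuable.
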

	\begin{proof}
		To show that the differentials $(q-1)\q\nabla$ of $(\qHodge_{R/A,\square})_p^\complete$ respect the decomposition, it will be enough to show that $\gamma_i\colon \widehat{R}_p\qpower\rightarrow \widehat{R}_p\qpower$ respects the decomposition of $\widehat{R}_p\qpower$ as a free $\widehat{R}_p^{(\alpha)}\qpower$-module with basis $T_1^{v_1}\dotsm T_n^{v_n}$ for all multi-indices $v=(v_1,\dotsc,v_n)$ as above. To show this, extend $\phi_\square$ to a Frobenius lift $\phi_\square\colon \widehat{R}_p\qpower\rightarrow \widehat{R}_p\qpower$ by putting $\phi_\square(q)\coloneqq q^p$. It will certainly be enough to show that $\phi_\square$ and $\gamma_i$ commute. It's straightforward to check that they commute when restricted $\widehat{A}_p\langle T_1,\dotsc,T_n\rangle\qpower$. Furthermore, they commute modulo $(p,q-1)$, because $\gamma_i$ is the identity modulo $q-1$. Hence the desired commutativity follows from  uniqueness of infinitesimal lifting for $(p,q-1)$-completely étale morphisms.
		
		To show that each piece $(\qHodge_{R,\square}^{*,v})_p^\complete/(q^{p^\alpha}-1)$ is $\widehat{R}_p^{(\alpha)}\qpower$-linear, it will be enough to show that 
		\begin{equation*}
			(q-1)\q\nabla\colon \widehat{R}_p\qpower\longrightarrow (\Omega_{R/A}^1)_p^\complete\qpower
		\end{equation*}
		is divisible by $q^{p^\alpha}-1$ when restricted to $\widehat{R}_p^{(\alpha)}\qpower$. This follows easily from the fact that $\gamma_i$ and $\phi_\square$ commute, as we've observed above.
	\end{proof}
	\begin{numpar}[The Frobenius decomposition II.]\label{par:FrobeniusDecompositionsII}
		We 'll now further simplify the Frobenius decomposition from \cref{par:FrobeniusDecompositions}, adapting the arguments in the proof of \cite[Theorem~(\href{https://web.math.princeton.edu/~nmk/old/nilpconn.pdf\#page=27}{7.2})]{Katz}. Using \cref{lem:DecompRespectsqDifferential}, we can write
		\begin{equation*}
			\bigl(\qHodge_{R/A,\square}^{*,v}\bigr)_p^\complete/(q^{p^\alpha}-1)\cong \widehat{R}_p^{(\alpha)}\qpower\otimes_{\IZ_p\qpower}K_\alpha^{*,v}(n)\,,
		\end{equation*}
		where $K_\alpha^{*,v}(n)$ is the complex of free $\IZ_p\qpower/(q^{p^\alpha}-1)$-modules with basis given by the elements from \cref{par:FrobeniusDecompositions} and differentials given by $(q-1)\q\nabla$. The complex $K_\alpha^{*,v}(n)$ can be decomposed into a tensor product $K_\alpha^{*,v}(n)\cong K_\alpha^{*,v_1}(1)\otimes_{\IZ_p\qpower}\dotsb\otimes_{\IZ_p\qpower}K_\alpha^{*,v_d}(1)$, where $K_\alpha^{*,v_i}(1)$ is the complex
		\begin{equation*}
			K_\alpha^{*,v_i}(1)\coloneqq \left(T_i^{v_i}\cdot \IZ_p\qpower/(q^{p^\alpha}-1)\xrightarrow{(q-1)\q\nabla} T_i^{v_i-1}\d T_i\cdot \IZ_p\qpower/(q^{p^\alpha}-1)\right)
		\end{equation*}
		concentrated in degrees $0$ and $1$.  As in \cref{par:FrobeniusDecompositions}, we use the convention that $T_i^{v_i-1}\d T_i\coloneqq T_i^{p^\alpha-1}\d T_i$ if $v_i=0$.  If $v_i\geqslant 1$, then we can write $v_i=p^{e}v_i'$, where $e$ is the exponent of $p$ in the prime factorisation of $v_i$. The differential $(q-1)\q\nabla$ of $K^{*,v_i}(1)$ sends the generator $T_i^{v_i}$ in degree zero to 
		\begin{equation*}
			(q-1)\q\nabla(T_i^{v_i})=(q^{v_i}-1)T_i^{v_i-1}\d T_i=[v_i']_{q^{p^e}}(q^{p^e}-1)T_i^{v_i-1}\d T_i\,.
		\end{equation*}
		Now observe that $[v_i']_{q^{p^e}}$ is a unit in $\IZ_p\qpower/(q^{p^n}-1)$. Indeed, it can be written as a sum of $v_i'$, which is a unit, and a multiple of the topologically nilpotent element $q-1$. Hence $K^{*,v_i}(1)$ is isomorphic to the complex $K^*_{\alpha,e}$ given by
		\begin{equation*}
			K^*_{\alpha,e}\coloneqq \left(\IZ_p\qpower/(q^{p^\alpha}-1)\xrightarrow{(q^{p^e}-1)} \IZ_p\qpower/(q^{p^\alpha}-1)\right)\,,
		\end{equation*}
		again concentrated in degrees $0$ and $1$. If $v_i=0$, then similarly $K^{*,0}(1)\cong K_{\alpha,\alpha}^*$, where the differential of $K_{\alpha,\alpha}^*$ is multiplication with $q^{p^\alpha}-1$, hence zero.
		
		Summarising, we see that $K^{*,v}(n)$ can be written as a tensor product of complexes of the form $K_{n,e_i}^*$ for some $0\leqslant e_1,\dotsc,e_n \leqslant \alpha$. Fortunately, such a tensor product is easy to compute:
	\end{numpar}
	\begin{lem}\label{lem:Ke1xKe2}
		If $e_1\geqslant e_2\geqslant 0$, then there is an isomorphism of complexes of  $\IZ_p\qpower$-modules
		\begin{equation*}
			K_{\alpha,e_1}^*\otimes_{\IZ_p\qpower}K_{\alpha,e_2}^*\cong K_{\alpha,e_2}^*[-1]\oplus K_{\alpha,e_2}^*\,.
		\end{equation*}
	\end{lem}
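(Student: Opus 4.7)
The plan is to write out the tensor product complex explicitly and then exhibit the desired splitting via a concrete change of basis in degree~$1$.

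Set $R \coloneqq \IZ_p\qpower/(q^{p^\alpha}-1)$, $a\coloneqq q^{p^{e_1}}-1$, and $b\coloneqq q^{p^{e_2}}-1$. The crucial input is that $e_1\geqslant e_2$ implies $b\mid a$ in $\IZ_p\qpower$; specifically,
\begin{equation*}
a=bu\,,\qquad u\coloneqq [p^{e_1-e_2}]_{q^{p^{e_2}}}\,.
\end{equation*}
Let $f_i,g_i$ denote the generators of $K_{\alpha,e_i}^*$ in degrees $0$ and~$1$, respectively, so that $\d f_i=b_ig_i$ with $b_1=a$, $b_2=b$. Unwinding the definitions, the tensor product $K_{\alpha,e_1}^*\otimes_{\IZ_p\qpower}K_{\alpha,e_2}^*$ is the complex of free $R$-modules
\begin{equation*}
R\cdot(f_1\otimes f_2)\xrightarrow{\bigl(\begin{smallmatrix}a\\ b\end{smallmatrix}\bigr)} R\cdot(g_1\otimes f_2)\oplus R\cdot(f_1\otimes g_2)\xrightarrow{(-b,\,a)} R\cdot(g_1\otimes g_2)
\end{equation*}
concentrated in degrees $0,1,2$, where the signs arise from the usual Koszul sign rule $\d(x\otimes y)=\d x\otimes y+(-1)^{\abs{x}}x\otimes \d y$.

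Next I would perform a change of basis in degree~$1$ using the factorisation $a=bu$. Define the new basis elements
\begin{equation*}
h_1\coloneqq u(g_1\otimes f_2)+f_1\otimes g_2\,,\qquad h_2\coloneqq g_1\otimes f_2\,.
\end{equation*}
A direct calculation gives $\d(f_1\otimes f_2)=b h_1$, $\d h_2=-b(g_1\otimes g_2)$, and, thanks to the identity $a=bu$,
\begin{equation*}
\d h_1=u\d(g_1\otimes f_2)+\d(f_1\otimes g_2)=-ub(g_1\otimes g_2)+a(g_1\otimes g_2)=0\,.
\end{equation*}
Hence in the basis $\{h_1,h_2\}$ of degree~$1$ the complex splits as a direct sum: the subcomplex spanned by $f_1\otimes f_2$ and $h_1$ (trivial in degree~$2$) is isomorphic to $K_{\alpha,e_2}^*$, and the subcomplex spanned by $h_2$ in degree~$1$ and $-(g_1\otimes g_2)$ in degree~$2$ is isomorphic to $K_{\alpha,e_2}^*[-1]$.

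There is no genuine obstacle here; the only subtlety is keeping track of signs and ensuring that the two summands are truly complementary. The key observation that makes the proof work is the divisibility $b\mid a$, which is available precisely because we have assumed $e_1\geqslant e_2$; without this hypothesis, the complex would not split in this way.
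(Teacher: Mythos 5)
Your proof is correct and is essentially the same as the paper's: both perform the change of basis $(x,y)\mapsto(x-uy,y)$ in degree~$1$ (where $u=(q^{p^{e_1}}-1)/(q^{p^{e_2}}-1)$) to split the complex, with the paper presenting it as an explicit commutative diagram and you presenting it as a new basis $\{h_1,h_2\}$.
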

	\begin{proof}
		An explicit isomorphism $K_{\alpha,e_1}^*\otimes_{\IZ_p\qpower}K_{\alpha,e_2}^*\overset{\cong }{\longrightarrow}K_{\alpha,e_2}^*[-1]\oplus K_{\alpha,e_2}^*$ is given by the following commutative diagram:
		\begin{equation*}
			\begin{tikzcd}[column sep= large, row sep=large]
				\IZ_p\qpower/(q^{p^\alpha}-1)\rar["\!\!\biggl(\begin{matrix}q^{p^{e_1}}-1\\q^{p^{e_2}}-1\end{matrix}\biggr)"]\eqar[d] &[-0.35em] \bigl(\IZ_p\qpower/(q^{p^\alpha}-1)\bigr)^{\oplus 2}\dar\rar["{\left(-(q^{p^{e_2}}-1),\,q^{p^{e_1}}-1\right)}"] &[3.9em] \IZ_p\qpower/(q^{p^\alpha}-1)\eqar[d]\\
				\IZ_p\qpower/(q^{p^\alpha}-1)\rar["\!\!\biggl(\begin{matrix}0\\q^{p^{e_2}}-1\end{matrix}\biggr)"] & \bigl(\IZ_p\qpower/(q^{p^\alpha}-1)\bigr)^{\oplus 2} \rar["{\left(-(q^{p^{e_2}}-1),\,0\right)}"] & \IZ_p\qpower/(q^{p^\alpha}-1)
			\end{tikzcd}
		\end{equation*}
		Here the vertical arrow in the middle sends $(a,b)\mapsto \left(a-\frac{q^{p^{e_1}}-1}{q^{p^{e_2}}-1}b,b\right)$.
	\end{proof}
	\begin{lem}\label{lem:qHatgeAdditive}
		Let $v=(v_1,\dotsc,v_n)$ be a multi-index as in \cref{par:FrobeniusDecompositions}, and let's write $v_i=p^{e_i}v_i'$, where $e_i$ is the exponent of $p$ in the prime factorisation of $v_i$ as in \cref{par:FrobeniusDecompositionsII} \embrace{with the convention that $e_i\coloneqq \alpha$ in the case $v_i=0$}. If $e\coloneqq \min\{e_1,\dotsc,e_d\}$, then there is an isomorphism of complexes of  $\widehat{A}_p\qpower$-modules
		\begin{equation*}
			\bigl(\qHodge_{R/A,\square}^{*,v}\bigr)_p^\complete/(q^{p^\alpha}-1)\cong \widehat{R}_p^{(\alpha)}\qpower\otimes_{\IZ_p\qpower}\left(\bigoplus_{k=0}^{n-1}\big(K_{\alpha,e}^*[-k]\big)^{\oplus \binom{n-1}k}\right)\,.
		\end{equation*}
	\end{lem}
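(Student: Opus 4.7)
The plan is to reduce the claim to a purely combinatorial statement about the complexes $K_{\alpha,e}^*$ and then prove that statement by induction on $n$, with \cref{lem:Ke1xKe2} as the workhorse. From \cref{par:FrobeniusDecompositionsII} we already know that
\begin{equation*}
	\bigl(\qHodge_{R/A,\square}^{*,v}\bigr)_p^\complete/(q^{p^\alpha}-1)\cong \widehat{R}_p^{(\alpha)}\qpower\otimes_{\IZ_p\qpower}\bigl(K_{\alpha,e_1}^*\otimes_{\IZ_p\qpower}\dotsm\otimes_{\IZ_p\qpower} K_{\alpha,e_n}^*\bigr)\,,
\end{equation*}
so it suffices to exhibit an isomorphism of complexes of $\IZ_p\qpower$-modules
\begin{equation*}
	K_{\alpha,e_1}^*\otimes_{\IZ_p\qpower}\dotsm\otimes_{\IZ_p\qpower} K_{\alpha,e_n}^*\cong \bigoplus_{k=0}^{n-1}\bigl(K_{\alpha,e}^*[-k]\bigr)^{\oplus \binom{n-1}{k}}\,,
\end{equation*}
where $e=\min\{e_1,\dotsc,e_n\}$.

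I would prove this by induction on $n$, the case $n=1$ being tautological. For the inductive step, after reordering the tensor factors (which only permutes indices and does not change $e$) I would assume $e_n=e$. By the inductive hypothesis applied to the first $n-1$ factors, with $e'\coloneqq \min\{e_1,\dotsc,e_{n-1}\}\geqslant e$, we obtain
\begin{equation*}
	K_{\alpha,e_1}^*\otimes_{\IZ_p\qpower}\dotsm\otimes_{\IZ_p\qpower} K_{\alpha,e_{n-1}}^*\cong \bigoplus_{k=0}^{n-2}\bigl(K_{\alpha,e'}^*[-k]\bigr)^{\oplus \binom{n-2}{k}}\,.
\end{equation*}
Tensoring with $K_{\alpha,e_n}^*=K_{\alpha,e}^*$ and using \cref{lem:Ke1xKe2} in the form $K_{\alpha,e'}^*\otimes K_{\alpha,e}^*\cong K_{\alpha,e}^*[-1]\oplus K_{\alpha,e}^*$ (which applies since $e'\geqslant e$), and then observing that shifts commute with this tensoring in the evident way, each summand $K_{\alpha,e'}^*[-k]$ contributes $K_{\alpha,e}^*[-k-1]\oplus K_{\alpha,e}^*[-k]$.

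Collecting the resulting direct sum by cohomological shift, the coefficient of $K_{\alpha,e}^*[-j]$ becomes $\binom{n-2}{j-1}+\binom{n-2}{j}$, which equals $\binom{n-1}{j}$ by Pascal's rule (with the usual convention $\binom{n-2}{-1}=\binom{n-2}{n-1}=0$). This yields exactly the claimed decomposition. I don't expect any serious obstacle: once the reduction to the tensor-product statement is made, the argument is essentially a clean induction relying on \cref{lem:Ke1xKe2} and a standard binomial identity, and the only subtlety is keeping track of signs and shifts in the Koszul-type tensor products, which is straightforward since each $K_{\alpha,e_i}^*$ is concentrated in degrees $0$ and $1$.
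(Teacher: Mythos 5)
Your proposal is correct and is exactly the argument the paper has in mind: the paper's proof is the one-line statement ``Use \cref{par:FrobeniusDecompositionsII}, \cref{lem:Ke1xKe2}, and induction on $n$,'' and you have simply spelled out the induction, the reduction to the tensor-product identity, and the Pascal's-rule bookkeeping. No gaps.
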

	\begin{proof}
		Use \cref{par:FrobeniusDecompositionsII}, \cref{lem:Ke1xKe2}, and induction on $n$.
	\end{proof}
	\begin{cor}\label{cor:H*pTorsionFree}
		For all $\alpha\geqslant 0$, the cohomology groups $\H^*((\qHodge_{R/A,\square}^*)_p^\complete/(q^{p^\alpha}-1))$ are $p$-torsion free.
	\end{cor}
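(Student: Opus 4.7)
The plan is to exploit the explicit decomposition provided by \cref{lem:qHatgeAdditive}, and reduce the statement to a calculation on the building blocks $K_{\alpha,e}^*$. Since $(\qHodge_{R/A,\square}^*)_p^\complete/(q^{p^\alpha}-1)$ decomposes as a direct sum (over multi-indices $v$) of complexes of the form $\widehat{R}_p^{(\alpha)}\qpower\otimes_{\IZ_p\qpower}\bigl(\bigoplus_{k}(K^*_{\alpha,e(v)}[-k])^{\oplus\binom{n-1}{k}}\bigr)$, and cohomology commutes with direct sums and degree shifts, it suffices to show that for every $0\leqslant e\leqslant \alpha$ the cohomology groups of $\widehat{R}_p^{(\alpha)}\qpower\otimes_{\IZ_p\qpower}K_{\alpha,e}^*$ are $p$-torsion free.

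First, I would pin down the structural properties of the rings involved. The assumption that $A$ admits a faithfully flat map into a perfect $\Lambda$-ring forces $\widehat{A}_p$ to be $p$-torsion free (any perfect $\Lambda$-ring has this property and it descends faithfully flatly). Since $\widehat{R}_p^{(\alpha)}$ is $p$-completely flat over $\widehat{A}_p$ by construction, it too is $p$-torsion free. By Weierstraß preparation, $q^{p^e}-1$ is a distinguished polynomial of degree $p^e$ in $\IZ_p\qpower$, so $\IZ_p\qpower/(q^{p^e}-1)$ is a free $\IZ_p$-module of finite rank, and hence
\begin{equation*}
\widehat{R}_p^{(\alpha)}\qpower/(q^{p^e}-1)\cong \widehat{R}_p^{(\alpha)}\otimes_{\IZ_p}\IZ_p\qpower/(q^{p^e}-1)
\end{equation*}
is a finite free module over the $p$-torsion free ring $\widehat{R}_p^{(\alpha)}$, hence $p$-torsion free. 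The same argument applies with $p^\alpha$ in place of $p^e$.

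Next, I would compute the cohomology of $\widehat{R}_p^{(\alpha)}\qpower\otimes_{\IZ_p\qpower}K_{\alpha,e}^*$ directly. This is the two-term complex
\begin{equation*}
\widehat{R}_p^{(\alpha)}\qpower/(q^{p^\alpha}-1)\xrightarrow{q^{p^e}-1}\widehat{R}_p^{(\alpha)}\qpower/(q^{p^\alpha}-1)
\end{equation*}
concentrated in degrees $0$ and $1$. The $H^0$ is the kernel of multiplication by $q^{p^e}-1$, hence a submodule of $\widehat{R}_p^{(\alpha)}\qpower/(q^{p^\alpha}-1)$, which is $p$-torsion free by the previous paragraph. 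Using the identity $q^{p^\alpha}-1=[p^{\alpha-e}]_{q^{p^e}}(q^{p^e}-1)$, the $H^1$ is identified with $\widehat{R}_p^{(\alpha)}\qpower/(q^{p^e}-1)$, which is again $p$-torsion free by the previous paragraph. Assembling the pieces according to \cref{lem:qHatgeAdditive} yields that $\H^*((\qHodge_{R/A,\square}^*)_p^\complete/(q^{p^\alpha}-1))$ is $p$-torsion free, as claimed.

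I do not foresee any substantive obstacle here; the whole calculation is essentially bookkeeping once \cref{lem:qHatgeAdditive} is in hand. The only mild subtlety is the identification of $H^1$ via the factorisation of $q^{p^\alpha}-1$ through $q^{p^e}-1$, but this is an elementary manipulation in $\IZ_p\qpower$.
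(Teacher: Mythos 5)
Your proof is correct and takes essentially the same route as the paper: both reduce via \cref{lem:qHatgeAdditive} to the building blocks $K_{\alpha,e}^*$ and then read off that the $\H^0$ and $\H^1$ terms (after base change to $\widehat{R}_p^{(\alpha)}\qpower$) are $p$-torsion free. The one small wrinkle is your justification that $\widehat{R}_p^{(\alpha)}$ is $p$-torsion free via "$p$-complete flatness over $\widehat{A}_p$ by construction" --- this flatness isn't entirely immediate from the definition, and a slicker route is that $\widehat{R}_p^{(\alpha)}\hookrightarrow\widehat{R}_p$ is a split injection (with $\widehat{R}_p$ free over $\widehat{R}_p^{(\alpha)}$ on monomial basis, as established in \cref{par:FrobeniusDecompositions}), and $\widehat{R}_p$ is $p$-torsion free; but this is a presentational point, not a gap.
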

	\begin{proof}
		By \cref{lem:qHatgeAdditive}, each cohomology group of $(\qHodge_{R/A,\square}^*)_p^\complete/(q^{p^\alpha}-1)$ is a direct sum of terms of the form 
		\begin{equation*}
			\widehat{R}_p^{(\alpha)}\qpower\otimes_{\IZ_p\qpower}\H^0(K^*_{\alpha,e})\quad\text{or}\quad\widehat{R}_p^{(\alpha)}\qpower\otimes_{\IZ_p\qpower}\H^1(K^*_{\alpha,e})
		\end{equation*}
		for some $e\geqslant 1$. But $\H^0(K^*_{\alpha,e})=[p^{\alpha-e}]_{q^{p^e}}\IZ_p\qpower/(q^{p^\alpha}-1)\cong \IZ_p\qpower/(q^{p^e}-1)$ and also $\H^1(K^*_{\alpha,e})\cong \IZ_p\qpower/(q^{p^e}-1)$, so everything is indeed $p$-torsion free.
	\end{proof}

	\subsection{The \texorpdfstring{$q$}{q}-Hodge complex II: Multiplicative Structure}\label{subsec:qHodgeMultiplicative}
	Our goal in this subsection is to equip the cohomologies $(\H^*(\qHodge_{R/A,\square}^*/(q^m-1)))_{m\in\IN}$ with the structure of a $q$-$FV$-system of differential-graded $A$-algebras over $R$ as defined in \cref{def:qFVSystemOfCDGA}. This will allow us to formulate a relative version of \cref{thm:qDeRhamWittGlobalIntro}, which will eventually be proved in \crefrange{subsec:pTypical}{subsec:Global}. Throughout, we fix a framed smooth $A$-algebra $(R,\square)$ and use the shorthand $\H_{R/A,\square}^*(m)\coloneqq \H^*(\qHodge_{R/A,\square}^*/(q^m-1))$. Let's start constructing the various pieces of structure.
	
	\begin{numpar}[Differential-graded algebra structure.]\label{par:BocksteinDifferential}
		We know from \cref{par:MultiplicativeStructure} that $\qHodge_{R/A,\square}^*$ can be equipped with a non-commutative differential-graded $A[q]$-algebra structure. This induces a graded algebra structure on $\H_{R/A,\square}^*(m)$, which turns out to be commutative as we'll see in \cref{lem:HWithBocksteinIsCDGA} below.
		
		This leaves us with the question of how to define the differentials. We'll use the Bockstein differentials: Our assumptions on $A$ and $R$ guarantee that $q^{m}-1$ is a nonzerodivisor in $R\qpower$, hence we have a short exact sequence of complexes
		\begin{equation*}
			0\longrightarrow \qHodge_{R/A,\square}^*/(q^m-1)\xrightarrow{\!(q^m-1)\!}\qHodge_{R/A,\square}^*/(q^{m}-1)^2\longrightarrow \qHodge_{R/A,\square}^*/(q^m-1)\longrightarrow 0\,.
		\end{equation*}
		The associated connecting morphisms $\beta_m\colon \H_{R/A,\square}^*(m)\rightarrow \H_{R/A,\square}^{*+1}(m)$ are called \emph{Bockstein differentials}. As the name suggests, $\beta_m$ turns the graded $A\qpower$-module $\H_{R/A,\square}^*(m)$ into a cochain complex (see \cite[\stackstag{0F7N}]{Stacks} for example).
	\end{numpar}
	\begin{lem}\label{lem:HWithBocksteinIsCDGA}
		The graded algebra structure from \cref{par:MultiplicativeStructure} and the Bockstein differentials constructed in \cref{par:BocksteinDifferential} make $(\H_{R/A,\square}^*(m),\beta_m)$ a commutative differential-graded $A\qpower$-algebra.
	\end{lem}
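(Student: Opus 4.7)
The plan is to verify three properties of $(\H_{R/A,\square}^*(m), \beta_m)$: the Leibniz rule for $\beta_m$, the identity $\beta_m^2 = 0$, and graded commutativity of the induced multiplication (the $A\qpower$-linearity of all maps being manifest). The first two are formal consequences of the Bockstein construction applied to our (non-commutative) dg-algebra, while commutativity is the main content.

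For the Leibniz rule, I would choose cocycle representatives $\omega, \eta$ modulo $(q^m-1)$ and lifts $\tilde\omega, \tilde\eta \in \qHodge_{R/A,\square}^*/(q^m-1)^2$. By definition of $\beta_m$, one has $d\tilde\omega = (q^m-1)\omega'$ and $d\tilde\eta = (q^m-1)\eta'$ with $[\omega'] = \beta_m[\omega]$ and $[\eta'] = \beta_m[\eta]$. Since $(q-1)\q\nabla$ is a graded derivation for the non-commutative product on $\qHodge_{R/A,\square}^*/(q^m-1)^2$,
\begin{equation*}
	d(\tilde\omega \wedge \tilde\eta) = (q^m-1)\bigl(\omega' \wedge \tilde\eta + (-1)^{|\omega|}\tilde\omega \wedge \eta'\bigr),
\end{equation*}
and dividing by $(q^m-1)$ (which is a nonzerodivisor on $\qHodge^*$) and reducing modulo $(q^m-1)$ yields the Leibniz rule. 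Lifting once more to $\qHodge_{R/A,\square}^*/(q^m-1)^3$ and using $d^2 = 0$ gives $\beta_m^2 = 0$ by the standard argument.

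For graded commutativity, I would first treat monomial cocycles $\omega = f\d T_I$ and $\eta = g\d T_J$. If $I \cap J \neq \emptyset$, both products vanish; otherwise, the rule $\d T_i \wedge h = \gamma_i(h)\d T_i$ and commutativity of $R\qpower$ yield
\begin{equation*}
	\omega \wedge \eta - (-1)^{|I||J|}\eta \wedge \omega = \bigl(f(\gamma_I(g) - g) - g(\gamma_J(f) - f)\bigr)\d T_I \wedge \d T_J.
\end{equation*}
The telescoping $\gamma_I - 1 = \sum_{s=1}^{|I|}\gamma_{I_{<s}}(\gamma_{i_s} - 1)$ together with $\gamma_{i_s} - 1 = T_{i_s}(q-1)\q\partial_{i_s}$ writes each summand in terms of $(q-1)\q\partial_{i_s}g$. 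Since $i_s \in I$ lies outside $J$, the cocycle condition $d\eta = (q-1)\sum_{k\notin J}\q\partial_k g \cdot \d T_k \wedge \d T_J \equiv 0 \mod (q^m-1)$ forces $(q-1)\q\partial_{i_s}g \equiv 0 \mod (q^m-1)$, whence $f(\gamma_I(g) - g) \equiv 0 \mod (q^m-1)$. The symmetric argument handles the other summand, so the commutator vanishes identically on the chain level.

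The main obstacle is extending this to general cocycles $\omega = \sum_I f_I \d T_I$, whose cocycle condition is a single linear relation across monomials rather than a constraint on individual summands. My plan is either to construct an explicit chain homotopy $H\colon \bigl(\qHodge_{R/A,\square}^*/(q^m-1)\bigr)^{\otimes 2} \to \qHodge_{R/A,\square}^{*-1}/(q^m-1)$ for the graded-commutator chain map, guided by the monomial formula above and the identity $\gamma_i - 1 = T_i(q-1)\q\partial_i$, or else to apply \cref{lem:DerivedBeauvilleLaszlo} and verify commutativity locally. After inverting $m$ the factor $[m]_q$ becomes a unit, so $\qHodge_{R/A,\square}^*/(q^m-1)$ collapses to $\Omega^*_{R/A}\otimes\IZ[\localise{m}]$ with vanishing differential and standard commutative wedge product (as $\gamma_i \equiv \mathrm{id} \mod (q-1)$, which itself vanishes modulo $(q^m-1)$ once $[m]_q$ is inverted); after $p$-completion for each $p \mid m$, the Frobenius decomposition of \cref{par:FrobeniusDecompositions,par:FrobeniusDecompositionsII} together with \cref{lem:qHatgeAdditive} splits the complex into pieces whose cohomology admits canonical monomial representatives, to which the chain-level computation above applies directly.
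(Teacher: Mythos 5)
Your treatment of the Leibniz rule and of $\beta_m^2=0$ via lifts and the derivation property of $(q-1)\q\nabla$ is correct; the paper sketches essentially the same degree-$0$ computation and declares higher degrees \enquote{similar}. The monomial-cocycle commutator calculation is also correct. The obstacle you isolate at the end is genuine: for a general cocycle $\omega=\sum_j f_j\,\d T_{j_1}\wedge\dots\wedge\d T_{j_k}$, the condition $(q-1)\q\nabla(\omega)\equiv 0\pmod{q^m-1}$ is one relation per basis element of $\qHodge_{R/A,\square}^{k+1}$ mixing the contributions of the various $\q\partial_\ell$'s, and hence does not yield $(q-1)\q\partial_i f_j\equiv 0$ term by term. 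Concretely, $\omega=T_2\,\d T_1+T_1\,\d T_2$ over $A[T_1,T_2]$ has $\q\nabla(\omega)=0$ identically, yet $\d T_1\wedge\omega+\omega\wedge\d T_1=(q-1)T_1\,\d T_1\wedge\d T_2$, which is the coboundary of $-T_1T_2\,\d T_1$ but is \emph{not} zero modulo $q^m-1$ for $m>1$. So graded commutativity fails on cocycle representatives in general and has to be argued at the level of cohomology. This is precisely the step at which the paper's own proof is terse: it asserts, as an \enquote{in particular}, that the cocycle condition forces $\sum_j(\gamma_i(f_j)-f_j)\,\d T_i\wedge\d T_{j_1}\wedge\dots\wedge\d T_{j_k}\equiv 0$ for each fixed $i$ --- exactly the term-wise statement you flag --- and the example above shows this congruence holds only up to a coboundary, not on the nose.

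Your proposed fallback through \cref{lem:DerivedBeauvilleLaszlo} is therefore a genuinely different and more defensible route, consistent with the techniques the paper uses later. After inverting $m$ everything collapses as you say. But the $p$-complete branch does not close the argument as stated: the identification of \cref{lem:qHatgeAdditive} passes through the upper-triangular change of basis of \cref{lem:Ke1xKe2}, so cohomology classes in a Frobenius piece are generally not represented by single monomials from the basis of \cref{par:FrobeniusDecompositions}; and even for basis monomials in two distinct pieces $v$, $v'$, the commutator does not vanish at the chain level (each $\gamma_\ell$ acts on the $v$-piece as multiplication by the scalar $q^{v_\ell}$ modulo $q^{p^\alpha}-1$, and the resulting factors $q^{\sum_{j\in J}v'_j}$ and $q^{\sum_{j'\in J'}v_{j'}}$ need not agree). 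The decisive step --- showing the commutator of two general cocycles is always a coboundary --- is supplied by neither the explicit-homotopy route nor the localisation route as you have written them.
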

	\begin{proof}
		To show commutativity, let $\omega\in \qHodge_{R/A,\square}^k$ be a $k$-form representing an element in $\H_{R/A,\square}^k(m)$. It will be enough to show $\d T_i\wedge \omega\equiv (-1)^k\omega\wedge\d T_i\mod q^m-1$ for all $i=1,\dotsc,n$. To see this, write
		\begin{equation*}
			\omega=\sum_{j\in J}f_j\d T_{j_1}\wedge\dotsb\wedge \d T_{j_k}
		\end{equation*}
		for some finite indexing set $J$. Our assumption on $\omega$ reads $0\equiv (q-1)\q\nabla(\omega)\mod q^m-1$. In particular, it implies that
		\begin{equation*}
			0\equiv \sum_{j\in J}\bigl(\gamma_i(f_j)-f\bigr)\d T_i\wedge \d T_{j_1}\wedge\dotsb\wedge \d T_{j_k}\mod (q^m-1)\,.
		\end{equation*} 
		But $\d T_i\wedge f_j \d T_i\wedge \d T_{j_1}\wedge\dotsb\wedge \d T_{j_k}=\gamma_i(f)\d T_i\wedge \d T_{j_1}\wedge\dotsb\wedge \d T_{j_k}$ holds by definition of the multiplication on $\qHodge_{R/A,\square}^*$. So the congruence above is exactly what we need.
		
		To show the graded Leibniz rule, we'll only verify that $\beta_m\colon \H_{R/A,\square}^0(m)\rightarrow \H_{R/A,\square}^1(m)$ is a derivation; the arguments in higher degrees are similar. Let $f,g\in R\qpower$ be elements whose images modulo $(q^m-1)$ are contained in $\H_{R/A,\square}^0(m)$. Then $(q-1)\q\nabla(f)\in \Omega_{R/A}^1\qpower$ is divisible by $(q^m-1)$, so that $\q\nabla(f)$ is divisible by $[m]_q$. A quick unravelling then shows that $\beta_m(f)$ is the image of 
		\begin{equation*}
			\frac{(q-1)\q\nabla(f)}{(q^m-1)}=\frac{\q\nabla(f)}{[m]_q}
		\end{equation*}
		in $\H_{R/A,\square}^1(m)$, and likewise for $\beta_m(g)$. Furthermore, if $\q\nabla f$ is divisible by $[m]_q$, then $\gamma_i(f)-f$ must be divisible by $q^m-1$ for all $i=1,\dotsc,n$. Thus, by the $q$-Leibniz rule,
		\begin{equation*}
			\frac{\q\partial_{i}(fg)}{[m]_q}= \gamma_i(f)\frac{\q\partial_ig}{[m]_q}+g\frac{\q\partial_if}{[m]_q}\equiv f\frac{\q\partial_ig}{[m]_q}+g\frac{\q\partial_if}{[m]_q}\mod (q^m-1)\,.
		\end{equation*}
		This shows $\beta_m(fg)=f\beta_m(g)+g\beta_m(f)$, which means that the Bockstein differential $\beta_m$ satisfies the Leibniz rule, as desired.
	\end{proof}
	Next, we need to construct $A[q]$-algebra maps $\qIW_m(R/A)\rightarrow \H^0_{R/A,\square}(m)$ for all $m$. If $R=A[T_1,\dotsc,T_n]$ and $\square$ is just the identity, then these maps are easily defined: We equip $R$ with the $\Lambda$-$A$-algebra structure in which $\psi^p(T_i)=T_i^p$ and consider the composition
	\begin{equation*}
		\qIW_m(R/A)\xrightarrow{c_{m/A}} R[q]/(q^m-1)\longrightarrow R\qpower/(q^m-1)\,,
	\end{equation*}
	where $c_{m/A}$ is the comparison map from \cref{par:RelativeComparisonMaps}. It's straightforward to see (but we don't need it at this point) that this map induces an isomorphism $\qIW_m(R/A)_{(q-1)}^\complete\rightarrow \H_{R/A,\square}^0(m)$.%
	\footnote{One can use the $\Lambda$-structure on $R$ to decompose $\qHodge_{R/A,\square}^*/(q^m-1)$ as in \cref{par:FrobeniusDecompositions}. Then $\H_{R/A,\square}^0(m)$ can be read off and it matches up with the $(q-1)$-completion of the description of $\qIW_m(R/A)$ from the proof of \cref{lem:RelativeqWittBaseChange}.}
	For general $R$, however, the $\Lambda$-structure on $A[T_1,\dotsc,T_n]$ doesn't extend along the étale morphism $\square\colon A[T_1,\dotsc,T_n]\rightarrow R$. Instead, we only get $\delta$-structures on the $p$-completions $\widehat{R}_p$ for all primes~$p$. So instead our strategy will be to use an \emph{arithmetic fracture pullback square}.
	\begin{numpar}[Arithmetic fracture pullback squares.]\label{par:DerivedBeauvilleLaszlo}
		For any $M\in\Dd(\IZ)$, and any integer $N\neq 0$, the canonical commutative square
		\begin{equation*}
			\begin{tikzcd}
				M\rar\dar\drar[pullback] & \prod_{p\mid N}\widehat{M}_p\dar\\
				M\bigl[\localise{N}\bigr]\rar & \prod_{p\mid N}\widehat{M}_p\bigl[\localise{p}\bigr]
			\end{tikzcd}
		\end{equation*}
		is a pullback square in the derived $\infty$-category $\Dd(\IZ)$. Indeed, this is obvious after applying any of the functors $(-)[1/N]$ or $(-)_{p}^\complete$ for $p\mid N$, and these are jointly conservative by \cref{lem:DerivedBeauvilleLaszlo}.
	\end{numpar}
	
	We'll now study how these squares appear in the situation at hand.
	
	\begin{lem}\label{lem:qWittBeauvilleLaszloSquare}
		Let $N\neq 0$ be divisible by $m$. For all primes~$p$ let $\alpha_p\coloneqq v_p(m)$ denote the exponent of $p$ in the prime factorisation of $m$. Then the derived $(q-1)$-completion of the arithmetic fracture pullback square from \cref{par:DerivedBeauvilleLaszlo} for $\qIW_m(R/A)$ takes the form
		\begin{equation*}
			\begin{tikzcd}[column sep=huge]
				\qIW_m(R/A)_{(q-1)}^\complete\rar["\left(F_{m/p^{\smash{\alpha_p}}}\right)_p"]\dar["\gh_m"']\drar[pullback] & \prod_{p\mid N}\qIW_{p^{\smash{v_p(m)}}}(\widehat{R}_p/A)_{(p,q-1)}^\complete\dar["\left(\gh_{p^{\smash{\alpha_p}}}\right)_p"]\\
				R\bigl[\localise{N}\bigr]\rar & \prod_{p\mid N}\widehat{R}_p\bigl[\localise{p}\bigr]
			\end{tikzcd}
		\end{equation*}
	\end{lem}
	\begin{proof}
		We start with the top right corner. The Frobenius and Verschiebung
		\begin{equation*}
			F_{m/p^{\alpha_p}}\colon \qIW_m(R/A)\longrightarrow \qIW_{p^{\alpha_p}}(R/A)\quad\text{and}\quad V_{m/p^{\alpha_p}}\colon \qIW_{p^{\alpha_p}}(R/A)\longrightarrow \qIW_m(R/A)
		\end{equation*}
		become isomorphisms after derived $(p,q-1)$-completion because $F_{m/p^{\alpha_p}}\circ V_{m/p^{\alpha_p}}=m/p^{\alpha_p}$ and $V_{m/p^{\alpha_p}}\circ F_{m/p^{\alpha_p}}=[m/p^{\alpha_p}]_{q^{p^{\alpha_p}}}$ are invertible in $\IZ[q]_{(p,q-1)}^\complete$. This explains why $\qIW_{p^{\alpha_p}}(-/A)$ shows up in the diagram instead of $\qIW_m(-/A)$. Furthermore, the canonical morphism $\qIW_{p^{\alpha_p}}(R/A)\rightarrow \qIW_{p^{\alpha_p}}(\widehat{R}_p/A)$ becomes an isomorphism after $p$-completion. Indeed, this can be checked after $p$-completed base change along the faithfully flat map $A\rightarrow A_\infty$. Via \cref{lem:RelativeqWittBaseChange}, we're then reduced to a question about absolute $q$-Witt vectors, which was addressed in \cref{cor:qWittpCompletion}.
		
		The bottom corners are similar. Since $m$ and $[m]_q$ are invertible in $\IZ[1/N,q]_{(q-1)}^\complete$, the same argument as above shows that the ghost map $\gh_m\colon \qIW_m(R/A)\rightarrow R$, or equivalently, the Frobenius $F_m$, becomes an isomorphism after $(-)[1/N]_{(q-1)}^\complete$. Also note that $R[1/N]$ doesn't need to be $(q-1)$-completed because it's already $(q-1)$-torsion. This takes care of the bottom left corner; the argument for the bottom right corner is analogous.
	\end{proof}
	\begin{rem}
		Observe that all factors of the pullback from \cref{lem:qWittBeauvilleLaszloSquare} are static (in the sense of \cref{par:Notation}), so we don't just get a pullback in the derived $\infty$-category, but also an honest pullback of $A[q]$-modules. To see this, it's enough to check that $\qIW_m(R/A)$ and $\qIW_{p^{\alpha_p}}(\widehat{R}_p/A)$ have bounded $(q-1)^\infty$ and $p^\infty$-torsion. This can be done after base change along $A\rightarrow A_\infty$, where it reduces via \cref{lem:RelativeqWittBaseChange} to questions about absolute $q$-Witt vectors that were addressed in \cref{cor:qWittpTorsion,cor:qWittCompletion}. The same conclusion is true (but for easier reasons) for the pullback square in \cref{lem:OtherBeauvilleLaszloSquare} below.
	\end{rem}
	\begin{lem}\label{lem:OtherBeauvilleLaszloSquare}
		With notation as in \cref{lem:qWittBeauvilleLaszloSquare}, the derived $(q-1)$-completion of the arithmetic fracture pullback square from \cref{par:DerivedBeauvilleLaszlo} for $R\qpower/(q^m-1)$ takes the form
		\begin{equation*}
			\begin{tikzcd}
				R\qpower/(q^m-1)\rar\dar\drar[pullback] & \prod_{p\mid N}\widehat{R}_p\qpower/(q^{p^{\alpha_p}}-1)\dar\\
				R[1/N]\rar & \prod_{p\mid N}\widehat{R}_p\bigl[\localise{p}\bigr]
			\end{tikzcd}
		\end{equation*}
	\end{lem}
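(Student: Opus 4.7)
The plan is to apply the arithmetic fracture square of \cref{par:DerivedBeauvilleLaszlo} to $M=R\qpower/(q^m-1)$ and then derived $(q-1)$-complete everywhere, using that derived completion commutes with limits and in particular with pullback squares in $\Dd(\IZ)$. After that, all that remains is to identify each corner with the claimed value, which will be a matter of showing that the relevant quotients are already $(q-1)$-complete and that certain $q$-analogues become units after appropriate localisations/completions.

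Write $m = p^{\alpha_p}n_p$ with $\gcd(n_p,p)=1$ for each prime factor $p\mid N$; note that $q^m-1 = (q^{p^{\alpha_p}}-1)[n_p]_{q^{p^{\alpha_p}}}$. First I would handle the top-right corner: since $R\qpower$ is $p$-torsion-free (as $R$ is, by our perfectly-covered hypothesis) and $q^m-1$ is a nonzerodivisor, the derived $p$-completion of $M$ is just $\widehat{R}_p\qpower/(q^m-1)$. The element $[n_p]_{q^{p^{\alpha_p}}}$ reduces to $n_p$ modulo $(q-1)$, and $n_p\in\IZ_p^\times$, so $[n_p]_{q^{p^{\alpha_p}}}$ is a unit in the $(q-1)$-complete ring $\widehat{R}_p\qpower$. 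Hence $(q^m-1)$ and $(q^{p^{\alpha_p}}-1)$ generate the same ideal there, giving $\widehat{M}_p \cong \widehat{R}_p\qpower/(q^{p^{\alpha_p}}-1)$; this is already $(q-1)$-complete.

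Next, the bottom-left corner. Since $m\mid N$, the integer $m$ is invertible in $R[1/N]$. The same argument as above shows that $[m]_q$ is a unit in $R[1/N]\qpower$, so $q^m-1$ and $q-1$ generate the same ideal in this ring. A straightforward Koszul computation then gives $M[1/N]/^\L(q-1)^k \simeq R[1/N]$ for every $k\geqslant 1$, which means that $M[1/N]$ is already derived $(q-1)$-complete and equals $R[1/N]$. The bottom-right corner is handled analogously: applying the top-right identification and then inverting $p$, the element $[p^{\alpha_p}]_q$ becomes a unit (as $[p^{\alpha_p}]_q\equiv p^{\alpha_p}\mod(q-1)$ and $p$ is now invertible), so $\widehat{R}_p\qpower/(q^{p^{\alpha_p}}-1)[1/p]$ reduces to $\widehat{R}_p[1/p]$.

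There is no real obstacle here; the only subtlety worth flagging is the verification that $(q-1)$-completion genuinely preserves the pullback when one of the corners, $M[1/N]$, is obtained by a localisation that is not compatible with $(q-1)$-adic completion in general. But this is a non-issue for us since all four corners end up being already derived $(q-1)$-complete by the computations above, and the $(q-1)$-completion functor, being a limit in $\Dd(\IZ)$, automatically preserves the finite limit of pullback squares from \cref{par:DerivedBeauvilleLaszlo}.
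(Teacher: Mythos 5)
Your overall strategy matches the paper's: feed $M=R\qpower/(q^m-1)$ into the arithmetic fracture square, derived $(q-1)$-complete it (which preserves pullbacks because derived completion is exact), and identify the four corners using the units $[n_p]_{q^{p^{\alpha_p}}}$ and $[m]_q$; the top two corners are handled correctly. However, your bottom corners contain a genuine error. You claim $M[\localise{N}]/^\L(q-1)^k\simeq R[\localise{N}]$, that $M[\localise{N}]$ is already derived $(q-1)$-complete and equals $R[\localise{N}]$, and finally that all four corners are already complete. None of these hold. The crux is that $[m]_q$ is a unit in the completed ring $R[\localise{N}]\qpower$ but \emph{not} in the localisation $R\qpower[\localise{N}]$ --- its formal geometric-series inverse does not converge before re-completing at $(q-1)$ (already $1+q$ is not invertible in $\IZ\qpower[\localise{2}]$). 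Therefore $M[\localise{N}]=R\qpower[\localise{N}]/(q^m-1)$ has the nonzero $(q-1)$-local quotient $R\qpower[\localise{N}]/([m]_q)$, so it is neither $R[\localise{N}]$ nor derived $(q-1)$-complete. And since $(M[\localise{N}])_{(q-1)}^\complete\simeq R[\localise{N}]$ is $(q-1)$-torsion, one has $M[\localise{N}]/^\L(q-1)^k\simeq R[\localise{N}]/^\L(q-1)^k\simeq R[\localise{N}]\oplus R[\localise{N}][1]$, which carries a shifted summand; only the \emph{underived} quotients are $R[\localise{N}]$.

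The identification $(M[\localise{N}])_{(q-1)}^\complete\simeq R[\localise{N}]$ that you actually need is nonetheless correct; one must simply complete first and invert $[m]_q$ afterwards: $(M[\localise{N}])_{(q-1)}^\complete\simeq (R\qpower[\localise{N}])_{(q-1)}^\complete/^\L(q^m-1)\simeq R[\localise{N}]\qpower/(q^m-1)=R[\localise{N}]$. The paper reaches the same conclusion by referring to the argument of \cref{lem:qWittBeauvilleLaszloSquare}: the projection $R\qpower/(q^m-1)\rightarrow R$ and multiplication by $[m]_q\colon R\rightarrow R\qpower/(q^m-1)$ become mutually inverse after $(-)[\localise{N}]_{(q-1)}^\complete$, the two composites being $m$ and $[m]_q$, both units in $\IZ[\localise{N}]\qpower$; and $R[\localise{N}]$ is $(q-1)$-torsion, hence already complete. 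The same fix applies to your bottom-right corner. Your closing remark should appeal only to exactness of derived completion (as you already do) and drop the false statement that the bottom corners are already $(q-1)$-complete.
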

	\begin{proof}
		One can argue as in \cref{lem:qWittBeauvilleLaszloSquare}, but with the Frobenius and Verschiebung replaced by the canonical projections $R\qpower/(q^m-1)\rightarrow R\qpower/(q^{p^{\alpha_p}}-1)$ and the multiplication maps $[m/p^{\alpha_p}]_{q^{p^{\alpha_p}}}\colon R\qpower/(q^{p^{\alpha_p}}-1)\rightarrow R\qpower/(q^m-1)$.
	\end{proof}
	
	\begin{con}\label{con:qWittToH0}
		Fix a prime $p$. We've already seen in \cref{par:FrobeniusLift} that the étale framing $\square\colon A[T_1,\dotsc,T_n]\rightarrow R$ determines a Frobenius lift on the $p$-completion $\widehat{R}_p$, which turns $\widehat{R}_p$ into a $\delta$-$A$-algebra by $p$-torsion freeness. By \cref{rem:LambdaRingIsntNecessary}, $\qIW_{p^{\alpha_p}}(\widehat{R}_p/A)$ only depends on the $\Lambda_p$-structure on $A$, that is, on the $\delta$-structure, and the relative comparison map
		\begin{equation*}
			c_{p^{\alpha_p}/A}\colon \qIW_{p^{\alpha_p}}(\widehat{R}_p/A)\longrightarrow \widehat{R}_p[q]/(q^{p^{\alpha_p}}-1)
		\end{equation*}
		from \cref{par:RelativeComparisonMaps} can be defined using only a $\delta$-$A$-algebra structure on $\widehat{R}_p$. These comparison maps for all $p\mid N$ induce a morphism between the pullback squares from \cref{lem:qWittBeauvilleLaszloSquare,lem:OtherBeauvilleLaszloSquare} and hence a morphism
		\begin{equation*}
			c_{m,\square}\colon \qIW_m(R/A)_{(q-1)}^\complete\longrightarrow R\qpower/(q^m-1)\,,
		\end{equation*}
		even though there's usually no $\Lambda$-$A$-algebra structure on $R$. Furthermore, it's easy to see that $c_{m,\square}$ lands in $\H^0_{R/A,\square}(m)$. Indeed, we can write the free $R\qpower/(q^m-1)$-module $\qHodge_{R/A,\square}^1$ as a similar pullback as in \cref{lem:OtherBeauvilleLaszloSquare} and then it suffices to check that we get $0$ in each factor. For the bottom factors this is trivial, since the differentials of $\qHodge_{R/A,\square}^*$ vanish modulo $q-1$. So it remains to show that the composition
		\begin{equation*}
			\qIW_{p^{\alpha_p}}(\widehat{R}_p/A)\longrightarrow \widehat{R}_p\qpower/(q^{p^{\alpha_p}}-1)\xrightarrow{(q-1)\q\nabla}(\Omega_{R/A}^1)_p^\complete\qpower/(q^{p^{\alpha_p}}-1)
		\end{equation*}
		vanishes, or in other words, that map $c_{p^{\alpha_p}/A}$ lands in $\H^0((\qHodge_{R/A,\square}^*)_p^\complete/(q^{p^{\alpha_p}}-1))$ for all $p\mid N$. But thanks to \cref{par:FrobeniusDecompositions} and \cref{par:FrobeniusDecompositionsII}, we can determine that cohomology group. If $v=(v_1,\dotsc,v_n)$ is a multi-index and $e$ is defined as in \cref{lem:qHatgeAdditive}, then the $v$\textsuperscript{th} component of our desired $\H^0$ is given by
		\begin{equation*}
			\H^0\bigl((\qHodge_{R/A,\square}^{*,v})_p^\complete/(q^{p^{\alpha_p}}-1)\bigr)\cong [p^{\alpha_p-e}]_{q^{p^e}}T_1^{v_1}\dotsm T_n^{v_n}\widehat{R}_p^{(\alpha_p)}\qpower/(q^{p^{\alpha_p}}-1)\,.
		\end{equation*}
		By unravelling the definitions, it's clear that $c_{p^{\alpha_p}/A}$ really lands the direct sum of over all $v$ of these groups. This finishes the construction of a $\qIW_m(R/A)$-algebra structure on $\H^0_{R/A,\square}(m)$.
	\end{con}
	\begin{numpar}[Frobenius and Verschiebung.]\label{par:FrobeniusVerschiebung}
		For $d\mid m$, we define the Frobenius and the Verschiebung
		\begin{equation*}
			F_{m/d}\colon \H^*_{R/A,\square}(m)\longrightarrow \H^*_{R/A,\square}(d)\quad\text{and}\quad V_{m/d}\colon  \H^*_{R/A,\square}(d)\longrightarrow \H^*_{R/A,\square}(m)
		\end{equation*}
		to be the maps induced by the canonical projection $\qHodge_{R/A,\square}^*/(q^m-1)\rightarrow \qHodge_{R/A,\square}^*/(q^d-1)$ and the multiplication map $[m/d]_{q^d}\colon \qHodge_{R/A,\square}^*/(q^d-1)\rightarrow\qHodge_{R/A,\square}^*/(q^m-1)$, respectively. It's clear that $F_{m/d}$ is a map of graded $A[q]$-algebras and $V_{m/d}$ is a map of graded $A[q]$-modules. In fact, $V_{m/d}$ is a map of graded $\H_{R/A,\square}^*(m)$-modules if we equip $\H_{R/A,\square}^*(d)$ with the module structure induced from $F_{m/d}$.
		
		These maps are compatible with the Frobenii and Verschiebungen on relative $q$-Witt vectors under the maps constructed in \cref{con:qWittToH0}, which is straightforward to check from the definition and \cref{cor:qWittToCyclicRing}. Furthermore, we clearly have $F_{m/e}=F_{d/e}\circ F_{m/d}$ and $V_{m/e}=V_{m/d}\circ V_{d/e}$ for all chains of divisors $e\mid d\mid m$. The condition $V_{m/d}(\omega F_{m/d}(\eta))=V_{m/d}(\omega)\eta$ follows from our observation that $V_{m/d}$ is a map of graded $\H_{R/A,\square}^*(m)$-modules.
	\end{numpar}
	According to \cref{rem:Redundancies}, we've thus equipped $(\H_{R/A,\square}^*(m))_{m\in\IN}$ with all the structure from \cref{def:qVSystemOfCDGA}\cref{enum:qDeRhamWittConditionA} and~\cref{enum:qDeRhamWittConditionB} as well as \cref{def:qFVSystemOfCDGA}\cref{enum:qDeRhamWittConditionC}, and it only remains to check the $F$-Teichmüller condition~\cref{enum:TeichmuellerF}. It's easy to see that the $F$-Teichmüller condition always holds up to $(m/d)^{m/d-1}$-torsion, so it will be enough to show the following lemma.
	\begin{lem}\label{lem:H*pTorsionFree}
		For all $m\in\IN$ and all primes $p$, the cohomology $\H_{R/A,\square}^*(m)$ is $p$-torsion free in every degree.
	\end{lem}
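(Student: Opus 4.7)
The plan is to reduce the statement to the prime-power case after $p$-completion, which is \cref{cor:H*pTorsionFree}. Write $m = p^\alpha n$ with $\gcd(p,n) = 1$. The arithmetic observation that drives the reduction, already familiar from \cref{lem:qWittBeauvilleLaszloSquare}, is that
\begin{equation*}
q^m - 1 = (q^{p^\alpha} - 1) \cdot [n]_{q^{p^\alpha}}\,,
\end{equation*}
and $[n]_{q^{p^\alpha}}$ reduces to the unit $n$ modulo $q - 1$, so it is a unit in $\IZ_p\qpower$. Hence the ideals $(q^m - 1)$ and $(q^{p^\alpha} - 1)$ coincide after base change to $\IZ_p\qpower$.

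Next I would observe that $p$-torsion-freeness of any abelian group $N$ is equivalent to $p$-torsion-freeness of $N \otimes_\IZ \IZ_p$. One direction is clear; the other uses that $N[p]$ is an $\IF_p$-module, so $N[p] = N[p] \otimes_\IZ \IZ_p$ (since $\IF_p \otimes_\IZ \IZ_p = \IF_p$), and by flatness of $\IZ_p$ over $\IZ$ this agrees with $(N \otimes \IZ_p)[p]$. Since $\qHodge^*_{R/A,\square}/(q^m - 1)$ is degree-wise $\IZ$-flat, it thus suffices to show that
\begin{equation*}
\H^i\bigl(\qHodge^*_{R/A,\square}/(q^m - 1) \otimes_\IZ \IZ_p\bigr) \cong \H^i_{R/A,\square}(m) \otimes_\IZ \IZ_p
\end{equation*}
has no $p$-torsion for each $i$.

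The key remaining step is to identify this cohomology with $\H^i\bigl((\qHodge^*_{R/A,\square})_p^\complete/(q^{p^\alpha} - 1)\bigr)$, which is $p$-torsion-free by \cref{cor:H*pTorsionFree}. For this I would compare the two complexes modulo $p$: in $\IF_p\qpower$ one again has $(q^m - 1) = (q^{p^\alpha} - 1)$, because $q^n - 1 = (q-1) \cdot [n]_q$ with $[n]_q$ a unit modulo $q - 1$, so that modulo $p$ both complexes reduce to $(\qHodge^*_{R/A,\square})_p^\complete/(p, q^{p^\alpha} - 1)$. Combined with \cref{cor:H*pTorsionFree}, the Bockstein-type universal coefficient sequences
\begin{equation*}
0 \longrightarrow \H^i(C)/p \longrightarrow \H^i(C/p) \longrightarrow \H^{i+1}(C)[p] \longrightarrow 0
\end{equation*}
applied to both complexes then force the induced map on cohomology to be an isomorphism by induction on $i$ (using that the target's $[p]$-terms vanish).

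The main obstacle will be the cohomology comparison in the previous paragraph: tensoring with $\IZ_p$ does not commute with the $(q-1)$-adic completion built into the $q$-Hodge complex, so the source complex is genuinely smaller than the $p$-completed target, and the two are not quasi-isomorphic as complexes (only on cohomology). A clean way around this is to apply the arithmetic fracture pullback from \cref{par:DerivedBeauvilleLaszlo} to the complex $\qHodge^*_{R/A,\square}/(q^m - 1) \in \Dd(\IZ)$ at the single prime $p$ and run the associated long exact sequence, using that $\H^*$ of the $p$-completed factor is $p$-torsion-free by \cref{cor:H*pTorsionFree} and that $\H^*$ of the $[1/p]$-localised factors are $\IZ[1/p]$-modules and thus automatically $p$-torsion-free. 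This reduces $p$-torsion classes in $\H^i_{R/A,\square}(m)$ to the image of a connecting map from a $\IQ_p$-module, and an iteration combined with degree-wise $p$-torsion-freeness of the complex rules them out.
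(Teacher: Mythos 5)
Your strategy is right in broad outline: reduce to the prime-power case after $p$-completion and invoke \cref{cor:H*pTorsionFree}. You also correctly identify the main obstacle—the comparison of $\H^i(\qHodge^*_{R/A,\square}/(q^m-1))$ with the cohomology of the $p$-completed complex, which is genuinely a completion issue and not just a tensoring one. But the workaround you propose does not close the gap, and the argument as written has a real hole at that point.

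Consider the Mayer--Vietoris sequence from the arithmetic fracture square for $M^* = \qHodge^*_{R/A,\square}/(q^m-1)$: a $p$-torsion class in $\H^i(M)$ indeed dies in $\H^i(M[1/p])\oplus\H^i(\widehat{M}_p)$ (the factors are $p$-torsion-free, using \cref{cor:H*pTorsionFree} for the second), and therefore lies in the image of the connecting map $\partial\colon \H^{i-1}(\widehat{M}_p[1/p])\to\H^i(M)$. Now $\H^{i-1}(\widehat{M}_p[1/p])$ is a $\IQ_p$-module, but the image of $\partial$ is a \emph{quotient} of that $\IQ_p$-module by an arbitrary subgroup, and such quotients routinely have $p$-torsion: $\IQ_p/\IZ_p$ is the standard example. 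No amount of iterating the divisibility ($x=\partial(y)=p\,\partial(p^{-1}y)$) turns this into a contradiction. Equivalently, via the Bockstein sequence and $\widehat{M}_p^*/p = M^*/p$, you can identify
\begin{equation*}
\H^i(M)[p]\cong\coker\bigl(\H^{i-1}(M)/p\longrightarrow \H^{i-1}(\widehat{M}_p)/p\bigr)\,,
\end{equation*}
so what you actually need is that $\H^{i-1}(M)\to\H^{i-1}(\widehat{M}_p)$ is surjective modulo $p$ — in other words, that derived $p$-completion commutes with taking cohomology. This is precisely what can fail when the cohomology groups have unbounded $p^\infty$-torsion, and it is precisely the issue the paper's \cref{lem:TechnicalCompletionLemma} is designed to handle.

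The paper's proof supplies the missing ingredient through finiteness: in the case $A=\IZ$ it observes (via \cref{cor:qWittNoetherian} and \cref{con:qWittToH0}) that $\qHodge^*_{R/\IZ,\square}/(q^m-1)$ is a complex of finitely generated modules over the noetherian ring $\qIW_m(R)^\complete_{(q-1)}$, which makes $p$-completion commute with cohomology for free (Artin--Rees); the identification of the $p$-completion with $(\qHodge^*_{R/\IZ,\square})^\complete_p/(q^{p^\alpha}-1)$ then reduces everything to \cref{cor:H*pTorsionFree}. For general perfectly covered $A$ the paper proves a flat base change isomorphism against the $A=\IZ$ case using \cref{prop:vanDerKallen} and \cref{cor:qWittGhostMapsPushout}, followed by another application of \cref{lem:TechnicalCompletionLemma}; your proposal does not address the general $A$ case at all. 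So: the high-level plan is sound, but you need a finiteness or bounded-torsion input to make the completion/cohomology exchange legitimate, and the arithmetic fracture sequence alone is not a substitute for it.
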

	For the proof, we need a technical lemma.
	\begin{lem}\label{lem:TechnicalCompletionLemma}
		Fix a prime $p$ and consider the following three conditions on a cochain complex $M^*$ of $\IZ[q]$-modules:
		\begin{alphanumerate}\itshape
			\item In every degree, $M^*$ and $\H^*(M^*)$ have bounded $(q-1)^\infty$-torsion.\label{enum:ConditionABoundedq-1InftyTorsion}
			\item In every degree, $M^*$ and $\H^*(M^*)$ are $p$-torsion free.\label{enum:ConditionBpTorsionFree}
			\item In every degree, $M^*/p$ and $\H^*(M^*)/p$ have bounded $(q-1)^\infty$-torsion.\label{enum:ConditionCpq-1Torsion}
		\end{alphanumerate}
		If $M$ satisfies these conditions, then the derived $(q-1)$-completion of $M$ can be computed as the degree-wise underived $(q-1)$-completion, and it also satisfies conditions~\cref{enum:ConditionABoundedq-1InftyTorsion},~\cref{enum:ConditionBpTorsionFree}, and~\cref{enum:ConditionCpq-1Torsion}. Moreover, taking cohomology of $M^*$ commutes with derived $(q-1)$-completion and with derived $(p,q-1)$-completion.
	\end{lem}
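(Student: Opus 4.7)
The plan is to use the standard fact that bounded $f^\infty$-torsion on a module guarantees that its derived and underived $f$-adic completions coincide (and are both static), cf.\ \cite[\stackstag{0BKG}]{Stacks}. Applying this to each $M^i$ and each $\H^i(M^*)$ via condition~\cref{enum:ConditionABoundedq-1InftyTorsion}, I obtain that $\widehat{M^i}_{(q-1)}$ is static and agrees with its derived analogue, and similarly $\widehat{\H^i(M^*)}_{(q-1)}$ is static. The goal is to leverage this to control the whole complex and then check the conditions propagate.

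First I would show that the degree-wise completion $\widehat{M}^*_{(q-1)}$ represents the derived completion of $M$ in $\Dd(\IZ[q])$. To this end, from the cofibre sequence $M\xrightarrow{(q-1)^n}M\to M/^\L(q-1)^n$ one extracts short exact sequences
\begin{equation*}
  0\longrightarrow \H^{i-1}(M^*)/(q-1)^n\longrightarrow \H^{i-1}\bigl(M^*/^\L(q-1)^n\bigr)\longrightarrow \H^i(M^*)\bigl[(q-1)^n\bigr]\longrightarrow 0\,.
\end{equation*}
Under \cref{enum:ConditionABoundedq-1InftyTorsion}, the right-hand term stabilises in $n$ and the left-hand system is Mittag-Leffler, so $\lim^1=0$ and the Milnor sequence for $\R\lim_nM^*/^\L(q-1)^n$ collapses, yielding $\H^i\bigl(\R\lim_n M^*/^\L(q-1)^n\bigr)\cong \widehat{\H^i(M^*)}_{(q-1)}$. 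A similar degree-wise Milnor argument on each $M^i$ identifies this $\R\lim$ with the underived degree-wise completion $\widehat{M}^*_{(q-1)}$. This establishes the representability claim and, simultaneously, that cohomology commutes with derived $(q-1)$-completion.

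Next I would verify that the three conditions persist after $(q-1)$-completion. Condition~\cref{enum:ConditionABoundedq-1InftyTorsion} is immediate: for any module $N$ with bounded $(q-1)^\infty$-torsion $T=N\bigl[(q-1)^\infty\bigr]$, the short exact sequence $0\to T\to N\to N/T\to 0$ shows $T$ is already $(q-1)$-complete and $N/T$ is $(q-1)$-torsion-free, so $\widehat{N}_{(q-1)}\bigl[(q-1)^\infty\bigr]=T$; combined with cohomology commuting with completion, this handles both $\widehat{M}^*_{(q-1)}$ and its cohomology. For condition~\cref{enum:ConditionBpTorsionFree}, apply derived $(q-1)$-completion to the short exact sequence $0\to M^i\xrightarrow{p}M^i\to M^i/p\to 0$; by \cref{enum:ConditionCpq-1Torsion} all three derived completions are static, so we obtain an honest short exact sequence witnessing $p$-torsion-freeness of $\widehat{M^i}_{(q-1)}$, and likewise for cohomology using the previous step. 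Condition~\cref{enum:ConditionCpq-1Torsion} then follows from the same short exact sequence, which identifies $\widehat{M^i}_{(q-1)}/p\cong \widehat{(M^i/p)}_{(q-1)}$ and reduces to \cref{enum:ConditionABoundedq-1InftyTorsion} applied to the $p$-reduction.

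Finally, the statement about $(p,q-1)$-completion reduces to iterating: $\widehat{M}_{(p,q-1)}\simeq \bigl(\widehat{M}_{(q-1)}\bigr){}_p^\complete$, and since $\widehat{M}^*_{(q-1)}$ and its cohomology are now $p$-torsion-free, $p$-completion commutes with cohomology by a standard (and much easier) Milnor-sequence argument. The main technical obstacle is the first step, i.e.\ simultaneously justifying the vanishing of $\lim^1$ on the $(q-1)^n$-torsion subgroups and on the quotients $\H^{i-1}(M^*)/(q-1)^n$; everything else is bookkeeping once this is in place.
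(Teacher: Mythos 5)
Your proof is correct and follows essentially the same approach as the paper's: representing the derived $(q-1)$-completion by the degree-wise underived completion via boundedness of $(q-1)^\infty$-torsion, using the $p$-multiplication short exact sequence to propagate the three conditions, and reducing the $(p,q-1)$-case to $p$-torsion-freeness after $(q-1)$-completion. The only cosmetic difference is that where you unwind the $\lim/\lim^1$ Milnor-sequence computation by hand, the paper simply cites the completion spectral sequence from \cite[\stackstag{0BKE}]{Stacks}, which encodes exactly the same information.
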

	\begin{proof}
		Condition~\cref{enum:ConditionABoundedq-1InftyTorsion} implies that the derived $(q-1)$-completion of $M^*$ can be computed as the degree-wise underived $(q-1)$-completion. Furthermore, via the spectral sequence from \cite[\stackstag{0BKE}]{Stacks}, it implies that $(q-1)$-completion (derived or underived doesn't matter) commutes with taking cohomology. In formulas,
		\begin{equation*}
			\H^*\bigl(\widehat{M}_{(q-1)}^*\bigr)\cong \H^*(M)_{(q-1)}^\complete\,.
		\end{equation*}
		It is now clear that \cref{enum:ConditionABoundedq-1InftyTorsion} is still satisfied for $\widehat{M}_{(q-1)}^*$. For conditions~\cref{enum:ConditionBpTorsionFree} and~\cref{enum:ConditionCpq-1Torsion}, let $N$ be any $p$-torsion free $\IZ[q]$-module. Then
		\begin{equation*}
			N\overset{p}{\longrightarrow}N\longrightarrow N/p
		\end{equation*}
		is a cofibre sequence of static objects in $\Dd(\IZ[q])$. If both $N$ and $N/p$ have bounded $(q-1)^\infty$-torsion, then applying $(-)_{(q-1)}^\complete$ to this cofibre sequence still yields a cofibre sequence of static objects. This shows that $N_{(q-1)}^\complete$ is still $p$-torsion free and that $N_{(q-1)}^\complete/p$ is the derived or underived $(q-1)$-completion of $N/p$. Applied to $M^*$, we conclude that the degree-wise $(q-1)$-completion $\widehat{M}_{(q-1)}^*$ still satisfies all three conditions. In particular, its cohomology is $p$-torsion free and so taking cohomology commutes with derived $p$-completion. In formulas,
		\begin{equation*}
			\H^*\bigl(\widehat{M}_{(p,q-1)}^*\bigr)\cong \H^*\bigl(\widehat{M}_{(q-1)}^*\bigr)_p^\complete\cong \H^*(M)_{(p,q-1)}^\complete\,,
		\end{equation*}
		which is the last assertion we had to show.
	\end{proof}
	\begin{proof}[Proof of \cref{lem:H*pTorsionFree}]
		We'll show that the conditions of \cref{lem:TechnicalCompletionLemma} are satisfied for the complex $M^*=\qHodge_{R/A,\square}^*/(q^m-1)$. Let's first consider the case $A=\IZ$. Thanks to \cref{con:qWittToH0}, we know that $\qHodge_{R/\IZ,\square}^*/(q^m-1)$ is a complex of $\qIW_m(R)_{(q-1)}^\complete$-modules. We also know that $\qIW_m(R)_{(q-1)}^\complete$ is noetherian using \cref{cor:qWittNoetherian}. Furthermore, the proof of said corollary shows that $R\qpower/(q^m-1)$ is finite over $\qIW_m(R)_{(q-1)}^\complete$. So $\qHodge_{R/\IZ,\square}^*/(q^m-1)$ is a complex of finitely generated modules over a noetherian ring. Hence conditions~\cref{enum:ConditionABoundedq-1InftyTorsion} and~\cref{enum:ConditionCpq-1Torsion} from \cref{lem:TechnicalCompletionLemma} become obvious. It remains to check condition~\cref{enum:ConditionBpTorsionFree}. It's clear that $\qHodge_{R/\IZ,\square}^*/(q^m-1)$ is degree-wise $p$-torsion free. For complexes of finitely generated modules over a noetherian ring, $p$-completion (derived or underived doesn't matter) commutes with cohomology. Furthermore, an abelian group is $p$-torsion free if and only if its derived $p$-completion is static and $p$-torsion free. Now the derived $p$-completion of $\qHodge_{R/\IZ,\square}^*/(q^m-1)$ is computed by $(\qHodge_{R/\IZ,\square}^*)_p^\complete/(q^{p^\alpha}-1)$, where $\alpha$ is the exponent of $p$ in the prime factorisation of $m$. Since we know this complex has $p$-torsion free cohomology by \cref{cor:H*pTorsionFree}, it follows that $\H^*_{R/\IZ,\square}(m)$ must be $p$-torsion free as well, as desired.
		
		Now let $A$ be an arbitrary $\Lambda$-ring (except that we still impose the condition that a faithfully flat morphism of $\Lambda$-rings $A\rightarrow A_\infty$ into a perfect $\Lambda$-ring exists). Let $R_0\coloneqq \IZ[T_1,\dotsc,T_n]$. We claim that
		\begin{equation*}
			\qHodge_{R/A,\square}^*/(q^m-1)\cong \left(\qHodge_{R_0/\IZ,\square}^*/(q^m-1)\otimes_{\qIW_m(R_0)} \qIW_m(R/A)\right)_{(q-1)}^\complete\,,
		\end{equation*}
		where the completion is the degree-wise underived $(q-1)$-completion, but it computes the derived $(q-1)$-completion (as we'll see in a moment). To see this, first observe that the base change along $\qIW_m(R_0)\rightarrow \qIW_m(R/A)$ is flat. Indeed, our assumptions imply that $A$ is flat over $\IZ$, hence $\qIW_m(R_0)\rightarrow \qIW_m(R_0\otimes_\IZ A/A)$ is flat by \cref{lem:RelativeqWittBaseChange}. Furthermore, $\square\colon A[T_1,\dotsc,T_n]\cong R_0\otimes_\IZ A\rightarrow R$ is étale, hence so is $\qIW_m(R_0\otimes_\IZ A/A)\rightarrow \qIW_m(R/A)$ by \cref{prop:vanDerKallen}. The fact that $\qHodge_{R_0/\IZ,\square}^*/(q^m-1)\otimes_{\qIW_m(R_0)} \qIW_m(R/A)$ is a flat base change of $\qHodge_{R_0/\IZ,\square}^*/(q^m-1)$ immediately implies that it satisfies the conditions \cref{enum:ConditionABoundedq-1InftyTorsion},~\cref{enum:ConditionBpTorsionFree}, and~\cref{enum:ConditionCpq-1Torsion} from \cref{lem:TechnicalCompletionLemma}, and so the degree-wise underived $(q-1)$-completion indeed agrees with the derived $(q-1)$-completion. This also shows that once we've proved the isomorphism above, we'll be immediately done by \cref{lem:TechnicalCompletionLemma}.
		
		To prove the claimed isomorphism, it's enough to show that that the natural map
		\begin{equation*}
			 \left(R_0\qpower/(q^m-1)\lotimes_{\qIW_m(R_0)}\qIW_m(R/A)\right)_{(q-1)}^\complete\overset{\simeq}{\longrightarrow}R\qpower/(q^m-1)
		\end{equation*}
		is an equivalence, as $\qHodge_{R_0/\IZ,\square}^*/(q^m-1)$ and $\qHodge_{R/A,\square}^*/(q^m-1)$ are degree-wise free modules over $R_0\qpower/(q^m-1)$ and $R\qpower/(q^m-1)$, respectively, with compatible bases. By the derived Nakayama lemma \cite[\stackstag{0G1U}]{Stacks} such an equivalence can be checked after applying $R\lotimes_{R_0\qpower/(q^m-1)}-$. The right-hand side then becomes $R$. The left-hand side becomes
		\begin{equation*}
			R_0\lotimes_{\gh_m,\,\qIW_m(R_0)}\qIW_m(R/A)\simeq (R_0\otimes_\IZ A)\lotimes_{\gh_m,\,\qIW_m(R_0\otimes_\IZ A/A)}\qIW_m(R/A)\,,
		\end{equation*}
		where we've used that $c_m\colon \qIW_m(R_0)\rightarrow R\qpower/(q^m-1)$ intertwines the canonical projection to $R_0$ with the $m$\textsuperscript{th} ghost map. The claim now follows from \cref{cor:qWittGhostMapsPushout}. 
	\end{proof}
	\begin{con}\label{con:qDRWtoqHodge}
		This finishes the construction of a $q$-$FV$-system of differential-graded $A$-algebras over $A$ on $(\H^*_{R/A,\square}(m))_{m\in\IN}$. By \cref{prop:qDRWHasFrobenii}, this induces a unique morphism of $q$-$FV$-systems $(\qIW_m\Omega_{R/A}^*)_{m\in\IN}\rightarrow (\H^*_{R/A,\square}(m))_{m\in\IN}$. Since $\H^*_{R/A,\square}(m)$ is derived $(q-1)$-complete in every degree, this morphism of $q$-$FV$ systems factors through the degree-wise derived $(q-1)$-completions. By \cref{lem:qDRWDegreewiseBoundedq-1Torsion}, we can equally well take the degree-wise underived $(q-1)$-completions, and so we obtain morphisms
		\begin{equation*}
			\bigl(\qIW_m\Omega_{R/A}^*\bigr)_{(q-1)}^\complete\longrightarrow \H_{R/A,\square}^*(m)
		\end{equation*}
		which finally allow us to state the relative version of \cref{thm:qDeRhamWittGlobalIntro}.
	\end{con}
	\begin{thm}\label{thm:qDeRhamWittqHodge}
		Let $R$ be a smooth algebra over the perfectly covered $\Lambda$-ring $A$. Then the canonical morphism from \cref{con:qDRWtoqHodge} is an isomorphism
		\begin{equation*}
			\bigl(\qIW_m\Omega_{R/A}^*\bigr)_{(q-1)}^\complete\overset{\cong}{\longrightarrow}\H^*\bigl(\qHodge_{R/A,\square}^*/(q^m-1)\bigr)
		\end{equation*}
		for all positive integers $m\in\IN$.
	\end{thm}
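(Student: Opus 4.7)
The plan is to prove the theorem first in the $p$-typical case $m = p^\alpha$ and then globalise by an arithmetic fracture argument. For the reduction to the $p$-typical case, I would apply the arithmetic fracture pullback squares of \cref{lem:qWittBeauvilleLaszloSquare} and \cref{lem:OtherBeauvilleLaszloSquare} (with $N = m$) to both sides; after inverting $m$, both sides agree with $\prod_{d \mid m}(\Omega_{R/A}^* \otimes_{A,\psi^d} A[\zeta_d])$ by \cref{cor:qDRWTrivialAfterLocalisation} together with an analogous decomposition of $\H^*(\qHodge_{R/A,\square}^*/(q^m-1))[1/m]$ via $[m]_q$-inversion and a Frobenius-free analogue of the decomposition in \cref{par:FrobeniusDecompositions}. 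Thus it suffices to prove the isomorphism after $(p,q-1)$-completion for every prime $p \mid m$; since the Frobenius $F_{m/p^\alpha}$ and Verschiebung $V_{m/p^\alpha}$ become mutually inverse equivalences on both sides after $(p,q-1)$-completion (where $\alpha = v_p(m)$), this reduces everything to showing that the comparison is an equivalence for $(R,\square)$ viewed over $\widehat{R}_p$ in the prime power case $m = p^\alpha$.

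For the prime power case $m = p^\alpha$, I would carry out the induction on $\alpha$ of \cref{par:OutlineOfStrategy} by cycling through the four assertions $(a_\alpha) \Rightarrow (b_\alpha) \Rightarrow (c_\alpha) \Rightarrow (d_\alpha)$ and then feeding $(d_{\alpha-1})$ back into $(a_\alpha)$ (the latter was already established in \cref{subsec:pTorsionFree}). The base case is vacuous. Assuming $(a_\alpha)$, that is, degree-wise $p$-torsion-freeness of $\qIW_{p^\alpha}\Omega_{R/A}^*$, I would prove $(b_\alpha)$ by comparing both sides, after $(p,q-1)$-completion, to an explicit description arising from the canonical Frobenius lift $\phi_\square$ on $\widehat{R}_p$ of \cref{par:FrobeniusLift}. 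On the left, the $\delta$-structure on $\widehat{R}_p$ combined with \cref{prop:qWittOfLambdaRing,cor:qWittToCyclicRing} and the base-change result \cref{prop:EtaleBaseChange} would let me decompose $\qIW_{p^\alpha}\Omega_{\widehat{R}_p/A}^*/^\L(q-1)^{\text{large}}$ according to weights under the Adams-type operator coming from $\phi_\square$; on the right, I would use the Frobenius decomposition of \cref{par:FrobeniusDecompositions,par:FrobeniusDecompositionsII} together with \cref{lem:qHatgeAdditive} to match up the pieces.

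The core of the argument is the matching of these two decompositions. For each multi-index $v$, the cohomology $\H^*((\qHodge_{R/A,\square}^{*,v})_p^\complete/(q^{p^\alpha}-1))$ is a direct sum of copies of $\widehat{R}_p^{(\alpha)}\qpower \otimes \IZ_p\qpower/(q^{p^e}-1)$ (shifted), where $e$ is the minimum $p$-adic valuation of the $v_i$; I would interpret this decomposition as living naturally over $\qIW_{p^\alpha}(\widehat{R}_p/A)$ via the structure of $q$-$FV$-system constructed in \crefrange{par:BocksteinDifferential}{con:qDRWtoqHodge}, and then check that the induced map from $(\qIW_{p^\alpha}\Omega_{\widehat{R}_p/A}^*)_{(p,q-1)}^\complete$ hits each piece bijectively. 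The surjectivity onto each summand is essentially $(d_{\alpha})$ applied one step down combined with the explicit generators $V_{p^{\alpha-e}}\tau(T_1^{v_1}\cdots T_n^{v_n})$ (and their $\d V_{p^{\alpha-e}}$ counterparts); injectivity follows from \cref{lem:qDRWGhostMap} plus $(a_\alpha)$. Deducing $(c_\alpha)$ from $(b_\alpha)$ is then formal: kill $(q^{p^\alpha}-1)$ in $(b_\alpha)$ and use that the right-hand side collapses to $(\Omega_{R/A} \lotimes_{A,\phi^\alpha} A[q]/(q^{p^\alpha}-1))_p^\complete$ via the Cartier-type collapse of the Koszul complexes $K_{\alpha,0}^*$. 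Deducing $(d_\alpha)$ from $(c_\alpha)$ follows by tracking which elements of $\qIW_{p^\alpha}\Omega_{R/A}^*$ lift modulo $p$ to the image of $F_p$, using the explicit description of the divided-Frobenius filtration coming from the Katz-style decomposition.

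The main obstacle will be the step $(a_\alpha) \Rightarrow (b_\alpha)$: one has to upgrade the purely additive decomposition of $\qHodge^*_{R/A,\square}/(q^{p^\alpha}-1)$ from \crefrange{par:FrobeniusDecompositions}{lem:qHatgeAdditive} to one that is compatible with the multiplicative, differential, Frobenius, and Verschiebung structures defining the $q$-$FV$-system, so that the universal property of $(\qIW_{p^\alpha}\Omega_{R/A}^*)_{(q-1)}^\complete$ can actually be invoked. This amounts to showing that the Frobenius lift $\phi_\square$ interacts correctly with the Bockstein $\beta_{p^\alpha}$ and with the product of \cref{par:MultiplicativeStructure}, which is where the étale framing and the compatibility of $\gamma_i$ and $\phi_\square$ (noted in the proof of \cref{lem:DecompRespectsqDifferential}) become essential; the technical care of \cref{lem:H*pTorsionFree,lem:TechnicalCompletionLemma} ensures that completions commute with all cohomologies involved, so that no spectral sequence issues obstruct the identification.
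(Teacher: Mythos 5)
Your high-level outline correctly identifies the structure of the paper's argument: reduce to the prime-power case $m=p^\alpha$ by arithmetic fracture (the paper uses \cref{lem:DerivedBeauvilleLaszlo} directly rather than the pullback squares, but that is cosmetic), and then run the induction $(a_\alpha)\Rightarrow(b_\alpha)\Rightarrow(c_\alpha)\Rightarrow(d_\alpha)$ with $(d_{\alpha-1})\Rightarrow(a_\alpha)$. However, your proposed proofs of the inner implications diverge from the paper's and have gaps.

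For $(a_\alpha)\Rightarrow(b_\alpha)$, you propose to decompose $\qIW_{p^\alpha}\Omega_{\widehat{R}_p/A}^*$ by weights under the Adams-type operator coming from $\phi_\square$ and match weight pieces with the Katz-style decomposition of $q$-Hodge cohomology. But $\qIW_{p^\alpha}\Omega_{R/A}^*$ is defined by a universal property, not by any explicit weight-graded presentation, and establishing a weight decomposition compatible with the differential graded algebra, Verschiebung, and Teichmüller structures is a substantial extra task the paper never undertakes. (Even on the $q$-Hodge side the compatibility of the decomposition with the differential requires separate work; see \cref{lem:DecompRespectsqDifferential}.) The paper instead avoids this entirely: it uses the short exact sequence of \cref{lem:qDRWGhostMap} identifying $\qIW_{p^\alpha}\Omega_{R/A}^*/\IV_{p^\alpha}^*$ with $\Omega_{R/A}^*\otimes_{A,\phi^\alpha}A[\zeta_{p^\alpha}]$, the parallel sequence on the $q$-Hodge side from \cref{lem:VerschiebungSequence,lem:H*0DeRham}, and the five lemma — with the inductive hypothesis $(b_{\alpha-1})$ entering to identify the two Verschiebung ideals. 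Your proposed use of $(d_{\alpha-1})$ for surjectivity onto weight pieces does not occur in the paper's $(b_\alpha)$ step and would need separate justification.

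For $(b_\alpha)\Rightarrow(c_\alpha)$, the claim that this is ``formal'' misses the essential point: $(b_\alpha)$ gives an isomorphism for a \emph{framed} smooth $A$-algebra, whereas $(c_\alpha)$ demands an equivalence that is \emph{functorial in arbitrary morphisms of smooth $A$-algebras}, which do not preserve framings. The paper devotes \cref{par:NonCanonicalQuasiIso,par:NonCanonicalQuasiIsoII,par:CanonicalComparisonMap} to exactly this: first a non-canonical quasi-isomorphism $s_{\square/A}$ is built via the décalage functor $\L\eta_{\Phi_{p^\alpha}(q)\cdots}$ and the prismatic Frobenius; then a framing-independent comparison map $s_\pi$ is constructed from a surjection $P\twoheadrightarrow\widehat{R}_p$ by a $p$-completely ind-smooth $\delta$-ring and its PD-envelope; and finally one checks $s_{\mathrm{id}}$ agrees with $s_{\square/A}$. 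Merely ``killing $q^{p^\alpha}-1$'' and invoking Cartier collapse does not produce the required functoriality, which is the real content of the implication.

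Finally, for the globalisation in the $(-)[1/m]$ direction, your detour through the full product decomposition of \cref{cor:qDRWTrivialAfterLocalisation} is a viable (if longer) route than the paper's: after $(q-1)$-completion only the $d=1$ factor survives, so you would still need to show both sides reduce to $\Omega_{R/A}^*[1/m]$ compatibly, which is precisely what the paper obtains more directly from the invertibility of $F_m\circ V_m$ and $V_m\circ F_m$ in $\IZ[1/m]\qpower$.
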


	\subsection{Proof of the main results I: The \texorpdfstring{$p$}{p}-typical case}\label{subsec:pTypical}
	After our lengthy digression, we return to the induction outlined in our battle plan~\cref{par:OutlineOfStrategy}. The goal of this subsection is to prove the remaining three implications, starting with \cref{enum:alphaA} $\Rightarrow$ \cref{enum:alphaB}. We keep the shorthand $\H_{R/A,\square}^*(p^\alpha)\coloneqq \H^*(\qHodge_{R/A,\square}^*/(q^{p^\alpha}-1))$ as in \cref{subsec:qHodgeMultiplicative} and note that the degree-wise $p$-completion $\H_{R/A,\square}^*(p^\alpha)_p^\complete$ computes the cohomology of $(\qHodge_{R/A,\square}^*)_p^\complete/(q^{p^\alpha}-1)$ by \cref{lem:H*pTorsionFree}.
	
	\begin{numpar}[The Frobenius on the $q$-Hodge complex.]\label{par:FrobeniusqHodge}
		As in the proof of \cref{lem:DecompRespectsqDifferential} we extend  $\phi_\square$ to a Frobenius lift $\phi_\square\colon \widehat{R}_p\qpower\rightarrow \widehat{R}_p\qpower$ by putting $\phi_{\square}(q)\coloneqq q^p$. We can further extend this to an endomorphism of complexes
		\begin{equation*}
			\phi_\square\colon \bigl(\qHodge_{R/A,\square}^*\bigr)_p^\complete\longrightarrow \bigl(\qHodge_{R/A,\square}^*\bigr)_p^\complete
		\end{equation*}
		by putting $\phi_{\square}(\d T_i)\coloneqq T_i^{p-1}\d T_i$. Indeed, we've checked in the proof of \cref{lem:DecompRespectsqDifferential} that $\phi_\square$ and $\gamma_i$ commute and so to check that $\phi_\square$ respects the differential, it's enough to compute
		\begin{align*}
			\phi_\square\bigl((q-1)\q\partial_i f\bigr)=\phi_\square\left(\frac{\gamma_i(f)-f}{T_i}\d T_i\right)&=\frac{\phi_\square\bigl(\gamma_i(f)\bigr)-\phi_{\square}(f)}{T_i^p}T_i^{p-1}\d T_i\\
			&=\frac{\gamma_i\bigl(\phi_\square(f)\bigr)-\phi_{\square}(f)}{T_i}\d T_i\\
			&=(q-1)\q\partial_i\bigl(\phi_{\square}(f)\bigr)\,.
		\end{align*}
		The Frobenius lift on $A$ can be extended to a Frobenius lift $\phi\colon A[q]\rightarrow A[q]$ by putting $\phi(q)\coloneqq q^p$. Then the endomorphism $\phi_\square$ induces a relative Frobenius
		\begin{equation*}
			\phi_{\square/A}^\alpha\colon \left(\qHodge_{R/A,\square}^*\otimes_{A[q],\phi^\alpha}A[q]\right)_{(p,q-1)}^\complete\longrightarrow \bigl(\qHodge_{R/A,\square}^*\bigr)_p^\complete\,.
		\end{equation*}
		Following our conventions from \cref{par:Notation}, here we take the degree-wise underived completions, but they coincide with the derived completion, because $\qHodge_{R/A,\square}^*$ is degree-wise $p$- and $(q-1)$-torsion free and $\phi\colon A[q]\rightarrow A[q]$ is flat (using footnote~\cref{footnote:FaithfullyFlatMapOfLambdaRings} in \cref{rem:FaithfullyFlatCoverByPerfectLambdaRing}). Also recall from \cref{par:FrobeniusDecompositions} that we have a decomposition
		\begin{equation*}
			\bigl(\qHodge_{R/A,\square}^*\bigr)_p^\complete\cong \bigoplus_v\bigl(\qHodge_{R/A,\square}^{*,v}\bigr)_p^\complete\,.
		\end{equation*}
		Let us denote by $(\qHodge_{R/A,\square}^{*,0})_p^\complete$ the direct summand for $v=(0,\dotsc,0)$ and by $\H_{R/A,\square}^{*,0}(p^\alpha)_p^\complete$ the corresponding direct summand of $\H_{R/A,\square}^*(p^\alpha)_p^\complete$.
	\end{numpar}
	\begin{lem}\label{lem:H*0DeRham}
		The relative Frobenius $\phi_{\square/A}^\alpha$ is an isomorphism onto $(\qHodge_{R/A,\square}^{*,0})_p^\complete$. In particular, we obtain an isomorphism of differential-graded $A[q]$-algebras
		\begin{equation*}
			\H_{R/A,\square}^{*,0}(p^\alpha)_p^\complete\cong \bigl(\Omega_{R/A}^*\otimes_{A,\phi^\alpha}A[q]/(q^{p^{\alpha}}-1)\bigr)_p^\complete
		\end{equation*}
	\end{lem}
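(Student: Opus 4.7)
The plan is to prove the isomorphism of complexes first and then derive the cohomology statement by quotienting by $q^{p^\alpha}-1$. An easy induction on $\alpha$ starting from the defining formulas $\phi_\square(T_i) = T_i^p$ and $\phi_\square(\d T_i) = T_i^{p-1}\d T_i$ yields $\phi_\square^\alpha(T_i) = T_i^{p^\alpha}$ and $\phi_\square^\alpha(\d T_i) = T_i^{p^\alpha-1}\d T_i$. Hence a typical element $f \d T_{i_1} \wedge \cdots \wedge \d T_{i_k}$ is sent by $\phi_\square^\alpha$ to $\phi_\square^\alpha(f) \cdot T_{i_1}^{p^\alpha-1} \cdots T_{i_k}^{p^\alpha-1} \d T_{i_1} \wedge \cdots \wedge \d T_{i_k}$, and after the $A[q]$-linearization defining $\phi_{\square/A}^\alpha$ the coefficient $\phi_\square^\alpha(f) \cdot A[q]$ ranges through $\widehat{R}_p^{(\alpha)}\qpower$, matching exactly the basis $\{\prod_{j\in J} T_j^{p^\alpha-1}\d T_j\}$ of the $v=0$ summand described in \cref{par:FrobeniusDecompositions}. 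So $\phi_{\square/A}^\alpha$ factors through $(\qHodge_{R/A,\square}^{*,0})_p^\complete$.

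To see that the resulting map is an isomorphism, observe that in each degree $k$, both source and $v=0$ target are free modules of rank $\binom{n}{k}$ over the appropriate base rings, with basis elements matched under $\phi_{\square/A}^\alpha$; everything thus reduces to the degree-zero statement that
\begin{equation*}
\bigl(\widehat{R}_p\qpower \otimes_{A[q], \phi^\alpha} A[q]\bigr)_{(p,q-1)}^\complete \overset{\cong}{\longrightarrow} \widehat{R}_p^{(\alpha)}\qpower,\qquad r \otimes a \mapsto \phi_\square^\alpha(r) \cdot a
\end{equation*}
is an isomorphism. Since both sides are derived $(p,q-1)$-complete, derived Nakayama reduces this to checking modulo $q-1$. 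The tensor-product relation $q_{\mathrm{source}} \otimes 1 = 1 \otimes q^{p^\alpha}$ means that imposing $q-1=0$ on the right factor also forces $q-1 = 0$ on the left factor $\widehat{R}_p\qpower$, turning the assertion into $(\widehat{R}_p \otimes_{\widehat{A}_p, \phi^\alpha} \widehat{A}_p)_p^\complete \cong \widehat{R}_p^{(\alpha)}$, which is precisely the base-change identification established by the derived pushout square in \cref{par:FrobeniusDecompositions}.

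For the ``in particular'' statement, quotient the isomorphism of complexes by $q^{p^\alpha}-1$. On the source, this amounts via $q_{\mathrm{source}} \otimes 1 = 1 \otimes q^{p^\alpha}$ to imposing $q-1=0$ on $\qHodge_{R/A,\square}^*$; since $(q-1)\q\nabla$ vanishes modulo $q-1$ and $\qHodge_{R/A,\square}^*/(q-1) \cong \Omega_{R/A}^*$ as a graded $A$-module, the source becomes $\bigl(\Omega_{R/A}^* \otimes_{A, \phi^\alpha} A[q]/(q^{p^\alpha}-1)\bigr)_p^\complete$ with zero differential. By \cref{lem:qHatgeAdditive} applied with $e = \alpha$ (so that $K_{\alpha,\alpha}^*/(q^{p^\alpha}-1)$ has trivial differential), the target $(\qHodge_{R/A,\square}^{*,0})_p^\complete/(q^{p^\alpha}-1)$ also has zero differential; both cohomologies therefore equal the complexes themselves, giving the underlying graded module isomorphism. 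The multiplicative structure is preserved because $\phi_\square^\alpha$ is a ring homomorphism, and reduction modulo $q-1$ turns the non-commutative $\qHodge$-product into the usual commutative product on $\Omega^*$. Finally, the Bockstein on the target, transported via $\phi_{\square/A}^\alpha$ to the source, is computed by lifting a class $\omega \otimes a$ to $\tilde\omega \otimes a$ modulo $(q_{\mathrm{source}}-1)^2$, applying $(q_{\mathrm{source}}-1)\q\nabla\otimes\mathrm{id}$, and dividing by $q_{\mathrm{source}}-1$; the result is $\q\nabla(\tilde\omega) \otimes a$ modulo $q_{\mathrm{source}}-1$, which is precisely the classical de Rham differential extended $A[q]/(q^{p^\alpha}-1)$-linearly.

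The main obstacle is the middle step: carefully tracking how the $(p,q-1)$-completion on the source interacts with the tensor product over $\phi^\alpha$ (where the ``old'' $q-1$ on $\qHodge_{R/A,\square}^*$ gets identified with $q^{p^\alpha}-1$ in the new variable), so that the derived Nakayama reduction modulo $q-1$ legitimately lands in the pushout identification of \cref{par:FrobeniusDecompositions}.
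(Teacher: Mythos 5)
Your proof is correct and takes essentially the same route as the paper's: reduce the first assertion to the degree-zero isomorphism $(\widehat{R}_p\qpower \otimes_{A[q],\phi^\alpha} A[q])_{(p,q-1)}^\complete\cong\widehat{R}_p^{(\alpha)}\qpower$ via matching bases, then obtain the second assertion by quotienting through $q^{p^\alpha}-1$ and identifying the Bockstein with the de Rham differential. What you do differently is mostly a matter of filling in details the paper leaves implicit: you spell out the derived-Nakayama reduction to the pushout square of \cref{par:FrobeniusDecompositions}, and you sidestep the paper's separate discussion of whether $p$-completion commutes with cohomology by observing that once the differentials of both sides are literally zero after quotienting, cohomology is tautologically the complex itself. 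You also compute the Bockstein explicitly, where the paper only asserts that $(\H^*(\qHodge_{R/A,\square}^*/(q-1)),\beta_1)\cong\Omega_{R/A}^*$ is straightforward. These are all legitimate simplifications; the only imprecision is verbal — reducing ``modulo $q-1$'' via derived Nakayama should really be phrased as reducing modulo $(p,q-1)$, though since the resulting statement is (a derived quotient of) precisely the defining pushout square of $\widehat{R}_p^{(\alpha)}$, nothing is lost.
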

	\begin{proof}
		To see that we get an isomorphism onto $(\qHodge_{R/A,\square}^{*,0})_p^\complete$, just observe that
		\begin{equation*}
			\phi_{\square/A}^\alpha\colon\left(R\qpower\otimes_{A[q],\phi^\alpha}A[q]\right)_{(p,q-1)}\overset{\simeq}{\longrightarrow}\widehat{R}_p^{(\alpha)}\qpower
		\end{equation*}
		is an equivalence essentially by definition of $\widehat{R}_p^{(\alpha)}$ and then check that $\phi_{\square/A}^\alpha$ sends the bases from \cref{par:FrobeniusDecompositions} onto each other.
		
		To show the second assertion, we deduce from the first one that $(\qHodge_{R/A,\square}^{*,0})_p^\complete/(q^{p^\alpha}-1)$ is a flat base change of $\qHodge_{R/A,\square}^*/(q-1)$ up to $(p,q-1)$-completion (or just $p$-completion because we're already in a $(q^{p^\alpha}-1)$-torsion situation). It straightforward to check that the cohomology $\H^*(\qHodge_{R/A,\square}^*/(q-1))$, equipped with the Bockstein differential, is the de Rham complex $\Omega_{R/A}^*$. It remains to check that $p$-completion commutes with taking cohomology, but this is clear since we've just checked that
		\begin{equation*}
			\H^*\left(\qHodge_{R/A,\square}/(q-1)\otimes_{A[q],\phi^\alpha}A[q]\right)\cong \Omega_{R/A}^*\otimes_{A,\phi^\alpha}A[q]/(q^{p^\alpha}-1)
		\end{equation*}
		is degree-wise $p$-torsion free.
	\end{proof}
	\begin{lem}\label{lem:VerschiebungSequence}
		Let $\ov{\IV}_{p^\alpha}^*\subseteq \H_{R/A,\square}^*(p^\alpha)_p^\complete$ be the $p$-complete graded sub-$A[q]$-module generated by the images of $V_p$ and $\d V_p$. Then there's a short exact sequence
		\begin{equation*}
			0\longrightarrow \ov\IV_{p^\alpha}^*\longrightarrow \H_{R/A,\square}^*(p^\alpha)_p^\complete\longrightarrow \H_{R/A,\square}^{*,0}(p^\alpha)_p^\complete/\Phi_{p^\alpha}(q)\longrightarrow 0\,,
		\end{equation*}
		in which the second arrow is the canonical inclusion and the third arrow is the projection to the direct summand $\H_{R/A,\square}^{*,0}(p^\alpha)_p^\complete$.
	\end{lem}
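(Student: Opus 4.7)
The plan is to construct the projection in the lemma using the Frobenius decomposition of \cref{par:FrobeniusDecompositions} and to verify the two directions of exactness separately. By \cref{lem:DecompRespectsqDifferential}, the decomposition $(\qHodge_{R/A,\square}^*)_p^\complete \cong \bigoplus_v (\qHodge_{R/A,\square}^{*,v})_p^\complete$ is one of complexes, so the projection onto the $v = 0$ summand induces a map of complexes and hence a map $\pi_0 \colon \H_{R/A,\square}^*(p^\alpha)_p^\complete \to \H_{R/A,\square}^{*,0}(p^\alpha)_p^\complete$ on cohomology. The map in the lemma is $\pi_0$ followed by the quotient modulo $\Phi_{p^\alpha}(q)$, and surjectivity is immediate.

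For the containment $\ov\IV_{p^\alpha}^* \subseteq \ker$, I would use that $V_p$ is induced by multiplication by $\Phi_{p^\alpha}(q) \in \widehat A_p[q]$ at the chain level. Since the Frobenius decomposition is $\widehat A_p[q]$-linear, $\pi_0$ intertwines $V_p$ with the Verschiebung $V_p^{(0)}$ on the $v = 0$ subcomplex, whose image by \cref{lem:H*0DeRham} lies in $\Phi_{p^\alpha}(q)\cdot\H^{*,0}_{R/A,\square}(p^\alpha)_p^\complete$. The argument for $\d V_p$ is identical once one notes that the Bockstein on $\H^{*,0}_{R/A,\square}(p^\alpha)_p^\complete$ is identified via \cref{lem:H*0DeRham} with the $\widehat A_p[q]$-linear de Rham differential, hence commutes with multiplication by $\Phi_{p^\alpha}(q)$.

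For the reverse containment, I would exploit the short exact sequence of chain complexes
\[
0 \longrightarrow \qHodge_{R/A,\square}^*/(q^{p^{\alpha-1}}-1) \xrightarrow{\Phi_{p^\alpha}(q)} \qHodge_{R/A,\square}^*/(q^{p^\alpha}-1) \longrightarrow \qHodge_{R/A,\square}^*/\Phi_{p^\alpha}(q) \longrightarrow 0,
\]
whose first map realizes $V_p$ on cohomology. Since $F_pV_p = p$ and the relevant cohomology groups are $p$-torsion free by \cref{lem:H*pTorsionFree}, $V_p$ is injective after $p$-completion, so the connecting map in the associated long exact sequence vanishes and the sequence degenerates to a short exact sequence identifying $\coker V_p$ with $\H^*(\qHodge_{R/A,\square}^*/\Phi_{p^\alpha}(q))_p^\complete$. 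Running the same reasoning on the $v = 0$ subcomplex identifies the $v = 0$ summand of this cokernel with $\H^{*,0}_{R/A,\square}(p^\alpha)_p^\complete/\Phi_{p^\alpha}(q)$. Combined with the Frobenius decomposition, the reverse containment reduces to showing that the image of $\d V_p$ in $\coker V_p$ surjects onto every $v \neq 0$ summand of $\H^*(\qHodge_{R/A,\square}^*/\Phi_{p^\alpha}(q))_p^\complete$.

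This final step is the main obstacle. The plan is to verify it component-by-component using the explicit description from \cref{lem:qHatgeAdditive} and the tensor factorisation $K^{*,v}(n) \cong \bigotimes_i K^{*,v_i}(1)$. The key computation, already visible when $\alpha = 1$ and $n = 1$, is that $\d V_p(T_i^{v_i} g) \equiv v_i \cdot g \cdot T_i^{v_i - 1}\,\d T_i$ modulo $(q^{p^\alpha}-1)$ and modulo terms living in other $v$-components; choosing a multi-index $v$ with $v_i$ coprime to $p$ makes $v_i$ a unit after $p$-completion, so $\d V_p$ surjects onto that $v$-component in one degree higher. For multi-indices $v$ where every nonzero $v_i$ is divisible by $p$, I plan to reduce to the previous case by pulling off a factor of $p$ from $v_i$ and using that multiplication by $\Phi_{p^\alpha}(q)$ acts as multiplication by $p$ on $\H^{*,v}_{R/A,\square}(p^\alpha)_p^\complete$ for $v \neq 0$, together with the $K^*_{\alpha,e}$-description, to bootstrap surjectivity inductively.
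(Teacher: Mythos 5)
Your high-level strategy — use the Frobenius decomposition and verify the containments component-by-component — is indeed the paper's strategy, but your proposal glosses over exactly the obstruction that forces the paper to take a detour, and a few of your intermediate steps are asserted without the justification they need.

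First, for the easy containment $\ov\IV_{p^\alpha}^* \subseteq K^*$, the paper's argument is shorter than yours and avoids a subtlety you've swept under the rug: since $\H^{*,0}(p^\alpha)_p^\complete/\Phi_{p^\alpha}(q)$ is a quotient differential-graded algebra of $\H^*(p^\alpha)_p^\complete$, the kernel $K^*$ is automatically a differential-graded ideal, and it visibly contains $\im V_p$; hence it contains the DG ideal generated by $\im V_p$, which contains $\ov\IV_{p^\alpha}^*$. Your version asserts that $\pi_0 V_p(x)$ lies in $\Phi_{p^\alpha}(q)\H^{*,0}(p^\alpha)_p^\complete$; but $\pi_0$ applied to a chain-level lift $\tilde x$ of $x\in\H^*(p^{\alpha-1})$ need not be a cycle modulo $q^{p^\alpha}-1$ (only modulo $q^{p^{\alpha-1}}-1$), so the image of $\pi_0\circ V_p$ is not obviously in $\Phi_{p^\alpha}(q)$ times a cohomology class. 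You are also silently using two different decompositions (at levels $\alpha-1$ and $\alpha$, based on $\widehat R_p^{(\alpha-1)}$ and $\widehat R_p^{(\alpha)}$), which are not the same; the same level mismatch undermines your later claim that "running the same reasoning on the $v=0$ subcomplex identifies the $v=0$ summand of this cokernel with $\H^{*,0}_{R/A,\square}(p^\alpha)_p^\complete/\Phi_{p^\alpha}(q)$."

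Second, and more seriously, your proposed final computation of "where $\d V_p$ lands in each $v$-component" runs headlong into the issue the paper explicitly flags and works around: the refined decomposition of $\H^{*,v}(p^\alpha)_p^\complete$ into $K_{\alpha,e}$-pieces from \cref{lem:qHatgeAdditive} is \emph{not} compatible with the Bockstein differential, because the isomorphisms in \cref{lem:Ke1xKe2} do not lift to maps over $\IZ_p\qpower/(q^{p^\alpha}-1)^2$. So a naive computation of $\d V_p$ using those coordinates is not justified. The paper gets around this by passing to $\IF_p$ coefficients and using the decomposition at level $\alpha+1$, exploiting the surjection $\qHodge_{R/A,\square}^*/(p,q^{p^{\alpha+1}}-1)\twoheadrightarrow \qHodge_{R/A,\square}^*/(p,(q^{p^\alpha}-1)^2)$, which makes the decomposition compatible with the relevant Bockstein mod $(p,q^{p^\alpha}-1)$; only then does it carry out the explicit $K_{\alpha+1,e}$-computation. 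Your proposal has nothing addressing this, so as written the component-by-component verification in the last paragraph is unjustified, and the "bootstrapping" for $v$ with all $v_i$ divisible by $p$ is too vague to substitute for it.
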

	\begin{proof}
		It's clear from the construction in \cref{par:FrobeniusVerschiebung} that the kernel $K^*$ of the canonical projection $\H_{R/A,\square}^*(p^\alpha)_p^\complete\rightarrow \H_{R/A,\square}^{*,0}(p^\alpha)_p^\complete/\Phi_{p^\alpha}(q)$ contains the image of $V_p$, hence $K^*$ also contains the $p$-complete differential-graded ideal generated by the image of $V_p$. This shows $\ov\IV_{p^\alpha}^*\subseteq K^*$.
		
		It remains to show that $\ov\IV_{p^\alpha}^*\rightarrow K^*$ is surjective. We would like to use the decomposition from \cref{lem:qHatgeAdditive} to show this. Alas, this decomposition is not compatible with the Bockstein differential. But we can use a trick: It's enough to show that $\ov\IV_{p^\alpha}^*\rightarrow K^*$ is surjective modulo~$p$, as both sides are $p$-complete. But modulo~$p$, we have a surjection 
		\begin{equation*}
			\qHodge_{R/A,\square}^*/\bigl(p,q^{p^{\alpha+1}}-1\bigr)\longtwoheadrightarrow\qHodge_{R/A,\square}^*/\bigl(p,(q^{p^\alpha}-1)^2\bigr)\,,
		\end{equation*}
		hence the decomposition of $\qHodge_{R/A,\square}^*/(p,q^{p^{\alpha+1}}-1)$ is, in fact, compatible with the Bockstein differential modulo $(p,q^{p^\alpha}-1)$. So we'll use this decomposition instead. We'll also use that modding out~$p$ commutes with taking cohomology for the complex $\qHodge_{R/A,\square}^*/(q^{p^\alpha}-1)$ thanks to \cref{lem:H*pTorsionFree}.
		
		Consider a two-term complex $K_{\alpha+1,e}^*$ as in \cref{par:FrobeniusDecompositionsII}, where $0\leqslant e\leqslant \alpha+1$. Put $\ov e\coloneqq \min\{\alpha,e\}$ and $\ov K_{\alpha+1,e}^*\coloneqq K_{\alpha+1,e}^*/(p,q^{p^\alpha}-1)$. Then $\ov K_{\alpha+1,e}^*$ is the two-term complex
		\begin{equation*}
			\ov K_{\alpha+1,e}^*\cong\left(\IF_p[q]/(q-1)^{p^\alpha}\xrightarrow{(q-1)^{p^e}}\IF_p[q]/(q-1)^{p^\alpha}\right)
		\end{equation*}
		with cohomology $\H^0(\ov K_{\alpha+1,e}^*)\cong (q-1)^{p^{\alpha}-p^{\ov{e}}}\IF_p[q]/(q-1)^{p^\alpha}$, $\H^1(\ov K_{\alpha+1,e}^*)\cong \IF_p[q]/(q-1)^{p^{\ov{e}}}$. By a simple unravelling, the Bockstein differential $\ov\beta_{p^\alpha}\colon \H^0(\ov K_{\alpha+1,e}^*)\rightarrow \H^1(\ov K_{\alpha+1,e}^*)$ coming from the surjection $K_{\alpha+1,e}^*/(p,(q-1)^{2p^\alpha})\twoheadrightarrow \ov K_{\alpha+1,e}^*$ is given by
		\begin{equation*}
			\ov\beta_{p^\alpha}\bigl((q-1)^{p^{\alpha}-p^{\ov e}}\omega\bigr)=(q-1)^{p^e-p^{\ov e}}\omega
		\end{equation*}
		for all $\omega\in \IF_p[q]/(q^{p^\alpha}-1)$. For $e\leqslant\alpha-1$, we conclude that every class in $\H^0(\ov K_{\alpha+1,e}^*)$ is contained in the image of $V_p$, since they're all divisible by $(q-1)^{p^\alpha-p^{\alpha-1}}\equiv \Phi_{p^\alpha}(q)\mod p$, and similarly that every class in $\H^1(\ov K_{\alpha+1,e}^*)$ is contained in the image of $\d V_p$. This shows that $\ov\IV_{p^\alpha}\rightarrow \H_{R/A,\square}^*(p^\alpha)_p^\complete$ is a surjection in every direct summands except possibly $\H_{R/A,\square}^{*,0}(p^\alpha)_p^\complete$. To analyse the situation for the latter, just observe that the kernel of 
		\begin{equation*}
			\H_{R/A,\square}^{*,0}(p^\alpha)_p^\complete\longrightarrow\H_{R/A,\square}^{*,0}(p^\alpha)_p^\complete/\Phi_{p^\alpha}(q)
		\end{equation*}
		is given by the classes divisible by $\Phi_{p^\alpha}(q)$. But every such class is in the image of $V_p$ due to the relation $V_p\circ F_p=\Phi_{p^\alpha}(q)$.
	\end{proof}
	\begin{proof}[Proof of \cref{enum:alphaA} $\Rightarrow$ \cref{enum:alphaB}]
		We wish to show that after $p$-completion, the canonical map from \cref{thm:qDeRhamWittqHodge} for $m=p^\alpha$ becomes an isomorphism
		\begin{equation*}
			\bigl(\qIW_{p^\alpha}\Omega_{R/A}^*\bigr)_p^\complete\overset{\cong}{\longrightarrow}\H_{R/A,\square}^*(p^\alpha)_p^\complete
		\end{equation*}
		(the left-hand side doesn't need to be $(q-1)$-completed since it is already $p$-complete and $(q^{p^\alpha}-1)$-torsion). To do so, let $\IV_{p^\alpha}^*\subseteq (\qIW_{p^\alpha}\Omega_{R/A}^*)_p^\complete$ be the $p$-complete graded sub-$A[q]$-module generated by the image of $V_p$ and $\d V_p$. Now consider the diagram
		\begin{equation*}
			\begin{tikzcd}
				0\rar & \IV_{p^\alpha}^*\dar\rar & \bigl(\qIW_{p^\alpha}\Omega_{R/A}^*\bigr)_p^\complete\dar\rar & \bigl(\Omega_{R/A}^*\otimes_{A,\phi^\alpha}A[\zeta_{p^\alpha}]\bigr)_p^\complete\rar\eqar[d] & 0\\
				0\rar & \ov\IV_{p^\alpha}^*\rar & \H_{R/A,\square}^*(p^\alpha)_p^\complete\rar & \bigl(\Omega_{R/A}^*\otimes_{A,\phi^\alpha}A[\zeta_{p^\alpha}]\bigr)_p^\complete\rar & 0
			\end{tikzcd}
		\end{equation*}
		The bottom row is exact by \cref{lem:H*0DeRham,lem:VerschiebungSequence}. The top row is exact by passing to degree-wise $p$-completions in \cref{lem:qDRWGhostMap}; this preserves exactness since \cref{enum:alphaA} implies that all $p$-completions agree with the corresponding derived $p$-completions. By the five lemma, it will be enough to show that the left and the right vertical maps are isomorphisms.
		
		Let's begin with the right one and verify that it really is the identity, as indicated. By the universal property of de Rham complexes it's enough to check this in degree~$0$. Now the diagram
		\begin{equation*}
			\begin{tikzcd}
				\qIW_{p^\alpha}(\widehat{R}_p/A)\rar["c_{p^\alpha/A}"]\dar["\gh_1"'] & \widehat{R}_p[q]/(q^{p^\alpha}-1)\dar\\
				\left(R\otimes_{A,\phi^\alpha}A\right)_p^\complete[\zeta_{p^\alpha}] \rar["\phi_{\square/A}^\alpha"] & \widehat{R}_p[\zeta_{p^{\alpha}}]
			\end{tikzcd}
		\end{equation*}
		commutes and the bottom row is injective, as we've checked in~\cref{par:FrobeniusLift} and~\cref{par:FrobeniusDecompositions}. This shows that we really get the identity in degree~$0$.
		
		To complete the proof, let's shows that the left vertical arrow is an isomorphism. Using the inductive hypothesis \embrace{\hyperref[enum:alphaB]{$b_{\alpha-1}$}} as well as injectivity of the Verschiebungen $V_p$ (which follows from $F_p\circ V_p=p$ combined with our $p$-torsion freeness results \cref{enum:alphaA} and \cref{lem:H*pTorsionFree}), we see that we get an isomorphism when restricted to the respective images of $V_p$. This immediately implies that $\IV_{p^\alpha}^*\rightarrow \ov\IV_{p^\alpha}^*$ must be surjective. For injectivity, observe that $p\IV_{p^\alpha}^*$ and $p\ov\IV_{p^\alpha}^*$ are contained in the respective images of $V_p$ since $p\d V_p=V_p\circ \d$ by \cref{lem:dAfterV}. So $p\IV_{p^\alpha}^*\rightarrow p\ov\IV_{p^\alpha}^*$ must be injective. By $p$-torsion freeness we conclude that $\IV_{p^\alpha}^*\rightarrow \ov\IV_{p^\alpha}^*$ must be injective as well.
	\end{proof}
	
	Next we set out to prove the implication \cref{enum:alphaB} $\Rightarrow$ \cref{enum:alphaC}. From now on, $R$ is an arbitrary smooth $A$-algebra; the existence of an étale framing $\square\colon A[T_1,\dotsc,T_n]\rightarrow R$ is no longer assumed.
	\begin{numpar}[A non-canonical quasi-isomorphism I.]\label{par:NonCanonicalQuasiIso}
		Contrary to what we just said, assume that $R$ admits an étale framing $\square\colon A[T_1,\dotsc,T_n]\rightarrow R$ and fix one such choice. We'll use it to construct a quasi-isomorphism as in \cref{prop:qDRWTrivialAfterpCompletion}, albeit a priori a non-canonical one (and compatibility with the $\IE_\infty$-$A[q]$-algebra structures is also not a priori clear). To do that, we consider the following diagram:
		\begin{equation*}
			\begin{tikzcd}
				\left(\qOmega_{R/A,\square}^*\otimes_{A[q],\phi^\alpha}A[q]/(q^{p^\alpha}-1)\right)_p^\complete \rar["\phi_{\square/A}^\alpha"]\dar[dashed] & \eta_{(\Phi_p(q)\Phi_{p^2}(q)\dotsm \Phi_{p^\alpha}(q))}\left(\bigl(\qOmega_{R/A,\square}^*\bigr)_p^\complete\right)/(q^{p^\alpha}-1)\dar["\cong"]\\
				\H^*\left(\bigl(\qHodge_{R/A,\square}^*\bigr)_p^\complete/(q^{p^\alpha}-1)\right) & \eta_{(q^{p^{\alpha}}-1)}\left(\bigl(\qHodge_{R/A,\square}^*\bigr)_p^\complete\right)/(q^{p^\alpha}-1)\lar["\simeq"']
			\end{tikzcd}
		\end{equation*}
		Let us explain what happens here: The right vertical arrow is an isomorphism for obvious reasons, and the bottom arrow is the quasi-isomorphism from \cite[\stackstag{0F7T}]{Stacks}. The top row comes from the map of complexes
		\begin{equation*}
			\phi_{\square}\colon \bigl(\qOmega_{R/A,\square}^*\bigr)_{p}^\complete\longrightarrow \bigl(\qOmega_{R/A,\square}^*\bigr)_p^\complete
		\end{equation*}
		which is constructed as in \cref{par:FrobeniusqHodge} except that we put $\phi_{\square}(\d T_i)\coloneqq \Phi_p(q)T_i^{p-1}\d T_i$.%
		\footnote{Under the identification $\eta_{\Phi_p(q)}(\qOmega_{R/A,\square}^*)\cong \eta_{(q^p-1)}(\qHodge_{R/A,\square}^*)$, this agrees with applying $\eta_{(q-1)}$ to the Frobenius morphism from \cref{par:FrobeniusqHodge} (which sends $q\mapsto q^p$, so $\eta_{(q-1)}$ on the left-hand side corresponds to $\eta_{(q^p-1)}$ on the right-hand side).}
		By construction, the image of $\phi_\square$ is contained in the subcomplex $\eta_{\Phi_p(q)}((\qOmega_{R/A,\square}^*)_p^\complete)$, hence the top row $\phi_{\square/A}^\alpha$ of the diagram has the indicated form.
	\end{numpar}
	\begin{lem}
		The morphism $\phi_{\square/A}^\alpha$ from the diagram from \cref{par:NonCanonicalQuasiIso} is a quasi-isomorphism.
	\end{lem}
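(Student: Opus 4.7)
The plan is to decompose both sides of $\phi_{\square/A}^\alpha$ according to the Frobenius decomposition of \cref{par:FrobeniusDecompositions,par:FrobeniusDecompositionsII}. The same argument as in \cref{lem:DecompRespectsqDifferential}, using that $\phi_\square$ commutes with each $\gamma_i$, shows that the decomposition
\begin{equation*}
	\bigl(\qOmega_{R/A,\square}^*\bigr)_p^\complete\cong \bigoplus_v \bigl(\qOmega_{R/A,\square}^{*,v}\bigr)_p^\complete
\end{equation*}
is not merely a decomposition of graded modules but one of subcomplexes. Since $\eta_{\Phi_p(q)\cdots \Phi_{p^\alpha}(q)}$ commutes with direct sums of subcomplexes, the target of $\phi_{\square/A}^\alpha$ splits into the corresponding $v$-pieces. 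The explicit formula $\phi_\square^\alpha(\d T_i)=\Phi_p(q)\Phi_{p^2}(q)\cdots \Phi_{p^\alpha}(q)\, T_i^{p^\alpha-1}\d T_i$, combined with $\phi_\square^\alpha$ landing in $\widehat R_p^{(\alpha)}$ in degree zero, shows that $\phi_{\square/A}^\alpha$ takes values in the $v=0$-summand.

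So the lemma reduces to two assertions: (1) $\phi_{\square/A}^\alpha$ is an isomorphism of complexes onto the $v=0$-piece $\eta_{\Phi_p(q)\cdots \Phi_{p^\alpha}(q)}\bigl((\qOmega_{R/A,\square}^{*,0})_p^\complete\bigr)/(q^{p^\alpha}-1)$; and (2) for each $v\neq 0$, the summand $\eta_{\Phi_p(q)\cdots \Phi_{p^\alpha}(q)}\bigl((\qOmega_{R/A,\square}^{*,v})_p^\complete\bigr)/(q^{p^\alpha}-1)$ is acyclic. For (1), a direct computation analogous to the proof of \cref{lem:H*0DeRham} shows that on the $v=0$-summand, the differential $\q\nabla$ factors as $\Phi_p(q)\Phi_{p^2}(q)\cdots\Phi_{p^\alpha}(q)$ times the $q^{p^\alpha}$-derivative with respect to the variables $T_i^{p^\alpha}$. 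Applying $\eta_{\Phi_p(q)\cdots \Phi_{p^\alpha}(q)}$ precisely cancels these cyclotomic factors, and passing to the quotient modulo $q^{p^\alpha}-1$ turns the differential into the ordinary de Rham differential. Unravelling the identifications produces exactly $(\Omega_{R/A}^*\otimes_{A,\phi^\alpha}A[q]/(q^{p^\alpha}-1))_p^\complete$, and $\phi_{\square/A}^\alpha$ is seen to induce the identity on this piece.

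For (2), the key input is the identity $[p^{\alpha-e}]_{q^{p^e}}=\Phi_{p^{e+1}}(q)\cdots\Phi_{p^\alpha}(q)$. The analogue of \cref{par:FrobeniusDecompositionsII} for $\qOmega$ (without the $(q-1)$-factor appearing in $\qHodge$) presents each $v$-summand of $(\qOmega_{R/A,\square}^*)_p^\complete$ as a tensor product of two-term complexes whose differentials are, up to units in $\IZ_p\qpower$, multiplication by $\Phi_{p^{e_i+1}}(q)\cdots \Phi_{p^\alpha}(q)$, where $e_i=v_p(v_i)$ with the convention $v_p(0)=\alpha$. A direct computation of $\eta_{\Phi_p(q)\cdots \Phi_{p^\alpha}(q)}$ of such a two-term complex shows that its degree-zero and degree-one pieces become $\Phi_p(q)\cdots\Phi_{p^{e_i}}(q)\,\widehat R_p^{(\alpha)}\qpower$ and $\Phi_p(q)\cdots\Phi_{p^\alpha}(q)\,\widehat R_p^{(\alpha)}\qpower$ respectively, and after dividing through by these cyclotomic factors the induced differential becomes the identity; so the two-term complex is contractible, and remains so after reduction mod $q^{p^\alpha}-1$. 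Any tensor product involving at least one contractible factor is itself contractible, so $v\neq 0$ (which forces $e_i<\alpha$ for some $i$) yields an acyclic $v$-summand.

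The main obstacle is the bookkeeping in (2): one must verify that $\eta_{\Phi_p(q)\cdots\Phi_{p^\alpha}(q)}$ of each elementary two-term complex really does reduce to the identity under the natural identifications, and that the resulting contractibility survives both the tensor product over $\widehat R_p^{(\alpha)}\qpower$ and the quotient by $q^{p^\alpha}-1$. Granted these computations, combining (1) and (2) across all $v$ exhibits $\phi_{\square/A}^\alpha$ as a quasi-isomorphism, as desired.
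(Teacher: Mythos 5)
The paper proves this lemma by a slick reduction to prismatic cohomology: it identifies $(\qOmega_{R/A,\square}^*)_p^\complete$ with $\Prism_{S/\widehat A_p\qpower}$ for $S=\widehat R_p[\zeta_p]$ via \cite[Theorem~16.22]{Prismatic}, identifies $\phi_\square$ with the prismatic Frobenius, and then invokes the Frobenius isogeny theorem \cite[Theorem~15.3]{Prismatic}. Your proposal is a genuinely different, purely computational approach via the Frobenius decomposition of \cref{par:FrobeniusDecompositions,par:FrobeniusDecompositionsII}, which is an attractive idea in that it avoids the prismatic machinery. However, as written it has a concrete error and a structural gap.

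\textbf{The error.} You claim that the two-term complexes in the $\qOmega$-decomposition have differentials $\Phi_{p^{e_i+1}}(q)\cdots\Phi_{p^\alpha}(q)$ up to units. This is not what comes out: for $v_i\neq 0$, $\q\nabla(T_i^{v_i})=[v_i]_qT_i^{v_i-1}\d T_i$, and $[v_i]_q\sim[p^{e_i}]_q=\Phi_p(q)\cdots\Phi_{p^{e_i}}(q)$ up to a unit in $\IZ_p\qpower$ \embrace{dividing the $\qHodge$-differential $q^{p^{e_i}}-1$ from \cref{par:FrobeniusDecompositionsII} by $q-1$}; and for $v_i=0$, $\q\nabla(1)=0$, so the differential is zero. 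You have conflated the differential with the cohomology class $[p^{\alpha-e}]_{q^{p^e}}=\Phi_{p^{e+1}}(q)\cdots\Phi_{p^\alpha}(q)$ generating $\H^0(K_{\alpha,e}^*)$ in \cref{cor:H*pTorsionFree} — those are complementary products of cyclotomic polynomials.

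\textbf{The gap.} More seriously, the $v$-piece of $(\qOmega_{R/A,\square}^*)_p^\complete$ before modding out by $q^{p^\alpha}-1$ is \emph{not} a base change of a $\IZ_p\qpower$-complex as you use it. The key point of \cref{lem:DecompRespectsqDifferential} for $\qHodge$ is that $(q-1)\q\nabla$ restricted to $\widehat R_p^{(\alpha)}\qpower$ is divisible by $q^{p^\alpha}-1$, so that modulo $q^{p^\alpha}-1$ the differential is $\widehat R_p^{(\alpha)}\qpower$-linear and the $v$-piece becomes $\widehat R_p^{(\alpha)}\qpower\otimes_{\IZ_p\qpower}K_\alpha^{*,v}(n)$. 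For $\qOmega$, $\q\nabla$ restricted to $\widehat R_p^{(\alpha)}\qpower$ is only divisible by $[p^\alpha]_q=\Phi_p(q)\cdots\Phi_{p^\alpha}(q)$, so the differential on $(\qOmega^{*,v})_p^\complete$ has a nontrivial de Rham-type contribution from the $\widehat R_p^{(\alpha)}$-direction that survives both before and after modding out by $q^{p^\alpha}-1$. You must apply $\eta_{\Phi_p(q)\cdots\Phi_{p^\alpha}(q)}$ \emph{before} the quotient, and the input is therefore a genuine total complex with two interacting directions, not a tensor product of two-term $\IZ_p\qpower$-complexes. Finally, $\eta_f$ is only lax monoidal, so even if the $K$-factors became contractible after $\eta_f$ you could not conclude that $\eta_f$ of the tensor product is contractible; what $\eta_f$ does preserve is acyclicity of the input, and the $v\neq 0$ pieces of $(\qOmega)_p^\complete$ are not acyclic before $\eta_f$. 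Any fix would essentially have to go through the Bockstein identification $\eta_{(q^{p^\alpha}-1)}(\qHodge)/(q^{p^\alpha}-1)\simeq\H^*(\qHodge/(q^{p^\alpha}-1),\beta)$ and then \cref{lem:H*0DeRham,lem:VerschiebungSequence}, which is precisely what the paper does at a later stage \embrace{\cref{enum:alphaA}$\Rightarrow$\cref{enum:alphaB}} rather than here.
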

	\begin{proof}
		This is a general feature of prismatic cohomology. If we consider $S\coloneqq \widehat{R}_p[\zeta_p]$ and the $q$-de Rham prism $(\widehat{A}_p\qpower,(\Phi_p(q)))$, then $(\qOmega_{R/A,\square}^*)_p^\complete\simeq \Prism_{S/\widehat{A}_p\qpower}$ by \cite[Theorem~\href{https://arxiv.org/pdf/1905.08229v4\#theorem.16.22}{16.22}]{Prismatic}. As we'll check below, this identifies $\phi_\square$ with the primsatic Frobenius. Now for any prism $(B,J)$, \cite[Theorem~\href{https://arxiv.org/pdf/1905.08229v4\#theorem.15.3}{15.3}]{Prismatic} shows that the relative Frobenius
		\begin{equation*}
			\bigl(\Prism_{S/B}\lotimes_{B,\phi_B^\alpha}B\bigr)_{(p,J)}^\complete\overset{\simeq}{\longrightarrow} \L\eta_{(J\phi_B(J)\dotsm\phi_B^{\alpha-1}(J))}\Prism_{S/B}\,.
		\end{equation*}
		is a quasi-isomorphism, which is what we want.
		
		To check that $\phi_\square$ agrees with the prismatic Frobenius, first observe that with the same definition, one can define an endomorphism $\phi_\square$ for any $q$-PD de Rham complex as in \cite[Construction~\href{https://arxiv.org/pdf/1905.08229v4\#theorem.16.20}{16.20}]{Prismatic}. We claim that each of them computes the prismatic Frobenius. To show this, consider the cosimplicial complex $M^{\bullet,*}$ from the proof of \cite[Theorem~\href{https://arxiv.org/pdf/1905.08229v4\#theorem.16.22}{16.22}]{Prismatic}. Applying our construction of $\phi_\square$ for $q$-PD de Rham complexes yields an endomorphism $\phi_\square\colon M^{\bullet,*}\rightarrow M^{\bullet,*}$. Restricted to $M^{0,*}$, this is the endomorphism we started with. Restricted to the cosimplicial $\widehat{A}_p\qpower$-module $M^{\bullet,0}$, we get the prismatic Frobenius, since in degree $0$ the endomorphism $\phi_\square$ is induced by the Frobenii of the $q$-PD envelopes involved (which are $\delta$-rings).
	\end{proof}
	\begin{numpar}[A non-canonical isomorphism II.]\label{par:NonCanonicalQuasiIsoII}
		Since $\phi^\alpha\colon A[q]\rightarrow A[q]/(q^{p^\alpha}-1)$ factors through the projection $A[q]\rightarrow A$, the top left corner in the diagram from \cref{par:NonCanonicalQuasiIso} is isomorphic to $(\Omega_{R/A}^*\otimes_{A,\phi^\alpha}A[q]/(q^{p^\alpha}-1))_p^\complete$. By \cref{enum:alphaB}, the bottom left corner is isomorphic to $(\qIW_{p^\alpha}\Omega_{R/A}^*)_p^\complete$. This yields a quasi-isomorphism
		\begin{equation*}
			s_{\square/A}\colon \bigl(\Omega_{R/A}\lotimes_{A,\phi^\alpha}A[q]/(q^{p^\alpha}-1)\bigr)_p^\complete\overset{\simeq}{\longrightarrow} \bigl(\qIW_{p^\alpha}\Omega_{R/A}\bigr)_p^\complete\,,
		\end{equation*}
		which has the desired form from \cref{prop:qDRWTrivialAfterpCompletion}. As we've already mentioned above, this quasi-isomorphism is a priori non-canonical and compatibility with the $\IE_\infty$-$A[q]$-algebra structures is far from clear.
	\end{numpar}
	As we'll see now, the map $s_{\square/A}$ from \cref{par:NonCanonicalQuasiIsoII} is, in fact, canonical.
	\begin{numpar}[A functorial comparison map.]\label{par:CanonicalComparisonMap}
		Let $\pi\colon P \twoheadrightarrow \widehat{R}_p$ be any surjection from a $p$-completely ind-smooth $\delta$-$\widehat{A}_p$-algebra (see \cref{par:Notation} for the terminology). Let $D(\pi)$ denote the divided power envelope of $\ker\pi$. Then $D(\pi)$ has a canonical $\delta$-structure by \cite[Corollary~\href{https://arxiv.org/pdf/1905.08229v4\#theorem.2.39}{2.39}]{Prismatic}. We let $\Omega_{D(\pi)/A}^*$ and $\breve{\Omega}_{D(\pi)/A}^*\cong \Omega_{P/A}^*\otimes_PD(\pi)$ denote the ordinary and the PD-de Rham complex of $D(\pi)$, respectively. Note that the crystalline Poincaré lemma \cite[\stackstag{07LG}]{Stacks} shows that the canonical map $(\breve{\Omega}_{D(\pi)/A}^*)_p^\complete\rightarrow (\Omega_{R/A}^*)_p^\complete$ is an equivalence on underlying $\IE_\infty$-algebras.
		
		Using \cref{par:RelativeComparisonMaps} (for which we only need a $\Lambda_p$-$A$-algebra structure on $D(\pi)$, so the given $\delta$-structure does it) we get a comparison map $s_{p^\alpha/A}\colon D(\pi)\otimes_{A,\phi^\alpha}A[q]/(q^{p^\alpha}-1)\rightarrow \qIW_{p^\alpha}(D(\pi)/A)$ which induces a morphism of differential-graded $A[q]$-algebras
		\begin{equation*}
			\Omega_{D(\pi)/A}^*\otimes_{A,\phi^\alpha}A[q]/(q^{p^\alpha}-1)\longrightarrow \qIW_{p^\alpha}\Omega_{D(\pi)/A}^*\,.
		\end{equation*}
		Now let $\qIW_{p^\alpha}\ov\Omega_{D(\pi)/A}^*\coloneqq \qIW_{p^\alpha}\Omega_{D(\pi)/A}^*/(p^\infty\text{-torsion})$. Since $\qIW_{p^\alpha}\overline\Omega_{D(\pi)/A}^*$ is $p$-local (because $D(\pi)$ is, see \cref{cor:qWittLocalisation}\cref{enum:MultiplicativeSubsetOfZ}) and $p$-torsion-free, its differentials must be PD-derivations. Therefore, we get an induced map $\breve{\Omega}_{D(\pi)/A}^*\rightarrow \qIW_{p^\alpha}\ov\Omega_{D(\pi)/A}^*$ and hence also a map
		\begin{equation*}
			\bigl(\breve{\Omega}_{D(\pi)/A}^*\otimes_{A,\phi^\alpha}A[q]/(q^{p^\alpha}-1)\bigr)_p^\complete\longrightarrow \bigl(\qIW_{p^\alpha}\Omega_{D(\pi)/A}^*\bigr)_p^\complete\,,
		\end{equation*}
		where the completion is the degree-wise underived $p$-completion (following our convention from~\cref{par:Notation}). Finally, since $\qIW_{p^\alpha}\Omega_{R/A}^*$ is degree-wise $p$-torsion-free by \cref{enum:alphaA}, the natural map $\qIW_{p^\alpha}\Omega_{D(\pi)/A}^*\rightarrow \qIW_{p^\alpha}\Omega_{R/A}^*$ induces a morphism $(\qIW_{p^\alpha}\ov\Omega_{D(\pi)/A}^*)_p^\complete\rightarrow (\qIW_{p^\alpha}\Omega_{R/A}^*)_p^\complete$ of differential-graded $A[q]$-algebras. Summarising, we obtain the following diagram of $\IE_\infty$-$A[q]$-algebras
		\begin{equation*}
			\begin{tikzcd}
				\bigl(\breve{\Omega}_{D(\pi)/A}\lotimes_{A,\phi^\alpha}A[q]/(q^{p^\alpha}-1)\bigr)_p^\complete\rar\dar["\simeq"']& \bigl(\qIW_{p^\alpha}\ov\Omega_{D(\pi)/A}\bigr)_p^\complete\dar\\
				\bigl(\Omega_{R/A}\lotimes_{A,\phi^\alpha}A[q]/(q^{p^\alpha}-1)\bigr)_p^\complete\rar[dashed,"{s_\pi}"] & \bigl(\qIW_{p^\alpha}\Omega_{R/A}\bigr)_p^\complete
			\end{tikzcd}
		\end{equation*}
		Since the left vertical arrow is an equivalence, the bottom dashed arrow $s_\pi$ exists uniquely up to contractible choice.
		
		We claim that $s_\pi$ doesn't depend on the choice of $\pi\colon P \twoheadrightarrow \widehat{R}_p$ (up to equivalence in the $\infty$-category $\CAlg(\widehat{\Dd}_p(A[q]))$ of $p$-complete $\IE_\infty$-$A[q]$-algebras). Indeed, let $\pi'\colon P' \twoheadrightarrow\widehat{R}_p$ be another surjection from a $p$-completely ind-smooth $\delta$-$\widehat{A}_p$-algebra. If there exists a $\delta$-$\widehat{A}_p$-algebra map $f\colon P\rightarrow P'$ such that $\pi=\pi'\circ f$, then it's clear that $s_\pi\simeq s_{\pi'}$ since the the whole construction is functorial with respect to $\delta$-$\widehat{A}_p$-algebra maps. In general, let $\Cc_R$ denote the category of surjections $(\pi\colon P \twoheadrightarrow \widehat{R}_p)$ as above, with morphisms $\pi\rightarrow \pi'$ given by $\delta$-$\widehat{A}_p$-algebra morphisms $f\colon P\rightarrow P'$ such that $\pi=\pi'\circ f$. Then $\Cc_R$ has coproducts given by $\pi\sqcup \pi'=(\pi\otimes\pi'\colon (P\otimes_{A}P')_p^\complete\twoheadrightarrow \widehat{R}_p)$, hence it is weakly contractible.%
		\footnote{If $\Cc$ is any $\infty$-category with coproducts, the diagonal $\Delta\colon\Cc\rightarrow \Cc\times\Cc$ has a left adjoint, which forces $\abs{\Delta}\colon\abs{\Cc}\rightarrow\abs{\Cc}\times\abs{\Cc}$ to be an equivalence. Then all $\pi_*\abs{\Cc}$ must be singletons.} Therefore, for arbitrary elements in $\Cc_R$, there's an essentially unique way to compare them, proving that they all give rise to the same map $s_{\pi}$ up to equivalence.
		
		Furthermore, this map can be made functorial in $R$. The easiest way to do so is to simply choose a functorial surjection $\pi_R\colon P_R \twoheadrightarrow\widehat{R}_p$ from a $p$-completely ind-smooth $\delta$-ring; for example, one can take $P_R\coloneqq \widehat{A}_p\{W(\widehat{R}_p)\}_p^\complete$ to be the $p$-complete free $\delta$-$\widehat{A}_p$-algebra on the set $W(\widehat{R}_p)$ of $p$-typical Witt vectors, together with its canonical surjection
		\begin{equation*}
			\pi_R\colon P_R \longtwoheadrightarrow W(\widehat{R}_p) \longtwoheadrightarrow \widehat{R}_p
		\end{equation*}
		Here $W(\widehat{R}_p)$ is equipped with its $\widehat{A}_p$-algebra structure induced via $\widehat{A}_p\rightarrow W(\widehat{A}_p)\rightarrow W(\widehat{R}_p)$, using the $\delta$-structure on $\widehat{A}_p$. This yields the desired functoriality.
	\end{numpar}
	\begin{rem}\label{rem:ConceptualConstruction}
		 A conceptually nicer way would be to consider the category $\Cc$ of all pairs $(R,\pi\colon P\twoheadrightarrow \widehat{R}_p)$ together with its forgetful functor $U\colon \Cc\rightarrow \cat{Sm}_A$ into the category of smooth $A$-algebras. In \cref{par:CanonicalComparisonMap} we've constructed a natural transformation
		\begin{equation*}
			s_{(-)}\colon \bigl(\breve{\Omega}_{D(-)/A}\lotimes_{A,\phi^\alpha}A[q]/(q^{p^\alpha}-1)\bigr)_p^\complete\Longrightarrow \bigl(\qIW_{p^\alpha}\Omega_{-/A}\bigr)_p^\complete\circ U\,.
		\end{equation*}
		One can show that $(\Omega_{-/A}\lotimes_{A,\phi^\alpha}A[q]/(q^{p^\alpha}-1))_p^\complete$ is the left Kan extension of the functor on the left-hand side along $U$.%
		\footnote{For any $R\in\cat{Sm}_A$, the value of the left Kan extension at $R$ is given by $\colimit(\Cc_{/R}\rightarrow \Cc\rightarrow \CAlg(\widehat{\Dd}_p(A[q])))$. Using that $\Cc_R$ is weakly contractible, it will thus be enough to check that $\Cc_R\rightarrow \Cc_{/R}$ is coinitial. The same argument as in \cref{par:CanonicalComparisonMap} shows that $\Cc_{/R}$ has coproducts. Thus, if we choose any $\pi_0\in \Cc_R$, then the slice category projection $(\Cc_{/R})_{\pi_0/}\rightarrow \Cc_{/R}$ will be a right adjoint, with left adjoint given by $-\sqcup \pi_0$. In particular, $(\Cc_{/R})_{\pi_0/}\rightarrow \Cc_{/R}$ is coinitial. Now let $(\Cc_{/R})^{\mathrm{surj}}\subseteq \Cc_{/R}$ be the full subcategory spanned by those $((R',\pi')\in \Cc, (R'\rightarrow R)\in (\cat{Sm}_A)_{/R})$ for which $R'\rightarrow R$ is surjective. By inspection, the image of $(\Cc_{/R})_{\pi_0/}\rightarrow \Cc_{/R}$ lands in $(\Cc_{/R})^\mathrm{surj}$. The same adjointness argument then shows that $(\Cc_{/R})_{\pi_0/}\rightarrow (\Cc_{/R})^\mathrm{surj}$ is coinitial too, hence $(\Cc_{/R})^\mathrm{surj}\rightarrow \Cc_{/R}$ must also be coinitial. Finally, $\Cc_R\rightarrow (\Cc_{/R})^\mathrm{surj}$ is a right adjoint, hence coinitial: The left adjoint simply sends a surjection $\pi'\colon P\twoheadrightarrow \widehat{R}_p'$ to its composition with $\widehat{R}_p'\twoheadrightarrow \widehat{R}_p$.}
		Then the universal property of left Kan extension provides the desired natural transformation.
	\end{rem}
	Armed with a functorial comparison map, we'll now prove the remaining two implications, thus finishing the induction outlined in~\cref{par:OutlineOfStrategy}.
	\begin{proof}[Proof of \cref{enum:alphaB} $\Rightarrow$ \cref{enum:alphaC}]
		We need to prove that the natural transformation
		\begin{equation*}
			\bigl(\Omega_{-/A}\lotimes_{A,\phi^\alpha}A[q]/(q^{p^\alpha}-1)\bigr)_p^\complete\Longrightarrow \bigl(\qIW_{p^\alpha}\Omega_{-/A}\bigr)_p^\complete
		\end{equation*}
		constructed in \cref{par:CanonicalComparisonMap} and \cref{rem:ConceptualConstruction} is an equivalence. As both sides are étale sheaves by \cref{cor:qDRWEtaleSheaf}, it's enough to do this in the case where there exists an étale map $\square\colon A[T_1,\dotsc,T_n]\rightarrow R$. As in \cref{par:FrobeniusLift}, we get a $\delta$-$\widehat{A}_p$-algebra structure on $\widehat{R}_p$, thus making $(\id\colon \widehat{R}_p\rightarrow \widehat{R}_p)$ into an object of $\Cc_R$. The corresponding divided power envelope is just $\widehat{R}_p$ itself and so, with notation as in \cref{par:CanonicalComparisonMap}, $s_{\id}$ is a morphism of differential-graded $A[q]$-algebras
		\begin{equation*}
			\bigl(\Omega_{R/A}^*\otimes_{A,\phi^\alpha}A[q]/(q^{p^\alpha}-1)\bigr)_p^\complete\longrightarrow\bigl(\qIW_{p^\alpha}\Omega_{R/A}^*\bigr)_p^\complete\,.
		\end{equation*}
		We claim that this map coincides with the quasi-isomorphism $s_{\square/A}$ from \cref{par:NonCanonicalQuasiIso}, which would finish the proof.
		
		Both $s_{\id}$ and $s_{\square/A}$ are given as explicit maps of differential-graded $A[q]$-algebras. By the universal property of de Rham complexes, it is thus enough to check that $s_{\id}$ and $s_{\square/A}$ agree in degree $0$. In fact, it's enough to check this after postcomposition with the comparison map $c_{p^\alpha/A}\colon \qIW_{p^\alpha}(\widehat{R}_p/A)\rightarrow \widehat{R}_p[q]/(q^{p^\alpha}-1)$, which we know to be injective as a consequence of \cref{enum:alphaB}. By construction, $s_{\id}$ is given by the comparison map
		\begin{equation*}
			s_{p^\alpha/A}\colon \widehat{R}_p\otimes_{A,\phi^\alpha}A[q]/(q^{p^\alpha}-1)\longrightarrow \qIW_{p^\alpha}(\widehat{R}_p/A)
		\end{equation*}
		from \cref{par:RelativeComparisonMaps} in degree $0$, using the $\delta$-$A$-algebra structure on $\widehat{R}_p$. We've noted in \cref{par:RelativeComparisonMaps} that $c_{p^\alpha/A}\circ s_{p^\alpha/A}$ is given by the linearised $(p^\alpha)$\textsuperscript{th} Adams operation of $\widehat{R}_p$. But that's just $\phi_{\square/A}^\alpha$! By unravelling \cref{par:NonCanonicalQuasiIso}, we see that $s_{\square/A}$ in degree~$0$, postcomposed with $c_{p^\alpha/A}$, is also given by $\phi_{\square/A}^\alpha$. This finishes the proof.
	\end{proof}
	\begin{proof}[Proof of \cref{enum:alphaC} $\Rightarrow$ \cref{enum:alphaD}]
		The idea is to combine the equivalence from \cref{prop:qDRWTrivialAfterpCompletion} with the Cartier isomorphism. Let $R\cong A[T_1,\dotsc,T_n]$ be a polynomial ring; we equip $\widehat{R}_p$ with the identity $p$-completely étale framing $\square\colon \widehat{A}_p\langle T_1,\dotsc,T_n\rangle\rightarrow \widehat{R}_p$ and the corresponding $\delta$-$\widehat{A}_p$-algebra structure given by $\delta(T_i)=0$. Furthermore, let
		\begin{equation*}
			s_{\square/A}\colon \bigl(\Omega_{R/A}^*\otimes_{A,\phi^\alpha}A[q]/(q^{p^\alpha}-1)\bigr)_p^\complete\overset{\simeq}{\longrightarrow}\bigl(\qIW_{p^\alpha}\Omega_{R/A}^*\bigr)_p^\complete
		\end{equation*}
		be the explicit quasi-isomorphism from \cref{par:NonCanonicalQuasiIso}. We note that the left-hand side can be rewritten as $(\Omega_{R_0/\IZ}\otimes_\IZ A[q]/(q^{p^\alpha}-1))_p^\complete$, where $R_0\coloneqq \IZ[T_1,\dotsc,T_n]$.
		
		Now assume $\xi \in \qIW_{p^\alpha}\Omega_{R/A}^i$ satisfies $\d\xi\equiv 0\mod p$. Letting $\overline{\xi}$ denote the image of $\xi$ in $\qIW_{p^\alpha}\Omega_{R/A}^*/p$; we see that $\overline{\xi}$ is a cycle. Since both source and target of $s_{\square/A}$ are $p$-torsion-free (using \cref{enum:alphaA}, which we already know), it induces a quasi-isomorphism
		\begin{equation*}
			\overline{s}_{\square/A}\colon \Omega_{(R_0/p)/\IF_p}^*\otimes_{\IF_p}(A/p)[q]/\bigl(q^{p^\alpha}-1\bigr)\overset{\simeq}{\longrightarrow}\qIW_{p^\alpha}\Omega_{R/A}^*/p\,.
		\end{equation*}
		Consequently, we can write $\overline{\xi}=\overline{s}_{\square/A}(\overline{\vartheta})+\d\overline{\xi}_0$, where $\overline{\vartheta}$ is a cycle in the base changed de Rham complex $\Omega_{(R_0/p)/\IF_p}^i\otimes_{\IF_p}(A/p)[q]/(q^{p^\alpha}-1)$. But cycles in the de Rham complex of a polynomial ring over $\IF_p$ can be very explicitly described using the Cartier isomorphism, or rather the ideas that lead to it. Namely, we can write $\overline{\vartheta}=\overline{\vartheta}_0+\d\overline{\vartheta}_1$, where $\overline{\vartheta}_0$ is an $A[q]$-linear combination of terms of the form $T_1^{pv_1}\dotsm T_n^{p v_n} (T_{n_1}^{p-1}\d T_{n_1})\dotsb (T_{n_i}^{p-1}\d T_{n_i})$, where $v_1,\dotsc,v_d\geqslant 0$ and $1\leqslant n_1<n_2<\dotso<n_i\leqslant n$. Now choose lifts $\xi_0$, $\vartheta_0$ and $\vartheta_1$ of $\overline{\xi}_0$, $\overline{\vartheta}_0$, and $\overline{\vartheta}_1$, respectively. Then $\xi\equiv s_{\square/A}(\vartheta_0)+\d s_{\square/A}(\vartheta_1)+\d\xi_0\mod p$. Both $\d s_{\square/A}(\vartheta_1)$ and $\d\xi_0$ are in the image of $F_p\colon \qIW_{p^{\alpha+1}}\Omega_{R/A}^*\rightarrow \qIW_{p^\alpha}\Omega_{R/A}^*$ since $F_p\circ \d \circ V_p=\d$. Furthermore, we've seen in the proof of \cref{enum:alphaB} $\Rightarrow$ \cref{enum:alphaC} above that $s_{\square/A}$ is induced by the comparison map $s_{p^\alpha/A}$ from \cref{par:RelativeComparisonMaps}. This map sends $T_i$ to its Teichmüller lift $\tau_{p^\alpha}(T_i)$ since $\delta(T_i)=0$. Hence $s_{\square/A}(\vartheta_0)$ is an $A[q]$-linear combination of terms of the form
		\begin{equation*}
			\tau_{p^\alpha}(T_1)^{pv_1}\dotsb\tau_{p^\alpha}(T_n)^{pv_n}\bigl(\tau_{p^\alpha}(T_{n_1})^{p-1}\d\tau_{p^\alpha}(T_{n_1})\bigr)\dotsm \bigl(\tau_{p^\alpha}(T_{n_i})^{p-1}\d\tau_{p^\alpha}(T_{n_i})\bigr)\,,
		\end{equation*}
		which are also in the image of $F_p$. This finishes the proof.
	\end{proof}
	
	\subsection{Proof of the main results II: The global case}\label{subsec:Global}
	In the previous section we've carried out the induction outlined in our battle plan~\cref{par:OutlineOfStrategy}. It remains to prove the global cases of \cref{prop:qDRWisTorsionFreeForSmoothRings,prop:qDRWTrivialAfterpCompletion} as wellas \cref{thm:qDeRhamWittqHodge}. Fortunately, these are all easily reduced to the $p$-typical cases. 
	\begin{proof}[Proof of \cref{prop:qDRWisTorsionFreeForSmoothRings}]
		We show that $\qIW_m\Omega_R^*$ is degree-wise $p$-torsion-free using induction on $m$. The case $m=1$ is covered by \embrace{\hyperref[enum:alphaA]{$a_0$}}. Now let $m>1$. Using étale descent as in the proofs of \cref{lem:qDRWGhostMap,lem:qDRWDegreewiseBoundedq-1Torsion}, we can reduce to the case where $R$ is a polynomial ring, and then by base change and \cref{lem:RelativeqDRWBaseChange} we can reduce to the case $A=\IZ$. 
		
		By \cref{cor:qWittNoetherian} and \cref{prop:qDRWExists}\cref{enum:dRtoqDRWisSurjective}, we see that $\qIW_m\Omega_{R/\IZ}^*$ is degree-wise finitely generated over the noetherian ring $\qIW_m(R)$. By the same argument as in the proof of \cref{lem:FpdAdditive}, it's therefore enough to show $p$-torsion freeness after applying each of the functors
		\begin{equation*}
			(-)\bigl[\localise{p}\bigr]\,,\quad (-)_{(p,q^{m/\ell}-1)}^\complete\,,\quad\text{and}\quad (-)\left[\localise{q^{m/\ell}-1}\ \middle|\ \ell\neq p\right]\,,
		\end{equation*}
		where $\ell$ ranges over all prime factors $\neq p$ of $m$. After localisation at $p$, the $p$-torsion freeness is trivial. After $(p,q^{m/\ell}-1)$-adic completion, both $\ell$ and $[\ell]_{q^{m/\ell}}$ become units and so
		\begin{equation*}
			F_\ell\colon \bigl(\qIW_m\Omega_{R/\IZ}^*\bigr)_{(p,q^{m/\ell}-1)}^\complete\overset{\cong}{\longrightarrow} \bigl(\qIW_{m/\ell}\Omega_{R/\IZ}^*\bigr)_{(p,q^{m/\ell}-1)}^\complete
		\end{equation*}
		is a graded isomorphism with inverse $[\ell]_{q^{m/\ell}}^{-1}V_\ell$. By the inductive hypothesis, the right-hand side is $p$-torsion-free, hence so is the left-hand side. Finally, if $\alpha$ is the exponent of $p$ in the prime factorisation of $m$, then $\qIW_m\Omega_{R/\IZ}^*\bigl[1/(q^{m/\ell}-1)\ \big|\ \ell\neq p\bigr]$ is isomorphic to a flat base change of $\qIW_{p^\alpha}\Omega_{R/\IZ}^*$, as was argued in the proof of \cref{lem:FpdAdditive}, so we're done by \cref{enum:alphaA}.
	\end{proof}
	To prove \cref{prop:qDRWTrivialAfterpCompletion}, we first need a $q$-de Rham--Witt analogue of \cref{lem:qWittOverZp}:
	\begin{lem}\label{lem:qDRWpLocalDecomposition}
		Let $R$ be any $A$-algebra and let $m=p^\alpha n$ be an integer, where $\alpha=v_p(m)$ is the exponent of $p$ in the prime factorisation of $m$. Then there's an isomorphism of differential-graded $A_{(p)}[q]$-algebras
		\begin{equation*}
			\bigl(\qIW_m\Omega_{R/A}^*\bigr)_{(p)}\cong \prod_{d\mid n}\left(\qIW_{p^\alpha}\Omega_{R/A}^*\otimes_{A[q],\psi^d} A_{(p)}[q]/\Phi_d(q^{p^\alpha})\right)\,,
		\end{equation*}
		where the map $\psi^d\colon A[q]\rightarrow A_{(p)}[q]/\Phi_d(q^{p^\alpha})$ is given by the Adams operation on the $\Lambda$-ring $A$ and $\psi^d(q)\coloneqq q^d$.
	\end{lem}
	\begin{proof}[Proof sketch]
		Let us abbreviate the right-hand side by $\Pi_m^*$. We'll show that $(\Pi_m^*)_{m\in\IN}$ exhibits the same universal property as $((\qIW_m\Omega_{R/A}^*)_{(p)})_{m\in\IN}$. To do so, one must first construct the structure of a $q$-$FV$-system of differential-graded $A$-algebras over $R$ on $(\Pi_m^*)_{m\in\IN}$. Let us first explain how to equip each $\Pi_m^0$ with a $\qIW_m(R)\otimes_{\qIW_m(A),c_m}A_{(p)}[q]/(q^m-1)$-algebra structure. To do so, we use
		\begin{equation*}
			\begin{tikzcd}
				\qIW_{m}(A)_{(p)}\rar["\cong","{(\labelcref{lem:qWittOverZp})}"']\dar["c_m"']& \prod_{d\mid n}\qIW_{p^\alpha}(A)\otimes_{\IZ[q],\psi^d}\IZ_{(p)}[q]/\Phi_d(q^{p^\alpha})\dar\\
				A_{(p)}[q]/(q^m-1)\rar["\cong"] & \prod_{d\mid n}A_{(p)}[q]/\Phi_d(q^{p^\alpha})
			\end{tikzcd}
		\end{equation*}
		The top isomorphism is \cref{lem:qWittOverZp} and the bottom isomorphism is the Chinese remainder theorem. The arrow on the right to make the diagram commute is given as follows: In the $d$\textsuperscript{th} factor, take the composition
		\begin{equation*}
			\qIW_{p^\alpha}(A)\overset{c_{p^\alpha}}{\longrightarrow}A[q]/(q^{p^\alpha}-1)\overset{\psi^d}{\longrightarrow}A_{(p)}[q]/\Phi_d(q^{p^\alpha})
		\end{equation*}
		and extend it linearly along the map $\psi^d\colon \IZ[q]\rightarrow \IZ_{(p)}[q]/\Phi_d(q^{p^\alpha})$. To check that this really makes the diagram commute can be done on ghost components, where it becomes a straightforward but tedious unravelling of definitions.
		
		Now to construct the desired $\qIW_m(R)\otimes_{\qIW_m(A),c_m}A_{(p)}[q]/(q^m-1)$-algebra structure on $\Pi_m^0\cong \prod_{d\mid n}\qIW_{p^\alpha}(R/A)\otimes_{A[q],\psi^d}A_{(p)}[q]/\Phi_d(q^{p^\alpha})$, we observe that the relative $q$-Witt vector ring $\qIW_{p^\alpha}(R/A)$ is an algebra over $\qIW_{p^\alpha}(R)\otimes_{\qIW_{p^\alpha}(A),c_{p^\alpha}}A[q]/(q^{p^\alpha}-1)$ and then use \cref{lem:qWittOverZp} together with the diagram above.
		
		To construct Frobenii and Verschiebungen on $(\Pi_m^*)_{m\in\IN}$, we proceed in the exact same way as in the proof of \cref{lem:qWittOverZp}. It's straightforward to verify that these satisfy the conditions from \cref{def:qVSystemOfCDGA}\cref{enum:qDeRhamWittConditionB}, \cref{def:qFVSystemOfCDGA}\cref{enum:qDeRhamWittConditionC}, and the Teichmüller conditions~\cref{enum:TeichmuellerV} and~\cref{enum:TeichmuellerF}. The existence of $A[q]$-linear Verschiebungen implies that the $\qIW_m(R)\otimes_{\qIW_m(A),c_m}A_{(p)}[q]/(q^m-1)$-algebra structure on $\Pi_m^0$ factors through a $\qIW_m(R/A)_{(p)}$-algebra structure.
		
		This finishes the construction of the desired structure on $(\Pi_m^*)_{m\in\IN}$. To prove universality, one can use the same argument as in the proof of \cref{lem:qWittOverZp}.
	\end{proof}
	
	\begin{proof}[Proof of \cref{prop:qDRWTrivialAfterpCompletion}]
		The morphisms $\psi^d\colon A[q]/(q^{p^\alpha}-1)\rightarrow A_{(p)}[q]/(\Phi_d(q)\dotsm \Phi_{p^\alpha d}(q))$ are flat by our assumptions on $A$. Therefore, the tensor products in \cref{lem:qDRWpLocalDecomposition} can be replaced by derived tensor products and we can reduce the general case of \cref{prop:qDRWTrivialAfterpCompletion} to the case $m=p^\alpha$, which we've already proved in \cref{enum:alphaC}.
	\end{proof}
	\begin{proof}[Proof of \cref{thm:qDeRhamWittqHodge}]
		By \cref{lem:DerivedBeauvilleLaszlo}, to check whether 
		\begin{equation*}
			\bigl(\qIW_m\Omega_{R/A}^*\bigr)_{(q-1)}^\complete\longrightarrow \H^*\bigl(\qHodge_{R/A,\square}^*/(q^m-1)\bigr)
		\end{equation*}
		is an isomorphism, it's enough to do so after degree-wise application of the functors $(-)[1/N]$ and $(-)_p^\complete$ for $p\mid N$, where $N\neq 0$ is divisible by $m$. Furthermore, as both sides are degree-wise derived $(q-1)$-complete, the localisation can be replaced by $(-)[1/N]_{(q-1)}^\complete$ instead. We've seen in the proof of \cref{lem:H*pTorsionFree} that the complex $\qHodge_{R/A,\square}^*/(q^m-1)$ satisfies the conditions of \cref{lem:TechnicalCompletionLemma}. Hence the functors $(-)_p^\complete$ and $(-)[1/N]_{(q-1)}^\complete$ commute with taking cohomology, and all completions can be computed as underived completions.
		
		Let's consider $p$-completions first. Let $\alpha\coloneqq v_p(m)$ be the exponent of $p$ in the prime factorisation of $m$. Both $F_{m/p^\alpha}\circ V_{m/p^\alpha}=m/p^\alpha$ and $V_{m/p^\alpha}\circ F_{m/p^\alpha}=[m/p^\alpha]_{q^{p^\alpha}}$ are invertible over $\IZ_p\qpower$, hence
		\begin{equation*}
			F_{m/p^\alpha}\colon \bigl(\qIW_m\Omega_{R/A}^*\bigr)_{(p,q-1)}^\complete\overset{\cong}{\longrightarrow} \bigl(\qIW_{p^\alpha}\Omega_{R/A}^*\bigr)_{(p,q-1)}^\complete
		\end{equation*}
		(where we take the degree-wise derived completion) is an isomorphism. The same conclusion holds for $\H^*(\qHodge_{R/A,\square}^*/(q^m-1))_p^\complete\rightarrow\H^*(\qHodge_{R/A,\square}^*/(q^{p^\alpha}-1))_p^\complete$. So we're reduced the assertion to \cref{enum:alphaB}, which we already know.
		
		The argument for $(-)_{(q-1)}^\complete$ is similar: Both $F_m\circ V_m=m$ and $V_m\circ F_m=[m]_q$ are invertible in $\IZ[1/N]\qpower$, hence we only need to check that
		\begin{equation*}
			\qIW_1\Omega_{R/A}^*\bigl[\localise{N}\bigr]_{(q-1)}^\complete\longrightarrow \H^*\bigl(\qHodge_{R/A,\square}^*/(q-1)\bigr)\bigl[\localise{N}\bigr]_{(q-1)}^\complete
		\end{equation*}
		is an isomorphism. By \cref{prop:qDRWExists}\cref{enum:dRtoqDRWisSurjective}, the left-hand side is just $\Omega_{R/A}^*[1/N]$, as is the right-hand side by inspection. The map between them is clearly the identity on $R[1/N]$ in degree~$0$ and compatible with the differential-graded algebra structures by construction, hence it must be the identity on $\Omega_{R/A}^*[1/N]$ by the universal property of de Rham complexes.
	\end{proof}
	
	\subsection{The arithmetic fracture square for \texorpdfstring{$q$}{q}-de Rham--Witt complexes}
	
	To finish this section, we'll prove two corollaries that allow us to identify  the arithmetic fracture square (in the sense of \cref{par:DerivedBeauvilleLaszlo}) for $\qIW_m\Omega_{R/A}$. These results won't play any role in the present article, but they will be used quite a lot in the sequel \cite{qWittHabiro}. 
	\begin{cor}\label{cor:qDRWArithmeticFractureSquare}
		Let $R$ be a smooth $A$-algebra, let $m\in\IN$, and let $N\neq 0$ be divisible by $m$. For every prime $p\mid N$ and every divisor $d\mid m$ write $m=p^{v_p(m)}m_p$ and $d=p^{v_p(d)}d_p$. Let also
		\begin{equation*}
			\phi_{p/A}\colon \Omega_{R/A}\lotimes_{A,\psi^p}A\longrightarrow \left(\Omega_{R/A}\right)_p^\complete
		\end{equation*}
		denote the relative Frobenius coming from the identification $(\Omega_{R/A})_p^\complete\simeq \R\Gamma_\crys((R/p)/\widehat{A}_p)$. Then there exists a functorial pullback diagram
		\begin{equation*}
			\begin{tikzcd}
				\qIW_m\Omega_{R/A}\rar\dar["(\gh_{m/d})_{d\mid m}"']\drar[pullback] & \prod_{p\mid N}\prod_{d_p\mid m_p}\left(\Omega_{R/A}\lotimes_{A,\psi^{p^{\smash{v_p(m)}}d_p}}A[q]\right)_p^\complete/\Phi_{d_p}\bigl(q^{p^{v_p(m)}}\bigr)\dar["\left(\phi_{p/A}^{v_p(m/d)}\right)_{p\mid N,\,d\mid m}"]\\
				\prod_{d\mid m}\Omega_{R/A}\lotimes_{A,\psi^d}A\bigl[\localise{N},q\bigr]/\Phi_d(q)\rar& \prod_{p\mid N}\prod_{d\mid m}\left(\Omega_{R/A}\lotimes_{A,\psi^d}A[q]\right)_p^\complete\bigl[\localise{p}\bigr]/\Phi_d(q)
			\end{tikzcd}
		\end{equation*}
	\end{cor}
	\begin{proof}
		Using \cref{cor:qDRWTrivialAfterLocalisation} and \cref{prop:qDRWTrivialAfterpCompletion}, we can identify the bottom left and top right corner with $\qIW_m\Omega_{R/A}[1/N]$ and $\prod_{p\mid N}(\qIW_m\Omega_{R/A})_p^\complete$, respectively. In view of the arithmetic fracture square from \cref{par:DerivedBeauvilleLaszlo}, it only remains to show the following:
		\begin{alphanumerate}\itshape
			\item[\boxtimes] \label{claim:FrobeniusIdentification} The following diagram commutes:
			\begin{equation*}
				\begin{tikzcd}[column sep=large]
					\prod_{p\mid N}\bigl(\qIW_m\Omega_{R/A}\bigr)_p^\complete\dar & \prod_{p\mid N}\prod_{d_p\mid m_p}\left(\Omega_{R/A}\lotimes_{A,\psi^{p^{\smash{v_p(m)}}d_p}}A[q]\right)_p^\complete/\Phi_{d_p}\bigl(q^{p^{v_p(m)}}\bigr)\lar["\simeq"',"(\labelcref{prop:qDRWTrivialAfterpCompletion})"]\dar["\left(\phi_{p/A}^{v_p(m/d)}\right)_{p\mid N,\,d\mid m}"]\\
					\prod_{p\mid N}\bigl(\qIW_m\Omega_{R/A}\bigr)_p^\complete\bigl[\localise{p}\bigr]\rar["\simeq","(\gh_{m/d})_{p\mid N,\,d\mid m}"'] & \prod_{p\mid N}\prod_{d\mid m}\left(\Omega_{R/A}\lotimes_{A,\psi^d}A[q]\right)_p^\complete\bigl[\localise{p}\bigr]/\Phi_d(q)
				\end{tikzcd}
			\end{equation*}
		\end{alphanumerate}
		To prove \cref{claim:FrobeniusIdentification}, fix a prime $p$ and a divisor $d\mid m$. Let also $\alpha\coloneqq v_p(m)$ and $i\coloneqq v_p(d)$ for short. By unravelling the proof of \cref{lem:qDRWpLocalDecomposition}, we see that the following diagram commutes:
		\begin{equation*}
			\begin{tikzcd}
				\qIW_m\Omega_{R/A}^*\drar["\gh_{m/d}"',bend right=12]\rar & \left(\qIW_{p^\alpha}\Omega_{R/A}^*\otimes_{A[q],\psi^{p^\alpha d_p}}A[q]/\Phi_{d_p}\bigl(q^{p^\alpha}\bigr)\right)_p^\complete\dar["\gh_{p^\alpha/p^i}"]\\
				&\left(\Omega_{R/A}^*\otimes_{A,\psi^d}A[q]/\Phi_d(q)\right)_p^\complete
			\end{tikzcd}
		\end{equation*}
		Also let $D=D(\pi)$ be a PD-envelope as in \cref{par:CanonicalComparisonMap} and $\breve{\Omega}_{D/A}^*$ the associated PD-de Rham complex. By unravelling \cref{par:CanonicalComparisonMap}, to finish the proof of \cref{claim:FrobeniusIdentification}, it will be enough to show that
		\begin{equation*}
			\begin{tikzcd}
				\bigl(\qIW_{p^\alpha}\Omega_{R/A}^*\bigr)_p^\complete\dar["\gh_{p^\alpha/p^i}"'] & \left(\breve{\Omega}_{D/A}^*\otimes_{A,\psi^{p^\alpha}}A[q]/(q^{p^\alpha}-1)\right)_p^\complete\lar["\simeq"',"(\labelcref{par:CanonicalComparisonMap})"]\dar["\phi_{p/A}^{\alpha-i}"]\\
				\left(\Omega_{R/A}^*\otimes_{A,\psi^{p^i}}A[q]/\Phi_d(q)\right)_p^\complete & \left(\breve{\Omega}_{D/A}^*\otimes_{A,\psi^{p^\alpha}}A[q]/(q^{p^\alpha}-1)\right)_p^\complete\lar
			\end{tikzcd}
		\end{equation*}
		commutes. By the universal property of the PD-de Rham complex, it's enough to show commutativity in degree~$0$. But the map
		\begin{equation*}
			s_{p^\alpha/A}\colon D\otimes_{A,\psi^{p^\alpha}}A[q]/(q^{p^\alpha}-1)\longrightarrow \qIW_{p^\alpha}(D/A)
		\end{equation*}
		considered in \cref{par:CanonicalComparisonMap} satisfies ${\gh_{p^\alpha/p^i}}\circ s_{p^\alpha/A}=\phi_{D/A}^{\alpha-i}$, where now $\phi_{D/A}\colon D\otimes_{A,\psi^p}A\rightarrow D$ denotes the linearised Frobenius of the $\delta$-$A$-algebra $D$. From this observation it becomes obvious that the diagram above commutes in degree~$0$, thus in any degree. This finishes the proof of \cref{claim:FrobeniusIdentification}.
	\end{proof}
	Let $\overtilde{F}_{m/d}\colon \qIW_m\Omega_{R/A}^*\rightarrow \qIW_d\Omega_{R/A}^*$ be given by $(m/d)^iF_{m/d}$ in degree~$i$, so that $\overtilde{F}_{m/d}$ is a map of differential-graded $A[q]$-algebras. The effect of $\overtilde{F}_{m/d}$ on arithmetic fracture squares can be determined using the following corollary. 
	\begin{cor}
		Let $R$ be a smooth $A$-algebra, let~$p$ be a prime and let $\phi_{p/A}$ denote the crystalline Frobenius as in \cref{cor:qDRWArithmeticFractureSquare}. Then the following diagram commutes:
		\begin{equation*}
			\begin{tikzcd}
				\bigl(\Omega_{R/A}\lotimes_{A,\phi^\alpha}A[q]/(q^{p^\alpha}-1)\bigr)_p^\complete\rar["\simeq","(\labelcref{prop:qDRWTrivialAfterpCompletion})"']\dar["\phi_{p/A}"'] & \bigl(\qIW_{p^\alpha}\Omega_{R/A}\bigr)_p^\complete\dar["\overtilde{F}_p"]\\ \bigl(\Omega_{R/A}\lotimes_{A,\phi^{\alpha-1}}A[q]/(q^{p^{\alpha-1}}-1)\bigr)_p^\complete\rar["\simeq","(\labelcref{prop:qDRWTrivialAfterpCompletion})"'] & \bigl(\qIW_{p^{\alpha-1}}\Omega_{R/A}\bigr)_p^\complete
			\end{tikzcd}
		\end{equation*}
	\end{cor}
	\begin{proof}
		Let $D=D(\pi)$ be a PD-envelope as in \cref{par:CanonicalComparisonMap} and let $\phi_{D/A}\colon D\otimes_{A,\phi}A\rightarrow D$ denote the relative Frobenius of the $\delta$-$A$-algebra $D$. By the universal property of the PD-de Rham complex, it induces a map of differential graded algebras
		\begin{equation*}
			\bigl(\breve{\Omega}_{D/A}^*\otimes_{A,\phi}A\bigr)_p^\complete\rightarrow \bigl(\breve{\Omega}_{D/A}^*\bigr)_p^\complete\,,
		\end{equation*}
		which represents the $\IE_\infty$-$A[q]$-algebra map $\phi_{p/A}$. Again by the universal property, whether this map of PD-de Rham complexes is compatible with $\overtilde{F}_p$ can be checked in degree~$0$. Therefore, it's enough to check that the comparison map $s_{p^\alpha/A}$ from \cref{par:CanonicalComparisonMap} satisfies $F_p\circ s_{p^\alpha/A}=s_{p^{\alpha-1}/A}\circ \phi_{D/A}$, which it does by construction.
	\end{proof}

	\newpage
	
	\section{A no-go result for functoriality of the \texorpdfstring{$q$}{q}-Hodge complex}\label{sec:Functoriality}
	In this final section, we'll show the following result, which perhaps comes as an unwelcome surprise after our very promising \cref{thm:qDeRhamWittqHodge}.
	\begin{thm}\label{thm:qHodgeNotFunctorial}
		Let $A$ be a perfectly covered $\Lambda$-ring \embrace{in the sense of \cref{rem:FaithfullyFlatCoverByPerfectLambdaRing}}. If $A$ is not a $\IQ$-algebra, then there can be no functor
		\begin{equation*}
			\qHodge_{-/A}\colon \cat{Sm}_A\longrightarrow \widehat{\Dd}_{(q-1)}\bigl(A\qpower\bigr)
		\end{equation*}
		from the category of smooth $A$-algebras into the $\infty$-category of derived $(q-1)$-complete $A\qpower$-modules in such a way that for all $m\in \IN$ there's also a functorial graded $A\qpower$-module isomorphism 
		\begin{equation*}
			\bigl(\qIW_m\Omega_{-/A}^*\bigr)_{(q-1)}^\complete\overset{\cong}{\longrightarrow}\H^*\bigl(\qHodge_{-/A}/^\L(q^m-1)\bigr)
		\end{equation*}
		and for all $d\mid m$ the canonical projection $\qHodge_{-/A}/^\L(q^m-1)\rightarrow \qHodge_{-/A}/^\L(q^d-1)$ induces the Frobenius $F_{m/d}$ on $q$-de Rham--Witt complexes. 
	\end{thm}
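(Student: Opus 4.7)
The plan is to argue by contradiction: suppose $G \coloneqq \qHodge_{-/A}$ exists satisfying both functoriality conditions. By combining the functorial cohomology isomorphism with \cref{thm:qDeRhamWittqHodge}, one obtains for every smooth framed $(R,\square)$ a functorial equivalence
\[
G(R)/^\L(q^m-1) \simeq \qHodge_{R/A,\square}/^\L(q^m-1)
\]
in $\widehat{\Dd}_{(q-1)}(A\qpower)$, compatibly with Frobenii on cohomology. This translates abstract functoriality of $G$ into explicit compatibility statements about maps between the concrete $q$-Hodge complexes of \cref{par:qDeRhamqHodge}, and it is at this level that I hope to find a numerical obstruction.

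I would then exploit the hypothesis that $A$ is not a $\IQ$-algebra: fix a prime $p$ that is non-invertible in $A$ and set $R = A[T]$. Consider the two framings $\square_1 = \id$ and $\square_2\colon A[T] \to A[T]$, $T\mapsto T+1$. These give two genuinely different $\delta$-$\widehat A_p$-algebra structures on $\widehat R_p$, with Frobenius lifts $T\mapsto T^p$ and $T\mapsto (T+1)^p - 1$ respectively, and hence two genuinely different complexes $\qHodge_{R/A,\square_1}^*$ and $\qHodge_{R/A,\square_2}^*$. The assumed functor $G$ forces the existence of a canonical equivalence between these two complexes in $\widehat{\Dd}_{(q-1)}(A\qpower)$ such that, after reduction modulo every $q^m-1$ and passage to cohomology, it induces the identity on $\qIW_m(R/A)_{(q-1)}^\complete$ and is compatible with the projection-is-Frobenius condition.

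The core computation should then be to follow a well-chosen element through this forced equivalence. A natural candidate is the class of $T$ in degree zero: under the comparison maps $c_{m,\square_i}$ from \cref{con:qWittToH0}, the element $\tau_m(T) \in \qIW_m(R/A)_{(q-1)}^\complete$ is represented on one side by $T^m$ and on the other by the explicit sum $(T+1)^m - 1 +[\text{correction}]\cdot[p]_q$ built from the Adams operation of $\square_2$. The identity on cohomology forces the two representatives to be chain-homotopic, and the Frobenius-projection compatibility for all pairs $d\mid m$ forces the chain homotopies at different truncation levels to assemble into a coherent tower. My aim would be to extract from this tower a congruence in $\widehat A_p\qpower$ that turns out to contradict the fact that the Bockstein differentials $(q-1)\q\nabla$ associated to the two framings differ by a non-trivial inner derivation that cannot be cancelled modulo $p$.

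The main obstacle is the extraction of a genuinely numerical contradiction. All first-order compatibilities (the behavior of Teichmüller lifts, the $F$- and $V$-Teichmüller conditions, the Leibniz rule and commutativity of Frobenius with $\d$) hold automatically by the universal properties developed in \cref{sec:qWitt,sec:qDRW}, so the contradiction must be extracted from a finer invariant---presumably involving the simultaneous behavior under the Frobenii $F_{p^\alpha}$ for varying $\alpha \geqslant 1$ combined with the derived $(q-1)$-completeness requirement. Isolating the precise invariant whose non-vanishing witnesses the obstruction, and then verifying by hand that the specific $A[q]$-module automorphism forced on $\qHodge_{R/A,\square_1}/(q^{p^\alpha}-1)$ cannot exist for large enough $\alpha$, is the technical heart of the argument.
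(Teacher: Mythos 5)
Your approach is not the one the paper takes, and I believe it has a genuine gap that would prevent it from working.

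The core idea of comparing two framings $\square_1=\id$ and $\square_2\colon T\mapsto T+1$ on $R=A[T]$ will not produce an obstruction, precisely because of \cref{thm:qDeRhamWittqHodge}: that theorem shows $\H^*(\qHodge_{R/A,\square}^*/(q^m-1))\cong (\qIW_m\Omega_{R/A}^*)_{(q-1)}^\complete$ is \emph{independent of the framing}. The two complexes $\qHodge_{R/A,\square_1}^*$ and $\qHodge_{R/A,\square_2}^*$ differ as chain complexes, but there is every reason to expect they are equivalent as objects of $\widehat{\Dd}_{(q-1)}(A\qpower)$ in a way compatible with the Frobenius tower---this is exactly the phenomenon that makes \cref{thm:qDeRhamGlobalIntro} work for the $q$-de Rham complex. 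So chasing $T$ versus $T+1$ through the comparison, as you propose, is looking for a contradiction in a place where the theory is in fact coordinate-independent. The ``finer invariant'' you hope for does not exist at the level of smooth $A$-algebras with varying framings.

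The paper's proof attacks the problem from a completely different direction, and the essential move is one you don't consider: it left Kan extends the hypothetical functor from polynomial algebras to \emph{animated} $A$-algebras, obtaining a derived functor $\L\!\qHodge_{-/A}$ together with a $q$-de Rham--Witt filtration whose associated graded is $\L\qIW_m\Omega^i_{-/A}[-i]$ (see \cref{par:DerivedqHodge,par:Filtrations}). It then evaluates this on the decidedly non-smooth ring $R=(\Oo_C\otimes_\IZ A)_p^\complete$ for $\Oo_C$ perfectoid, writing $R\cong W/\xi$ with $W=(\mathrm A_\inf(\Oo_C)\otimes_\IZ A)_p^\complete$. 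The $p$-complete vanishing of $\L_{\mathrm A_\inf(\Oo_C)/\IZ_p}$ collapses the filtration and forces $(\L\!\qHodge_{W/A})_p^\complete\simeq W\qpower$ (\cref{lem:DerivedqHodge}). Functoriality along $W\twoheadrightarrow R$, combined with the Frobenius-projection hypothesis at truncation levels $m=1$ and $m=p$, then forces both $\xi$ and $\phi(\xi)-\Phi_p(q)\delta(\xi)$ to be divisible by $q-1$ and $q^p-1$ respectively in $(\L\!\qHodge_{R/A})_p^\complete$. Reducing mod $p$ and using that $\delta(\xi)$ is a unit yields $R/p\cong 0$, which is absurd. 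In short: the numerical obstruction lives in the interaction between functoriality for ring maps (a surjection out of $\mathrm A_\inf$) and the $\delta$-ring structure of $\mathrm A_\inf$, not in coordinate dependence on smooth rings; and accessing it requires leaving the smooth category via animation.
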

	\begin{rem}
		Over $\IQ$, the $q$-derivatives from \cref{par:qDeRhamqHodge} can be expressed in terms of the usual derivatives; see the argument in \cite[Lemma~\href{https://arxiv.org/pdf/1606.01796\#theorem.4.1}{4.1}]{Toulouse}. Hence in this case the $q$-Hodge complex can be made functorial, but it's no more interesting than the usual de Rham complex itself.
	\end{rem}
	\begin{rem}
		In \cite{qHodge}, we'll explain how in certain situations a functorial \emph{derived $q$-Hodge complex} can be constructed. This will constitute a partial fix for the non-existence result in \cref{thm:qHodgeNotFunctorial}.
	\end{rem}
	We'll now start the proof of \cref{thm:qHodgeNotFunctorial}. 
	
	\begin{numpar}[Derived $q$-Hodge complexes.]\label{par:DerivedqHodge}
		Suppose a functor as in \cref{thm:qHodgeNotFunctorial} would exist. Let $\cat{AniAlg}_A$ denote the $\infty$-category of \emph{animated $A$-algebras} in the sense of Clausen. It can be explicitly constructed as the $\infty$-categorical localisation of the category of simplicial commutative $A$-algebras at all weak equivalences. Via left Kan extension from polynomial $A$-algebras (formerly known as forming the \emph{non-abelian derived functor}), we can define a \emph{derived $q$-Hodge complex}
		\begin{equation*}
			\L\!\qHodge_{-/A}\colon \cat{AniAlg}_A\longrightarrow \widehat{\Dd}_{(q-1)}\bigl(A\qpower\bigr)\,.
		\end{equation*}
	\end{numpar}
	\begin{numpar}[$q$-de Rham--Witt filtrations.]\label{par:Filtrations}
		By left Kan extending (or \emph{non-abelian deriving}) the Postnikov filtration $\tau^{\leqslant i}(\qHodge_{-/A}/^\L(q^m-1))$, we obtain for any animated $A$-algebra $R$ an ascending filtration $\Fil_*^{\qIW\Omega}(\L\!\qHodge_{R/A}/^\L(q^m-1))$ which we call the \emph{$q$-de Rham--Witt filtration}. Since the Postnikov filtration is exhaustive, we get
		\begin{equation*}
			\L\!\qHodge_{R/A}/^\L(q^m-1)\simeq \Bigl(\colimit_{i\geqslant 0}\Fil_i^{\qIW\Omega}\bigl(\L\!\qHodge_{R/A}/^\L(q^m-1)\bigr)\Bigr)_{(q-1)}^\complete\,.
		\end{equation*}
		Furthermore, if a functorial isomorphism $(\qIW_m\Omega_{-/A}^*)_{(q-1)}^\complete\cong \H^*\bigl(\qHodge_{-/A}/^\L(q^m-1))$ exists, then associated graded of the $q$-de Rham-Witt filtration is given by
		\begin{equation*}
			\gr_i^{\qIW\Omega}\bigl(\qHodge_{R/A}/^\L(q^m-1)\bigr)\simeq \bigl(\L\!\qIW_m\Omega_{R/A}^i\bigr)_{(q-1)}^\complete[-i]\,.
		\end{equation*}
		Let us also remark that the $0$\textsuperscript{th} filtration step of the $q$-de Rham--Witt filtration is always given by $\Fil_0^{\qIW\Omega}(\qHodge_{R/A}/^\L(q^m-1))\simeq \qIW_m(R/A)_{(q-1)}^\complete$. Indeed, using the above description of the associated graded $\gr_0^{\qIW\Omega}$, we only have to show that $\L\!\qIW_m(R/A)\rightarrow \qIW_m(R/A)$ is an equivalence. Base change along the faithfully flat map $A\rightarrow A_\infty$ and \cref{lem:RelativeqWittBaseChange} reduce this to a question about absolute $q$-Witt vectors, which follows inductively from \cref{prop:qWittKoszulExactSequence}.
	\end{numpar}
	\begin{proof}[Proof of \cref{thm:qHodgeNotFunctorial}]
		If $A$ is not a $\IQ$-algebra, then we find a prime $p$ such that $\widehat{A}_p\not\cong 0$. Consider $R\coloneqq (\Oo_C\otimes_\IZ A)_p^\complete$, where $\Oo_C$ is the ring of integers in a complete algebraically closed extension of $\IQ_p$. Note that $R\not\cong 0$ since $\Oo_C$ is the $p$-completion of a free $\IZ$-module. We also note that $\Oo_C$ is an integral perfectoid ring in the sense of \cite[Definition~\href{https://people.mpim-bonn.mpg.de/scholze/integralpadicHodge.pdf\#theorem.3.5}{3.5}]{BMS1}, so we can write $\Oo_C\cong \mathrm{A}_\inf(\Oo_C)/\xi$ for some nonzerodivisor $\xi\in \mathrm{A}_{\inf}(\Oo_C)$ such that $\delta(\xi)$ is a unit. Here $\delta$ refers to the usual $\delta$-structure on $\mathrm{A}_\inf(\Oo_C)$. If we define $W\coloneqq (\mathrm{A}_\inf(\Oo_C)\otimes_\IZ A)_p^\complete$, then $\xi$ is also a nonzerodivisor on and $W/\xi\cong R$.
		
		As we'll see in \cref{lem:DerivedqHodge} below, $(\L\!\qHodge_{W/A})_p^\complete\simeq W\qpower$; in particular, it is static in the sense of \cref{par:Notation}. Similarly, $(\L\!\qHodge_{R/A})_p^\complete$ is static and flat over $\IZ_p\qpower$. Hence all derived quotients $(-)/^\L(q^m-1)$ can be identified with actual quotients. We'll now play around with the filtrations on $(\L\!\qHodge_{R/A})_p^\complete/(q^m-1)$ and $(\L\!\qHodge_{R/A})_p^\complete/(q^m-1)$ for $m=1$ and $m=p$ and derive a contradiction.
		
		\emph{Case~1: $m=1$.} Consider the element $\xi\in W\qpower\cong (\L\!\qHodge_{W/A})_p^\complete$. Since $\xi$ vanishes under $\qIW_1(W/A)\rightarrow \qIW_1(R/A)$ and the diagram
		\begin{equation*}
			\begin{tikzcd}
				\qIW_1(W/A)\rar\dar & \Fil_0^{\qIW\Omega}\bigl(\L\!\qHodge_{W/A}/(q-1)\bigr)\rar\dar & \bigl(\L\!\qHodge_{W/A}\bigr)_p^\complete/(q-1)\dar\\
				\qIW_1(R/A)\rar & \Fil_0^{\qIW\Omega}\bigl(\L\!\qHodge_{R/A}/(q-1)\bigr)\rar& \bigl(\L\!\qHodge_{R/A}\bigr)_p^\complete/(q-1)
			\end{tikzcd}
		\end{equation*}
		commutes, we see that $\xi$ vanishes in $(\L\!\qHodge_{R/A})_p^\complete/(q-1)$ and so $\xi$ must be divisible by $(q-1)$ in $(\L\!\qHodge_{R/A})_p^\complete$.
		
		\emph{Case~2: $m=p$.} Consider the element $\phi(\xi)-\Phi_p(q)\delta(\xi)\in W\qpower\cong (\L\!\qHodge_{W/A})_p^\complete$, where $\phi$ denotes the Frobenius of the $\delta$-ring $\mathrm{A}_\inf(\Oo_C)$. Then the image of $\xi$ modulo $q^p-1$ agrees with the image of the Teichmüller lift $\tau_p(\xi)$ under $\qIW_p(W/A)\rightarrow (\L\!\qHodge_{W/A})_p^\complete/(q^p-1)$. Indeed, this follows from an unravelling of first the proof of \cref{lem:DerivedqHodge}\cref{enum:DerivedqHodgeW} and then the map from \cref{lem:WittToCyclicRing}: We must check that $\epsilon_p(\tau_p(\xi))=-\delta(\xi)$, which follows from the fact that the section $s_p\colon \mathrm{A}_\inf(\Oo_C)\rightarrow \IW_p(\mathrm{A}_\inf(\Oo_C))$ coming from the $\delta$-structure on $\mathrm{A}_\inf(\Oo_C)$ satisfies $s_p(\xi)=(\xi,\delta(\xi))=\tau_p(\xi)+V_p(\delta(\xi))$. Now the Teichmüller lift $\tau_p(\xi)$ vanishes under $\qIW_p(W/A)\rightarrow \qIW_p(R/A)$. Since the diagram
		\begin{equation*}
			\begin{tikzcd}
				\qIW_p(W/A)\rar\dar & \Fil_0^{\qIW\Omega}\bigl(\L\!\qHodge_{W/A}/(q^p-1)\bigr)\rar\dar & \bigl(\L\!\qHodge_{W/A}\bigr)_p^\complete/(q^p-1)\dar\\
				\qIW_p(R/A)\rar & \Fil_0^{\qIW\Omega}\bigl(\L\!\qHodge_{R/A}/(q^p-1)\bigr)\rar& \bigl(\L\!\qHodge_{R/A}\bigr)_p^\complete/(q^p-1)
			\end{tikzcd}
		\end{equation*}
		commutes, it follows that the image of $\phi(\xi)-\Phi_p(q)\delta(\xi)$ vanishes in $(\L\!\qHodge_{R/A})_p^\complete/(q^p-1)$ and so it must be divisible by $(q^p-1)$ in $(\L\!\qHodge_{R/A})_p^\complete$.
		
		We're ready to derive our contradiction. In the mod~$p$ reduction $(\L\!\qHodge_{R/A})_p^\complete/p$, we see that $\phi(\xi)-\Phi_p(q)\delta(\xi)\equiv \xi^p-(q-1)^{p-1}\delta(\xi)\mod p$ is divisible by $q^p-1\equiv (q-1)^p\mod p$. Since $\xi^p$ is also divisible by $(q-1)^p$ and $(\L\!\qHodge_{R/A})_p^\complete/p$ is flat over $\IF_p\qpower$, it follows that $\delta(\xi)$ is divisible by $(q-1)$. In particular, $\delta(\xi)$ vanishes in $(\L\!\qHodge_{R/A})_p^\complete/(p,q-1)$. By \cref{lem:DerivedqHodge}\cref{enum:DerivedqHodgeFirstFiltrationStepInjective} we see that $\delta(\xi)$ already vanishes in $R/p$. Since $\delta(\xi)$ is a unit by assumption, this forces $R/p\cong 0$, hence $R\cong 0$ by the derived Nakayama lemma \cite[\stackstag{09B9}]{Stacks}. This is the desired contradiction!
	\end{proof}
	The following technical lemma was used in the proof.
	\begin{lem}\label{lem:DerivedqHodge}
		With notation as above, the following are true:
		\begin{alphanumerate}
			\item $(\L\!\qHodge_{W/A})_p^\complete\simeq W\qpower$.\label{enum:DerivedqHodgeW}
			\item $(\L\!\qHodge_{R/A})_p^\complete$ is a static $A\qpower$-module and flat over $\IZ_p\qpower$.\label{enum:DerivedqHodgeR}
			\item The map $R/p\cong \qIW_1(R)/p\rightarrow (\L\!\qHodge_{R/A})_p^\complete/(p,q-1)$ induced by the $0$\textsuperscript{th} step in the $q$-de Rham--Witt filtration is injective.\label{enum:DerivedqHodgeFirstFiltrationStepInjective}
		\end{alphanumerate}
	\end{lem}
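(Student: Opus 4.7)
The strategy is to apply the $q$-de Rham--Witt filtration from \cref{par:Filtrations} to the $p$-completions of $\L\!\qHodge_{W/A}/^\L(q^m-1)$ and $\L\!\qHodge_{R/A}/^\L(q^m-1)$, identify the graded pieces via the left Kan extension of \cref{prop:qDRWTrivialAfterpCompletion}, namely
\begin{equation*}
	(\L\qIW_m\Omega^i_{S/A})_p^\complete \simeq \bigl(\L\Omega^i_{S/A} \lotimes_{A,\psi^{p^\alpha}} A[q]/(q^m-1)\bigr)_p^\complete
\end{equation*}
for any animated $A$-algebra $S$, and then pass to the inverse limit over $m$. Since $\L\!\qHodge_{-/A}$ is derived $(q-1)$-complete by construction and $p$-completion commutes with limits, this limit recovers $(\L\!\qHodge_{-/A})_p^\complete$. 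Everything thus reduces to an analysis of the $p$-completed cotangent complexes $(\L_{W/A})_p^\complete$ and $(\L_{R/A})_p^\complete$.

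For part (a), the fundamental input is $(\L_{\mathrm{A}_\inf(\Oo_C)/\IZ})_p^\complete \simeq 0$: modulo $p$ this becomes $\L_{\Oo_C^\flat/\IF_p}$, which vanishes because $\Oo_C^\flat$ is a perfect $\IF_p$-algebra. Base changing along $\IZ \to A$ gives $(\L_{W/A})_p^\complete \simeq 0$, so $(\L\Omega^i_{W/A})_p^\complete \simeq 0$ for $i \geq 1$ and every higher graded piece of the filtration vanishes after $p$-completion. Only $\Fil_0^{\qIW\Omega} \simeq \qIW_m(W/A)_{(q-1)}^\complete$ survives. Because $\mathrm{A}_\inf(\Oo_C)$ is a perfect $\Lambda$-ring, combining \cref{cor:qWittOfPerfectLambdaRing} with \cref{lem:RelativeqWittBaseChange} and faithfully flat descent along $A \to A_\infty$ identifies $\qIW_m(W/A)_{(p,q-1)}^\complete$ with $(W[q]/(q^m-1))_p^\complete$, and passing to the limit over $m$ yields $(\L\!\qHodge_{W/A})_p^\complete \simeq W\qpower$.

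For part (b), the cofibre sequence in cotangent complexes together with $(\L_{W/A})_p^\complete \simeq 0$ and the regular immersion formula $\L_{R/W} \simeq R[1]$ (since $\xi$ is a nonzerodivisor on $W$) yields $(\L_{R/A})_p^\complete \simeq R[1]$. By derived Koszul duality for an odd-degree shift, $(\L\Omega^i_{R/A})_p^\complete \simeq \Gamma^i_R R[i] \simeq R[i]$. Inserting into the filtration formula, after the $[-i]$ shift appearing in $\gr_i^{\qIW\Omega}$ the cohomological degrees cancel, so every $p$-completed graded piece becomes $R \otimes_A A[q]/(q^m-1)$ placed in cohomological degree $0$, which is $p$-completely flat over $\IZ_p[q]/(q^m-1)$. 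Staticness and flatness are preserved under extensions and exhaustive filtrations, so $(\L\!\qHodge_{R/A}/^\L(q^m-1))_p^\complete$ is itself static and flat over $\IZ_p[q]/(q^m-1)$; passing to the limit gives (b). Part (c) is then immediate: flatness from (b) ensures that reduction modulo $p$ preserves the filtration, so the inclusion of the zeroth filtration step $R/p \simeq \Fil_0/p \hookrightarrow (\L\!\qHodge_{R/A}/^\L(p,q-1))_p^\complete$ remains injective.

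The main obstacle is the derived identification $\Lambda^i_R(R[1]) \simeq \Gamma^i_R R[i] \simeq R[i]$ in part (b) and the careful sign bookkeeping so that the cohomological degree of $(\L\qIW_m\Omega^i_{R/A})_p^\complete$ exactly cancels the $[-i]$ shift in $\gr_i^{\qIW\Omega}$, causing every graded piece to land in cohomological degree $0$. A secondary subtlety is confirming that $\L\qIW_m(S/A) \to \qIW_m(S/A)$ is an equivalence on the static rings $S \in \{W, R\}$ occurring here, which reduces via \cref{lem:RelativeqWittBaseChange} to the absolute $q$-Witt vector case handled inductively by \cref{prop:qWittKoszulExactSequence}, as already remarked in \cref{par:Filtrations}.
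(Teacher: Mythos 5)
Your overall strategy---reduce to cotangent complex computations via the $q$-de Rham--Witt filtration, use $(\L_{\mathrm{A}_\inf(\Oo_C)/\IZ_p})_p^\complete\simeq 0$ and $(\L_{R/W})_p^\complete\simeq R[1]$---is the same as the paper's. Your explicit derivation of $(\L\Omega^i_{\Oo_C/\IZ_p})_p^\complete\simeq \Oo_C[i]$ via derived exterior powers of the shifted cotangent complex is correct and reproduces the input that the paper simply cites from \cite{BMS2}. But there are two genuine gaps.

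The first gap is in part~\cref{enum:DerivedqHodgeW}, in the phrase \enquote{passing to the limit over $m$ yields $(\L\!\qHodge_{W/A})_p^\complete\simeq W\qpower$}. For this limit to work out, you need the transition maps $\L\!\qHodge_{W/A}/^\L(q^{p^{\alpha+1}}-1)\rightarrow \L\!\qHodge_{W/A}/^\L(q^{p^\alpha}-1)$ to correspond, under the identification $\qIW_{p^\alpha}(W/A)_{(p,q-1)}^\complete\simeq W[q]/(q^{p^\alpha}-1)$, to the natural projections of truncated polynomial rings. This is exactly the hypothesis in the theorem that the canonical projections induce the $q$-Witt Frobenii $F_p$, and for a perfect $\Lambda$-ring the Frobenius does become the projection $q\mapsto q$. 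The paper explicitly flags this as \emph{the only place the Frobenius hypothesis enters}; without it the limit identification is unjustified. Your proposal never invokes this hypothesis, so strictly speaking you haven't proved \cref{lem:DerivedqHodge} under the given assumptions---you've proved a statement that would, as written, hold without the Frobenius compatibility, which cannot be right.

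The second gap is the asserted degree-wise left Kan extension
\begin{equation*}
	\bigl(\L\qIW_m\Omega^i_{S/A}\bigr)_p^\complete\simeq \bigl(\L\Omega^i_{S/A}\lotimes_{A,\psi^{p^\alpha}}A[q]/(q^m-1)\bigr)_p^\complete\,.
\end{equation*}
\cref{prop:qDRWTrivialAfterpCompletion} is an equivalence of $\IE_\infty$-$A[q]$-algebras, i.e.\ a quasi-isomorphism; its construction goes through an $\eta$-décalage and is emphatically \emph{not} an isomorphism of the underlying cochain complexes in each degree. So the degree-wise formula you want to left Kan extend is not something the paper establishes, and it is not clear that it's even true. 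The paper sidesteps this entirely by applying the filtration only at $m=1$, where $\L\qIW_1\Omega^i\simeq\L\Omega^i$ for free, and then propagating to all $m$ via the Frobenius-compatible limit of $0$\textsuperscript{th} filtration steps together with derived Nakayama. If you restrict your filtration-collapse argument to $m=1$ as well, both gaps disappear and your proof essentially becomes the paper's.
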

	\begin{proof}
		It's well-known that the $p$-completed cotangent complex $(\L_{\mathrm{A}_\inf(\Oo_C)/\IZ_p})_p^\complete$ vanishes. Hence the graded pieces of the $p$-completed $q$-de Rham--Witt filtration for $(\L\!\qHodge_{W/A}/^\L(q-1))_p^\complete$ are
		\begin{equation*}
			\gr_i^{\qIW\Omega}\bigl(\L\!\qHodge_{W/A}/^\L(q-1)\bigr)_p^\complete\simeq \bigl(\L\Omega_{\mathrm{A}_\inf(\Oo_C)/\IZ_p}^i\lotimes_\IZ A\bigr)_p^\complete[-i]\simeq 0
		\end{equation*}
		for $i>0$. It follows that $(\L\!\qHodge_{W/A})_p^\complete/^\L(q-1)\simeq \qIW_1(W/A)_p^\complete\simeq W$. In general, for any derived $(p,q-1)$-complete object $M\in\widehat{\Dd}_{(p,q-1)}(\IZ_p\qpower)$ we have $M\simeq \R\!\limit_{\alpha\geqslant 0}M/^\L(q^{p^\alpha}-1)$. Indeed, by the derived Nakayama lemma \cite[\stackstag{0G1U}]{Stacks} this can be checked after applying $(-)/^\L p$, and then $M/^\L(p,q^{p^\alpha}-1)\simeq M/^\L(p,(q-1)^{p^\alpha})$, so we recover the condition that $M/^\L p$ is derived $(q-1)$-complete. In particular, we obtain a map
		\begin{equation*}
			\R\!\limit_{\alpha\geqslant 0}\qIW_{p^\alpha}(W/A)_{(p,q-1)}^\complete\longrightarrow \R\!\limit_{\alpha\geqslant 0}\bigl(\L\!\qHodge_{W/A}\bigr)_p^\complete/^\L(q^{p^\alpha}-1)\,.
		\end{equation*}
		where the limit on the left-hand side is taken along the $q$-Witt vector Frobenii. Here's the only time we use our assumption that $\L\!\qHodge_{W/A}/^\L(q^{p^\alpha+1}-1)\rightarrow \L\!\qHodge_{W/A}/^\L(q^{p^\alpha}-1)$ induces the Frobenii on $q$-de Rham--Witt complexes. Using \cref{lem:RelativeqWittBaseChange} and \cref{cor:qWittOfPerfectLambdaRing}, we get
		\begin{equation*}
			\R\!\limit_{\alpha\geqslant 0}\qIW_{p^\alpha}(W/A)_{(p,q-1)}^\complete\simeq\Bigl(\R\!\limit_{\alpha\geqslant 0}W[q]/(q^{p^\alpha}-1)\Bigr)_{(p,q-1)}^\complete\simeq W\qpower
		\end{equation*}
		In summary, we've constructed a map $W\qpower\rightarrow (\L\!\qHodge_{W/A})_p^\complete$. By construction, after $(-)/^\L(q-1)$ this map is the identity on $W$, hence it is an isomorphism by the derived Nakayama lemma. This finishes the proof of \cref{enum:DerivedqHodgeW}.
		
		For~\cref{enum:DerivedqHodgeR} and~\cref{enum:DerivedqHodgeFirstFiltrationStepInjective}, we argue as above to see that the graded pieces of the $p$-completed $q$-de Rham--Witt filtration for $(\L\!\qHodge_{R/A}/^\L(q-1))_p^\complete$ are
		\begin{equation*}
			\gr_i^{\qIW\Omega}\bigl(\L\!\qHodge_{R/A}/^\L(q-1)\bigr)_p^\complete\simeq \bigl(\L\Omega_{\Oo_C/\IZ_p}^i\lotimes_\IZ A\bigr)_p^\complete[-i]\simeq 0
		\end{equation*}
		By a standard fact about perfectoid rings (see \cite[Proposition~\href{https://www.math.uni-bonn.de/people/scholze/bms2.pdf\#theorem.4.19}{4.19}]{BMS2} for example), we have $(\L\Omega_{\Oo_C/\IZ_p}^i)_p^\complete[-i]\simeq \Oo_C$ for all $i\geqslant 0$. Hence the graded pieces above are all equivalent to $R$. In particular, they are all static and $p$-torsion free. Inductively, this implies that all steps in the $p$-completed $q$-de Rham--Witt filtration must be static and $p$-torsion free. Furthermore, the transition maps must be injective. The same conclusion holds modulo $p$, which immediately shows~\cref{enum:DerivedqHodgeFirstFiltrationStepInjective}. For~\cref{enum:DerivedqHodgeR}, we conclude that $(\L\!\qHodge_{R/A})_p^\complete/^\L(q-1)$ is the derived $p$-completion of a static $p$-torsion free $\IZ_p$-module, hence it must be static and $p$-torsion free itself. In general, if $M\in \widehat{\Dd}_{(q-1)}(\IZ\qpower)$ is derived $(q-1)$-complete and $M/^\L(q-1)$ is static, then $M$ itself is static. Indeed, the map $\H^i(M)/(q-1)\rightarrow \H^i(M/^\L(q-1))$ is always injective; together with \cite[\stackstag{09B9}]{Stacks} this implies $\H^i(M)\cong0$ for $i\neq 0$, hence $M$ must indeed be static. It follows that $(\L\!\qHodge_{R/A})_p^\complete$ is static. Moreover, $p$-torsion freeness implies that $(\L\!\qHodge_{R/A})_p^\complete$ is $(q-1)$-completely flat in the sense of \cref{par:Notation}. Since $\IZ_p\qpower$ is noetherian, it follows that $(\L\!\qHodge_{R/A})_p^\complete$ is flat on the nose. This finishes the proof of~\cref{enum:DerivedqHodgeR}.
	\end{proof}


	\newpage
	\renewcommand{\SectionPrefix}{}
	\renewcommand{\bibfont}{\small}
	\printbibliography
\end{document}